\def\xyunfrenchcatcodes{\catcode 33 12 \catcode 58 12 \catcode 59 12 \catcode 63 12 }
\newcommand{\mcf}{\mathcal{F}}
\newcommand{\mcm}{\mathcal{M}}
\newcommand{\catset}{{\mathbf{Set}}}
\newcommand{\catiord}{\mathbf{IOrd}}
\newcommand{\catord}{\mathbf{Ord}}
\newcommand{\catpfib}{\mathbf{PFib}}
\newcommand{\pfib}{\mathbf{PFib}}
\newcommand{\epfib}{\exists\text{-}\mathbf{PFib}}
\newcommand{\mpfib}{\wedge\text{-}\mathbf{PFib}}
\newcommand{\ffrm}{\mathbf{FFrm}}
\newcommand{\catpdist}{\mathbf{PDist}}
\newcommand{\catcstruct}{\mathcal{CSTRUCT}}
\newcommand{\catcat}{{\mathbf{Cat}}}
\newcommand{\catudist}{\mathbf{UDist}}
\newcommand{\catidist}{\mathbf{IDist}}
\newcommand{\catfib}{{\mathbf{Fib}}}
\newcommand{\tope}{{\mathcal{E}}}
\newcommand{\tops}{{\mathcal{S}}}
\newcommand{\cata}{\mathbb{A}}
\newcommand{\catb}{\mathbb{B}}
\newcommand{\catc}{\mathbb{C}}
\newcommand{\catd}{\mathbb{D}}
\newcommand{\catl}{\mathbb{L}}
\newcommand{\catp}{\mathbb{P}}
\newcommand{\catq}{\mathbb{Q}}
\newcommand{\catr}{\mathbb{R}}
\newcommand{\cats}{\mathbb{S}}
\newcommand{\catx}{\mathbb{X}}
\newcommand{\hyph}{{\EuScript{H}}}
\newcommand{\pcaa}{\mathcal{A}}
\newcommand{\pcaia}{(I,\mathcal{A})}
\newcommand{\pcaai}{\mathcal{A}_i}
\newcommand{\pcaaii}{(\mathcal{A}_i)_{i\in I}}
\newcommand{\pcaas}{{\mathcal{A}_\#}}
\newcommand{\pcaasi}{\mathcal{A}_{\#,i}}
\newcommand{\pcaass}[1]{\mathcal{A}_{\#,#1}}
\newcommand{\pcab}{\mathcal{B}}
\newcommand{\ipcaasa}{\mathcal{A}_\#\subseteq\mathcal{A}}
\newcommand{\iipcaasa}{(I,\mathcal{A}_\#\subseteq\mathcal{A})}
\newcommand{\ufpcaa}{(\pcaa,R(\pcaa))}
\newcommand{\ufpcaia}{(I,\pcaa,R(\pcaa))}
\newcommand{\ufpcaias}{(I,\pcaa,R(\pcaas))}
\newcommand{\trip}{{\EuScript{P}}}
\newcommand{\triq}{{\EuScript{Q}}}
\newcommand{\rtr}[1]{\mathbf{rt}(#1)}
\newcommand{\catrt}{\mathbf{RT}}
\newcommand{\catrc}{\mathbf{RC}}
\newcommand{\comk}{\mathsf{k}}				
\newcommand{\coms}{\mathsf{s}}				
\newcommand{\compair}{\mathsf{pair}}				
\newcommand{\comfst}{\mathsf{fst}}				
\newcommand{\comsnd}{\mathsf{snd}}				
\newcommand{\vtp}{\mathclose:}				
\newcommand{\csep}{\mathrel|\,}		
\newcommand{\msep}{\mathrel|\,}		
\newcommand{\op}{\mathsf{op}}				
\newcommand{\co}{\mathsf{co}}				%
\newcommand{\prop}{{\mathsf{Prop}}}		
\newcommand{\triptr}{{\mathsf{tr}}}		
\newcommand{\id}{\mathrm{id}}				
\newcommand{\defequi}{:\Leftrightarrow}			
\newcommand{\appca}{\mathclose\cdot}			
\newcommand{\qdot}{\,.\hspace{.3mm}}				
\newcommand{\sheaf}[1]{{\mathbf{Sh}(#1)}}		
\newcommand{\setof}[2]{\{#1\msep #2\}}
\newcommand{\asm}{\mathbf{Asm}}
\newcommand{\basm}{\mathbf{B}\textrm{-}\mathbf{Asm}}
\newcommand{\commacat}[2]{#1\mathopen\downarrow#2}
\newcommand{\pscommacat}[2]{#1\mathopen\Downarrow#2}
\newcommand{\olaco}[2]{{(#1\mathopen\swarrow #2)}}
\newcommand{\cro}{{(C,\rho)}}
\newcommand{\dsi}{{(D,\sigma)}}
\newcommand{\predeq}{\mathclose=}
\newcommand{\N}{\mathbb{N}}
\newcommand{\reg}{\ar@{|->}}
\newcommand{\mono}{\ar@{ >->}}
\newcommand{\cocov}{\ar@{ |>->}}
\newcommand{\depi}{\ar@{->>}}
\newcommand{\cov}{\ar@{-|>}}
\newcommand{\monepi}{\ar@{ >->>}}
\newcommand{\ccov}{\ar@{->>}|-{\mathrm{c}}}
\newcommand{\vmono}{\ar@{ >->}|-{\mathrm{v}}}
\newcommand{\vepi}{\ar@{->>}|-{\mathrm{v}}}
 \newdimen\axis \axis=\fontdimen22\textfont2%
 \newbox\bullbox \setbox\bullbox=\hbox{.}
\newcommand{\dashed}{\ar@{-->}}
 \def\bull{\kern 1.9pt \usebox\bullbox\kern 1.1pt}
\newcommand{\dotted}{\ar@{..>}}
\newcommand{\dline}{\ar@{..}}
\newcommand{\dar}{\ar@{..>}}
\newcommand{\dreg}{\ar@{|..>}}
\newcommand{\dmono}{\ar@{ >..>}}
\newcommand{\dcocov}{\ar@{ |>..>}}
\newcommand{\ddepi}{\ar@{..>>}}
\newcommand{\dcov}{\ar@{..|>}}
\newcommand{\dmonepi}{\ar@{ >..>>}}
\newcommand{\dist}{\ar|-*=0@{|}}
\newcommand{\edist}{\mathbin{\mathop{\longrightarrow\hspace{-.75em}\mapsfromchar\hspace{.75em}}}}
\newcommand{\dashprof}{\ar@{-->}|-*=0@{|}}
\newcommand{\mar}{\ar@{-}|-*=0@{+>}}
\newcommand{\ppair}[3]{{\ar@<3pt>[#1]^-{#2}\ar@<-3pt>[#1]_-{#3}}}
\newcommand{\nohead}{\ar@{-}}
\newcommand{\incl}{\ar@{^{ (}->}}
\newcommand{\emar}{\ar@{}}
\newcommand{\cart}{\ar@{~>}}
\newcommand{\epicart}{\ar@{~|>}}
\newcommand{\coca}{\ar@{||{+} >|}}
\newcommand{\arid}{\ar@{=}}
\renewcommand{\to}{\rightarrow}
\newcommand{\emono}{\rightarrowtail}
\newcommand{\eepi}{\twoheadrightarrow}
\newcommand{\eiso}{\stackrel{\cong}{\longrightarrow}}
\newcommand{\erel}{\looparrowright}
\newcommand{\etrel}{{\looparrowright\hspace{-8.6pt}\rightarrowtriangle}}
\newcommand{\eepicart}{\xymatrix@1@-3mm{{}\epicart[r]&{}}}
\newcommand{\ecovercart}{\xymatrix@1@-3mm{{}\epicart[r]&{}}}
\newcommand{\abs}[1]{\lvert#1\rvert}
\newcommand{\ve}{\varepsilon}
\newcommand{\qtext}[1]{\quad\text{#1}\quad}
\newcommand{\stext}[1]{\;\text{#1}\;}
\newcommand{\sub}{\operatorname{sub}}
\newcommand{\Sub}{\operatorname{Sub}}
\newcommand{\siev}{\operatorname{siev}}
\newcommand{\Siev}{\operatorname{Siev}}
\newcommand{\imp}{\Rightarrow}
\newcommand{\cxymatrix}[1]{\vcenter{\xymatrix{#1}}}
\newcommand{\bbtwo}{\mathbbm{2}}
\newcommand{\sep}{\mathbf{Sep}}
\newcommand{\sh}{\mathbf{Sh}}
\newcommand{\dom}{\mathrm{dom}}
\newcommand{\famf}{\mathrm{fam}}
\newcommand{\Famf}{\mathrm{Fam}}
\newcommand{\ufam}{\mathrm{ufam}}
\newcommand{\Ufam}{\mathrm{UFam}}
\newcommand{\subf}{\mathrm{sub}}
\newcommand{\pullbackcorner}[1][dr]{\save*!/#1-1.2pc/#1:(-1,1)@^{|-}\restore}
\newcommand{\gl}{\mathrm{gl}}
\newcommand{\Gl}{\mathrm{Gl}}
\newcommand{\brprod}{\Join}
\newcommand{\adj}{\dashv}
\newcommand{\ent}{\vdash}
\newcommand{\delsep}{\delta:\sub(\tope)\to\trip}
\newcommand{\pto}{\rightharpoonup}
\newcommand{\catlex}{\mathbf{Lex}}
\newcommand{\catreg}{\mathbf{Reg}}
\newcommand{\catex}{\mathbf{Ex}}
\newcommand{\catpretop}{\mathbf{Pretop}}
\newcommand{\catlxv}{\mathbf{Lxv}}
\newcommand{\catgeo}{\mathbf{Geo}}
\newcommand{\catpos}{\mathbf{Pos}}
\newcommand{\adjr}[2]{
\ar@/_6pt/[r]_{#1}
\ar@{}[r]|\top
\ar@{<-}@/^6pt/[r]^{#2}
}
\newcommand{\adjrr}[2]{
\ar@/_6pt/[rr]_{#1}
\ar@{}[rr]|\top
\ar@{<-}@/^6pt/[rr]^{#2}
}
\newcommand{\adjl}[2]{
\ar@/^6pt/[l]^{#1}
\ar@{}[l]|\top
\ar@{<-}@/_6pt/[l]_{#2}
}
\newcommand{\adjd}[2]{
\ar@/_6pt/[d]_{#1}
\ar@{}[d]|\dashv
\ar@{<-}@/^6pt/[d]^{#2}
}
\newcommand{\adju}[2]{
\ar@/^6pt/[u]^{#1}
\ar@{}[u]|\dashv
\ar@{<-}@/_6pt/[u]_{#2}
}
\newcommand{\dadj}[4]{ 
\ar@/^6pt/[#3]^{#1}
\ar@{}[#3]|{#4}
\ar@{<-}@/_6pt/[#3]_{#2}
}
\newcommand{\bco}{\mathbf{BCO}} 
\newcommand{\abracks}[1]{(#1)} 
\newcommand{\ebracks}[1]{[#1]} 
\newcommand{\pil}{\pi_l} 
\newcommand{\pir}{\pi_r} 
\newcommand{\ldist}[1]{{#1}^*} 
\newcommand{\rdist}[1]{{#1}_*} 
\newcommand{\brar}{{(A,R)}}
\newcommand{\upiar}{(I,A,R)}
\newcommand{\upjbs}{(J,B,S)}
\newcommand{\upkct}{(K,C,T)}
\newcommand{\upldu}{(L,D,U)}
\newcommand{\upa}{\mathfrak{A}}
\newcommand{\upb}{\mathfrak{B}}
\newcommand{\upc}{\mathfrak{C}}
\newcommand{\pplus}{P_+}
\newcommand{\dplus}{D_+}
\newcommand{\catufp}{\mathbf{UOrd}}
\newcommand{\catuord}{\mathbf{UOrd}}
\newcommand{\cateuord}{\exists\text{-}\mathbf{UOrd}}
\newcommand{\catmuord}{\wedge\text{-}\mathbf{UOrd}}
\newcommand{\catufrm}{\mathbf{UFrm}}
\newcommand{\fibc}{\mathscr{C}}
\newcommand{\fibd}{\mathscr{D}}
\newcommand{\fibe}{\mathscr{E}}
\newcommand{\fibf}{\mathscr{F}}
\newcommand{\fibp}{\mathscr{P}}
\newcommand{\fibq}{\mathscr{Q}}
\newcommand{\fibs}{\mathscr{S}}
\newcommand{\fibx}{\mathscr{X}}
\newcommand{\ffrma}{\EuScript{A}}
\newcommand{\ffrmb}{\EuScript{B}}
\newcommand{\fifa}{\EuScript{A}}
\newcommand{\fifb}{\EuScript{B}}
\newcommand{\fifc}{\EuScript{C}}
\newcommand{\fifx}{\EuScript{X}}
\newcommand{\fpa}{\EuScript{A}}
\newcommand{\tot}[1]{\abs{#1}}
\newcommand{\whfc}{\widehat{\fibc}}
\newcommand{\adcl}{\mathopen\downarrow}
\newcommand{\uf}{(u,f)}
\newcommand{\vg}{(v,g)}
\newcommand{\srel}[2]{{#1\{#2\}}}
\newcommand{\defined}{\mathclose\downarrow}
\newcommand{\srelrc}{\srel{\catr}{\fibc}}
\newcommand{\geostack}{geometric pre-stack}
\newcommand{\trel}{\operatorname{tRel}}
\newcommand{\prim}{\mathbf{Prim}}
\newcommand{\les}{\preceq}
\newcommand{\ges}{\succeq}
\newcommand{\ptype}{*}
\newcommand{\pslice}{\!\sslash\!}
\newcommand{\geopos}[1]{\mathfrak{P}(#1)}
\newcommand{\flpos}[1]{\mathfrak{A}(#1)}
\newcommand{\ilbracks}[1]{\bigl( #1 \bigr)}
\newcommand{\fund}[1]{\mathrm{cod}(#1)}
\newcommand{\lfund}[1]{\mathrm{cod}(#1):\commacat{#1}{#1}\to #1}
\newcommand{\dindex}[1]{\index{#1}}
\newcommand{\umeet}{(\ptype,\wedge)}
\newcommand{\uterm}{(1,\top)}
\newcommand{\lfisufi}[1]{\sub(#1):\Sub(#1)\to\tot{#1}}
\newcommand{\sall}{\;\forall}
\newcommand{\sists}{\;\exists}
\newcommand{\per}{\mathrm{PER}}
\newcommand{\iemph}[1]{\emph{#1}\index{#1}}
\newcommand{\twif}{\top,\wedge,\imp,\forall}
\newcommand{\pp}{\mathscr{P}}
\newcommand{\scs}{\commacat{\tops}{\tops}}
\newcommand{\sierp}{\widehat{\bbtwo}}
\newcommand{\pcakone}{\mathcal{K}_1}
\newcommand{\ddesc}{\mathbf{Desc}}
\newcommand{\simp}{\!\imp\!}
\newcommand{\conflict}{\ar@{~}}
\def\signed #1{{\leavevmode\unskip\nobreak\hfil\penalty50\hskip2em
  \hbox{}\nobreak\hfil(#1)%
  \parfillskip=0pt \finalhyphendemerits=0 \endgraf}}
\newsavebox\mybox
\newcommand{\comment}[1]{}
\theoremstyle{plain}
\newtheorem{theorem}{Theorem}[section]
\newtheorem{lemma}[theorem]{Lemma}
\newtheorem{corollary}[theorem]{Corollary}
\newtheorem{definition}[theorem]{Definition}
\newtheorem{remark}[theorem]{Remark}
\newtheorem{remarks}[theorem]{Remarks}
\newtheorem{example}[theorem]{Example}
\newtheorem{examples}[theorem]{Examples}
\newtheorem{convention}[theorem]{Convention}
\theoremstyle{nonumberplain}
\newtheorem{proof}{Proof.}
\numberwithin{equation}{section}
\setlist{itemsep=0pt}
\begin{document}
\xyunfrenchcatcodes
\selectlanguage{english}
\pagestyle{empty}
{\pagestyle{empty}
\begin{center}
{\large\sc Université Paris Diderot -- Paris 7\\[2mm] Laboratoire PPS}
\end{center}
\vspace{2cm}
\begin{center}
{\huge\bfseries\sc 
A fibrational study of\\[2mm] 
realizability toposes}
\end{center}
\vspace{9mm}
\begin{center}
{\Large Jonas {\sc Frey}}
\end{center}
\vspace{2cm}
\begin{center}
Defended on 20 June 2013\\
Minor modifications 14 March 2014
\end{center}
}

\clearpage
\pagestyle{plain}
\chapter*{Thanks}

Science is a community effort, and this thesis wouldn't exist without
the help of many.

\bigskip

First and foremost, I want to thank my supervisor Paul-André Melliès for sharing
his inspiration and enthusiasm, and for a lot of support and encouragement.

Thanks to Thomas Streicher for teaching me categorical logic, and for
many discussions, email exchanges and pointers to literature which were crucial
to my work.

Thanks to Martin Hyland, Pino Rosolini, Alex Simpson, Jaap van Oosten, Benno van
den Berg, Naohiko Hoshino, Sam Staton and Wouter Stekelenburg for inspiring and
instructive discussions on categorical realizability, of which this work has greatly profited.

Thanks to Peter Johnstone, Jaap van Oosten and Thomas Streicher for 
agreeing to be reviewers and devoting their time to read the manuscript.

\bigskip

The four years that I spent at Paris 7 University have been an enriching time, 
and while it would be hard to mention all the people who contributed to this, 
I want to thank at least a few of them.

Thanks to Stéphane, Thibaut, Gabriel, Antoine, Fabien,
Pierre, Boris, Samuel, Nicolas and Mehdi of 6C10-fame for never being above a 
joke, and for patiently helping me out with the subtleties of French language, 
life, and bureaucracy. 

Thanks to Albert Burroni, Georges Maltsiniotis, Pierre-Louis Curien, François
Métayer, Dimitri, Samuel, Jonathan, and all the other
participants for the homotopy seminar, which was a source of inspiration and a
fixed point of my life in Paris.

Thanks to Christine, Dimitri, Nicolas and Samuel for taking time to discuss 
category theory.

Thanks to Beniamino, Guillaume, Noam and Stéphane 
for sharing with me their visions of polarization and focalization.

Thanks to Alexis, Shahin, Kuba, Flavien, Ionna, Sara, Sarah, Matthieu, Pierre B, 
Charles, Guilio, Serguei, Danko, Matthias, Stéphane G, Séverine, Mathias, Jaap, Alberto, Jim, Daisy, Juliusz and 
all that I have forgotten for
discussions, conversations, inspirational coffee breaks, picnics, and generally a great time.

Thanks to all of PPS for your open and friendly atmosphere.

\tableofcontents
\chapter{Introduction}

The present thesis is meant to be a contribution to the theory of
`realizability toposes', and more generally to `categorical realizability' and
`categorical logic'.

\section{Context}

\subsection{Realizability}

Realizability is a technique originating from \emph{proof theory} and was
originally devised by Kleene~\cite{kleene1945interpretation} to reason about
metamathematical properties of formal
systems. This is done by associating `realizers' to logical formulas, which are
usually finitary objects (integers, terms) viewed as approximations of
proofs, using ideas from constructive mathematics. 

In \emph{computer science}, realizability is strongly related to typed lambda
calculi via the proofs-as-programs correspondence. 
 In this context, it is very instructive to compare
realizability to typing à la Curry: typing constructs a binary relation between
types and terms by induction on the term structure, whereas realizability
constructs such a relation by induction on the structure of types. The
realizability relation is generally bigger than the typing relation, and
undecidable. This point of view on realizability is strongly related to
normalization proofs of typed lambda calculi via `reducibility candidates'.
Besides normalization, realizability can be used to reason about operational
semantics.

\emph{Semantically}, realizability 
can be viewed as a model construction for predicate logic. The
basic idea here is that the set of realizers of a closed formula is viewed
as its `truth
value'; in the simplest case a truth value is `true', if it is inhabited. This
seems rather restricted as it admits (up to equivalence) only two truth values
for closed formulas, but the structure diverges from the classical model as soon
as we consider open formulas: these are interpreted by `predicates', which are
families of truth values, and a predicate is considered true whenever it has a
\emph{uniform} realizer, i.e.\ the intersection of all its truth values is
inhabited.

\subsection{The categorical approach}

At the end of the 70ies, it was observed that the semantic aspect of
realizability fits into a formalism described by
Lawvere~\cite{lawvere1970equality}: the above
mentioned semantic predicates can be arranged into a \emph{hyperdoctrine},
whereby it becomes apparent that the constructions thought out by proof
theorists to interpret the logical connectives are characterized by universal
properties. It was then realized that if the hyperdoctrine has enough
structure, the model given by the hyperdoctrine can be `internalized' into a
topos by performing a construction analogous to the construction of sheaves on a
locale, providing an abstract way of turning realizability interpretations into
models of \emph{higher order logic}. The hyperdoctrines for which this
construction is possible and results in a topos were called \emph{triposes} by
Hyland, Johnstone and Pitts~\cite{hjp80}\footnote{Actually the definition of
tripos is slightly stronger
than what is necessary to construct a topos (see~\cite{pitts2002}), but the
additional strength can naturally be viewed as a \emph{smallness} condition.}.

The most well known and well studied of the toposes obtained this way is
Hyland's \emph{effective topos}~\cite{hyland82}, which is the categorical
incarnation of Kleene realizability. Kleene realizability uses natural numbers
as realizers, but --
as already noted in~\cite{hjp80} --, the same construction works just as well
when we
replace the natural numbers by elements of an arbitrary \emph{partial
combinatory algebra} (pca) $\pcaa$, giving rise to the \emph{realizability
topos} $\catrt(\pcaa)$.

\section{Motivation}

The present work is not concerned with realizability in its proof theoretic
sense at all, but only with its categorical abstraction in the form of partial
combinatory algebras, triposes and toposes.

Broadly speaking, the intention of this work is to get a more abstract
understanding
of realizability toposes.

\subsection{Questions}

The ideas and constructions presented in this thesis can be motivated  
by  the following two questions/challenges.
\begin{enumerate}
 \item 
Johnstone compared the state of the study of
realizability toposes to `stamp-collecting', calling for a
Giraud~\cite{giraud1963analysis} style
`extensional' characterization of realizability toposes. 
\item
The construction of realizability toposes is motivated by the analogy to 
Grothendieck toposes of sheaves on a locale, but realizability toposes
themselves are not Grothendieck toposes. Nevertheless, we want to push the
analogy further and try to adapt techniques from Grothendieck toposes to
realizability. A more ambitious goal is to find an axiomatic framework
generalizing Grothendieck toposes in such a way that it contains realizability
toposes.
\end{enumerate}

\subsection{Approach}

\subsubsection*{Johnstone's question}

Johnstone's question for a Giraud style characterization is difficult to answer
since it is not clear what precisely we want to call realizability. Clearly, the
toposes constructed from
pcas are realizability toposes, but we might also want to include toposes
arising from modified realizability, relative realizability, and the Dialectica
interpretation, to name just a few. Krivine's \cite{krivine2011realizability}
notion of \emph{realizability structure} subsumes set-theoretical
\emph{forcing}, and if we want to adopt this liberal point of view we
should also include a lot of Boolean valued models, and if we do not insist on
classical logic, Heyting valued models as well. On the level of
hyperdoctrines, a class of structures that contains all these examples and
furthermore has good closure properties is the class of triposes, and in my
opinion this is a reasonable candidate for an abstract framework for
realizability\footnote{At least as long as we are interested in realizability
\emph{toposes} -- there are notions of realizability which do not give rise to
impredicative models, and we will later consider corresponding types of
hyperdoctrines. However, in Section~\ref{sec:arbitrary-bases} we will see that
those predicative classes of models are more difficult to handle on base
categories other than $\catset$, and in particular iteration seems more
difficult if possible at all, so there are good reasons to consider primarily
the impredicative case.}.

We do not know how to give an abstract characterization of
toposes arising from triposes, but such a characterization can be given (as has
been done by Pitts~\cite{pitts81}) if we admit as additional structuring data
the `constant objects functor' $\Delta:\catset\to\catset[\trip]$ which embeds
the category of sets into the topos.

\subsubsection*{The constant objects functor}

The inclusion of the constant objects functor in the data can be justified by
taking the point of view of `relative (Grothendieck) topos theory': whenever
we have a bounded
geometric morphism $(\Delta\adj \Gamma:\tope\to\tops)$, $\tope$ is equivalent to
a topos of sheaves on an internal site in $\tops$, whence bounded geometric
morphisms into $\tops$ can be viewed as Grothendieck toposes relative to
$\tops$. In the same way, functors $\Delta:\tops\to\tope$ satisfying Pitts'
condition correspond to tripos-induced toposes relative to $\tops$ (the analogy
makes sense since  for triposes coming from complete Heyting algebras,
the constant objects functor coincides with the inverse image functor of the
geometric morphism). 
Now in the case of Grothendieck toposes relative to $\catset$, the geometric
morphism can be constructed from the topos alone, but this is not the case for
toposes coming from triposes, whence we have to include the functor in the data.

\subsubsection*{The fibrational point of view}

A way to understand the relevance of the constant objects and direct
image functors is via the gluing fibration: any regular functor
$\Delta:\tops\to\tope$ between toposes allows to view $\tope$ as a fibration (actually a
stack) over $\tops$ by gluing (i.e.\ taking the pullback of the fundamental fibration
of $\tope$ along $\Delta$), and one can argue that this is the real object of
interest. Relative to $\catset$, this fibration is just the family fibration for
Grothendieck toposes, while we obtain non-standard fibrations in case of
tripos-induced toposes.

The fibrational point of view opens up an interesting new perspective: while
from the non-fibered viewpoint the main structural difference between
realizability toposes and Grothendieck toposes is that the former are not
cocomplete, the gluing fibration of a realizability topos is fibrationally
cocomplete, but not \emph{locally small}, which implies for example that we
can't use Freyd's adjoint functor theorems.

\subsubsection*{Unifying realizability and Grothendieck toposes}

The previous deliberations suggest that a common framework for realizability and
Grothendieck toposes is given by fibrations of toposes arising from gluing
along regular functors $\Delta:\catset\to\tope$ into toposes. To
restrict this very general class of structures, it seems sensible to impose a
boundedness condition on the corresponding constant objects functors, which
should correspond to the fact that the fibration can be constructed from a
small, `site-like' structure.

We do not study this question in detail or try to give an axiomatics, but some
further speculations in this direction can be found
Sections~\ref{sec:arbitrary-bases} and \ref{sec:non-posetal}. 
In particular, in Section~\ref{sec:non-posetal} we sketch a definition of the
alluded site like structures which we call `uniform categories', and the
considerations of Section~\ref{sec:arbitrary-bases} suggest that
on base toposes $\tops$ other than $\catset$ it seems to be necessary to
postulate, in addition to the constant objects functor $\Delta:\tops\to\tope$,
a kind of `global sections functor' $\Gamma:\tope\to\tops$ (which is however not
required to be
adjoint to $\Delta$ in general), in order to be able to reconstruct the
generalized site from the
fibration and the bound.

\subsubsection*{In this work}

As explained above, one might argue that when taking the fibrational point
of view, and adopting a liberal notion of realizability, Johnstone's question
has already been answered by Pitts.

The main result of the present work is an analogous characterization
for the \emph{smallest} reasonable class of realizability toposes -- those
arising from pcas. 

In order to achieve this, we develop a framework of `fibrational
cocompletions', which is manifested as a chain of biadjunctions between
2-categories of pre-stacks on a regular category $\catr$. These
biadjunctions are viewed as a fibrational generalization of the transition from
a small finite-limit category $\catc$ to the presheaf category
$\widehat{\catc}$, and several intermediate steps.

The analogy being that finite-limit fibrations are generalized finite-limit
categories on which we construct generalized presheaf categories, we will
then concentrate on the posetal case, in particular on a class of posetal
fibrations on $\catset$	admitting a small representation -- the
\emph{uniform preorders}. Uniform preorders provide an adequate framework for
the analysis of realizability over pcas since they contain all triposes (in
the presence of choice), as
well as representations of pcas.

\section{Overview}

\subsubsection*{Chapter~\ref{chap:fibrations}}

In Chapter~\ref{chap:fibrations}, we introduce the necessary parts of fibered
category theory, including \emph{Moens' theorem} which clarifies the
relationship between constant object functors and their gluing fibrations, and
(pre-)stacks for the regular topology. In particular,
we present the chain
\begin{equation}\label{eq:chain-intro}
\catlex(\catr)
\hookleftarrow\catgeo(\catr) 
\hookleftarrow\catpos(\catr)
\hookleftarrow\catpretop(\catr)
\end{equation}
of 2-categories of \emph{fibered pretoposes, positive pre-stacks, geometric
pre-stacks and finite-limit pre-stacks}
on a regular category $\catr$ which form the basis of the developments in
Chapter~\ref{chap:fib-cocompletions}.

\subsubsection*{Chapter~\ref{chap:fib-cocompletions}}

Chapter~\ref{chap:fib-cocompletions} is about \emph{fibrational cocompletions}
-- more precisely we construct left biadjoints to the chain of inclusion
functors above. The fibrational cocompletions are meant to provide
a common framework for presheaf-constructions and realizability constructions,
the motivating examples are the following:
\begin{itemize}
 \item Given a small finite-limit category $\catc$, its family fibration
$\famf(\catc)$ is a finite-limit pre-stack. The fibered pretopos cocompletion
of $\famf(\catc)$ is $\famf(\widehat{\catc})$ (the family fibration of the
category of presheaves), and the geometric and positive prestack cocompletions
are the subfibrations on families of subrepresentable presheaves, and coproducts
of subrepresentable presheaves, respectively.
\item
For a pca $\pcaa$, the posetal fibration of singleton valued predicates in
$\pcaa$ is a finite-limit pre-stack. Its geometric cocompletion is the
realizability tripos, and its positive and pretopos cocompletions are the gluing
fibrations of the category of assemblies and the realizability topos,
respectively.
\end{itemize}
The reason for the use of pre-stacks is explained at the beginning of
Section~\ref{sec:prestacks}.

In Sections~\ref{sec:fibered-presheaves},
\ref{sec:fibered-sheaf-construction}, and \ref{sec:fibered-pretop-pos}, we
present the constructions of the biadjoints in detail, and in
Section~\ref{sec:preordered-case} we take a second look on the cocompletions
of finite-limit pre-stacks and geometric pre-stacks in the special case
of \emph{posetal} fibrations. This restriction simplifies the constructions
considerably, and is sufficiently general for the treatment of realizability.

In Section~\ref{sec:assemblies}, we slightly deviate from our main line of
thought, to treat \emph{assemblies} -- as emphasized by Johnstone,
realizability toposes constructed from pcas are much easier to work in than
more general toposes constructed from triposes, the reason being that they 
can be presented as 
ex/reg completions of `concrete' categories of assemblies. However,
among the toposes constructed from triposes, those coming from pcas are not the
only ones admitting such a presentation -- other examples are given by
\emph{relative realizability}, and presheaves on meet-semilattices. This raises
the question for a general criterion for when the construction via assemblies
is possible. In Section~\ref{sec:assemblies}, we show that for a topos
$\catset[\trip]$ constructed from a tripos $\trip$ on $\catset$ to be the
ex/reg completion of a concrete subcategory of assemblies, it is sufficient
that the embedding $\delta:\sub(\catset)\to\trip$ of classical predicates into
the tripos has a finite meet preserving left adjoint. Using classical logic, we
can moreover show that in this case, the assemblies coincide with the
$\neg\neg$-sheaves
in $\catset[\trip]$.

\subsubsection*{Chapter~\ref{chap:ufp}}

Although fibrational cocompletions work well, the framework that we
present in Chapter~\ref{chap:fib-cocompletions} is a bit too general for
the purpose of analyzing realizability -- as hinted earlier, we want to impose
boundedness conditions on the right side of the chain~\eqref{eq:chain-intro}
(for positive pre-stacks and
fibered pretoposes), and this should correspond to smallness conditions on the
left side (for finite-limit pre-stacks and geometric pre-stacks). 

To make this work, we need more structure in the base than that of a regular
category -- for example we want to express the transition from finite-limit
pre-stacks to geometric pre-stacks entirely on the level of internal data, and
to internalize the necessary constructions it is convenient to demand the
base to be at least a topos. It turns out that not even that is enough --
the only base category on which we can endow the chain of biadjunctions with
size data in a straightforward manner is $\catset$ (we can make it work on
other base categories if we introduce an additional layer in the fibrations, as
suggested in Section~\ref{sec:arbitrary-bases}, but this is not worked out in
detail in this thesis). As a further restriction we demand the fibrations on the
small side of the scale (finite-limit pre-stacks and geometric pre-stacks) to be
posetal from now on -- as already pointed out, this is sufficient for the
treatment of
realizability, and leads us to the concept of \emph{uniform preorder}.

Uniform preorders are representations of certain posetal fibrations on
$\catset$ conforming to a smallness condition -- in the presence of choice the
locally ordered category $\catuord$ of uniform preorders is equivalent to the
full subcategory of posetal fibrations on $\catset$ on those posetal fibrations
which have a \emph{generic family of predicates}. $\catuord$ is quite similar
in structure to the category of preorders and has very good closure properties
(in particular it is bicartesian closed and comes with a notion
of distributor that makes it a locally posetal cartesian bicategory with
duals~\cite{carboni1987cartesian}). 
This and the fact that it accommodates triposes and partial combinatory algebras
in a natural way\footnote{Following Hofstra~\cite{hofstra2006all} we do
not identify a pca with the corresponding realizability tripos, but with its
subfibration on singleton valued predicates.} makes $\catuord$ an ideal
framework to analyze realizability constructions, and enables us to prove our
main results near the end of the chapter. These are:
\begin{itemize}
 \item The identification of inclusions of typed pcas (corresponding to typed 
relative realizability) with \emph{relationally complete functional uniform
preorders} in Lemma~\ref{lem:rel-compl-func-tpca}. This identification can be
specialized to characterizations of the untyped and non-relative cases by adding
conditions.
\item The characterization of realizability triposes and hyperdoctrines arising
from (typed)
(inclusions of) pcas in Theorem~\ref{theo:character-relrealhyper}, using the
previous result and a concept of `$\exists$-primality'
(Definition~\ref{def:exprime}) generalizing the notion of `completely join
prime element' in complete lattices.
\item The characterization of the fibered (pre)toposes arising from (typed)
(inclusions of) pcas by the fibered presheaf construction in
Theorem~\ref{theo:char-cat}, using the characterization of pcas and a
fibrational generalization of the characterization of presheaf toposes in terms
of indecomposable projectives.

In the non-relative case, our characterization gives rise to a characterization
of the non-fibered realizability categories/toposes, since the constant objects
functor is right adjoint to the global sections functor in this case, and thus
doesn't give additional information.
\end{itemize}
In Section~\ref{sec:arbitrary-bases}, we describe how the correspondence
between uniform preorders and posetal fibrations can be expressed on base
toposes other than $\catset$, which gives an approach of how to generalize the
the listed results to arbitrary base toposes.

\subsubsection*{Appendix}

In Section~\ref{sec:pcas} we recall standard definitions from categorical
realizability.

In Section~\ref{sec:decompo}, we describe a decomposition result which is
inverse to Pitts' iteration theorem and is inspired by the idea that constant
objects functors are `generalized geometric morphisms' (since for geometric
morphisms there are several such decompositions known).

Finally, Section~\ref{sec:bits} contains outlines of unfinished work and ideas
for future investigations.

\section{Related work}

The present work is based on a large body of work in categorical realizability
that has been carried out throughout the last 20 years. This work can loosely
be divided into two themes -- \emph{exact completion} and \emph{combinatory
structures}: 
\begin{itemize}
 \item The connection between realizability toposes and exact completion was
established by Robinson and Rosolini~\cite{robinson1990colimit}, who observed
that realizability toposes are exact completions of their subcategories of
\emph{partitioned assemblies}. This inspired subsequent work by Birkedal,
Carboni,
Hofstra, Menni, Rosolini and Scott~\cite{carboni1995some, birkedal1998type,
menni2000exact, carboni2000locally, menni2003characterization,
hofstra2004relative}, of which \cite{carboni2000locally} is of particular
importance for this thesis.
\item The study of `combinatory structures' generalizing pcas started with
van Oosten's~\cite{vanoosten1997extensional} definition of ordered
pcas\footnote{Longley's thesis~\cite{longley1995realizability} should also be
mentioned as it introduced the idea of organizing 	pcas into a
\emph{locally ordered category}, which was subsequently adopted for more general
classes of combinatory structures.},
and ordered pcas were further developed by Hofstra and
van~Oosten~\cite{hofstra2003completions, hofstra2003ordered}. Longley
generalized pcas by adding types~\cite{longley1999unifying}, and Hofstra
further generalized ordered pcas into \emph{basic combinatory
objects} (BCOs)~\cite{hofstra2006all, hofstra2008iterated}. Recently,
Longley~\cite{longley2011computability} presented a vast generalization of his
ordered pcas.

Of these works, \cite{hofstra2006all} has been crucial to the
development of this thesis -- a lot of results about uniform preorders are just
adoptions of Hofstra's results about BCOs (we will
indicate this in the text). However, BCOs correspond just to single-sorted
uniform preorders. The idea to consider the many-sorted version was triggered
by Streicher's remark that the definition of uniform preorder
(in its first, one-sorted version) resembled the definition of Longley's
C-structures~\cite{longley2011computability}. Remarkably, some of our concepts
that are based on intuitions about fibered preorders are similar to notions
that Longley devised without these intuitions -- most notably the definition of
`relationally complete uniform preorder' (Definition~\ref{def:rel-compl-ufp}) is
similar to Longley's~\emph{higher order C-structure}.

Krivine's realizability structures~\cite{krivine2011realizability} can also be
seen as generalizations of pcas, but their link to the structures
mentioned above is not explored here.
\end{itemize}
The idea to view realizability toposes as \emph{presheaf} toposes can be traced
to Hofstra's~\cite{hofstra2006all} observation that realizability triposes can
be generated by freely adding existential quantification to the singleton
fibrations (analogous to the fact that sheaves on a downset lattice are
equivalent to presheaves on the generating preorder), the idea that this leads
to an alternative reading of the `exact completion description' of
realizability toposes materialized during a discussion with Rosolini. This lead
to a new approach to the question of finding a choice-free presentation, by
taking inspiration from Bunge's~\cite{bunge1977internal} characterization of
preshaf toposes over arbitrary bases. When adapting this viewpoint to
realizability, the `partitioned assemblies' which are important for exact
completion become \emph{families of subterminal indecomposable projectives} --
the corresponding assembly is the internal sum of this family, and is only
projective if the indexing set is, which explains the role of choice in a
sense.

\smallskip

In November 2011, I learned from Naohiko Hoshino that he developed a framework
for combinatory objects that somewhat resembled our \emph{uniform preorders}.
Hoshino defined his version of combinatory objects as monads in a more primitive
locally ordered bicategory, a point of view that was influential for the ideas
sketched in Appendix~\ref{sec:uniform-as-internal}.

\smallskip

Wouter Stekelenburg's thesis~\cite{stekelenburg2013realizability} on realizability 
categories is close in spirit to the present work in that it also aims towards a 
more abstract understanding of categorical realizability constructions. In 
comparison, Stekelenburg's approach seems to be more `logical', as opposed to the 
`geometric' flavor of the present work which arises from emphasizing the analogy
to Grothendieck toposes. Further analysis may be needed.

\section{Conventions}\label{sec:conventions}

\subsection{Logical notation}

In the present text, we make extensive use of notation in predicate logic, both
informally and as `internal language' in categories and posetal fibrations. We
use the propositional connectives $\bot,\top,\wedge,\vee,\imp$ with the
convention that $\wedge$ binds strongest of the binary connectives, and $\imp$
binds weakest, i.e.\ $\varphi\vee\psi\wedge\gamma\imp\psi$ has to be read
as $(\varphi\vee(\psi\wedge\gamma))\imp\psi$. Quantifiers $\forall,\exists$ have
lowest
precedence, in other words their scope stretches as far to the right as
possible. For example, $\forall x\qdot \varphi\imp\psi$ means $\forall x\qdot(
\varphi\imp\psi)$.

\subsection{Internal logic}

For an introduction to categorical logic we refer to
\cite{jacobs2001categorical} and \cite[Part~D]{elephant2}, here we only review
notational conventions and
shorthands.

When reasoning in the internal logic of a category or a fibration we use the
same symbols as in informal reasoning, but a bit more rigorously. We have three
syntactic classes -- terms\index{term}, formulas\index{formula}, and
judgments\index{judgment}, which we write as follows:
\begin{align*}
 &\text{term\index{term}:} & x_1\vtp A_1\dots x_n\vtp A_n &\csep t\\
&\text{formula\index{formula}:} & x_1\vtp A_1\dots x_n\vtp A_n &\csep \varphi\\
&\text{judgment\index{judgment}:} & x_1\vtp A_1\dots x_n\vtp A_n
&\csep\varphi_1\dots\varphi_n\ent\psi
\end{align*}
Note that all come with explicit contexts $x_1\vtp A_1\dots x_n\vtp
A_n$ of \emph{typed} (or rather `sorted') variables, without which the
expressions are meaningless. Similarly, bound variables always have types.
Since the notation in this style is rather heavy, we will often omit types or
even entire variable contexts -- the types are normally clear from the
context (in the non-technical sense of the word), and if no variable context is
given, we mean by convention simply all free variables that occur in the
expression.

Expressions in the internal language denote semantic entities -- terms denote
morphisms, formulas denote `predicates'\index{predicate}, by which we mean
either monomorphisms (in categories) or objects in a fibration over the
denotation of the context. 
If we assert the validity of a judgment, we mean that a certain
inequality holds in a subobject lattice or the fiber of a fibration (we will
use predicate logic only in posetal fibrations). 

In general, we do not
distinguish between a syntactic expression in the internal language and its
denotation since it is either clear from the context what we mean (if we refer
to a formula as a predicate, then we mean its denotation), or it doesn't matter.

\subsection{Reasoning with partial terms}\label{sec:partial-terms}

 When reasoning with partial terms (for example in pcas), we interpret
function symbols and primitive predicate symbols in the \emph{strict} sense --
for example if we
assert  $s=t$  or $s\in M$ where $s$ and $t$ are partial terms, then this in
particular means that $s$ and $t$ and all of their subterms have to be
defined\footnote{Be aware, however, that we can not deduce that $t$ is defined
from
$\varphi(t)$ for general non-atomic formulas $\varphi$ -- this goes wrong
already for
$\neg (t=t)$.}.
This  spares us from inserting `... then $t$ is defined and ...' in many
definitions and arguments.

We write $t\defined$ for the proposition that a possibly partial term $t$ is
defined; in accordance with our strictness convention this is equivalent to
$t=t$.

Formally, when doing first order logic with partial terms, we use $E^+$-logic
(see~\cite[Chapter~2.2]{troelstra1988constructivism}). 

We use the notations $s\preceq t$ for $s\defined\imp s=t$, and $s\simeq t$ for
$(s\defined\vee t\defined)\imp s=t$; the second relation is also called `strong
equality'.

\subsection{The axiom of choice}\label{sec:choice}

The proofs and developments in this thesis do not rely on the axiom of choice,
unless explicitly said otherwise. The abandonment of
choice leads to a certain proliferation of mathematical concepts, in particular
in category theory -- in the case of finite limit categories, for example, it
makes a difference whether we demand the \emph{existence} of limiting cones for
every finite diagram, or whether we ask each such diagram to come with an
explicit choice of such cone. Similarly, it is sometimes desirable to have an
explicit choice of cartesian liftings in fibrations. As is common practice, we
will generally assume that our categorical structures come with an explicit
choice of whichever structure we postulate.

\chapter{Fibrations}\label{chap:fibrations}
\section{Basic theory}

In the present work, fibrations form the central tool and formalism. We refer
the reader to \cite{streicherfib} for a general introduction, and to
\cite{jacobs2001categorical} and~\cite[Section~B1.3]{elephant1}
for fibrations in categorical logic and topos theory.
Bénabou's original paper~\cite{benabou1985fibered} gives a more
philosophical account.

In the following, we recall some basic theory, mostly without proofs.
\begin{definition}
 Let $\fibc:\catc\to\catb$ be a functor between categories $\catb,\catc$.
\begin{enumerate}
 \item Let $u:J\to I$ in $\catb$ and $f:B\to A$ in $\catc$ such that
$\fibc(f)=u$. We call $f$ 
 \emph{cartesian}\index{cartesian morphism}\index{morphism!cartesian} (with respect to $\fibc$), if for
any $v:K\to J$ in $\catb$
and $g:C\to A$ in $\catc$ such that $\fibc(g)=uv$ there exists a unique
$h:C\to B$ such that $\fibc(h)=v$ and $fh=g$.
\begin{equation}\label{eq:diag-cart-ar}
\vcenter{\xymatrix@R-5mm@C-3mm{
C\dashed[rd]_h\ar[rrrd]^g\\
& B\cart[rr]_f && A\\
K\ar[rd]_v\\
& J\ar[rr]^u && I\\
}}
\end{equation}
\item $\fibc$ is a \emph{fibration}\index{fibration}, if for every $A\in\catc$
and $u:J\to I$ in
$\catb$ such that $\fibc(A)=I$ there exists a \emph{cartesian lifting of $A$
along $u$}\index{cartesian lifting}, by which we mean a cartesian arrow $f:B\to A$ such that
$\fibc(f)=u$.
\end{enumerate}
\end{definition}
The domain of a fibration is called its \emph{total category}\index{total
category}, and the codomain
its \emph{base category}\index{base category}. To avoid having to come up with a
new letter, we often
denote the total category of a fibration $\fibc$ by $\tot{\fibc}$. Given a
fibration $\fibc:\tot{\fibc}\to\catb$, we also say that $\fibc$ is a fibration
\emph{on} $\catb$.
If
$\fibc(A)=I$, or $\fibc(f)=u$, we will say that $A$ is \emph{over} $I$, and that
$f$ is \emph{over} $u$, respectively. Morphisms over identity morphisms in the
base
are called \emph{vertical}\index{vertical morphism}\index{morphism!vertical}.
In diagrams
we represent the `over' relation by vertical alignment, as we already did in
Diagram~\eqref{eq:diag-cart-ar}. Given $I\in\catb$, $\fibc_I$ is
the \emph{fiber of $\fibc$ over $I$}\index{fiber!of fibration}, which is the
subcategory of $\tot{\fibc}$
on objects over $I$ and morphisms over $\id_I$. We
use the arrow symbol $\xymatrix@1@C-2mm{\cart[r]&}$ for cartesian arrows. 

A \emph{posetal}\index{posetal fibration}\index{fibration!posetal} fibration is
a fibration $\fifa:\tot{\fifa}\to\catb$
which is faithful as a functor, which is equivalent to the fact that all fibers
$\fifa_I$ for $I\in\catb$ are preorders. Since posetal fibrations can serve as
models of first order logic, we refer to the objects in $\tot{\fifa}$ as
\emph{predicates}\index{predicate} in this case. Specifically, given
$\varphi\in\fifa_I$, we
call $\varphi$ a \emph{predicate on $I$}.

\begin{definition}\label{def:catfib}
 Let $\fibc:\tot{\fibc}\to\catb$, $\fibd:\tot{\fibd}\to\catb$ be fibrations on
a category $\catb$.
\begin{enumerate}
 \item A \emph{fibered functor}\index{fibered!functor}\index{functor!fibered} is
a functor
$F:\tot{\fibc}\to\tot{\fibd}$ which maps cartesian arrows in $\fibc$ to
cartesian arrows in $\catd$, and such that $\fibd\circ F = \fibc$.
\item Given two fibered functors $F,G:\fibc\to\fibd$, a \emph{fibered natural
transformation}\index{fibered!natural transformation}\index{natural
transformation!fibered} is a natural transformation $\eta:F\to G$ such that all
components $\eta_C$ for $C\in\tot{\fibc}$ are vertical (or equivalently
$\fibd\eta=\id_\fibc$).
\item Fibrations, fibered functors, and fibered natural transformations on
$\catb$ form a 2-category which we denote by $\catfib(\catb)$.
\end{enumerate}
\end{definition}
Since we don't rely on the axiom of choice, it makes a difference whether we
assume mere existence of cartesian liftings or an actual assignment of a
lifting to each pair $(A\in\fibc_I,u:J\to I)$ in a fibration
$\fibc:\tot{\fibc}\to\catb$. Such a choice of cartesian liftings is called a
\emph{cleavage}\index{cleavage}. Unless otherwise specified, we \emph{always}
assume that our fibrations are equipped with cleavages, which we leave implicit
(we do, however, not require fibered functors to preserve chosen cartesian
arrows).
Using cleavages, we can employ a functorial notation for cartesian
liftings: Given $u:J\to I$ and $A\in \fibc_I$, we denote the domain of the
designated cartesian lifting of $A$ along $u$ by $u^*A$, and using the universal
property of cartesian liftings, we can transport vertical maps $(f:A\to
B)\in\fibc_I$ to vertical maps $(u^*f:u^*A\to u^*B)\in\fibc_J$.
\begin{equation}\label{eq:lift-morphism}
 \vcenter{\xymatrix@R-3mm{
u^* A \cart[r]\ar[d]_{u^*f} & A\ar[d]^f\\
u^* B \cart[r] & B\\
J\ar[r]^u & I
}}
\end{equation}
From the universal property of cartesian liftings we can deduce that
\begin{itemize}
 \item squares of the form \eqref{eq:lift-morphism} are always pullbacks,
\item the construction gives rise to a functor $u^*(-):\fibc_I\to
\fibc_J$, and
\item the assignment $u\mapsto u^*(-)$
is functorial up to isomorphism and thus gives rise to a pseudofunctor of type
\[
\catb^\op\to \catcat
\]
which we call the \emph{indexed category}\index{indexed category}\index{category!indexed} associated to
$\fibc$.
\end{itemize}
We can show the
following.
\begin{lemma}\label{lem:indexed-fibered-equivalence}
The transition from a fibration to the associated indexed category gives a
biequivalence of 2-categories
\[
 \catfib(\catb)\simeq [\catb^\op,\catcat],
\]
where $[\catb^\op,\catcat]$ is the 2-category \emph{indexed
categories} on $\catb$, i.e., the 2-category of
pseudofunctors, pseudo-natural transformations, and modifications.
\qed
\end{lemma}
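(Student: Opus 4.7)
The plan is to construct 2-functors in both directions and show they are quasi-inverse, i.e.\ inverse up to pseudo-natural equivalence.

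First I would define the \emph{straightening} 2-functor $\Phi:\catfib(\catb)\to[\catb^\op,\catcat]$. On objects, a fibration $\fibc$ with its chosen cleavage sends $I\in\catb$ to the fiber $\fibc_I$, and $u:J\to I$ to the reindexing functor $u^*:\fibc_I\to\fibc_J$ described before the lemma. The required coherence isomorphisms $\id_I^*\cong\id_{\fibc_I}$ and $(uv)^*\cong v^*u^*$ arise uniquely from the universal property of cartesian liftings: two cartesian lifts of a given object along the same arrow are canonically isomorphic by a unique vertical iso, and the pentagon and triangle identities then follow from the same uniqueness clause. A fibered functor $F:\fibc\to\fibd$ restricts to each fiber to give functors $F_I:\fibc_I\to\fibd_I$; because $F$ preserves cartesian morphisms, the comparison $F_J(u^*A)\to u^*F_I(A)$ (again induced by universality) is an iso, yielding a pseudo-natural transformation. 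Finally, a fibered natural transformation has vertical components by definition, so restricting to the fibers gives a modification.

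Next I would define the \emph{Grothendieck construction} $\Psi:[\catb^\op,\catcat]\to\catfib(\catb)$. Given $P:\catb^\op\to\catcat$, let $\tot{\Psi(P)}$ have objects pairs $(I,A)$ with $A\in P(I)$, and morphisms $(J,B)\to(I,A)$ pairs $(u,f)$ where $u:J\to I$ and $f:B\to P(u)(A)$ in $P(J)$; composition is defined using the coherence isos of $P$. The projection $(I,A)\mapsto I$ is a fibration, with canonical cleavage given by $(u,\id):(J,P(u)(A))\to(I,A)$, which is cartesian by construction. A pseudo-natural transformation yields a fibered functor in the obvious way, and a modification yields a fibered natural transformation.

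To establish the biequivalence I would exhibit pseudo-natural equivalences $\Phi\Psi\simeq\id$ and $\Psi\Phi\simeq\id$. The composite $\Phi\Psi(P)$ has fiber over $I$ literally equal to $P(I)$ and reindexing literally equal to $P(u)$ under the canonical cleavage, so this direction is essentially the identity (up to the usual coherence). For the other direction, the comparison $\Psi\Phi(\fibc)\to\fibc$ sends $(I,A)$ to $A$ and a morphism $(u,f:B\to u^*A)$ to the composite $B\xrightarrow{f} u^*A\xrightarrow{\text{cart}} A$. This is a fibered functor; it is essentially surjective on objects trivially and fully faithful by the universal property of cartesian lifts (each morphism $g:B\to A$ in $\tot{\fibc}$ over $u$ factors uniquely as a vertical map into $u^*A$ followed by the cartesian lift).

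The main obstacle will be the 2-categorical bookkeeping: verifying the pentagon and triangle coherences for $\Phi(\fibc)$, the hexagon/square axioms for pseudo-natural transformations arising from fibered functors, and the modification axiom, while carefully tracking how the chosen cleavage interacts with composition. Each of these reduces to invoking uniqueness in the universal property of cartesian liftings, but assembling them into a coherent biequivalence and showing independence from cleavage choice (up to equivalence) is the principal labor of the argument.
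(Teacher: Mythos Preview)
Your proposal is correct and follows the standard approach to this biequivalence. However, the paper does not actually prove this lemma: it is stated with a terminal \qed{} and no proof environment, treating the result as well-known background (the Grothendieck construction is mentioned in passing with a reference to the Elephant). So there is nothing to compare against; your outline is exactly the kind of argument one would supply if a proof were demanded.
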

To make this statement precise, we have to say something about relative
sizes of the involved entities. It is easiest to assume that $\catb$ and the
fibrations in $\catfib(\catb)$ are small
relative to some universe, whereas $\catcat$ is the large 2-category of small
categories relative to the same universe. 

The construction in the inverse direction from the one sketched above, i.e.\
the transition from an indexed categoy to a fibration, 
is known as the \emph{Grothendieck
construction}\index{Grothendieck construction} (see
\cite[Definition~B1.3.1]{elephant1}). 
When defining fibrations, in particular
posetal ones, we will often make implicit use of the Grothendieck construction,
and only define the ordering in the fibers and the cartesian liftings. 

\medskip

In the spirit of Bénabou's work~\cite{benabou1985fibered}, we view
fibrations as generalized categories. To justify this point of view, we explain
now how ordinary categories can be viewed as fibrations.
\begin{definition}\label{def:fam-fibration}
 Let $\cata$ be a category.
\begin{enumerate}
 \item $\Famf(\cata)$ is the \emph{category of families}\index{category of families}
 in $\cata$. Its objects
are families $(A_i)_{i\in I}$, where $I$ is a set and $A_i\in\cata$ for $i\in
I$. A morphism from $(A_i)_{i\in I}$ to $(B_j)_{j\in J}$ is a pair
$(u,(f_i)_{i\in I})$ of a function $f:I\to J$ and a family of morphisms
$f_i:A_i\to B_{ui}$.
\item The \emph{family fibration}\index{familiy fibration}\index{fibration!family} of $\cata$ is the
functor
$\famf(\cata):\Famf(\cata)\to\catset$ which sends $(A_i)_{i\in I}$ to $I$, and
$(u,(f_i)_{i\in I})$ to $u$.
\end{enumerate}
\end{definition}
\begin{lemma}
 The assignment $\cata\mapsto\famf(\cata)$ gives rise to a 2-functor
\[
 \famf:\catcat\to\catfib(\catset),
\]
which is a local equivalence.
\end{lemma}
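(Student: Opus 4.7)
The plan is to check that $\famf$ is a 2-functor and then establish the local equivalence by exhibiting an explicit inverse equivalence on hom-categories.

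First I would extend $\famf$ to 1- and 2-cells in the obvious pointwise fashion. Given $F:\cata\to\catb$ I set $\famf(F)(A_i)_{i\in I}=(FA_i)_{i\in I}$ and $\famf(F)(u,(f_i))=(u,(Ff_i))$, and given $\eta:F\to G$ the component $\famf(\eta)_{(A_i)_{i\in I}}=(\id_I,(\eta_{A_i})_{i\in I})$. Then $\famf(F)$ is a functor over $\id_\catset$, and since the chosen cartesian liftings in $\famf(\cata)$ have the form $(u,(\id))$, it preserves cartesianness; the natural transformation $\famf(\eta)$ has vertical components by construction. Functoriality and 2-functoriality are straightforward verifications on the pointwise level.

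For local equivalence, the key observation is that the fiber $\famf(\cata)_1$ over the terminal set $1=\{\ast\}$ is canonically isomorphic to $\cata$ via $A\mapsto(A)_{\ast\in1}$. Using this identification I define the putative inverse
\[
R_{\cata,\catb}:\catfib(\catset)(\famf\cata,\famf\catb)\to\catcat(\cata,\catb),\qquad R(H)(A):=H((A)_\ast),
\]
functorial in $H$ by restricting vertical transformations to the fiber over $1$. The triangle $R\circ\famf(F)=F$ is immediate from the definition, and on the 2-cell level the bijection between vertical natural transformations $\famf(F)\to\famf(G)$ and natural transformations $F\to G$ is similarly direct: a vertical transformation at the family $(A_i)_{i\in I}$ is literally a tuple $(\eta_i:FA_i\to GA_i)$ whose naturality is equivalent, by the universal property of cartesian arrows, to naturality of the restriction to singletons.

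The one nontrivial step is showing $\famf\circ R\cong\id$, i.e.\ that an arbitrary fibered functor $H:\famf\cata\to\famf\catb$ is isomorphic to $\famf(R(H))$. For each point $i:1\to I$, the morphism $(i,\id_{A_i}):(A_i)_\ast\to(A_j)_{j\in I}$ is cartesian, so $H(i,\id)$ is a cartesian arrow in $\famf\catb$ over $i$ with codomain $H(A_j)_{j\in I}=:(B_j)_{j\in I}$; by uniqueness of cartesian liftings up to vertical iso in the fiber, this gives a canonical isomorphism $H((A_i)_\ast)\cong(B_i)_\ast$, i.e.\ $R(H)(A_i)\cong B_i$. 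Assembling these pointwise isomorphisms produces a vertical iso $\famf(R(H))(A_j)_{j\in I}\cong H(A_j)_{j\in I}$, and naturality in $(A_j)_{j\in I}$ follows again from the universal property: two vertical maps into $H(A_j)_{j\in I}$ agree iff their composites with every cartesian arrow from a singleton $(A_i)_\ast$ agree. The main obstacle is precisely this coherence check — verifying that the candidate iso is natural and that no choice is implicitly invoked — and it is handled entirely by the uniqueness clause in the definition of cartesian arrow, so no appeal to choice is needed even though $\famf(\cata)$ has a specified cleavage.
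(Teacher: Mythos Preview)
The paper states this lemma without proof, so there is nothing to compare against directly. Your argument is correct and follows the standard line: the fiber of $\famf(\cata)$ over $1$ is canonically $\cata$, and restriction to that fiber gives the inverse equivalence on hom-categories. The essential surjectivity step, reconstructing an arbitrary fibered functor $H$ from its restriction via the cartesian arrows $(i,\id_{A_i}):(A_i)_\ast\to(A_j)_{j\in I}$, is exactly the right idea.

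One small wording issue: your final sentence about naturality (``two vertical maps into $H(A_j)_{j\in I}$ agree iff their composites with every cartesian arrow from a singleton $(A_i)_\ast$ agree'') is slightly garbled. What you actually need is that a vertical map in $\Famf(\catb)$ is determined by its components, which amounts to saying it is detected by \emph{precomposition} with the cartesian arrows from singletons into its \emph{domain}. For naturality of the isomorphism $\famf(R(H))\cong H$ at a general morphism, it is cleanest to decompose into vertical-followed-by-cartesian: compatibility with cartesian arrows holds by construction (both functors preserve them and the iso is built from the universal property), and compatibility with vertical arrows is componentwise. This is surely what you meant, but the sentence as written does not quite parse.
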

A central theme in the `fibered category' approach to fibrations is to take a
possibly \emph{non-elementary} property of categories, such as small
completeness or local smallness, and to try to express it as an
\emph{elementary}\footnote{`Elementary' here means roughly `first order
axiomatizable', in particular without references to set theoretic `size
conditions'.}
 property of the corresponding family fibration. A concept
that fits in this pattern is `having internal sums'
in Definition~\ref{def:internal-sums}-\ref{def:internal-sums-sums}; for
systematic treatments of this point of view see~\cite{benabou1985fibered,
streicherfib}.

\subsection{Fibrations from (typed) pcas}

To give examples of fibrations which are not given by they family
construction, and since they are of central interest for this work, let us 
explain how to obtain posetal fibrations from (typed) pcas. 
The fibrations that we will now introduce are \emph{not} the realizability
triposes known from \cite{hjp80} and many subsequent works (we will present
those in Definition~\ref{def:realizability-tripos}), but come from a more
primitive construction whose importance was apparently first realized by
Hofstra~\cite{hofstra2006all}.

Even though we are
not dealing with family fibrations in the previously defined sense, we are using
similar notation and terminology. We do this in the hope that it will lead the
intuition of the reader in the intended direction, which is to view pcas (or
rather the associated uniform preorders -- see
Example~\ref{ex:uords}-\ref{ex:uords-pca}) as \emph{generalized
preorders}. 
\begin{definition}\label{def:fam-pca}
 Let $\pcaia$ be a typed pca (Definition~\ref{def:typed-pca}). The
\emph{uniform family fibration}\index{uniform family fibration!of a (typed) pca}\index{fibration!uniform family}
\[
\ufam\pcaia:\Ufam\pcaia\to\catset 
\]
of $\pcaia$ is the posetal fibration defined as follows.
\begin{itemize}
 \item Predicates on $M\in \catset$ are pairs $(i\in I,\varphi:M\to \pcaa_i)$.
\item The ordering on $\ufam\pcaia_M$ is defined by 
\[(i,\varphi)\leq(j,\psi)\qtext{iff} \exists e\vtp \pcaa_{i\imp
j}\;\forall m\qdot \;e\appca \varphi(m)=\psi(m).
\]
\item Reindexing is given by precomposition.
 \end{itemize}
\end{definition}
We see that on a
literal level the analogy to family fibrations makes sense, since the
predicates in $\ufam\pcaia$ really \emph{are} families.

There is an obvious untyped analogue of the previous definition, which
associates to each pca $\pcaa$ a posetal fibration
\[
 \ufam(\pcaa):\Ufam(\pcaa)\to\catset.
\]

\subsection{Finite-limit fibrations}\label{sec:fl-fibs}

Let us recall some basic facts about fibrations of finite limit categories
from~\cite[Section~8]{streicherfib}.
\begin{definition}\label{def:fl-fibs}
 A \emph{fibration of finite limit categories}, or \emph{finite limit
fibration}\index{finite limit fibration}\index{fibration!finite limit} on a base category $\catb$ is a
fibration
$\fibc:\tot{\fibc}\to\catb$ where
\begin{itemize}
 \item all fibers $\fibc_I$ for $I\in \catb$ have finite limits, and
\item reindexing preserves finite limits.
\end{itemize}
\end{definition}
The following fact about finite limit fibrations is of central importance.
\begin{lemma}
Let $\fibc:\tot{\fibc}\to\catc$ be a fibration on a category $\catc$ with
finite limits. Then $\fibc$ is a finite limit fibration iff $\tot{\fibc}$ has
finite limits and $\fibc$ preserves them.
\end{lemma}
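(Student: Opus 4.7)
The plan is to prove the two directions separately. For the forward direction, assuming that each fiber $\fibc_I$ has finite limits and that reindexing preserves them, I would construct the terminal object, binary products, and equalizers in $\tot{\fibc}$ explicitly and check that $\fibc$ sends each to the corresponding limit in $\catc$. The terminal object is the fiberwise terminal $T$ of $\fibc_1$ over the terminal $1\in\catc$: for any $X$ over $I$, the unique arrow $!:I\to 1$ has a cartesian lifting ${!}^*T\to T$, and since ${!}^*$ preserves terminal objects, ${!}^*T$ is terminal in $\fibc_I$, so there is a unique vertical arrow $X\to {!}^*T$ and hence a unique arrow $X\to T$ in $\tot{\fibc}$.

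For the binary product of $X\in\fibc_I$ and $Y\in\fibc_J$, I would take the fiber product $\pi_1^*X\times\pi_2^*Y$ in $\fibc_{I\times J}$, with projections obtained by composing the fiber projections with the cartesian liftings to $X$ and $Y$. A cone over $X,Y$ with apex $Z$ over $K$ projects in $\catc$ to a pair $K\to I$, $K\to J$, hence to a unique $K\to I\times J$, along which the two legs yield vertical maps into $\pi_1^*X$ and $\pi_2^*Y$ and then a unique vertical map into the fiber product. For equalizers of parallel $f,g:X\to Y$, I would equalize $\fibc(f),\fibc(g)$ in $\catc$ by $e:E\to\fibc(X)$, lift $X$ to $e^*X$, factor the two composites $f\bar e,g\bar e:e^*X\to Y$ through a common cartesian lifting of $Y$ to obtain vertical maps in $\fibc_E$, and equalize them in the fiber. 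In each construction $\fibc$ visibly sends the limit in $\tot{\fibc}$ to the corresponding limit in $\catc$.

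For the converse, assume $\tot{\fibc}$ has finite limits and $\fibc$ preserves them. Given a finite diagram $D:\catd\to\fibc_I$, I compose with the inclusion $\fibc_I\hookrightarrow\tot{\fibc}$ and let $L$ with projections $\lambda_d:L\to D(d)$ be its limit in $\tot{\fibc}$. Since $\fibc\circ D$ is the constant diagram at $I$, preservation yields that $\fibc(L)$ is its limit in $\catc$, and the cone of identities $(\id_I)_d$ induces a canonical diagonal $\Delta:I\to\fibc(L)$ with $\fibc(\lambda_d)\circ\Delta=\id_I$ for every $d$. The composites $\Delta^*L\to L\to D(d)$ are then vertical and form a cone in $\fibc_I$; for any competing cone in the fiber, the universal property of $L$ produces a unique comparison map in $\tot{\fibc}$ which lies over $\Delta$ and hence factors uniquely as a vertical map into $\Delta^*L$ by the universal property of the cartesian lifting.

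To check that reindexing $u^*:\fibc_I\to\fibc_J$ along $u:J\to I$ preserves these fiber limits, I would reindex the cone $(\lambda_d)$ to $(u^*\lambda_d:u^*L\to u^*D(d))$ and verify universality: any cone over $u^*D$ in $\fibc_J$ composes with cartesian liftings to a cone over $D$ in $\tot{\fibc}$ whose comparison morphism into $L$ lies over $u\circ\Delta$ and therefore factors uniquely vertically through $u^*L$. I expect the main obstacle to be not a single difficult step but the bookkeeping in the reverse direction, where one must relate `external' limits in $\tot{\fibc}$ to `internal' fiber limits via the diagonal pullback, and carefully verify that cartesian liftings transport universal properties between the fibers and the total category.
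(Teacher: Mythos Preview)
The paper does not actually prove this lemma; it simply cites \cite[Theorem~8.5]{streicherfib}. Your proposal supplies a direct argument, and the overall strategy is the standard one and is correct.

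A couple of notational slips in the final paragraph are worth fixing. First, the limit you constructed in the fiber $\fibc_I$ is $\Delta^*L$, not $L$ itself (which lives over $\fibc(L)$, not over $I$); so when you write ``reindex the cone $(\lambda_d)$ to $(u^*\lambda_d:u^*L\to u^*D(d))$'' and later ``factors uniquely vertically through $u^*L$'', you mean $u^*(\Delta^*L)$ together with the reindexings of the \emph{vertical} projections $\mu_d:\Delta^*L\to D(d)$. Second, the comparison morphism into $L$ lies over $\Delta\circ u$, not $u\circ\Delta$ (here $u:J\to I$ and $\Delta:I\to\fibc(L)$). Once these are straightened out, the verification that $u^*(\Delta^*L)$ is a limit cone for $u^*D$ in $\fibc_J$ goes through exactly as you indicate: a cone in $\fibc_J$ composes with cartesian liftings to a cone over $D$ in $\tot{\fibc}$, the induced map to $L$ is over $\Delta\circ u$, and cartesianness of $u^*(\Delta^*L)\to L$ yields the unique vertical factorization.
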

\begin{proof}
See \cite[Theorem~8.5]{streicherfib}.
\end{proof}
This lemma highlights that we have to distinguish two kinds of limits in a
finite limit fibration (as long as the base has finite limits, what we will
always assume from now on) -- the limits in the fibers (which we
sometimes call `fiberwise'), and the `global' limits in
the total category. Furthermore, the lemma explains how they are connected:
\begin{itemize}
 \item Since $\fibc$ maps global limits to limits in the base, global limiting
cones on vertical connected diagrams can be chosen vertical as well. This implies 
in particular that fiberwise connected limits are also limits in the total category.
\item The fact that fiberwise connected limits are global limits implies that 
monomorphisms in the fibers are also monic in the total category, since monos
can be characterized in terms of pullbacks.
\item In general, we can express global limits in terms of limits in the fibers
and in the base. For example, given a cospan $B\xrightarrow{f}A\xleftarrow{g}C$
in $\tot{\fibc}$ over a cospan $J\xrightarrow{u}I\xleftarrow{v}K$ in the base,
we can take the pullback of $f$ and $g$ by first reindexing the cospan
$B\xrightarrow{f}A\xleftarrow{g}C$ into the fiber over the pullback of $u$ and
$v$, and then taking the fiberwise pullback.
\[
\xyunfrenchcatcodes
{
\vcenter{\xymatrix@R-7mm{
& B\times_A C\ar[rrdd]\ar@/^.8mm/[rddd]\\
\\
& \emar[l]|(.3){\textstyle \bullet}="1"\emar[r]|(.3){\textstyle \bullet}="2"& &
C\ar[dd]\ar[dddr]^g\\
& & B\ar[dd]\ar@/_2.5mm/[ddrr]|f & &\\
& \bullet\cart[rr]|(.57)\hole|(.8)\hole\cart[rd]& & \bullet\cart[rd]\\
& & \bullet\cart[rr] & & A\\
& J\times_I K\ar[rr]\ar[rd]& & K\ar[rd]^{v}\\
& & J\ar[rr]^{u} & & I\\
\ar"1,2";"1"*+\frm{}
\ar"1,2";"2"*+\frm{}
\ar"1"*+\frm{};"5,2"
\ar"2"*+\frm{};"5,2"|(.25)\hole
\cart"2"*+\frm{};"3,4"|(.25)\hole
\cart"1"*+\frm{};"4,3"
}}}
\]
\end{itemize}
Having both fiberwise and global limits in finite limit fibrations, we have to
be careful to distinguish them notationally. In general we use global 
limits unless saying otherwise explicitly. For example, given
$C,D\in\tot{\fibc}$, $C\times D$ means their global product. For the
fiberwise product of $C,D\in\fibc_I$, on the other hand, we write
$C\times_I D$, in analogy to the common notation for pullbacks. More generally,
given a cospan $J\xrightarrow{u}I\xleftarrow{v}K$ in $\catc$, and $B\in\fibc_J$,
$C\in\fibc_K$, we write $B\times_I C$ for the product of $B$ and $C$ relative
to $u$ and $v$, in the sense of the following diagram.
\[
\xyunfrenchcatcodes
\vcenter{\xymatrix@R-5mm{
& B\times_I C\ar[rrdd]^q\ar@/^.8mm/[rddd]^p\\
\\
& \emar[l]|(.3){\textstyle \bullet}="1"\emar[r]|(.3){\textstyle \bullet}="2"& &
C\\
& & B & &\\
& J\times_I K\ar[rr]\ar[rd]& & K\ar[rd]^{v}\\
& & J\ar[rr]^{u} & & I\\
\ar"1,2";"1"*+\frm{}
\ar"1,2";"2"*+\frm{}
\cart"2"*+\frm{};"3,4"|(.25)\hole
\cart"1"*+\frm{};"4,3"
}},
\]
Formally, this product is characterized as terminal among cones
$J\xleftarrow{p}\bullet\xrightarrow{q}K$ in $\tot{\fibc}$ such that
$u\fibc(p)=v\fibc(q)$; it can equivalently be described
as fiberproduct $B\times_{1_I}C$ in $\tot{\fibc}$.
\begin{remarks}\label{rem:flfibs}
 \begin{itemize}
  \item In posetal fibrations, we refer to fiberwise finite limits as
\emph{finite
meets}, and we use the symbols $\top$ and $\wedge$ instead of $1$ and $\times$.
\item The uniform family fibrations $\ufam(\pcaa)$ and $\ufam\pcaia$ of
(typed) pcas $\pcaa$ and $\pcaia$ have finite meets -- in the typed case, if
$\varphi:M\to \pcaa_i$ and $\psi:M\to \pcaa_j$ are predicates in $\ufam\pcaia$,
a greatest lower bound is the predicate 
$\varphi\wedge\psi:M\to \pcaa_{i\ptype j}$ given by 
$m\mapsto
\compair\appca\varphi(m)\appca\psi(m)$. In the untyped case, we can derive the
existence of a pairing combinator from \emph{functional completeness}\index{functional completeness} -- see
e.g.\ \cite[Section~1.1.1]{vanoosten2008realizability}.
 \end{itemize}
\end{remarks}

\subsection{Localization and slicing}

\begin{definition}\label{def:slice-functor}
Given a functor $F:\cata\to\catb$ and $A\in\cata$ we define the \emph{slice
functor\index{slice functor}\index{functor!slice} $F/A:\cata/A\to\catb/FA$ of $F$ over $A$} by
\[
 (f:X\to A)\mapsto (Ff:FX\to FA)
\]
and in the obvious way for morphisms.
\end{definition}
The following is easy to show.
\begin{lemma}\label{lem:slice-fibration}
Given a fibration $\fibc:\tot{\fibc}\to\catb$, and $C\in\fibc_I$,
$\fibc/C:\tot{\fibc}/C\to\catb/I$ is a fibration as well. 
If $\fibc$ is a
finite limit fibration, then so is $\fibc/C$.
\qed
\end{lemma}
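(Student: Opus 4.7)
The plan is to address the two claims in turn. For the first, I will verify that $\fibc/C$ is a fibration by exhibiting explicit cartesian liftings. Given an object $g\colon D\to C$ of $\tot{\fibc}/C$ lying over $v=\fibc(g)\colon K\to I$ in $\catb/I$, and a morphism $w\colon u\to v$ in $\catb/I$ (so $w\colon J\to K$ in $\catb$ with $vw=u$), the natural candidate is obtained from a cartesian lifting $\crefl{w}\colon w^{\ast}D\to D$ of $D$ along $w$ supplied by the fibration structure on $\fibc$; we read it as a morphism from $g\circ\crefl{w}\colon w^{\ast}D\to C$ to $g$ in $\tot{\fibc}/C$, whose image under $\fibc/C$ is precisely $w$.

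The key step is to check that this morphism is cartesian for $\fibc/C$. Given a competing morphism $h\colon(h'\colon E\to C)\to(g\colon D\to C)$ in $\tot{\fibc}/C$ whose image under $\fibc/C$ factors through $w$ via some $w'\colon u'\to u$ in $\catb/I$, the universal property of $\crefl{w}$ in $\fibc$ yields a unique $\tilde h\colon E\to w^{\ast}D$ with $\crefl{w}\tilde h=h$ and $\fibc(\tilde h)=w'$. That $\tilde h$ genuinely lives over $C$ in $\tot{\fibc}/C$ is then automatic, since $(g\circ\crefl{w})\circ\tilde h=g\circ(\crefl{w}\circ\tilde h)=gh=h'$, and uniqueness is inherited. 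I do not anticipate any real obstacle here; the argument is essentially a transport of the cartesian property of $\crefl{w}$ through the forgetful map from the slice.

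For the second claim I will invoke the lemma of Section~\ref{sec:fl-fibs}: a fibration over a base with finite limits is a finite-limit fibration iff its total category has finite limits and the fibration preserves them. Assuming $\fibc$ is a finite-limit fibration, both $\catb$ and $\tot{\fibc}$ have finite limits and $\fibc$ preserves them. Slices of categories with finite limits have finite limits (the terminal object being $\id_C$, and pullbacks being computed in the ambient category), so both $\tot{\fibc}/C$ and $\catb/I$ have finite limits. A routine check, using the explicit description of slice limits, shows that the slice $\fibc/C$ of the finite-limit preserving functor $\fibc$ again preserves finite limits. The cited lemma then allows us to conclude. The only potentially delicate point is the verification that slicing preserves preservation of finite limits, but this is completely routine and I expect no obstacle.
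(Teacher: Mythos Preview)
Your proposal is correct. The paper states this lemma without proof (the \qed\ at the end of the statement indicates it is left as a straightforward verification), so there is nothing to compare against; your argument supplies exactly the routine checks one would expect. One small point of precision: when you write ``both $\catb$ and $\tot{\fibc}$ have finite limits'', note that $\catb$ having finite limits is not a consequence of $\fibc$ being a finite-limit fibration (Definition~\ref{def:fl-fibs} only concerns the fibers), but rather the standing assumption announced just after the lemma in Section~\ref{sec:fl-fibs}. With that in place, your invocation of the total-category characterization goes through as you describe.
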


There is a construction that is somewhat similar to slicing of fibrations
(see~\cite[Section~4]{streicherfib}):
\begin{definition}\label{def:localization}
 Given a fibration $\fibc:\tot{\fibc}\to\catc$ and $I\in B$, we define the
\emph{localization $\fibc/I$ of $\fibc$ to $I$}\index{localization!of a fibration} by the
pullback
\[
\vcenter{\xymatrix@R-3mm{
\tot{\fibc/I}\ar[r]\pullbackcorner\ar[d]_{\fibc/I} & \tot{\fibc}\ar[d]^\fibc\\
\catb/I\ar[r] & \catb
}}.
\]
\end{definition}
\begin{lemma}\label{lem:local-terminal-slice}
 If $\fibc:\tot{\fibc}\to\catb$ has terminal objects in the fibers which are
stable under reindexing\footnote{Such a fibration is called a \emph{fibration
of categories with terminal objects} in \cite[Definition~8.2]{streicherfib}.},
then localization is a special case of slicing. More precisely, given
$I\in\catb$, the localization $\fibc/I$ is equivalent to the slice fibration
$\fibc/1_I$. \qed
\end{lemma}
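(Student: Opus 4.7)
The plan is to construct mutually inverse fibered functors $F:\fibc/I \to \fibc/1_I$ and $G:\fibc/1_I \to \fibc/I$ over $\catb/I$, and to observe that the two composites equal the identity on the nose; this yields an isomorphism of fibrations, a fortiori an equivalence.

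Concretely, I would set $F(u:J\to I,\ A) := \bar u_{1_I} \circ {!}_A : A \to 1_I$, where $\bar u_{1_I}: u^{*}1_I \to 1_I$ is the chosen cartesian lifting of $u$ at $1_I\in\fibc_I$ and ${!}_A: A \to u^{*}1_I$ is the unique vertical arrow into the fiberwise terminal object $u^{*}1_I$, which is terminal in $\fibc_J$ by the stability hypothesis. Going back, I define $G(g: A\to 1_I) := (\fibc(g),\ A)$. On morphisms, both $F$ and $G$ act as the identity on the underlying arrow in $\tot{\fibc}$: a morphism $\alpha:(u,A) \to (v,B)$ in $\tot{\fibc/I}$ unfolds to a morphism $\alpha:A\to B$ in $\tot{\fibc}$ with $v\circ\fibc(\alpha) = u$, and this is exactly a commuting triangle $F(v,B)\circ\alpha = F(u,A)$ in $\tot{\fibc}/1_I$ once we know that any morphism $A\to 1_I$ over $u$ is unique.

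The pivotal observation, applied repeatedly, is exactly this uniqueness principle: any morphism $A \to 1_I$ over a fixed $u: J\to I$ factors uniquely through $\bar u_{1_I}$ via a vertical arrow $A\to u^{*}1_I$, and that vertical arrow is forced to be ${!}_A$ because $u^{*}1_I$ is terminal in $\fibc_J$. This single fact gives $FG=\id$ directly (the cartesian factorization of $g$ coincides with $\bar u_{1_I}\circ {!}_A$), makes $F$ well-defined on morphisms (the two candidate morphisms $A\to 1_I$ over $u$ must agree), and implies preservation of cartesian arrows. The equality $GF=\id$ is immediate from $\fibc(\bar u_{1_I}\circ{!}_A) = u$, and commutation of $F$ and $G$ with the base projections to $\catb/I$ is equally immediate.

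The main obstacle, such as it is, is purely bookkeeping — unraveling the pullback definition of $\fibc/I$ so that morphisms there are just morphisms in $\tot{\fibc}$ whose $\fibc$-image sits in $\catb/I$, and matching the cleavages on both sides — but the hypothesis on terminal objects has been tailored precisely so that no conceptual obstacle arises.
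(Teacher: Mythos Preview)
Your proof is correct, and in fact the paper offers no proof at all for this lemma (it is marked with a bare \qed{} and left to the reader), so there is nothing to compare against. Your argument is precisely the straightforward one the author presumably had in mind: the uniqueness of morphisms $A\to 1_I$ over a fixed $u$ --- forced by cartesianness of $\bar u_{1_I}$ together with terminality of $u^*1_I$ in $\fibc_J$ --- makes $F$ and $G$ strict inverses, yielding an isomorphism of fibrations rather than merely an equivalence.
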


To develop an intuition, the best way to think about fibrations resulting from
slicing and localization is as `fibered fibrations' -- by `fibered fibration'
we mean a fibration on the total category of another fibration (see
also the remarks at the beginning of Section~\ref{sec:arbitrary-bases}).

Given an object $I\in\catc$ of a finite limit category, the projection
$\catc/I\to\catc$ is a fibration --
it is the externalization (see \cite[Section~4]{streicherfib}) of the discrete
internal category with $I$ as set of
objects. Now a fibration on a discrete category is the same thing as a family
of categories -- analogously we may regard a fibration on a discrete fibration
on $\catc$ as a family of fibrations on $\catc$. From this point of view, the
localization of $\fibc$ at $I$ as a \emph{fibered} 
fibration\index{fibered!fibration}
\[
 \tot{\fibc/I}\xrightarrow{\fibc/I}\catc/I\to\catc
\]
can be viewed as $I$-indexed family of fibrations on $\catc$ of value constant
$\fibc$, and for $C\in\fibc/I$, the fibered fibration
\[
 \tot{\fibc}/C\xrightarrow{\fibc/C}\catc/I\to\catc
\]
can be viewed as $I$-indexed family of fibrations whose components are the
(ordinary) slices of $\fibc$ (viewed as generalized category) over the
components of $C$ (viewed as $I$-indexed family of objects in the generalized
category).

This intuition is a helpful guideline to understand for example which
properties of fibrations are preserved by localization and slicing -- since
localization corresponds simply to taking many copies of the same fibration, we
can expect it to preserve all 
properties of fibrations that correspond to `reasonable' properties of
categories, such as having small (co)limits, local smallness, and
well-poweredness. With slicing, we have to be a bit more careful -- just as for
ordinary categories, slicing of fibrations preserves the existence of finite
limits, as well as the existence of (finite/small) coproducts, but not of
products (neither finite nor small).

It seems that all classes of fibrations that we consider in this work are stable
under localization and slicing -- we give explicit proofs whenever we actually
need this.

\section{Internal sums}

Internal sums in a fibration are an abstraction of infinite coproducts in
categories, and of infinite joins in preorders. In this section, we introduce
the general concept and then devote closer attention to two special cases --
internal sums in posetal fibrations, and \emph{extensive} sums.

We recall the definition of internal sums in fibrations from
\cite[Sections 6]{streicherfib}.
\begin{definition}\label{def:internal-sums}
Let $\fibc:\tot{\fibc}\to\catc$ be a fibration on a finite limit category
$\catc$.
\begin{enumerate}
 \item\label{def:internal-sums-sums} We say that $\fibc$ \emph{has internal
sums}\index{internal sums} if 
\begin{enumerate}
 \item in addition to being a fibration, $\fibc$ is an \emph{opfibration}\index{opfibration} in the
sense that $\fibc^\op:\tot{\fibc}^\op\to\catc^\op$ is a fibration -- in this
case we call the cartesian arrows in $\fibc^\op$ \emph{cocartesian}\index{cocartesian morphism}\index{morphism!cocartesian} in
$\fibc$, and we use the arrow symbol $\xymatrix@1{\coca[r]&}$ for them --, and
\item the \emph{Beck-Chevalley condition (BCC)}\index{Beck-Chevalley condition} holds: cocartesian maps in
$\tot{\fibc}$ are stable under pullbacks along cartesian maps\footnote{For this
statement to make sense we don't have to assume that $\fibc$ is a finite limit
fibration -- for pullbacks along cartesian maps to exist we only need pullbacks
in the base.}.
\end{enumerate}
\item\label{def:internal-sums-stable}  If $\fibc$ is a finite limit fibration
with internal sums,
then we say that 
internal sums in $\fibc$ are \emph{stable}\index{internal sums!stable}, if
cocartesian maps in $\tot{\fibc}$
are stable under pullback along \emph{arbitrary} maps.
\end{enumerate}
\end{definition}
If $\cata$ is a category, then $\famf(\cata):\Famf(\cata)\to\catset$ has
internal
sums iff $\cata$ has small coproducts. Internal sums in $\famf(\cata)$ are
stable iff the same is true for the coproducts in $\cata$.

Let us now consider the posetal case.

\subsection{Existential fibrations and partial equivalence relations}

\begin{definition}\label{def:existential-fibration}
 An \emph{existential fibration}\index{existential fibration}\index{fibration!existential} is a posetal
fibration
$\fifx:\tot{\fifx}\to\catc$ on a finite limit category $\catc$ which has finite
meets and stable internal sums.
\end{definition}
In an existential fibration, we can soundly interpret the $\top,\wedge,\exists$
fragment of first order logic, where conjunctions are interpreted by meets in
the fibers; and existential quantification is interpreted by internal sums (see
\cite[Section~4.2]{jacobs2001categorical}). Because of this correspondence, we
normally refer to internal sums in existential fibrations as
\emph{existential quantification}\footnote{In particular, we
always assume the Beck-Chevalley condition when speaking
about existential quantification.}\index{existential quantification}, and we
denote existential quantification of a predicate $\psi\in\fifx_J$ along a
morphism $u:J\to I$ in $\catr$ as $\exists_u\psi$. Stability is known as
\emph{Frobenius law}\index{Frobenius law} in the posetal context, where it is
normally expressed as
\[
 \varphi\wedge\exists_u\psi\cong\exists_u u^*\varphi\wedge\psi\qquad\text{for
$\varphi\in \fifx_I$ and $\psi\in\fifx_J$.}
\]
\begin{example}
 The subobject fibration $\sub(\catr)$ of a regular category $\catr$ is an
existential fibration.
\end{example}
A very useful construction on existential fibrations is the category of partial
equivalence relations.
\begin{definition}\label{def:catper}
Let $\fifx:\tot{\fifx}\to\catc$ be an existential fibration. The category
$\per(\fifx)$\footnote{
The construction of $\per(\fifx)$ will turn out to be the same as the
construction of $\catc[\fifx]$ that we consider in
Section~\ref{sec:fibered-sheaves}, but for bootstrapping reasons, we use a
different notation here.} is defined as follows.
\begin{itemize}
  \item Objects are pairs $\cro$ of an object $C\in\catc$ and a binary
    predicate $\rho\in\fifx_{C\times C}$ which is a \emph{partial equivalence
      relation}\index{partial equivalence relation}\index{equivalence relation!partial}, i.e.\ the judgments
\begin{align*}
&  \text{(symm)} & \rho(c,d)&\ent\rho(d,c)\\
&\text{(trans)} & \rho(c,d),\rho(d,e)&\ent\rho(c,e)
\end{align*}
hold.
\item Morphisms from $\cro$ to $\dsi$ are equivalence classes of binary
  predicates $\phi\in\fifx_{C\times D}$ which are functional and total in a
  sense relative to $\rho$ and $\sigma$ -- more precisely the judgments
\begin{align*}
&  \text{(strict)} & \phi(c,d)&\ent\rho(c)\wedge\sigma(d)\\
& \text{(cong)} & \phi(c,d),\rho(c,c'),\sigma(d,d')&\ent\phi(c',d')\\
& \text{(singval)} & \phi(c,d),\phi(c,d')&\ent\sigma(d,d')\\
& \text{(tot)} & \rho(c)&\ent\exists d\qdot\phi(c,d)
\end{align*}
hold\footnote{For a partial equivalence relation $\rho$, we often use $\rho(c)$
  as an abbreviation for $\rho(c,c)$ -- the `definedness' part of the
  relation.}. Two such predicates $\phi,\phi'\in\fifx_{C\times D}$ are
identified as morphisms in $\per(\fifx)$ if they are logically equivalent,
i.e.\ $\phi(c,d)\adj\ent\phi'(c,d)$ holds.
\item 
Composition of is given by relational
composition, i.e.\ $(\gamma\circ\phi)(c,e)\equiv\exists d\qdot
\phi(c,d)\wedge\gamma(d,e)$. 
\end{itemize}
\end{definition}
Associativity of composition follows from the Frobenius law, and it is easy to
see that an identity morphism for $\cro$ is given by $\rho$ itself, thus
$\per(\fifx)$ is
really a category.
In the following, we want to show that $\per(\fifx)$ is an exact category for
any existential fibration $\fifx$. This fact can probably be considered
folklore; we
will give a detailed proof since the occurring constructions will be important
later.

\medskip

The following lemma is easy to show.
\begin{lemma}\label{lem-inj-surj}
  Let $\fifx:\tot{\fifx}\to\catc$ be an existential fibration, and let 
  $\phi:\cro\to\dsi$ be a morphism in $\per(\fifx)$.
  \begin{itemize}
  \item If the judgment
\begin{equation}
 \sigma(d)\ent\exists
c\qdot\phi(c,d)\tag{inj}\label{eq:judg-inj}
\end{equation}
holds, then $\phi$ is a monomorphism.
  \item If the judgment 
\begin{equation}
\phi(c,d),\phi(c',d)\ent\rho(c,c')\tag{surj}\label{eq:judg-surj}
\end{equation}
holds, then $\phi$ is an cover\index{cover}, i.e.\
left
    orthogonal (Definition~\ref{def:facsys}-\ref{def:facsys-orth}) to all
monomorphisms.
  \item $\phi$ is an isomorphism iff \eqref{eq:judg-inj} and
\eqref{eq:judg-surj} both hold.
  \end{itemize}
  \qed
\end{lemma}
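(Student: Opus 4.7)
The plan is to argue throughout in the internal $\top,\wedge,\exists$-logic of $\fifx$, unfolding morphisms of $\per(\fifx)$ as functional relations and composition via $(\gamma\circ\phi)(c,e)\equiv\exists d.\,\phi(c,d)\wedge\gamma(d,e)$; each claim then reduces to relational bookkeeping with the PER-morphism axioms (strict), (cong), (singval), (tot) and the Frobenius/Beck--Chevalley law on $\fifx$.

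For the first item, I would take parallel morphisms $\psi_1,\psi_2:(E,\tau)\rightrightarrows(C,\rho)$ with $\phi\circ\psi_1=\phi\circ\psi_2$ and show $\psi_1\vdash\psi_2$ (the converse being symmetric): from a witness $\psi_1(e,c)$, use (tot) on $\phi$ to produce some $d$ with $\phi(c,d)$, read off $(\phi\circ\psi_2)(e,d)$ and extract $c'$ with $\psi_2(e,c')\wedge\phi(c',d)$, and finally combine the hypothesis \eqref{eq:judg-inj} with (singval) and (cong) on $\phi$ and on $\psi_2$ to close the loop and deliver $\psi_2(e,c)$.

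For the second item, given a square $m\circ g=h\circ\phi$ with $m:(X,\xi)\rightarrowtail(Y,\eta)$ a monomorphism, I would take as candidate diagonal
\[
k(d,x)\;\equiv\;\exists c.\,\phi(c,d)\wedge g(c,x)\qquad\text{in }\fifx_{D\times X},
\]
verify the four axioms for $k:(D,\sigma)\to(X,\xi)$ using \eqref{eq:judg-surj} together with the axioms on $\phi$ and $g$, confirm $k\circ\phi=g$ by direct computation, and then obtain $m\circ k=h$ by unfolding $(m\circ k)(d,y)$, substituting $m\circ g$ for $h\circ\phi$ via the given square, and collapsing the result through (singval) for $\phi$ and (cong) for $h$. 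Uniqueness of $k$ is automatic from the monicity of $m$.

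The third item is then a formality: the `if' direction combines the first two via the fact that a morphism of $\per(\fifx)$ which is simultaneously a monomorphism and a cover is an isomorphism (in line with the exactness of $\per(\fifx)$ to be established shortly afterwards); and the `only if' direction reads \eqref{eq:judg-inj} and \eqref{eq:judg-surj} off an inverse $\phi^{-1}\in\fifx_{D\times C}$, which up to the equivalence defining morphisms of $\per(\fifx)$ coincides with the converse relation of $\phi$. The main obstacle I foresee is in the second item: interlocking \eqref{eq:judg-surj} with (tot), (singval) and the Beck--Chevalley/Frobenius manipulations in the correct order so that all four axioms for $k$, and then the identity $m\circ k=h$, fall out cleanly.
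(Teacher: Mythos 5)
The paper offers no proof of this lemma (it is introduced with ``easy to show'' and closed with a \qed), so there is no official argument to compare against; your internal-logic unpacking with an explicit diagonal filler is the intended route, and your first and third items are essentially sound. One caveat, since your argument leans on the equation labels: the two displayed judgments are attached to the wrong tags. The paper's own later uses (Lemma~\ref{lem:decompo}-\ref{lem:decompo-inj} derives $\phi(c,d),\phi(c',d)\ent\rho(c,c')$ from monicity and calls it \textup{(inj)}; Lemma~\ref{lem:decompo}-\ref{lem:decompo-surj} derives $\sigma(d)\ent\exists c\qdot\phi(c,d)$ from being a cover and calls it \textup{(surj)}) show that the injectivity-style judgment is the one giving monomorphisms and the surjectivity-style judgment the one giving covers; as literally printed the first two bullets already fail for $\fifx=\sub(\catset)$. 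Read this way, your item-one argument is correct: the ``close the loop'' step is exactly an application of $\phi(c,d),\phi(c',d)\ent\rho(c,c')$ followed by \jcong{} for $\psi_2$ (\jsingval{} for $\phi$ plays no role).

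The genuine gap is in item two, and it is not where you locate it. With the hypothesis $\sigma(d)\ent\exists c\qdot\phi(c,d)$, strictness, congruence and totality of $k(d,x)\equiv\exists c\qdot\phi(c,d)\wedge g(c,x)$ do follow from that judgment and the axioms for $\phi$ and $g$; single-valuedness does not, and neither does the inclusion $(k\circ\phi)(c,x)\ent g(c,x)$. In both cases you are confronted with $\phi(c,d)\wedge\phi(c',d)$ and need to relate $c$ and $c'$, which the hypotheses on $\phi$ and $g$ alone cannot do. The monomorphism $m$ must enter: push the relevant elements of $X$ forward along $m$, use the square $m\circ g=h\circ\phi$ together with \jsingval{} for $\phi$ and $h$ to see that their images are $\eta$-related, and then use that $m$ \emph{reflects} this, i.e.\ that $m(x,y),m(x',y)\ent\xi(x,x')$. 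That reflection property is not free at this point of the development --- it is the content of Lemma~\ref{lem:decompo}-\ref{lem:decompo-inj}, proved later via kernel pairs --- so it has to be established here by hand, e.g.\ by testing the monicity of $m$ against the two evident morphisms $(X\times X,\kappa)\rightrightarrows(X,\xi)$ where $\kappa$ is the meet of $\xi\brprod\xi$ with $\exists y\qdot m(x_1,y)\wedge m(x_2,y)$; this uses only finite limits in $\catc$ and finite meets in $\fifx$. Once that ingredient is in place, the rest of your plan ($m\circ k=h$, uniqueness from monicity of $m$, and item three) goes through as described.
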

Given an arbitrary morphism $\phi:\cro\to\dsi$ in $\per(\fifx)$, we have a
decomposition
\begin{equation}\label{eq:decomposition}
\xymatrix@1{
\cro\depi[r]^\pi&(C,\pi)\ar[r]_\cong^{\overline{\phi}}&(D,\sigma|_\upsilon)\mono
[r]^{\sigma|_\upsilon}&\dsi
}
\end{equation}
into an cover, an isomorphism, and a monomorphism. Here,
\begin{align*}
  \pi(c,c')&\equiv \rho(c)\wedge\rho(c')\wedge\exists d\qdot
  \phi(c,d)\wedge\phi(c',d)\\
\overline{\phi}(c,d)&\equiv\phi(c,d)\\
\upsilon(d)&\equiv\exists c\qdot \phi(c,d)\\
\sigma|_\upsilon(d,d')&\equiv\sigma(d,d')\wedge\upsilon(d).
\end{align*}
The predicate $\upsilon$ in this decomposition has a special relation to
$\sigma$, which (following~\cite{vanoosten2008realizability}) we call
\emph{strictness}:
\begin{definition}\label{def:strict}
 Let $\fifx:\tot{\fifx}\to\catc$ be an existential fibration, and let
$\cro\in\per(\fifx)$. We call $\varphi\in\fifx_C$ \emph{strict}\index{strict!predicate}\index{predicate!strict} with respect to $\rho$, if the judgments $\varphi(x)\ent\rho(x)$ and
$\varphi(x),\rho(x,y)\ent\varphi(y)$ hold in $\fifx$.
\end{definition}
The exactness of $\per(\fifx)$ is now shown as follows.
\begin{lemma}\label{lem:decompo}
Let $\fifx:\tot{\fifx}\to\catc$ be an existential fibration.
  \begin{enumerate}
  \item\label{lem:decompo-monorepr} 
For $\dsi\in\per(\fifx)$, subobjects of $\dsi$ correspond to predicates in
$\fifx_D$ which are strict with respect to $\dsi$. More precisely, monomorphisms
into $\dsi$ can up to isomorphism be represented as
    $\sigma|_\upsilon:(D,\sigma|_\upsilon)\emono\dsi$ where $\upsilon\in
    \fifx_D$ is strict with respect to $\sigma$.
  \item\label{lem:decompo-flims} $\per(\fifx)$ has finite limits.
  \item\label{lem:decompo-inj} If $\phi:\cro\to\dsi$ is a monomorphism,
then \textup{(inj)} holds.
  \item\label{lem:decompo-surj} If $\phi:\cro\to\dsi$ is a regular epimorphism,
then \textup{(surj)} holds.
  \item\label{lem:decompo-reg} $\per(\fifx)$ is regular.
  \item\label{lem:decompo-coverrepr} Covers with domain $\cro$ can up to
isomorphism be represented as
    $\pi:\cro\eepi(C,\pi)$ where $\pi\in\fifx_{C\times C}$ is a partial
    equivalence relation satisfying $\rho(c,c')\ent\pi(c,c')$ and
    $\pi(c)\ent\rho(c)$.
  \item\label{lem:decompo-ex} $\per(\fifx)$ is exact.
  \end{enumerate}
\end{lemma}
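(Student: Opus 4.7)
The plan is to prove (i)--(vii) roughly in order, using the decomposition~\eqref{eq:decomposition} and Lemma~\ref{lem-inj-surj} as the main tools, with parts (iii)--(vi) entangled through a single factorization argument.

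For (i), I would note that if $\upsilon$ is strict relative to $\sigma$, the restriction $\sigma|_\upsilon$ inherits the PER axioms (symmetry and transitivity descend from $\sigma$ using the strictness clauses for $\upsilon$), and the morphism $\sigma|_\upsilon\colon (D, \sigma|_\upsilon) \to \dsi$ satisfies (inj) by taking the witness $d' := d$, so it is a monomorphism by Lemma~\ref{lem-inj-surj}; the converse direction drops out of (iii) once we know every mono $\phi\colon \cro \emono \dsi$ factors via an iso through $\sigma|_\upsilon$ with $\upsilon(d) \equiv \exists c\qdot\phi(c,d)$. For (ii), I would construct finite limits explicitly: terminal $(1,\top_{1\times 1})$, binary product $(C\times D,\rho\boxtimes\sigma)$ with $(\rho\boxtimes\sigma)((c,d),(c',d'))\equiv \rho(c,c')\wedge\sigma(d,d')$ and the obvious projection relations, and equalizers via (i) from the strict predicate $\upsilon(c)\equiv \exists d\qdot\phi(c,d)\wedge\psi(c,d)$, whose strictness is immediate from congruence of $\phi,\psi$; pullbacks follow.

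The heart of the proof is showing that the map $\pi\colon \cro \eepi (C,\pi)$ in \eqref{eq:decomposition} is a regular epimorphism, namely the coequalizer of its own kernel pair. The kernel pair is computed via the product formula of (ii), and the key descent property is that a morphism $\gamma\colon \cro \to \etau$ descends uniquely through $(C,\pi)$ iff the judgment $\gamma(c_1,e),\gamma(c_2,e)\ent \pi(c_1,c_2)$ holds, a verification that relies on the Frobenius law to commute existentials with conjunctions. Granted this, \eqref{eq:decomposition} is a regular-epi/mono factorization, so (iii) and (iv) follow from its essential uniqueness: if $\phi$ is mono then the regular-epi factor $\pi$ must be iso and Lemma~\ref{lem-inj-surj} forces (inj) on $\phi$; dually, if $\phi$ is regular epi then the mono factor $\sigma|_\upsilon$ must be iso, yielding (surj). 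Pullback-stability of regular epis is then a direct calculation in the fibration, again using Frobenius, which combined with the factorization and finite limits establishes (v). Part (vi) merely renames the regular-epi factor in PER form, using the computation already carried out.

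Finally, for (vii), an internal equivalence relation on $\cro$ is by (i) represented on $C\times C$ by a strict predicate $\theta$ satisfying reflexivity $\rho\ent\theta$ together with the symmetry and transitivity judgments in the internal language of $\fifx$; the quotient candidate is $(C,\theta)$ with cover $\cro \eepi (C,\theta)$ represented by $\theta$ itself, and a computation analogous to the kernel-pair analysis of (iii)--(iv) shows that the original equivalence relation is recovered as the kernel pair of this cover. The main obstacle I anticipate is the verification that $\pi$ in \eqref{eq:decomposition} is genuinely a coequalizer of its kernel pair, as opposed to merely satisfying (surj) of Lemma~\ref{lem-inj-surj}; this step is where the existential quantification and the Frobenius law of $\fifx$ do the essential work, after which the remaining parts are bookkeeping around the factorization.
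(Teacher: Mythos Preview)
Your overall strategy is sound, but it takes a longer route than the paper's. You make the coequalizer property of $\pi$ the centerpiece: once~\eqref{eq:decomposition} is known to be a regular-epi/mono factorization, (iii) and (iv) follow from essential uniqueness. The paper instead proves (iii) and (iv) by short direct arguments that bypass this step entirely. For (iii), one simply observes that a monomorphism has kernel pair equal to the diagonal, and computing the kernel pair of $\phi$ in $\per(\fifx)$ yields exactly the required judgment. For (iv), a regular epimorphism is in particular left orthogonal to monomorphisms, so it cannot factor through a proper subobject; hence the image predicate $\upsilon(d)\equiv\exists c\qdot\phi(c,d)$ must be maximal. With (iii) and (iv) in hand, (v) needs only the cover/mono factorization already supplied by Lemma~\ref{lem-inj-surj} and the decomposition~\eqref{eq:decomposition}, together with pullback-stability of covers; the identification of covers with regular epis then comes for free. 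So the step you flag as the ``main obstacle'' is not needed at all.

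One genuine slip: your descent criterion is stated backwards. For $\gamma:\cro\to\etau$ to factor through the quotient $(C,\pi)$ you need $\pi(c_1,c_2),\gamma(c_1,e)\ent\gamma(c_2,e)$ (constancy on $\pi$-classes), not $\gamma(c_1,e),\gamma(c_2,e)\ent\pi(c_1,c_2)$. With the condition as you wrote it, the congruence axiom for the induced morphism $(C,\pi)\to\etau$ fails, so the coequalizer verification would not go through as written.
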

\begin{proof}
  \emph{Ad \ref{lem:decompo-monorepr}.} If we apply the
decomposition~\eqref{eq:decomposition} to a monomorphism, then by
  orthogonality the cover part will be an isomorphism. It is easy to see that
  the resulting $\upsilon$ is strict with respect to $\sigma$.

  \emph{Ad \ref{lem:decompo-flims}.}  A product of $\cro,\dsi$ is given by
$(C\times
  D,\rho\brprod\sigma)$ where
  \[\rho\brprod\sigma(c,d,c',d')\equiv\rho(c,c')\wedge\sigma(d,d'),\]
 $(1,\top)$
  is a terminal object, and an equalizer of $\phi,\gamma:\cro\to\dsi$ is given
  by the subobject of $\cro$ corresponding to the predicate
  $\ilbracks{c\csep\exists d\qdot \phi(c,d)\wedge\gamma(c,d)}$.

  \emph{Ad \ref{lem:decompo-inj}.} The kernel of $\phi$ is represented by the
predicate
  $\ilbracks{c,c'\csep \exists d\qdot \phi(c,d)\wedge \phi(c',d)}$, and the
diagonal
  subobject $\delta:\cro\to\cro\times\cro$ is represented by $\rho$ itself. The
  claim follows since the kernel coincides with the diagonal for monomorphisms.

  \emph{Ad \ref{lem:decompo-surj}.} The predicate $\ilbracks{d\csep\exists
c\qdot \phi(c,d)}$ represents a
  subobject of $\dsi$ through which $\phi$ factors. If $\phi$ is a cover, then
  this has to be the maximal subobject.

  \emph{Ad \ref{lem:decompo-reg}.} We already know that $\per(\fifx)$ has finite
limits and
  cover/mono factorizations. It remains to show that covers are stable under
  pullback. Using the previous construction of finite limits and
  characterization of covers, this is easy to verify.

  \emph{Ad \ref{lem:decompo-coverrepr}.} This follows from the factorization and
orthogonality.

  \emph{Ad \ref{lem:decompo-ex}.} We have to show that equivalence relations are
effective, i.e.\
  appear as kernel pairs. This follows since the binary predicates
  $\tau\in\fifx_{C\times C}$ representing equivalence relations on $\cro$
  coincide exactly with those binary predicates representing quotients of
  $\cro$ as in~\ref{lem:decompo-coverrepr}.
\end{proof}

\subsection{Extensive fibrations and Moens' theorem}

We recall the definition of extensivity for internal sums from
\cite[Section 15]{streicherfib}.
\begin{definition}\label{def:lextensive-fibs}
\begin{enumerate}
\item\label{def:lextensive-fibs-disjoint} If $\fibc:\tot{\fibc}\to\catc$ is a
finite limit fibration with internal
sums,
then we say that internal sums in $\fibc$ are \emph{disjoint}\index{internal
sums!disjoint}\index{disjoint internal sums}, if for any cocartesian map
$s:\xymatrix@1@-1.5mm{A\coca[r] & B}$ in $\tot{\fibc}$, 
the canonical map $\delta:A\to A\times_B A$ is also cocartesian.
\item We call internal sums in a finite limit fibration
\emph{extensive}\index{internal sums!extensive}\index{extensive internal sums}, if
they are stable and disjoint. A finite limit fibration with extensive internal
sums is also called a
\emph{lextensive}\index{lextensive fibration}\index{fibration!lextensive} fibration.
\item
$\catlxv(\catc)$ is the 2-category of lextensive fibrations on $\catc$. Its
1-cells are fibered functors preserving finite limits and internal sums, and
its 2-cells are fibered natural transformations.
\end{enumerate}
\end{definition}
The following lemma describes two ways to construct lextensive fibrations.
\begin{lemma}\label{lem:fund-glu}
\begin{enumerate}
 \item \label{lem:fund-glu-fund}
 Let $\catc$ be a finite limit category. Then the functor
\[
 \lfund{\catc},
\]
which sends every morphism to its codomain, is a lextensive fibration which we
call the \emph{fundamental fibration}\index{fundamental
fibration}\index{fibration!fundamental} of $\catc$ (the fundamental fibration
is also known as \emph{codomain fibration}\index{codomain fibration}\index{fibration!codomain}, e.g.\ in
\cite{jacobs2001categorical, vanoosten2008realizability}).
\item If $\Delta :\catc\to\catd$ is a finite limit preserving functor between
finite limit categories, and $\fibc$ is a lextensive fibration on $\catd$, then
the (strict) pullback $\Delta ^*\fibc$ of $\fibc$ along $\Delta $ is a
lextensive fibration on $\catc$.
\begin{equation*}
 \vcenter{\xymatrix{
\tot{\Delta ^*\fibc}\pullbackcorner\ar[r]\ar[d]_{\Delta ^*\fibc} &
\tot{\fibc}\ar[d]^\fibc\\
\catc\ar[r]^\Delta &\catd
}}
\end{equation*}
\end{enumerate}
\qed
\end{lemma}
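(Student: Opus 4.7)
The plan is to verify, in each part, the structure required by Definitions~\ref{def:fl-fibs}, \ref{def:internal-sums}, and \ref{def:lextensive-fibs} by explicitly identifying the cartesian and cocartesian morphisms and then reducing the remaining axioms to elementary pullback arguments.

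For part~(i), the total category of $\mathrm{cod}(\catc):\commacat{\catc}{\catc}\to\catc$ is the arrow category, whose objects are morphisms of $\catc$ and whose morphisms are commuting squares. A square is cartesian for $\mathrm{cod}(\catc)$ precisely when it is a pullback in $\catc$, so the existence of pullbacks in $\catc$ gives the fibration property. The fiber over $I$ is the slice $\catc/I$, which has finite limits, and reindexing along $u:J\to I$ is pullback along $u$, which preserves finite limits; hence $\mathrm{cod}(\catc)$ is a finite-limit fibration. For the opfibration structure, I would take the cocartesian lift of $f:X\to I$ along $u:I\to K$ to be the square with identity on top and $f$, $uf$ on the sides; a direct check then shows that a square in $\commacat{\catc}{\catc}$ is cocartesian iff its top arrow is invertible. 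The BCC and stability both follow because the pullback of a square with an iso on top has an iso on top, by the pullback-pasting lemma. Disjointness is automatic: if $s:A\to B$ is cocartesian, then its upper arrow is an iso, so $A\times_B A$ is represented by $A$ via the identity, and the diagonal $\delta:A\to A\times_B A$ is itself invertible, hence cocartesian.

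For part~(ii), I would describe the total category of $\Delta^*\fibc$ as the strict pullback in $\catcat$, so that its objects are pairs $(I,A)$ with $\Delta I=\fibc(A)$ and its morphisms are pairs $(u,f)$ with $\Delta u=\fibc(f)$. I take cartesian (respectively cocartesian) morphisms in $\Delta^*\fibc$ to be those pairs $(u,f)$ for which $f$ is cartesian (respectively cocartesian) in $\fibc$; this is forced by the universal property of the strict pullback. The fibers of $\Delta^*\fibc$ over $I\in\catc$ agree with the fibers of $\fibc$ over $\Delta I$, so they have finite limits; and because $\Delta$ preserves finite limits, reindexing in $\Delta^*\fibc$ along $u:J\to I$ is reindexing in $\fibc$ along $\Delta u$, so it preserves finite limits as well. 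The BCC, stability, and disjointness for $\Delta^*\fibc$ then transfer directly from $\fibc$: any pullback square in $\catc$ is sent by $\Delta$ to a pullback square in $\catd$, and the $\fibc$-axioms can be applied to it.

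The main obstacle is really only bookkeeping: tracking carefully which squares live in the arrow category (for part~(i)) or in the strict-pullback category (for part~(ii)) versus in the base, and invoking the pullback-pasting lemma in the correct configuration. Once cartesian and cocartesian morphisms are pinned down, all remaining verifications reduce to routine diagram chases with no genuinely hard content.
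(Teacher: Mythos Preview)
Your proposal is correct. The paper does not give a proof of this lemma at all: it is stated with a terminal \qed and no argument, treating both parts as routine background facts (with an implicit reference to the general fibered-category sources cited nearby). Your verification is exactly the kind of direct check the paper is suppressing, and all of your identifications are the standard ones: pullback squares as cartesian arrows, squares with invertible top as cocartesian arrows in $\mathrm{cod}(\catc)$, and for the change of base, the fact that a morphism in $\Delta^*\fibc$ is (co)cartesian iff its $\fibc$-component is. The only place where a reader might want one more word is the transfer of stability and disjointness in part~(ii): these use that the projection $\tot{\Delta^*\fibc}\to\tot{\fibc}$ preserves finite limits (because $\Delta$ does and fiberwise limits agree), so pullbacks and diagonals in $\tot{\Delta^*\fibc}$ map to the corresponding diagrams in $\tot{\fibc}$, where the axioms for $\fibc$ apply. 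You essentially say this, but making the preservation of total-category pullbacks explicit would close the loop.
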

\begin{remark}\label{rem:subobject-fibration}
 The fundamental fibration of a finite limit category $\catc$ has an important
subfibration -- the \emph{subobject fibration}\index{subobject fibration}\index{fibration!subobject}
\[
 \sub(\catc):\Sub(\catc)\to\catc,
\]
which is defined as the full subfibration of $\fund{\catc}$ on those objects in
$\commacat{\catc}{\catc}$ which are monomorphisms in $\catc$.
\end{remark}
The combination of  pullback and
fundamental fibration is known as the \emph{gluing construction}.
\begin{definition}\label{def:gluing}
 Let $\Delta :\catc\to\catd$ be a finite limit preserving functor. 
The \emph{gluing\index{gluing construction} of $\catd$ along 
$\Delta $}, denoted by 
\[\gl_\Delta(\catd):\Gl_\Delta (\catd)\to\catc,\] is the fibration obtained by 
pulling back $\fund{\catd}$ along $\Delta$.
\begin{equation}\label{eq:gluing}
 \vcenter{\xymatrix{
\Gl_\Delta (\catd)\pullbackcorner\ar[r]\ar[d]_{\gl_\Delta (\catd)}
& \commacat{\catd}{\catd}\ar[d]^{\fund{\catd}}\\
\catc\ar[r]^\Delta &\catd
}}
\end{equation}
\end{definition}
Concretely, the total category $\Gl_\Delta (\catd)$ of the gluing fibration is
the comma category $\commacat{\catd}{\Delta }$, and the functor $\gl_\Delta
(\catd)$ is the evident projection. Streicher~\cite{streicherfib} denotes the
gluing along a functor simply by $\gl(\Delta )$, but since essentially all of
the functors
of which we take the gluing are called $\Delta$, I chose a more informative
notation.

\medskip

Moens \cite{moens1982characterization} observed that up to equivalence, all
lextensive fibrations are obtained by gluing, which can be expressed as a
biequivalence of 2-categories.
\begin{theorem}[Moens' theorem]\label{theo:moens}
  Given a finite limit category $\catc$, the assignment
\[(\Delta:\catc\to\catd)\mapsto\gl_\Delta(\catd)\]
from Diagram~\eqref{eq:gluing} gives rise to a biequivalence
\begin{equation}\label{eq:biequ-moens-strong}
\catc\pslice\catlex \simeq \catlxv(\catc),
\end{equation}
where $\catlex$ is the 2-category of finite limit categories, finite limit
preserving functors and natural transformations, and $\catc\pslice\catlex$ is
the
pseudo-co-slice\footnote{The `pseudo' here means that the triangles in the
  definition of morphism commute only up to specified isomorphism.} 2-category
of $\catlex$ under $\catc$.
\end{theorem}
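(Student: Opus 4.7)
The plan is to construct an explicit pseudo-inverse to the gluing assignment, show essential surjectivity via a fiberwise equivalence, and verify local equivalence on hom-categories. Given a lextensive fibration $\fibc:\tot{\fibc}\to\catc$, I would set $\catd:=\fibc_1$ (the fiber over the terminal object of $\catc$) and define $\Delta:\catc\to\catd$ by $\Delta I := \exists_{p_I}(1_I)$, where $p_I:I\to 1$ is the unique arrow, $1_I\in\fibc_I$ is the fiber terminal, and $\exists_{p_I}$ denotes the internal sum along $p_I$. Functoriality of $\Delta$ follows from terminality of $1_I$ together with the opfibration structure, and preservation of finite limits uses the Beck-Chevalley condition together with the fact that reindexing preserves fiber terminals.

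Next I would exhibit an equivalence $\fibc\simeq\gl_\Delta(\catd)$ in $\catlxv(\catc)$. Fiberwise, the candidate $\Phi_I:\fibc_I\to\catd/\Delta I$ sends $A\in\fibc_I$ to the morphism $\exists_{p_I}A\to\exists_{p_I}(1_I)=\Delta I$ obtained by applying $\exists_{p_I}$ to the unique arrow $A\to 1_I$. Its candidate inverse sends $(\alpha:X\to\Delta I)\in\catd/\Delta I$ to the pullback in $\fibc_I$ of the reindexed morphism $p_I^*\alpha$ along the unit $\eta_{1_I}:1_I\to p_I^*\Delta I$ of the adjunction $\exists_{p_I}\dashv p_I^*$. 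That the two composites are mutually inverse is where both extensivity axioms enter: the Frobenius law handles one direction, while disjointness -- namely that the diagonal of a cocartesian arrow is cocartesian -- handles the other. Naturality of $\Phi$ in $I$, and the verification that $\Phi$ intertwines cartesian liftings and global finite limits, reduces to further applications of Beck-Chevalley.

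For local equivalence, given $(\Delta:\catc\to\catd)$ and $(\Delta':\catc\to\catd')$ in $\catc\pslice\catlex$, a 1-cell is a finite-limit preserving functor $F:\catd\to\catd'$ equipped with a natural isomorphism $F\Delta\cong\Delta'$; this induces a fibered functor $\gl_\Delta(\catd)\to\gl_{\Delta'}(\catd')$ by postcomposition, which preserves finite limits and internal sums because $F$ does and the pullback structure on gluings is created from the slices. Conversely, a 1-cell $H:\gl_\Delta(\catd)\to\gl_{\Delta'}(\catd')$ in $\catlxv(\catc)$ restricts to a functor on the fibers over $1\in\catc$, giving $F:\catd\to\catd'$, and the coherence isomorphism $F\Delta\cong\Delta'$ arises from the fact that $H$ preserves the cocartesian arrows used to define $\Delta$. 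Checking that these assignments form mutually pseudo-inverse equivalences of hom-categories is then largely bookkeeping, and an analogous but simpler argument at the level of 2-cells completes the local equivalence.

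The main obstacle will be establishing that $\Phi_I$ and its candidate inverse are truly quasi-inverse, since this is the point at which the full strength of extensivity must be deployed. Recovering $A$ from $\exists_{p_I}A$ via pullback against $\eta_{1_I}$ is clean by Frobenius, but the reverse direction -- reconstructing an arbitrary $\alpha:X\to\Delta I$ from its fiberwise pullback -- crucially uses disjointness to certify that $X$ is the internal sum of that pullback rather than merely a receptacle for it. Assembling the fiberwise equivalences into a fibered equivalence that is compatible with 2-cells and with the coslice structure on the source, and verifying enough coherence for the result to be a biequivalence rather than merely a pointwise equivalence, is the coherence-heavy core of the proof.
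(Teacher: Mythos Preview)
Your proposal is correct and follows the same approach as the paper, which itself only sketches the inverse construction $\Delta_\fibe:\catc\to\fibe_1$, $I\mapsto\Sigma_I 1_I$ (your $\exists_{p_I}(1_I)$) and then refers to \cite[Section~15]{streicherfib} for the details. The fiberwise equivalence $\fibc_I\simeq\fibc_1/\Delta I$ you describe, with one composite handled by stability (Frobenius) and the other by disjointness, is exactly the content of Streicher's Theorem~15.5, so you are essentially reconstructing the omitted argument. One small point: your justification that $\Delta$ preserves finite limits via ``Beck--Chevalley together with reindexing preserving fiber terminals'' is too quick---in practice this is most cleanly obtained \emph{after} the fiberwise equivalence, since then $\Sigma_{p_I}$ is identified with a forgetful slice functor and hence preserves finite limits (this is Streicher's Lemma~15.7), from which preservation by $\Delta$ follows.
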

\begin{proof} 
Given a lextensive fibration $\fibe:\tot{\fibe}\to\catc$, the associated
functor is given by 
\[
\Delta_\fibe:\catc\to\fibe_1,\quad I\mapsto \Sigma_I 1_I,
\]
and given a fibered functor $F:\fibe\to\fibf$ between extensive fibrations which
preserves finite limits and internal sums, it follows from the preservation of
internal sums that $F_1\circ \Delta_\fibe\cong \Delta_\fibf$.

We refer to \cite[Section~15]{streicherfib} for details.
\end{proof}
As a first application, we can deduce that the fundamental fibration
$\lfund{\catc}$ is bi-initial in $\catlxv(\catc)$ since $\id_\catc$ is
bi-initial in $\catc\pslice\catlex$. This means that for every
lextensive fibration $\fibe:\tot{\fibe}\to\catc$ there exists a unique (up to
unique equivalence) lextensive fibered functor
\begin{equation}\label{eq:def-fibered-delta}
 \Delta:\fund{\catc}\to\fibe,
 \end{equation}
which we call $\Delta$ since it is the fibered analogue of the functor
$\Delta:\catc\to\fibe_1$ defined in the proof above. Concretely, $\Delta$ is
given by 
\[
 \fund{\catc}_I\ni(f:J\to I)\;\mapsto\; \Sigma_f 1_J\in\fibe_I.
\]
\begin{remark}\label{rem:generalized-moens}
 The biequivalence~\eqref{eq:biequ-moens-strong} can be generalized to a more
general class of functors: given a fibered functor
\[
F:\fibe\to\fibf
\]
between extensive fibrations $\fibe,\fibf$ which preserves finite limits but
not necessarily internal sums, the isomorphism
$F_1\circ\Delta_\fibe\cong\Delta_\fibc$ becomes replaced by a natural
transformation of type $\Delta_\fibf\to F_1\circ\Delta_\fibe$
\begin{equation}\label{eq:lax-triangle}
\vcenter{\xymatrix{
\catc\ar[d]_{\Delta_\fibe}\ar[dr]^{\Delta_\fibf} & \emar[dl]|(.7){\Downarrow}\\
\fibe_1\ar[r]_{F_1} & \fibf_1
} }.
\end{equation}
Triangles of the form~\eqref{eq:lax-triangle} form a 2-category
$\olaco{\catc}{\catlex}$ generalizing the pseudo-co-slice 2-category
$\catc\pslice\catlex$ (which one may call `oplax co-slice 2-category'), and one
can show that $\olaco{\catc}{\catlex}$ is equivalent to the 2-category of
extensive fibrations and finite limit preserving fibered functors.
This is relevant in Section~\ref{sec:glob-secs} where we treat `global sections
functors'.
\end{remark}

\section{Regular pre-stacks and stacks}\label{sec:prestacks}

In this section, we introduce the classes of fibrations that we will use in
Chapter~\ref{chap:fib-cocompletions} to give a framework for (pre)sheaf
  constructions and realizability. In this context, we will always require our
fibrations to be pre-stacks on regular base categories. We have several reasons
for
insisting on the pre-stack condition.
\begin{itemize}
 \item In his thesis~\cite{pitts81}, 
Pitts showed how iterated tripos constructions can be `composed',
provided the corresponding constant object functors are \emph{regular}. This
regularity requirement is equivalent to the pre-stack condition for the
corresponding gluing fibrations (or triposes).
\item In Chapter~\ref{chap:ufp} we study \emph{uniform preorders}, which are
representations of fibered preorders. It turns out that a fibered preorder on
$\catset$ can be represented by a uniform preorder iff it is a pre-stack and has
a generic family of predicates (see Lemma~\ref{lem:reconstuct-ufp}).
\item Robinson and Rosolini~\cite{robinson1990colimit} and
Carboni~\cite{carboni1995some} give construction of realizability toposes and
presheaf toposes using exact completion. These constructions rely on the axiom
of choice. The \emph{fibered presheaf construction} (Section~\ref{sec:fpc})
captures these relationships without choice, giving a universal
characterization in terms of a biadjunction between 2-categories of
{pre-stacks}. 
\end{itemize}

\subsection{Definition and basic properties}

In the present work, \emph{(pre-)stack} always means `(pre-)stack for the
regular topology on a regular category'.

We refer to \cite[Definition~4.6]{vistoli2004notes} for the general definition
of what it means for a fibration to be a (pre-)stack for a Grothendieck
topology,
and to \cite[Example~A2.1.11(a)]{elephant1} for the definition of the regular
Grothendieck topology.
In the following we give the instantiated definition of (pre-)stack for the
regular topology.

\medskip

 Let $\fibc:\tot{\fibc}\to\catr$ be a fibration on a regular category $\catr$,
and let $e:J\eepi I$ be a regular epimorphism in $\catr$. We form the diagram
\begin{equation}\label{eq:truncated-complex}
\xymatrix@C+5mm{
J\times_I J\times_I
J{\ar@<4.5pt>[r]^-{\partial_{0},\partial_{1},\partial_{2}}\ar[r]\ar@<-4.5pt>[r]}
& 
J\times_I J
\ar@<3pt>[r]^-{\partial_0,\partial_1}\ar@<-3pt>[r]& 
J\depi[r]^e & I
}
\end{equation}
of products and projection maps in $\catr/I$. We employ notation
coming from simplicial sets and write $\partial_i$ for the projection which
omits the $i$-th component (see
e.g.~\cite[Chapter~I-1]{goerss2009simplicial}).
In particular, 
the \emph{simplicial
identities}~\cite[Equation~I-(1.3)]{goerss2009simplicial}\index{simplicial
identities}
\begin{equation}\label{eq:simpl-id}
 \partial_0\partial_1=\partial_0\partial_0\qquad\partial_0\partial_2=\partial_1
\partial_0\qquad\partial_1\partial_2=\partial_1\partial_1
\end{equation}
are satisfied.
\begin{definition}\label{def:cat-descent-data}
The \emph{category $\ddesc(\fibc,e)$ of descent data\index{category!of descent
data}\index{descent data} over $e:J\eepi I$} is defined as follows.
\begin{itemize}
 \item An \emph{object with descent data}\index{object!with descent data} $((A_i),(p_i),(q_i))$ is a
configuration
\[
\xymatrix@C+5mm{
A_3
{\cart@<4.5pt>[r]^-{q_{0},q_{1},q_{2}}\cart[r]
\cart@<-4.5pt>[r]}
& 
A_2
\cart@<3pt>[r]^-{p_0,p_1}\cart@<-3pt>[r]& A_1
\\
J\!\times_I\! J\!\times_I\!
J{\ar@<4.5pt>[r]^-{\partial_{0},\partial_{1},\partial_{2}}\ar[r]\ar@<-4.5pt>[r]}
& 
J\!\times_I\! J
\ar@<3pt>[r]^-{\partial_0,\partial_1}\ar@<-3pt>[r]& 
J\depi[r]^e & I
}
\]
of objects and cartesian arrows in $\tot{\fibc}$ over the truncated
complex~\eqref{eq:truncated-complex} satisfying the same simplicial identities
\begin{equation*}
 p_0q_1=p_0q_0\qquad p_0q_2=p_1
q_0\qquad p_1q_2=p_1q_1.
\end{equation*}
\item A \emph{morphism} between objects with descent data 
\[
 (f_i):((A_i),(p_i),(q_i))\to((B_i),(r_i),(s_i))
\]
is a family of \emph{vertical} maps $f_i:A_i\to B_i$
\[
\xymatrix@C+5mm{
A_3\ar[d]_{f_3}
{\cart@<4.5pt>[r]^-{q_{0},q_{1},q_{2}}\cart[r]
\cart@<-4.5pt>[r]}
& 
A_2\ar[d]_{f_2}
\cart@<3pt>[r]^-{p_0,p_1}\cart@<-3pt>[r]& A_1\ar[d]^{f_1}
\\
B_3
{\cart@<4.5pt>[r]^-{s_{0},s_{1},s_{2}}\cart[r]
\cart@<-4.5pt>[r]}
& 
B_2
\cart@<3pt>[r]^-{r_0,r_1}\cart@<-3pt>[r]& B_1
\\
J\!\times_I\! J\!\times_I\!
J{\ar@<4.5pt>[r]^-{\partial_{0},\partial_{1},\partial_{2}}\ar[r]\ar@<-4.5pt>[r]}
& 
J\!\times_I\! J
\ar@<3pt>[r]^-{\partial_0,\partial_1}\ar@<-3pt>[r]& 
J\depi[r]^e & I
}
\]
such that $f_1p_i=r_if_2$ and $f_2q_i=s_if_3$ for all appropriate $i$.
\end{itemize}
\end{definition}
The previous definition does not refer to a cleavage on $\fibc$. 
In some situations (in particular in the proof of
Lemma~\ref{lem:excat-lccc-cover}) it is useful to rephrase the definition in a
way that uses an explicit cleavage, in which case the so-called \emph{cocycle
condition}\index{cocycle condition} becomes visible:
\begin{lemma}\label{lem:desc-cleavage}
 The category $\ddesc(\fibc, e)$ of descent data is equivalent to the category
given by
\begin{itemize}
 \item \textbf{objects:} pairs
$(A\in\fibc_J,\alpha:\partial_1^*A\to\partial_0^*A)$ such that the diagram
\[
 \xymatrix@R-5mm@C-8mm{
&& (\partial_1\partial_1)^*A\ar[rr]^-\cong\arid[ld] &&
\partial_1^*\partial_1^*A\ar[rr]^{\partial_1^*\alpha} &&
\partial_1^*\partial_0^*A\ar[rd]^\cong \\
& (\partial_1\partial_2)^*A\ar[ld]_{\cong} &&&&&&
(\partial_0\partial_1)^*A\arid[rd] \\
\partial_2^*\partial_1^*A\ar[rd]_{\partial_2^*\alpha} &&&&&&&&
(\partial_0\partial_0)^*A \\
& \partial_2^*\partial_0^*A\ar[rd]_\cong  &&&&&&
\partial_0^*\partial_0^*A\ar[ur]_\cong \\
&& (\partial_0\partial_2)^*A\arid[rr] && (\partial_1\partial_0)^*A\ar[rr]_\cong
&& \partial_0^*\partial_1^*A\ar[ru]_{\partial_0^*\alpha} \\
}
\]
in $\fibc_{J\times_I J\times_I J}$ commutes (the isomorhisms are those
given by the universal property of cartesian liftings, and the equalities come
from the simplicial identities), and 
\item\textbf{morphisms} from $(A,\alpha)$ to $(B,\beta)$: arrows $(f:A\to B)\in
\fibc_J$ such that $\partial_0^*f\circ\alpha = \beta\circ\partial_1^*f$.
\end{itemize}
\end{lemma}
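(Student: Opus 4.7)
The plan is to exhibit an equivalence between the two categories by constructing a functor $\Psi$ from the descent data of Definition~\ref{def:cat-descent-data} to the cleavage-based description, and then an essential inverse. Given an object $((A_i),(p_i),(q_i))$, the cartesian arrows $p_0,p_1:A_2\to A_1$ over $\partial_0,\partial_1$ present $A_2$ as a cartesian lifting of $A_1$ along each of $\partial_0$ and $\partial_1$. The universal property of the chosen cartesian liftings $\partial_k^*A_1\cart A_1$ then produces unique vertical isomorphisms $\iota_k:A_2\eiso\partial_k^*A_1$ for $k=0,1$, and I would define $\alpha:=\iota_0\circ\iota_1^{-1}:\partial_1^*A_1\to\partial_0^*A_1$ (so that $\alpha$ is in fact an iso, which is implicit in the cleavage-based formulation).

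Next I would verify the cocycle condition. Applying the same argument to the three cartesians $q_0,q_1,q_2:A_3\to A_2$ yields isomorphisms $A_3\eiso\partial_j^*A_2$, and composing through $\iota_k$ and the canonical isomorphisms $\partial_j^*\partial_k^*A_1\cong(\partial_k\partial_j)^*A_1$ coming from pseudofunctoriality of the fibration, each of $\partial_i^*\alpha$ (for $i=0,1,2$) factors as a composite of such universally determined isomorphisms along the six paths $p_kq_j:A_3\to A_1$. The three simplicial identities $p_0q_1=p_0q_0$, $p_0q_2=p_1q_0$, $p_1q_2=p_1q_1$ imposed on these composites make the hexagon in the displayed diagram commute: each route around the hexagon expresses the same factorization of a single cartesian arrow $A_3\to A_1$, so uniqueness forces agreement.

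For morphisms, a family $(f_1,f_2,f_3)$ satisfying $f_1p_i=r_if_2$ and $f_2q_i=s_if_3$ is determined by $f_1$ alone: the equation $f_1p_i=r_if_2$ combined with the universal property of cartesian maps shows that $f_2$ is forced to be the unique vertical lift of $f_1$ along either $\partial_0$ or $\partial_1$, and the fact that a \emph{single} $f_2$ satisfies both equations simultaneously translates, via the isomorphisms $\iota_k$, exactly to $\partial_0^*f_1\circ\alpha=\beta\circ\partial_1^*f_1$. An inverse functor is then given by $(A,\alpha)\mapsto(A_i,p_i,q_i)$ where $A_1:=A$, $A_2:=\partial_1^*A$ with $p_1$ the designated cartesian arrow and $p_0$ the composite $\partial_1^*A\xrightarrow{\alpha}\partial_0^*A\cart A$, and $A_3$ is built analogously from iterated liftings of $A$. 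The required simplicial identities on the $p_iq_j$ translate, by the universal property of cartesian arrows, to precisely the cocycle condition on $\alpha$.

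The main technical obstacle is the bookkeeping for the canonical isomorphisms $\partial_i^*\partial_j^*A\cong(\partial_j\partial_i)^*A$ arising from pseudofunctoriality of the cleavage, which appear both in the statement of the cocycle diagram and implicitly throughout the verification that $\Psi$ is well-defined, full, and faithful. Each individual step is forced by the uniqueness clause of the cartesian property, so no genuinely new argument is needed, but one has to keep careful track of how the three simplicial identities align with the three face-indexed reindexings of $\alpha$ in order to see that commutativity of the hexagon is equivalent to commutativity of the analogous ``equation of composites'' on the descent-data side.
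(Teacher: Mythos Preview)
Your proposal is correct and follows the standard approach: translate between the two descriptions using the universal property of cartesian liftings, so that the simplicial identities on the $p_iq_j$ correspond exactly to the cocycle hexagon, and morphisms are determined by their component at $A_1$. The paper itself does not spell out a proof but simply cites \cite[Section~4.1.2]{vistoli2004notes}; your argument is essentially what one finds there.
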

\begin{proof}
 \cite[Section~4.1.2]{vistoli2004notes}.
\end{proof}

There is yet another definition of $\ddesc(\fibc,e)$ in terms of \emph{sieves}\index{sieve}:
the sieve $\langle e\rangle$ generated by $e:J\eepi I$ is a subfibration of the
representable (discrete) fibration $YI=\dom:\catr/I\to\catr$, and one can
show the following lemma.
\begin{lemma}
 We have an equivalence of categories
$\ddesc(\fibc,e)\simeq\catfib(\catr)(\langle e\rangle,\fibc)$.
\end{lemma}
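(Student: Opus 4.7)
The plan is to exhibit explicit functors in both directions and check they are quasi-inverse. Throughout, I will represent the sieve $\langle e\rangle$ concretely as the discrete fibration over $\catr$ whose total category has as objects the morphisms $f\colon K\to I$ which factor through $e$, and as morphisms $h\colon (K,f)\to(L,g)$ those $h\colon K\to L$ in $\catr$ with $gh=f$; every such morphism is cartesian for the projection to $\catr$. A fibered functor $F\colon\langle e\rangle\to\fibc$ is then the same as an assignment $(K,f)\mapsto F(K,f)\in\fibc_K$ together with cartesian arrows $F(h)\colon F(K,f)\to F(L,g)$ over $h$, compatibly with composition and identities.

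\textbf{Forward functor $\Phi\colon\catfib(\catr)(\langle e\rangle,\fibc)\to\ddesc(\fibc,e)$.} Given $F$, set $A_1=F(J,e)$, $A_2=F(J\times_IJ,e\partial_0)$ and $A_3=F(J\times_IJ\times_IJ,e\partial_0\partial_0)$ (noting that $e\partial_0=e\partial_1$ and $e\partial_0\partial_0=e\partial_i\partial_j$ for all admissible $i,j$ by commutativity of the projection diagram). Define $p_i=F(\partial_i)$ and $q_i=F(\partial_i)$; these are cartesian by the definition of a fibered functor, and the simplicial identities~\eqref{eq:simpl-id} in $\catr$ imply the corresponding identities in $\tot{\fibc}$ by functoriality of $F$. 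A fibered natural transformation $F\Rightarrow G$ consists of vertical components, and restricting to the three distinguished objects yields a morphism of descent data.

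\textbf{Backward functor $\Psi$.} This is the delicate part. The cleanest route is to use the reformulation of $\ddesc(\fibc,e)$ given in Lemma~\ref{lem:desc-cleavage}: a datum is $(A,\alpha)$ with $A\in\fibc_J$ and $\alpha\colon\partial_1^*A\xrightarrow{\cong}\partial_0^*A$ satisfying the cocycle condition. Given such $(A,\alpha)$, I want to produce a fibered functor $F_{A,\alpha}\colon\langle e\rangle\to\fibc$. For each $(K,f)\in\langle e\rangle$ I \emph{choose} a factorization $g_f\colon K\to J$ with $eg_f=f$ (using the standing conventions of Section~\ref{sec:choice} on picking structure), and set $F_{A,\alpha}(K,f)=g_f^*A$ via the chosen cleavage. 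For a morphism $h\colon(K,f)\to(L,f')$, note that $g_{f'}h$ and $g_f$ are both factorizations of $f$ through $e$, so they induce $\langle g_f,g_{f'}h\rangle\colon K\to J\times_IJ$; reindexing $\alpha$ along this map produces a vertical isomorphism $h^*g_{f'}^*A\cong g_f^*A$, which I compose with the chosen cartesian lift of $g_{f'}^*A$ along $h$ to obtain a cartesian arrow $g_f^*A\to g_{f'}^*A$ over $h$. This defines $F_{A,\alpha}(h)$. The identity axiom is immediate (the relevant map into $J\times_IJ$ is $\delta g_f$ and the restriction of $\alpha$ along the diagonal is the identity, which follows from the cocycle condition by setting two of the three factorizations equal). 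Functoriality in $h$ is precisely the cocycle condition applied to the triple of factorizations $(g_f,g_{f'}h,g_{f''}h'h)$ for composable $h,h'$.

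\textbf{Equivalence.} The main obstacle is verifying that $\Psi$ is well-defined, i.e.\ that the cocycle condition really does yield functoriality of $F_{A,\alpha}$; this is a diagram chase using that cartesian liftings of a fixed map are unique up to unique vertical iso, together with the hexagon in Lemma~\ref{lem:desc-cleavage}. Once $\Psi$ is defined on objects, its action on morphisms of descent data is routine (a family $(f_i)$ yields, for each $(K,f)$ in the sieve, the vertical map $g_f^*f_1\colon g_f^*A\to g_f^*B$, and naturality in $h$ follows from the compatibility of $(f_i)$ with $\alpha,\beta$). For $\Phi\Psi\cong\id$, observe that evaluating $F_{A,\alpha}$ at $(J,e)$ with the canonical factorization $g_e=\id_J$ returns $A$ on the nose, and at $(J\times_IJ,e\partial_0)$ with the canonical factorization $\partial_0$ returns $\partial_0^*A$; the two projections $p_0,p_1$ recover $\id$ and (a representative of) $\alpha$ via the construction above. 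For $\Psi\Phi\cong\id$, given a fibered functor $F$ and any $(K,f)$ in the sieve with chosen factorization $g_f$, the canonical cartesian lift of $F(J,e)=A_1$ along $g_f$ is, by functoriality of $F$ and cartesianness of $F(g_f)$, canonically isomorphic to $F(K,f)$; these isomorphisms assemble into a fibered natural isomorphism $\Psi\Phi F\cong F$. Both isomorphisms are visibly natural in their arguments, which completes the proof.
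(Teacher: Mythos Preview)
Your argument is correct and is precisely the standard construction; the paper does not give its own proof but simply cites \cite[Proposition~4.5]{vistoli2004notes}, and what you have written is essentially the content of that proposition. One small remark: your appeal to Section~\ref{sec:choice} to justify choosing factorizations $g_f$ is a slight stretch of that convention (which is about choosing limiting cones, cleavages, and the like, not arbitrary witnesses of factorization through a cover); if you wanted to be scrupulous about choice you could instead verify directly that the restriction functor $\Phi$ is fully faithful and essentially surjective, which avoids building $\Psi$ globally. But this is a stylistic point, not a gap.
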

\begin{proof}
 \cite[Proposition~4.5]{vistoli2004notes}.
\end{proof}
This allows us to embed $\fibc_I$ into $\ddesc(\fibc,e)$ via the chain
\begin{equation}\label{eq:into-descent}
 \fibc_I\simeq\catfib(\catr)(YI,\fibc)\to \catfib(\catr)(\langle
e\rangle,\fibc)\simeq\ddesc(\fibc,e)
\end{equation}
where the first equivalence is the \emph{2-Yoneda Lemma}\index{2-Yoneda Lemma}
\cite[Section~3.6.2]{vistoli2004notes} and the arrow is given by precomposition
with the inclusion $\langle e\rangle\subseteq YI$. We can now define pre-stacks
and stacks.
\begin{definition}\label{def:pre-stack}
 Let $\fibc:\tot{\fibc}\to\catr$ be a fibration on a regular category.
\begin{itemize}
 \item $\fibc$ is a \emph{pre-stack}\index{pre-stack}, if for each regular
epimorphism $e:J\eepi
I$ in $\catr$, the embedding~\eqref{eq:into-descent} is full and faithful.
\item $\fibc$ is a \emph{stack}\index{stack}, if for each regular epimorphism
$e:J\eepi
I$ in $\catr$, the embedding~\eqref{eq:into-descent} is an equivalence of
categories.
\end{itemize}
\end{definition}
In order to effectively manipulate pre-stacks and stacks, we introduce some
more terminology.
\begin{definition}\label{def:covercart-collepi}
 Let $\fibc:\tot{\fibc}\to\catr$ be a fibration on a regular category $\catr$.
\begin{enumerate}
 \item A
\emph{cover-cartesian} morphism\index{cover-cartesian morphism}\index{morphism!cover-cartesian} in $\tot{\fibc}$
is a cartesian morphism over a regular epimorphism.

We will denote cover-cartesian morphisms by the arrow symbol
$\eepicart$.
\item
A \emph{collective epimorphism}\index{collective epimorphism}\index{epimorphism!collective} in $\tot{\fibc}$
is a map $e$ such that $fe=ge$ implies $f=g$ for vertical $f,g$.

\end{enumerate}
\end{definition}
The following lemma gives a diagrammatic criterion for a fibration to be a
pre-stack.
\begin{lemma}\label{lem:diagrammatic-pre-stack}
A fibration $\fibc:\tot{\fibc}\to\catr$ is a
pre-stack, iff for any configuration
\[
\xymatrix@C+5mm{
A_2\ar[d]_{f_2}
\cart@<3pt>[r]^-{p_0,p_1}\cart@<-3pt>[r]& A_1\ar[d]^{f_1}\epicart[r]^p &
A\dashed[d]^h
\\
B_2
\cart@<3pt>[r]^-{r_0,r_1}\cart@<-3pt>[r]& B_1\epicart[r]^r & B
\\
J\!\times_I\! J
\ar@<3pt>[r]^-{\partial_0,\partial_1}\ar@<-3pt>[r]& 
J\depi[r]^e & I
}
\]
where $e$ is a regular epi in the base with kernel pair $\partial_0,\partial_1$,
and the
other maps are cartesian or vertical above in $\tot{\fibc}$ as indicated
such that $pp_0=pp_1$ and $rr_0=rr_1$, if
$f_1p_0=r_0f_2$ and $f_1p_1=r_1f_2$ then there exists a \emph{unique} $h$ such
that
$hp=rf_1$.
\qed
\end{lemma}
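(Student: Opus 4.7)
The plan is to recognize the diagrammatic criterion as an elementary unfolding of the fully-faithfulness of the embedding $\iota_e : \fibc_I \to \ddesc(\fibc,e)$ for each regular epi $e : J \eepi I$, i.e.\ of Definition~\ref{def:pre-stack}. Fix such an $e$ with kernel pair $\partial_0,\partial_1 : J \times_I J \to J$ and objects $A, B \in \fibc_I$. Since the diagrammatic condition quantifies over \emph{arbitrary} cover-cartesian lifts $p : A_1 \eepicart A$ and $r : B_1 \eepicart B$ rather than only the cleavage-chosen $e^*A, e^*B$, and any two cover-cartesian lifts over $e$ differ by a unique vertical isomorphism, this quantification is harmless: without loss of generality $A_1 = e^*A$ and $B_1 = e^*B$, so that $(A_1, A_2, A_3, p_i, q_i)$ and $(B_1, B_2, B_3, r_i, s_i)$ form the descent data $\iota_e A$ and $\iota_e B$.

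Next I would argue that morphisms $\iota_e A \to \iota_e B$ in $\ddesc(\fibc,e)$ are in natural bijection with pairs $(f_1, f_2)$ of vertical arrows satisfying just the two equations $f_1 p_i = r_i f_2$ ($i=0,1$). Indeed, given such a pair, the universal property of (say) the cartesian arrow $s_2$ yields a unique vertical $f_3 : A_3 \to B_3$ with $s_2 f_3 = f_2 q_2$; the remaining equalities $s_0 f_3 = f_2 q_0$ and $s_1 f_3 = f_2 q_1$ then follow by uniqueness of factorizations through cartesian arrows, using the simplicial identities~\eqref{eq:simpl-id} together with the compatibilities $f_1 p_i = r_i f_2$ transported one level up. Thus the top two rows of the configuration in the lemma capture all of the data of a morphism of descent data.

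Finally, because $p$ and $r$ are cover-cartesian, vertical arrows $h : A \to B$ over $I$ correspond bijectively, via their unique factorization $hp = rf_1$, to vertical arrows $f_1 : A_1 \to B_1$ over $J$ satisfying no constraint. Under this correspondence, $\iota_e$ is full precisely when every admissible $(f_1, f_2)$ comes from some $h$, and faithful precisely when such $h$ is unique; conjoined, this is exactly the existence-and-uniqueness assertion of the lemma. The main obstacle in writing this out cleanly is the bookkeeping for the third level $A_3, f_3$: one has to verify carefully that the two level-$2$ compatibilities $f_1 p_i = r_i f_2$ together with the simplicial identities already force the three level-$3$ equations $s_i f_3 = f_2 q_i$, so that $A_3$ and $f_3$ can legitimately be omitted from the diagrammatic statement without weakening it.
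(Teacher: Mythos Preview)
Your proposal is correct and supplies exactly the argument the paper omits (the lemma is stated with a bare \qed). The reduction of a morphism in $\ddesc(\fibc,e)$ between $\iota_e A$ and $\iota_e B$ to the two level-$2$ compatibilities $f_1 p_i = r_i f_2$ is the heart of the matter, and your sketch of why $f_3$ and the three level-$3$ equations are forced by the simplicial identities is right; for instance, having defined $f_3$ via $s_2 f_3 = f_2 q_2$, the identity $r_0 s_2 = r_1 s_0$ together with $r_0 f_2 = f_1 p_0$ and $p_0 q_2 = p_1 q_0$ yields $r_1 s_0 f_3 = r_1 f_2 q_0$, whence $s_0 f_3 = f_2 q_0$ by cartesianness of $r_1$.

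One phrasing should be tightened: the sentence ``vertical arrows $h : A \to B$ over $I$ correspond bijectively \ldots\ to vertical arrows $f_1 : A_1 \to B_1$ over $J$ satisfying no constraint'' is not literally true and in fact contradicts your next sentence. What you have is a well-defined map $h \mapsto f_1$ (by cartesianness of $r$); faithfulness of $\iota_e$ is precisely injectivity of this map, and fullness is surjectivity onto those $f_1$ whose induced $f_2$ satisfies both compatibilities. You might also note explicitly that $f_2$ is already determined by $f_1$ via cartesianness of $r_0$, so the datum is really just $f_1$ subject to one compatibility condition.
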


\begin{lemma}\label{lem:descent-pstack}\label{lem:covercart-regepi}
  Let $\fibc:\tot{\fibc}\to\catr$ be a
  pre-stack. Cover-cartesian maps in
$\tot{\fibc}$ are regular epimorphisms.
\end{lemma}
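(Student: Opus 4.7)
The plan is to show that any cover-cartesian morphism $f:A\to B$ over a regular epimorphism $e:J\eepi I$ is the coequalizer in $\tot{\fibc}$ of a canonical parallel pair $p_0,p_1:A_2\rightrightarrows A$ obtained by choosing cartesian lifts of the kernel pair $\partial_0,\partial_1:J\times_I J\rightrightarrows J$ of $e$ with common codomain $A$ and satisfying $fp_0=fp_1$ (which is possible by cartesianness of $f$ over $e$ together with $e\partial_0=e\partial_1$, via a standard compatible choice of lifts). A coequalizer is a regular epimorphism, so this suffices.

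The main technical device is the diagrammatic characterization of pre-stacks from Lemma~\ref{lem:diagrammatic-pre-stack}, but that condition applies only when the middle map between the two cover-cartesian columns is vertical, whereas an arbitrary $g:A\to C$ coequalizing $p_0,p_1$ need not be. The plan is to reduce the general case to the vertical case by two cartesian factorizations. Given such a $g$, applying $\fibc$ to $gp_0=gp_1$ and using that $e$ is the coequalizer of its kernel pair in $\catr$ produces a unique $u:I\to\fibc(C)$ with $\fibc(g)=ue$; then $g$ factors as $g=\gamma\eta g''$ where $\gamma:u^*C\to C$ is cartesian over $u$, $\eta:e^*(u^*C)\to u^*C$ is cartesian over $e$ (hence cover-cartesian), and $g'':A\to e^*(u^*C)$ is vertical over $J$. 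I would set up the pre-stack diagram with left column $(p_0,p_1,f)$ and right column $(\eta_0,\eta_1,\eta)$, where $\eta_0,\eta_1$ are chosen compatibly so that $\eta\eta_0=\eta\eta_1$, mirroring $fp_0=fp_1$. The pre-stack condition would then yield a unique vertical $h':B\to u^*C$ with $h'f=\eta g''$, and the desired factorization is $h:=\gamma h':B\to C$, which satisfies $hf=g$. Uniqueness of $h$ follows by reversing this process, using that $e$ is epi in $\catr$ (to recover $u$) together with the uniqueness part of the pre-stack condition (to recover $h'$).

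The main obstacle is verifying the second commutativity square $\eta_1 f_2=g''p_1$ needed to apply the pre-stack condition, where $f_2:A_2\to(e^*(u^*C))_2$ is defined via cartesianness of $\eta_0$ as the unique vertical map with $\eta_0 f_2=g''p_0$. The plan for this verification is a two-stage cartesian cancellation: from $gp_0=gp_1$, cartesianness of $\gamma$ over $u$ yields $\eta g''p_0=\eta g''p_1$, both sides being over $e\partial_0=e\partial_1$; then the identities $\eta\eta_1 f_2=\eta\eta_0 f_2=\eta g''p_0=\eta g''p_1=\eta(g''p_1)$ together with cartesianness of $\eta$ over $e$ force $\eta_1 f_2=g''p_1$, both sides being over $\partial_1$. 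The crucial bookkeeping is the compatible choice of cartesian lifts producing $\eta\eta_0=\eta\eta_1$; once this is in place, the verifications reduce entirely to uniqueness clauses in the definition of cartesian morphism.
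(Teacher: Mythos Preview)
Your proposal is correct and follows essentially the same approach as the paper's proof: both show the cover-cartesian map is the coequalizer of cartesian lifts of the kernel pair of the underlying regular epimorphism, factor an arbitrary coequalizing map through cartesian lifts to reduce to a vertical map between cover-cartesian columns, and then invoke the diagrammatic pre-stack criterion (Lemma~\ref{lem:diagrammatic-pre-stack}). Your version is slightly more explicit in verifying the second commutativity square $\eta_1 f_2 = g'' p_1$ via two cartesian cancellations, whereas the paper simply asserts that the map $k$ works as the reindexing along either projection; this extra care is fine and clarifies a point the paper leaves somewhat implicit.
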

\begin{proof}
Let $p:A_1\eepicart A$ be a cover-cartesian map. We show that $p$ is the
coequalizer of its kernel pair   
$\xymatrix@1{A_2\cart@<-4pt>[r]_{p_1}\cart@<4pt>[r]^{p_0} & A_1\epicart[r]^p
& A}$ (which is definable using only finite limits in the base). Let $f:A_1\to
B$ such that $fp_0=fp_1$. Then $\fibc(f)$ factors uniquely
through $\fibc(p)=e$ since $e$ is a regular epimorphism and $\fibc(f)$
coequalizes its kernel pair $\fibc(p_0)=\partial_0, \fibc(p_1)=\partial_1$.
Consider the
following diagram.
\begin{equation*}
  \xymatrix{
    A_2\cart@<-4pt>[r]_{p_1}\cart@<4pt>[r]^{p_0}\ar[d]_{k}
 & A_1\epicart[r]^p\ar[drr]_(.3){f}\ar[d]_{f^*} & A\dotted[d]^(.3){f'}|{\hole}
\\
C_2\cart@<-4pt>[r]\cart@<4pt>[r]  &C_1\epicart[r]_d & C\cart[r]_c &
B\\
     K\ar@<-4pt>[r]_{\partial_1}\ar@<4pt>[r]^{\partial_0} 
& 
I^*\depi[r]^{e} 
& 
I \ar[r]^u& J\\
}
\end{equation*}
Here $k$ can be understood both as the cartesian lifting of $fp_0 = fp_1$
along $ue\partial_0=ue\partial_1$ and as the reindexing of $f^*$ along either of
$\partial_0$ and $\partial_1$. Now since the reindexing of $f^*$ along both
components of
the
kernel pair of  $e$ are equal and $\fibc$ is a pre-stack by assumption,
Lemma~\ref{lem:diagrammatic-pre-stack} allows us to deduce that there
exists a unique $f':A\to C$ such that $f'p=df^*$. The map $cf'$ provides the
desired factorization of $f$ through $e$, uniqueness follows from uniqueness of
$f'$.
\end{proof}

A minimal structural requirement for pre-stacks to be of interest for us is to
have finite limits. This leads us to the following definition.
\begin{definition}\label{def:fl-pstack}
\begin{enumerate}\item
A \emph{finite-limit pre-stack}\index{finite-limit pre-stack}\index{pre-stack!of finite limit categories} is a pre-stack
$\fibc:\tot{\fibc}\to\catr$ on a regular category which is a finite limit
fibration in the sense of
Definition~\ref{def:fl-fibs}.
 \item 
$\catlex(\catr)$ is the 2-category of finite-limit pre-stacks on $\catr$.
Its 1-cells are finite limit preserving fibered functors, and its 2-cells are
fibered natural transformations.
\end{enumerate}
\end{definition}

\subsubsection{Weak equivalences}

There is a class of fibered functors between pre-stacks which are
almost, but not quite, equivalences -- the \emph{weak equivalences}. We recall
the definition from~\cite{bunge1979stacks}.
\begin{definition}\label{def:weak-equivalences}
 A fibered functor $F:\fibc\to\fibd$ between pre-stacks on a regular category
$\catr$ is called a \emph{weak equivalence}\index{weak equivalence}, if it is
full and faithful and for each $D\in\tot{\fibd}$ there exists a
$C\in\tot{\fibc}$ and a cover-cartesian map $e:FC\ecovercart D$.
\end{definition}
\begin{remarks}
 \begin{itemize}
  \item A good way to understand the relevance of weak equivalences between
pre-stacks is to note that if $F$ is an externalization
(\cite[Section~4]{streicherfib}) of an internal functor $F_0:\catc\to\catd$
between internal categories, then $F$ is a weak equivalence iff $F_0$ is a weak
 equivalence in the sense of the internal logic of $\catr$, meaning that the
statement that $F$ is full, faithful and essentially surjective holds in the
internal logic, but the essential surjectivity is not necessarily witnessed by a
choice of essential pre-image for each object of $\catd$.
\item Another intuition on weak equivalence is given by the fact that a fibered
functor $F:\fibc\to\fibd$ between pre-stacks $\fibc,\fibd$ is a weak
equivalence iff the induced functor $\tilde{F}:\tilde{\fibc}\to\tilde{\fibd}$
between the stack completions of $\fibc$ and $\fibd$ is an equivalence in the
standard sense (\cite[Corollary~2.12]{bunge1979stacks}, we do not treat stacks
and stack-completions here and refer to \cite{bunge1979stacks} and the
references therein).
\item In the same spirit as the weakening of the notion of equivalence, one can
consider a weak notion of `having finite limits' for regular pre-stacks
by demanding that each diagram in the total category can be covered (in the
sense of cover-cartesian maps) by a diagram having a limiting cone. This and
similar considerations are important in \cite{hrr90}. It seems reasonable to
assume that everything that we do with finite-limit pre-stacks in this work
also works for pre-stacks that only have `weak' finite limits (in the above
sense)\footnote{For a long time I didn't believe this to be relevant for
realizability, but Wouter Stekelenburg pointed out to me that depending on the
definition of pca, the category of partitioned assemblies over an internal pca
does not necessarily have (strong) finite limits.}.

I wrote `weak' in quotes above since there is a clash with standard
terminology -- normally a weak limit for a diagram is a cone that satisfies the
existence part, but not the uniqueness part of the universal property of a
limiting cone. If a pre-stack has weak finite limits in the fibrational sense,
then the total category has weak finite limits in the ordinary sense, but not
necessarily vice versa.
 \end{itemize}
\end{remarks}

\subsection{Geometric pre-stacks}\label{suse-geo-fib}

\begin{definition}
A \emph{pre-stack of regular categories}\index{pre-stack!of regular categories} on a regular category $\catr$ is a
pre-stack $\fibc:\tot{\fibc}\to\catr$ whose fibers are regular categories, and
whose reindexing functors are regular functors.
\end{definition}
\begin{lemma}\label{lem:descent-regular} \label{lem:descent-regular-descent}
Let $\fibc:\tot{\fibc}\to\catr$ be a pre-stack of regular
categories on a regular
category $\catr$. Vertical regular epimorphisms are
    closed under descent, i.e.\ if $e:J\eepi I$ is a regular epimorphism in
    $\catr$, and $(f:X\to Y)\in \fibc_I$ such that $e^*f$ is regular epic in
    $\fibc_J$, then $f$ is already a regular epi in $\fibc_I$.
\end{lemma}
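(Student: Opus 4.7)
The plan is to produce an image factorization of $f$ in the fiber $\fibc_I$ and prove, via a descent argument, that its monic part is an isomorphism. First, form the factorization $f = m \circ q$ in $\fibc_I$ with $q\colon X\eepi\img(f)$ regular epic and $m\colon\img(f)\emono Y$ monic; this exists because each fiber is regular by hypothesis. Since $e^*$ is a regular functor it preserves both regular epis and monos, so $e^*f = e^*m \circ e^*q$ is an image factorization of $e^*f$ in $\fibc_J$. Because $e^*f$ is regular epic by assumption, the monic part $e^*m$ must be an isomorphism.

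It remains to show that $m$ itself is an isomorphism, for which I will descend $(e^*m)^{-1}$ along $e$ using the pre-stack hypothesis. By definition, the embedding $\fibc_I \to \ddesc(\fibc,e)$ is fully faithful; concretely, by fullness a vertical morphism $Y \to \img(f)$ in $\fibc_I$ can be obtained from any morphism $h\colon e^*Y \to e^*\img(f)$ in $\fibc_J$ whose reindexings along $\partial_0,\partial_1\colon J\times_I J\rightrightarrows J$ agree modulo the canonical isomorphisms $\partial_i^* e^* \cong (e\partial_i)^*$. Taking $h = (e^*m)^{-1}$: since $e\partial_0 = e\partial_1$, both $\partial_0^*h$ and $\partial_1^*h$ are inverses to the \emph{same} morphism $(e\partial_0)^*m = (e\partial_1)^*m$, hence they coincide by uniqueness of inverses. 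Fullness then yields $\tilde m\colon Y\to\img(f)$ in $\fibc_I$ with $e^*\tilde m = (e^*m)^{-1}$.

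Finally, the composites $\tilde m \circ m$ and $m\circ\tilde m$ reindex along $e$ to $\id_{e^*\img(f)}$ and $\id_{e^*Y}$ respectively, which are also the reindexings of $\id_{\img(f)}$ and $\id_Y$. Faithfulness of the embedding into $\ddesc(\fibc,e)$ then forces $\tilde m \circ m = \id_{\img(f)}$ and $m\circ\tilde m = \id_Y$, so $m$ is an isomorphism. Therefore $f \cong q$ is regular epic in $\fibc_I$.

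The only technical obstacle is the bookkeeping around the coherence isomorphisms linking $\partial_i^* e^*$ with $(e\partial_i)^*$, which is absorbed into the equivalent reformulation of $\ddesc(\fibc,e)$ in Lemma~\ref{lem:desc-cleavage}; everything else reduces to the regularity of reindexing, the uniqueness of inverses in a category, and the uniqueness part of the image factorization in the fibers.
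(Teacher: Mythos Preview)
Your proof is correct and close in spirit to the paper's, but the two arguments are organized differently. The paper proves directly that $f$ is left orthogonal to every monomorphism in $\fibc_I$: given a square against a mono $m$, it reindexes to $\fibc_J$, obtains a diagonal filler there (since $e^*f$ is regular epic and $e^*m$ is monic), observes that the two reindexings of this filler along the kernel pair of $e$ mediate the \emph{same} orthogonality square and hence coincide by uniqueness of fillers, and then descends the filler via the pre-stack property. Your version instead descends the inverse of the monic part of the image factorization, using uniqueness of inverses where the paper uses uniqueness of orthogonality lifts. Both arguments hinge on the same mechanism---produce a morphism upstairs, check its two pullbacks to $J\times_I J$ agree by a uniqueness principle, then invoke full faithfulness of $\fibc_I\to\ddesc(\fibc,e)$---so the difference is largely one of packaging. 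Your approach has the minor advantage of being slightly more self-contained (you never need to unwind what ``left orthogonal to all monos implies regular epic'' means), while the paper's approach generalizes more readily to situations where one wants to descend arbitrary lifting data rather than just invertibility.
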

\begin{proof}
  We show that $f$ is left orthogonal~(see
Definition~\ref{def:facsys}-\ref{def:facsys-orth}) to monos in $\fibc_I$. Any
square $
  \vcenter{\xymatrix@-3mm{
      X\ar[d]_f\ar[r] & U\mono[d]^m\\
      Y\ar[r] & A }} $ in $\fibc_I$ gives rise to a square $
  \vcenter{\xymatrix@-3mm{
      e^*X\depi[d]_{e^*f}\ar[r] & e^*U\mono[d]^{e^*m}\\
      e^*Y\ar[r] & e^*A }} $ in $\fibc_J$ with mediator $
  \vcenter{\xymatrix@-3mm{
      e^*X\depi[d]_{e^*f}\ar[r] & e^*U\mono[d]^{e^*m}\\
      e^*Y\ar[r]\dashed[ur]^{h} & e^*A }} $.  Now the reindexings of $h$
  along the two components of the kernel pair of $e$ coincide (since they
mediate the same orthogonality square), and thus (by
Lemma~\ref{lem:diagrammatic-pre-stack}) $h$
  descends to a mediator of the square in $\fibc_I$.
\end{proof}

\begin{definition}\label{def:fisufi}
Let $\fibc:\tot{\fibc}\to\catc$ be a fibration of finite limit categories on a
finite limit category $\catc$.
  \begin{enumerate}
  \item\label{def:fisufi-fisufi} The 
  \emph{fibered subobject fibration}\index{fibered!subobject
fibration}\index{subobject fibration!fibered}\index{fibration!fibered subobject}
  $\lfisufi{\fibc}$ of $\fibc$ is the posetal
fibration on $\tot{\fibc}$ whose predicates on
  $C\in\tot{\fibc}$ in $\sub(\fibc)$ are vertical monomorphisms with codomain
$C$, where
  entailment is given by inclusion of monomorphisms, and reindexing is given by
pullback.
\[
\Sub(\fibc)\xrightarrow{\sub(\fibc)}\tot{\fibc}
\xrightarrow{\;\;\fibc\;\;} \catr
\]
\item\label{def:fisufi-int-union} We say that $\fibc$ has \emph{internal
unions}\index{internal unions}, if $\sub(\fibc)$ admits left adjoints to
reindexing along cartesian morphisms in $\tot{\fibc}$ subject to the
Beck-Chevalley condition for pullbacks along cartesian morphisms (i.e. for
squares of cartesian morphism in $\tot{\fibc}$ over pullbacks in $\catc$).
\item\label{def:fisufi-stable-int-union} We say that $\fibc$ has \emph{stable
internal unions}\index{internal unions!stable}, if $\sub(\fibc)$ admits
left adjoints to
reindexing along cartesian morphisms in $\tot{\fibc}$, subject to the
Beck-Chevalley condition for pullbacks along \emph{arbitary maps} in
$\tot{\fibc}$.
\end{enumerate}
\end{definition}
We leave it as an exercise to verify that for a finite limit
category $\catc$, $\famf(\catc)$ has internal unions iff the subobject
lattices in $\catc$ have small joins, and that internal unions in $\famf(\catc)$
are stable iff small joins of subobjects in $\catc$ are stable under pullback.

\begin{definition}\label{def:geo-fib}
\begin{enumerate}
  \item A \emph{\geostack}\index{geometric pre-stack}\index{pre-stack!geometric} is a pre-stack
$\fibs:\tot{\fibs}\to\catr$ of
regular categories with stable internal unions.
  \item $\catgeo(\catr)$ is the 2-category of \geostack{}s on $\catr$
    -- its 1-cells are regular fibered functors that
    preserve internal unions (`geometric fibered
functors'\index{geometric fibered functor}\index{fibered!functor!geometric}),
and its 
    2-cells are fibered natural transformations.
  \end{enumerate}
\end{definition}
\begin{remark}\label{rem:geom-prestack}
 Johnstone~\cite[after Lemma~A1.4.18]{elephant1} defines a \emph{geometric
category}\index{geometric category}\index{category!geometric} to be a well-powered regular category with pullback-stable small joins
of subobjects. It is easy to see that a small category $\cats$ is geometric iff
its family fibration $\famf(\cats):\Famf(\cats)\to\catset$ is a geometric
pre-stack in the sense of the preceding definition. The well poweredness
condition is necessary for Johnstone to show that any geometric category is a
Heyting category (since a monotone map between \emph{small} cocomplete lattices
has a right adjoint iff it preserves arbitrary joins), but it would be too
restrictive for our purposes to make a similar assumption since we are
interested in examples which do \emph{not} have universal quantification in the
fibers (see Remark~\ref{rem:rel-compl}-\ref{rem:rel-compl-prim}). 

The nLab~\cite{nlab-geometric-category} doesn't demand well
poweredness either for geometric categories.
\end{remark}

\begin{lemma}\label{lem:gps-sub-equant}
  A pre-stack $\fibs:\tot{\fibs}\to\catr$ of finite limit categories is
  a \geostack{} iff its fibered subobject fibration $\sub(\fibs)$ has
  existential quantification along \emph{arbitrary} morphisms in $\tot{\fibs}$.
In this case, $\sub(\fibs)$ also validates the Frobenius condition and thus is
an existential fibration.
\end{lemma}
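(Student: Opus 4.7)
The plan hinges on the canonical factorization of any morphism $f:C\to D$ in $\tot{\fibs}$ as $f=m_f\circ v_f$, where $v_f:C\to f^*D$ is vertical (in the fiber over $\fibs(C)$) and $m_f:f^*D\to D$ is the chosen cartesian lifting. Consequently, $f^*=v_f^*\circ m_f^*$ at the level of $\sub(\fibs)$, so the problem of constructing left adjoints to reindexing along arbitrary morphisms decomposes cleanly into the cartesian and vertical cases.

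For the forward direction, assume $\fibs$ is a geometric pre-stack. Regularity of each fiber $\fibs_I$ yields image factorizations, hence left adjoints $\exists_v$ along any vertical $v$ in $\sub(\fibs)$, and pullback-stability of covers in $\fibs_I$ provides BCC for pullbacks that live inside a single fiber. Stable internal unions directly give left adjoints $\exists_m$ along cartesian morphisms $m$, together with BCC for pullbacks along arbitrary maps. Setting $\exists_f:=\exists_{m_f}\circ\exists_{v_f}$ produces the required left adjoint along a general $f$, and its BCC for an arbitrary pullback square is obtained by decomposing the square into a cartesian-over-cartesian piece and a vertical-over-identity piece (using the factorization on the parallel edges) and pasting the BCCs already available.

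For the backward direction, assume $\sub(\fibs)$ has existential quantification along arbitrary morphisms with full BCC. Restricting to vertical morphisms in a single fiber $\fibs_I$, applying $\exists_v$ to top elements of subobject posets and factoring through the image yields cover/mono factorizations in $\fibs_I$; BCC for pullbacks internal to $\fibs_I$ makes these covers pullback-stable, so the fibers are regular and reindexing (which is cartesian and thus commutes with the constructions) is automatically a regular functor. Restricting to cartesian morphisms yields internal unions in the sense of Definition~\ref{def:fisufi}, and their stability follows by restricting the assumed BCC to squares over pullbacks along arbitrary maps in $\catr$.

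The final clause about Frobenius is the standard fibered argument and is proved in both cases alike: given $\varphi\in\sub(\fibs)_D$ and $\psi\in\sub(\fibs)_C$ for $f:C\to D$, the subobject $f^*\varphi\wedge\psi$ of $C$ is the pullback of $\psi\emono C$ along the projection from $f^{-1}\varphi\emono C$, and applying BCC to this pullback square (together with the fact that $f^*$ preserves finite meets as a right adjoint) gives the isomorphism $\exists_f(f^*\varphi\wedge\psi)\cong\varphi\wedge\exists_f\psi$. I expect the main obstacle to be the careful decomposition of a general pullback square in $\tot{\fibs}$ into purely cartesian and purely vertical pieces compatible with the cleavage, so that the BCC for arbitrary squares really does reduce to the two cases covered by the hypotheses; once the right diagrammatic decomposition is set up, the rest of the verification is routine.
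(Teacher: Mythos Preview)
Your approach is the paper's approach: factor arbitrary maps as vertical-then-cartesian, build $\exists_f$ as a composite, and verify BCC by decomposing pullback squares. But your decomposition of the pullback square is too coarse, and this is exactly where the argument can go wrong.

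You say the square decomposes into ``a cartesian-over-cartesian piece and a vertical-over-identity piece''. In fact, factoring \emph{both} edges of the square as vertical-then-cartesian produces a $2\times 2$ grid of \emph{four} small pullback squares, and two of them are genuinely mixed. One mixed square has you quantifying along a cartesian arrow and reindexing along a vertical one; this is covered by stable internal unions (BCC along arbitrary maps). The other mixed square has you quantifying along a \emph{vertical} arrow and reindexing along a \emph{cartesian} one; this is \emph{not} ``BCC for pullbacks that live inside a single fiber'', and it does not follow from either of the two cases you list. What makes it work in the direction geometric $\Rightarrow$ $\exists$ is precisely that reindexing functors are \emph{regular} functors (part of the definition of a pre-stack of regular categories), so they preserve covers and hence the image factorizations that compute $\exists$ along vertical maps.

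The same mixed square is the content you are glossing over in the other direction when you write that ``reindexing is automatically a regular functor'' because it is cartesian. Preserving finite limits does not give preservation of covers. The paper obtains this from the assumed global BCC by the one-line computation $\exists_{\bar e}\top \cong c^*\exists_e\top \cong c^*\top \cong \top$ for a vertical regular epi $e$ reindexed along a cartesian $c$. You should make this explicit.

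Your Frobenius sketch is fine in spirit; the paper's version runs the same idea through the chain $\varphi\wedge\exists_u\psi \cong \exists_\varphi\varphi^*\exists_u\psi \cong \exists_\varphi\exists_{\bar u}\bar\varphi^*\psi \cong \exists_u\exists_{\bar\varphi}\bar\varphi^*\psi \cong \exists_u(\bar\varphi\wedge\psi)$, using BCC on the pullback of $\varphi$ along $u$.
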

\begin{proof}
Let $\fibs$ be a finite limit pre-stack such that $\sub(\fibs)$ has existential
quantification.
Clearly, if we have $\exists$ along arbitrary maps, then we have it in
particular along vertical and cartesian maps. Furthermore, the global
Beck-Chevalley condition specializes to the same condition in the fibers, which
implies that the fibers of $\fibs$ are regular categories. To show that we are
dealing with a fibration of regular categories, it remains to show that
reindexing preserves regular epimorphisms. This follows from the global
Beck-Chevalley condition for squares of the form
\[
 \xymatrix@-3mm{
\overline{D}\cart[r]^d\ar[d]_{\overline{e}}\pullbackcorner&D\ar[d]^e\\
\overline{C}\cart[r]_c&C
}
\]
with $e$ (and thus $\overline{e}$) vertical, since $e$ is regular epic iff
$\exists_e\top\cong\top$, in which case we have
$\exists_{\overline{e}}\top\cong\exists_{\overline{e}}
d^*\top\cong c^*\exists_e\top\cong c^*\top\cong\top$, which means that
$\overline{e}$ is also a regular epi. Thus, $\fibs$ is a \geostack{}.

Conversely, assume that $\fibs$ is a \geostack{}. Then we can
existentially quantify in $\sub(\fibs)$ along vertical maps since the fibers
are regular, and along cartesian maps by assumption. Since every map in the
total category can be decomposed into vertical followed by cartesian part, we
can thus quantify along all maps. It remains to check if the global
Beck-Chevalley
condition holds in this case. To this end consider a pullback square
\[
\vcenter{
\xymatrix@-3mm{
 P\ar[r]\pullbackcorner\ar[d] & A\ar[d]\\ 
B\ar[r]& C
}}
\]
in $\tot{\fibs}$. This square can be decomposed into a diagram of cartesian and
vertical arrows as in 
\[
\vcenter{
\xymatrix@-3mm{
 P\pullbackcorner\ar[r]\ar[d] &\bullet\pullbackcorner\cart[r]\ar[d]& A\ar[d]\\ 
\bullet\pullbackcorner\ar[r]\cart[d]&\bullet\pullbackcorner\cart[r]\cart[d]
&\bullet
\cart[d] \\
B\ar[r] &\bullet\cart[r]& C
}},
\]
and to check the condition for the large square, it suffices to check it for
the small ones. The condition for the upper left square follows since it holds
in regular categories, for the two squares where we quantify along cartesian
arrows it follows by assumption, and for the remaining one it is a consequence
of the fact that reindexing preserves regular epimorphisms.

It remains to show the validity of the Frobenius condition. Consider the diagram
\[
 \vcenter{\xymatrix@-3mm{
&
\bullet\mono[d]_{\overline{\varphi}}\cart[r]^{\overline{u}}&\bullet\mono[d]
^\varphi\\
\bullet\mono[r]^\psi&\bullet\cart[r]^u&\bullet
}}.
\]
We have to show that
$\varphi\wedge\exists_u\psi\cong\exists_u\overline{\varphi}\wedge\psi$. To
show this, we can argue
\[
\varphi\wedge\exists_u\psi
\cong \exists_\phi\phi^*\exists_u\psi
\cong \exists_\phi \exists_{\overline{u}}\overline{\varphi}^*\psi
\cong \exists_u\exists_{\overline{\varphi}}\overline{\varphi}^*\psi
\cong\exists_u\overline{\varphi}\wedge\psi,
\]
which completes the proof.
\end{proof}

\begin{definition}\label{def:collective-regular-epi}
 Let $\fibs$ be a \geostack{} on $\catr$. We call a morphism $f:A\to B$
in $\tot{\fibs}$ a \emph{collective cover}\index{collective cover}\index{cover!collective} (or
\emph{collectively
covering}), if $\exists_f\top\cong\top$.
\end{definition}
When thinking in terms of families, this means that the objects in the family
$B$ are
covered by (the unions of) the images of the objects in the family $A$.

\medskip

We will now prove that collective covers and vertical monos form a \emph{stable
factorization system} on the total category of a geometric pre-stack. Let us
first recall the definition from~\cite[Section~5]{borceux1}.
\begin{definition}\label{def:facsys}
 Let $\catc$ be a category. 
\begin{enumerate}
 \item\label{def:facsys-orth}
Let $f:A\to B$, $g:X\to Y$ in $\catc$.
We say that $f$ is \emph{left orthogonal}\index{orthogonal}\index{left orthogonal} to $g$ (or that $g$ is
\emph{right orthogonal}\index{right orthogonal} to $f$) and we write $f\perp g$, if for any commuting
square
\[
\xymatrix@R-4mm{
A \ar[r]\ar[d]_f& X\ar[d]^g \\
B\dashed[ru]^h\ar[r] & Y
}\]
there exists a \emph{unique} $h:B\to X$ such that the two triangles commute.
\item
A \emph{factorization system}\index{factorization system} on $\catc$ is a pair
$(\mathcal{E}, \mathcal{M})$
of classes of morphisms of $\catc$ such that
\begin{enumerate}
 \item $\mathcal{E}$ and $\mathcal{M}$ are both closed under composition and
contain all isomorphisms,
\item for all $e\in\mathcal{E}$ and $m\in\mathcal{M}$ we have $e\perp m$, and
\item any $f:A\to B$ in $\catc$ can be factorized as $f=me$ with $m\in
\mathcal{M}$ and $e\in\mathcal{E}$.
\end{enumerate}
\item
If $\catc$ has pullbacks, we call a factorization system
$(\mathcal{E},\mathcal{M})$ on $\catc$ \emph{stable}\index{factorization
system!stable}\index{stable factorization system}, if given $f:A\to B$ and
$e:C\to B$ we have $e\in\mathcal{E}\imp f^*e\in\mathcal{E}$\footnote{
It follows from orthogonality that the class
$\mathcal{M}$ is closed under pullbacks.}.
\end{enumerate}
\end{definition}

\begin{lemma}\label{lem:geostack-ccov}
 Let $\fibs$ be a \geostack{} on $\catr$.
 \begin{enumerate}
  \item\label{lem:geostack-ccov-epicart}
 Cover-cartesian maps are collectively covering.
\item\label{lem:geostack-ccov-fsys}
Collective covers and vertical monos form
a stable factorization
system on $\tot{\fibs}$.
\item\label{lem:geostack-ccov-equant}
 The collective-cover/vertical-mono factorization system allows to express
existential quantification in
$\sub(\fibs)$. Concretely, given 
$f: B\to A$ and a vertical monomorphism $m:U\emono B$ in $\tot{\fibs}$,
$\exists_f m$ is given by the vertical mono part of the
collective-cover/vertical-mono factorization of $fm$.
\[
\xymatrix@R-2mm{
U\vmono[d]_m\ccov[r] &
\bullet\vmono[d]^{\exists_fm}\\
B\ar[r]^f & A
}
\]
\item\label{lem:geostack-ccov-cepi}
Collective covers are collectively epic, i.e.\ if $e$ is
collectively covering and $fe=ge$ where $f$ and $g$ are vertical, then $f=g$.
 \end{enumerate}
\end{lemma}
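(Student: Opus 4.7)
The plan is to prove (i) first, use it together with the existential structure to establish (iii) (which yields the factorizations needed for (ii)), then derive orthogonality and stability for (ii), and finally reduce (iv) to an equalizer argument. A key auxiliary observation, used throughout, is: if a collective cover $e:A\to B$ factors through a vertical mono $m:U\emono B$ as $e=me'$, then $m$ is an isomorphism. Indeed, since the image of a mono is itself, $\exists_m\top_U\cong m$, whence $\top_B\cong\exists_e\top_A\cong\exists_m\exists_{e'}\top_A\leq\exists_m\top_U\cong m$, forcing $m\cong\top_B$. Claim (i) then follows immediately: a cover-cartesian map $f:A\to B$ is a regular epimorphism in $\tot{\fibs}$ by Lemma~\ref{lem:descent-pstack}, and factors through the vertical mono $\exists_f\top_A\emono B$; since vertical monos are global monos (their self-pullback is fiberwise, hence an iso), the standard fact that a regular epi factoring through a mono makes the mono an iso gives $\exists_f\top_A\cong\top_B$.

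For (iii) and the existence half of (ii), fix $f:B\to A$ and a vertical mono $m:U\emono B$, and let $\iota:V\emono A$ represent $\exists_f m$. The counit of $\exists_f\adj f^*$ provides a canonical $p:U\to V$ with $\iota p=fm$. Functoriality gives $\exists_{fm}\top_U\cong\exists_\iota\exists_p\top_U$, and comparing with $\exists_{fm}\top_U\cong\iota\cong\exists_\iota\top_V$ while using that $\exists_\iota$ is injective on subobjects (since $\iota$ is monic) forces $\exists_p\top_U\cong\top_V$; thus $p$ is a collective cover. Specialising to $m=\id_B$ produces the factorization of an arbitrary morphism $f$ into a collective cover followed by a vertical mono, giving both (iii) and the existence of factorizations in (ii).

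Orthogonality in (ii) follows again from the auxiliary observation: given a square $mh=ge$ with $e:A\to B$ a collective cover, $m:U\emono C$ a vertical mono, and $g:B\to C$, the pullback $g^*m\emono B$ is a vertical mono through which $e$ factors, hence an isomorphism by the auxiliary observation; composing its inverse with the pullback projection supplies a diagonal filler $B\to U$, which is unique because $m$ is monic. Stability of collective covers under pullback is just the Beck-Chevalley identity $\exists_{u^*e}\top\cong u^*\exists_e\top\cong u^*\top=\top$, valid for arbitrary $u$ by Lemma~\ref{lem:gps-sub-equant}. For (iv), if $fe=ge$ with $f,g$ vertical, the equalizer $\eq(f,g)\emono B$ exists as a vertical mono inside the regular fiber containing $f,g$, and $e$ factors through it; the auxiliary observation once more makes this equalizer isomorphic to $\top_B$, forcing $f=g$.

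The main obstacle is bookkeeping: one must check that vertical monos are global monos and stable under arbitrary pullback, that equalizers of vertical morphisms are vertical monos, and that the Beck-Chevalley hypotheses from Lemma~\ref{lem:gps-sub-equant} cover the pullback squares actually used above. None of these is hard given the finite-limit pre-stack structure from Section~\ref{sec:fl-fibs}, but every step of the argument depends on them.
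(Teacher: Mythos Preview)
Your proof is correct and follows essentially the same strategy as the paper: the auxiliary observation (a collective cover factoring through a vertical mono forces the mono to be an iso) is exactly what the paper uses for orthogonality, the factorization goes through $\exists_f\top$, stability is Beck--Chevalley, and (iv) is the equalizer argument. The one minor difference is in (i): the paper argues directly from the pre-stack condition that reindexing along a cover-cartesian map reflects inclusions of vertical subobjects (so $e^*m\cong\top$ implies $m\cong\top$), whereas you take the slightly more roundabout route via Lemma~\ref{lem:descent-pstack} (cover-cartesian maps are regular epis) and the standard fact that a regular epi factoring through a mono makes the mono invertible. Both are valid; the paper's is marginally shorter, yours reuses an earlier lemma.
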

\begin{proof}
\emph{Ad \ref{lem:geostack-ccov-epicart}.}
Let $e:A\eepicart B$ be cover-cartesian in $\tot{\fibc}$. We have to show that
$e$
doesn't factor through any nontrivial vertical monomorphism. Consider the
diagram
\[
 \cxymatrix{
& \bullet\mono[d]^m \\
A\dotted[ru]^h\epicart[r]_e & B
}.
\]
The existence of $h$ is equivalent to $e^*m\cong\top$, thus we have to show
that $e^*m\cong\top\imp m\cong\top$. This follows from the fact that reindexing
along cover-cartesian maps reflects inclusion of vertical subobjects, which is a
consequence of the fact that $\fibs$ is a pre-stack.

\medskip

\emph{Ad \ref{lem:geostack-ccov-fsys}.}
It is clear that collective covers as well as vertical monos are closed under
composition and contain all isos.
To show the orthogonality property, since $\tot{\fibs}$ has
pullbacks which preserve vertical monos it is sufficient to show that
whenever a collective cover factors through a vertical mono, then this is
already an iso. As we already saw in the proof
of~\ref{lem:geostack-ccov-epicart}, this is just a rephrasing of the property
of being collectively covering. The existence of factorizations is not
difficult to see either -- we can factor any $f:A\to B$ in $\tot{\fibs}$
through the vertical mono given by $\exists_f\top$, and it is easy to see that
the left part of this factorization is collectively covering. Finally, pullback
stability of collective covers follows from the Beck-Chevalley condition in the
definition of stable internal unions.

\medskip

\emph{Ad \ref{lem:geostack-ccov-equant}.}
This follows because $\exists_fm$ as well as the vertical-mono part
of the factorization can be characterized as minimal vertical subobject of $A$
admitting a factorization of $fm$.

\medskip

\emph{Ad \ref{lem:geostack-ccov-cepi}.}
This is since on the one hand, $e$ factors through the equalizer of $f$ and $g$,
which is a vertical subobject of $B$ and on the other hand
$\top=\exists_e\top$ is the minimal subobject of $B$ admitting a factorization
of $e$.
\end{proof}

\subsection{Positive pre-stacks}

\begin{definition}\label{def:positive-pre-stack}
  Let $\catr$ be a regular category.
\begin{enumerate}
 \item 
 A \emph{positive pre-stack}\index{positive pre-stack}\index{pre-stack!positive}
 on $\catr$ is a pre-stack
$\fibp:\tot{\fibp}\to\catr$ of regular categories with extensive internal sums.
\item $\catpos(\catr)$ is the 2-category of positive pre-stacks on $\catr$. Its
1-cells are fibered functors which are fiberwise regular and preserve internal
sums (`positive fibered functors')\index{positive fibered
functor}\index{fibered!functor!positive}, and its 2-cells are arbitrary fibered
natural transformations.
\end{enumerate}
\end{definition}
\begin{remark}
 The family fibration $\famf(\catp):\Famf(\catp)\to\catset$ of a category
$\catp$ is a positive pre-stack iff $\catp$ is regular and has \emph{small}
extensive sums. This is an infinitary version of what Johnstone calls `positive
coherent category'~\cite[Section~A1.4]{elephant1} (and close to what he calls
`$\infty$-positive geometric category', though there is again the difference
about well poweredness that we pointed out in Remark~\ref{rem:geom-prestack}).

Observe that we do not demand 
any form of sums \emph{in} the fibers, only `between' the fibers. In
particular, the fibers of a positive fibration are not necessarily positive in
the sense of Johnstone.
\end{remark}

Since positive pre-stacks are in particular lextensive fibrations, Moens'
theorem applies and one may ask how the additional conditions on the fibration
can be expressed in terms of the corresponding functor. We will answer this
question after an auxiliary lemma.
\begin{lemma}\label{lem:descent-positive}
Let $\fibc:\tot{\fibc}\to\catr$ be a positive pre-stack.
For any regular epimorphism $e:J\eepi I$ in $\catr$ and $A\in\fibc_I$, the
fibered codiagonal map
    $\sigma:\Sigma_ee^*A\eepi A$ is a regular epimorphism in $\fibc_I$.
\end{lemma}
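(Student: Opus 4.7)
The plan is to reduce the statement via descent to a split-epi argument in the fiber over $J$. Since $\fibc$ has internal sums, cocartesian liftings over $e$ endow the reindexing functor $e^*:\fibc_I\to\fibc_J$ with a left adjoint $\Sigma_e\dashv e^*$ whose counit at $A$ is, by construction, precisely the fibered codiagonal $\sigma:\Sigma_e e^*A\to A$.

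Evaluating the triangle identity $e^*\varepsilon\cdot\eta e^*=\id_{e^*}$ at $A$ then yields the equation $e^*\sigma\circ\eta_{e^*A}=\id_{e^*A}$ in $\fibc_J$. Hence $e^*\sigma$ is a split epimorphism, and in particular a regular epimorphism, in $\fibc_J$.

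Since a positive pre-stack is by definition a pre-stack of regular categories, Lemma~\ref{lem:descent-regular} applies: a vertical map in $\fibc$ whose reindexing along a regular epi in $\catr$ is regular epic is already itself regular epic. Applied to the regular epi $e:J\eepi I$ and the vertical map $\sigma$, this gives the desired conclusion. The only (mild) point requiring attention is the clean identification of the codiagonal $\sigma$ with the counit of the adjunction $\Sigma_e\dashv e^*$, after which the triangle identity does all the work; no appeal to extensivity or to the Beck--Chevalley condition beyond what is already built into the existence of internal sums is needed.
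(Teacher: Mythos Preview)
Your proof is correct and follows essentially the same approach as the paper: reduce to the fiber over $J$ via Lemma~\ref{lem:descent-regular}, then observe that $e^*\sigma$ is split epic by one of the triangle identities of $\Sigma_e\dashv e^*$. Your explicit identification of $\sigma$ with the counit and the spelling-out of which triangle identity is used is slightly more detailed than the paper's one-line version, but the argument is the same.
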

\begin{proof}
 By Lemma~\ref{lem:descent-regular}, it is sufficient to show that
$e^*\sigma$ is a regular
  epimorphism. In fact, $e^*\sigma$ is even a split epimorphism since it is a
  leg of one of the triangle equalities of the adjunction $\Sigma_e\adj e^*$.
\end{proof}

\begin{lemma}\label{lem:moens-positive}
  The gluing constrution 
gives rise
  to a biequivalence
\[
\catr\pslice\catreg \simeq \catpos(\catr),
\]
where $\catreg$ is the 2-category of regular categories, regular
functors and natural transformations, and $\catr\pslice\catreg$ is the
pseudo-co-slice 2-category
of $\catreg$ under $\catr$.
\end{lemma}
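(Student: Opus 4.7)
The plan is to refine Moens' theorem (Theorem~\ref{theo:moens}) by identifying, on each side of the biequivalence $\catr\pslice\catlex \simeq \catlxv(\catr)$, the conditions that cut out the sub-2-categories $\catr\pslice\catreg$ and $\catpos(\catr)$. Since $\catpos(\catr)$ is a locally full sub-2-category of $\catlxv(\catr)$ (a positive pre-stack is a lextensive fibration which is moreover a pre-stack of regular categories) and similarly $\catreg\hookrightarrow\catlex$ is locally full, it suffices to show that for a finite-limit preserving functor $\Delta:\catr\to\catd$, the gluing fibration $\gl_\Delta(\catd)$ is a positive pre-stack iff $\catd$ is regular and $\Delta$ is a regular functor, and that this bijection at the level of objects lifts to the 1-cells and 2-cells.

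First I would analyze the fiber structure. The fiber of $\gl_\Delta(\catd)$ over $I\in\catr$ is equivalent to the slice $\catd/\Delta I$, and reindexing along $u:J\to I$ is given by pullback along $\Delta u$. Hence the fibers are regular categories iff $\catd$ is regular (slices of a regular category are regular, and conversely $\catd\simeq\catd/1$ is recovered from the fiber over the terminal object of $\catr$), and reindexing automatically preserves regular epis, since pullback in a regular category does.

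Next I would treat the pre-stack condition. Using the diagrammatic characterization of Lemma~\ref{lem:diagrammatic-pre-stack}, descent data in $\gl_\Delta(\catd)$ over a regular epi $e:J\eepi I$ in $\catr$ translates, via the pullback square defining the gluing, into descent data in $\fund{\catd}$ over the map $\Delta e:\Delta J\to\Delta I$. If $\Delta e$ is a regular epi in $\catd$, then descent for the fundamental fibration of a regular category is standard, so $\gl_\Delta(\catd)$ is a pre-stack. Conversely, Lemma~\ref{lem:descent-positive} forces the fibered codiagonal $\Sigma_e e^*A\eepi A$ to be a regular epi; instantiating at $A=1_I\in\gl_\Delta(\catd)_I$ yields that $\Delta e$ is a regular epi in $\catd$, so $\Delta$ is a regular functor. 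Together these steps establish the object-level bijection.

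Finally, for 1-cells: a finite-limit preserving fibered functor $F:\gl_\Delta(\catd)\to\gl_{\Delta'}(\catd')$ restricts over the terminal object of $\catr$ to a finite-limit preserving functor $F_1:\catd\to\catd'$ compatible (up to iso) with $\Delta,\Delta'$, as in the original Moens correspondence. I would then argue that $F$ is positive (i.e.\ fiberwise regular and sum-preserving) iff $F_1$ is a regular functor: preservation of internal sums is automatic from the Moens setup, and preservation of regular epis in every fiber $\catd/\Delta I$ is equivalent to preservation of regular epis in $\catd\simeq\catd/1$, using that pullback along $\Delta I\to 1$ is a regular functor reflecting regular epis when one checks covers. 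The 2-cell correspondence is unchanged from Moens' theorem since both $\catpos(\catr)$ and $\catr\pslice\catreg$ take \emph{all} natural transformations as 2-cells. The main obstacle I expect is the careful verification that the pre-stack condition corresponds exactly to $\Delta$ preserving regular epis—particularly the converse direction, where one must extract regularity of $\Delta$ from descent data—but Lemma~\ref{lem:descent-positive} makes this quite direct.
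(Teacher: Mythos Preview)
Your proposal is correct and follows essentially the same approach as the paper: both start from Moens' theorem and show that the extra conditions (regular codomain and regular $\Delta$ on one side, positive pre-stack on the other) match up, with the key step in the converse direction being the application of Lemma~\ref{lem:descent-positive} at $A=1_I$ to see that $\Delta e$ is a regular epi. The paper phrases this last step as ``$\Sigma_I\sigma$ is regular epic in $\fibp_1$ since $\Sigma_I$ is a left adjoint,'' while you read it off directly in the slice $\catd/\Delta I$; these are equivalent, and your treatment of the 1-cell correspondence is more detailed than the paper's (which leaves it as ``easy to see'').
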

\begin{proof}
We rely on Moens' Theorem~\ref{theo:moens} and only show that additional
conditions on one side imply additional conditions on the other side and vice
versa.

 Since the fundamental fibration of a regular category is a pre-stack, and
pre-stacks are stable under pullback along regular functors,
$\gl_\Delta(\catq)$ is a positive pre-stack whenever $\Delta:\catr\to\catq$ is
a regular functor between regular categories. Conversely, assume that $\fibp$ is
a positive pre-stack on $\catr$. We
have to show that the functor
\[
 I\mapsto \sum_I 1_I : \catr\to\fibp_1
\]
preserves regular epimorphisms. Given a regular epimorphism $e:J\eepi I$ in
$\catc$, the map $\sigma:\Sigma_e 1_J\to 1_I$ in $\fibp_I$ is regular epic by
Lemma~\ref{lem:descent-positive}, and its image $\Sigma_I\sigma$ is regular
epic in $\fibp_1$ since $\Sigma:\fibp_I\to\fibp_1$ preserves regular
epimorphisms as a left adjoint. This shows the equivalence between fibrations
and functors. The correspondence on the level of 1-cells is easy to see.
\end{proof}

\begin{remark}
The following remark does not really fit into the flow of ideas, and
in particular relies on concepts that will be introduced only later. We
nevertheless present it here, since it seems to be the right place for the
informed reader.

There is a connection between $\catr\pslice\catreg$ and Longley's
$\nabla\Gamma$-categories. Recall from
\cite[Definition~1.4.2]{longley1995realizability} that a
$\nabla\Gamma$-category is a regular category $\catq$ together with regular
functors $\nabla:\catset\to\catq$ and $\Gamma:\catq\to\catset$ such that
$\Gamma\adj\nabla$ and $\Gamma\nabla\cong\id_\catset$; the archetypal example
being the category $\asm(\pcaa)$ of assemblies over a partial combinatory
algebra $\pcaa$. A $\nabla\Gamma$-functor from $(\catq,\nabla,\Gamma)$ to
$(\catq',\nabla',\Gamma')$ is a regular functor $F:\catq\to\catq'$ such that
$F\nabla\cong\nabla'$ and $\Gamma' F\cong\Gamma$, and Longley's `equivalence
theorem'~\cite[Theorem~2.2.20]{longley1995realizability} states that
\emph{applicative morphisms}\index{applicative morphism}\index{morphism!applicative} between pcas correspond to $\nabla\Gamma$-functors
between the corresponding categories of assemblies.

Now it turns out that for $\nabla\Gamma$-functors
\emph{between categories of assemblies}, the condition $\Gamma' F\cong\Gamma$ is
redundant, and thus in this case $\nabla\Gamma$-functors are the same things as
1-cells in $\catset\pslice\catreg$. Therefore, Longley's equivalence theorem
can be read as stating an equivalence between applicative morphisms between
pcas, and
positive fibered functors between the associated positive fibrations of
assemblies. 
\end{remark}

The following lemma clarifies the relation between $\catgeo(\catr)$ and
$\catpos(\catr)$.
\begin{lemma}\label{lem:positive-prestack-geometric}
\begin{enumerate}
 \item\label{lem:positive-prestack-geometric-geo} 
Positive pre-stacks are
\geostack{}s.
\item\label{lem:positive-prestack-geometric-cocacoco}
Let $\fibp:\tot{\fibp}\to\catr$ be a positive fibration. Let $f:A\to B$ be a
map in $\tot{\fibp}$ over $u:I\to J$ in $\catr$. Then $f$ is cocartesian iff
the judgments
\begin{align}
 x,y\vtp A\csep fx=fy&\ent x=y\label{eq:inj-in-sub}\\
z\vtp B\csep\hspace{39pt}&\ent\exists x\vtp A\qdot fx=z\label{eq:surj-in-sub}
\end{align}
hold in $\sub(\fibp)$.
\item\label{lem:positive-prestack-geometric-functors} A fibered functor
$F:\fibp\to\fibq$ between positive pre-stacks $\fibp$ and $\fibq$ is positive
iff it is geometric.
\end{enumerate}
\end{lemma}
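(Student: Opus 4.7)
The plan is to invoke the positive form of Moens' theorem (Lemma~\ref{lem:moens-positive}) and reduce everything to an easy computation in a comma category. Concretely, we may identify $\fibp$ with $\gl_\Delta(\catq)$ for some regular functor $\Delta:\catr\to\catq$ between regular categories, so that the total category is $\commacat{\catq}{\Delta}$: objects are triples $(X,I,a:X\to\Delta I)$, morphisms are pairs $(p:X\to Y,u:I\to J)$ satisfying $b\circ p=\Delta u\circ a$, pullbacks are computed componentwise, cocartesian arrows are those with $p$ an isomorphism, and vertical subobjects of $(X,I,a)$ correspond canonically to subobjects of $X$ in $\catq$.

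For part~(\ref{lem:positive-prestack-geometric-geo}), existential quantification along an arbitrary morphism $(p,u):(X,I,a)\to(Y,J,b)$ of a vertical subobject $m:U\hookrightarrow X$ is just $\exists_p m\hookrightarrow Y$ computed in the regular category $\catq$, equipped with the structure map inherited from $b$; the Beck--Chevalley condition for pullbacks along arbitrary maps then follows directly from BCC in $\sub(\catq)$, so Lemma~\ref{lem:gps-sub-equant} yields that $\fibp$ is a geometric pre-stack. For~(\ref{lem:positive-prestack-geometric-cocacoco}) we unfold the two judgments under these identifications: \eqref{eq:inj-in-sub} says the diagonal $X\to X\times_Y X$ in $\catq$ is iso, i.e.\ $p$ is monic, while \eqref{eq:surj-in-sub} says $\exists_p\top_X\cong\top_Y$, i.e.\ $p$ is a cover. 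Since $\catq$ is regular, both conditions together are equivalent to $p$ being an isomorphism, which is precisely the cocartesianness criterion for $(p,u)$.

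Part~(\ref{lem:positive-prestack-geometric-functors}) now follows from~(\ref{lem:positive-prestack-geometric-cocacoco}). If $F$ is geometric then it preserves fiberwise finite limits and existential quantification in the subobject fibrations, hence the predicates witnessing \eqref{eq:inj-in-sub} and \eqref{eq:surj-in-sub}; by~(\ref{lem:positive-prestack-geometric-cocacoco}) it therefore preserves cocartesian arrows, i.e.\ is positive. Conversely, if $F$ is positive then it preserves cocartesian arrows and is fiberwise regular. Using~(\ref{lem:positive-prestack-geometric-geo}) we may apply Lemma~\ref{lem:geostack-ccov}\ref{lem:geostack-ccov-equant}, which expresses $\exists$ in $\sub(\fibp)$ via the collective-cover/vertical-mono factorization; in the gluing picture collective covers are exactly the $(p,u)$ with $p$ a regular epimorphism in $\catq$, equivalently those maps whose vertical part in the cocartesian--vertical decomposition $f=\overline{f}\circ\eta$ is a fiberwise regular epi -- a property visibly preserved by positive functors. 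Hence $F$ preserves $\exists$ in $\sub$, so is geometric. The only real bookkeeping is in~(\ref{lem:positive-prestack-geometric-geo}), where one must verify that pullbacks in $\commacat{\catq}{\Delta}$ are componentwise enough to make the required BCC squares transparent.
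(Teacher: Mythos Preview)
Your argument is correct, but it proceeds along a genuinely different route from the paper's. The paper works \emph{intrinsically}: for~(\ref{lem:positive-prestack-geometric-geo}) it constructs $\exists_f m$ by cocartesian lifting followed by fiberwise image factorization, and for the harder direction of~(\ref{lem:positive-prestack-geometric-cocacoco}) (cocartesian $\Rightarrow$~\eqref{eq:inj-in-sub}) it argues directly from the disjointness axiom for internal sums, showing that the canonical cocartesian map $\delta:A\to A\times_B A$ forces the two predicates $(fx=fy)$ and $(x=y)$ to have the same vertical image in $A\times A$. You instead invoke the positive Moens correspondence (Lemma~\ref{lem:moens-positive}) to replace $\fibp$ by $\gl_\Delta(\catq)$, after which everything becomes a computation in $\sub(\catq)$: $\exists$ is just the image in $\catq$, and the two judgments become ``$p$ is monic'' and ``$p$ is a cover'', whose conjunction is ``$p$ is iso'', i.e.\ cocartesianness.

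Your route is shorter and conceptually clean; the price is that the role of disjointness is hidden inside Moens' theorem rather than surfacing where it is actually needed (namely in establishing~\eqref{eq:inj-in-sub} from cocartesianness). The paper's intrinsic argument makes that dependency explicit, and also avoids the ``up to equivalence'' bookkeeping that comes with invoking a biequivalence. For part~(\ref{lem:positive-prestack-geometric-functors}) the two arguments are essentially the same: both reduce preservation of $\exists$ to preservation of the cocartesian-then-fiberwise-regular-epi decomposition, which is exactly what a positive functor preserves.
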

\begin{proof}
\emph{Ad \ref{lem:positive-prestack-geometric-geo}.}
Given $f:A\to B$ over $u:I\to J$ and a vertical mono $m:V\emono A$, existential
quantification $\exists_fm$ of $m$ along $f$ is given by cocartesian lifting and
image factorization as in the following diagram.
\begin{equation}\label{eq:exists-from-sum}
\vcenter{\xymatrix@R-4mm@C+03mm{
 V \mono[d]_m\coca[r] & \Sigma_uV\depi[d] \\
A\ar[rd]_f & \mono[d]^{\exists_fm}\\
& B\\
I\ar[r]^u & J\\
}}
\end{equation}
Stability follows from stability of internal sums in extensive
fibrations.
 
\medskip
\emph{Ad \ref{lem:positive-prestack-geometric-cocacoco}.}
Assume that judgments~\eqref{eq:inj-in-sub} and \eqref{eq:surj-in-sub} hold for
$f:A\to B$. Let $g:A\to C$ over $u:I\to J$. We have to show that there exists a
unique vertical $h:B\to C$ such that $hf=g$. The uniqueness follows since $f$
is a collective cover and thus collectively epic
(Lemma~\ref{lem:geostack-ccov}-\ref{lem:geostack-ccov-cepi}). For existence,
observe that since $f$ and $g$ are over the same arrow $u:I\to J$, the map
$\langle f,g\rangle:A\to B\times C$ factors through $B\times_J C$ as $k:A\to
B\times_J C$. Consider the following diagram.
\[
 \xymatrix{
 & \vmono[d]^rU\\
A\ccov[ur]^e\ar[r]_k\ar[dr]_f & B\times_J C\ar[d]^p\\
& B
}
\]
$r\circ e$ is a collective-cover/vertical-mono factorization of $k$, and we
want to show that $pr$ is an isomorphism (this gives us a vertical map from
$B\to C$ via the span $B\xleftarrow{\cong}U\to C$). Since $\fibp_J$ is regular,
it suffices to show that $pr$ is a mono and a cover. The image of $pr$ can be
written as $\exists_pr$ in $\sub(\fibp)$, which is equal to
$\exists_f\top$, and thus equivalent to $\top$ by~\eqref{eq:surj-in-sub}. To see
that $pr$ is monic, observe that from $u\csep\ent\exists a\qdot ea=u$ and
$fa=fa'\ent a=a'$ we can derive $pru=pru'\ent u=u'$ (still in $\sub(\fibp)$),
which implies that $pr$ is monic since it is vertical (this is a formalization
of the argument that if $fe$ is monic and $e$ is epic, then $f$ is monic).

\smallskip

Conversely, assume that $f$ is cocartesian. Judgment~\eqref{eq:surj-in-sub} is
just a fancy way of saying that $f$ is a collective cover, which is true for
cocartesian maps as can be easily seen from the proof
of~\ref{lem:positive-prestack-geometric-geo}.
For the other judgment, let's make the denotations of both sides of the
turnstile explicit.
The predicate $(x,y\vtp A\csep fx =
fy)$ is the vertical image (in the sense of the
collective-cover/vertical-mono factorization system) of $A\times_B
A\to A\times A$, and the predicate $(x,y\vtp A\csep x=y)$ is the vertical image
of $A\to A\times A$. Let $m\circ e$ be a collective-cover/vertical-mono
factorization of $A\times_B A\to A\times A$. The canonical map $\delta:A\to
A\times_B A$ is cocartesian -- and thus collectively covering -- since internal
sums in $\fibp$ are disjoint (see
Definition~\ref{def:lextensive-fibs}-\ref{def:lextensive-fibs-disjoint}). 
\[
 \xymatrix{
A\ar[ddr]\coca[r]_-\delta & A\times_BA \pullbackcorner\coca[r]\ccov[d]_e&
B\ccov[d]\\
& \bullet\vmono[d]^m\pullbackcorner\coca[r] & \bullet\vmono[d]\\
& A\times A\coca[r]^{f\times f} & B\times B
}
\]
This implies that $m\circ
e\delta$ is a
collective-cover/vertical-mono
factorization of $A\to B\times B$, hence the two maps have the same vertical
image, and
the predicates are equivalent.

\medskip
\emph{Ad \ref{lem:positive-prestack-geometric-functors}.}
If $F$ is a positive fibered functor, then it preserves diagrams of the
form~\eqref{eq:exists-from-sum}, hence it also preserves existential
quantification. Conversely, if $F$ is geometric, then the
induced transformation from $\sub(\fibp)$ to $\sub(\fibq)$ preserves the
validity of the judgments \eqref{eq:inj-in-sub} and \eqref{eq:surj-in-sub},
whence it also preserves cocartesian maps.
\end{proof}

\subsection{Fibered pretoposes}

\begin{definition}\label{def:fibered-pretopos}
  Let $\catr$ be a regular category.
\begin{enumerate}
\item A \emph{pre-stack of exact categories}\index{pre-stack!of exact categories} on $\catr$ is a pre-stack whose
fibers are exact categories, and whose reindexing functors are regular functors.
 \item 
A \emph{fibered pretopos}\index{fibered!pretopos}\index{pretopos!fibered} on $\catr$ is a pre-stack of
exact categories
    with extensive internal sums.
\item $\catpretop(\catr)$ is the 2-category of fibered pretoposes on $\catr$.
Its
1-cells are positive fibered functors (Definition~\ref{def:positive-pre-stack}),
and its 2-cells are arbitrary fibered natural transformations.
\end{enumerate}
\end{definition}
\begin{remarks}
 \begin{itemize}
  \item The family fibration $\famf(\catx):\Famf(\catx)\to\catset$ of a
category $\catx$ is a fibered pretopos iff the category is an $\infty$-pretopos.
\item Contrary to Johnstone's definition of `$\mathcal{S}$-indexed
$\infty$-pretopos'~\cite[Definition~3.3.9]{elephant1}, we do \emph{not} assume
that the fibers of a fibered pretopos are pretoposes -- the extensive sums in
our case are `purely infinitary', i.e.\ between the fibers. 
 \end{itemize}
\end{remarks}

Again, we have a specialization of Moens' theorem:
\begin{lemma}\label{lem:moens-pretop}
  The gluing construction 
gives rise
  to a biequivalence
\[
\pscommacat{\catr}{\mathbf{U}} \simeq \catpos(\catr),
\]
where $\mathbf{U}:\catex\to\catreg$ is the forgetful 2-functor from exact to
regular categories, and $\pscommacat{\catr}{\mathbf{U}}$ is the evident pseudo
comma category.
\end{lemma}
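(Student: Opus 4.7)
The plan is to piggy-back on the biequivalence $\catr\pslice\catreg \simeq \catpos(\catr)$ of Lemma~\ref{lem:moens-positive} and simply identify which regular functors $\Delta:\catr\to\catq$ correspond, under gluing, to positive pre-stacks whose fibers happen to be exact. Since $\catpretop(\catr)$ is by definition the full sub-2-category of $\catpos(\catr)$ on those pre-stacks with exact fibers (with the same 1-cells and 2-cells), and since $\pscommacat{\catr}{\mathbf{U}}$ is in the same way a full sub-2-category of $\catr\pslice\catreg$ (namely the one on those $\Delta:\catr\to\catq$ where $\catq$ happens to lie in the image of $\mathbf{U}:\catex\to\catreg$), it suffices to show that the biequivalence of Lemma~\ref{lem:moens-positive} restricts.

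First I would analyze the fibers of a glued pre-stack. Given a regular functor $\Delta:\catr\to\catq$, the fibration $\gl_\Delta(\catq)$ has, by unfolding the pullback in Diagram~\eqref{eq:gluing}, fiber over $I\in\catr$ equivalent to the slice $\catq/\Delta I$, and reindexing along $u:J\to I$ is given by pullback along $\Delta u$ (which is a regular functor between slices of a regular category, and hence automatically preserves whatever exactness structure is present). Conversely, if $\fibp$ is a positive pre-stack, the associated regular functor of Lemma~\ref{lem:moens-positive} is $\Delta_\fibp: I\mapsto \Sigma_I 1_I : \catr\to\fibp_1$, and $\fibp_1$ is (up to equivalence) $\catq$ when we start from $\Delta:\catr\to\catq$ since $\Delta 1\cong 1$.

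The key step is then the observation that for a regular category $\catq$, the slice $\catq/X$ is exact for every $X\in\catq$ if and only if $\catq$ itself is exact: slicing preserves exactness, and taking $X=1$ recovers $\catq\simeq\catq/1$. Combining this with the fiber analysis above: all fibers of $\gl_\Delta(\catq)$ are exact iff $\catq$ is exact iff $\Delta$ is an object of $\pscommacat{\catr}{\mathbf{U}}$. This establishes that the biequivalence $\catr\pslice\catreg\simeq \catpos(\catr)$ restricts to a biequivalence between the sub-2-categories $\pscommacat{\catr}{\mathbf{U}}$ and $\catpretop(\catr)$, as claimed.

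For 1-cells and 2-cells no further work is needed: a morphism in $\pscommacat{\catr}{\mathbf{U}}$ is just a regular functor between exact categories making the pseudo-triangle over $\catr$ commute up to iso, which is the same data as a 1-cell in $\catr\pslice\catreg$ whose codomain object happens to be exact; and this corresponds under gluing precisely to a positive fibered functor between fibered pretoposes, which is the definition of a 1-cell in $\catpretop(\catr)$. The 2-cells agree trivially. The only delicate point I anticipate is the bookkeeping that slicing-and-pulling-back really does transport exactness in both directions, but since we have already done the parallel work for the regular case in Lemma~\ref{lem:moens-positive} (and the pre-stack property is inherited from $\catpos(\catr)$ automatically), no new descent-theoretic argument is required.
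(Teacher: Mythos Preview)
Your proposal is correct and follows essentially the same approach as the paper: the paper's proof is the one-liner ``This follows directly from Lemma~\ref{lem:moens-positive} since exact categories are stable under slicing,'' and your argument is a careful unpacking of exactly that (noting that fibers of $\gl_\Delta(\catq)$ are slices $\catq/\Delta I$, so exactness of $\catq$ is equivalent to exactness of all fibers). You also correctly read the right-hand side of the displayed biequivalence as $\catpretop(\catr)$ rather than the stated $\catpos(\catr)$, which is evidently a typo in the paper given the label, the surrounding context, and the involvement of $\mathbf{U}:\catex\to\catreg$.
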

\begin{proof}
This follows directly from Lemma~\ref{lem:moens-positive} since exact
categories are stable under slicing.
\end{proof}
As a consequence of the preceding lemma we see that a fibered pretopos is not
only a pre-stack, but even a \emph{stack} (since the fundamental fibration of 
an exact category is a stack, and stacks are stable under change of base along
regular
functors).

\chapter{Fibrational cocompletions}\label{chap:fib-cocompletions}

Given a regular category $\catr$, the 2-categories of pre-stacks on $\catr$ that
we introduced in Section~\ref{sec:prestacks} can be arranged into a sequence
\begin{equation}\label{eq:sequence-inclusions}
\catlex(\catr)
\hookleftarrow\catgeo(\catr) 
\hookleftarrow\catpos(\catr)
\hookleftarrow \catpretop(\catr)
\end{equation}
connected by forgetful functors.
In this section, we will prove the existence of, and give constructions for,
 left biadjoints for all forgetful functors in this sequence. Before diving
into the details, let us make some general remarks.
\begin{enumerate}
 \item On the right end of the sequence
($\catpretop(\catr)$ and $\catpos(\catr)$) we are dealing with extensive
fibrations and are thus in the realm of Moens' theorem.
\item\label{enum:ficoco-idem} The forgetful functors
$\catpretop(\catr)\to\catpos(\catr)$ and $\catpos(\catr)\to\catgeo(\catr)$ are
full on 1- and 2-cells and thus local equivalences (for the second inclusion
this follows from
Lemma \ref{lem:positive-prestack-geometric}-
\ref{lem:positive-prestack-geometric-functors}), hence their left biadjoints are
reflections. Concretely, this means for example that the fibered pretopos
cocompletion of a geometric pre-stack $\fibs$ doesn't add anything if $\fibs$
happens to be already a fibered pretopos.
\item The biadjunctions between finite limit pre-stacks and the three other
types of pre-stacks
are not reflections, but fulfill the
weaker `lax idempotency' condition $U\ve\adj \eta U$ between unit and counit
($U$ is the forgetful functor). This implies in particular that the induced
pseudomonad is a KZ-monad \cite[Definition~B1.1.11]{elephant1}, which is
characteristic of cocompletions.

One can recover a finite limit pre-stack (up to
weak equivalence) from its
(geometric/posi-tive/pretopos)-cocompletion\footnote{see e.g,{}
Lemma~\ref{lem:chr-psh}-\ref{lem:chr-psh-yip} for the pretopos cocompletion},
something that is impossible for the cocompletions considered in
\ref{enum:ficoco-idem} because of idempotency.
\end{enumerate}
To help the intuition of the reader we give some examples.
\begin{enumerate}
 \item Let $\catc$ be  a small finite limit category. The $\infty$-pretopos
cocompletion of $\catc$ is the presheaf topos $\widehat{\catc}$; the geometric
and $\infty$-positive
cocompletions are the subcategories of $\widehat{\catc}$ on sub-representables,
and coproducts of sub-representables, respectively. This fits into our
fibrational framework if we switch from categories to their family fibrations.

If $\catc$ is not small but only locally small, the fibrational construction
gives us \emph{small} presheaves.
\item The $\infty$-pretopos cocompletion of a small geometric category $\cats$
is the
topos of \emph{sheaves} for the canonical topology on $\cats$. Again, this fits
into the fibrational framework via the family construction.
\item Given a pca $\pcaa$, 
the uniform family fibration $\ufam(\pcaa)$ is a finite limit pre-stack.
The 
geometric cocompletion of $\ufam(\pcaa)$ is the tripos $\rtr{\pcaa}$, the
positive cocompletion is the gluing fibration of $\asm(\pcaa)$, and the fibered
pretopos cocompletion is the gluing of the realizability topos
$\catrt(\pcaa)$ (in each case along the `constant objects functor').
\end{enumerate}

The most `economic' way to obtain left biadjoints to all functors (including
compositions) in the sequence above would be to construct the left biadjoints
only for the forgetful functors from one level to the next. 

However, we will go a different way as it turns out to be most natural to
construct first the left biadjoint of
$\catpretop(\catr)\to\catlex(\catr)$, and then to define the geometric and
positive cocompletions of finite limit pre-stacks as suitable subfibrations.

In the same way, the left biadjoint of $\catpos(\catr)\to\catgeo(\catr)$ seems
to be more
fundamental than its 2-step decomposition (although in this case it is a bit
arguable) whence we present it first.

\section{Fibered presheaves}\label{sec:fpc}\label{sec:fibered-presheaves}

The `fibered presheaf construction'\index{fibered!presheaf construction} is the left biadjoint to the forgetful
functor
\begin{equation}\label{eq:forget-pretop-to-lex}
 \catpretop(\catr)\hookrightarrow\catlex(\catr).
\end{equation}
It is the starting point of our exploration of fibrational cocompletions. It is
motivated by the following two facts.
\begin{itemize}
 \item Robinson and Rosolini~\cite{robinson1990colimit} observed that
realizability toposes are exact completions of their subcategories
of `partitioned assemblies'.
\item Carboni~\cite{carboni1995some} discovered that for a small category
$\catc$ with finite limits, the presheaf category $\widehat{\catc}$ is
the exact completion of the category $\Famf(\catc)$ of families.
\end{itemize}
Both statements are only true in presence of the axiom of choice. Now the
motivation of the fibered presheaf construction is that it provides a common
framework for the two observations, while managing to avoid the axiom of
choice. The central observation is that both the category $\Famf(\catc)$ of
families of $\catc$, and the category of partitioned assemblies over a pca
$\pcaa$ are total categories of certain fibrations -- $\Famf(\catc)$ is the
total category of the family fibration $\famf(\catc):\Famf(\catc)\to\catset$,
while the category of partitioned assemblies is the total category of the
uniform family fibration $\ufam(\pcaa):\Ufam(\pcaa)\to\catset$ defined after
Definition~\ref{def:fam-pca} (this observation occurs to my knowledge first
explicitly in Hofstra's
\cite{hofstra2006all}). We now take the following point of view:
\begin{itemize}
 \item Fibrations on $\catset$ are `generalized categories/preorders'.
 \item In presence of choice, exact completion of the total categories followed
by gluing along the appropriate inclusion functor is a cocompletion operation on
fibrations which can be viewed as fibrational analogue of
$\catc\mapsto\widehat{\catc}$ for small $\catc$ (more precisely, it is left
adjoint to the forgetful functor~\eqref{eq:forget-pretop-to-lex}).
\end{itemize}
 Without choice, the exact completion of the total category doesn't do the
job anymore, and we have to replace it by a construction that takes the
structure of the fibration explicitly into account, while at the same time
being closer intuitively to ideas about presheaves.

We start out by presenting an alternative reading of the exact completion of
$\Famf(\catc)$ ($\catc$ small with finite limits), which makes the link to
the presheaf construction directly visible.

The central idea is the \emph{fibration of sieves}\footnote{I think I first heard about this fibration when Streicher explained Shulman's `stack semantics'~\cite{shulman2010stack} to me.} on $\catc$, which is defined
as the pullback of the subobject fibration of $\widehat{\catc}$ along $Y$.
\begin{equation}\label{eq:fib-siev}
\vcenter{\xymatrix@R-3mm{
{\Siev(\catc)}\ar[r]\ar[d]_{\siev(\catc)}{\pullbackcorner}
&{\Sub(\widehat{\catc})}
\ar[d]^{\sub(\widehat{\catc})}\\
\catc\ar[r]^Y&{\widehat{\catc}}
}}
\end{equation}
We can reconstruct $\widehat{\catc}$ from $\siev(\catc)$ 
in the following way.  Given $F\in\widehat{\catc}$, we can cover it by
representables $\Sigma_i Y(C_i)\eepi F$, and the kernel pair 
\[U\rightarrowtail
(\Sigma_i Y(C_i))\times (\Sigma_i Y(C_i))\cong \Sigma_{i,j}Y(C_i\times C_j)\] 
of
this cover is determined by the sieves $U_{ij}\rightarrowtail Y(C_i\times C_j)$
obtained by restricting to the summands.  
\[
\xymatrix@R-2mm@C-2mm{
U_{ij} \mono[r]\mono[d]\pullbackcorner &
U \mono[d]\\
Y(C_i\times C_j)\mono[r]&\Sigma_{i,j}Y(C_i\times C_j)
}
\]
The $U_{ij}$ together represent an
equivalence relation on $\Sigma_{i,j}Y(C_i\times C_j)$, and it turns out that
we can express this fact without actually referring to the coproduct -- we
can write the conditions indexwise in the form
\begin{align*}
  i,j,k\in I&\imp U_{jk}\circ U_{ij}\subseteq U_{ik}\\
  i\in I &\imp \id_{A_i}\subseteq U_{ii}.
\end{align*}
Following a suggestion by Alex Simpson, we call a system $(U_{ij})_{i,j\in I}$
of predicates that fulfills these conditions a \emph{heterogeneous equivalence
  relation}\index{heterogeneous equivalence relation}\index{equivalence relation!heterogeneous}
(compare~\cite[Lemma~11.6]{awodey2007relating}). 

We now get our representation of $\widehat{\catc}$ by taking families $(C_i)_i$
of objects in $\catc$ equipped with heterogeneous equivalence relations
$(U_{ij})_{ij}$ in $\siev(\catc)$ as objects, and an appropriate version of
`heterogeneous functional relations' as morphisms.  

If we want to internalize
our handling of families, we can express the heterogeneous relations as
ordinary equivalence relations and functional relations in the fibration
\begin{equation}\label{eq:fam-siev}
\Famf(\siev(\catc)):\tot{\Famf(\siev(\catc))}\to\Famf(\catc)
\end{equation}
whose predicates on a family $(C_i)_{i\in I}$ are just $I$-indexed families of
sieves.  Now the somewhat surprising observation is that in presence of choice, this fibration is
equivalent to the posetal reflection of the fundamental fibration
\[\lfund{\Famf(\catc)}\] -- a morphism
$(u,(f_j)_j):(D_j)_j\to(C_i)_i$ can be viewed as associating to each $i$ a
family of morphisms in $\catc$ with codomain $C_i$, and these morphisms
generate a sieve on $C_i$. Here, we get the link to the exact completion, which
can be described as the category of equivalence relations and functional
relations in the posetal reflection of the fundamental fibration for any
category with finite limits.

\subsection{The fibration of sieves}

\begin{definition}\label{def:fibration-of-sieves}
  Let $\fibc:\tot{\fibc}\to\catr$ be a finite limit pre-stack on a regular
  category $\catr$. The \emph{fibration of sieves}\index{fibration!of sieves} on
$\fibc$ is the fibered
preorder $\siev(\fibc)$ on
  $\tot{\fibc}$, where
  \begin{itemize}
  \item predicates on $X\in\tot{\fibc}$ are morphisms $f:Y\to X$ in
    $\tot{\fibc}$
  \item $f\leq g$ over $X$ if there exist $e,k$ with $e$ cover-cartesian such
that
    \[
    \xymatrix{
      Y'\ar[r]_k\epicart[d]_e &  Z\ar[d]^g \\
      Y\ar[r]^f & X \\
    }
    \]
    commutes.
  \end{itemize}
\end{definition}
\begin{remark}\label{rem:ord-mono}
\begin{itemize}
 \item 
 If $g$ is a monomorphism in the previous definition, then $f\leq
g$ iff $f$ factors through $g$, since cover-cartesian maps in
$\fibc$ are regular epis by Lemma~\ref{lem:covercart-regepi}, and thus left
orthogonal to monomorphisms.
\item For a small finite limit category $\catc$, the fibration
$\siev(\famf(\catc))$ is equivalent to the fibration $\Famf(\siev(\catc))$ from~\eqref{eq:fam-siev}.
\end{itemize}
\end{remark}
\begin{lemma}\label{lem:siev-fibc-existential}
For any finite limit pre-stack $\fibc$, $\siev(\fibc)$ is an existential
fibration in the sense of Definition~\ref{def:existential-fibration}.
\end{lemma}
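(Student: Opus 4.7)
The plan is to verify the three defining properties of an existential fibration (Definition~\ref{def:existential-fibration}) in turn: that $\siev(\fibc)$ is a posetal fibration on a finite limit category, that it has fiberwise finite meets, and that it has internal sums satisfying Beck-Chevalley and Frobenius. The base $\tot{\fibc}$ has finite limits since $\fibc$ is a finite-limit pre-stack (Section~\ref{sec:fl-fibs}), so the base is fine. The key idea is that all the structure can be built from pullbacks and composition in $\tot{\fibc}$: reindexing of a sieve $f:Y\to X$ along $v:X'\to X$ is the pullback projection $v^*f:Y\times_X X'\to X'$; the top predicate on $X$ is $\id_X$, the binary meet of $f:Y\to X$ and $g:Z\to X$ is $Y\times_X Z\to X$; and the existential quantification of $g:Z\to X'$ along $v:X'\to X$ is simply $v\circ g:Z\to X$.

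For the posetal structure, reflexivity is immediate by taking $e=\id$. For transitivity, given $f\leq g$ witnessed by $(e_1,k_1)$ and $g\leq h$ witnessed by $(e_2,k_2)$, I would form the pullback of $k_1$ along $e_2$; this yields a cover-cartesian map (cartesian morphisms are closed under pullback in any fibration, and regular epimorphisms are stable under pullback in the regular base $\catr$) and a mediating morphism to the domain of $h$, witnessing $f\leq h$. The same pullback-stability argument shows that $v^*(-)$ is monotone, so reindexing is well-defined on the preorder, and functoriality up to isomorphism follows from the universal property of pullbacks.

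To check that $\exists_v\dashv v^*$ with $\exists_v g = vg$, I would use the universal property of the pullback $Y\times_X X'$: a witness $(e,k)$ for $\exists_v g\leq f$, consisting of a cover-cartesian $e$ and $k$ with $fk = vge$, induces a unique map $\langle k, ge\rangle$ into the pullback satisfying $v^*f\circ\langle k, ge\rangle = ge$, giving $g\leq v^*f$; conversely, a witness $(e,k')$ for $g\leq v^*f$ composes to a witness for $\exists_v g\leq f$ via the first projection. For stability (Frobenius), I would unfold both sides of $\varphi\wedge\exists_v\psi\cong\exists_v(v^*\varphi\wedge\psi)$ concretely: with $\varphi = (f:Y\to X)$ and $\psi = (g:Z\to X')$, the left side is $Y\times_X Z\to X$ and the right side is $(Y\times_X X')\times_{X'}Z\to X$, and these are canonically isomorphic by the usual pullback pasting lemma. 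Beck-Chevalley between $v^*$ and $\exists_{v'}$ along a pullback square follows by the same kind of pullback manipulation.

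I do not expect a serious obstacle here; the only delicate point is pullback-stability of cover-cartesian morphisms, which is what licenses transitivity of $\leq$ and monotonicity of reindexing, but this is immediate from the definition of cover-cartesian (cartesian over a regular epi) together with pullback-stability of cartesian morphisms and of regular epimorphisms in the regular category $\catr$. Note that Lemma~\ref{lem:covercart-regepi} (cover-cartesian maps are regular epimorphisms), which uses the pre-stack hypothesis, is not needed for this lemma, though the pre-stack condition ensures that $\tot{\fibc}$ has well-behaved pullbacks via $\fibc$ being a finite-limit fibration.
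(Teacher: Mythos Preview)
Your proposal is correct and follows exactly the paper's approach: the paper's proof is the single sentence ``Conjunction is given by pullback and existential quantification by postcomposition,'' and you have simply spelled out the verifications (reindexing by pullback, $\exists_v\dashv v^*$, Beck--Chevalley, Frobenius) that the paper leaves implicit.
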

\begin{proof}
  Conjunction is given by pullback and existential quantification
  by postcomposition.
\end{proof}
\begin{remark}\label{rem:covercert-surj}
Cover-cartesian maps are surjective from the point of view of $\siev(\fibc)$.
More precisely, if $e:A\eepicart B$ is cover-cartesian in $\tot{\fibc}$, then
$b\csep\ent\exists a\qdot ea=b$ holds in $\siev(\fibc)$. This follows from the
fact that existential quantification is given by postcomposition.
\end{remark}

The following easy lemma will be very useful later.
\begin{lemma}\label{lem:factor-covercart}
Given a diagram
\[
\vcenter{\xymatrix@-2mm{
A^*\epicart[d]_e\ar[rd]^f\\
A\dashed[r]_h & B
} }
\]
in $\tot{\fibc}$ where $\fibc$ is a finite limit pre-stack and $e$ is
cover-cartesian, there exists a mediator $h$ iff $ex=ey\ent fx=fy$ in
$\siev(\fibc)$.
\end{lemma}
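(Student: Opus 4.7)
The plan is to interpret both sides in the internal language of the existential fibration $\siev(\fibc)$ (Lemma~\ref{lem:siev-fibc-existential}), and then derive the reverse direction from the fact that cover-cartesian maps are coequalizers of their kernel pairs.

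The forward direction is essentially formal: if $h\circ e = f$, then $fx$ and $hex$ denote the same morphism in the internal language, and likewise for $fy,hey$. Thus the trivial entailment $ex = ey \ent hex = hey$ (apply $h$ to both sides) rewrites to $ex = ey \ent fx = fy$ in $\siev(\fibc)$.

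For the reverse direction, I would interpret the predicate $(x,y\vtp A^* \csep ex = ey)$ as the kernel pair $K \emono A^*\times A^*$ of $e$, and similarly $(x,y\vtp A^* \csep fx = fy)$ as the kernel pair $L \emono A^*\times A^*$ of $f$. Both exist as monomorphisms in $\tot{\fibc}$, since $\fibc$ is a finite limit fibration and kernel-pair maps are monic into the product. The hypothesis then states $K \leq L$ in $\siev(\fibc)_{A^* \times A^*}$, and since $L$ is monic, Remark~\ref{rem:ord-mono} translates this inequality into an actual factorization $m\colon K \to L$ commuting with the maps into $A^* \times A^*$. Denoting the kernel-pair projections by $p_0,p_1\colon K \to A^*$ and $q_0,q_1\colon L \to A^*$, the identities $q_i m = p_i$ together with $fq_0 = fq_1$ give $fp_0 = fp_1$.

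Finally, since $e$ is cover-cartesian in a pre-stack, Lemma~\ref{lem:covercart-regepi} says $e$ is a regular epimorphism, and inspection of that proof shows it is presented as the coequalizer of its kernel pair $(p_0,p_1)$. Applied to $f$, this yields the desired mediator $h\colon A \to B$ with $he = f$ (and automatically unique, since $e$ is epic). The only real obstacle will be keeping the three viewpoints---the internal language of $\siev(\fibc)$, actual subobjects in $\tot{\fibc}$, and the coequalizer presentation of $e$---in clean bijective correspondence throughout; once the translations are made, essentially no work beyond Remark~\ref{rem:ord-mono} and Lemma~\ref{lem:covercart-regepi} is required.
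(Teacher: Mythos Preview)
Your proposal is correct and follows essentially the same approach as the paper: translate the entailment in $\siev(\fibc)$ into containment of kernel pairs via Remark~\ref{rem:ord-mono}, and use that a cover-cartesian map is a regular epimorphism (Lemma~\ref{lem:covercart-regepi}) and hence the coequalizer of its kernel pair. The paper's proof is just a two-sentence compression of your argument. One minor point: you do not need to inspect the proof of Lemma~\ref{lem:covercart-regepi} to know that $e$ is the coequalizer of its kernel pair---any regular epimorphism in a category with pullbacks has this property.
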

\begin{proof}
 Since $e$ is a regular epimorphism by Lemma~\ref{lem:descent-pstack}, it
suffices to show that the kernel of $e$ is contained in the kernel of $f$. This
is equivalent to $ex=ey\ent fx=fy$ by Remark~\ref{rem:ord-mono}.
\end{proof}

\subsection{The fibered presheaf construction}\index{fibered!presheaf construction}

Using the fibration of sieves, we can define presheaves on $\fibc$ as
`quotients of sums of representables', or rather as formal quotients of objects
in $\tot{\fibc}$ with respect to equivalence relations in $\siev(\fibc)$.
\begin{definition}\label{def:rc}
Given a finite limit
  pre-stack $\fibc:\tot{\fibc}\to\catr$, the category $\srelrc$ is the full
subcategory of $\per(\siev(\fibc))$ on \emph{total} equivalence relations.
\end{definition}
\begin{lemma}\label{lem:whfc-exact}
For any finite limit pre-stack $\fibc:\tot{\fibc}\to\catr$, the category
$\srel{\catr}{\fibc}$ is equivalent to $\per(\siev(\fibc))$ and therefore
in particular exact.
\end{lemma}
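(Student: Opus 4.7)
The plan is as follows. Since $\srel{\catr}{\fibc}$ is by definition the full subcategory of $\per(\siev(\fibc))$ on total equivalence relations, and since $\siev(\fibc)$ is existential by Lemma~\ref{lem:siev-fibc-existential} so that $\per(\siev(\fibc))$ is exact by Lemma~\ref{lem:decompo}-\ref{lem:decompo-ex}, the entire lemma reduces to showing that every PER in $\siev(\fibc)$ is isomorphic in $\per(\siev(\fibc))$ to a total one. Exactness then transports along the resulting equivalence.

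The construction I would use is the standard ``restrict to the domain of definition'' trick, adapted to the fact that predicates in $\siev(\fibc)$ are morphisms of $\tot{\fibc}$. Given $\cro \in \per(\siev(\fibc))$, the strict predicate $\rho(c,c) \in \siev(\fibc)_C$ is represented by some morphism $d\colon D\to C$ in $\tot{\fibc}$. Set $\sigma\defequi (d\times d)^*\rho\in\siev(\fibc)_{D\times D}$; this is a PER because reindexing preserves symmetry and transitivity.

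To verify totality of $\sigma$, I would compute
\[
\sigma(x,x)\;=\;\Delta^*(d\times d)^*\rho\;=\;d^*\Delta^*\rho\;=\;d^*\bigl(\rho(c,c)\bigr),
\]
and since $d$ represents $\rho(c,c)$, this is $d^*d$, whose representing morphism is the projection $\pi_1\colon D\times_C D\to D$. The diagonal $\Delta\colon D\to D\times_C D$ satisfies $\pi_1\Delta=\id_D$, giving $\id_D\leq\pi_1$ in $\siev(\fibc)_D$, hence $\sigma(x,x)\cong\top_D$ as required. For the isomorphism $(D,\sigma)\cong\cro$ in $\per(\siev(\fibc))$, I would take the morphism represented by the predicate $\phi(x,c)\defequi\rho(d(x),c)=(d\times\id_C)^*\rho$. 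The axioms \jstrict, \jcong, \jsingval, \jtot are immediate from the PER axioms for $\rho$ together with the totality of $\sigma$ just established, and applying Lemma~\ref{lem-inj-surj} I need only check the surjectivity and injectivity conditions. Surjectivity $\phi(x,c),\phi(x',c)\ent\sigma(x,x')$ is symmetry-plus-transitivity of $\rho$.

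The main obstacle will be the injectivity condition $\rho(c,c)\ent\exists x\qdot\phi(x,c)$, where I have to unwind the definition of $\exists$ in $\siev(\fibc)$ as postcomposition and exhibit a diagram witnessing $d\leq\exists_{\pi_2}(d\times\id_C)^*\rho$. This amounts to building, up to a cover-cartesian adjustment, a factorization of $\Delta\circ d\colon D\to C\times C$ through the representing map of $\rho$; this is precisely where the choice of $d$ as a representative of $\rho(c,c)$ is used, through the factorization $d\leq r_\Delta$ and $r_\Delta\leq d$ in $\siev(\fibc)_C$. Once this is done, $\phi$ is an isomorphism, essential surjectivity of $\srel{\catr}{\fibc}\hookrightarrow\per(\siev(\fibc))$ follows, and exactness of $\srel{\catr}{\fibc}$ transfers along this equivalence from Lemma~\ref{lem:decompo}-\ref{lem:decompo-ex}.
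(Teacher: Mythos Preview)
Your approach is essentially the same as the paper's: restrict a partial equivalence relation $\cro$ to its domain of definition by choosing a representative $d\colon D\to C$ of $\rho(c,c)$ and pulling $\rho$ back along $d\times d$. The paper's proof is terse---it simply defines $\sigma(x,y)\equiv\rho(dx,dy)$ and asserts totality and the isomorphism $\dsi\cong\cro$ without further verification---whereas you spell out the details of totality and the two-sided invertibility via Lemma~\ref{lem-inj-surj}; your sketch of the ``injectivity'' step via a cover-cartesian factorization of $\Delta\circ d$ through the representing map of $\rho$ is correct and is exactly what the equivalence $d\cong\Delta^*\rho$ unwinds to.
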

\begin{proof}
We have to show that every partial equivalence relation in $\siev(\fibc)$ is
equivalent to a total one in the sense of $\per(\siev(\fibc))$.
Given $\cro$ in the latter category, the predicate $\ilbracks{c\csep
\rho(c,c)}\in\siev(\fibc)_C$ is a morphism $f:D\to C$. If we define
$\sigma\in\siev(\fibc)_{D\times D}$ by $\sigma(x,y)=\rho(fx,fy)$ then
$\sigma$ is a total equivalence relation, and furthermore $\dsi$ is isomorphic
to $\cro$.
\end{proof}

Thinking about presheaves as quotients of sums of representables, there is
another way to represent morphisms besides functional relations in the
fibration of sieves -- thinking non-fibered for the moment, if $F$ and $G$ are
presheaves covered by families of representables $(YA_i)_{i\in I}$ and
$(YB_j)_{j\in J}$, then since representables are indecomposable and projective,
the restriction $YA_i\to G$ of a morphism $f:F\to G$ to some $YA_i$ factors
through some $YB_j$. 
\[
 \vcenter{\xymatrix@R-3mm{
YA_i\incl[d]\dashed[r]^h& YB_j\incl[d]\\
F\ar[r] &G
}}\qquad \forall i\;\exists j,h
\]
This implies that in the presence of choice, morphisms
from $F$ to $G$ can be represented by maps $u:I\to J$ and families $(f_i:A_i\to
B_{ui})$ which are compatible with the equivalence relations. The next
lemma does this in the fibrational setting. In the absence of choice in the
base, however, while we can assert the existence of $j$ and $f_i:A_i\to B_j$ for
a given $i\in I$, we can not actually choose one\footnote{Unless $G$ is the
coproduct of the family $(YB_J)_J$, see
Lemma~\ref{lem:tracking-fam}-\ref{lem:tracking-fam-strict} below.}. This can be
taken care
of by working with spans $I \twoheadleftarrow\bullet \to J$ instead
of maps $I\to J$; which motivates the following definition.
\begin{definition}\label{def:tracking-family}
  Let $\phi:\cro\to\dsi$ in $\srel{\catr}{\fibc}$. We call a span
\[\xymatrix@1@C-1.5mm{C &\epicart[l]_eC^*\ar[r]^f & D}\]
with $e$ cover-cartesian
 a \emph{tracking family}\index{tracking!family} for $\phi$, if one of the
following
  equivalent conditions holds in $\siev(\fibc)$.
  \begin{itemize}
  \item $\phi(c,d)\adj\ent\exists x\qdot ex=c\wedge \sigma(fx,d)$
  \item $\sigma(fx,d)\ent\phi(ex,d)$
  \item $\phi(ex,d)\ent\sigma(fx,d)$
  \item $\ent\phi(ex,fx)$
  \end{itemize}
A \emph{strict}\index{strict!tracking family}\index{tracking family!strict} tracking family is a tracking family where the cover-cartesian
part is an identity.
\end{definition}
The first characterization shows how $\phi$ can be reconstructed from
$(e,f)$. Thus, for fixed $\rho$ and $\sigma$, $(e,f)$ can be the tracking
family of at most one morphism.

\begin{lemma}\label{lem:tracking-fam}
  \begin{enumerate}
  \item\label{lem:tracking-fam-is} A span 
$\xymatrix@1@C-1.9mm{C &
      \epicart[l]_eC^*\ar[r]^f & D}$ 
is a tracking family of some morphism from
    $(C,\rho)$ to $(D,\sigma)$ in $\srel{\catr}{\fibc}$ iff
$\rho(ex,ey)\ent\sigma(fx,fy)$ holds.
\item\label{lem:tracking-fam-equal} Two spans $(e,f)$, $(e',f')$ are tracking
families of the \emph{same} morphism from
    $(C,\rho)$ to $(D,\sigma)$  iff $\rho(ex,e'y)\ent\sigma(fx,f'y)$ holds.
  \item\label{lem:tracking-fam-exist} All morphisms in $\srel{\catr}{\fibc}$
have tracking families.
\item\label{lem:tracking-fam-strict}
Morphisms of type $(C,\rho)\to(D,\predeq)$ (where the equivalence
relation in the image is discrete), have \emph{strict} tracking families, i.e.,
morphisms $f:C\to D$ such that $\rho(c,c')\ent fc=fc'$. The latter
judgment is equivalent to the fact $f$ coequalizes the components
of $\rho:R\to C\times C$.
\item\label{lem:tracking-fam-strict-two}
For every morphism $\phi:\cro\to\dsi$ we can find an
isomorphism  $\iota:(C',\rho')\eiso\cro$ such that $\phi\iota$ has a strict
tracking family.
  \end{enumerate}
\end{lemma}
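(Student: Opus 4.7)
The plan is to treat (i)--(v) in order, with (i) supplying the functional-relation calculations that drive the rest, (iii) being the key existence result, and (iv), (v) following by combining (i), (iii) with Lemma~\ref{lem:factor-covercart}. Throughout we work in the existential fibration $\siev(\fibc)$ (Lemma~\ref{lem:siev-fibc-existential}), using the logical notation of Definition~\ref{def:catper} and the totality $\top\ent\rho(c)$ built into $\srel{\catr}{\fibc}$.

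For (i) the $(\Rightarrow)$ direction is pure ``chasing'' in $\per(\siev(\fibc))$: from $\ent\phi(ex,fx)$ and the assumption $\rho(ex,ey)$, apply \jcong{} with witness $\sigma(fx,fx)$ (furnished by \jstrict{} applied to $\phi(ex,fx)$) to obtain $\phi(ey,fx)$; then \jsingval{} on $\phi(ey,fx)$ and $\phi(ey,fy)$ yields $\sigma(fx,fy)$. For $(\Leftarrow)$ I define $\phi(c,d)\defcon\exists x\qdot ex=c\wedge\sigma(fx,d)$ and verify \jstrict, \jcong, \jsingval, \jtot. The only nontrivial points are \jsingval, where one instantiates the hypothesis at $ex_1=ex_2$ to get $\sigma(fx_1,fx_2)$ and composes with $\sigma(fx_i,d_i)$, and \jtot, which uses that $e$ is cover-cartesian and hence satisfies $c\ent\exists x\qdot ex=c$ by Remark~\ref{rem:covercert-surj}. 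Statement (ii) is a direct variant: given that both spans track the same $\phi$, apply \jcong{} using $\rho(ex,e'y)$ to get $\phi(e'y,fx)$, then \jsingval{} with $\phi(e'y,f'y)$; conversely one checks that the two presentations of $\phi$ define logically equivalent predicates, using cover-cartesianness of both $e$ and $e'$ to surject over $c$.

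For (iii), given $\phi:\cro\to\dsi$ with $\phi\in\siev(\fibc)_{C\times D}$ represented by a morphism $m:Y\to C\times D$ in $\tot{\fibc}$, the totality judgment $\ent\exists d\qdot\phi(c,d)$ says exactly that $\pi_C\circ m\geq\id_C$ in $\siev(\fibc)_C$ (since $\top$ on $C$ in $\siev(\fibc)$ is $\id_C$, and existential quantification is postcomposition). Unfolding the order of Definition~\ref{def:fibration-of-sieves}, this provides a cover-cartesian $e:C^*\epicart C$ together with $k:C^*\to Y$ satisfying $\pi_C\,m\,k=e$. Setting $f\defcon\pi_D\,m\,k$ gives $(e,f)$, and $\ent\phi(ex,fx)$ holds because $(ex,fx)=mkx$ is in the image of $m$.

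For (iv), given any tracking family $(e,f)$ of a morphism $(C,\rho)\to(D,\predeq)$, \jsingval{} applied to $\phi(ex,fx)$ and $\phi(ey,fy)$ with $ex=ey$ yields $ex=ey\ent fx=fy$ in $\siev(\fibc)$; by Lemma~\ref{lem:factor-covercart} we obtain $f_0:C\to D$ with $f_0e=f$, and part (i) pulled back along the cover-cartesian map $e\times e$ gives the required $\rho(c,c')\ent f_0c=f_0c'$ (pre-stackness justifies descent of this judgment across $e\times e$). The equivalence with coequalization of the legs of $\rho:R\to C\times C$ is just rewriting the internal entailment as a commuting diagram. For (v), given $\phi$ with tracking family $(e,f)$, set $\rho'(x,y)\defcon\rho(ex,ey)$ on $C^*$ and define $\iota:(C^*,\rho')\to\cro$ by $\iota(x,c)\defcon\rho(ex,c)$. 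Checking $\iota$ is a well-defined morphism uses symmetry/transitivity of $\rho$; the judgments \eqref{eq:judg-inj} and \eqref{eq:judg-surj} of Lemma~\ref{lem-inj-surj} are verified respectively from $c\ent\exists x\qdot ex=c$ and from $\rho(ex,c)\wedge\rho(ex',c)\ent\rho(ex,ex')\equiv\rho'(x,x')$, so $\iota$ is an isomorphism. Relational composition gives $\phi\iota(x,d)\dashv\ent\phi(ex,d)$, and combining this with (i) for $(e,f)$ shows that $(\id_{C^*},f)$ is a strict tracking family for $\phi\iota$. The only step I expect to require some care is the descent argument in (iv): writing $\rho(c,c')\ent f_0c=f_0c'$ as an inclusion of predicates over $C\times C$ and invoking Lemma~\ref{lem:diagrammatic-pre-stack} (or simply the fact that cover-cartesian maps reflect vertical subobject inclusions, as used in Lemma~\ref{lem:geostack-ccov}) to transfer the judgment from $C^*\times C^*$ back to $C\times C$.
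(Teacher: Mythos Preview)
Your argument is correct and follows the paper's approach closely. The only real deviation is in (iv): the paper shows that $(\id_C,h)$ tracks the same morphism as $(e,f)$ via part~\ref{lem:tracking-fam-equal}, and then reads off $\rho(c,c')\ent hc=hc'$ from part~\ref{lem:tracking-fam-is}. Your descent route is sound in principle, but the lemmas you invoke do not apply as stated: Lemma~\ref{lem:diagrammatic-pre-stack} concerns descent of \emph{vertical morphisms in $\fibc$}, and Lemma~\ref{lem:geostack-ccov} assumes a geometric pre-stack --- neither speaks to reflecting an entailment in $\siev(\fibc)$ along $e\times e$. The cleanest repair is to argue internally using Remark~\ref{rem:covercert-surj}: from $\rho(c,c')$, surjectivity of $e$ furnishes $x,y$ with $ex=c$, $ey=c'$, and then the already-established $\rho(ex,ey)\ent f_0(ex)=f_0(ey)$ gives the conclusion. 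Alternatively, since the right-hand predicate is represented by the monomorphism $\ker(f_0)\hookrightarrow C\times C$, Remark~\ref{rem:ord-mono} reduces the question to a factorisation, which follows from orthogonality of the regular epimorphism $e\times e$ (cover-cartesian maps are regular epis by Lemma~\ref{lem:descent-pstack}) against that mono.
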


\begin{proof}
  \emph{Ad \ref{lem:tracking-fam-is}.} Straightforward.

\medskip

\emph{Ad \ref{lem:tracking-fam-equal}.} Assume that
$\rho(ex,e'y)\ent\sigma(fx,f'y)$ holds. We first show that $(e,f)$ (and
therefore by symmetry also $(e',f')$) \emph{is} a tracking family. Using
\ref{lem:tracking-fam-is}, we have to show that $\rho(ex,ey)\ent\sigma(fx,fy)$.
To do this, we argue informally in $\siev(\fibc)$. Assume $\rho(ex,ey)$. By
Remark~\ref{rem:covercert-surj}, there exists $z$ in the domain of $e'$ such
that $ex=e'z$. Since $\rho$ is total, we have $\rho(ex,e'z)$, which implies by
assumption that $\sigma(fx,f'z)$. Substituting $e'z$ in $\rho(ex,ey)$, we have
$\rho(e'z,ey)$, which implies (using symmetry) that $\sigma(f'z,fy)$. Together
we have $\sigma(fx,fy)$.

The functional relation associated to a span $(e,f)$ is given by
$\ilbracks{c,d\csep \exists x\qdot ex=c\wedge \sigma(fx,d)}$, therefore to show
that
$(e,f)$ and $(e',f')$ are equivalent, it remains to show that $\exists
x\qdot ex=c\wedge \sigma(fx,d)\ent \exists y\qdot e'y=c\wedge \sigma(f'y,d)$.
Again, we reason informally in $\siev(\fibc)$. Assume that $ex=c$ and
$\sigma(fx,d)$. There exists $y$ such that $ex=c=e'y$. It remains to show that
$\sigma(f'y,d)$, which by symmetry, transitivity and assumption is equivalent to
$\sigma(fx,f'y)$. This follows from $\rho(ex,e'y)$, which holds because of
totality of $\rho$.

\medskip

  \emph{Ad \ref{lem:tracking-fam-exist}.}  Given a functional relation
  $\tot{\phi}\xrightarrow{\phi}C\times D$ between $(C,\rho)$ and $(D,\sigma)$,
  the totality judgment means precisely that there exist $e$ and $h$ making
\[
\vcenter{\xymatrix{
C^*\epicart[d]_e\ar[rd]^h\\
C & \ar[l]^{\phi_1}\tot{\phi}\ar[r]_{\phi_2} & D\\
}}.
\] 
commute, and the desired span is obtained by setting $f=\phi_2h$.

\medskip

\emph{Ad \ref{lem:tracking-fam-strict}.} 
If the span $(e,f)$ tracks a morphism from $\cro$ to $(D,=)$, we have
$\rho(ex,ey)\ent fx=fy$ by \ref{lem:tracking-fam-is}, thus in particular
$ex=ey\ent fx=fy$. By Lemma~\ref{lem:factor-covercart}, there exists thus
$h:C\to D$ such that $he=f$. Using \ref{lem:tracking-fam-equal} one can verify
that the span $(\id_C,h)$ tracks the same morphism as $(e,f)$.

The fact that $\rho(c,c')\ent fc=fc'$ iff $f\rho_1=f\rho_2$ follows from
Remark~\ref{rem:ord-mono}.

\medskip

\emph{Ad \ref{lem:tracking-fam-strict-two}.} Given $\phi:\cro\to\dsi$ with
tracking family $\xymatrix@1@C-1.5mm{C &\epicart[l]_eC^*\ar[r]^f & D}$, define
an equivalence relation on $C^*$ by $\rho'(x,y)\equiv \rho(ex,ey)$. Then
$(C^*,\rho')\cong\cro$, and $f$ is a strict tracking family of $\phi$ composed
with the isomorphism.
\end{proof}

$\srel{\catr}{\fibc}$ will be the fiber of the fibration of presheaves on
$\fibc$ over the
terminal object. To get the entire fibration, we have to define a functor along
which we can glue.
\begin{definition}
  \begin{enumerate}
  \item The functor $\Delta:\catr\to\srel{\catr}{\fibc}$ sends $I\in\catr$  to
    $(1_I,\predeq)$, i.e.\ the terminal object in $\fibc_I$ equipped with the
    discrete equivalence relation.

    $u:I\to J$ in $\catr$ is mapped to
    $\ilbracks{x\vtp 1_I,y\vtp 1_J\csep 1_ux=y}$ by $\Delta$, where $1_u:1_I\to
1_J$ is
    the unique map over $u$ of this type in $\tot{\fibc}$.
\item We define the fibration $\whfc$ of presheaves\index{fibration!of
presheaves} on $\fibc$ by
  $\whfc=\gl_\Delta(\srel{\catr}{\fibc})$.
\begin{equation*}
 \vcenter{\xymatrix{ 
\tot{\whfc}\pullbackcorner\ar[r]\ar[d]_{\whfc}
&
 \commacat{\srel{\catr}{\fibc}}{\srel{\catr}{\fibc}}
\ar[d]^{\fund{\srel{\catr}{\fibc}}}
\\ 
\catr\ar[r]^\Delta&\srel{\catr}{\fibc} }}
\end{equation*}
  \end{enumerate}
\end{definition}
\begin{lemma}
$\Delta:\catr\to\srel{\catr}{\fibc}$ is regular, full, and faithful and reflects
regular epimorphisms.
\end{lemma}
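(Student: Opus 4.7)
The statement splits into four properties, each of which I would verify using the theory of tracking families (Lemma~\ref{lem:tracking-fam}) and the characterization of monos and covers in $\srel{\catr}{\fibc}$ (Lemmas~\ref{lem-inj-surj} and~\ref{lem:decompo}).

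For faithfulness and fullness, I would reduce to morphisms $1_I \to 1_J$ in $\tot{\fibc}$. Since the codomain $(1_J, \predeq)$ carries the discrete equivalence relation, Lemma~\ref{lem:tracking-fam}-\ref{lem:tracking-fam-strict} guarantees that every morphism $\phi\vtp(1_I,\predeq)\to(1_J,\predeq)$ in $\srel{\catr}{\fibc}$ admits a strict tracking family $f\vtp 1_I \to 1_J$ in $\tot{\fibc}$, and two such families $(\id,f)$ and $(\id,f')$ represent the same morphism iff $f=f'$ (applying Lemma~\ref{lem:tracking-fam}-\ref{lem:tracking-fam-equal} with $\rho=\predeq$). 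Thus morphisms in $\srel{\catr}{\fibc}$ between discrete objects are in natural bijection with morphisms $1_I \to 1_J$ in $\tot{\fibc}$. Since $\fibc$ is a finite-limit pre-stack so reindexing preserves terminals, the latter in turn correspond bijectively to morphisms $I\to J$ in $\catr$ via the cartesian lifting construction, and this is exactly the action of $\Delta$ on hom-sets.

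For preservation of finite limits, I would treat the terminal, binary products, and equalizers separately. The terminal object of $\catr$ maps to $(1_1, \predeq) = (1_1, \top)$, which is terminal in $\srel{\catr}{\fibc}$ since $1_1\times 1_1\cong 1_1$ in $\tot{\fibc}$. Using Lemma~\ref{lem:decompo}-\ref{lem:decompo-flims}, one has $\Delta(I)\times\Delta(J)=(1_I\times 1_J, \predeq\brprod\predeq)$, and the global isomorphism $1_I\times 1_J\cong 1_{I\times J}$ in $\tot{\fibc}$ (both being reindexings of fiberwise terminals along projections) identifies $\predeq\brprod\predeq$ with the diagonal on $1_{I\times J}$, matching $\Delta(I\times J)$. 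The equalizer of $\Delta(u),\Delta(v)$ is represented by the strict predicate $\ilbracks{x \csep 1_u x = 1_v x}$ on $1_I$, whose underlying vertical mono lies over the equalizer $E\emono I$ of $u,v$ in $\catr$, and is hence isomorphic to $\Delta(E)$.

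For preservation and reflection of regular epimorphisms, the crucial observation is that a regular epi $e\vtp I\eepi J$ in $\catr$ lifts to a cover-cartesian morphism $1_e\vtp 1_I\epicart 1_J$, and in $\siev(\fibc)_{1_J}$ this morphism represents $\top$: the inequality $\id_{1_J}\leq 1_e$ is witnessed by $1_e$ itself (the cover-cartesian leg) together with $\id_{1_I}$. Since existential quantification in $\siev(\fibc)$ is postcomposition (Lemma~\ref{lem:siev-fibc-existential}), the image predicate $\ilbracks{y\csep\exists x\qdot 1_e x = y}$ of $\Delta(e)$ is exactly $1_e$, hence $\top$, and the characterization of covers used in the proof of Lemma~\ref{lem:decompo}-\ref{lem:decompo-surj} gives that $\Delta(e)$ is a cover. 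Conversely, if $\Delta(u)$ is a regular epi then the same image calculation forces $1_u$ to represent $\top$ in $\siev(\fibc)_{1_J}$; unfolding $\id_{1_J}\leq 1_u$ yields a cover-cartesian $e'\vtp Z\epicart 1_J$ and $k\vtp Z\to 1_I$ with $1_u k=e'$, and projecting to $\catr$ gives a regular epi $\fibc(e')=u\fibc(k)$, from which the image calculus in the regular category $\catr$ forces $u$ to be a cover. The main subtlety throughout is the translation between the internal logic of $\siev(\fibc)$ and diagrammatic statements in $\tot{\fibc}$, in particular the identification of top elements in $\siev(\fibc)$ with cover-cartesian maps; once this dictionary is set up the verifications are routine.
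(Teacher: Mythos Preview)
Your proposal is correct and follows essentially the same route as the paper's proof: both use the tracking-family machinery (Lemma~\ref{lem:tracking-fam}-\ref{lem:tracking-fam-strict}) for fullness, the explicit description of equalizers in $\srel{\catr}{\fibc}$ for finite-limit preservation, and the identification of the image predicate of $\Delta(u)$ with $1_u$ in $\siev(\fibc)_{1_J}$ for the regular-epi statements. One small point to tighten: your claim that two strict tracking families $(\id,f)$ and $(\id,f')$ represent the same morphism iff $f=f'$ does not follow directly from Lemma~\ref{lem:tracking-fam}-\ref{lem:tracking-fam-equal}; that lemma only gives you $\vdash fx=f'x$ in $\siev(\fibc)$, and you still need Remark~\ref{rem:ord-mono} (the equalizer is a mono, so $\top\leq$ it implies it is an iso) to conclude $f=f'$ --- exactly the step the paper spells out in its faithfulness paragraph.
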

\begin{proof}
It is clear that $\Delta$ preserves finite products. To see that it preserves
equalizers, consider $u,v:I\to J$ in $\catr$.

The equalizer of $\Delta u$, $\Delta v$ in $\srel{\catr}{\fibc}$ is represented
by the
predicate $\ilbracks{x\vtp 1_I\csep 1_ux=1_vx}$ in $\siev(\fibc)$, which as a
morphism
in $\tot{\fibc}$ is exactly the equalizer of $u$ and $v$. Thus it remains to
check that the functor $1:\catr\to\tot{\fibc}$, $I\mapsto 1_I$ preserves
equalizers. This is easy to see.

To show that $\Delta$ is preserves and reflects regular epimorphisms, let
$u:I\to J$ in
$\catr$. From Lemma~\ref{lem:whfc-exact} we know that $\Delta u$ is a regular
epimorphism iff \eqref{eq:judg-surj} holds, and substituting definitions we see
that this judgment is concretely given by
$
y\vtp 1_J\csep\ent \exists x\vtp 1_I\qdot 1_u x=y
$.
Now the interpretation of the formula $\ilbracks{y\vtp 1_J\csep \exists x\vtp
1_I\qdot
1_u x=y}$ is $1_u$ viewed as a predicate in $\siev(\fibc)_{1_J}$, and by
the definition of the fibration of sieves, $1_u$ is
equivalent to $\id_J$ (the true predicate) iff there exists a cover-cartesian
$e:\xymatrix@1{X \epicart[r] & 1_J}$ which factors through $1_u$. This is in
turn equivalent to the existence of an epimorphism $p$ into $J$ which factors
though $u$, which is equivalent to $u$ being epic.

To see that $\Delta$ is faithful, let $u,f:I\to J$ in $\catr$. We have $\Delta
u=\Delta v$ iff $\ent 1_ux=1_vx$ holds iff the equalizer of $1_u$ and $1_j$ is
equivalent to $\id_I$ in $\siev(\fibc)_{1_I}$. By Remark~\ref{rem:ord-mono},
this implies that the equalizer is an isomorphism.

To see that $\Delta$ is full, let $\phi:\Delta I\to\Delta J$.
By
Lemma~\ref{lem:tracking-fam}-\ref{lem:tracking-fam-strict}, there exists a map
$f:1_I\to 1_J$ in $\tot{\fibc}$ such that $x\vtp 1_I\csep\ent \phi(x,fx)$. The
inverse image of $\phi$ is given by $\fibc f$.
\end{proof}
We can express the fibers of $\whfc$ directly in terms of $\siev(\fibc)$, 
without using the gluing construction:
\begin{lemma}\label{lem:whfci-by-localization}
\begin{enumerate}
 \item \label{lem:whfci-by-localization-loc}
  For $I\in \catr$, we have an equivalence
\[
 \whfc_I\stackrel{\mathrm{def}}{=}\srel{\catr}{\fibc}/\Delta
I\simeq\srel{(\catr/I)}{\fibc/I},
\]
of categories, where $\fibc/I$ is the localization of $\fibc$ to
$I$, introduced in Definition~\ref{def:localization}.
\item\label{lem:whfci-by-localization-slice}
More generally, given $C\in\fibc_I$, we have
\[
 \srel{\catr}{\fibc}/(C,\predeq)\simeq\srel{(\catr/I)}{\fibc/C},
\]
where $\fibc/C:\tot{\fibc/C}\to\catr/I$ is the slice of $\fibc$ over
$C$ (see Lemma~\ref{lem:slice-fibration}).
\end{enumerate}
\end{lemma}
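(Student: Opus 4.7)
First, I observe that (i) is a special case of (ii): by construction of $\Delta$ we have $\Delta I = (1_I,\predeq)$, and Lemma~\ref{lem:local-terminal-slice} identifies $\fibc/1_I$ with $\fibc/I$ (a finite-limit pre-stack has fibrewise terminal objects stable under reindexing). So the plan is to prove (ii) by constructing mutually quasi-inverse functors between $\srel{\catr}{\fibc}/(C,\predeq)$ and $\srel{(\catr/I)}{\fibc/C}$.

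The crucial input is Lemma~\ref{lem:tracking-fam}-\ref{lem:tracking-fam-strict}: any morphism $\phi:(D,\rho)\to(C,\predeq)$ admits a strict tracking family, that is, a morphism $f\colon D\to C$ in $\tot{\fibc}$ satisfying $\rho(d,d') \ent fd = fd'$. Hence each object of the left-hand slice is determined by a triple $(D, f, \rho)$ where $\rho$ is a total equivalence relation in $\siev(\fibc)_{D\times D}$ lying below the kernel of $f\times f$. On the other side, an object of $\srel{(\catr/I)}{\fibc/C}$ is a pair $((D,f),\tilde\rho)$ with $\tilde\rho$ a total equivalence relation on $(D,f)$ in $\siev(\fibc/C)$. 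Since products in $\tot{\fibc}/C$ are fibre products over $C$, we have $(D,f)\times(D,f)\cong(D\times_C D\to C)$, and sieves on this in $\fibc/C$ correspond bijectively to sieves on $D\times_C D$ in $\fibc$ — the triangle condition with $C$ being automatic — which in turn are precisely sieves on $D\times D$ factoring through the mono $D\times_C D\hookrightarrow D\times D$, i.e.\ sieves $\rho$ with $\rho(d,d')\ent fd = fd'$. So the two descriptions of objects coincide.

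For morphisms, an arrow in $\srel{\catr}{\fibc}/(C,\predeq)$ from $\phi$ to $\phi'$ is a functional relation $\gamma\in\siev(\fibc)_{D\times D'}$ with $\phi'\circ\gamma = \phi$; after choosing strict tracking families $f,f'$ this triangle condition translates into $\gamma(d,d')\ent fd = f'd'$, exhibiting $\gamma$ as a functional relation in $\siev(\fibc/C)_{(D,f)\times(D',f')}$. Relational composition is given on both sides by the same existentially-quantified formula interpreted in the respective fibration of sieves, so the bijection is functorial and furnishes the desired inverse pair. The main technical point to verify is the equivalence of preorders $\siev(\fibc/C)_{(X,g)}\simeq\siev(\fibc)_X$, including the transport of the order defined via factorization through cover-cartesian maps; this reduces to observing that cover-cartesian morphisms in $\tot{\fibc}/C$ are exactly the cover-cartesian morphisms in $\tot{\fibc}$, with commutation over $C$ automatic. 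Once established, the defining axioms of $\per$ (symmetry, transitivity, strictness, singlevaluedness, totality) transport axiom by axiom.
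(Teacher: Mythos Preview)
Your proof is correct and follows essentially the same approach as the paper: reduce (i) to (ii) via Lemma~\ref{lem:local-terminal-slice}, use strict tracking families (Lemma~\ref{lem:tracking-fam}-\ref{lem:tracking-fam-strict}) to present objects of the slice as triples $(D,f,\rho)$ with $\rho(d,d')\ent fd=fd'$, and then identify such $\rho$ with equivalence relations in $\siev(\fibc/C)$ via the mono $D\times_C D\hookrightarrow D\times D$. You supply more detail than the paper does on the morphism side and on the equivalence $\siev(\fibc/C)_{(X,g)}\simeq\siev(\fibc)_X$ (in particular the identification of cover-cartesian maps), which the paper leaves to the reader; this is all sound.
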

\begin{proof}
We only show the second claim, the first one is a special case by
Lemma \ref{lem:local-terminal-slice}. Objects in $\srel{\catr}{\fibc}/\Delta
C$ are equivalence relations $\sigma\in\siev(\fibc)_{D\times D}$ together with
morphisms $\phi:\dsi\to(C,\predeq)$,  which by
Lemma~\ref{lem:tracking-fam}-\ref{lem:tracking-fam-strict} correspond to
morphisms $f:D\to C$ such that $f\sigma_1=f\sigma_2$. Objects in
$\srel{(\catr/I)}{\fibc/C}$, on the other hand, are equivalence relations in
$\siev(\fibc/C)$, where concretely the underlying object is a map $f:D\to C$ and
the relation is a map $\sigma:S\to D\times_C D$ satisfying the axioms. The key
observation now is that an equivalence relation in $\siev(\fibc)$ on $D$ whose
components are equalized by $f$ is the as an equivalence relation in
$\siev(\fibc/C)$ on $f$. We leave it to the reader to extend the correspondence
to morphisms.
\end{proof}

The preceding lemma gives a streamlined representation of the fibration
$\whfc$, which we will use from now on. Let us spell it out for reference.
\begin{itemize}
 \item Objects in $\whfc_I$ are given by triples $(u:J\to I, C\in\fibc_J,
\rho:R\to C\times C)$ such that $\rho$ is an equivalence relation in
$\siev(\fibc)$ and factors through $C\times_I C$.
\item A morphism from $(u:J\to I,C,\rho)$ to $(v:L\to K,D,\sigma)$ over $w:
I\to K$ is a span $\xymatrix@1@C-1.5mm{C & \epicart[l]_e\bullet\ar[r]^f & D}$
such that $w\,u\,\fibc(e)=v\,\fibc(f)$ and $\rho(ex,ey)\ent\sigma(fx,fy)$ in
$\siev(\fibc)$.
\end{itemize}

This description allows us to give an easy definition of the Yoneda embedding.
\begin{definition}\label{def:fib-yoneda}
  The \iemph{fibered Yoneda embedding} $Y:\fibc\to\whfc$ is given by
\[
 \fibc_I\ni C\mapsto (\id_I, C,\predeq)\in\whfc_I.
\]
\end{definition}
\begin{lemma}\label{lem:yoneda}
  \begin{enumerate}
  \item\label{lem:yoneda-fff} $Y$ is full and faithful.
  \item\label{lem:yoneda-univ-prop} Given a fibered pretopos
$\fibx:\tot{\fibx}\to\catr$, precomposition by
    $Y$ induces an equivalence
    \[
    \catlex(\catr)(\fibc,\fibx)\simeq\catpretop(\catr)(\whfc,\fibx),
    \]
    where $\catlex(\fibc,\fibx)$ is the category of fibered finite limit
    preserving functors, and $\catpretop(\whfc,\fibx)$ is the category of
    fibered regular functors preserving internal sums.
  \end{enumerate}
\end{lemma}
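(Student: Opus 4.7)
For (i), I will use the streamlined description of morphisms in $\whfc$ given just after Lemma~\ref{lem:whfci-by-localization}. A morphism $YC \to YD$ over $w\colon I \to K$, with $C \in \fibc_I$ and $D \in \fibc_K$, is a span $C \stackrel{e}{\leftarrow} C^* \stackrel{f}{\to} D$ in $\tot{\fibc}$ with $e$ cover-cartesian satisfying $w\fibc(e) = \fibc(f)$ and the compatibility $ex=ey \ent fx=fy$ in $\siev(\fibc)$ (instantiating the general condition $\rho(ex,ey) \ent \sigma(fx,fy)$ to the discrete relations on $C$ and $D$). By Lemma~\ref{lem:factor-covercart}, the compatibility is equivalent to the existence of a unique $h\colon C \to D$ with $he = f$; the condition $w\fibc(e) = \fibc(f)$ then forces $\fibc(h) = w$, since $\fibc(e)$ is (regular) epic. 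By the equivalence criterion for tracking families (Lemma~\ref{lem:tracking-fam}-\ref{lem:tracking-fam-equal}) specialized to discrete relations, the spans $(e,f)$ and $(\id_C, h)$ represent the same morphism in $\whfc$, and two strict tracking families $(\id_C, h)$ and $(\id_C, h')$ represent the same morphism iff $h = h'$. Hence morphisms $YC \to YD$ in $\whfc$ correspond naturally and bijectively to morphisms $C \to D$ in $\tot{\fibc}$, so $Y$ is full and faithful.

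For (ii), my plan is to construct the extension $\tilde F\colon \whfc \to \fibx$ of a given $F \in \catlex(\catr)(\fibc,\fibx)$ directly. On an object $(u\colon J\to I,\, C,\, \rho) \in \whfc_I$ with $\rho$ represented by $r\colon R \to C \times_I C$, I set
\[
 \tilde F(u, C, \rho) \;=\; \Sigma_u F(C)\,/\,\overline{F\rho},
\]
where $\Sigma_u F(C) \in \fibx_I$ is the cocartesian pushforward and $\overline{F\rho}$ is the equivalence relation on $\Sigma_u F(C)$ induced by $F(r)$. Extensivity of sums in $\fibx$ (Lemma~\ref{lem:positive-prestack-geometric}-\ref{lem:positive-prestack-geometric-cocacoco}) identifies $F(C) \times_I F(C)$ with the kernel pair of the cocartesian map $F(C) \to \Sigma_u F(C)$, letting me transport $F(r)$ cocartesianly to a morphism into $\Sigma_u F(C) \times \Sigma_u F(C)$ in $\fibx_I$, whose image in the exact fiber $\fibx_I$ is an equivalence relation; the quotient then exists by exactness. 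On morphisms I reduce to the case of strict tracking families via Lemma~\ref{lem:tracking-fam}-\ref{lem:tracking-fam-strict-two}, on which $\tilde F$ acts by the induced map between quotients. The isomorphism $\tilde F\circ Y \cong F$ is immediate, since the construction collapses to $\tilde F(YC) = F(C)$ when the equivalence relation is discrete and $u$ is an identity.

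The main obstacle is verifying that $\tilde F$ is a well-defined fibered functor lying in $\catpretop(\catr)$, i.e.\ that it preserves finite limits, internal sums, and covers in the fibers. Preservation of internal sums follows essentially by construction, as cocartesian lifts commute with the quotient formula by the Beck-Chevalley condition and extensivity. Fiberwise finite-limit preservation reduces, by our formula for $\tilde F$, to the interaction of quotients with fiber products in the exact fibers $\fibx_I$, which is handled by stability of sums combined with exactness; preservation of cartesian arrows is similarly a consequence of stable extensivity. Fiberwise regularity (preservation of covers) is handled by the collective-cover/vertical-mono factorization system of Lemma~\ref{lem:geostack-ccov}. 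Essential uniqueness of $\tilde F$, which gives essential surjectivity of precomposition, follows from the observation that every object $(u,C,\rho) \in \whfc_I$ is the canonical quotient of $\Sigma_u YC$ by the equivalence induced by $\rho$, so any positive pretopos fibered functor extending $F$ must satisfy our formula. Full and faithful on morphism categories then follow by a parallel analysis using the tracking family description.
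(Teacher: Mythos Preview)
Your argument for (i) is correct and in fact more direct than the paper's: the paper reduces to the fiberwise statement via the localization equivalence $\whfc_I\simeq\srel{(\catr/I)}{\fibc/I}$ and then appeals to Lemma~\ref{lem:tracking-fam}-\ref{lem:tracking-fam-strict}, whereas you work directly with the span description and Lemma~\ref{lem:factor-covercart}. Both arguments are fine; yours avoids the detour through slicing.

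For (ii), your construction of $\tilde F$ is exactly the paper's, and your overall strategy is the same. The main difference is one of emphasis: the paper spends most of its effort on explicit diagram chases establishing that precomposition with $Y$ is full and faithful on natural transformations (covering an arbitrary $(u,C,\rho)$ as $YC\to(u,C,\predeq)\to(u,C,\rho)$ by a cocartesian map followed by a vertical regular epi, and using preservation of these to transport components of $\eta$), while you dismiss this with ``a parallel analysis using the tracking family description''. Conversely, you sketch more of the verification that $\tilde F$ lies in $\catpretop(\catr)$, which the paper leaves entirely implicit. Your sketch here is plausible but thin; in particular the claim that fiberwise finite-limit preservation ``reduces to the interaction of quotients with fiber products'' hides real work (compatibility of the quotient formula with the three-step computation of global pullbacks in Section~\ref{sec:fl-fibs}). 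Neither you nor the paper fully discharges this obligation, so your proposal is at the same level of completeness as the original, just with the gaps distributed differently.
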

\begin{proof}
\emph{Ad \ref{lem:yoneda-fff}.} It is sufficient to show that $Y$ is fiberwise
full and faithful. For $I\in \catr$, $Y_I$ can be decomposed as follows.
\begin{equation*}
 \xymatrix@1{
\fibc_I\ar[r]^-\simeq&(\fibc/I)_1\ar[rr]^-{C\mapsto
(C,\predeq)}&&\srel{(\catr/I)}{\fibc/I}
}
\end{equation*}
The first part is an equivalence, thus it suffices to show that for arbitrary
finite limit pre-stacks $\fibd:\tot{\fibd}\to\cats$, the embedding $\fibd_1\to
\srel{\cats}{\fibd}$ is full and faithful. It is easy to see that the embedding
is faithful; fullness is a consequence of
Lemma~\ref{lem:tracking-fam}-\ref{lem:tracking-fam-strict}, since every
morphism between two objects in the terminal fiber is vertical.

\emph{Ad \ref{lem:yoneda-univ-prop}.}
To see that precomposition with $Y$ is faithful, consider fibered functors $F,
G:
\whfc\to \fibx$ which are fiberwise regular and preserve cocartesian
morphisms. Assume that $\eta, \theta:F\to G$ are fibered natural transformations
such that $\eta\circ Y=\theta\circ Y$. Let $I\in\catr$ and $(u,C,\rho)$ in
$\widehat{\fibc}_I$.

We can cover $(u,C,\rho)$ with an object in the image of $Y$ as,
\begin{equation*}
\xymatrix@1{YC=(\id_J, C, \predeq)\coca[r]^-s & (u,C, \predeq) \depi[r]^e &
(u,C, \rho)},
\end{equation*}
where $s$ is cocartesian and $e$ is regular epic in $\widehat{\fibc}_I$.
Applying $\eta$ and $\theta$ to this sequence, we obtain
\begin{equation*}
\vcenter{\xymatrix@C+2mm{
FYC\coca[r]^{Fs}\ar[d]_{\eta_{YC}=\theta_{YC}} &
F(u, C,  \predeq)\ppair{d}{\theta_{(u,C,\predeq)}}{\eta_{(u,C,\predeq)}}
\depi[r]^{Fe} &
F(u,C,\rho)\ppair{d}{\theta_{(u,C,\rho)}}{\eta_{(u,C,\rho)}} \\
GYC\coca[r]_{Gs} & G(u,C,\predeq) \depi[r]_{Ge} & G(u,C,\rho)
}}.
\end{equation*}
Now the facts that $Fs$ is
cocartesian and the components
of $\eta$ and $\theta$ are
vertical imply that ${\eta_{(C,u,\predeq)}}={\theta_{(C,u,\predeq)}}$, 
and since $Fe$ is regular epic in $\fibx_I$, this furthermore implies that
${\eta_{(C,u,\rho)}}={\theta_{(C,u,\rho)}}$.

To see that precomposition by $Y$ is full, consider $F,G:\whfc\to\fibx$ and
$\eta:FY\to GY$. We have to construct $\tilde{\eta}:F\to G$ such that
$\tilde{\eta}Y=\eta$. Let $(u,C,\rho)\in\whfc_I$ as before. Consider the
following diagram in ${\whfc}$.
\begin{equation}\label{eq:dia-coeq-ucro}
\vcenter{\xymatrix@R-3.5mm{
YR\coca[rrr]\ar[rdd]_{Y\rho} & & & (w,R,\predeq)\depi[d]^p\\
& & & \bullet\mono[d]^r\\
& Y(C\times_I C)\coca[rr]\ppair{dr}{\pil}{\pir}\ & & (u,
C,\predeq)^2\ppair{d}{\pil}{\pir} \\
K\ar[rdd]_v\ar[rrrdd]^w & & YC\coca[r] & (u,C,\predeq)\depi[d]^e\\
& & & (u,C,\rho)\\
& J\times_I\ppair{rd}{\pil}{\pir} J\ar[rr]|{u\times_Iu} & & I \\
& & J\ar[ru]|u
}}
\end{equation}
Applying $F$, $G$, and $\eta$, we get:
\begin{equation*}
 \xymatrix@R-6.5mm@C-2mm{
&GYR\coca[rr]\ar[dddd]|(.265){\hole\hole}_{GY\rho} &&
G(w,R,\predeq)\depi[dd]^{Gp}\\
FYR\ar[dddd]_{FY\rho}\coca[rr]\ar[ur]^{\eta_R} && F(w,R,\predeq)\depi[dd]^{Fp}
\dotted[ru]_{\tilde{\eta}_{(w,R,\predeq)}}\\
&& & G\bullet\mono[dd]^{Gm}\\
& & F\bullet\mono[dd]^(.25){Fm}\dotted[ru]_h\\
&G(C\times_I
C)\coca[rr]|{\hole\hole}{\ar@<3pt>[dd]|{\phantom{\big[}}\ar@<-3pt>[dd]|{
\phantom{\big[}} } && G(u,C,\predeq)^2\ppair{dd}{}{}\\
F(C\times_I C)\ar[ur]^{\eta_{C\times_I C}}\coca[rr]\ppair{dd}{}{} &&
F(u,C,\predeq)^2\ppair{dd}{}{}\dotted[ru]_{\tilde{\eta}_{(u,C,\predeq)^2}}\\
&GY C\coca[rr]|{\hole\hole\hole} && G(u,C,\predeq)\depi[dd]^{Ge}\\
FYC\ar[ur]^{\eta_C}\coca[rr] &&
F(u,C,\predeq)\depi[dd]^{Fe}\dotted[ru]_{\tilde{\eta}_{(u,C,\predeq)}}\\
&&& G(u,C,\rho)\\
&& F(u,C,\rho)\dotted[ru]_{\tilde{\eta}_{(u,C,\rho)}}\\
}
\end{equation*}
Here, the arrows $\tilde{\eta}_{(w,R,\predeq)}$,
$\tilde{\eta}_{(u,C,\predeq)^2}$, and $\tilde{\eta}_{(u,C,\predeq)}$ are
cocartesian liftings, $h$ exists since the regular epimorphism $Fp$ is left
orthogonal to the monomorphism $Gm$ in $\fibx_I$, and $\eta_{(u,C,\rho)}$ exists
since $\eta_{(u,C,\predeq)}$ is compatible with the kernels $Fm$ and $Gm$  of
$Fe$ and $Ge$, which is expressed by the existence of $h$.

To see that $\widetilde{\eta}$ is natural, consider a morphism
$\phi:(u,C,\rho)\to (v,D,\sigma)$ over $t:I\to K$ in ${\widehat{\fibc}}$. By
\ref{lem:tracking-fam}-\ref{lem:tracking-fam-exist}, $\phi$ has a tracking
family
$\xymatrix@1@C-1.5mm{C &
    \epicart[l]_eC^*\ar[r]^f & D}$, and since $\phi$ is
over $w$, we have $t\,u\,\fibc(e)=v\,\fibc(f)$. Set $w \,:=\, u\,\fibc(e)$.
\[
\xymatrix@R-3mm{
C & \epicart[l]_e C^*\ar[r]^f & D \\
J\ar[d]_u & \depi[l]J^*\ar[r]\ar[dl]_w & L\ar[d]^v \\
I\ar[rr]^t & & K
}
\]
Defining $\rho^*\in\siev(\fibc)_{C^*\times C^*}$ to be the predicate
$\rho^*=[x,y\csep \rho(ex,ey)]$, we have $(u,C,\rho)\cong(w,C^*,\rho^*)$.
Thus, to show commutativity of the outer rectangle in the diagram
\[
 \xymatrix@R-3mm{
F(u,C,\rho)\ar[d]_{\tilde{\eta}} &\ar[l]_\cong
F(w,C^*,\rho^*)\ar[d]_{\tilde{\eta}}\ar[r]& F(v,D,\sigma)\ar[d]^{\tilde{\eta}}\\
G(u,C,\rho) &\ar[l]_\cong G(w,C^*,\rho^*)\ar[r]& G(v,D,\sigma)
}
\]
in $\tot{\fibx}$, it suffices to show that the left and right squares commutes,
i.e.\ we have reduced the problem of showing naturality to checking the
condition for morphisms which have tracking families with identity
cover-cartesian
part.
Specifically, we show that the right square in the preceding diagram commutes,
and the left one is analogous. In the diagram
\begin{equation}
 \vcenter{\xymatrix@-7mm{
FYC^*\coca[dd]_c\ar[rd]^\eta \ar[rr] && FYD\coca[dd]\ar[rd]^\eta \\
& GYC^*\coca[dd] \ar[rr] && GYD\coca[dd] \\
F(w,C^*,\predeq)\depi[dd]_p\ar[rr]|(.515){\hole}\ar[rd] &&
F(v,D,\predeq)\depi[dd]|(.5)\hole\ar[rd]\\
&G(w,C^*,\predeq)\depi[dd]\ar[rr] && G(v,D,\predeq)\depi[dd]\\
F(w,C^*,\rho^*)\ar[rr]|(.515)\hole\ar[rd]_{\tilde{\eta}}&&F(v,D,\sigma)\ar[rd]^{
\tilde { \eta }
}\\
&G(w,C^*,\rho^*)\ar[rr]&&G(v,D,\sigma)
}}, 
\end{equation}

we know that the top face and the sides commute, and we have to show
commutativity of the bottom face. Because $p$ is epic, the two paths around the
bottom face are equal iff they are equal precomposed with $p$, and since these
two composites have the same image under $\fibx$, it suffices to check their
equality when moreover precomposed with $c$ (by the universal property of
cocartesian arrows). This equality follows from the commutativity of the top
and side faces. 

This finishes the proof that precomposition with $Y$ is full. 

\medskip

Finally, we show that precomposition by $Y$ is essentially surjective. Let
$F:\fibc\to\fibx$ be a finite limit preserving fibered functor. The idea of how
to extend $F$ to $\widetilde{F}$ along $Y:C\to\whfc$ 
\[
\xymatrix{
{\fibc}\ar[d]_Y\ar[rd]^F & \\
{\whfc}\ar[r]_{\widetilde{F}}\emar[ur]|(.3)\cong & \fibx
} 
\]
is contained in
diagram~\eqref{eq:dia-coeq-ucro}, which presents an object $(u,C,\rho)$ in
$\fibc_I$ as a quotient of an internal sum of a representable,
where the associated equivalence relation is itself the image (in the sense of
image factorization) of the sum of a representable. Accordingly, we construct
$\widetilde{F}(u,C,\rho)$ as in
\[
 \xymatrix@R-3mm{
FR\coca[rr]\ppair{rdd}{\rho_1}{\rho_2} & & {\Sigma_w} FR\depi[d] \\
&&{\bullet}\ppair{d}{r_1}{r_2}\\
 & FC \coca[r] & \Sigma_u FC\depi[d]^e  \\
K\ppair{rd}{}{v_1,v_2}\ar[rrd]^w & & {}\widetilde{F}(u,C,\rho)\\
 & J\ar[r]_u & I
}
\]
where $r=\langle r_1,r_2\rangle$ is an equivalence relation obtained by
cocartesian lifting and image factorization, and $e$ is its
quotient.
\end{proof}

In the non-fibered case, categories of presheaves on small categories can be
characterized as $\infty$-pretoposes having a small generating family of
indecomposable projectives. We will prove the fibered analogue of this
statement after defining fibrational versions of projectivity and
indecomposability.
\begin{definition}\label{def:proj-indec}
 Let $\fibp:\tot{\fibp}\to\catr$ be a positive pre-stack.
\begin{enumerate}
 \item \label{def:proj-indec-proj}
 We call $P\in\tot{\fibp}$ \emph{projective}\index{projective object in a
fibration} (with
respect to $\fibp$), if 
given $c,e,f$ as in the diagram
\[
\xymatrix@R-2mm{
&\bullet\epicart[d]_d\dotted[r]_g & Y\depi[d]^e \\
P & \cart[l]_c P^*\ar[r]^f & X
}
\]
where $c$ is cartesian and $e$ is vertical and a regular epimorphism in its
fiber, we can fill in $d,g$ with $d$ cover-cartesian such that the square
commutes.
\item\label{def:proj-indec-indec}
    We call $X\in\tot{\fibp}$ \emph{indecomposable}\index{indecomposable object in a fibration}, if for every
  diagram
\[
\vcenter{\xymatrix{
X^*\cart[r]^c\ar[dr]\dotted[d]_m & X\\
Y\coca[r]_s & S
}}
\]
where $c$ is cartesian and $s$ is cocartesian, there exists a \emph{unique}
mediating arrow $m$. 
\end{enumerate}
\end{definition}
Observe that the definitions are given in a way which assures that
indecomposables and projectives are closed under reindexing. 
\begin{lemma}$ $\label{lem:chr-psh}
  \begin{enumerate}
\item\label{lem:chr-psh-yip}
    Let $\fibc:\tot{\fibc}\to\catr$ be a finite limit pre-stack.
    \begin{enumerate}
    \item\label{lem:chr-psh-yip-a}
 The objects in the image of
$Y:\fibc\to\whfc$ are
      indecomposable projectives in $\whfc$.
    \item\label{lem:chr-psh-yip-b} Given an indecomposable
projective $A\in\tot{\whfc}$, there exists a
      $C\in\tot{\fibc}$ and a cover-cartesian map $e:\xymatrix@1{YC\epicart[r]&
        A}$. 

In other words, the co-restriction of $Y$ to the subfibration of
$\whfc$ on indecomposable projectives is a weak equivalence
(Definition~\ref{def:weak-equivalences}).
    \end{enumerate}
  \item \label{lem:chr-psh-characterization}
    Let $\fibx:\tot{\fibx}\to\catr$ be a fibered pretopos.

    $\fibx$ is equivalent to the fibration of presheaves on its subfibration
of indecomposable
    projectives iff the latter is closed under finite limits, and  every
$X\in\tot{\fibx}$ 
    can be covered as
\[
\xymatrix{ A\coca[r]^-s&S\depi[r]^-e&X}
\]
where $A$ is indecomposable and projective, $s$ is cocartesian, and $e$ is
vertical and a regular epimorphism in its
fiber.
  \end{enumerate}
\end{lemma}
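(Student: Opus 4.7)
The plan is to prove the two parts of the lemma in sequence, leveraging the morphism description via tracking families from Lemma~\ref{lem:tracking-fam} throughout.

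For part \ref{lem:chr-psh-yip-a}, I would check indecomposability and projectivity of $YC$ by unpacking the definitions. The key technical point is that cartesian restrictions of $Y$-objects are again $Y$-objects (since reindexing in $\whfc$ sends $(\id_J, C, \predeq)$ to $(\id_{J'}, v^*C, \predeq)$), so in both definitions one can replace $P^*$ resp. $X^*$ by a $Y$-object without loss of generality. Projectivity then amounts to lifting a morphism $YC'\to X$ through a vertical regular epimorphism $Y\depi X$, which via Lemma~\ref{lem:tracking-fam}-\ref{lem:tracking-fam-exist} reduces to finding a cover-cartesian restriction of $C'$ in $\tot{\fibc}$ over which the tracking family lifts to a map into $Y$. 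Indecomposability reduces, via the universal property of cocartesian arrows, to the fact that a morphism out of $YC'$ into a cocartesian image factors essentially uniquely through the cocartesian source.

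For part \ref{lem:chr-psh-yip-b}, starting from an indecomposable projective $A = (u:J\to I, C, \rho)\in\whfc_I$, I would first apply projectivity of $A$ to the diagram with $P^* = A$ (cartesian $\id_A$), $X = A$, $Y = (u, C, \predeq)$, $e:(u,C,\predeq)\depi A$ the quotient by $\rho$ (which is vertical regular epic by Lemma~\ref{lem:whfc-exact}) and $f = \id_A$. This produces a cover-cartesian $d:A'\epicart A$ over some regular epimorphism $w:I'\eepi I$ together with $g:A'\to (u,C,\predeq)$ with $eg = d$. Then apply indecomposability of $A$ to the cocartesian $s:YC\coca (u,C,\predeq)$ and the diagonal $g$: this yields a unique $m:A'\to YC$ with $sm = g$. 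Writing $q = \fibc(m):I'\to J$, we obtain $uq = w$, so $u$ is a regular epimorphism (since $uq = w$ is and $\catr$ is regular). The main claim is then that $m$ is cartesian: factor $m = m_0\circ v$ with $m_0$ cartesian and $v$ vertical. Then $esm_0 v = d$ cartesian; combining this with the uniqueness of the cartesian lift of $A$ (along $w$) and the uniqueness clause in indecomposability forces $v$ to be an isomorphism. Hence $m$ is cartesian over $q$, so $A'\cong YC'$ for $C' = q^*C$, and $d:YC'\epicart A$ is the desired cover-cartesian map.

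For part \ref{lem:chr-psh-characterization}, the "only if" direction is immediate from part \ref{lem:chr-psh-yip}: in $\whfc$, the $Y$-objects are indecomposable projectives (by \ref{lem:chr-psh-yip-a}) and are closed under finite limits since $Y$ is a finite-limit preserving full embedding by Lemma~\ref{lem:yoneda}-\ref{lem:yoneda-fff}, while the standard presentation $YC\coca (u,C,\predeq)\depi (u,C,\rho)$ provides the covers. For the "if" direction, the inclusion $\iota:\fibp\hookrightarrow\fibx$ is a finite-limit preserving fibered functor between finite-limit pre-stacks, so by the universal property Lemma~\ref{lem:yoneda}-\ref{lem:yoneda-univ-prop} it extends (up to equivalence) to a positive fibered functor $\widetilde{\iota}:\whfibp\to\fibx$. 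Essential surjectivity of $\widetilde{\iota}$ follows from the covering hypothesis, using that $\widetilde{\iota}$ preserves both cocartesian arrows and vertical regular epimorphisms. Fully faithfulness follows by checking it on objects of the form $YA$ for $A\in\fibp$ (where it holds by definition of $\widetilde{\iota}$ and faithfulness of $\iota$), then propagating to all of $\whfibp$ via the presentation of objects as coequalizers of cocartesian lifts of $Y$-objects, together with the fact that both $\widetilde{\iota}$ and the identity on $\fibx$ preserve these presentations.

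The main obstacle is the cartesianness of $m$ in part \ref{lem:chr-psh-yip-b}: it requires careful bookkeeping of base morphisms (to identify the relevant factorization $w = uq$) and a delicate application of the uniqueness clause in indecomposability to exclude nontrivial vertical components. Once this is in place, both parts assemble cleanly.
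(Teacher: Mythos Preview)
Your outline for part~\ref{lem:chr-psh-yip-a} is essentially the paper's argument and is fine.

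For part~\ref{lem:chr-psh-yip-b}, however, there is a genuine gap: the claim that $m$ is cartesian does not follow from the uniqueness clause of indecomposability in the way you suggest. Factoring $m = m_0\circ v$ with $m_0$ cartesian and $v$ vertical, you have $esm_0v = d$ with $d$ cartesian; but to conclude that $v$ is invertible you would need $esm_0$ to be cartesian as well, and there is no reason for this---$e$ is a vertical regular epi and $s$ is cocartesian, so $esm_0$ is generally neither cartesian nor a cover-cartesian map. The uniqueness in indecomposability only pins down $m$ among maps $A'\to YC$ over the given diagonal; it says nothing about $v$. Concretely, $A' = w^*A$ still carries the reindexed equivalence relation $w^*\rho$, whereas $q^*(YC)$ carries the discrete one, so there is no reason for them to agree.

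The paper handles this differently: after obtaining $f:X^*\to YC$ with $pcf = e$ (from projectivity and indecomposability combined), it pulls back $pc$ along $e$, which yields a cover-cartesian $YC^*\to YC$ (so $YC^*$ is again a $Y$-object) together with a section $g:X^*\to YC^*$ exhibiting $X^*$ as a \emph{retract} of $YC^*$. Since $\fibc$ has finite limits and $Y$ preserves them, the essential image of $Y$ is closed under retracts, whence $X^*$ lies in it. This retract argument is the missing ingredient in your approach.

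For part~\ref{lem:chr-psh-characterization}, your high-level strategy via the universal property is reasonable, but the ``propagating fully faithfulness from $Y$-objects'' step is underspecified: full faithfulness on a generating subcategory does not formally imply full faithfulness everywhere without further control. The paper makes this precise by reducing (via Moens) to the functor $\widetilde{F}_1:\srel{\catr}{\fibc}\to\fibx_1$ between terminal fibers and then comparing the fibered subobject fibrations: indecomposability and projectivity of the generators make the map $\siev(\fibc)\to\sub(\fibx)$ a pullback over the inclusion $\tot{\fibc}\hookrightarrow\tot{\fibx}$, and extensivity makes $\sub(\fibx)\to\sub(\fibx_1)$ a pullback over $\Sigma$. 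From this, full faithfulness of $\widetilde{F}_1$ follows because both sides are categories of equivalence relations in existential fibrations that are identified by these pullbacks.
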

\begin{proof}
\emph{Ad \ref{lem:chr-psh-yip-a}.} Since indecomposability is
about cocartesian maps, we first have to understand what those look like in
${\widehat{\fibc}}$. It turns out that this is very easy -- given $u:J\to
I$, $(u,C,\rho)\in \widehat{\fibc}_I$, and $v:I\to H$, the internal sum of 
$(u,C,\rho)$ along $v$ is simply $(vu,C,\rho)$.
Now consider $D\in \fibc_L$ and a morphism of type
$(\id_L,D,\predeq)\to(vu,C,\rho)$, given by a map $w:L\to H$ and a span
$\xymatrix@1@C-1.5mm{D & \epicart[l]_eD^*\ar[r]^f & C}$
such that $vuh=wp$ and $ex=ey\ent \rho(fx,fy)$ in $\siev(\fibc)$. 
\[
 \xymatrix@R-2mm{
D & \epicart[l]_e D^*\ar[r]^f & C & \ppair{l}{\rho_2}{\rho_1} R \\
L\ar[rrdd]_w & \depi[l]_p L^*\ar[r]^h & J\ar[d]^u & \ppair{l}{r_2}{r_1} K \\
& & I\ar[d]^v\\
& & H
}
\]
Lifting
$(e,f)$ along $\xymatrix@1{(u,C,\rho)\coca[r]&(vu,C,\rho)}$ amounts to lifting
$w$ along $v$, or equivalently extending $uh$ along $p$. This is possible if
and only if the kernel pair of $p$ is contained in the kernel pair of $uh$,
which follows from $ex=ey\ent \rho(fx,fy)$ and $ur_1=ur_2$.

For projectivity, recall that
by~\ref{lem:decompo}-\ref{lem:decompo-coverrepr}, any regular epimorphism
in $\widehat{\fibc}_I$ can up to isomorphism be represented as
$(u,C,\rho)\eepi(u,C,\sigma)$ where $u:J\to I$, $C\in \fibc_J$, and
$\rho(x,y)\ent\sigma(x,y)$. Consider $D\in\fibc_L$ and a morphism of type
$(\id,C,\predeq)\to(u,C,\sigma)$ given by $w:L\to I$ and a span $(e,f)$
\[
 \vcenter{\xymatrix@R-2mm{
D & \epicart[l]_e D^*\ar[r]^f & C\\
L\ar[rrd]_w & \depi[l]_p L^*\ar[r]^h & J\ar[d]^u\\
& & I\\
}},
\]
where $ex=ey\ent\sigma(x,y)$. The desired lifting in $\widehat{\fibc}$ is given
in the following square.
\[
 \xymatrix@R-2mm{
(\id,D^*,\predeq)\ar[r]\epicart[d] & (u,C,\rho)\depi[d]\\
(\id,D,\predeq)\ar[r] & (u,C,\sigma)
}
\]

\emph{Ad~\ref{lem:chr-psh-yip-b}.}
Assume that $X\in\whfc_I$ is indecomposable and projective. Just like any
object in $\tot{\whfc}$, we can cover $X$ as
$\xymatrix@1{YC\coca[r]^c&\bullet\depi[r]^p&X}$, and since $X$ is indecomposable
and projective, there exists $e:\xymatrix@1{X^*\epicart[r]& X}$ and $f:X^*\to
YC$ such that $pcf=e$. Pulling $pc$ back along $e$, we obtain
\[
 \vcenter{\xymatrix{
YC^*\pullbackcorner\ar[d] \epicart[r] & YC\ar[d]_{pc}\\
X^* \epicart[r]^e\ar[ur]^f\ar@/^3mm/[u]^g & X
}}
\]
where $g$ is induced by $f$ and the pullback property and exhibits $X^*$ as a
retract of $YC^*$. Since $\fibc$ has finite limits and $Y$ preserves them, the
essential image of $Y$ is closed under retracts in $\tot{\whfc}$, which proves
the claim.

\medskip

\emph{Ad~\ref{lem:chr-psh-characterization}.} 
Clearly, the condition is necessary. Conversely, let $\fibc$ be the
subfibration of $\fibx$ on indecomposable
projectives and $F:\fibc\to\fibx$ the inclusion. We have to show that the
canonical map $\widetilde{F}:\whfc\to\fibx$ 
is an 
equivalence. By Moens' theorem, it is sufficient to show this for the functor
$\widetilde{F}_1:\srel{\catr}{\fibc}\to \fibx_1$ between the terminal fibers.
Consider the following diagram.
\begin{equation}\label{eq:two-pullbacks}
 \xymatrix{
\Siev(\fibc)
\pullbackcorner
\ar[d]_{\siev(\fibc)}
\ar[r]^H 
&
\Sub(\fibx)
\pullbackcorner
\ar[d]^{\sub(\fibx)}
\ar[r]^K 
&
\Sub(\fibx_1)
\ar[d]^{\sub(\fibx_1)}
\\
\tot{\fibc}
\ar[dr]_\fibc
\ar[r]^F
&
\tot{\fibx}\ar[r]^\Sigma
\ar[d]^\fibx
 &
\fibx_1
\\
& 
\catr
}
\end{equation}
Here, $H$ maps sieves on $C\in\fibc_I$ onto the image of their cocartesian
lifting
\[
 \vcenter{\xymatrix@R-4mm{
D\coca[r]\ar[ddr]_f & \bullet\depi[d]\\
& \bullet\mono[d]^{Hf}\\
& C\\
J\ar[r] & I
}},
\]
and $K$ is the action on vertical monomorphisms of the sum functor $\Sigma$.
$K$ is well defined since vertical monomorphisms are monomorphisms in the total
category, and $\Sigma$ preserves finite limits (and thus monos)
by~\cite[Lemma~15.7]{streicherfib}.

It follows from indecomposability and projectivity that $H$ is fiberwise
order-reflecting, and the assumption that every object in $\fibx$ can be covered
by
an indecomposable projective implies that $H$ is fiberwise essentially
surjective.
Thus, the left square in
diagram~\eqref{eq:two-pullbacks} is a pullback. The right square  is a pullback
by extensivity of $\fibx$. Furthermore, $F$ trivially preserves finite limits,
and $\Sigma$ preserves finite limits by~\cite[Lemma~15.7]{streicherfib}.

Now the functor $\widetilde{F}_1:\srel{\catr}{\fibc}\to \fibx_1$ can be
expressed as
applying $KH$ to an equivalence relation in $\siev(\fibc)$, and then forming
the quotient in the exact category $\fibx_1$. This functor is full and
faithful, since $H$ and $K$ are parts of pullbacks as established previously,
and thus preserve all logic, and furthermore $\fibx_1\simeq\per(\sub(\fibx_1))$.
$\widetilde{F}_1$ is essentially surjective since objects in $\fibx$ can be
covered by indecomposable projectives.
\end{proof}
\begin{remarks}
\begin{itemize}
 \item
As mentioned at the beginning of Section~\ref{sec:fibered-presheaves}, the
fibered presheaf construction is motivated by the
desire to make certain constructions involving exact completions independent of
the axiom of choice. Essentially the same question motivated
Hofstra's~\cite{hofstra2004relative}, but he uses a different approach:
instead of working in a fibrational framework, he constructs a left biadjoint to
the forgetful functor
\[
\catr/\catex\to\catr/\catlex,
\]
where the objects of the 2-categories are regular and finite limit preserving
functors, respectively, and in both cases the 1-cells are the respective
commutative triangles.
Via Moens' theorem, this can of course be understood as a biadjunction between
2-categories of lextensive fibrations and fibered pretoposes (apart from the
detail that Hofstra considers \emph{strict}\index{strict!slice category}, not \emph{pseudo-}slice
categories), but taking this point of view there is still a discrepancy since in
the present work we locate the transition from partitioned assemblies to
realizability toposes in the left biadjoint to
$\catpretop(\catr)\to\catlex(\catr)$ and not in the left biadjoint to
$\catpretop(\catr)\to\catlxv(\catr)$ as a reading of Hofstra's work through the
glasses of Moens would suggest.
\item
In recent work~\cite{shulman2012exact}, Michael Shulman presents a notion of
\emph{exact completion of unary sites} (a unary site is a small category
equipped with a Grothendieck topology which is generated by singleton
covering families) that is related to the fibered presheaf construction: given a
finite-limit pre-stack $\fibc:\tot{\fibc}\to\catr$, the cover-cartesian maps in
$\tot{\fibc}$ generate a unary topology, and the exact completion of the
corresponding unary site is equivalent to $\srelrc$.

This was observed by Wouter Stekelenburg in the case of realizability over pcas.
\end{itemize}
\end{remarks}

\subsection{The fibered geometric cocompletion}

This section is about the left biadjoint to
$
 \catgeo(\catr)\to\catlex(\catr)
$. In the non-fibered case, the geometric cocompletion of a small finite limit
category $\catc$ can be described as the full subcategory of $\widehat{\catc}$
on sub-representables. In the fibered case, we can use essentially the same
construction.

\begin{definition}\label{def:d-fib}
  Let $\fibc:\tot{\fibc}\to\catr$ be a finite limit pre-stack on a regular
  category. The fibration $D\fibc$ is defined to be the full subfibration of
$\whfc$ on (vertical) subobjects of representables. 
\[
\vcenter{\xymatrix@-4mm{
\fibc\ar[dr]^Y\ar[d]_y\\
D\fibc\ar[r] & {}\whfc
}}
\]
We use a lowecase `$y$' (read as `little Yoneda') to denote the embedding of
$\fibc$ into $D\fibc$, a
convention that appears particularly natural in the posetal case which we will
examine in more detail in Section~\ref{sec:preordered-case}.
\end{definition}

By Lemma~\ref{lem:decompo}-\ref{lem:decompo-monorepr}, subobjects of
$Y(A)$ for $A\in\fibc_I$ correspond to predicates in $\siev(\fibc)_A$, i.e. to
morphisms
$h:B\to A$ in $\tot{\fibc}$. This leads us to the following concrete description
of the fibration
$D\fibc$.
  \begin{itemize}
  \item Objects in $(D\fibc)_I$ are morphisms $h:B\to A$ in $\tot{\fibc}$
with $A\in\fibc_I$.
  \item A morphism from $h:B\to A$ to $k:D\to C$ over
    $u:I\to J$ is an equivalence class of spans $\xymatrix@1{B &
B^*\epicart[l]_e\ar[r]^f & D}$ with
$e$ cover-cartesian, such
    that $\fibc(kf)=u\fibc(he)$ and 
\begin{equation}\label{eq:judg-map-dc}
x,\! y\vtp B^* \csep h(ex)=h(ey)\ent
k(fx)=k(fy) 
\end{equation}
in $\siev(\fibc)$, where
  \item 
two spans
    $\xymatrix@C-3.5mm@1{B & B^*\epicart[l]_e\ar[r]^f & D}$ and
$\xymatrix@C-3.5mm@1{B &
      {B^*}'\epicart[l]_{e'}\ar[r]^{f'} & D}$ represent the same morphism  over
$u$, if
\[x\vtp B^*, y\vtp {B^*}' \csep h(ex)=h(e'y)\ent
k(fx)=k(f'y) 
\]
holds in $\siev(\fibc)$.
\item Using this representation, the embedding $y:\fibc\to D\fibc$ is given by
\[
 \fibc_I\ni A\mapsto \id_A\in (D\fibc)_I.
\]
  \end{itemize}

\begin{lemma}[Localization and slicing]\label{lem:d-loc-slice}
Let $\fibc$ be a finite limit pre-stack on $\catr$.
\begin{itemize}
 \item Given $I\in\catr$, we have an equivalence
\[
 D(\fibc/I)\simeq(D\fibc)/I
\]
of fibrations on $\catr/I$.
\item For $I\in\catr$ and $A\in\fibc_I$, we have an equivalence
\[
D(\fibc/A)\simeq  (D\fibc)/(yA)
\]
of fibrations on $\catr/I$.
\end{itemize}
\end{lemma}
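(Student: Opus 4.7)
The plan is to derive both equivalences from a fibered upgrade of Lemma~\ref{lem:whfci-by-localization}, by showing that the full-subfibration inclusion $D\fibc\hookrightarrow\whfc$ is compatible with localization and slicing.

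First I would extend the fiberwise equivalences of Lemma~\ref{lem:whfci-by-localization} to equivalences of fibrations
\[
\whfc/I \;\simeq\; \widehat{\fibc/I} \qquad\text{and}\qquad \whfc/(YA) \;\simeq\; \widehat{\fibc/A}
\]
over $\catr/I$. This is essentially an unwinding of the gluing construction $\whfc=\gl_\Delta(\srel{\catr}{\fibc})$: pullback along $\catr/I\to\catr$ commutes with gluing, so the pulled-back fibration is the gluing along the functor $\catr/I\to \srel{(\catr/I)}{\fibc/I}$ corresponding to $\Delta$ under Lemma~\ref{lem:whfci-by-localization}-\ref{lem:whfci-by-localization-loc}. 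The slice version is obtained in the same way using Lemma~\ref{lem:whfci-by-localization}-\ref{lem:whfci-by-localization-slice}.

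Next I would observe that under these equivalences the Yoneda embeddings correspond: for $B\in\fibc_J$ sitting over a map $J\to I$, the representable $YB\in\whfc_J$ is matched with the representable of $B$ viewed as an object in $\fibc/I$, and analogously in the slice case. Since the equivalences preserve the finite-limit structure of the fibers, hence in particular vertical monomorphisms, they restrict to equivalences between the full subfibrations of subobjects of representables. By the very definition of $D(-)$ as this subfibration, the two claimed equivalences $D(\fibc/I)\simeq (D\fibc)/I$ and $D(\fibc/A)\simeq (D\fibc)/(yA)$ follow.

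The main obstacle is purely notational rather than conceptual: one has to simultaneously keep track of the gluing construction defining $\whfc$, the localization/slicing operations on fibrations, and the passage between pre-stacks on $\catr$ and on $\catr/I$. Once the fibered form of Lemma~\ref{lem:whfci-by-localization} is available the rest is immediate, since being a subobject of a representable is a property cut out by the embedding $y$ and is automatically stable under the pullbacks used to define localization and slicing.
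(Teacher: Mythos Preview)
Your strategy---lift Lemma~\ref{lem:whfci-by-localization} to a fibered equivalence $\whfc/(YA)\simeq\widehat{\fibc/A}$ and then restrict to sub-representables---is sound for the localization claim, and indeed the paper dismisses that case as ``straightforward''. But the slicing claim hides exactly the step you wave away.

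Under the equivalence $\whfc/(YA)\simeq\widehat{\fibc/A}$, a representable in $\widehat{\fibc/A}$ is $Y(f\colon B\to A)$, which on the other side becomes the object $(YB,\,Yf\colon YB\to YA)$ of $\whfc/(YA)$. Hence sub-representables in $\widehat{\fibc/A}$ correspond to pairs $(X\hookrightarrow YB,\; X\to YA)$ where the structure map to $YA$ \emph{factors through} some $Yf$. By contrast, an object of $(D\fibc)/(yA)$ is any $(X,\,X\to yA)$ with $X$ sub-representable in $\whfc$; the map to $yA$ need not factor through the representable that $X$ sits in. So the two sub-fibrations do \emph{not} match up on the nose, and your sentence ``being a subobject of a representable \dots\ is automatically stable under the pullbacks used to define localization and slicing'' is precisely where the argument breaks: slicing is not a change-of-base pullback, and the property is not preserved for free.

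What is missing is the observation that, given $X\hookrightarrow YB$ and an arbitrary map $h\colon X\to yA$, one can replace $B$ by $B\times_I A$ so that $X\hookrightarrow Y(B\times_I A)$ and the map to $yA$ now factors through the projection $Y(B\times_I A)\to YA$. This is the construction the paper carries out explicitly (using the pairing $\langle g,h\rangle\colon C\to B\times_I A$), and the paper flags it as the ``one argument which is not purely formal''. Once you insert this step, your high-level outline becomes a correct proof; without it there is a genuine gap.
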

\begin{proof}
 The claim about localization is straightforward. The proof of the claim about
slicing involves one argument which is not purely formal. Let us just consider
the objects. Let $u:J\to I$ in $\catr$. An object in $D(\fibc/A)_u$ is a
configuration
\begin{equation}\label{eq:obj-dcau}
 \vcenter{\xymatrix@R-6mm{
C\ar[rd]_g \\
& B\ar[r]^f & A\\
K\ar[rd]^v \\
& J\ar[r]^u & I
}},
\end{equation}
where $g$ is viewed as a sieve on $f\in(\fibc/A)_u$. An object in 
$((D\fibc)/(yA))_u$, on the other hand, is given by a configuration
\[
 \vcenter{\xymatrix@R-8.5mm{
C^*\ar[rrd]^f\epicart[dd]_e\\ 
&& A\ar[ddd]^\id\\
C\ar[rdd]_g \\ \\
& B & A\\
K^*\depi[dd]_p\ar[rrd]^w\\
& & I\ar[ddd]^\id\\
K\ar[ddr]^v\\ \\
& J\ar[r]^u & I
}} 
\]
such that $g(ex)=g(ey)\ent fx=fy$ in $\siev(\fibc)$
(see~\eqref{eq:judg-map-dc}) -- $g$ is represents a sieve on $A$, and $(e,f)$
represents a morphism from the corresponding sub-representable into $yA$. Now
for reasons similar to
Lemma~\ref{lem:tracking-fam}-\ref{lem:tracking-fam-strict} and since $yA$ is
represented by $\id_A$, we can represent the morphism given by $(e,f)$ without
cover-cartesian part, yielding a simplified configuration
\begin{equation}\label{eq:obj-dcyau}
 \vcenter{\xymatrix@R-6mm{
C\ar[rd]_g\ar[rrd]^h \\
& B & A\\
K\ar[rd]^v \\
& J\ar[r]^u & I
}},
\end{equation}
where $g$ is a sieve on $B$, and $h$ represents a map from the corresponding
sub-representable into $yA$.
To establish the equivalence, observe first that every
configuration of the form~\eqref{eq:obj-dcau} induces a configuration of the
form~\eqref{eq:obj-dcyau}. To transform data of the latter into the former
form, however, we have to make use of finite limit structure -- the object in
$D(\fibc/A)_u$ corresponding to~\eqref{eq:obj-dcyau} is given by
\begin{equation}
 \vcenter{\xymatrix@R-6mm{
C\ar[rd]_-{\langle g,h\rangle} \\
& B\times_I A\ar[r]^-{\pi_A} & A\\
K\ar[rd]^v \\
& J\ar[r]^u & I
}}.
\end{equation}
The verification that this construction induces the desired equivalence is
purely technical.
\end{proof}

\begin{lemma}\label{lem:characterize-d}
  Let $\fibc:\tot{\fibc}\to\catr$ be a finite limit pre-stack on a regular
  category. 
  \begin{itemize}
  \item $D\fibc$ is a \geostack{}.
  \item Given a \geostack{} $\fibs$, we have
    $\catlex(\fibc,\fibs)\simeq\catgeo(D\fibc,\fibs)$.
  \end{itemize}
\end{lemma}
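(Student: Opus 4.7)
The plan is to verify (i) by exhibiting $D\fibc$ as a full subfibration of the fibered pretopos $\whfc$ that inherits the required structure, and to verify (ii) by constructing an essentially inverse to precomposition along $y$ using the image factorization available in the geometric pre-stack $\fibs$.

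For (i), I first check that $D\fibc\subseteq\whfc$ is closed under reindexing (since $y$ preserves cartesian liftings and monomorphisms are reindexing-stable in $\whfc$) and under vertical finite limits in each fiber (products and equalizers of subrepresentables are subrepresentable, using that $y$ preserves finite limits). The pre-stack condition is then inherited from the fact, recorded after Lemma~\ref{lem:moens-pretop}, that $\whfc$ is a stack, since $D\fibc$ is a full subfibration closed under reindexing. Each fiber $(D\fibc)_I$ is regular: a vertical morphism between subrepresentables, factored in the exact category $\whfc_I$, has as its image a subobject of a subrepresentable (hence subrepresentable), and reindexing preserves both parts of this factorization. Finally, by Lemma~\ref{lem:gps-sub-equant} it suffices to show that $\sub(D\fibc)$ admits existential quantification along arbitrary morphisms; this follows because $\sub(D\fibc)$ is (up to equivalence) the restriction of $\sub(\whfc)$ to subobjects whose ambient object is subrepresentable, and since $\whfc$ is geometric, $\sub(\whfc)$ admits such quantifications, whose values along morphisms of $D\fibc$ remain subobjects of subrepresentables.

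For (ii), restriction along $y$ yields a 2-functor $\catgeo(D\fibc,\fibs)\to\catlex(\fibc,\fibs)$, and I produce an essentially inverse extension. Given $F\in\catlex(\fibc,\fibs)$, define $\tilde F$ on an object $[h:B\to A]$ of $(D\fibc)_I$ by $\tilde F([h]):=\exists_{Fh}\top$, the vertical-mono part of the collective-cover/vertical-mono factorization of $Fh$ in $\fibs$ (Lemma~\ref{lem:geostack-ccov}). For a morphism $(e,f):[h]\to[k]$, the key observation is that $Fe$ is cover-cartesian in $\fibs$ since $F$ preserves cartesian maps over the unchanged base $\catr$, hence collectively covering by Lemma~\ref{lem:geostack-ccov}-\ref{lem:geostack-ccov-epicart}; consequently $\exists_{F(he)}\top=\exists_{Fh}\exists_{Fe}\top=\exists_{Fh}\top=\tilde F([h])$ and symmetrically for $[k]$. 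I then set $\tilde F(e,f):\tilde F([h])\to\tilde F([k])$ to be the functional total relation $R_F\hookrightarrow FA\times_I FC$ given by the vertical image of $\langle F(he),F(kf)\rangle:FB^*\to FA\times_I FC$: totality on $\tilde F([h])$ follows because the first projection of $R_F$ equals the image of $F(he)=Fh\circ Fe$, which is $\tilde F([h])$, while functionality $R_F(a,c)\wedge R_F(a,c')\ent c=c'$ is the transport through $F$ of the sieve-containment $h(ex)=h(ey)\ent k(fx)=k(fy)$ (using that $F$ preserves the finite limits computing kernel pairs).

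The remaining routine verifications — well-definedness on the equivalence relation identifying tracking spans, compatibility with reindexing and fiberwise finite limits, preservation of internal unions (reduced via Lemma~\ref{lem:gps-sub-equant} to preservation of existential quantification in $\sub(-)$ along arbitrary morphisms), and the isomorphism $\tilde F\circ y\cong F$ (immediate because $y(A)=[\id_A]$, whose image is $FA$) — are handled by the same sieve-to-image translation in $\sub(\fibs)$. The main obstacle is functoriality of $\tilde F$: composing $(e,f):[h]\to[k]$ with $(e',f'):[k]\to[l]$ in $D\fibc$ requires a pullback together with a cover-cartesian lift at the sieve level, and one must show that $\tilde F$ applied to this composite agrees with the relational composite of the image relations, which reduces to the standard compatibility of image factorization with relational composition in a regular category, combined with $F$ preserving the pullbacks involved.
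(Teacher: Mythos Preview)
Your proposal is correct and follows essentially the same strategy as the paper: part (i) is obtained by exhibiting $D\fibc$ as a full subfibration of $\whfc$ closed under finite limits, images, and internal unions, and part (ii) by constructing the extension $\widetilde{F}$ via the collective-cover/vertical-mono factorization of $Fh$ in $\fibs$. Your treatment is actually more explicit than the paper's, which leaves the action of $\widetilde{F}$ on morphisms and the remaining verifications to the reader; your relational description of $\widetilde{F}(e,f)$ is one valid way to fill this in (noting that the ``transport through $F$'' of the sieve inequality works because kernel pairs are monic, so by Remark~\ref{rem:ord-mono} the sieve containment is an honest subobject inclusion preserved by the finite-limit-preserving $F$).
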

\begin{proof}
By
Lemma~\ref{lem:positive-prestack-geometric}-\ref{
lem:positive-prestack-geometric-geo},
 $\whfc$ is a \geostack{}. The
subfibration on sub-representables is closed under finite limits (since $\fibc$
is), image factorization, and internal unions of subobjects (i.e., existential
quantification in the fibered subobject fibration), hence it is geometric as
well.

Given a second \geostack{} $\fibs$, precomposition with
$y:\fibc\to D\fibc$ induces a functor of type
$\catgeo(D\fibc,\fibs)\to\catlex(\fibc,\fibs)$, since $y$ preserves
finite limits. This functor is faithful since every object in $D\fibc$ is a
vertical subobject of an object in the image of $y$, and the functors in
$\catgeo(D\fibc,\fibs)$ preserve monomorphisms. To see that precomposition is
full, let $F,G\in\catgeo(D\fibc,\fibs)$, and $\eta:F\circ y\to G\circ y$. 
We have to extend $\eta$ to a natural transformation $\eta_0:F\to G$.
Let $h:B\to A$ in $\tot{\fibc}$ with $A\in\fibc_I$ represent an object $h\in
(D\fibc)_I$. Then we have a decomposition of $y(h):y(B)\to y(A)$ in
$\tot{D\fibc}$ as
\[
 \xymatrix{
y(B)\ccov[r]^e& h \mono[r]^m & y(A)
}
\]
with $e$ collectively covering and $m$ vertical monic. Applying $F$ and $G$, we
get 
\[
 \xymatrix{
F(y(B))\ccov[r]^{Fe}\ar[d]_{\eta_B}& F(h)\dotted[d]_{\eta_{0,h}} \mono[r]^{Fm} &
F(y(A))\ar[d]_{\eta_A}\\
G(y(B))\ccov[r]^{Ge}& G(h) \mono[r]^{Gm} & G(y(A))
}
\]
Since $F$ and $G$ preserve vertical monomorphisms and collective covers, the
lifting property for $Fe$ and $Gm$ gives us a unique candidate for
${\eta_{0,h}}$. We leave it to the reader to verify that $\eta_0$ is well
defined.

It remains to show that precomposition by $y:\fibs\to D\fibs$ is essentially
surjective. For this, let $F:\fibc\to\fibs$ be a finite limit preserving
fibered functor. We want to construct a geometric functor
$\widetilde{F}:D\fibc\to\fibs$ such that $\widetilde{F}\circ y\cong F$. Let
$h:B\to A$ represent an object $h\in(D\fibc)_I$ as before. We construct
$\widetilde{F}h\in\fibs_I$ by taking the collective-cover/vertical-mono
factorization.
\[
 \xymatrix@1{
y(B)\ar[r] & {\widetilde{F}h}\mono[r] & y(A)
}
\]
We leave it to the reader to figure out how to extend this operation to
morphisms, and to verify that $\widetilde{F}$ is a geometric extension of $F$.
\end{proof}

\subsection{The fibered positive
cocompletion}\label{sec:pos-pstack-from-fl-pstack}

We now come to the left adjoint of $\catpos(\catr)\to\catlex(\catr)$. Just as
for the geometric cocompletion, we define the positive cocompletion of a
finite limit pre-stack as a subfibration of the fibration of presheaves. Since
positive pre-stacks are in the realm of Moens' theorem, we define the fibration
by giving the terminal fiber and gluing.
\begin{definition}
  Let $\fibc:\tot{\fibc}\to\catr$ be a finite limit-pre-stack. The category
  $\flpos{\fibc}_1$ is the full subcategory of $\srel{\catr}{\fibc}$ on
subobjects of objects of the form $(X,\predeq)$.
\end{definition}
Similar as for the geometric cocompletion, subobjects of
of objects $(X,\predeq)$ correspond to predicates in $\siev(\fibc)_A$ by
Lemma~\ref{lem:decompo}-\ref{lem:decompo-monorepr}, which allows us to
give the following more concrete description of the category
$\flpos{\fibc}_1$:
  \begin{itemize}
  \item Objects are morphisms $h:Y\to X$ in $\tot{\fibc}$.
  \item A morphism from $h:Y\to X$ to $k:W\to Z$ is a span $\xymatrix@1{Y &
      Y^*\epicart[l]_e\ar[r]^f & W}$ such that $\rho(ex,ey)\ent\sigma(fx,fy)$ in
    $\siev(\catc)$, where $\rho$ and $\sigma$ are the kernel pairs of $h$ and
    $k$ viewed as equivalence relations in $\siev(\fibc)$.
  \item Two morphisms from $h$ to $k$ given by spans
    $\xymatrix@1{Y & Y^*\epicart[l]_e\ar[r]^f & W}$ and $\xymatrix@1{Y &
      {Y^*}'\epicart[l]_{e'}\ar[r]^{f'} & W}$ are identified as morphisms in
    $\flpos{\fibc}_1$, if
\[
y\vtp Y,w\vtp W\csep \exists y^*\vtp Y^*\!\qdot ey^*=y\wedge
\sigma(fy^*,w)\adj\ent \exists y^*\vtp {Y^*}'\!\qdot e'y^*=y\wedge
\sigma(f'y^*,w)
\]
holds in $\siev(\fibc)$.
  \end{itemize}
Since $\srel{\catr}{\fibc}$ is an exact category and $\flpos{\fibc}_1$ is a
subcategory which is closed under products and subobjects, it is regular.
Furthermore, the functor $\Delta:\catr\to\srel{\catr}{\fibc}$ factors through
$\flpos{\fibc}_1$
\[
\xymatrix@R-3mm{
\catr\ar[r]^\Delta\ar[dr]_\Delta & \flpos{\fibc}_1\incl[d]\\
& \srel{\catr}{\fibc}
},
\]
allowing us to make the follwing definition.
\begin{definition}
 The fibration $\flpos{\fibc}:\tot{\flpos{\fibc}}\to\catr$ is defined by
\[\flpos{\fibc}=\gl_\Delta(\flpos{\fibc}_1\].
\end{definition}
Since $\flpos{\fibc}_1$ is a regular category, and
$\Delta:\catr\to\flpos{\fibc}_1$ is a regular functor, $\flpos{\fibc}$ is a
positive pre-stack. We state its universal property without proof.
\begin{lemma}
 Let $\fibc$ be a finite limit pre-stack, and $\fibp$ a positive pre-stack.
Then we have an equivalence
\[
 \catlex(\catr)(\fibc,\fibp)\simeq\catpos(\catr)(\flpos{\fibc},\fibp)
\]
of categories of fibered functors.
\qed
\end{lemma}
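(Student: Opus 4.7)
The plan is to adapt the proof of Lemma~\ref{lem:characterize-d} concerning $D\fibc$, accounting for the fact that objects of $\flpos{\fibc}$ comprise not only sub-representables but also, via the gluing construction $\flpos{\fibc}=\gl_\Delta(\flpos{\fibc}_1)$, their internal sums. By Moens' theorem (Lemma~\ref{lem:moens-positive}), $\flpos{\fibc}$ is a positive pre-stack on $\catr$ and positive fibered functors $\flpos{\fibc}\to\fibp$ correspond (up to pseudo-slice) to regular functors $G:\flpos{\fibc}_1\to\fibp_1$ equipped with a coherent isomorphism $G\circ\Delta\cong\Delta_\fibp$. It therefore suffices to match such pairs with finite-limit-preserving fibered functors $F:\fibc\to\fibp$.

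First, I would introduce a canonical embedding $y:\fibc\to\flpos{\fibc}$ by sending $A\in\fibc_I$ to the object represented in $\flpos{\fibc}_I\subseteq\whfc_I$ by the triple $(\id_I,A,\predeq)$, i.e.\ the composite of $Y:\fibc\to\whfc$ with the subfibration inclusion. This $y$ preserves finite limits because $Y$ does. Precomposition with $y$ yields the comparison functor $\catpos(\catr)(\flpos{\fibc},\fibp)\to\catlex(\catr)(\fibc,\fibp)$ whose equivalence-hood we wish to establish. Faithfulness is proved as in Lemma~\ref{lem:yoneda}-\ref{lem:yoneda-univ-prop}: every object of $\flpos{\fibc}$ admits a presentation as a vertical regular-epi quotient of an internal sum of objects in the image of $y$, and both cocartesian arrows and vertical regular epimorphisms are preserved by positive fibered functors.

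For fullness and essential surjectivity, given $F:\fibc\to\fibp$ in $\catlex(\catr)$, I would construct the extension $\tilde F:\flpos{\fibc}\to\fibp$ by first constructing its terminal-fiber restriction $\tilde F_1:\flpos{\fibc}_1\to\fibp_1$. A generic object of $\flpos{\fibc}_1$ is represented by a morphism $h:Y\to X$ in $\tot{\fibc}$; map it to the vertical-mono part of the collective-cover/vertical-mono factorization of the cocartesian lift of $F(h)$ into the terminal fiber $\fibp_1$, which exists by Lemmas~\ref{lem:positive-prestack-geometric} and~\ref{lem:geostack-ccov}. Morphisms are extended using their representation by spans $Y\twoheadleftarrow Y^*\to W$ from the concrete description of $\flpos{\fibc}_1$, following the same pattern as the argument for $\whfc$ in Lemma~\ref{lem:yoneda}-\ref{lem:yoneda-univ-prop}. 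By construction $\tilde F_1\circ\Delta\cong\Delta_\fibp$, and Moens then packages $\tilde F_1$ into a positive fibered functor $\tilde F$ extending $F$ along $y$.

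The main obstacle is verifying that $\tilde F_1$ is a well-defined regular functor. Well-definedness on morphisms amounts to the observation that the equivalence of spans defining morphisms in $\flpos{\fibc}_1$ (a condition in the internal logic of $\siev(\fibc)$) is exactly what makes the image under $\tilde F_1$ independent of the chosen representative; here one uses that collective covers in $\fibp$ are collectively epic (Lemma~\ref{lem:geostack-ccov}-\ref{lem:geostack-ccov-cepi}). Preservation of finite limits and of regular epimorphisms by $\tilde F_1$ reduces, via the description of these in $\flpos{\fibc}_1\subseteq\srel{\catr}{\fibc}$ from Lemma~\ref{lem:decompo}, to the preservation by $F$ of finite limits in the fibers of $\fibc$ together with stability of collective covers under pullback along vertical monomorphisms in $\fibp$, both of which are available.
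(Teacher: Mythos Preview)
The paper states this lemma \emph{without proof} (it literally says ``We state its universal property without proof'' and closes with a bare $\qed$), so there is no proof in the paper to compare against. Your proposal is a correct and natural way to fill the gap: it follows the pattern of the analogous arguments for $\whfc$ (Lemma~\ref{lem:yoneda}) and $D\fibc$ (Lemma~\ref{lem:characterize-d}), adapted to the intermediate case where objects are vertical subobjects of sums of representables rather than arbitrary quotients.

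One remark on your strategy: you lean on Moens' theorem (Lemma~\ref{lem:moens-positive}) to reduce to the terminal fibers, which is appropriate here since $\flpos{\fibc}$ is \emph{defined} as a gluing $\gl_\Delta(\flpos{\fibc}_1)$. By contrast, the paper's proof of Lemma~\ref{lem:yoneda} for $\whfc$ works directly in all fibers without invoking Moens. Either route works; yours is arguably cleaner for $\flpos{\fibc}$ precisely because the definition is already in Moens form. Your identification of the ``main obstacle'' (well-definedness and regularity of $\tilde F_1$) is accurate, and your sketch of how to discharge it---using the concrete span description of morphisms in $\flpos{\fibc}_1$, Lemma~\ref{lem:decompo} for the regular structure, and Lemma~\ref{lem:geostack-ccov} for collective covers in $\fibp$---is sound.
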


\section{The fibered sheaf construction}\label{sec:fibered-sheaf-construction}
\index{fibered!sheaf construction}
Having treated cocompletions of finite limit pre-stacks in the
preceding section, we now come to cocompletions of \emph{geometric pre-stacks}.
In this context, the fibered subobject fibration (Definition~\ref{def:fisufi})
plays a similar role as the fibration of sieves for cocompletions of
finite limit pre-stacks.

First, we describe the free fibered pretopos on a geometric
fibration. We give a construction via gluing, in close analogy to what we did
for the fibered \emph{presheaf} construction.

In analogy to Definition~\ref{def:rc}, we define:
\begin{definition}\label{def:cat-from-geo-fib}
  Let $\fibs:\tot{\fibs}\to\catr$ be a \geostack{}. The category
  $\catr[\fibs]$ is the category of (total) equivalence relations and functional
  relations in $\sub(\fibs)$ -- in other words the full subcategory of
$\per(\sub(\fibs))$ on total equivalence relations.
\end{definition}
If we view objects in $\tot{\fibs}$ as families of objects, and predicates in
$\sub(\fibs)$ as families of subobjects, we realize that the equivalence
relations in $\catr[\fibs]$ can be viewed as heterogeneous equivalence
relations, just as we explained for the fibered presheaf construction in
Section~\ref{sec:fpc}. In particular, given a geometric pre-stack $\fibs$, we
can form the categories $\catr\{\fibs\}$ and $\catr[\fibs]$, both of which can
be
viewed as categories of heterogeneous equivalence relations -- the first one
with respect to $\siev(\fibs)$, and the second one with respect to
$\sub(\fibs)$.

\medskip

Let me make a slightly deviating remark at this point.
If $\cats$ is a small geometric category, then $\catset\{\famf(\cats)\}$ is
equivalent to the category $\widehat{\cats}$ of presheaves on $\cats$, and
$\catset[\famf(\cats)]$ is equivalent to
the category $\sheaf{\cats}$ of sheaves on $\cats$ for the canonical topology.
Furthermore, both categories are toposes and the latter is a subtopos of the
former. For general geometric pre-stacks, the categories do not have to be
toposes, nor does it generally seem to be the case that 
$\catr[\fibs]$ is a localization\footnote{in the sense of `reflective
subcategory with finite limit preserving left adjoint'}\index{localization!of a finite-limit category} of $\catr\{\fibs\}$.
Interestingly, what is lacking for the localization is not the inverse but the
\emph{direct} image
part -- while the embedding $\sheaf{\cats}\to\widehat{\cats}$ is just the
identity in the usual presentation, there is no generic method to construct a
functor of type $\catr[\fibs]\to\catr\{\fibs\}$. An intuition as to what goes
wrong is given by thinking about \emph{small} sheaves and presheaves on large
geometric categories. Already here the embedding of sheaves into presheaves does
not have to be well defined since if a functor $F:\cats^\op\to\catset$ is a
small
colimit of representables in the category of sheaves, there is no reason why it
should also be such a small colimit in the category of presheaves. On the
positive side, if the
geometric pre-stack is a tripos, then the embedding can be constructed using
impredicativity/weakly complete objects -- see also the remark after
Corollary~\ref{cor:tripos-subtripos-free-tripos}.

\medskip

Back to the main line of thoughts, we state a lemma analogous to Lemma~\ref{lem:whfc-exact}.
\begin{lemma}\label{lem:rs-exact}
If $\fibs:\tot{\fibs}\to\catr$ is a geometric fibration, then 
  $\catr[\fibs]$ is equivalent to $\per(\sub(\fibs))$ and thus in particular
exact.
\end{lemma}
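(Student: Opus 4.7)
The proof should proceed in close parallel to that of Lemma~\ref{lem:whfc-exact}. The plan is as follows.

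First, I would invoke Lemma~\ref{lem:gps-sub-equant} to conclude that $\sub(\fibs)$ is an existential fibration: it has finite meets (from finite-limit structure in the fibers), stable existential quantification along arbitrary maps in $\tot{\fibs}$, and validates Frobenius. Consequently $\per(\sub(\fibs))$ is defined, and by Lemma~\ref{lem:decompo}-\ref{lem:decompo-ex} it is an exact category. Since $\catr[\fibs]$ is by definition the full subcategory of $\per(\sub(\fibs))$ on total equivalence relations, the claim reduces to showing that the inclusion $\catr[\fibs]\hookrightarrow\per(\sub(\fibs))$ is an equivalence, i.e.\ that every partial equivalence relation in $\sub(\fibs)$ is isomorphic to a total one.

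For this, given $\cro\in\per(\sub(\fibs))$, I would consider the predicate $\ilbracks{c\csep \rho(c,c)}\in\sub(\fibs)_C$. Since predicates in $\sub(\fibs)$ are vertical monomorphisms, this is represented by some $m\colon D\mono C$ in $\tot{\fibs}$. Reindexing $\rho$ along $m\times m$ (or equivalently forming $\sigma=\ilbracks{d,d'\csep \rho(md,md')}$) yields a binary predicate $\sigma\in\sub(\fibs)_{D\times D}$, which by transport of the judgments for $\rho$ is again symmetric and transitive, and moreover is \emph{total} because the condition $\rho(md,md)$ holds by construction of $D$.

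Finally, I would exhibit the isomorphism $\cro\cong\dsi$ in $\per(\sub(\fibs))$ by writing down the functional relation $\phi(c,d)\equiv \rho(c,md)$ together with its converse $\psi(d,c)\equiv\rho(md,c)$, and verify the four judgments $\jstrict$, $\jcong$, $\jsingval$, $\jtot$ for each, as well as $\psi\hcomp\phi=\rho$ and $\phi\hcomp\psi=\sigma$. All verifications reduce to applications of symmetry/transitivity of $\rho$, together with the existential totality fact ``$\rho(c,c)\ent \exists d\qdot md=c$'', which holds by the very definition of $m$ as the subobject associated with $\ilbracks{c\csep\rho(c,c)}$. The one subtlety to watch is that the existential quantifier in the totality judgment is interpreted in $\sub(\fibs)$, not in $\siev(\fibs)$; however, this is precisely what Lemma~\ref{lem:gps-sub-equant} secures, so no genuine obstacle arises. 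The main routine burden is the bookkeeping of the Frobenius/Beck--Chevalley manipulations when checking composition, but there is no conceptual difficulty beyond what was already handled in Lemma~\ref{lem:whfc-exact}.
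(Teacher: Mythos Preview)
Your proposal is correct and follows the same approach as the paper: show that every partial equivalence relation in $\sub(\fibs)$ is isomorphic to a total one by restricting to its support. The paper's proof is simply a one-line remark (``restrict any partial equivalence relation to its support''), whereas you spell out the construction of $\dsi$ and the isomorphism in detail; but the underlying idea is identical.
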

\begin{proof}
Again, we have to show that every partial equivalence relation is equivalent to
a total one. This is evident for $\sub(\fibs)$ since predicates are vertical
monomorphisms, and we can restrict any partial equivalence relation to its
support.
\end{proof}
\begin{definition}\label{def:delta-r-rs}
  The functor $\Delta:\catr\to\catr[\fibs]$ maps $I\in\catr$ to $(1_I,\predeq)$
  and $u:I\to J$ to the predicate $\ilbracks{x\vtp 1_I,y\vtp 1_J\csep
1_u(x)=y}$, where
  $1_u:1_I\to 1_J$ is the unique function of this type over $u$.
\end{definition}
It is easy to see that $\Delta$ is regular, which allows us to give the
expected definition of the fibration of sheaves on $\fibs$.
\begin{definition}
For a geometric pre-stack $\fibs:\tot{\fibs}\to\catr$, the \emph{fibration
$\sheaf{\fibs}:\tot{\fibs}\to\catr$ of sheaves on}\index{fibration!of sheaves} $\fibs$ is defined as
$\sheaf{\fibs}=\gl_\Delta(\catr[\fibs])$.
\end{definition}
In Lemma~\ref{lem:sh-uprop}, we will show that $\sheaf{\fibs}$ is the free
fibered-pretopos completion of a \geostack{} $\fibs$, but before that we need a
representation of the slices of $\sheaf{\fibs}$ analogous to
Lemma~\ref{lem:whfci-by-localization}.
\newcommand{\shs}{\sheaf{\fibs}}
\newcommand{\fibsar}{\fibs/(A,R)}
\begin{lemma}\label{lem:sheaf-slice}
Let $\fibs$ be geometric pre-stack and $A\in\fibs_I$.
\begin{enumerate}
\item
$\fibs/A$ is a geometric fibration.
\item 
 We have
$\catr[\fibs]/(A,\predeq)\simeq(\catr/I)[\fibs/A]$.
\end{enumerate}
\end{lemma}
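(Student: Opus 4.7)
For part (i), the plan is to verify each clause of Definition~\ref{def:geo-fib} for $\fibs/A:\tot{\fibs/A}\to\catr/I$ by reducing it to the corresponding property of $\fibs$. The finite-limit-fibration structure is immediate from Lemma~\ref{lem:slice-fibration}. To see that $\fibs/A$ is a pre-stack, I would use the diagrammatic criterion of Lemma~\ref{lem:diagrammatic-pre-stack}: a regular epi in $\catr/I$ is carried by a regular epi in $\catr$, a cartesian morphism in $\tot{\fibs/A}$ is a cartesian morphism in $\tot{\fibs}$ between objects with a chosen map to $A$, and the required unique descended mediator in $\fibs$ automatically lives over the slice since it is determined by its composite with the defining map to $A$. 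The fibers $(\fibs/A)_{u:J\to I}\simeq(\fibs_J)/u^*A$ are regular because slicing preserves regularity, and reindexing along $v:(v:K\to I)\to(u:J\to I)$ is (up to equivalence) the reindexing along $K\to J$ in $\fibs$ followed by slicing, hence regular. For stable internal unions I would use the criterion of Lemma~\ref{lem:gps-sub-equant}: the fibered subobject fibration $\sub(\fibs/A)$ is a full subfibration of $\sub(\fibs)$ (vertical monos into $f:B\to A$ correspond bijectively to vertical monos into $B$ in $\tot{\fibs}$ whose composite with $f$ still factors through the chosen map—in fact all of them do, since they factor through $f$ vertically), so existential quantification along arbitrary maps in $\tot{\fibs/A}$ inherits directly from $\tot{\fibs}$, together with the Frobenius condition.

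For part (ii), the plan is to mimic the proof of Lemma~\ref{lem:whfci-by-localization}(ii), replacing $\siev$ by $\sub$ throughout. An object of $\catr[\fibs]/(A,\predeq)$ is a pair $(\bsi,\phi)$ with $\bsi\in\catr[\fibs]$ and $\phi:\bsi\to(A,\predeq)$. By the analogue of Lemma~\ref{lem:tracking-fam}(iv) for $\per(\sub(\fibs))$—proved exactly as there, using that any morphism into a discrete relation has a strict tracking family—$\phi$ is uniquely represented by a vertical morphism $f:B\to A$ in $\tot{\fibs}$ such that $\sigma(b,b')\ent fb=f b'$ holds in $\sub(\fibs)$, equivalently such that $f$ coequalizes the two projections of the vertical mono $\sigma\emono B\times B$. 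On the other side, an object of $(\catr/I)[\fibs/A]$ is a total equivalence relation $\sigma'\in\sub(\fibs/A)_{f\times_A f}$ on some $f:B\to A$ in $\tot{\fibs/A}$; via the subfibration inclusion $\sub(\fibs/A)\hookrightarrow\sub(\fibs)$ used in part~(i), such $\sigma'$ corresponds exactly to a vertical mono $\sigma\emono B\times B$ that factors through $B\times_A B\emono B\times B$, i.e., whose components are equalized by $f$. This is exactly the data extracted above, and the equivalence relation axioms transfer between the two presentations. I would then extend the correspondence to morphisms: a functional relation $(B,\sigma)\to(D,\tau)$ in $\catr[\fibs]$ compatible with the given maps to $A$ corresponds to a functional relation in $\sub(\fibs/A)$ from $(f,\sigma)$ to $(g,\tau)$, by the same factorization argument.

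The routine bookkeeping is the morphism-level part of (ii) and the verification of functoriality/pseudonaturality of the constructed equivalence; these are direct but tedious. The main obstacle I anticipate is in part (i), namely confirming pullback-stable internal unions transfer cleanly—concretely, checking that the Beck-Chevalley squares needed for $\exists$ along \emph{arbitrary} morphisms in $\tot{\fibs/A}$ really are pullback squares there, since pullbacks in $\tot{\fibs/A}$ coincide with pullbacks in $\tot{\fibs}$ of diagrams over $A$; once this identification is settled, the clause follows from the corresponding property of $\fibs$.
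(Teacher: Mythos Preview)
Your plan for part~(i) is fine and matches the paper's (brief) argument.

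The gap is in part~(ii). The analogue of Lemma~\ref{lem:tracking-fam}(iv) that you invoke---that a morphism $\phi:(B,\sigma)\to(A,\predeq_A)$ in $\catr[\fibs]$ is represented by an actual morphism $f:B\to A$ in $\tot{\fibs}$---is \emph{false} for general geometric pre-stacks, and it cannot be ``proved exactly as there''. The original proof relies on the existence of tracking families (Lemma~\ref{lem:tracking-fam}(iii)) and on Lemma~\ref{lem:factor-covercart}, both of which exploit that predicates in $\siev(\fibc)$ are morphisms and that cover-cartesian maps are regular epimorphisms in $\tot{\fibc}$. In $\sub(\fibs)$ predicates are vertical monos; totality of $\phi$ only tells you that the projection $\phi\to B$ is \emph{collectively covering}, and collective covers need not be regular epimorphisms in $\tot{\fibs}$, so no descent argument produces $f$. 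Concretely, take $\fibs=\famf(\catfin)$ (a geometric but non-positive pre-stack), $A=(\{*\},\{*\})$ over $I=\{0,1\}$, $B=(\{0,1\})$ over $J=\{*\}$, both with discrete relations; the functional relation $\phi$ with $\phi_{*,0}=\{(0,*)\}$ and $\phi_{*,1}=\{(1,*)\}$ is a legitimate morphism $(B,\predeq)\to(A,\predeq)$ in $\catr[\fibs]$, yet any $f:B\to A$ in $\tot{\famf(\catfin)}$ must lie over a single point of $I$ and hence cannot represent~$\phi$.

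The paper circumvents this by doing the analogue of Lemma~\ref{lem:tracking-fam}(v) rather than~(iv): given $\phi:(B,\rho)\to(A,\predeq)$, it replaces $(B,\rho)$ by the isomorphic object $(U,(\predeq\brprod\rho)|_U)$, where $U\emono A\times B$ is the vertical image of $\langle\phi,\id\rangle$, so that the structure map to $A$ is realized by the projection $U\to A$. Your outline can be repaired along these lines, but as written the key reduction step does not go through.
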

\begin{proof}
We already know that $\fibs/A$ has finite limits. Image factorizations and
internal unions are inherited from $\fibs$ in the straightforward way,
preserving all stability properties. Thus, $\fibs/A$  is a \geostack.

To show the claimed equivalence, we sketch constructions of functors in both
directions. An object in $(\catr/I)[\fibs/A]$ is given by a map $f:B\to A$ in
$\tot{\fibs}$ and a vertical subobject $\rho:R\emono B\times_A B$ which is
reflexive and transitive as a predicate in $\sub(\fibs/A)$. We can transform
$\rho$ into a predicate on $B\times B$ by quantifying existentially along
$m:B\times_A B\emono B\times B$, and the resulting $\exists_m\rho$ is an
equivalence relation in $\sub(\fibs)$, whence $(B,\exists_m\rho)$ is an object
in $\catr[\fibs]$. We claim that $f$ induces a functional relation of type
$(B,\exists_m\rho)\to(A,\predeq_A)$ -- to show this, we have to verify that
$f$ is compatible with the two relations, i.e. that we can fill in the dashed
arrow in
\[
\vcenter{\xymatrix@R-5mm{
\vmono[dd]_\rho\ccov[rd]\ar[rr] & & \vmono[dd]_(.35)\top\ccov[rd]\\
& \vmono[dd]_(.25){\exists_m\rho}\dashed@/_5pt/[rr] & &
\vmono[dd]^{\exists_{\delta_A}\top}\\
B\times_A B\ar[rr]|(.56)\hole\mono[rd]_m& & A\mono[rd]^{\delta_A}\\
& B\times B\ar[rr]_{f\times f} & & A\times A\\
J\times_I J\ar[rr]\mono[rd]& & I\mono[rd]^{\delta_I}\\
& J\times J\ar[rr]^{u\times u} & & I\times I\\
}},
\]
where the two lower horizontal squares (in the spatial sense) are pullbacks in
the total category and the base, respectively, and the predicate
$\exists_{\delta_A}\top$ is (by definition) the equality predicate $\predeq_A$
in $\sub(\fibs)$. The validity of the claim can be seen by chasing along the
backside of the cube, i.e., over $B\times_A B\to A\to A\times A$; the fact that
the squares are pullbacks is not essential. This shows how an object in
$(\catr/I)[\fibs/A]$ can be transformed into an object in
$\catr[\fibs]/(A,\predeq)$.

In the other direction, take an object in $\catr[\fibs]/(A,\predeq)$, i.e.\ a
morphism $\phi:(B,\rho)\to(A,\predeq)$. Let $m:U\emono A\times B$ be the
vertical subobject corresponding to the image of $\langle
\phi,\id\rangle:(B,\rho)\to(A\times B,\predeq\brprod\rho)$. Then
$(U,(\predeq\brprod\rho)|_U)$ is isomorphic to $(B,\rho)$, 
and composing this isomorphism with $\phi$, we
obtain an isomorphic representation of $\phi$ which is tracked by the
projection $p:U\emono A\times B\to A$. Since the projection is compatible with
the equivalence relations, $\predeq\brprod\rho$ factors through $U\times_A U$,
which induces an equivalence relation on $p$ in $\fibs/A$. This shows how to go
from $\catr[\fibs]/(A,\predeq)$ to $(\catr/I)/[\fibs/A]$.

We leave it to the reader to verify that the sketched constructions are
functorial and constitute an equivalence of categories.
\end{proof}

The preceding lemma does in particular 
give us a representation of $\sheaf{\fibs}$ as
\[\sheaf{\fibs}_I\simeq(\catr/I)[\fibs/I].\]
 In the following, we will identify
$\sheaf{\fibs}_I$ with $(\catr/I)[\fibs/I]$, as this is easiest to work
with.
Explicitely, 
\begin{itemize}
 \item 
an object in $\sheaf{\fibs}_I$ is thus given by a triple $(u,A,\rho)$ with
$u:J\to I$, $A\in\fibs_J$, and $\rho$ a vertical subobject of $A\times_I A$
which is an equivalence relation in $\sub(\fibs/I)$, and
\item
a morphism from $(u,A,\rho)\in\sheaf{\fibs}_I$ to
$(v,B,\sigma)\in\sheaf{\fibs}_K$ over $w:I\to K$ is a vertical subobject of
$A\times_K B$ which is functional with respect to the extension of $\rho$ 
to $A\times_K A$ and
$\sigma$ in $\sub(\fibs/K)$.
\end{itemize}
We also give explicit constructions of cartesian and cocartesian lifting and
internal unions in $\sheaf{\fibs}$ relative to the above representation, since
we will need them later. 
\begin{itemize}
 \item 
Given
$(u,A,\rho)$ in $\sheaf{\fibs}_I$ and $v:K\to I$, the cartesian lifting of
$(u,A,\rho)$ along $v$ is given by $(v^*u, v^*A, v^*\rho)$, where $v^*u$,
$v^*A$, and $v^*\rho$ are defined as in the following diagrams.
\[
\vcenter{\xymatrix@R-2mm{
v^* A\cart[r] & A\\
v^*J\pullbackcorner\ar[r]\ar[d]_{v^*u} &  J\ar[d]^u\\
K\ar[r]^v & I
}
}\quad
\vcenter{\xymatrix@R-2mm{
\vmono[d]_{v^*\rho}\pullbackcorner\cart[r] & \vmono[d]^\rho\\
v^*A\times_K v^*A\cart[r]& A\times_I A\\
v^*J\times_K v^*J\pullbackcorner\ar[d]\ar[r] & J\times_I J\ar[d]\\
K\ar[r]^{v} & I
}}
\]
\item
In the other direction, the \emph{co}cartesian lifting of $(u,A,\rho)$ along
$w:I\to L$
is given by $(wu, A, \exists_m\rho)$, where 
$m:A\times_I A\to A\times_L A$ is the canonical embedding.
\item For internal unions, recall that subobjects of $(u,A,\rho)$ in
$\sheaf{\fibs}_I$ correspond compatible predicates in
$\sub(\fibs/I)_{(u,A)}$, i.e., vertical subobjects of $A$ which are
compatible with $\rho$. Now given $(u,A,\rho)\in\sheaf{\fibs}_I$, $v:K\to I$,
and a compatible subobject $m$ of $v^*A$ (as above), the subobject $n$ of $A$
corresponding to the internal union along $v$ is given by internal union along
$u^*v$ in $\fibs$,
\[
\vcenter{\xymatrix@R-2mm{
\vmono[d]_m\ccov[r] & \vmono[d]^n\\
v^* A\cart[r] & A\\
v^*J\pullbackcorner\ar[r]^{u^*v}\ar[d]_{v^*u} &  J\ar[d]^u\\
K\ar[r]^v & I
}},
\]
which is automatically strict with respect to $\rho$ (intuitively because sets
which are compatible with an equivalence relation are closed under arbitrary
unions).
\end{itemize}

Using this representation, we can define the embedding of $\fibs$
into $\sheaf{\fibs}$, which we call $Z$, since it is close to the fibered
Yoneda embedding $Y:\fibc\to\whfc$ defined in Lemma~\ref{def:fib-yoneda}.
\begin{definition}\label{def:z-embedding}
For a \geostack{} $\fibs:\tot{\fibs}\to \catr$, we define
\begin{align*}
Z:\fibs&\to\sheaf{\fibs}\\
 \fibs_I\ni A&\mapsto (\id_I,A,\predeq)\in\sheaf{\fibs}_I.
\end{align*}
\end{definition}
\begin{lemma}
The previously defined $Z$ is full, faithful and geometric.
\end{lemma}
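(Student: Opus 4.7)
Throughout I use the identification $\sheaf{\fibs}_I\simeq(\catr/I)[\fibs/I]$ of Lemma~\ref{lem:sheaf-slice}, under which $Z(A)$ for $A\in\fibs_I$ becomes the object $(\id_A,\predeq_A)$ of $(\catr/I)[\fibs/I]$ — that is, $A$ viewed as an object of $\fibs/I$ over $\id_I$, equipped with the discrete equivalence relation.

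For fullness and faithfulness, the plan is to reduce to the corresponding fibrewise statement and then invoke the standard characterisation of morphisms as functional relations in a regular category. Concretely, for $A\in\fibs_I$ and $B\in\fibs_K$ and $u\vtp I\to K$, a morphism $Z(A)\to Z(B)$ over $u$ in $\sheaf{\fibs}$ is a vertical subobject $R\emono A\times_K B$ in $\fibs_I$ that is total and single-valued relative to $\predeq_A$ and (the extension of) $\predeq_B$. Pulling $B$ back along $u$ replaces $A\times_K B$ by $A\times_I u^*B$, so $R$ becomes a functional relation from $A$ to $u^*B$ inside the regular category $\fibs_I$ (which is regular by Lemma~\ref{lem:gps-sub-equant}). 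By the well-known equivalence between morphisms and total single-valued relations in a regular category, such $R$ corresponds bijectively to a morphism $A\to u^*B$ in $\fibs_I$, and thence via the cartesian lifting to a unique morphism $A\to B$ over $u$ in $\tot{\fibs}$. Unwinding shows that this bijection is exactly $f\mapsto Z(f)$.

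For the geometric part, I would use the explicit descriptions of cartesian and cocartesian liftings and of internal unions in $\sheaf{\fibs}$ that are spelled out after Lemma~\ref{lem:sheaf-slice}. Cartesianness of $Z$ is immediate: given $v\vtp K\to I$, the recipe $(u,A,\rho)\mapsto(v^*u,v^*A,v^*\rho)$ sends $(\id_I,A,\predeq_A)$ to $(\id_K,v^*A,\predeq_{v^*A})=Z(v^*A)$. Fibrewise finite limits are preserved because the terminal object $1_I$, binary products $A\times_I B$, and equalisers are respectively sent by $Z$ to objects whose underlying datum is the corresponding limit carrying the discrete equivalence relation, which is manifestly the limit in $(\catr/I)[\fibs/I]$. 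Preservation of image factorisations follows because the image of $Z(f)\vtp ZA\to ZB$ is represented by the vertical subobject $\exists_f\top\emono B$, which is $Z$ of the image of $f$. Finally, internal unions: given $v\vtp K\to I$, $A\in\fibs_I$ and a vertical subobject $m\vtp U\emono v^*A$, the formula for internal union in $\sheaf{\fibs}_I$ unfolds to taking $\exists_v m$ in $\sub(\fibs)$, which is exactly $Z(\exists_v m)$.

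The most substantive step is the full-and-faithful one, and the only non-bookkeeping ingredient there is the equivalence between morphisms and functional relations in the regular categories $\fibs_I$; all the remaining verifications are direct calculations with the representations supplied by Lemma~\ref{lem:sheaf-slice} and the surrounding paragraphs.
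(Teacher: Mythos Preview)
Your proposal is correct and follows essentially the same route as the paper. The only organisational difference is that the paper reduces the fibrewise checks (fullness, faithfulness, regularity) to the terminal fibre via the identification $\fibs_I\simeq(\fibs/I)_1$, verifying everything for $Z_1:\fibs_1\to\catr[\fibs]$ and then invoking the same argument for $\fibs/I$; you instead work directly in each fibre (and even over non-identity base maps $u$). But the substantive ingredient is identical in both cases: morphisms in a regular category are in bijection with functional relations, so maps $(A,\predeq)\to(B,\predeq)$ are exactly maps $A\to B$. Likewise, both arguments for preservation of internal unions appeal to the explicit description given just before the definition of $Z$.
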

\begin{proof}
Fullness, faithfulness, and regularity are fiberwise properties, thus it
suffices to show them for the functors $Z_I$, $I\in\catr$. It suffices even to
verify them for $Z_1:\fibs_1\to\catr[\fibs]$, since
$\fibs_I\simeq(\fibs/I)_{\id_I}$, and we can apply the same argument to
$\fibs/I$. For $A,B\in\fibs_1$, a functional relation from $(A,\predeq)$ to
$(B,\predeq)$ in $\sub(\fibs)$ is just a functional relation between $A$ and
$B$ in the (ordinary) subobject fibration of $\fibs_1$, and we know that
morphisms are in bijection with functional relations in regular categories,
whence $Z_1$ is full and faithful. A morphism $\phi:(A,\rho)\to(B,\sigma)$ in
$\catr[\fibs]$ is regular epic iff $b\csep\vdash\exists A\qdot \phi(a,b)$
holds. Since $\exists$ is given by collective-cover/vertical-mono
factorization, which restricts to ordinary cover/mono factorization in
$\fibs_1$, it is easy to see that $Z_1$ preserves regular epimorphisms.

Finally, the preservation of internal unions follows directly from the
description of internal unions in $\sheaf{\fibs}$ given before
Definition~\ref{def:z-embedding}.
\end{proof}

\begin{lemma}\label{lem:sh-uprop}
  $\sheaf{\fibs}$ is characterized by the equivalence 
\[
\catgeo(\catr)(\fibs,\fibx)\simeq\catpretop(\catr)(\sh(\fibs),\fibx)
\]
for fibered pretoposes $\fibx$.
\end{lemma}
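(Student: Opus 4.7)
The plan is to follow the pattern of the analogous Lemma~\ref{lem:yoneda} for the fibered presheaf construction. The structural observation driving the argument is that every object $(u,A,\rho)\in\shs_I$ with $u:J\to I$ admits a canonical presentation
\[
\xymatrix@1{ZA\coca[r]^-s & (u,A,\predeq)\depi[r]^-e & (u,A,\rho)}
\]
as the quotient of an internal sum of an object in the image of $Z$. Here $s$ is cocartesian by the explicit description of cocartesian liftings in $\shs$ given after Lemma~\ref{lem:sheaf-slice}, and $e$ is the vertical regular epimorphism corresponding via Lemma~\ref{lem:decompo}-\ref{lem:decompo-coverrepr} to the equivalence relation $\rho$ on the object $(u,A)\in\fibs/I$. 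This presentation plays the role that diagram~\eqref{eq:dia-coeq-ucro} had in the proof of Lemma~\ref{lem:yoneda}, the essential simplification being that $\rho$ is already a vertical monomorphism in $\shs$ and so no separate image factorization is required.

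Faithfulness and fullness of precomposition with $Z$ then follow by the same reasoning as in Lemma~\ref{lem:yoneda}. Given fibered pretopos functors $F,G:\shs\to\fibx$ and vertical natural transformations $\eta,\theta:F\to G$ with $\eta Z=\theta Z$, applying $F$ and $G$ to the sequence above and using that $Fs,Gs$ are cocartesian while $Fe,Ge$ are regular epimorphisms in their fiber (hence collectively epic by Lemma~\ref{lem:geostack-ccov}-\ref{lem:geostack-ccov-cepi}) forces $\eta_{(u,A,\rho)}=\theta_{(u,A,\rho)}$. For fullness, a given $\eta:FZ\to GZ$ extends to $\widetilde{\eta}:F\to G$ in two steps: first one lifts along $Fs$ and $Gs$ to obtain the components at $(u,A,\predeq)$, then one passes through the vertical regular epis $Fe,Ge$ using orthogonality to the kernel monomorphism of $Ge$, exactly as in the second part of the proof of Lemma~\ref{lem:yoneda}.

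For essential surjectivity, given a geometric fibered functor $F:\fibs\to\fibx$, one defines $\widetilde{F}$ on an object $(u,A,\rho)\in\shs_I$ by reconstructing $\shs$ inside $\fibx$: let $\Sigma_u FA\in\fibx_I$ be a cocartesian lifting of $FA$ along $u$; view $\rho$ as a vertical subobject of $A\times_I A$ in $\tot{\fibs}$, send it to a vertical subobject of $FA\times_I FA$ using that $F$ is fiberwise regular, and lift the result cocartesianly to a vertical subobject $\widehat{F\rho}$ of $\Sigma_u FA\times_I\Sigma_u FA$. Then set $\widetilde{F}(u,A,\rho)=\Sigma_u FA/\widehat{F\rho}$, the quotient being taken in the exact category $\fibx_I$. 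That $\widehat{F\rho}$ is reflexive, symmetric and transitive follows because $F$, being geometric, preserves existential quantification in subobject fibrations and hence the characterization of cocartesian maps from Lemma~\ref{lem:positive-prestack-geometric}-\ref{lem:positive-prestack-geometric-cocacoco}.

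The main obstacle is verifying that this prescription is functorial and that $\widetilde{F}$ preserves finite limits, internal sums and fiberwise regular epimorphisms. Each of these properties reduces to a routine diagram chase using the explicit descriptions of cartesian liftings, cocartesian liftings and internal unions in $\shs$ recalled after Lemma~\ref{lem:sheaf-slice}, combined with the fact that $F$ preserves vertical monomorphisms, fiberwise covers, and the Beck--Chevalley and Frobenius isomorphisms for internal unions. The natural isomorphism $\widetilde{F}\circ Z\cong F$ is immediate, since $\widetilde{F}(\id_I,A,\predeq)$ is the quotient of $FA$ by the discrete equivalence relation and is hence canonically isomorphic to $FA$.
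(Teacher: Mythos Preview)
Your proposal is correct and follows essentially the same approach as the paper: the same canonical presentation $ZA\to(u,A,\predeq)\to(u,A,\rho)$ drives faithfulness and fullness, and the construction of $\widetilde{F}$ via pushing $F\rho$ cocartesianly and quotienting is exactly what the paper does. The paper likewise leaves the functoriality and preservation checks for $\widetilde{F}$ to the reader.
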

\begin{proof}
 Since $Z:\fibs\to\sheaf{\fibs}$ is a geometric fibered functor, precomposition
induces a functor of type
$\catpretop(\sh(\fibs),\fibx)\to\catgeo(\fibs,\fibx)$. We show that this
functor is full, faithful, and essentially surjective.

For faithfulness, assume that $F,G\in\catpretop(\sh(\fibs),\fibx)$ and
$\eta,\theta:F\to G$ such that $\eta Z=\theta Z$. Let
$(u,A,\rho)\in\sheaf{\fibs}_I$ with $u:J\to I$. This object can be covered by
an object in the image of $Z$ as
$ZA=(\id_J,A,\predeq)\to(u,A,\predeq)\to(u,A,\rho)$ where the first map is
cocartesian, and the second is a vertical regular epi, whence both are
collective covers, which are preserved by fibered geometric functors, and in
particular fibered pretopos morphisms. Thus, applying $F, G, \eta$, and
$\theta$, we obtain
\[
 \xymatrix{
\ar[d]_{\eta_{(\id_J,A,\predeq)}=
\theta_ { (\id_J , A ,
\predeq)}}F(\id_J,A,\predeq)\ccov[r]&F(u,A,\rho)\ppair{d}{\theta_{(u,A,\rho)}}{
\eta_ { (u , A , \rho)}}\\
G(\id_J,A,\predeq)\ccov[r]&G(u,A,\rho)
}
\]
in $\fibx$, and we deduce that $\eta_{(u,A,\rho)}=\theta_{(u,A,\rho)}$ since
collective covers are collectively epic, and the components of $\eta$,
$\theta$ are vertical.

To see that precomposition by $Z$ is full, let $\mu:FZ\to GZ$ and consider
again $(u,A,\rho)\in\sheaf{\fibs}_I$ covered by
$(\id_J,A,\predeq)\to(u,A,\predeq)\to(u,A,\rho)$. Applying $F$ and $G$, we
obtain 
\[
 \xymatrix{
\ar[d]_{\mu_A}FZA\coca[r]&F(u,A,\predeq)\vepi[r]\dashed[d]&F(u,A,\rho)\dashed[d]
\\
GZA\coca[r]&G(u,A,\predeq)\vepi[r]&G(u,A,\rho)
}
\]
and we have to show that we can fill in the dashed arrows. The existence of the
fist one follows from the universal property of cocartesian liftings, for the
second one we have to compare the kernels of the two vertical (horizontal in
the diagram) epimorphisms. Now these two kernels can be collectively covered by
the images of $\rho:R\emono A\times_I A$ under $FZ$ and $GZ$, respectively, and
the desired inclusion of kernels can be deduced by considering the arrow $\mu_R$
between their respective coverings. We leave it to the reader to verify that
this construction give rise to a natural transformation between $F$ and $G$.

It remains to check that precomposition by $Z$ is essentially surjective. Let
$F:\fibs\to\fibx$ be a geometric fibered functor. We have to construct an
extension $\widetilde{F}:\sheaf{\fibs}\to\fibx$ of $F$. Consider
$(u,A,\rho)\in\sheaf{\fibs}_I$. If we apply $F$ to $A$ and $\rho$, we obtain
$FA\in\fibs_J$ and $F\rho:FR\emono FA\times_I FA$. To construct
$\widetilde{F}(u,A,\rho)$, we take the internal sum of $FA$ along $u$, and then
push $F\rho$ from $FA\times_I FA$ to $\Sigma_uFA\times_I\Sigma_uFA$.
\[
\vcenter{\xymatrix{
FA\coca[r] & \Sigma_uFA\\
J \ar[r]^u & I
}}\quad
\vcenter{\xymatrix@R-6mm@C-9mm{
\vmono[dd]_{F\rho}\coca[rr] & & \vmono[dd]^{\rho'}\\
\\
  FA\times_I FA\coca[rr]\cart[rd]& &
\Sigma_uFA\times_I\Sigma_uFA\cart[rd]\\
& FA\times FA\coca[rr] & & \Sigma_uFA\times\Sigma_uFA\\
J\times_I J\ar[rr]\mono[rd]& & I\mono[rd]^{\delta_I}\\
& J\times J\ar[rr]^{u\times u} & & I\times I\\
}}
\]
$\rho'$ defined like this is an equivalence relation in $\fibx_I$, and we
take $\widetilde{F}(U,A,\rho)$ to be its quotient.
\end{proof}

\begin{corollary}\label{cor-hat-sh-d}
 For a finite limit pre-stack $\fibc:\tot{\fibc}\to\catr$, on a regular
 category, we have an equivalence
\[
\widehat{\fibc}\simeq\sheaf{D\fibc}
\]
of fibrations. In particular, we have $\srel{\catr}{\fibc}\simeq\catr[D\fibc]$.
\end{corollary}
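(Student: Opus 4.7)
The plan is to obtain the equivalence $\widehat{\fibc}\simeq\sheaf{D\fibc}$ by chaining together the three universal properties already established, and then deduce the claim about terminal fibers as an immediate corollary.

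First, I would fix an arbitrary fibered pretopos $\fibx$ on $\catr$ and construct a natural chain of equivalences of hom-categories
\[
\catpretop(\catr)(\sheaf{D\fibc},\fibx)
\simeq \catgeo(\catr)(D\fibc,\fibx)
\simeq \catlex(\catr)(\fibc,\fibx)
\simeq \catpretop(\catr)(\widehat{\fibc},\fibx).
\]
The first equivalence is Lemma~\ref{lem:sh-uprop} applied to the geometric pre-stack $\fibs=D\fibc$ (geometric by Lemma~\ref{lem:characterize-d}), viewing $\fibx$ as a fibered pretopos on which we evaluate the universal property. The second equivalence is Lemma~\ref{lem:characterize-d} applied with target $\fibs=\fibx$, which is legitimate because every fibered pretopos is in particular a positive pre-stack and hence a \geostack{} by Lemma~\ref{lem:positive-prestack-geometric}-\ref{lem:positive-prestack-geometric-geo} (and a positive fibered functor is geometric by Lemma~\ref{lem:positive-prestack-geometric}-\ref{lem:positive-prestack-geometric-functors}). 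The third equivalence is Lemma~\ref{lem:yoneda}-\ref{lem:yoneda-univ-prop}.

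Then a 2-Yoneda argument in the 2-category $\catpretop(\catr)$ gives the desired equivalence $\widehat{\fibc}\simeq\sheaf{D\fibc}$. Equivalently, and more explicitly, I would construct the equivalence by transporting universal arrows: the composite $Z\circ y:\fibc\to D\fibc\to\sheaf{D\fibc}$ is finite-limit preserving and fibered, so by Lemma~\ref{lem:yoneda}-\ref{lem:yoneda-univ-prop} it extends uniquely (up to canonical iso) to a fibered pretopos morphism $\Phi:\widehat{\fibc}\to\sheaf{D\fibc}$; conversely, Lemma~\ref{lem:characterize-d} extends $Y:\fibc\to\widehat{\fibc}$ (finite-limit preserving) along $y$ to a geometric functor $D\fibc\to\widehat{\fibc}$, and Lemma~\ref{lem:sh-uprop} then extends this to a fibered pretopos morphism $\Psi:\sheaf{D\fibc}\to\widehat{\fibc}$. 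The compositions $\Phi\Psi$ and $\Psi\Phi$ agree on $\fibc$ with the identity up to canonical isomorphism, and the uniqueness clauses of the universal properties force them to be isomorphic to the respective identities.

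For the second assertion, taking the terminal fiber of both sides of the established equivalence and using the definitions $\widehat{\fibc}_1\simeq\srel{\catr}{\fibc}$ and $\sheaf{D\fibc}_1\simeq\catr[D\fibc]$ (each by construction via gluing along the corresponding $\Delta$) yields $\srel{\catr}{\fibc}\simeq\catr[D\fibc]$. The only mild subtlety, and the step I would be most careful about, is checking that the three equivalences of hom-categories are truly pseudo-natural in $\fibx$ so that the 2-Yoneda argument is legitimate; this is however essentially built into the proofs of Lemmas~\ref{lem:yoneda}, \ref{lem:characterize-d}, and \ref{lem:sh-uprop}, since in each case the equivalence is given by precomposition with a fixed fibered functor.
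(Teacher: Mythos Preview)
Your proposal is correct and takes essentially the same approach as the paper: the paper's proof is the one-line observation that biadjunctions compose, and your chain of hom-equivalences followed by a 2-Yoneda argument is precisely the unpacking of that statement. Your explicit construction of $\Phi$ and $\Psi$ is more than the paper provides, but it is just the standard way of exhibiting the equivalence coming from uniqueness of left biadjoints.
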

\begin{proof}
  The universal property of the fibered presheaf construction says that it
  constitutes a left biadjoint to the forgetful functor
  $\catpretop(\catr)\to\catlex(\catr)$ from fibered pretoposes on $\catr$ to
  finite limit pre-stacks on $\catr$. The universal properties of $\sheaf{-}$
  and $D$ characterize the two constructions as left biadjoints to the
  forgetful functors of type $\catpretop(\catr)\to\catgeo(\catr)$ and
  $\catgeo(\catr)\to\catlex(\catr)$, respectively. The claim follows from the
  fact that biadjunctions compose.
\end{proof}

\subsection{Discrete sheaves}\label{sec:dis-sheaf}

In this section, we consider the left biadjoint to
\[
\catpos(\catr)\to\catgeo(\catr).
\]
We do this without going into details and without proofs. Abstractly, the
positive cocompletion of a geometric fibration $\fibs$ can be understood as the
subfibration of $\sheaf{\fibs}$ which is generated by the image of
$Z:\fibs\to\sheaf{\fibs}$ by closing under sums. Concretely, we give a
construction analogous to the positive completion of a finite limit pre-stack
in Section~\ref{sec:pos-pstack-from-fl-pstack} -- by identifying an
appropriate subcategory of $\catr[\fibs]$ and gluing.
 It turns out that this subcategory has a particularly
simple description.

\begin{definition}\label{def:geopos}
Let $\fibs:\tot{\fibs}\to\catr$ be a geometric pre-stack.
  The category $\geopos{\fibs}_1$ is defined as the full subcategory of
$\catr[\fibs]$ on \emph{discrete} equivalence relations. In other words, the
objects of $\geopos{\fibs}_1$ are the objects of $\tot{\fibs}$, and the
morphisms are functional relations in $\sub(\fibs)$.
 \end{definition}
Since discrete equivalence relations in $\catr[\fibs]$ are closed under finite
products and subobjects, $\geopos{\fibs}_1$ is a regular category. Moreover,
the functor $\Delta:\catr\to\catr[\fibs]$ (Definition~\ref{def:delta-r-rs})
factors through $\geopos{\fibs}_1$
\[
 \xymatrix@R-3mm{
\catr\ar[r]^\Delta\ar[rd]_\Delta & \geopos{\fibs}_1\incl[d]\\
& \catr[\fibs]
}
\]
and we can define $\geopos{\fibs}$ by
\[
 \geopos{\fibs}=\gl_\Delta({\geopos{\fibs}_1}).
\]
Since ${\geopos{\fibs}_1}$ and $\Delta$ are both regular, $\geopos{\fibs}$ is
indeed a positive pre-stack. Furthermore, the characterizing equivalence holds.
\begin{lemma}
 Let $\fibs$ be a geometric pre-stack, and $\fibp$ a positive pre-stack on
$\catr$. Then we have
\[
 \catgeo(\catr)(\fibs,\fibp)\simeq\catpos(\catr)(\geopos{\fibs},\fibp)
\]
\qed
\end{lemma}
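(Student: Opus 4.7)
The strategy parallels the proofs of Lemmas~\ref{lem:yoneda} and~\ref{lem:sh-uprop}: we identify an embedding $W\colon \fibs\to\geopos{\fibs}$, check it is geometric, and then verify that precomposition with $W$ is full, faithful and essentially surjective on morphism categories into positive pre-stacks.

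The embedding $W$ sends $A\in\fibs_I$ to $(\id_I,A)\in\geopos{\fibs}_I$, where we use the description (analogous to Lemma~\ref{lem:sheaf-slice}) of $\geopos{\fibs}_I$ as pairs $(u\colon J\to I, A\in\fibs_J)$ with morphisms over $w\colon I\to K$ being functional relations in $\sub(\fibs)$ suitably relativised over $K$. Cartesian lifting of $(u,A)$ along $v\colon K\to I$ gives $(v^*u,\bar{v}^*A)$, while cocartesian lifting along $w\colon I\to L$ gives $(wu,A)$ -- so any $(u,A)\in\geopos{\fibs}_I$ receives a cocartesian arrow $s\colon W(A)\coca (u,A)$. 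Geometricity of $W$ is checked fiberwise: finite limits and images in $\geopos{\fibs}_1$ are computed as in $\catr[\fibs]$ (using Lemma~\ref{lem:rs-exact}), so $W$ preserves them, and preservation of internal unions is immediate from the description given after Lemma~\ref{lem:sheaf-slice}.

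For faithfulness and fullness, let $F,G\colon\geopos{\fibs}\to\fibp$ be positive fibered functors. Given vertical $\eta,\theta\colon F\to G$ with $\eta W=\theta W$, the cocartesian map $s\colon W(A)\coca (u,A)$ is preserved by both $F$ and $G$, and since cocartesian maps are collective covers (Lemma~\ref{lem:positive-prestack-geometric}-\ref{lem:positive-prestack-geometric-geo} together with their characterisation) they are collectively epic by Lemma~\ref{lem:geostack-ccov}-\ref{lem:geostack-ccov-cepi}; hence $\eta_{(u,A)}=\theta_{(u,A)}$. For fullness, given $\mu\colon FW\to GW$, the universal property of $F(s)$ as a cocartesian arrow produces a unique vertical extension $\widetilde{\mu}_{(u,A)}$ such that $\widetilde{\mu}_{(u,A)}\circ F(s)=G(s)\circ\mu_A$; naturality follows from the fact that every morphism of $\geopos{\fibs}$ is generated, via reindexing and cocartesian lifting, by morphisms in the image of $W$ (which are functional relations in $\sub(\fibs)$, by the definition of $\geopos{\fibs}_1$), together with the naturality of the cocartesian lifting.

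For essential surjectivity, given a geometric fibered functor $F\colon\fibs\to\fibp$, define $\widetilde{F}(u,A)\coloneqq \Sigma_u F(A)\in\fibp_I$. A morphism $(u,A)\to(v,B)$ over $w$ is represented by a functional relation $\phi\in\sub(\fibs)$ between suitable reindexings of $A$ and $B$; since $F$ is geometric it carries $\phi$ to a functional relation in $\sub(\fibp)$ between the corresponding reindexings of $FA$ and $FB$. The crucial step -- and the main technical obstacle of the proof -- is to verify that $\widetilde{F}$ is well-defined, functorial, and positive. The key input here is Lemma~\ref{lem:positive-prestack-geometric}-\ref{lem:positive-prestack-geometric-cocacoco}: in the positive pre-stack $\fibp$, cocartesian maps are exactly those maps whose graph satisfies the injectivity and surjectivity judgements in $\sub(\fibp)$. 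This means that after applying the collective-cover/vertical-mono factorisation (available in $\fibp$ by Lemma~\ref{lem:geostack-ccov}-\ref{lem:geostack-ccov-fsys}) to the image under $F$ of a functional relation, one obtains precisely a vertical morphism between $\Sigma_u FA$ and $\Sigma_v FB$. Preservation of finite limits by $\widetilde{F}$ reduces to finite-limit preservation of $F$ and extensivity of sums in $\fibp$, while preservation of cocartesian maps is immediate from the description of cocartesian liftings in $\geopos{\fibs}$. Finally, the isomorphism $\widetilde{F}W\cong F$ is witnessed by the canonical identification $\Sigma_{\id} FA\cong FA$.
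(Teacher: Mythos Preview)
The paper does not actually prove this lemma: it is stated with a bare \qed, and the section opens with ``We do this without going into details and without proofs.'' So there is no paper proof to compare against; your sketch is filling a deliberate gap.

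Your approach is correct and is exactly the one the paper's architecture points to: it is the analogue of Lemmas~\ref{lem:yoneda} and~\ref{lem:sh-uprop}, simplified by the fact that every object $(u,A)$ of $\geopos{\fibs}$ is reached from $W(A)$ by a single cocartesian arrow (no vertical regular epi is needed, since the equivalence relations are discrete). The faithfulness and fullness arguments go through precisely as you describe.

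Two remarks on the essential-surjectivity step. First, well-definedness of $\widetilde{F}$ on morphisms is really the assertion that for a positive pre-stack $\fibp$ the canonical comparison $\geopos{\fibp}\to\fibp$ is an equivalence; this is what your appeal to Lemma~\ref{lem:positive-prestack-geometric}-\ref{lem:positive-prestack-geometric-cocacoco} and extensivity amounts to, and it is worth saying so explicitly, since it is the substantive point (it is the ``reflection'' observation the paper makes in the introduction to this chapter). Second, once $\widetilde{F}$ is defined, you need not separately verify fiberwise regularity and preservation of cocartesian maps: by Lemma~\ref{lem:positive-prestack-geometric}-\ref{lem:positive-prestack-geometric-functors} it suffices to check that $\widetilde{F}$ is geometric, and this follows more directly from geometricity of $F$ together with the fact that $\Sigma$ in $\fibp$ preserves finite limits (\cite[Lemma~15.7]{streicherfib}, used elsewhere in the paper) and the collective-cover/vertical-mono factorisation.
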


\section{Fibered pretoposes from positive
fibrations}\label{sec:fibered-pretop-pos}

To finish our treatment of left biadjoints to the forgetful functors
\eqref{eq:sequence-inclusions}, it remains to construct the left biadjoint to
$\catpretop(\catr)\to\catpos(\catr)$, i.e., the construction of the free
fibered pretopos on a positive pre-stack. This is simply done by fiberwise
ex/reg completion.

One way to understand this is to consider instead the forgetful functor
\[
\pscommacat{\catr}{\mathbf{U}}\to \catr\pslice\catreg 
\]
between the pseudo-co-slice 2-categories which are biequivalent to
$\catpretop(\catr)$ and $\catpos(\catr)$ via the correspondence of Moens
theorem (see Lemmas~\ref{lem:moens-positive} and \ref{lem:moens-pretop}). It is
easy to see that the ordinary ex/reg completion, which is left biadjoint to
$\mathbf{U}:\catex\to\catreg$, lifts to a biadjunction between the
pseudo-co-slice 2-categories.

\section{The preordered case}\label{sec:preordered-case}

The free constructions that we have developed in the preceding sections
become easier when we consider fibered preorders instead of general fibrations.

In this section, we explore some of the consequences, with the goal of
giving a characterization of when the category $\srel{\catr}{\ffrma}$ for a
pre-stack $\ffrma:\tot{\ffrma}\to \catr$ of meet-semilattices is locally
cartesian closed.

\subsection{Fibered frames}

Fibered frames are posetal geometric fibrations. Since in the posetal case in
the presence of greatest elements the existence of internal unions already
implies the existence of internal sums, we can give the following equivalent
definition.
\begin{definition}\label{def:fibered-frame}
Let $\catr$ be a regular category.
\begin{enumerate}
\item
A \emph{fibered frame}\index{fibered!frame}\index{frame!fibered} is
an existential fibration $\fifx:\tot{\fifx}\to\catr$
(Definition~\ref{def:existential-fibration}) which is a pre-stack.
 \item 
$\ffrm(\catr)$ is the locally ordered category of fibered frames on $\catr$. Its
morphisms are fibered monotone maps preserving finite meets and existential
quantification.
\end{enumerate}
\end{definition}
Given a fibered frame $\fifx:\tot{\fifx}\to\catr$, we can use the existential
quantification to `embed' the subobject fibration of $\catr$ into $\fifx$ via
the fibered monotone map
\begin{equation}\label{eq:def-delta}
 \delta:\sub(\catr)\to\fifx,\qquad (m:U\emono I)\;\mapsto
\;(\exists_m\top\in\fifx_I).
\end{equation}
We write `embed' in quotes since $\delta$ is not necessarily
order-reflecting (a counterexample is the terminal fibration). However,we can
show the following statements.
\begin{lemma}\label{lem:preds-incl-exists}
 Let $\fifx$ be a fibered frame on $\catr$.
\begin{itemize}
\item 
$\delta$ preserves conjunction and existential quantification.
 \item 
Let $m:U\emono I$ be a
monomorphism in $\catr$. Then we have 
\[\fifx_U\simeq\{\varphi\in\fifx_I\msep \varphi\leq \delta m\}.\]
\end{itemize}
\end{lemma}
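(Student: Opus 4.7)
The plan is to handle the two bullet points separately, with the first relying on a key auxiliary fact that the pre-stack condition forces $\exists_e\top = \top$ for any regular epimorphism $e$ in $\catr$.

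\textbf{The auxiliary fact.} For a regular epi $e:J\eepi I$ in $\catr$, reindexing preserves the top element, so $e^*\top_I = \top_J$. By the adjunction $\exists_e\dashv e^*$ applied to the trivial inequality $\exists_e\top\leq\exists_e\top$, we get $\top_J\leq e^*(\exists_e\top)$, hence $e^*(\exists_e\top)=\top_J=e^*\top_I$. Since $\fifx$ is a pre-stack and $\fifx$ is posetal, the embedding $\fifx_I\to\ddesc(\fifx,e)$ is faithful in the posetal sense, which amounts to $e^*:\fifx_I\to\fifx_J$ being order-reflecting along the regular epi $e$. Thus $\exists_e\top=\top_I$.

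\textbf{Preservation of $\wedge$.} Given $m:U\emono I$ and $n:V\emono I$, form the pullback square giving $m\wedge n = mp = nq:P\emono I$. Then $\delta(m\wedge n)=\exists_{mp}\top=\exists_m\exists_p\top$. On the other side, by Frobenius,
\[
\delta m\wedge\delta n \;=\; \exists_m\top\wedge\exists_n\top \;=\; \exists_m(m^*\exists_n\top),
\]
and by Beck--Chevalley for the pullback, $m^*\exists_n\top=\exists_p q^*\top=\exists_p\top$, so the two expressions agree.

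\textbf{Preservation of $\exists$.} For $u:J\to I$ and $m:V\emono J$, factor $um$ as $m'\circ e$ with $m':W\emono I$ a mono and $e:V\eepi W$ a regular epi; then $\exists_u m = m'$ in $\sub(\catr)$. Functoriality of $\exists$ along composition gives $\exists_u\delta m = \exists_u\exists_m\top = \exists_{m'e}\top = \exists_{m'}\exists_e\top$, which by the auxiliary fact equals $\exists_{m'}\top = \delta(\exists_u m)$.

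\textbf{Second bullet.} The claim is that $\exists_m\dashv m^*$ restrict to mutually inverse monotone maps between $\fifx_U$ and the down-set $\{\varphi\in\fifx_I\mid \varphi\leq\delta m\}$. Since $\exists_m\psi\leq\exists_m\top=\delta m$, the left adjoint lands in the down-set. For the round trip $\exists_m m^*\varphi=\varphi$ when $\varphi\leq\delta m$, apply Frobenius: $\exists_m m^*\varphi=\varphi\wedge\exists_m\top=\varphi\wedge\delta m=\varphi$. For the other round trip $m^*\exists_m\psi=\psi$, observe that since $m$ is monic, the pullback of $m$ along itself is the identity, so Beck--Chevalley yields $m^*\exists_m=\exists_{\id}\id^*=\id_{\fifx_U}$.

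The principal hurdle is the auxiliary fact $\exists_e\top=\top$, since this is the only place the pre-stack hypothesis enters in an essential way; once it is available, both bullets reduce to routine manipulations with Frobenius and Beck--Chevalley.
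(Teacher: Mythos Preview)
Your proof is correct and follows essentially the same route as the paper's: Beck--Chevalley and Frobenius for the preservation of conjunction, the pre-stack condition (via your auxiliary fact $\exists_e\top=\top$) for the preservation of existential quantification, and the same two round-trip identities for the second bullet. The only cosmetic difference is that the paper phrases the second bullet in internal-language notation (writing out $\exists_m m^*\varphi$ and $m^*\exists_m\psi$ as comprehension terms and using $mu=mv\vdash u=v$), whereas you argue directly with Frobenius and the Beck--Chevalley square for $m$ against itself; the content is identical.
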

\begin{proof}
The preservation of conjunction follows from the Beck-Chevalley condition and
Frobenius reciprocity. Preservation of $\exists$ follows from the fact that
$\fifx$ is a pre-stack.

For the second claim, use reasoning in the internal logic, making use of the
facts that $\exists_mm^* \varphi = \ilbracks{i\csep \exists u\qdot mu=i\wedge
\varphi(mu)}$ and $m^*\exists_m\psi=\ilbracks{u\csep\exists v\qdot
mu=mv\wedge\psi v}$
for $\varphi\in\fifx_I,\psi\in\fifx_U$, and that $mu=mv\ent u=v$ holds in
$\fifx$ since $m$ is a monomorphism.
\end{proof}

\subsubsection{Totally connected fibered frames}

\begin{definition}\label{def:totally-connected-fibered-frame}
 We call a fibered frame $\fifx:\tot{\fifx}\to\catr$ \emph{totally connected}\index{totally connected!fibered frame}\index{fibered!frame!totally connected},
if $\delta:\sub(\catr)\to\fifx$ has a finite meet preserving left adjoint
$\pi:\fifx\to\sub(\catr)$.
\end{definition}
The term `totally connected' comes from topos theory -- 
a geometric morphism $\Delta\adj\Gamma:\tope\to\tops$ is called totally
connected if the fibered functor
$\Delta:\fund{\tops}\to\gl_\Delta(\tope)$ (see~\eqref{eq:def-fibered-delta}) has
a finite limit preserving left adjoint (which is automatically positive
since it is a left adjoint). In this case, the adjunction is necessarily a
reflection, since $\fund{\tops}$ is bi-initial among fibered pretoposes on
$\tops$.

In the case of totally connected fibered frames, we can deduce that the
adjunction is a reflection in a similar way.

\begin{lemma}\label{lem:totally-connected-reflection}
 Let $\fifx:\tot{\fifx}\to\catr$ be a totally connected fibered frame. Then
$\pi\adj\delta$ is a reflection and $\delta$ is order reflecting.
\end{lemma}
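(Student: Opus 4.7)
The plan hinges on recognising $\sub(\catr)$ as the bi-initial object of $\ffrm(\catr)$ and using the hypothetical left adjoint $\pi$ to produce a second endomorphism of this bi-initial object, which is then forced by uniqueness to coincide with the identity.

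First I would verify bi-initiality. Lemma \ref{lem:preds-incl-exists} already shows that $\delta:\sub(\catr)\to\fifx$ preserves conjunction and existential quantification, and it evidently sends $\top_I=\id_I$ to $\top_I^{\fifx}$, so $\delta$ is a 1-cell of $\ffrm(\catr)$ for every fibered frame $\fifx$. Moreover $\delta$ is the unique such 1-cell: any $F:\sub(\catr)\to\fifx$ in $\ffrm(\catr)$ must send a subobject $m:U\emono I$, which in $\sub(\catr)_I$ is writeable as $m=\exists_m\top_U$, to $\exists_mF(\top_U)=\exists_m\top_U^{\fifx}=\delta(m)$.

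Next I would show that $\pi$ is itself a 1-cell of $\ffrm(\catr)$. Finite meet preservation is part of the hypothesis; preservation of existential quantification (together with the fiberedness of $\pi$) follows from the mates/doctrinal-adjunction package applied to the posetal adjunction $\pi\adj\delta$: the Beck--Chevalley squares for $\pi$ along arbitrary morphisms in $\catr$ are conjugate to the Beck--Chevalley squares for $\delta$, and Frobenius reciprocity for $\pi$ is conjugate to the fact that $\delta$ preserves binary meets. Consequently both $\pi\circ\delta$ and $\id_{\sub(\catr)}$ are endomorphisms of $\sub(\catr)$ in $\ffrm(\catr)$, whence uniqueness gives $\pi\delta\geq\id$. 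Combined with the counit inequality $\pi\delta\leq\id$ of the posetal adjunction, this yields $\pi\delta=\id$, i.e.\ $\pi\adj\delta$ is a reflection. Order-reflectivity of $\delta$ is then immediate: if $\delta\varphi\leq\delta\psi$, apply $\pi$ to obtain $\varphi=\pi\delta\varphi\leq\pi\delta\psi=\psi$.

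The main obstacle is the second step: $\pi$ is a priori only a family of fiberwise left adjoints, and one must check that it assembles into a fibered monotone map that genuinely preserves existential quantification. The cleanest route is to invoke mates in the 2-category of fibrations over $\catr$, so that the fiberedness of $\pi$ (i.e.\ commutation with reindexing up to iso), as well as the Beck--Chevalley and Frobenius conditions, are derived \emph{formally} from the corresponding properties of $\delta$ rather than reverified by hand. Once this is in place, the remaining argument is just the general fact that in a 2-category with a bi-initial object, any endomorphism of that object is isomorphic to the identity.
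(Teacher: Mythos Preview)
Your proof is correct but takes a different route from the paper's. The paper argues directly and elementarily: to show the adjunction is a reflection it suffices to verify $U\subseteq\pi\delta U$ for each subobject $m:U\emono I$, equivalently $m^*(\pi\delta U)\cong\top$, and this follows at once from $m^*U\cong\top$ together with the facts that $\pi,\delta$ are fibered and preserve $\top$. Your approach instead establishes that $\sub(\catr)$ is bi-initial in $\ffrm(\catr)$ and forces $\pi\delta=\id$ by uniqueness of endomorphisms; this is more conceptual and in fact echoes the paper's own remark (just before Definition~\ref{def:totally-connected-fibered-frame}) that the topos-level analogue holds because $\fund{\tops}$ is bi-initial among fibered pretoposes. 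The trade-off is that your route requires the extra verification that $\pi$ lies in $\ffrm(\catr)$.

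One small correction: your ``main obstacle'' is not really one. In the paper's conventions, ``$\delta$ has a left adjoint $\pi$'' means an adjunction in the 2-category of posetal fibrations, so $\pi$ is fibered by hypothesis (the paper's own proof uses that $\pi$ commutes with reindexing without comment). What \emph{does} need a word for your argument is that $\pi$ preserves $\exists$, and this is immediate: $\pi_I\exists_u$ and $\exists_u\pi_J$ are both left adjoint to $u^*\delta_I=\delta_Ju^*$.
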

\begin{proof}
Let $m:U\emono I$ in $\catr$. To show that $\pi\adj\delta$ is a reflection, it
suffices to show that $U\subseteq\pi\delta U$, which is equivalent to
$m^*(\pi\delta U)\cong\top$. This follows from the fact that $m^*U\cong\top$,
and that $\pi$ and $\delta$ commute with reindexing and preserve $\top$.

The second claim follows from the first one.
\end{proof}
In Lemma~\ref{lem:da-totally-connected} we will see that fibered frames of the
form $D\fifa$ for $\fifa$ with finite meets are always totally connected; in
Section~\ref{sec:assemblies} we will show that sheaves over totally connected
fibered frames have well behaved subcategories of assemblies.

\subsection{Cocompletions}\label{sec:pos-cocomp}

In the following we revisit the constructions of fibered presheaves and
sheaves, and the geometric completion $D$ for pre-stacks of preorders.
We will do this in the converse order, since it appears more natural in the
posetal case.

\subsubsection{The \texorpdfstring{$D$}{D}-construction}\label{sec:pos-d}

Since fibered frames are posetal \geostack{}s, we can expect to obtain a 
fibered frame when applying the $D$-construction (Definition~\ref{def:d-fib})
to a pre-stack of
meet-semilattices.

Recall that for a finite limit pre-stack $\fibc$, the fibration $D\fibc$ is
defined as the full subfibration of $\widehat{\fibc}$ on sub-representables, and
can be characterized as the completion of $\fibc$ to a \geostack{}
(Lemma~\ref{lem:characterize-d}). Now for pre-stacks of
meet-semilattices, $D$ can alternatively be characterized as freely adjoining
existential quantification.
This point of
view allows us to give a direct
construction of $D$ without detour over fibered presheaves, which allows us to
go in the opposite direction and define fibered presheaves in terms of $D$
(following Corollary~\ref{cor-hat-sh-d}) without being circular.

Furthermore, the point of view on $D$ as adjoining existential quantification
gives a construction that even makes sense in the absence of finite
limits/meets:
\begin{definition}
 Let $\ffrma:\tot{\ffrma}\to\catr$ be a pre-stack of preorders. The fibration
$D{\ffrma}$ on $\catr$ is defined as follows.
\begin{itemize}
 \item A predicate on $I\in \catr$ is a pair $(u:J\to I, \varphi\in\ffrma_J)$
\item $(u:J\to I,\varphi)\leq(v:K\to I,\psi)$ if there exists
a span $\xymatrix@1@C-1.9mm{J & \depi[l]_eL\ar[r]^f & K}$ with $e$
regular epic, $ue=vf$, and $e^*\varphi\leq f^*\psi$.
\end{itemize}
The embedding $y:\fifa\to D\fifa$ is defined by
\[
 \fifa_I\ni\varphi\mapsto(\id_I,\varphi)\in D\fifa_I.
\]
\end{definition}
\begin{lemma}
For pre-stacks $\fifa$, the ordering in $D\fifa$ can equivalently
be defined in terms of total relations: we have $(u:J\to I,\varphi)\leq(v:K\to
I,\psi)$ iff there exists a 
total relation $t:J\etrel K$ such that $vt=u$, and $\varphi(j),(\delta
t)(j,k)\ent \psi(k)$ holds in $\fifa$.
\end{lemma}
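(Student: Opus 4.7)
The plan is to interpret the judgment fibrationally and then bridge general spans and total relations via image factorization. A total relation $t:J\etrel K$ is a jointly monic span $(p:R\to J,\; q:R\to K)$ with $p$ regular epic, representing a subobject $\iota:R\rightarrowtail J\times K$; the constraint $vt=u$ (where $u,v$ are viewed as graphs) translates into the equation $vq=up$ via the unique factorization through the graph of $u$. The judgment $\varphi(j),(\delta t)(j,k)\ent\psi(k)$, reindexed along $\iota$ so that conjunction with $\delta t$ becomes restriction to $R$, is to be read as the inequality $p^{*}\varphi\leq q^{*}\psi$ in $\fifa_R$. This interpretation makes sense for any pre-stack $\fifa$ of preorders, requiring no additional meet or existential structure.

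For the reverse implication, the span $(p,q)$ underlying $t$ is itself a witness of the $D\fifa$-ordering: $p$ is regular epic, $up=vq$, and by hypothesis $p^{*}\varphi\leq q^{*}\psi$, which is exactly the definition of $(u,\varphi)\leq(v,\psi)$ in $D\fifa$. For the forward implication, I start from a span $J\xtwoheadleftarrow{e}L\xrightarrow{f}K$ with $ue=vf$ and $e^{*}\varphi\leq f^{*}\psi$, and factor $\langle e,f\rangle:L\to J\times K$ as a regular epi $\alpha:L\twoheadrightarrow R$ followed by a mono $\iota:R\rightarrowtail J\times K$, setting $p=\pi_J\iota$ and $q=\pi_K\iota$. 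Then $e=p\alpha$ and $f=q\alpha$, and since $e$ is a regular epi factoring through $p$, the image of $p$ exhausts $J$, so $p$ is regular epic; the equation $up\alpha=ue=vf=vq\alpha$ combined with epicness of $\alpha$ gives $up=vq$, so the resulting relation $t$ satisfies $vt=u$.

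The last step is to descend the inequality $\alpha^{*}p^{*}\varphi=e^{*}\varphi\leq f^{*}\psi=\alpha^{*}q^{*}\psi$ from $\fifa_L$ to $p^{*}\varphi\leq q^{*}\psi$ in $\fifa_R$. This is precisely where the pre-stack hypothesis is essential: in the posetal case, full and faithfulness of the embedding $\fifa_R\to\ddesc(\fifa,\alpha)$ from Definition~\ref{def:pre-stack} says that reindexing along a regular epi reflects the order, since any inequality between reindexed predicates is automatically compatible with the canonical descent data. I expect this descent step to be the only nontrivial point; everything else is bookkeeping with image factorizations in the regular base $\catr$.
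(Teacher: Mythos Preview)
Your proposal is correct and follows essentially the same approach as the paper's proof: pass from a span to a total relation by taking the image of $\langle e,f\rangle$ in $J\times K$, and conversely use the projections of a total relation as the span. The paper's proof is just two sentences and leaves implicit the points you spell out, in particular the use of the pre-stack condition to descend the inequality along the regular epi $\alpha$ onto the image.
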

\begin{proof}
 Any total relation gives a span via its two projections, the left leg is epic
since the relation is total. Conversely, given a span $\xymatrix@1@C-1.9mm{J &
\depi[l]_eL^*\ar[r]^f & K}$ with left leg epic, we get a total relation by
taking the image of $\langle e,f\rangle$ in $J\times K$.
\end{proof}

\begin{lemma}\label{lem:da}
Let $\ffrma:\tot{\ffrma}\to\catr$ be a pre-stack of preorders.
 \begin{enumerate}
\item $y:\fifa\to D\fifa$ is (monotone and) order-reflecting.
\item $D\fifa$ has existential quantification.
\item\label{lem:da-generated} $D\fifa$ is generated under existential
quantification by the image of
$y:\fifa\to D\fifa$. More precisely, for every $\varphi\in D\fifa_I$ there
exists $\psi\in\fifa_J$ and $u:J\to I$ with $\exists_uy(\psi)\cong\varphi$.
\item\label{lem:da-equiv} 
Given a posetal pre-stack $\fifb$ with existential quantification,
pre-composition with $y:\fifa\to D\fifa$ induces an equivalence
\[
\pfib(\catr)(\fifa,\fifb)\simeq\epfib(\catr)(D\fifa,\fifb)
\]
between preorders of fibered monotone maps, and
fibered monotone maps commuting with
existential quantification. 
 \end{enumerate}
\end{lemma}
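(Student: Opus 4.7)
The plan is to dispatch (i)--(iii) quickly and then focus on the universal property (iv), where the real work lies. A single observation underlies everything: in any posetal pre-stack with existential quantification, for a regular epimorphism $e:J\eepi I$ in the base, the adjunction $\exists_e\adj e^*$ is a reflection, i.e.\ $\exists_e e^*\cong\id$. To establish this, let $p_0,p_1:J\times_I J\to J$ be the kernel pair of $e$ and $\delta:J\to J\times_I J$ the diagonal section of $p_0$. Beck--Chevalley gives $e^*\exists_e e^*\varphi\cong\exists_{p_0}p_1^*e^*\varphi$, which equals $\exists_{p_0}p_0^*e^*\varphi$ since $ep_0=ep_1$; and $\exists_{p_0}p_0^*\psi\cong\psi$ for any $\psi$, by applying $\delta^*$ to the unit of $\exists_{p_0}\adj p_0^*$ and using $\delta^*p_0^*=\id$. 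So $e^*\exists_e e^*\varphi\cong e^*\varphi$, and order-reflection of $e^*$---which holds in any pre-stack of preorders, since the relation $e^*\alpha\leq e^*\beta$ automatically satisfies the descent cocycle and the pre-stack property promotes it to $\alpha\leq\beta$---upgrades this to $\exists_e e^*\varphi\cong\varphi$.

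Claim (i) is then immediate: monotonicity of $y$ uses the reflexive span with $e=f=\id$, and order-reflection is the pre-stack reflection property applied to $\fifa$. For (ii), I would define $\exists_u(v:K\to J,\varphi):=(uv,\varphi)$ and verify $\exists_u\adj u^*$ together with Beck--Chevalley directly from the span description of the order, using pullbacks in $\catr$ to compose and normalize spans. Claim (iii) is the observation $(u,\varphi)\cong\exists_u y(\varphi)$, which holds by definition.

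The crux is (iv). Given $F\in\pfib(\catr)(\fifa,\fifb)$, I would define $\tilde{F}(u,\varphi):=\exists_u F\varphi$, and the main task is verifying monotonicity. If $(u,\varphi)\leq(v,\psi)$ is witnessed by a span $(e,f)$ with $e$ regular epic, $ue=vf$, and $e^*\varphi\leq f^*\psi$, then applying the reindexing-preserving $F$ yields $e^*F\varphi\leq f^*F\psi$, and the chain
\[
\exists_u F\varphi\;\cong\;\exists_u\exists_e e^*F\varphi\;\leq\;\exists_u\exists_e f^*F\psi\;=\;\exists_{ue}f^*F\psi\;=\;\exists_v\exists_f f^*F\psi\;\leq\;\exists_v F\psi
\]
closes the argument; the first step uses $\exists_e e^*\cong\id$ in $\fifb$, and the last is the counit $\exists_f f^*\leq\id$. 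Fiberedness of $\tilde{F}$ is then a routine Beck--Chevalley computation, preservation of $\exists$ reduces to associativity of $\exists$ along composites, and $\tilde{F}\circ y\cong F$ is immediate. For essential uniqueness, any $G\in\epfib(\catr)(D\fifa,\fifb)$ extending $F$ satisfies $G(u,\varphi)\cong G\exists_u y\varphi\cong\exists_u Gy\varphi\cong\exists_u F\varphi=\tilde{F}(u,\varphi)$ by (iii) and preservation of $\exists$.

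The main obstacle is isolating the reflection property $\exists_e e^*\cong\id$ as the right general lemma: without it, the proof of monotonicity in (iv) degenerates into ad hoc span manipulations, and it is crucially the pre-stack hypothesis on the target $\fifb$ (not just on $\fifa$) that makes everything go through.
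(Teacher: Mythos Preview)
Your proof is correct and follows the same approach as the paper: the same formula $\tilde F(u,\varphi)=\exists_u F\varphi$, the same use of (iii) plus preservation of $\exists$ for uniqueness/order-reflection, and the same reasoning for (i)--(iii). The paper's proof is terse and leaves the monotonicity of $\tilde F$ entirely implicit; your explicit isolation of the reflection identity $\exists_e e^*\cong\id$ (for $e$ regular epic in a posetal pre-stack with $\exists$) and its use in the chain $\exists_u F\varphi\cong\exists_u\exists_e e^*F\varphi\leq\dots\leq\exists_v F\psi$ is exactly the detail one would need to fill in, and you do it correctly.
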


\begin{proof}
It is easy to see that $y$ is order-reflecting (the fact that $\fifa$ is a
pre-stack is necessary).

The existential quantification of $(u,\varphi)$
along $v:I\to K$ is given by $(vu, \varphi)$, and the Beck Chevalley condition
follows from
the pullback lemma. The fact that $D\fifa$ is generated by the image of $\fifa$
follows immediately from the description of existential quantification.

For the fourth claim, we have to show that 
precomposition by $y:\fifa\to D\fifa$ is
order-reflecting and essentially surjective. 
Let $F, G:D\fifa\to\fifb$ be
fibered monotone maps commuting with $\exists$, such
that $F\circ y \leq F\circ y$. 
Then $F\leq G$ follows from the facts that $D\fifa$ is generated by the image
of $y$ under existential quantification, and that $F$ preserves existential
quantification.
To show that precomposition is essentially surjective, let $F:A\to B$ be a
fibered monotone map. Then we define $\widetilde{F}:D\fifa \to \fifb$ by
$\widetilde{F}(u,\varphi)=\exists_u F\varphi$.
\end{proof}

\begin{definition}\label{def:exprime}
Let $\fifb:\tot{\fifb}\to\catr$ be a posetal pre-stack with existential
quantification subject to the Beck-Chevalley condition. We call $\pi\in\fifb_I$
\emph{$\exists$-prime}\index{$\exists$-prime}, if whenever we are given maps
$I\xleftarrow{u}J\xleftarrow{v}K$ and $\theta\in\fifb_K$ such that
$u^*\pi\leq\exists_v\theta$, there exists a span 
$\xymatrix@1@C-1.5mm{J &\depi[l]_eL\ar[r]^w & K}$ such that
$vw=e$ and $(ue)^*\pi\leq w^*\theta$.
\[
 \xymatrix@R-3mm{
\pi\dline[d] & L\ar@{-->>}[d]_e\dashed[dr]^w & \theta\dline[d]\\
I & J\ar[l]_u & \ar[l]_vK
}
\]
\end{definition}
\begin{remarks}
\begin{itemize}
 \item 
Again, we can replace the span in the definition by a total relation since we
are in a posetal framework.

In terms of relations, $\pi$ is prime iff for all
$I\xleftarrow{u}J\xleftarrow{v}K$ and $\theta\in\fifb_K$ such that
$u^*\pi\leq\exists_v\theta$, there exists a total relation $t:J\etrel K$ such
that $v\circ t = \id_J$ and
\[
\pi(uj),(\delta t)(j,k)\ent \theta(k) 
\]
holds in $\fifb$.
\item Let $(L,\leq)$ be a complete lattice. Then a family $\varphi:I\to L$ is
$\exists$-prime in $\famf(L):\Famf(L)\to\catset$ iff all components
$\varphi(i)$ for $i\in I$ are \emph{completely join prime}\index{completely join prime} in the usual order
theoretic sense~\cite{wiki:lattice}.
\end{itemize}
\end{remarks}
\begin{lemma}\label{lem:epstack-prime}
Let $\fifa:\tot{\fifa}\to\catr$ be a posetal pre-stack.
\begin{enumerate}
 \item\label{lem:epstack-prime-d-image} Given $\varphi\in\fifa_I$, $y(\varphi)$
is $\exists$-prime in $D\fifa$.
\item\label{lem:epstack-prime-in-d-image} Given an $\exists$-prime $\pi\in
(D\fifa)_I$, there exists $\psi\in\fifa_K$ and $e:K\eepi I$ regular epic
such that $y(\psi)\cong e^*\pi$.
\item\label{lem:epstack-prime-equiv} Let $\fifb$ be a posetal pre-stack
with existential quantification, and let $\fifa$ be its subfibration on
$\exists$-prime predicates. The fibered monotone map 
$\widetilde{H}:D\fifa\to\fifb$ induced by the inclusion $H:\fifa\to\fifb$ is
order-reflecting. It is essentially surjective (and thus an
equivalence) iff for every $\psi\in\fifb_I$ there exists an $\exists$-prime
predicate $\pi\in\fifb_J$ and a map $u:J\to I$ such that
$\psi\cong\exists_u\pi$.
\end{enumerate}
\end{lemma}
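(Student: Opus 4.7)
The plan is to treat the three parts separately, with each one essentially unwinding the definition of $\exists$-primeness in $D\fifa$ or $\fifb$ and combining it with the explicit description of the order and existential quantification in $D\fifa$ given before Lemma~\ref{lem:da}.

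For \ref{lem:epstack-prime-d-image}, given $y(\varphi) = (\id_I, \varphi)$ and a cospan $I \xleftarrow{u} J \xleftarrow{v} K$ with $\theta = (p\colon M\to K, \chi) \in (D\fifa)_K$ satisfying $u^* y(\varphi) \leq \exists_v\theta = (vp, \chi)$, I would unpack this inequality in $D\fifa_J$ using the definition of the order, obtaining a span $\xymatrix@1@C-1.9mm{J &\depi[l]_e L\ar[r]^f & M}$ with $e = vpf$ and $e^*u^*\varphi \leq f^*\chi$. Setting $w := pf\colon L \to K$ then gives $vw = e$, and the required inequality $(ue)^*y(\varphi) \leq w^*\theta$ unfolds (via the description of reindexing in $D\fifa$ through the pullback of the first component) into the already established $e^*u^*\varphi \leq f^*\chi$.

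For \ref{lem:epstack-prime-in-d-image}, I would first apply Lemma~\ref{lem:da}-\ref{lem:da-generated} to write $\pi \cong \exists_u y(\psi)$ for some $u\colon J \to I$ and $\psi \in \fifa_J$, and then invoke $\exists$-primeness of $\pi$ on the tautological inequality $\id_I^*\pi \leq \exists_u y(\psi)$ (with cospan $I \xleftarrow{\id} I \xleftarrow{u} J$). This yields a span $\xymatrix@1@C-1.5mm{I &\depi[l]_e K\ar[r]^w & J}$ with $uw = e$ and $e^*\pi \leq w^*y(\psi) = y(w^*\psi)$. The converse inequality comes for free: the unit $y(\psi) \leq u^*\exists_u y(\psi) = u^*\pi$ of the adjunction $\exists_u \dashv u^*$ reindexes along $w$ to $y(w^*\psi) = w^*y(\psi) \leq (uw)^*\pi = e^*\pi$. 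Setting $\chi := w^*\psi$ then closes the argument.

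For \ref{lem:epstack-prime-equiv}, the essential-surjectivity equivalence is immediate once one observes that by Lemma~\ref{lem:da}-\ref{lem:da-equiv}, the extension $\widetilde{H}$ satisfies $\widetilde{H}(u, \varphi) \cong \exists_u\varphi$, and by hypothesis the objects of $\fifa$ are exactly the $\exists$-prime predicates of $\fifb$. For the order-reflection, suppose $\exists_u\varphi \leq \exists_v\chi$ in $\fifb_I$ with $\varphi \in \fifb_J$ and $\chi \in \fifb_K$ both $\exists$-prime. The plan is to reindex along $u$ and apply the unit of $\exists_u \dashv u^*$ to get $\varphi \leq u^*\exists_v\chi$, then use the Beck--Chevalley isomorphism $u^*\exists_v\chi \cong \exists_{u^*v}(v^*u)^*\chi$ arising from the pullback of $u$ and $v$ to rewrite this in a form to which $\exists$-primeness of $\varphi$ directly applies. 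The span produced by $\exists$-primeness, after post-composing its right leg with the pullback projection $v^*u$, provides precisely the witness for $(u, \varphi) \leq (v, \chi)$ in $D\fifa_I$ (the commutation $ue = vf$ being built into the pullback).

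The main obstacle is the order-reflection step in \ref{lem:epstack-prime-equiv}: the bookkeeping with the pullback of $u$ and $v$, the Beck--Chevalley rewrite, and the exact statement of $\exists$-primeness must be orchestrated carefully to land on a span of the form required by the order in $D\fifa$. The other parts are essentially direct unwindings of definitions, provided one has the explicit description of objects, reindexing, and existential quantification in $D\fifa$ on hand.
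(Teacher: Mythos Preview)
Your proposal is correct and follows essentially the same approach as the paper's proof: all three parts unwind the definition of $\exists$-primeness against the explicit description of $D\fifa$, with part~\ref{lem:epstack-prime-equiv} hinging on the same pullback-plus-Beck--Chevalley maneuver. The only minor difference is that in part~\ref{lem:epstack-prime-d-image} the paper simplifies by absorbing the outer reindexing $u^*$ into $\varphi$ (using that the image of $y$ is closed under reindexing), whereas you treat the general cospan directly; your route works but requires the extra step of factoring through the pullback defining $w^*\theta$, which you correctly flag.
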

\begin{proof}
 \emph{Ad~\ref{lem:epstack-prime-d-image}.} Assume that
$y(\varphi)\leq\exists_u(v,\theta)$ for $(v,\theta)\in(D\fifa)_J$, i.e., $v:K\to
J$ and $\theta\in\fifa_K$ (since the image of $y$ is closed under reindexing,
we can omit the reindexing in the hypothesis of the definition of primality).
The definition of the ordering in $D\fifa$ provides us with the required span.

\emph{Ad~\ref{lem:epstack-prime-in-d-image}.} Assume that $\pi\in(D\fifa)_I$ is
$\exists$-prime. By Lemma~\ref{lem:da}-\ref{lem:da-generated}, there exist
$\varphi\in (D\fifa)_J$ and $u:J\to I$ such that $\varphi$ is in the image of
$y$ and $\exists_u\varphi\cong\pi$. Since $\pi$ is prime, there exists a span
$\xymatrix@1@C-1.5mm{J &\epicart[l]_eK^*\ar[r]^w & I}$ with $wu=e$ and
$e^*\pi\leq w^*\varphi$. If we can show that $e^*\pi\cong w^*\varphi$ we are
done, since the essential image is closed
under reindexing. We already know that the left hand side is $\leq$ than the
right hand side; the other inequality is derived as follows.
\[
 \exists_u\varphi\leq\pi\quad\imp\quad \varphi\leq u^*\pi\quad\imp\quad
w^*\varphi\leq w^*u^*\pi\cong e^*\pi
\]

\emph{Ad~\ref{lem:epstack-prime-equiv}.} 
Consider $(u:J\to I,\pi),(v:K\to I,\xi)\in (D\fifa)_I$ such that
$\pi$ and $\xi$ are $\exists$-prime in
$\fifa$ and
$\widetilde{H}(u,\pi)=\exists_u\pi\leq\exists_v\xi=\widetilde{H}(v,\xi)$. Form
the pullback of $u,v$.
\[
 \xymatrix{
& L\ar[r]^w\ar[d]^x\pullbackcorner & K\ar[d]^v \\
M\dashed[ur]^y\ar@{-->>}[r]^e & J\ar[r]^u & I
}
\]
From the Beck-Chevalley condition it follows that $\pi\leq\exists_x w^*\xi$,
and since $\pi$ is prime, we can find a span $(e,y)$ such that $xy=e$ and
$e^*\pi\leq y^*w^*\xi$. The span $(e,wy)$ witnesses the inequality
$(u,\pi)\leq(v,\xi)$, which shows that the canonical functor $D\fifa\to\fifb$
is order reflecting. The condition on essential surjectivity is clear from the
construction of $\widetilde{H}$ given in the proof of Lemma~\ref{lem:da}.
\end{proof}
\begin{lemma}\label{lem:msl-to-frame}
 If $\fifa$ is a pre-stack of meet-semilattices, then $D\fifa$ is its
completion to a fibered frame. More precisely, we have
\begin{itemize}
  \item $D\fifa$ has finite meets, compatible with $\exists$ in the sense of
Frobenius reciprocity.
 \item $y:\fifa\to D\fifa$ preserves finite meets.
\item For any fibered frame $\fifx$, pre-composition with $y:\fifa\to D\fifa$
induces an equivalence
\[
\mpfib(\catr)(\fifa,\fifx)\simeq\ffrm(\catr)(D\fifa,\fifx)
\]
between preorders of fibered monotone maps commuting with $\wedge$ and fibered
monotone maps commuting with $\wedge$ and $\exists$.
\end{itemize}
Moreover, $D\fifa$ is equivalent to the completion of $\fifa$ to a geometric
category from Defintion~\ref{def:d-fib}, which justifies the use of the same
notation.
\end{lemma}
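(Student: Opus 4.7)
The plan is to piggy-back on Lemma~\ref{lem:da}, which already establishes that $D\fifa$ carries existential quantification with the right universal property, and to add the meet structure on top. The main steps are: (1) construct fiberwise finite meets in $D\fifa$ using pullbacks in $\catr$ and fiberwise meets in $\fifa$; (2) check Frobenius reciprocity between these meets and the existential quantification described in Lemma~\ref{lem:da}; (3) upgrade the equivalence of Lemma~\ref{lem:da}-\ref{lem:da-equiv} to a meet-preserving one; (4) identify this $D\fifa$ with the geometric completion of Definition~\ref{def:d-fib} via uniqueness of biadjoints.

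For (1), given $(u:J\to I,\varphi),(v:K\to I,\psi)\in D\fifa_I$, I form the pullback $p:J\times_I K\to J$, $q:J\times_I K\to K$ in $\catr$ and define the meet to be $(up,\, p^*\varphi\wedge q^*\psi)$, where the meet inside is taken in $\fifa_{J\times_I K}$. The top element in $D\fifa_I$ is $(\id_I,\top_I)$. To see that this really is the meet, one verifies that a span witnessing $(w,\chi)\leq(u,\varphi)$ and $(w,\chi)\leq(v,\psi)$ factors (essentially uniquely, up to the preorder) through the pullback via the universal property; pre-stack hypotheses on $\fifa$ ensure that reindexing along this factorization respects $\leq$. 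Preservation of meets by $y$ is then immediate: $y(\varphi)\wedge y(\psi)=(\id_I\times_I\id_I, \pi_1^*\varphi\wedge\pi_2^*\psi)\cong(\id_I,\varphi\wedge\psi)=y(\varphi\wedge\psi)$.

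For (2) and (3), recall from the proof of Lemma~\ref{lem:da} that $\exists_u(v,\psi)=(uv,\psi)$. Computing both sides of Frobenius directly with the description above reduces to the pullback lemma in $\catr$ combined with the fact that $\fifa$ is a pre-stack. For the universal property, given a fibered frame $\fifx$ and a meet-preserving $F:\fifa\to\fifx$, Lemma~\ref{lem:da}-\ref{lem:da-equiv} already produces an $\exists$-preserving extension $\widetilde{F}:D\fifa\to\fifx$ given by $\widetilde{F}(u,\varphi)=\exists_u F\varphi$; I only have to verify that $\widetilde{F}$ preserves $\top$ and binary meets. The top case is trivial, and for binary meets the computation is
\[
\widetilde F(u,\varphi)\wedge\widetilde F(v,\psi)=\exists_u F\varphi\wedge\exists_v F\psi\cong\exists_u(F\varphi\wedge u^*\exists_v F\psi)\cong\exists_u\exists_p(p^*F\varphi\wedge q^*F\psi),
\]
where the first isomorphism is Frobenius in $\fifx$ and the second is the Beck–Chevalley condition for the pullback square $(u,v,p,q)$; this last expression is exactly $\widetilde F((u,\varphi)\wedge(v,\psi))$. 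Conversely any meet- and $\exists$-preserving extension must agree with $\widetilde F$ by Lemma~\ref{lem:da}-\ref{lem:da-generated}, so the equivalence of preorders $\mpfib(\catr)(\fifa,\fifx)\simeq\ffrm(\catr)(D\fifa,\fifx)$ follows.

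Finally, for the identification with Definition~\ref{def:d-fib}, I observe that both constructions produce the free fibered frame on $\fifa$ viewed as a pre-stack of meet-semilattices: the construction of Definition~\ref{def:d-fib} exhibits $D\fifa$ as sub-representables in $\widehat{\fifa}$, which by Lemma~\ref{lem:characterize-d} is the free \geostack{}, and the posetal case of a \geostack{} is precisely a fibered frame; the construction in this section is the free fibered frame by the universal property just proved. Uniqueness of biadjoints (up to equivalence) gives the claimed equivalence. The main obstacle I anticipate is bookkeeping in step~(1): one must be careful that the pullback-based meet is well-defined up to the equivalence used in $D\fifa$, which is where the pre-stack hypothesis on $\fifa$ is essential; the rest is formal manipulation of BCC and Frobenius.
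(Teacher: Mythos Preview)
Your proposal is correct and follows essentially the same route as the paper: define meets via pullbacks in the base plus fiberwise meets in $\fifa$, observe that $y$ preserves them, verify that the extension $\widetilde F(u,\varphi)=\exists_u F\varphi$ preserves meets by the Frobenius/Beck--Chevalley computation you wrote out, and identify the two $D$-constructions by uniqueness of left biadjoints. If anything, you are more explicit than the paper, which leaves the key identity $(\exists_u\gamma)\wedge(\exists_v\delta)\cong\exists_{up}(p^*\gamma\wedge q^*\delta)$ as an ``easy exercise'' and does not spell out the Frobenius verification for $D\fifa$ itself.
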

\begin{proof}
 Given predicates $(u:J\to I,\varphi),(v:K\to I,\psi)$, their meet is given by
$(y, x^*\varphi\wedge w^*\psi)$ with $w,x,y$ as in the following pullback
diagram.
\[
\xymatrix{
 L\pullbackcorner\ar[r]^w\ar[d]_x\ar[dr]^y|(.25)\hole & K\ar[d]^v \\
J\ar[r]_u & I
}
\]
This
description makes it clear that finite meets are preserved by $y$; thus, in
particular, precomposition by $y$ induces a monotone map of type
$\ffrm(D\fifa,\fifx)\to\mpfib(\fifa,\fifx)$ which is order-reflecting as we
showed in Lemma~\ref{lem:da}-\ref{lem:da-generated}. It remains to check that
if $F:\fifa\to\fifx$ preserves finite meets, then so does
$\widetilde{F}:D\fifa\to\fifx$. For this it suffices to show (for the binary
case) that for predicates $\gamma\in\fifx_J,\delta\in\fifx_J$ we have
$(\exists_u\gamma)\wedge(\exists_v\delta)\cong\exists_yx^*\gamma\wedge
w^*\delta$, which is an easy exercise.

\medskip

For the claim about the coincidence of the $D$-construction
from this section and the geometric completion from
Section~\ref{suse-geo-fib}, note that fibered frames are the same thing as
posetal \geostack{}s, and fibered geometric functors in the posetal case
coincide with fibered monotone maps commuting with $\wedge$ and $\exists$.
Moreover, as we defined the geometric completion of a finite limit pre-stack
$\fibc$ as the subfibration of $\whfc$ on the sub-representables, 
it is easy to see that the geometric completion of a fibered frame is posetal,
and thus a fibered frame. From this, it follows that the geometric completion
in the posetal case has the same universal property that we just established
for $D\fifa$.
\end{proof}

The fact that $\exists$ in $D\fifa$ is free has an interesting consequence,
described in the following lemma.
\begin{lemma}\label{lem:da-totally-connected}
If $\fifa$ is a pre-stack of meet-semilattices. Then the fibered frame $D\fifa$
is totally connected in the sense of
Defintion~\ref{def:totally-connected-fibered-frame}.
\end{lemma}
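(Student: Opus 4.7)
I would define the candidate left adjoint $\pi : D\fifa \to \sub(\catr)$ by
\[
\pi(u : J \to I,\, \varphi) \;=\; \img(u) \in \sub(\catr)_I,
\]
forgetting the $\fifa$-component entirely. The guess is forced: since $D\fifa$ is generated under $\exists$ by the image of $y$ (Lemma~\ref{lem:da}-\ref{lem:da-generated}), a left adjoint must preserve $\exists$, and on $y(\varphi) = (\id_J,\varphi)$ it must send to $\top$ (because $\delta$ sends $\top$ to $\top$); so $\pi(u,\varphi) = \pi(\exists_u y(\varphi)) = \exists_u \top = \img(u)$.

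\textbf{Step 1: $\pi$ is a fibered monotone map.} Reindexing in $D\fifa$ along $f:I'\to I$ pulls $u$ back to $f^*u$, and $\img(f^*u) = f^*\img(u)$ because regular epis are pullback-stable in $\catr$. Monotonicity is equally direct: if $(u,\varphi)\leq(v,\psi)$ is witnessed by a span $J\twoheadleftarrow L \to K$ with $ue = vf$ and $e$ regular epic, then $\img(u) = \img(ue) = \img(vf) \leq \img(v)$, using that precomposition with a regular epi does not alter the image (in a regular category regular epis compose, so $m_u\circ(e_u e)$ is still an image factorization of $ue$).

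\textbf{Step 2: $\pi\dashv\delta$.} Unfolding $\delta(m) = \exists_m \top = (m,\top_U)$ for a mono $m:U\emono I$, the inequality $(u,\varphi)\leq\delta(m)$ in $D\fifa_I$ amounts to the existence of a span $\xymatrix@1@C-1.9mm{J & \depi[l]_eL\ar[r]^f & U}$ with $ue=mf$ (the subsidiary $\fifa$-condition $e^*\varphi\leq f^*\top_U$ is automatic). By the argument of Step~1 this is equivalent to $\img(u)\leq m$, which is $\pi(u,\varphi)\leq m$. Conversely, $\img(u)\leq m$ means $u$ factors through $m$ as $mf'$, and then the span $(\id_J,f')$ exhibits the inequality in $D\fifa_I$.

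\textbf{Step 3: $\pi$ preserves finite meets.} Top is preserved since the terminal of $D\fifa_I$ is $(\id_I,\top_I)$ and $\pi(\id_I,\top_I)=\img(\id_I)=\id_I$. For binary meets, Lemma~\ref{lem:msl-to-frame} computes $(u,\varphi)\wedge(v,\psi) = (y, x^*\varphi\wedge w^*\psi)$ with $y$ the diagonal of the pullback square of $u$ and $v$; so the claim reduces to the regular-category identity
\[
\img(J \times_I K \to I) \;=\; \img(u)\wedge\img(v),
\]
which I would verify by factoring $u = m_u e_u$, $v = m_v e_v$, letting $P = \img(u)\wedge\img(v)$ the pullback of $m_u,m_v$, and observing that $J\times_I K$ fits as the iterated pullback $(J\times_{\img(u)}P)\times_P(K\times_{\img(v)}P)$; pullback-stability of the regular epis $e_u,e_v$ makes each component map onto $P$ a regular epi, and regular epis compose.

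The only nontrivial ingredient is the image-of-pullback identity in Step~3; everything else is bookkeeping against the explicit description of the ordering and existential quantification in $D\fifa$.
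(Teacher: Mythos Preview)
Your proof is correct and takes exactly the same approach as the paper: the paper's proof is a single sentence stating that $\pi(u,\varphi)$ is the image of $u$ and that well-definedness, the adjunction, and preservation of finite meets are ``easy to see''. You have simply filled in those verifications, and done so correctly---including the only genuinely nontrivial point, the regular-category identity $\img(J\times_I K\to I)=\img(u)\wedge\img(v)$, which you handle by the standard stability-and-composition argument for regular epis.
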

\begin{proof}
For a predicate
$(u:J\to I,\varphi)$, $\pi(u,\varphi)$ is the image of $u$ as subobject of $I$.
It is easy to see that this is well defined, left adjoint to $\delta$, and
preserves finite meets.
\end{proof}
\begin{example}\label{ex:dufamia}
 If we apply the $D$-construction to $\ufam\pcaia$ for a typed
pca $\pcaia$, we obtain a fibration that is equivalent to the
{realizability hyperdoctrine} $\hyph\pcaia$
(Definition~\ref{def:realizability-hyperdoctrine}-
\ref{def:realizability-hyperdoctrine-real}). This is not difficult to show by
hand, but the nicest way to understand this is via uniform preorders -- see
Example~\ref{ex:pca-d-hyp-trip}.

In the same way, if $\pcaa$ is an \emph{untyped} pca, the fibration
$D(\ufam(\pcaa))$ is equivalent to the realizability tripos $\rtr{\pcaa}$
(Definition~\ref{def:realizability-tripos}-\ref{def:realizability-tripos-real}).
\end{example}

\subsubsection{Fibered sheaves and assemblies}\label{sec:fibered-sheaves}

\comment{this is phrased too complicated}

In \ref{def:cat-from-geo-fib}, we defined the category $\catr[\fibs]$ for a
\geostack{} $\fibs$ on $\catr$ as the category of equivalence relations
and functional relations in $\sub(\fibs)$. For a fibered frame $\fifx$, given
$\varphi\in\fifx_I$, a predicate in $\sub(\fifx)_\varphi$ is simply a predicate
$\psi\in\fifx_I$ such that $\psi\leq\varphi$, and it seems easier to express
$\catr[\fifx]$ directly in terms of $\fifx$ without making the additional layer
in $\sub(\fifx)$ explicit. When taking this point of view, an equivalence
relation in $\sub(\fifx)$ becomes simply a \emph{partial} equivalence relation
in $\fifx$, and we can easily show the following lemma.
\begin{lemma}\label{lem:sheaf-per}
Given a fibered frame $\fifx:\tot{\fifx}\to\catr$, $\catr[\fifx]$ is equivalent
to the category $\per(\fifx)$. \qed
\end{lemma}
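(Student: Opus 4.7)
The plan is to unpack both sides of the claimed equivalence and exhibit mutually inverse functors; the work is essentially bookkeeping, since in a posetal setting a subobject of $\varphi\in\fifx_I$ inside the fiber $\sub(\fifx)_\varphi$ is just a predicate $\psi\in\fifx_I$ with $\psi\le\varphi$.

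First I would spell out the two categories concretely. An object of $\catr[\fifx]$ is a pair $((I,\varphi),\tau)$ with $\varphi\in\fifx_I$ and $\tau\in\fifx_{I\times I}$ a vertical subobject of $\varphi\times\varphi$ (i.e.\ $\tau\le\exists_{\pi_1}\varphi\wedge\exists_{\pi_2}\varphi$) which is symmetric, transitive, \emph{and} total: $\varphi(x)\ent\exists y\qdot\tau(x,y)$. An object of $\per(\fifx)$ (Definition~\ref{def:catper}) is just $(I,\rho)$ with $\rho\in\fifx_{I\times I}$ symmetric and transitive. The morphisms on both sides are equivalence classes of functional relations satisfying the judgments \emph{(strict)}, \emph{(cong)}, \emph{(singval)}, \emph{(tot)}, interpreted in $\sub(\fifx)$ and in $\fifx$ respectively.

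Then I would define the two functors. In one direction, $F:\catr[\fifx]\to\per(\fifx)$ sends $((I,\varphi),\tau)$ to $(I,\tau)$, simply forgetting $\varphi$ (which is legitimate since a predicate below $\varphi\times\varphi$ is in particular a predicate in $\fifx_{I\times I}$), and sends a functional relation $\phi$ to itself. In the other direction, $G:\per(\fifx)\to\catr[\fifx]$ sends $(I,\rho)$ to $((I,\varphi_\rho),\rho)$ where $\varphi_\rho(x)\equiv\rho(x,x)$, and again leaves functional relations as they are. The two main checks are: (i) for $\rho$ a PER, $\rho$ really is a vertical subobject of $\varphi_\rho\times\varphi_\rho$ and is total over $\varphi_\rho$; this follows from the derived judgment $\rho(x,y)\ent\rho(x,x)\wedge\rho(y,y)$ (a consequence of symmetry and transitivity, which is the \emph{(strict)} condition interpreted with $\rho$ playing the role of both PER and domain predicate), and totality is immediate from $\varphi_\rho(x)=\rho(x,x)\ent\exists y\qdot \rho(x,y)$ by taking $y=x$. (ii) A functional relation $\phi\in\fifx_{C\times D}$ between PERs $\rho,\sigma$ in the sense of Definition~\ref{def:catper} is the same data as a functional relation between $((C,\varphi_\rho),\rho)$ and $((D,\varphi_\sigma),\sigma)$ in $\sub(\fifx)$, because the \emph{(strict)} judgment $\phi(c,d)\ent\varphi_\rho(c)\wedge\varphi_\sigma(d)$ is exactly what it means for $\phi$ to factor through the relevant subobject.

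Finally I would verify $FG=\id_{\per(\fifx)}$ on the nose, and that $GF((I,\varphi),\tau)=((I,\varphi_\tau),\tau)$ is canonically isomorphic to $((I,\varphi),\tau)$. The isomorphism is the identity predicate on $\tau$ viewed as a functional relation; this is a morphism in $\catr[\fifx]$ because totality of $\tau$ over $\varphi$ combined with strictness forces $\varphi\cong\varphi_\tau=\ilbracks{x\csep\tau(x,x)}$ as predicates, so the two underlying subobjects of the ambient object in $\tot{\fifx}$ agree. No serious obstacle is expected; the only subtlety is the careful identification of $\sub(\fifx)_\varphi$ with the principal down-set of $\varphi$ in $\fifx_I$, which is precisely guaranteed by $\fifx$ being a fibered frame (posetal).
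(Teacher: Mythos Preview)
Your proposal is correct and follows essentially the same approach as the paper. The paper treats the lemma as immediate (it carries a \qed with no proof body), the key observation being precisely the one you isolate: in the posetal case a vertical subobject of $\varphi$ in $\sub(\fifx)$ is just a predicate $\psi\le\varphi$ in $\fifx$, so a total equivalence relation on $\varphi$ in $\sub(\fifx)$ is the same data as a partial equivalence relation $\rho$ in $\fifx$ with existence part $\rho(x,x)\cong\varphi(x)$; you have simply made this explicit by writing down the mutually inverse functors.
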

Let us recall that for geomeric fibrations $\fibs$, we defined $\catr[\fibs]$
as the category of total equivalence relations in $\sub(\fibs)$, and we showed
in Lemma~\ref{lem:rs-exact} that $\catr[\fibs]$ is equivalent to
$\per(\siev(\fibs))$. The lemma says that in the posetal case we can drop the
$\siev(-)$.

In the following, we will always view $\catr[\fifx]$ for $\fifx$ a fibered
frame as category of partial equivalence relations. This is the point of view
that is familiar from the tripos-to-topos construction. In particular, in
this representation the diagonal functor $\Delta:\catr\to\catr[\fifx]$ from
Definition~\ref{def:delta-r-rs} has the familiar description
\[
 I\mapsto (I,\predeq),
\]
which is known as the `constant objects functor'. The following lemma is a
direct consequence of Lemma~\ref{lem:decompo}-\ref{lem:decompo-monorepr}.
\begin{lemma}\label{lem:pullback-sub}
Let $\fifx$ be a fibered frame on $\catr$. Then we can reconstruct $\fifx$ from
$\catr[\fifx]$ and $\Delta$ as pullback
\[
 \vcenter{\xymatrix@R-3mm{
\tot{\fifx}\pullbackcorner\ar[r]\ar[d]_{\fifx} & \Sub(\catr[\fifx])
\ar[d]^{\sub(\catr[\fifx])}\\
\catr\ar[r]_\Delta & \catr[\fifx]
}}.
\]
In other words, $\fifx$ is equivalent to the subfibration of
$\gl_\Delta(\catr[\fifx])$ on monomorphisms.
\qed
\end{lemma}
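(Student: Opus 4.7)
The plan is to unfold both sides of the claimed pullback fiberwise and check that the correspondence is natural in the base, invoking Lemma~\ref{lem:decompo}-\ref{lem:decompo-monorepr} to identify fibers. First, I would examine the fiber of the pullback over $I \in \catr$: by definition this is the fiber of $\sub(\catr[\fifx])$ over $\Delta I = (I, \predeq)$, i.e.\ the poset of (isomorphism classes of) monomorphisms into $(I, \predeq)$ in $\catr[\fifx] \simeq \per(\fifx)$. Applying Lemma~\ref{lem:decompo}-\ref{lem:decompo-monorepr} with $\sigma = \predeq_I$, such monomorphisms are represented by predicates $\upsilon \in \fifx_I$ which are strict with respect to $\predeq_I$; but strictness here demands only $\upsilon(x) \ent x = x$ and $\upsilon(x), x = y \ent \upsilon(y)$, both of which are vacuously satisfied. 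Hence the fiber is canonically isomorphic (as a poset, since the representation preserves inclusion of subobjects) to $\fifx_I$.

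Next I would check compatibility with reindexing. Given $u: I \to J$ in $\catr$, reindexing in the pullback fibration along $u$ is reindexing in $\sub(\catr[\fifx])$ along $\Delta u : (I, \predeq) \to (J, \predeq)$, i.e.\ pullback of the corresponding monomorphism in $\per(\fifx)$. Under the identification from step one, the monomorphism represented by $\psi \in \fifx_J$ is $(J, \predeq_J \wedge \psi) \emono (J, \predeq_J)$; pulling back along the functional relation $\ilbracks{x \vtp 1_I, y \vtp 1_J \csep 1_u x = y}$ yields the subobject $(I, \predeq_I \wedge u^*\psi)$, which corresponds to $u^*\psi \in \fifx_I$. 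So the bijection on fibers commutes with reindexing, giving the desired equivalence of fibrations. The fact that this equivalence is actually over $\catr$ (and not merely fiberwise) is built in because both sides project to $\catr$ by construction.

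For the second formulation, I would note that by definition $\gl_\Delta(\catr[\fifx])$ is the pullback of $\fund{\catr[\fifx]}$ along $\Delta$, while $\sub(\catr[\fifx])$ is the subfibration of $\fund{\catr[\fifx]}$ on monomorphisms (Remark~\ref{rem:subobject-fibration}). Pullback commutes with taking this full subfibration in the obvious sense, so the pullback of $\sub(\catr[\fifx])$ along $\Delta$ is exactly the subfibration of $\gl_\Delta(\catr[\fifx])$ on those morphisms in $\commacat{\catr[\fifx]}{\catr[\fifx]}$ whose underlying arrow is monic; composing with the fiberwise identification above yields the claim.

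The argument is essentially bookkeeping and I do not foresee a genuine obstacle: the only substantive input is Lemma~\ref{lem:decompo}-\ref{lem:decompo-monorepr}, whose application here is particularly clean because the target equivalence relation $\predeq$ is discrete and so the strictness condition degenerates. The most careful step is checking that the fiberwise bijection is compatible with reindexing along arbitrary $u$ (as opposed to merely being a bijection on objects), but this is a direct computation using the explicit tracking relation defining $\Delta u$.
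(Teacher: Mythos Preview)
Your proposal is correct and follows exactly the approach the paper intends: the paper's own proof is simply the remark that the lemma is a direct consequence of Lemma~\ref{lem:decompo}-\ref{lem:decompo-monorepr}, with no further details given. Your unpacking of the fiberwise correspondence and the reindexing check is precisely the bookkeeping the paper leaves implicit.
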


In Section~\ref{sec:dis-sheaf}, we briefly considered he subcategory
$\geopos{\fibs}_1\subseteq \catr[\fibs]$ of \emph{discrete sheaves} that arises
as terminal fiber of the positive cocompletion of a geometric pre-stack $\fibs$.

In the posetal case, say for a fibered frame $\fifx$, this category is called
the category $\asm(\fifx)$ of $\fifx$-assemblies in
\cite[after Propositon~2.9]{vanoosten2008realizability}. We give an explicit
definition for future reference.
\begin{definition}\label{def:assemblies}
 Let $\fifx:\tot{\fifx}\to\catr$ be a fibered frame. The category $\asm(\fifx)$
of \emph{$\fifx$-assemblies}\index{assemblies} is the subcategory of $\catr[\fifx]$ on objects of
the form $(I,\predeq_\varphi)$, where $\varphi\in\fifx_I$, and $(x =_\varphi
y)\;\equiv\; (\varphi(x)\wedge x=y)$.
\end{definition}
Assemblies have particularly good properties if $\fifx$ is {totally
connected}, as we will explain in Section~\ref{sec:assemblies}.

\begin{remark}\label{rem:rosmai}
There are interesting analogies between the description and universal
characterization of the categories $\asm(\fifx)$ and $\catr[\fifx]$ for a
fibered frame as given here, and work of Maietti and Rosolini~\cite{rosmai08}.

Slightly paraphrasing, Maietti and Rosolini describe how to get the categories
$\asm(\fifx)$
and $\catr[\fifx]$ from a fibered frame (or rather an existential fibration) by
a sequence of completion operations on fibrations. In
particular, they introduce what we call the fibered subobject fibration, albeit
with a different viewpoint: in our notation, they view
$\sub(\fifx):\tot{\sub(\fifx)}\to\tot{\fifx}$ as the result of freely adding
Lawvere comprehension to $\fifx$, the coincidence of this with the fibered
subobject fibration occurs only in the posetal case.

The major difference between the approach of \cite{rosmai08} and the present
work is that whereas both are about fibrational (co)completions, in the present
work we adjoin categorical structure to fibrations on a constant base category
whereas Maietti and Rosolini complete the base category with respect to the
logical structure of the fibration. In particular, the categories $\asm(\fifx)$
and $\catr[\fifx]$ occur as terminal fibers here, whereas they occur as base
categories in \cite{rosmai08}.

A nice feature of Maietti and Rosolini's approach is that it highlights the fact
that the base category $\catr$ is actually not essential for the construction of
$\asm(\fifx)$ and $\catr[\fifx]$, both of which only depend on the 
\emph{cartesian bicategory}~\cite{carboni1987cartesian}\index{cartesian 
bicategory} of relations in $\fifx$.
\end{remark}

\subsubsection{Fibered presheaves}

We will now study the fibration of presheaves on a fibered
meet-semilattice $\fifa$, and give a criterion for $\srel{\catr}{\fifa}$ to be
locally cartesian closed. Since the direct manipulation of the fibration of
sieves feels a bit unhandy and cumbersome in the posetal case, we will make use
of the equivalence $\srel{\catr}{\fifa}\simeq\catr[D\fifa]$ from
Corollary~\ref{cor-hat-sh-d}, and work explicitly only in the second category.
An advantage of the first representation would be that we have tracking
families (Definition~\ref{def:tracking-family}, Lemma~\ref{lem:tracking-fam})
available, but with a bit of care we can access them also in $\catr[D\fifa]$,
as we will explain now. 
\begin{definition}\label{def:tracking-rel}
 Let $\fifx:\tot{\fifx}\to\catr$ be a fibered frame, and let
$\phi:(I,\rho)\to(J,\sigma)$ be a morphism in $\catr[\fifx]$, where $\rho$ and
$\sigma$ are partial equivalence relations as in Lemma~\ref{lem:sheaf-per}. A
\emph{tracking relation}\index{tracking!relation}\index{relation!tracking} for $\phi$ is a total relation $t:I\etrel J$ such that
\[\rho(i),(\delta t)(i,j)\ent\phi(i,j)\] holds in $\fifx$.
\end{definition}
\begin{lemma}\label{lem:tracking-rel}
 \begin{enumerate}
  \item\label{lem:tracking-rel-unique}
If $t:I\etrel J$ is a tracking relation of $\phi:(I,\rho)\to(J,\sigma)$,
then 
\[\phi(i,j)\adj\ent\rho(i)\wedge\exists j'\qdot (\delta t)(i,j')\wedge
\sigma(i,j'),\]
 thus $\phi$ is uniquely determined by the tracking relation and
can be reconstructed from it.
\item\label{lem:tracking-rel-characterize}
 A total relation $t:I\etrel J$ is a tracking relation of a morphism of
type $(I,\rho)\to(J,\sigma)$ iff
\[
\rho(i,i'),(\delta t)(i,j), (\delta t)(i',j')\ent \sigma(j,j') 
\]
holds in $\fifx$.
\item\label{lem:tracking-rel-equiv} Two total relations $t,u:I\etrel J$ are
tracking relations of the
\emph{same} morphism of type $(I,\rho)\to(J,\sigma)$, iff
\[
\rho(i,i'),(\delta t)(i,j), (\delta u)(i',j')\ent \sigma(j,j') 
\]
holds in $\fifx$.
 \end{enumerate}
\end{lemma}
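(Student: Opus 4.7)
The plan is to work entirely in the internal language of the fibered frame $\fifx$, treating tracking relations and functional relations as predicates and repeatedly applying the axioms (strict), (cong), (singval), (tot), (symm), (trans) together with totality of $t$ (which I read as the judgment $\ent\exists j\qdot(\delta t)(i,j)$). The three statements are all logical manipulations with no genuine geometric content beyond the hypothesis that $\fifx$ is a fibered frame, so the work is really in careful bookkeeping of witnesses.

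For \ref{lem:tracking-rel-unique}, for the forward direction I start from $\phi(i,j)$, extract $\rho(i)$ from (strict), use totality of $t$ to pick $j'$ with $(\delta t)(i,j')$, apply the tracking hypothesis to obtain $\phi(i,j')$, and then conclude $\sigma(j,j')$ from (singval). The backward direction takes a witness $j'$ for the existential, derives $\phi(i,j')$ from tracking, and transports to $\phi(i,j)$ using (cong) with $\sigma(j',j)$ (obtained from $\sigma(j,j')$ by (symm)) and $\rho(i,i)$.

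For \ref{lem:tracking-rel-characterize}, the forward direction is a short chain: $\rho(i,i')$ implies $\rho(i)$ and $\rho(i')$ by (symm)+(trans), tracking then gives $\phi(i,j)$ and $\phi(i',j')$, (cong) with $\rho(i,i')$ and $\sigma(j,j)$ (itself obtained from (strict) on $\phi(i,j)$) yields $\phi(i',j)$, and finally (singval) applied to $\phi(i',j),\phi(i',j')$ gives $\sigma(j,j')$. For the backward direction I \emph{define} $\phi(i,j)\;\equiv\;\rho(i)\wedge\exists j'\qdot(\delta t)(i,j')\wedge\sigma(j,j')$ and check the four axioms: (strict) is immediate once a witness $j'$ is fixed and $\sigma(j,j')$ is twisted via (symm)+(trans); (tot) follows from totality of $t$ combined with the hypothesis applied with $i=i'$ and the two witnesses equal, which produces $\sigma(j,j)$; (singval) follows by picking witnesses for both existentials, applying the hypothesis with $i=i'$, and using (trans) and (symm) on $\sigma$; (cong) is similar, using totality of $t$ at $i'$ to produce a third witness and then combining the hypothesis with $\sigma$-transitivity. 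The tracking identity $\rho(i),(\delta t)(i,j)\ent\phi(i,j)$ comes out by taking $j'=j$ in the existential and using the hypothesis at $i=i'$, $j=j'$ to produce $\sigma(j,j)$.

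For \ref{lem:tracking-rel-equiv}, the forward direction is identical to the argument just given for the forward direction of \ref{lem:tracking-rel-characterize}, replacing the second occurrence of $t$ by $u$ in the appeal to (singval) on $\phi(i',j)$ and $\phi(i',j')$. For the backward direction, the hypothesis taken with $t=u$ (and then with $u=t$) implies by \ref{lem:tracking-rel-characterize} that $t$ and $u$ each track some morphism, call them $\phi_t$ and $\phi_u$ respectively, given by the explicit formula of \ref{lem:tracking-rel-unique}. To show $\phi_t\cong\phi_u$ I assume $\phi_t(i,j)$, extract a witness $j''$ with $(\delta t)(i,j'')$ and $\sigma(j,j'')$, use totality of $u$ at $i$ to produce $j'''$ with $(\delta u)(i,j''')$, apply the hypothesis with $i=i'$ to get $\sigma(j'',j''')$, and conclude $\phi_u(i,j)$ by $\sigma$-transitivity; the converse is symmetric. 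The only mild subtlety throughout is remembering that $\rho(i)$ abbreviates $\rho(i,i)$ and that $\delta$ commutes with $\wedge$ and $\exists$ (Lemma~\ref{lem:preds-incl-exists}), so that totality of $t$ can be used as an internal judgment in $\fifx$.
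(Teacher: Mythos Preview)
Your arguments for \ref{lem:tracking-rel-unique}, \ref{lem:tracking-rel-characterize}, and the forward direction of \ref{lem:tracking-rel-equiv} are correct and follow the approach the paper has in mind (it refers back to Lemma~\ref{lem:tracking-fam}, whose proof proceeds exactly along these lines).

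There is, however, a genuine gap in the backward direction of \ref{lem:tracking-rel-equiv}. You write that ``the hypothesis taken with $t=u$ (and then with $u=t$) implies by \ref{lem:tracking-rel-characterize} that $t$ and $u$ each track some morphism''. But $t$ and $u$ are fixed total relations, not parameters you may specialize; the hypothesis
\[
\rho(i,i'),\ (\delta t)(i,j),\ (\delta u)(i',j')\ \ent\ \sigma(j,j')
\]
does not immediately give you the premise of \ref{lem:tracking-rel-characterize} for $t$ alone, namely $\rho(i,i'),(\delta t)(i,j),(\delta t)(i',j')\ent\sigma(j,j')$. You have to \emph{derive} this, and the derivation needs the ingredients you already have at hand: first note that by applying (symm) to $\rho$ and $\sigma$ the hypothesis also yields its swapped form
\[
\rho(i,i'),\ (\delta u)(i,j),\ (\delta t)(i',j')\ \ent\ \sigma(j,j').
\]
Now assume $\rho(i,i'),(\delta t)(i,j),(\delta t)(i',j')$; use totality of $u$ at $i'$ to produce $j''$ with $(\delta u)(i',j'')$, apply the original hypothesis to get $\sigma(j,j'')$, apply the swapped form with $(i',i',j'',j')$ to get $\sigma(j'',j')$, and conclude $\sigma(j,j')$ by (trans). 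This is precisely the manoeuvre carried out in the proof of Lemma~\ref{lem:tracking-fam}\ref{lem:tracking-fam-equal}. Once this is in place, your subsequent argument that $\phi_t\cong\phi_u$ goes through.
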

\begin{proof}
This is proved in a similar way as Lemma~\ref{lem:tracking-fam}. 
\end{proof}
\begin{lemma}\label{lem:track-prime}
\begin{enumerate}
 \item\label{lem:track-prime-track}
Let $\fifx$ be a fibered frame, and $(I,\rho),(J,\sigma)$ two objects in
$\catr[\fifx]$. If the \emph{existence part}\index{existence part!of partial equivalence relation} $(x\csep \rho(x,x))\in (D\fifa)_I$
of
$\rho$ is $\exists$-prime, then every morphism
$\phi:(I,\rho)\to(J,\sigma)$ has a tracking relation.
\item\label{lem:track-prime-prime}
For a fibered meet-semilattice $\fifa$, every object in $\catr[D\fifa]$ is
isomorphic to an object with $\exists$-prime existence predicate.
\end{enumerate}
\end{lemma}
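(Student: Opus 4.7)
\medskip

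\noindent\textbf{Proposal.} The plan is to treat the two parts independently, part (i) by directly unpacking $\exists$-primality and part (ii) by a change of base along a covering map obtained from the generation result of Lemma~\ref{lem:da}-\ref{lem:da-generated}. (I read the statement of (i) as asserting primality in $\fifx_I$, which is the fibration in which $\rho(x,x)$ naturally lives.)

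For part (i), let $\phi:(I,\rho)\to(J,\sigma)$ be a morphism in $\catr[\fifx]$, viewed as a functional relation $\phi\in\fifx_{I\times J}$. The totality axiom \textit{(tot)} is exactly the inequality $\rho\leq\exists_{\pi_I}\phi$ in $\fifx_I$, where $\pi_I:I\times J\to I$ is the first projection. Since $\rho$ is $\exists$-prime, applying Definition~\ref{def:exprime} with the span $I\xleftarrow{\id}I\xleftarrow{\pi_I}I\times J$ yields a regular-epi/arbitrary span $\xymatrix@1@C-1.9mm{I & \depi[l]_eL\ar[r]^w & I\times J}$ satisfying $\pi_I w=e$ and $e^*\rho\leq w^*\phi$. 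Setting $f:=\pi_Jw:L\to J$ and letting $t:I\etrel J$ be the image of $\langle e,f\rangle=w$, the fact that $e$ is regular epic makes $t$ total. The tracking condition $\rho(i),(\delta t)(i,j)\ent\phi(i,j)$ follows internally: unfolding, $(\delta t)(i,j)\adj\ent\exists l\qdot el=i\wedge fl=j$, so that $\rho(i)\wedge(\delta t)(i,j)$ implies $\exists l\qdot el=i\wedge fl=j\wedge\rho(el)$, and the inequality $e^*\rho\leq w^*\phi$ (i.e.\ $\rho(el)\ent\phi(el,fl)$) closes the derivation.

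For part (ii), let $(I,\rho)\in\catr[D\fifa]$ with existence predicate $E:=\ilbracks{i\csep\rho(i,i)}\in(D\fifa)_I$. By Lemma~\ref{lem:da}-\ref{lem:da-generated}, there exist $\psi\in\fifa_K$ and $v:K\to I$ with $\exists_v y(\psi)\cong E$; by Lemma~\ref{lem:epstack-prime}-\ref{lem:epstack-prime-d-image} the predicate $y(\psi)$ is $\exists$-prime. Define $\rho'\in(D\fifa)_{K\times K}$ by
\[
\rho'(k,k')\;\equiv\;y(\psi)(k)\wedge y(\psi)(k')\wedge\rho(vk,vk').
\]
Symmetry and transitivity of $\rho'$ are immediate from those of $\rho$; moreover $\rho'(k,k)\cong y(\psi)(k)\wedge E(vk)\cong y(\psi)(k)$, the last isomorphism using the unit-like inequality $y(\psi)(k)\ent E(vk)$ that comes from $\exists_v y(\psi)\cong E$. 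Hence the existence predicate of $\rho'$ is $\exists$-prime. It remains to exhibit an isomorphism $(K,\rho')\cong(I,\rho)$, for which I would take
\[
\phi(k,i)\;\equiv\;y(\psi)(k)\wedge\rho(vk,i).
\]
The four axioms \textit{(strict)}, \textit{(cong)}, \textit{(singval)}, \textit{(tot)} follow by routine internal-logic arguments using symmetry, transitivity, and strictness of $\rho$; for totality one uses $\phi(k,vk)$ as witness, which is well-typed because $y(\psi)(k)\ent E(vk)=\rho(vk,vk)$. Finally, Lemma~\ref{lem-inj-surj} reduces the isomorphism check to conditions \eqref{eq:judg-inj} and \eqref{eq:judg-surj}: the former is immediate from transitivity of $\rho$, and the latter is precisely the covering property $E\cong\exists_v y(\psi)$, which provides the required witness $k$ with $vk=i$ and $y(\psi)(k)$ whenever $\rho(i,i)$ holds.

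The main obstacle I foresee is bookkeeping rather than conceptual: one must navigate carefully between the internal languages of $\fifx$ and of $D\fifa$ (in (i) and (ii) respectively) and be attentive to the difference between partial equivalence relations written as binary predicates and their existence parts. The geometric content is however quite transparent -- part (i) is simply the extraction of a ``span-representative'' from the primality universal property, and part (ii) is the replacement of the ambient object $I$ by a ``covering'' $K$ along which the existence predicate becomes a representable, in the spirit of Lemma~\ref{lem:tracking-fam}-\ref{lem:tracking-fam-strict-two}.
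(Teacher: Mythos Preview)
Your proof is correct and follows essentially the same approach as the paper: part~(i) extracts a tracking relation from the $\exists$-primality of the existence predicate applied to the totality judgment, and part~(ii) replaces $(I,\rho)$ by a cover on which the existence predicate becomes a representable $y(\psi)$, defining the new per exactly as in the paper. You even spell out the isomorphism in~(ii) explicitly, which the paper leaves as ``easy to see.''

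One small slip: in your final sentence you have swapped the roles of \eqref{eq:judg-inj} and \eqref{eq:judg-surj}. Condition~(inj), namely $\rho(i)\ent\exists k\qdot\phi(k,i)$, is the one that requires the covering property $E\cong\exists_v y(\psi)$; condition~(surj), namely $\phi(k,i),\phi(k',i)\ent\rho'(k,k')$, is the one that follows from symmetry and transitivity of $\rho$. Your descriptions are correct, only the labels are exchanged.
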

\begin{proof}
\emph{Ad~\ref{lem:track-prime-track}.}
Given $\phi:(I,\rho)\to(J,\sigma)$, the judgment $\rho(i)\ent\exists
j\qdot\phi(i,j)$ holds since $\phi$ is total. Primality of
$\ilbracks{i\csep\rho(i)}$
implies that there exists a total relation $t:I\etrel I\times J$ such that
$\rho(i),(\delta t)(i,i',j)\ent\phi(i',j)$ and $\pi_I\circ t=\id_I$.
$\pi_I\circ t =\id_I$ means that $t(i,i',j)\ent i=i'$ holds in $\sub(\catr)$,
and thus that $(\delta t)(i,i',j)\ent i=i'$ holds in $\fifx$ since $\delta$
commutes with $\exists$. We claim that $\pi_J\circ t$ is a tracking relation
for $\phi$. Since $\pi_J\circ t$ is just the relation $\ilbracks{i,j\csep
t(i,i,j)}$
(and in particular total since total relations are stable under composition),
it suffices thus to show that $\rho(i),(\delta t)(i,i,j)\ent\phi(i,j)$, which
is just a substitution instance of the judgment following from primality.

\medskip
\emph{Ad~\ref{lem:track-prime-prime}.} We could deduce this directly from
Corollary~\ref{cor-hat-sh-d}, since the objects with $\exists$-prime existence
predicate in $\catr[D\fifa]$ correspond to the objects in $\srel{\catr}{\fifa}$.
Instead, we give an explicit construction since it will be useful later. Let
$(I,\rho)\in\catr[D\fifa]$. By Lemma~\ref{lem:epstack-prime} there exists a
moprphism $u:J\to I$ and a $\exists$-prime predicate $\pi\in (D\fifa)_J$ such
that $\exists_u\pi\cong\delta_I^*\rho$. Define a partial equivalence relation
$\sigma\in (D\fifa)_{J\times J}$ by
$\sigma(i,i')\equiv\pi(i)\wedge\pi(i')\wedge\rho(ui, ui')$. It is then easy to
see that $(J,\sigma)$ has $\pi$ as existence predicate and is isomorphic to
$(I,\rho)$.
\end{proof}

\begin{lemma}\label{lem:eda-ccc}
Let $\tope$ be a topos, and let $\ffrma:\tot{\ffrma}\to\tope$ be a pre-stack of
meet-semilattices. If
$D\fifa$ has implication and universal quantification, then
$\tope[D\fifa]$ is cartesian closed.
\end{lemma}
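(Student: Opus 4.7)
The plan is to reduce to constructing exponentials, since $\tope[D\fifa]$ has finite limits by Lemma~\ref{lem:decompo}-\ref{lem:decompo-flims}. The construction of $\bsi^\aro$ will mimic the tripos-to-topos exponential, with the power object of the topos $\tope$ playing the role normally reserved for a tripos's generic predicate, while the hypothesis that $D\fifa$ has $\imp$ and $\forall$ will supply the logical machinery.

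For the construction, take $F = \Omega_\tope^{A\times B}$ as the underlying $\tope$-object, with generic subobject $\epsilon \subseteq F\times A\times B$ in $\sub(\tope)$. Let $\tilde\epsilon := \delta(\epsilon)\in (D\fifa)_{F\times A\times B}$ via the embedding $\delta:\sub(\tope)\to D\fifa$ of Lemma~\ref{lem:preds-incl-exists}, and form the $\rho,\sigma$-saturation
\[
\hat\epsilon(f,a,b)\;\equiv\;\exists a',b'\qdot \tilde\epsilon(f,a',b')\wedge\rho(a,a')\wedge\sigma(b,b'),
\]
which is strict and congruent with respect to $\rho$ and $\sigma$ by construction. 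The remaining functional-relation axioms --- single-valuedness and totality of $\hat\epsilon(f,-,-)$ --- are then expressible as a predicate $\textsf{func}(f)\in (D\fifa)_F$ using $\top,\wedge,\exists,\imp,\forall$, all available in $D\fifa$ by hypothesis. Setting
\[
\tau(f,g)\;\equiv\;\textsf{func}(f)\wedge\textsf{func}(g)\wedge\forall a,b\qdot \hat\epsilon(f,a,b)\Leftrightarrow\hat\epsilon(g,a,b)
\]
gives a PER, so $\bsi^\aro := (F,\tau)$ is an object of $\tope[D\fifa]$, and evaluation is represented by the functional relation $\eval(f,a,b)\equiv\textsf{func}(f)\wedge\hat\epsilon(f,a,b)$.

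For the universal property, given $\gamma:\cta\times\aro\to\bsi$, Lemma~\ref{lem:track-prime}-\ref{lem:track-prime-prime} allows replacement of $\cta$ and $\aro$ by isomorphic copies with $\exists$-prime existence predicates, and since $\fifa$ has binary meets, the product PER on $C\times A$ inherits $\exists$-primeness via Lemma~\ref{lem:epstack-prime}-\ref{lem:epstack-prime-d-image}. Then Lemma~\ref{lem:track-prime}-\ref{lem:track-prime-track} furnishes a tracking relation $t\in\sub(\tope)_{C\times A\times B}$ for $\gamma$, which, $\tope$ being a topos, is classified by a unique morphism $\hat\gamma_0:C\to F$. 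The transpose $\hat\gamma:\cta\to\bsi^\aro$ will be defined by
\[
\hat\gamma(c,f)\;\equiv\;\tau_C(c)\wedge\textsf{func}(f)\wedge\forall a,b\qdot (\hat\epsilon(f,a,b)\Leftrightarrow\gamma(c,a,b)).
\]
A short computation using Lemma~\ref{lem:tracking-rel}-\ref{lem:tracking-rel-unique} shows that $\hat\epsilon(\hat\gamma_0(c),-,-)$ and $\gamma(c,-,-)$ coincide whenever $\tau_C(c)$ holds; this supplies the totality of $\hat\gamma$ (witnessed by $f = \hat\gamma_0(c)$) and yields the factorization $\eval\circ(\hat\gamma\times\id)=\gamma$ after unfolding the relational composition.

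The main obstacle will be verifying uniqueness and independence of the construction from auxiliary choices. For uniqueness, any transpose $\hat\gamma'$ satisfying the factorization must, whenever $\hat\gamma'(c,f)$ holds, satisfy $\hat\epsilon(f,-,-)\Leftrightarrow\gamma(c,-,-)$, and this property characterizes $f$ up to $\tau$-equivalence, so $\hat\gamma'$ and $\hat\gamma$ induce the same morphism of $\tope[D\fifa]$. Independence from the choice of tracking relation and of $\exists$-prime representatives follows by the same pin-down argument, combined with uniqueness of classifying morphisms into a power object in $\tope$.
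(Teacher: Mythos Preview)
Your construction is sound and close in spirit to the paper's, with one timing slip to correct.

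The paper takes as carrier the subobject $\trel(J,K)\subseteq P(J\times K)$ of \emph{total} relations, with PER
\[(\tau^\sigma)(t,u)\equiv\forall jj'kk'\qdot\sigma(j,j')\wedge\delta((j,k)\in t)\wedge\delta((j',k')\in u)\imp\tau(k,k'),\]
relying on totality of $u$ for transitivity. You instead take all of $P(A\times B)$ and absorb totality plus single-valuedness into a logical predicate $\textsf{func}(f)$ built from $\imp,\forall,\exists$; this is equally legitimate and makes your $\tau$ trivially a PER. Your version is closer to the textbook tripos-to-topos exponential; the paper's is leaner and isolates that only \emph{totality} of the parametrizing relations is essential --- a point it exploits in Remark~\ref{rem:eda-ccc}-\ref{rem:eda-ccc-full} to weaken the hypotheses on $\tope$.

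The slip: you define $F=\Omega^{A\times B}$ from the original $A$, and only afterwards invoke Lemma~\ref{lem:track-prime}-\ref{lem:track-prime-prime} to replace $\aro$ by an isomorphic copy with $\exists$-prime existence predicate. But that replacement changes the underlying $\tope$-object to some $A'$, so the tracking relation lives in $C\times A'\times B$ and is classified by a map into $\Omega^{A'\times B}$, not into your $F$. The fix is to assume $\rho$ has $\exists$-prime existence part \emph{before} constructing $F$ (as the paper does for its exponent), reserving the ``replace by an iso copy'' move for the test object $\cta$ alone. With this correction the rest of your argument goes through.
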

\begin{proof}
Let $(J,\sigma),(K,\tau)\in\catr[D\fifa]$, and assume without loss of
generality that $\sigma$ has $\exists$-prime existence part.
Denote by $\trel(J,K)\subseteq P(I\times J)$ the object of total
relations from $J$ to $K$ in the sense internal to $\tope$. We claim that an
exponential $(K,\tau)^{(J,\sigma)}$ is given by $(\trel(J,K),\tau^\sigma)$,
where the partial equivalence relation $\tau^\sigma$ is defined by
\[
(\tau^\sigma)(t,u)\equiv \forall jj'kk'\qdot \sigma(j,j')\wedge\delta((j,k)\in
t)\wedge\delta((j',k')\in u)\imp\tau(k,k').
\]
We leave it to the reader to verify that this is indeed a partial equivalence
relation; the fact that the variables $t,u$ range over \emph{total} relations
is important. 

To obtain the evaluation map associated to the exponential object, consider the
relation
$e:\trel(J,K)\times J\etrel K$ which is just the appropriate transposition of
the membership relation $(\in)\subseteq J\times K\times \trel(J, K)$, i.e.,
$e(t,j,k)\Leftrightarrow (j,k)\in t$. The relation $e$ is total precisely
because $\trel(J,K)$ contains only total relations; to verify that it tracks a
morphism
\[
 \ve:(\trel(J,K),\tau^\sigma)\times(J,\sigma)\to(K,\tau),
\]
we have to show (following
Lemma~\ref{lem:tracking-rel}-\ref{lem:tracking-rel-characterize}) that
\[
(\tau^\sigma)(t,u),\sigma(j,j'),\delta((j,k)\in t),\delta((j',k')\in
u)\ent\tau(k,k'),
\]
holds in $D\fifa$, which is immediate from the definition of $\tau^\sigma$.

It remains to show how to construct exponential transposes. Consider $\phi :
(I,\rho)\times (J,\sigma)\to (K,\tau)$, where we assume without loss of
generality that $\rho$ has $\exists$-prime existence part. Since
$\exists$-prime predicates in $D\fifa$ are closed under finite meets (being the
stack-completion of the $y$-image of $y:\fifa\to D\fifa$), $\rho\brprod\sigma$
has $\exists$-prime existence part as well, whence $\phi$ has a tracking
relation
$s:I\times J\etrel K$. This relation corresponds to a unique function $s':I\to
P(J\times K)$, which factors through $\trel(J,K)\hookrightarrow P(J\times K)$ as
$\tilde{s}:I\to\trel(J,K)$ since $s$ itself is total.
\[
\xymatrix@R-3mm{
I\ar[r]^-{\tilde{s}}\ar[dr]_-{s'} & \trel(J,K)\mono[d]\\
& P(J\times K)
} 
\]
It is easy to see that $\tilde{s}$ is a (functional) tracking relation of a
morphism of type $(I,\rho)\to(\trel(J,K),\tau^\sigma)$ which is an exponential
transpose of $\phi$.
\end{proof}
\begin{remarks}\label{rem:eda-ccc}
\begin{enumerate}
 \item 
It is \emph{not} possible to carry out the preceding proof
entirely in the (non-replete) full subcategory of $\tope[D\fifa]$ on objects
with $\exists$-prime existence predicate, since the existence part of
$\tau^\sigma$ might not be $\exists$-prime.

\item\label{rem:eda-ccc-full}
 The proof of the preceding lemma is \emph{impredicative} -- we
make use of the totality $\trel(J,K)$ of total relations between two objects
which we define as a subobject of $P(I\times J)$. In the presence of
appropriate choice principles, however, the use of impredicativity can be
avoided -- for example, if regular epimorphisms split in $\tope$, we can use
tracking \emph{functions}\index{tracking!function} instead of tracking \emph{relations}, which allows us
to take $K^J$ instead of $\trel(J,K)$ as underlying object of the exponential.

In general, all we need to make the proof work is a sufficient supply of
total relations in $\tope$ -- more precisely we need $\tope$ to be a regular
category with universal quantification such that for every pair $J,K\in\tope$ of
objects there exists an object $T(J,K)$ parameterizing a family of \emph{total}
relations \[\ve:E\emono T(J, K)\times J\times K\] from $J$ to $K$ (`total'
meaning that $t,j\csep\ent\exists k\qdot \ve(t,j,k)$ holds) such that for
every $I$-indexed family $r:R\emono I\times J \times K$ of total relations from
$J$ to $K$ we have
\[
 \forall i\;\exists t\;\forall j,k\qdot \ve(t,j,k)\imp r(i,j,k)\footnote{As I
learned from Benno van den Berg, this property is called \emph{fullness}\index{fullness}
in constructive set theory (see~\cite[Definition~4.11]{aczel2001notes}).
}.
\]
This property holds for example in regular locally cartesian closed categories
satisfying the \emph{internal} axiom of choice; here $T(J,K)$ is simply given
by $K^J$. This example is interesting because the internal axiom of choice is
\emph{not} sufficient to allow us to work entirely with tracking functions
instead of tracking relations (we need epi-splitting for that), but in the
construction of the exponential we can still use the function-object. To make
this work, we have to modify the construction of the exponential transposes from
the above proof a bit. Given $s:I\times J\etrel K$ as in the proof, the
associated function $s':I\to P(J\times K)$ does not generally factor through
$K^J\emono P(J\times K)$ (it does only if $s$ is already functional); instead we
define the tracking relation $\tilde{s}:I\etrel K^J$ of the exponential
transpose by 
\[
 \tilde{s}(i,f)\quad\defequi\quad\forall j,k\qdot fj=k\imp s(i,j,k)
\]
and the totality follows from internal choice.
\end{enumerate}
\end{remarks}
In Theorem~\ref{theo:rda-lccc}, we will show that in the situation of the
preceding lemma, $\tope[D\fifa]$ is even \emph{locally} cartesian closed, but
since the proof is a bit subtle we need two more lemmas.
\begin{lemma}\label{lem:excat-lccc-cover}
Let $\catx$ be an exact category, and $e:J\eepi I$ a regular epimorphism in
$\catx$. If $\catx/J$ is cartesian closed, then $\catx/I$ is cartesian closed.

In particular, to show that $\catx$ is \emph{locally} cartesian closed it is
sufficient to show that every object $I$ can be covered by an object $J$ with
cartesian closed slice $\catx/J$.
\end{lemma}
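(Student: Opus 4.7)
The plan is to use effective descent: since $\catx$ is an exact category, its fundamental fibration $\fund{\catx}$ is a stack for the regular topology (as noted after Lemma~\ref{lem:moens-pretop}), so $\catx/I$ is equivalent to the category of descent data for $e:J\eepi I$. Using the description of Lemma~\ref{lem:desc-cleavage}, with $\partial_0,\partial_1:R:=J\times_I J\rightrightarrows J$ the kernel pair of $e$, an object of this descent category is a pair $(X\in\catx/J,\alpha:\partial_1^*X\xrightarrow{\sim}\partial_0^*X)$ satisfying a cocycle condition over $R\times_J R$, and a morphism $(X,\alpha)\to(X',\alpha')$ is a map $X\to X'$ in $\catx/J$ commuting with the descent data.

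Given $A,B\in\catx/I$, I would construct their exponential as follows. The pullbacks $e^*A,e^*B\in\catx/J$ carry the canonical descent data, and by hypothesis we can form $H:=(e^*B)^{e^*A}\in\catx/J$. The key observation is that $\partial_i^*H$ represents the exponential $(\partial_i^*e^*B)^{\partial_i^*e^*A}$ in $\catx/R$, which follows from the adjunction $\Sigma_{\partial_i}\dashv\partial_i^*$ between slice categories together with Frobenius reciprocity: for any $Y\in\catx/R$ one computes
\begin{align*}
\catx/R(Y,\partial_i^*H)
&\cong\catx/J(\Sigma_{\partial_i}Y,H)\\
&\cong\catx/J(\Sigma_{\partial_i}Y\times_J e^*A,e^*B)\\
&\cong\catx/J(\Sigma_{\partial_i}(Y\times_R\partial_i^*e^*A),e^*B)\\
&\cong\catx/R(Y\times_R\partial_i^*e^*A,\partial_i^*e^*B).
\end{align*}
Note that this argument does not require $\catx/R$ to be globally cartesian closed, only that these specific exponentials exist. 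The canonical identifications $\partial_0^*e^*X\cong\partial_1^*e^*X$ for $X\in\{A,B\}$ (both sides being reindexed along $e\partial_0=e\partial_1$) now yield, by uniqueness of exponentials, a canonical isomorphism $\alpha:\partial_1^*H\xrightarrow{\sim}\partial_0^*H$; the cocycle condition on $\alpha$ follows from the corresponding identities for $e^*A,e^*B$ together with uniqueness of representing arrows. Hence $(H,\alpha)$ descends to an object $B^A\in\catx/I$.

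To verify the universal property, I would apply the stack property twice: $\catx/I(C\times_I A,B)$ is the equalizer of $\catx/J(e^*(C\times_I A),e^*B)\rightrightarrows\catx/R(\partial_0^*e^*(C\times_I A),\partial_0^*e^*B)$, and likewise $\catx/I(C,B^A)$ is an equalizer. The exponential isomorphism in $\catx/J$ together with the exponential presentation of $\partial_i^*H$ established above identify the two equalizer diagrams, giving $\catx/I(C\times_I A,B)\cong\catx/I(C,B^A)$ naturally in $C$. The last assertion is then immediate: if every $I\in\catx$ admits a regular epi $J\eepi I$ with $\catx/J$ cartesian closed, then every slice $\catx/I$ is cartesian closed, which is the definition of local cartesian closedness. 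The main technical hurdle I anticipate is the bookkeeping needed to verify the cocycle condition on $\alpha$, where one must chase the canonical isomorphisms induced by the simplicial identities $\partial_0\partial_1=\partial_0\partial_0$ etc.\ through the uniqueness arguments for exponentials.
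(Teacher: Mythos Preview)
Your proposal is correct and follows essentially the same approach as the paper: both use that the fundamental fibration of an exact category is a stack to identify the target slice with a category of descent data, observe that pullback along $\partial_i$ preserves the exponentials formed in $\catx/J$ (your Frobenius computation is exactly the content of the paper's remark that ``pullback in finite limit categories preserves exponentials''), equip the exponential $(e^*B)^{e^*A}$ with descent data, and check the cocycle and universal property. The only cosmetic difference is that the paper first reduces, via transitivity of slicing, to the special case where $I$ has global support and shows $\catx$ itself is cartesian closed, whereas you work directly with the given $e:J\eepi I$; your direct route is equally valid.
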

\begin{proof}
By transitivity of slicing it is sufficient to show that if $\catx/I$ is
cartesian closed for an object $I$ with global support, then $\catx$ is already
cartesian closed. 

The idea of the proof is as follows:
\begin{enumerate}
 \item Given $X\in\catx$, the object $\pi_I:I\times X\to I$ in $\catx/I$ is the
$I$-indexed family of objects which has value constant $X$.
\item Constant families in $\catx/I$ can be identified with objects in $\catx$.
\item Given two constant families in $\catx$, their exponential is constant as
well.
\end{enumerate}
To make this precise, we identify the concept of `constant family' by 'object
with descent data' (Definition~\ref{def:cat-descent-data}) which leaves us to
show that the category $\ddesc(\fund{\catx},I\to 1)$ is cartesian closed, which
is sufficient since the fundamental fibration $\lfund{\catx}$ of an exact
category is a stack for the regular topology (this is the formalization of
(ii)).

For the proof we use the representation of $\ddesc(\fund{\catx},I\to 1)$ using
an explicit cleavage from Lemma~\ref{lem:desc-cleavage}.
Concretely, an object in $\ddesc(\fund{\catx},I\to 1)$ is a pair $(b:B\to
I,\beta:\partial_1^*b\to\partial_0^*b)$ subject to the coherence axiom in the
Lemma.
Given two such 
objects, $(b,\beta)$, $(c,\gamma)$, we 
construct a structure map on $c^b$.
The important step here is to realize that (as can easily be verified) pullback
in finite limit categories
preserves exponentials, in particular $\partial_i^*(c^b)$ is an exponential of
$\partial_i^*b$ and $\partial_i^*c$ in $\catx/(I\!\times\! I)$, independently of
the question
whether $\catx/(I\!\times\! I)$ has all exponentials. This observation
allows us to define a structure map on $c^b$ via the following derivation.
\[
\def\fCenter{ \to }
\AX$\pi_1^*(c^b)\fCenter\pi_1^*(c^b)$
\UI$\pi_1^*(c^b)\times \pi_1^*(b)\fCenter\pi_1^*(c)$
\UI$\pi_1^*(c^b)\times \pi_2^*(b)\fCenter\pi_2^*(c)$
\UI$\pi_1^*(c^b)\fCenter\pi_2^*(c^b)$
\DP
\]
It remains to check that this structure map satisfies the coherence axiom, and
that
$c^b$ with this structure map is an exponential of $(b,\beta)$, $(c,\gamma)$ in
$\mathbf{Desc}(\catx,I)$. This follows again from pullback stability of
exponentials, and from calculations $\catx/I$, $\catx/(I\times I)$ and
$\catx/(I\times I\times I)$ which are most easily carried out in
$\lambda$-calculus.
\end{proof}
\begin{lemma}\label{lem:forall-imp-slice}
 Let $\fifa:\tot{\fifa}\to\catc$ be a fibered meet-semilattice and
$\varphi\in\fifa_I$. If $\fifa$ has implication and universal quantification,
then so has $\fifa/\varphi$.
\end{lemma}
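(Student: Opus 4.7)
The plan is to describe the slice fibration $\fifa/\varphi$ explicitly and then construct implication and universal quantification fiberwise by simple restriction to the downset of $u^*\varphi$.

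Unwinding Definition~\ref{def:slice-functor} and Lemma~\ref{lem:slice-fibration} in the posetal case, the fiber of $\fifa/\varphi$ over an object $u:J\to I$ of $\catc/I$ is the downset
\[
(\fifa/\varphi)_u \;=\; \{\psi\in\fifa_J \msep \psi\leq u^*\varphi\},
\]
and for a morphism $f:u\to v$ in $\catc/I$ (so $f:J\to K$ with $u=vf$), reindexing is just the restriction of $f^*:\fifa_K\to\fifa_J$ to $(\fifa/\varphi)_v\to(\fifa/\varphi)_u$, which is well-defined because $f^*(v^*\varphi)=u^*\varphi$. Meets in the fiber $(\fifa/\varphi)_u$ coincide with meets in $\fifa_J$ (they stay below $u^*\varphi$), while the top element is $u^*\varphi$ itself.

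For implication, I would use the standard fact that the downset of an element $a$ in a Heyting algebra is itself a Heyting algebra, with implication given by $(\psi\simp\psi')\wedge a$. Concretely, for $\psi,\psi'\in(\fifa/\varphi)_u$ I set
\[
\psi\imp_{/\varphi}\psi' \;:=\; (\psi\imp\psi')\wedge u^*\varphi,
\]
and the adjunction $\chi\wedge\psi\leq\psi' \Leftrightarrow \chi\leq\psi\imp_{/\varphi}\psi'$ for $\chi\leq u^*\varphi$ is an immediate consequence of the adjunction for $\imp$ in $\fifa_J$ together with $\chi\leq u^*\varphi$. Compatibility with reindexing reduces to $f^*(\psi\imp\psi')=f^*\psi\imp f^*\psi'$ and $f^*(v^*\varphi)=u^*\varphi$, the former being part of the assumption that $\fifa$ has implication.

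For universal quantification along $f:u\to v$ in $\catc/I$, given $\psi\in(\fifa/\varphi)_u$ I set
\[
\faa{f}^{/\varphi}\psi \;:=\; (\faa{f}\psi)\wedge v^*\varphi \;\in\; (\fifa/\varphi)_v.
\]
The adjunction $f^*\chi\leq\psi \Leftrightarrow \chi\leq \faa{f}^{/\varphi}\psi$ for $\chi\leq v^*\varphi$ follows at once from the adjunction for $\faa{f}$ in $\fifa$, since the side condition $\chi\leq v^*\varphi$ is built into the right-hand side. The Beck--Chevalley condition for $\faa{f}^{/\varphi}$ is inherited from BCC for $\faa{f}$ in $\fifa$, using that $v^*\varphi$ is itself stable under reindexing in $\catc/I$. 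No step presents a real obstacle; the only thing to be attentive to is to verify throughout that the defined operations land in the appropriate downsets and interact correctly with the $u^*\varphi$-truncation, which is a routine diagram chase.
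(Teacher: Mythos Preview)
Your proof is correct and follows exactly the approach of the paper: describe predicates in $(\fifa/\varphi)_u$ as those $\psi\in\fifa_J$ with $\psi\leq u^*\varphi$, and obtain implication and universal quantification by performing the construction in $\fifa$ and then conjoining with the appropriate reindexing of $\varphi$. You have simply spelled out in detail what the paper states in one sentence.
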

\begin{proof}
Recall that a predicate in $(\fifa/\varphi)_u$ for $u:J\to I$ is a predicate
$\psi\in\fifa_J$ such that $\psi\leq u^*\varphi$. To obtain implication and
universal quantification in $\fifa/\varphi$, it is sufficient to perform the
corresponding construction in $\fifa$ and then take the conjunction with
the appropriate reindexing of $\varphi$.
\end{proof}

Now we can prove the theorem.

\begin{theorem}\label{theo:rda-lccc}
 Let $\tope$ be a topos\footnote{Actually all we need is a locally cartesian
closed regular category where the fullness principle from
Remark~\ref{rem:eda-ccc}-\ref{rem:eda-ccc-full} holds in all slices.}, and let
$\ffrma:\tot{\ffrma}\to\tope$ be a pre-stack of
meet-semilattices. The
following are equivalent.
\begin{itemize}
\item $D\ffrma$ has implication and universal quantification.
\item $\catr[D\fifa]$ is locally cartesian closed.
\end{itemize}
\end{theorem}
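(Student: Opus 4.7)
The plan is to establish the two implications separately, using the chain of constructions developed earlier in the chapter, and in particular Lemmas~\ref{lem:eda-ccc}, \ref{lem:excat-lccc-cover}, \ref{lem:sheaf-slice}, \ref{lem:d-loc-slice}, \ref{lem:rs-exact} and \ref{lem:pullback-sub}.

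For the direction ($\Rightarrow$), suppose $D\fifa$ has implication and universal quantification. To conclude that $\tope[D\fifa]$ is locally cartesian closed, I would invoke Lemma~\ref{lem:excat-lccc-cover} and only show that sufficiently many slices are cartesian closed, since $\tope[D\fifa]$ is exact by Lemma~\ref{lem:rs-exact}. Concretely, by that same lemma every object is isomorphic to a total partial equivalence relation $(I,\rho)$, and such an object admits a regular epic cover $\Delta I = (I, \predeq)\depi (I,\rho)$ tracked by $\rho$ itself. Combining Lemma~\ref{lem:sheaf-slice} with Lemma~\ref{lem:d-loc-slice} and (Lemma~\ref{lem:local-terminal-slice}) one obtains
\[
\tope[D\fifa]/\Delta I\;\simeq\;(\tope/I)[(D\fifa)/I]\;\simeq\;(\tope/I)[D(\fifa/I)].
\]
Now $\tope/I$ is again a topos, and $D(\fifa/I)\simeq(D\fifa)/I$ inherits implication and universal quantification from $D\fifa$ (these operations are fiberwise/reindexing-compatible, via Lemma~\ref{lem:forall-imp-slice} applied in the obvious way to the localization). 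Lemma~\ref{lem:eda-ccc}, extended in the natural way to universal quantification to conclude cartesian closedness of the sheaf category, applies and yields that each slice $\tope[D\fifa]/\Delta I$ is cartesian closed; Lemma~\ref{lem:excat-lccc-cover} then closes the argument.

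For the direction ($\Leftarrow$), suppose $\tope[D\fifa]$ is locally cartesian closed. Since it is also exact by Lemma~\ref{lem:rs-exact}, the combination of regularity and local cartesian closedness makes it a Heyting category, so its subobject fibration $\sub(\tope[D\fifa])$ admits implication (from fiberwise exponentials) and universal quantification along arbitrary morphisms (from dependent products of subobjects). By Lemma~\ref{lem:pullback-sub}, the fibered frame $D\fifa$ is equivalent to the pullback $\Delta^*\sub(\tope[D\fifa])$ along the constant objects functor $\Delta\colon\tope\to\tope[D\fifa]$. Because implication is a fiberwise operation compatible with reindexing, it transports directly along this pullback; universal quantification along a morphism $u\colon I\to J$ in $\tope$ is obtained as the $\forall$ along $\Delta u$ in $\sub(\tope[D\fifa])$, and the Beck-Chevalley condition with respect to $\tope$-squares is inherited from that with respect to $\tope[D\fifa]$-squares via the pullback property. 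Hence $D\fifa$ has implication and universal quantification.

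The main obstacle is the forward direction, specifically the identification of $\tope[D\fifa]/\Delta I$ with $(\tope/I)[D(\fifa/I)]$: one must verify that the slicing equivalence of Lemma~\ref{lem:sheaf-slice} meshes correctly with the localization/slicing equivalence of Lemma~\ref{lem:d-loc-slice} for the $D$-construction, and that the fibered Heyting structure on $D\fifa$ descends to $D(\fifa/I)$ so that Lemma~\ref{lem:eda-ccc} applies in the new base topos $\tope/I$. The reverse direction is comparatively routine, the only mild subtlety being the verification of Beck-Chevalley for the pulled-back universal quantifier, which comes for free from the pullback-of-fibrations property.
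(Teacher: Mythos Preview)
Your backward direction is essentially the same as the paper's, and is fine.

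There is a genuine gap in your forward direction. You claim that every object $(I,\rho)$ of $\tope[D\fifa]$ admits a regular epi $\Delta I=(I,\predeq)\eepi(I,\rho)$ ``tracked by $\rho$ itself''. But for $\rho$ to define a morphism $(I,\predeq)\to(I,\rho)$, totality requires $(i=i)\ent\exists j\qdot\rho(i,j)$, i.e.\ $\ent\rho(i,i)$ --- the existence predicate of $\rho$ must be $\top$ in $(D\fifa)_I$. Partial equivalence relations in $D\fifa$ do not in general have this property (the fact that one can always pass to a \emph{total} equivalence relation in $\sub(D\fifa)$, as in Lemma~\ref{lem:rs-exact}, is a different statement and does not help here). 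Consequently you cannot cover arbitrary objects by constant objects $\Delta I$, and your reduction to slices over $\Delta I$ --- and hence to localizations $\fifa/I$ --- does not go through.

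The paper's fix is exactly the refinement you are missing: by Lemma~\ref{lem:track-prime}-\ref{lem:track-prime-prime} every object is isomorphic to one whose existence predicate is $\exists$-prime, hence of the form $y\varphi$ for some $\varphi\in\fifa_I$; such an object is then covered not by $\Delta I$ but by the assembly $(I,\predeq_{y\varphi})$. Slicing over this object gives
\[
(\tope[D\fifa])/(I,\predeq_{y\varphi})\;\simeq\;(\tope/I)[D\fifa/y\varphi]\;\simeq\;(\tope/I)[D(\fifa/\varphi)],
\]
using the genuine \emph{slice} clause of Lemma~\ref{lem:d-loc-slice} rather than the localization clause. One then needs $D(\fifa/\varphi)$ to inherit $\imp,\forall$, which is precisely Lemma~\ref{lem:forall-imp-slice} in its non-trivial form (for arbitrary $\varphi$, not just $\varphi=\top$). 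With this correction the rest of your argument via Lemmas~\ref{lem:eda-ccc} and~\ref{lem:excat-lccc-cover} is fine.
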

\begin{proof}
Since $D\fifa$ can be represented by the pullback
\[
\vcenter{ \xymatrix@R-3mm{
{}\pullbackcorner\ar[r]\ar[d]_{D\fifa} &
\ar[d]^{\sub(\tope[D\fifa])}\\
\tope\ar[r]_\Delta & \tope[D\fifa]
}},
\]
it is clear that existence of implication and universal quantification in
$D\fifa$ follow from local cartesian closedness of $\tope[D\fifa]$.

In the converse direction, by Lemma~\ref{lem:excat-lccc-cover} it is
sufficient to show that for all $I\in\tope$, $\varphi\in\fifa_I$, the slice
categories $(\tope[D\fifa])/(I,\predeq_{y\varphi})$ are cartesian closed, since
every object is isomorphic to one with $\exists$-prime existence predicate
(Lemma~\ref{lem:track-prime}-\ref{lem:track-prime-prime}), which in turn can be
covered by an object with the same existence predicate whose partial equivalence
relation is sub-diagonal.

To see that $(\tope[D\fifa])/(I,\predeq_{y\varphi})$ is cartesian closed,
consider the chain
\[
 (\tope[D\fifa])/(I,\predeq_{y\varphi})\simeq (\tope/I)/[D\fifa/y\varphi]\simeq
(\tope/I)/[D(\fifa/\varphi)]
\]
of equivalences, where the first equivalence was proved in
Lemma~\ref{lem:sheaf-slice}, and the second one follows from
Lemma~\ref{lem:d-loc-slice}. To conclude cartesian closure,
it suffices by Lemma~\ref{lem:eda-ccc} to show that $D(\fifa/\varphi)$, or
equivalently $D\fifa/y\varphi$, has implication and universal quantification.
This follows from Lemma~\ref{lem:forall-imp-slice}.
\end{proof}
\begin{remark}
The preceding result, and in particular its proof via
Lemma \ref{lem:excat-lccc-cover}, is based on Carboni and Rosolini's
characterization of categories with locally cartesian closed exact
completion~\cite[Theorem~3.3]{carboni2000locally}.

It would be nice to have a criterion for $\srelrc$ to be locally cartesian
closed for not necessarily posetal finite-limit pre-stacks $\fibc$. If regular
epis split in the base, then by
Carboni and Rosolini's  result $\srelrc$ is locally cartesian closed iff
$\tot{\fibc}$ is weakly cartesian closed in their sense, since then $\srelrc$ is
the exact completion of $\tot{\fibc}$. It seems nontrivial, however, to rephrase
this in a way such that it works without any choice principles, since
projectivity in the fibrational sense (Definition~\ref{def:proj-indec}) is more
difficult to handle than projectivity of objects in regular
categories\footnote{This is related to the fact that given a regular category
$\catr$, $f:P\to I$ is projective in $\catr/I$ iff $P$ is projective in $\catr$
whereas the same is not true for internal projectives. The link between internal
projectives and projectives in fibrations is that $f$ is internal projective in
$\catr/I$ iff it is fibrationally projective in $\fund{\catr}$}.

A careful study of \cite{rosicky1999cartesian} might give new clues.
\end{remark}

\subsection{Assemblies}\label{sec:assemblies}

We introduced assemblies over fibered frames in Definition~\ref{def:assemblies}.
Originally, assemblies were introduced
in the case of realizability over pcas~\cite{carboni1988categorical}, and in
this case they have particularly good properties --
if
$\rtr{\pcaa}:\tot{\pcaa}\to\catset$ is a realizability tripos
(Definition~\ref{def:realizability-tripos}-\ref{def:realizability-tripos-real})
over a pca $\pcaa$, then:
\begin{itemize}
 \item $\asm(\rtr{\pcaa})$ can naturally be represented as a `concrete
category', by
which we
mean that the objects are sets with additional structure, morphisms are
functions which are compatible in some sense with this structure, and equality
of morphism is equality of functions (we emphasize this last condition since it
does \emph{not} hold e.g., for strict tracking families in the sense of
Definition~\ref{def:tracking-family}).
\item $\asm(\rtr{\pcaa})$ is the category of separated objects for the
$\neg\neg$-topology on $\catset[\rtr{\pcaa}]=\catrt(\pcaa)$, and furthermore
$\catset$ itself is
the category of sheaves.
\end{itemize}
The aim of this section is to work out which requirements on a fibered
frame $\fifx$ we need in
order to have these properties. 

The $\neg\neg$-topology is only definable if we have
sufficient logical structure in $\fifx$ and $\catr[\fifx]$, and we can only
expect the base
category to coincide with the $\neg\neg$-sheaves if its internal logic validates
classical logic. However, it will turn out that rather weak conditions are
sufficient to ensure that the assemblies coincide with separated objects for
\emph{some} topology -- all we need for that is that the base category is an
exact category $\catx$, and $\fifx$ is \emph{totally
connected}. This entails that $\catx$ is a localization of $\catx[\fifx]$, and
with this localization comes a category of separated objects, which we can
identify as the assemblies. If $\fifx$ is a tripos and the base is boolean, then
the topology corresponding to the localization is precisely the
$\neg\neg$-topology. Finally, the `concrete' representation of $\asm(\fifx)$
follows formally from facts about the localization.

\begin{lemma}\label{lem:delta-left-adj}
  If $\fifx:\tot{\fifx}\to\catx$ is a totally connected fibered
frame on an \emph{exact category} $\catx$, then $\Delta:\catx[\fifx]\to\catx$
has a finite limit preserving left adjoint $\Pi\adj\Delta$ such that the counit
$\ve:\Pi\Delta\to\id_\catx$ is a natural isomorphism.
\end{lemma}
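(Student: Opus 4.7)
The plan is to construct $\Pi$ as the functor induced on categories of partial equivalence relations by the fibered monotone map $\pi$. First I would verify that $\pi$ is a \emph{morphism of fibered frames}: beyond the assumed preservation of $\top$ and $\wedge$, it also commutes with existential quantification because for any $u\colon J\to I$ in $\catx$, both $\pi\circ\exists_u$ and $\exists_u\circ\pi$ are left adjoint to $u^*\circ\delta=\delta\circ u^*$ (the second equality using that $\delta$, being a fibered monotone map, commutes with reindexing), hence they coincide up to isomorphism.

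With $\pi$ preserving $\top,\wedge,\exists$, the construction of Definition~\ref{def:catper} applies to yield a functor $\tilde\pi\colon\catx[\fifx]\to\catx[\sub(\catx)]$ sending $(I,\rho)$ to $(I,\pi\rho)$ and a functional relation $\phi$ to $\pi\phi$; the PER axioms, functional relation axioms and preservation of composition all transfer because they are expressed using only the connectives preserved by $\pi$, together with Frobenius. Since $\catx$ is exact, every PER in $\sub(\catx)$ has an effective quotient, yielding an equivalence $\catx\simeq\catx[\sub(\catx)]$ via $I\mapsto(I,\predeq)$; composing its inverse with $\tilde\pi$ defines $\Pi\colon\catx[\fifx]\to\catx$. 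Concretely, $\Pi(I,\rho)$ is the quotient in $\catx$ of the subobject $\pi(\rho(x,x))\emono I$ by the equivalence relation $\pi\rho$.

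To establish $\Pi\adj\Delta$, I would transfer the fibrational adjunction $\pi\adj\delta$ (regarded as a 2-adjunction in the 2-category of fibered frames) to the corresponding adjunction $\tilde\pi\adj\tilde\delta$ of induced functors, noting that $\Delta$ corresponds under the equivalence $\catx\simeq\catx[\sub(\catx)]$ to the functor $\tilde\delta$ induced by $\delta$ (which is a morphism of fibered frames by Lemma~\ref{lem:preds-incl-exists}). Explicitly, the unit $\eta_{(I,\rho)}\colon(I,\rho)\to(I,\delta\pi\rho)$ is represented by the functional relation $\eta(x,y):=\rho(x,x)\wedge\delta\pi\rho(x,y)$; the axioms (strict), (cong), (singval) and (tot) all follow from $\rho\leq\delta\pi\rho$ (the fibrational unit at $\rho$), Frobenius reciprocity, and the PER laws of $\delta\pi\rho$ in $\fifx$. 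The counit is literally the identity, since $\pi\delta\cong\id$ by Lemma~\ref{lem:totally-connected-reflection} gives $\tilde\pi\tilde\delta(J)=(J,\predeq)$; the triangle identities reduce to routine computations in the PER category, and the counit being identity yields immediately that $\ve\colon\Pi\Delta\to\id_\catx$ is an isomorphism.

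Finally, $\Pi$ preserves finite limits because limits in $\catx[\fifx]$ are built using $\top,\wedge,\exists$ in $\fifx$ (terminal object $(1,\predeq)$, binary product $(I\times J,\rho\brprod\sigma)$, and equalizer via the predicate $\exists y\qdot\phi(x,y)\wedge\psi(x,y)$), all of which $\pi$ preserves. The main subtlety in the argument is establishing that $\pi$ commutes with existential quantification; once this is in hand, everything else is essentially formal manipulation of the PER construction. A tempting alternative would be to prove $\Pi\adj\Delta$ directly by exhibiting the hom-set bijection between $\catx(\Pi(I,\rho),J)$ and $\catx[\fifx]((I,\rho),\Delta J)$, but that approach runs into the difficulty that a functional relation $\phi\colon(I,\rho)\to\Delta J$ need not satisfy $\phi\adj\ent\rho\wedge\delta\pi\phi$ pointwise in $\fifx$; the 2-categorical construction of unit and counit sidesteps this issue entirely.
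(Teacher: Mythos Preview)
Your proposal is correct and follows essentially the same approach as the paper: construct $\Pi$ by pushing partial equivalence relations along $\pi$ to obtain a functor $\catx[\fifx]\to\catx[\sub(\catx)]$, then compose with the equivalence $\catx[\sub(\catx)]\simeq\catx$ coming from exactness. The paper is terser---it simply notes that $\pi$ preserves $\exists$ ``since it is a left adjoint'' (your uniqueness-of-adjoints argument is exactly what underlies this), observes the induced functor is ``automatically regular'', and declares the reflection ``easy to see''---whereas you spell out the unit, counit, and limit preservation explicitly; but the underlying construction is identical.
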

\begin{proof}
  $\pi:\fifx\to\sub(\catx)$ preserves $\exists$ since it is a left
adjoint. Since $\pi$ furthermore preserves finite meets by assumption, it
preserves partial equivalence relations and functional relations, which allows
us to construct a functor $\catx[\fifx]\to\catx[\sub(\catx)]$, which is
automatically regular. Moreover, since $\catx$ is exact, we have
$\catx[\sub(\catx)]\simeq\catx$ -- composing the former functor with the
equivalence we obtain $\Pi:\catx[\fifx]\to\catx$. It is easy to see that $\Pi$
and $\Delta$ do indeed form a reflection $\Pi\adj\Delta$.
\end{proof}

The previous lemma says in particular that $\catx$ is a \emph{localization} of
$\catx[\fifx]$. For toposes, localizations correspond
precisely to local operators (also known as Lawvere-Tierney topologies) on the
subobject classifier. Since $\catx[\fifx]$ does not necessarily have a subobject
classifier, we can't work with local operators here -- however, we still have
the corresponding \emph{universal closure operation}
$j:\sub(\catx[\fifx])\to\sub(\catx[\fifx])$. Let us recall the relevant
concepts from~\cite[A4.3]{elephant1}.

\begin{itemize}
 \item Let $L:\catc\to\catc$ be a \emph{cartesian reflector}\footnote{That is
an idempotent monad which preserves finite limits.}\index{cartesian reflector}
on a category $\catc$ with finite limits, and denote by $\catl$ the
corresponding replete reflective subcategory.
We can define a fibered functor
\[j:\sub(\catc)\to\sub(\catc)\]
by the construction given in the following diagram.
\begin{equation}\label{eq:closop}
\vcenter{\xymatrix{
U \dashed[r]\mono[rd]_m& V\pullbackcorner\ar[r]^{\eta_V}\mono[d]^{jm} & L
U\mono[d]^{L m} \\
 & X\ar[r]^{\eta_X} & LX
}}
\end{equation}
$j$ is a \emph{universal closure operation}, meaning that it is isotone and
idempotent.
\item
Given a universal closure operation
$j:\sub(\catc)\to\sub(\catc)$,  call $m\in\sub(\catc)_C$
\emph{dense}, if $jm\cong\top$, and \emph{closed} if $jm\cong m$. Call
$S\in\catc$ \emph{$j$-separated}, if given $m$ and $f$ as in
\[
 \xymatrix@-3mm{
U\mono[d]_m\ar[r]^f & S\\
A\dashed[ur]_h
}
\]
with $m$ dense in $\sub(\catc)_A$, there exists at most one mediating $h$, and
call $S$ a \emph{$j$-sheaf}, if for any such $f$ and $m$ there exists
\emph{exactly one} $h$.
\end{itemize}
Given a cartesian reflector $L:\catc\to\catc$ with induced universal
closure operation $j:\sub(\catc)\to\sub(\catc)$, we have by
\cite[Lemma~A4.3.6]{elephant1} that
\begin{itemize}
 \item $A\in\catc$ is $j$-separated \emph{iff} $\eta_A:A\to LA$ is monic
\emph{iff} $A$ is a subobject of an object in $\catl$ \emph{iff} the diagonal
map $A\emono A\times A$ is closed.
\item
$A\in\catc$ is a $j$-sheaf \emph{iff} $\eta_A:A\to LA$ is an isomorphism
\emph{iff} $A\in\catl$.
\end{itemize}
For universal closure operations on \emph{exact} categories, we can furthermore
show the following.

\begin{lemma}\label{lem-j-ex}
  Let $j:\sub(\catx)\to\sub(\catx)$ be a universal closure operation on an exact
  category $\catx$.
  \begin{enumerate}
  \item\label{lem-j-ex-j-sep-reg} $j$-separated objects in $\catx$ are closed
under finite products and
    subobjects. Thus, the full subcategory of separated objects is closed under
    finite limits, regular, and an arrow is a regular epi in the subcategory
    iff it is one in $\catx$.
    \item\label{lem-j-ex-j-sep-refl} The subcategory $\sep_j(\catx)$ of
$j$-separated objects is
reflective in $\catx$.
  \end{enumerate}
\end{lemma}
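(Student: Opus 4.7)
My plan is to exploit the characterization \cite[Lemma~A4.3.6]{elephant1} recalled just above the statement: $A$ is $j$-separated iff $\eta_A\colon A\to LA$ is monic, equivalently iff $A$ embeds as a subobject of some object of $\catl$.

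For part \ref{lem-j-ex-j-sep-reg}, I will verify closure under finite products by appealing to the fact that $L$ preserves finite limits: $\eta_{A\times B}$ corresponds to $\eta_A\times\eta_B$ under the canonical isomorphism $L(A\times B)\cong LA\times LB$, which is monic whenever both factors are. Closure under subobjects follows from naturality of $\eta$: given $m\colon B\emono A$ with $A$ separated, the composite $Lm\circ\eta_B=\eta_A\circ m$ is monic (as a composite of monos), so $\eta_B$ is monic too. Terminal objects are trivially separated, and equalizers are built from products and subobjects, so $\sep_j(\catx)$ inherits finite limits. For regularity, given $f\colon A\to B$ between separated objects I take its cover/mono factorization $f=me$ in $\catx$; the image object, being a subobject of the separated $B$, is again separated, so the factorization already lives in $\sep_j(\catx)$. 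Uniqueness of such factorizations forces a morphism in the subcategory to be a cover in $\sep_j(\catx)$ precisely when it is a cover in $\catx$, and pullback stability transfers because pullbacks in $\sep_j(\catx)$ agree with those computed in $\catx$.

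For part \ref{lem-j-ex-j-sep-refl}, I will construct the reflection of $A\in\catx$ as the image $S(A)$ in the cover/mono factorization $A\eepi S(A)\emono LA$ of the unit $\eta_A$, which exists because $\catx$ is regular. The object $S(A)$ is separated, being a subobject of $LA$, which itself lies in $\catl$ and is therefore separated. For the universal property, given $f\colon A\to T$ with $T$ separated, I form the pullback $P:=S(A)\times_{LT} T$ along $S(A)\emono LA\xrightarrow{Lf} LT$ and the mono $\eta_T\colon T\emono LT$; naturality of $\eta$ produces a canonical arrow $A\to P$. Since $A\to P\to S(A)$ factors the cover $A\eepi S(A)$, and $P\to S(A)$ is a pullback of a mono and hence mono, the map $P\to S(A)$ must be simultaneously a cover and a mono, hence an isomorphism. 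Composing its inverse with the other projection $P\to T$ yields the required factorization $\bar f\colon S(A)\to T$, and uniqueness is immediate from $A\eepi S(A)$ being epic.

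The only genuinely delicate step is the pullback argument in part \ref{lem-j-ex-j-sep-refl}: one has to confirm that $P\to S(A)$ inherits both the cover property (from $A\eepi S(A)$) and the mono property (from $T\emono LT$), after which the entire reflection is formal. Everything else is bookkeeping with $\eta$ and the standard behaviour of cover/mono factorizations in a regular category.
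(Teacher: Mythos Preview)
Your proof is correct, but it takes a different route from the paper's, and the difference is worth noting.

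You work throughout with the reflector $L$ and the characterization ``$\eta_A$ monic'', which is legitimate given the setup recalled just before the lemma. The paper instead works directly with $j$ via the characterization ``diagonal is $j$-closed''. For part~\ref{lem-j-ex-j-sep-reg} the paper checks closure under products by the one-line computation $j(x=x'\wedge y=y')\ent x=x'\wedge y=y'$ (using that $j$ commutes with $\wedge$), and closure under subobjects by $j(u=v)\adj\ent j(mu=mv)\adj\ent mu=mv\adj\ent u=v$. For part~\ref{lem-j-ex-j-sep-refl} the paper constructs the reflection not as the image of $\eta_X$ but as the \emph{quotient} $X\eepi SX$ by the equivalence relation $j\delta_X$ (which exists because $\catx$ is exact); separatedness of $SX$ and the universal property then follow from the fact that a map into a separated object has $j$-closed kernel. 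The two constructions agree, since the kernel pair of $\eta_X$ is precisely $(\eta_X\times\eta_X)^*\delta_{LX}=j\delta_X$.

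What each approach buys: your argument is perhaps more conceptual and avoids internal-language manipulations, but it relies on having the reflector $L$ in hand, whereas the lemma as stated only hypothesizes a universal closure operation $j$. The paper's argument is self-contained in that sense, and --- more importantly for what follows --- the explicit description of the separated reflection as a quotient by $j\delta_X$ is exactly what is exploited in the subsequent Lemma~\ref{lem:tc-dense}, where $j$ is computed concretely on strict predicates. Your image-of-$\eta$ description would require an extra translation step there.
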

\begin{proof}
  \emph{Ad \ref{lem-j-ex-j-sep-reg}.} The terminal object is separated since its
diagonal predicate is
  already maximal in the lattice of subobjects. To see that the product of
  separated objects $X,Y\in\catx$ is separated, we have to verify the validity
  of \[x,x'\vtp X,\;y,y'\vtp E\csep j(x=x'\wedge y=y')\ent x=x'\wedge
  y=y'.\] This holds since $j$ commutes with conjunction.  For subobjects, let
  $m:U\emono X$ be a monomorphism into a separated object. We have
  \[u,v\vtp U\csep\;j(u=v)\adj\ent j(mu=mv)\adj\ent mu=mv\adj\ent u=v.\] 

  \emph{Ad \ref{lem-j-ex-j-sep-refl}.} Let $X\in\catx$. The closure $j\delta_X$
of the diagonal
  predicate is an equivalence relation since $j$ commutes with finite meets. We
  claim that the quotient $p:X\eepi SX$ of $X$ by $j\delta_X$ is the separated
  reflection of $X$. To see that $SX$ is separated, observe that $x,x'\vtp
  X\csep px=px'\adj\ent
  j(x=x')$, and thus $j(px=px')\ent px=px'$. This implies $y,y'\vtp SD\csep
  j(y=y')\ent y=y'$ since $e$ is an epimorphism. Now an arbitrary map
  $f:X\to Y$ with $Y$ separated lifts along $p$ since its kernel is $j$-closed
  and thus contains $j\delta_X$.
\end{proof}
In our case, the cartesian reflector is given by
$\Delta\Pi:\catx[\fifx]\to\catx[\fifx]$, and the corresponding reflective
subcategory is equivalent to $\catr$. 
The associated universal closure operator $j$ is best understood as an extension
of $\delta\pi:\fifx\to\fifx$ to $\sub(\catx[\fifx])$, an explicit description
will be given in Lemma~\ref{lem:tc-dense}-\ref{lem:tc-dense-repr-j}. We will now
give a nice, `assembly style' style representation of $\sep_j(\catx[\fifx])$.
\begin{definition}\label{def-subfib-dense}
  Let $\fifx$ be a totally connected fibered frame. We denote by
  $\fifx_d$ (the \emph{dense} part of $\fifx$) the subfibration of
$\fifx$
  on the predicates $\varphi$ such that $\ent_\catc\pi\varphi$, or equivalently
  $\ent_\fifx\delta\pi\varphi$.
\end{definition}
$\fifx_d$ is closed under finite meets since $\pi$ preserves them, and
we obtain an
assembly-style presentation of the separated objects in $\catx[\fifx]$.

\begin{lemma}\label{lem:tc-dense}
  Let $\fifx:\tot{\fifx}\to\catx$ be a totally connected fibered frame on an
exact
  category.
\begin{enumerate}
\item\label{lem:tc-dense-repr-j} In terms of the representation of subobjects by
strict predicates, 
the
universal closure
operation $j$ on
  $\catx[\fifx]$ (see Diagram~\eqref{eq:closop}) is
  given by $\varphi\mapsto\ilbracks{c\csep \delta\pi\varphi c\wedge \rho c}$,
where
  $\varphi\in\fifx_C$ represents a subobject of $\cro$.
\item\label{lem:tc-dense-total} $\sep_j(\catx[\fifx])\simeq\tot{\fifx_d}$ via
the embedding
  $(\varphi\in\fifx_{d,C})\mapsto(C,\predeq|_\varphi)$.
\item\label{lem:tc-dense-asm} $\sep_j(\catx[\fifx])$ coincides up to equivalence
with the terminal fiber $\geopos{\fifx}_1$ of the positive completion of $\fifx$
from
Section~\ref{sec:dis-sheaf}.
\end{enumerate}
\end{lemma}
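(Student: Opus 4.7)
For part (i), I would compute the universal closure operation directly from Diagram \eqref{eq:closop} using the construction of $\Pi$ from Lemma \ref{lem:delta-left-adj}. Given $m: U \emono \cro$ represented by a strict predicate $\varphi \in \fifx_C$, so $U = (C, \rho|_\varphi)$ with $\rho|_\varphi(c,d) \equiv \varphi(c) \wedge \rho(c,d)$, the functor $\Pi$ sends $\cro$ and $U$ to the respective quotients in $\catx$ of the PERs $\pi\rho$ and $\pi(\rho|_\varphi) = \pi\varphi \wedge \pi\rho$ (via the equivalence $\catx[\sub(\catx)] \simeq \catx$). The unit $\eta_\cro: \cro \to \Delta\Pi\cro$ is represented by the functional relation expressing membership in $\pi\rho$-equivalence classes, and pullback of $\Delta\Pi m$ along $\eta_\cro$, translated into the representation of subobjects by strict predicates, amounts to conjoining $\delta\pi\varphi$ with the ambient $\rho$-support. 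This yields $jm = \ilbracks{c\csep \delta\pi\varphi(c) \wedge \rho(c)}$; strictness with respect to $\rho$ follows because $\pi\varphi$ is a subobject of $C$ that is invariant under the equivalence relation $\pi\rho$, whence $\delta\pi\varphi$ is compatible with $\rho$ using $\rho \leq \delta\pi\rho$.

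For part (ii), an object $\cro$ is $j$-separated iff its diagonal subobject of $\cro \times \cro$, represented by $\rho \in \fifx_{C\times C}$ (strict over $\rho \brprod \rho$), is $j$-closed. By (i) this amounts to the inequality $\delta\pi\rho(c,d) \wedge \rho(c) \wedge \rho(d) \ent \rho(c,d)$. A direct calculation using that $\pi$ preserves finite meets and equality, and that $\pi\delta \cong \id$, verifies this condition for every $(C, \predeq|_\varphi)$ with $\varphi \in \fifx_{d,C}$. Conversely, if $S$ is separated then $\eta_S$ exhibits $S$ as a subobject of some $\Delta C' = (C', \predeq_{C'})$; by Lemma \ref{lem:preds-incl-exists} subobjects of $\Delta C'$ correspond to strict predicates $\varphi \in \fifx_{C'}$, giving $S \cong (C', \predeq|_\varphi)$, and replacing $C'$ by the representative $m: C'' \emono C'$ of $\pi\varphi$ produces $S \cong (C'', \predeq|_{m^*\varphi})$ with $m^*\varphi$ dense. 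The embedding is faithful because $\pi$ detects equality of base morphisms ($\pi$ applied to the associated functional relation recovers the graph of $f$ in $\sub(\catx)$). It is full because, given a functional relation $\phi: (C, \predeq|_\varphi) \to (D, \predeq|_\psi)$ with $\varphi, \psi$ dense, the image $\pi\phi$ is the graph of a unique function $f: C \to D$; then strictness $\phi \ent \varphi(c)$ combined with $\phi \ent \delta\pi\phi = (fc=d)_\fifx$ gives one inclusion, while single-valuedness and totality of $\phi$ together with density force the converse inclusion $\varphi(c) \wedge fc=d \ent \phi(c,d)$, so $\phi$ is the functional relation induced by $(f, \alpha: \varphi \leq f^*\psi)$.

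For part (iii), $\asm(\fifx) = \geopos{\fifx}_1$ by Definition \ref{def:assemblies} consists of the objects $(C, \predeq_\varphi)$ for arbitrary $\varphi \in \fifx_C$, with morphisms the functional relations between them. Every such object is isomorphic (by the truncation argument already used in (ii), restricting along $m: C'' \emono C$ representing $\pi\varphi$) to one with dense existence predicate, hence lies in the essential image of the embedding from (ii). Since the morphisms in both categories are identified as functional relations between PERs of the form $\predeq|_\varphi$, combining the essential-image argument with the full-faithfulness of the embedding from (ii) gives the equivalence $\sep_j(\catx[\fifx]) \simeq \geopos{\fifx}_1$.

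The main obstacle is (i): extracting the concrete formula for $j$ requires careful bookkeeping of the reflector $\Pi$ in terms of quotients in $\catx$ and of how subobjects transport under $\eta_\cro$, including verifying strictness of the resulting representing predicate. Once (i) is in place, (ii) and (iii) follow by internal-logic manipulations in $\fifx$, with the most delicate point being the fullness argument in (ii), which crucially combines totality of the functional relation with the density hypothesis and the inequality $\phi \leq \delta\pi\phi$.
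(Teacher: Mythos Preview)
Your proposal is correct in all three parts, and for part (i) you are simply filling in what the paper dismisses as a ``boring calculation''. For parts (ii) and (iii), however, the paper takes noticeably shorter routes that are worth knowing.

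For (ii), the paper's fullness argument avoids your direct verification that $\phi(c,d)\dashv\vdash\varphi(c)\wedge(fc=d)$: it observes once and for all that $\Pi(C,\predeq|_\varphi)\cong C$ whenever $\varphi$ is dense, and then the preimage of $\phi$ under the embedding is simply $\Pi\phi$ composed with these isomorphisms. For the essential image, the paper notes that the unit $\eta_A:A\to\Delta\Pi A$ of any cartesian reflector is automatically $j$-dense, so by (i) the representing predicate $\varphi$ is already dense; your restriction to the support $\pi\varphi$ is therefore unnecessary (though harmless). Also, your citation of Lemma~\ref{lem:preds-incl-exists} is off: the fact that subobjects of $\Delta C'$ correspond to predicates in $\fifx_{C'}$ is Lemma~\ref{lem:pullback-sub} (or the specialization of Lemma~\ref{lem:decompo}\ref{lem:decompo-monorepr}).

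For (iii), the paper gives a one-line conceptual argument rather than routing through (ii): $\geopos{\fifx}_1$ is by definition the full subcategory of $\catx[\fifx]$ on subobjects of constant objects, while $j$-separated objects are exactly the subobjects of $j$-sheaves, and sheaves coincide with constant objects. Your approach via dense predicates works, but the paper's identification of both sides as the same replete full subcategory of $\catx[\fifx]$ is more transparent and does not rely on the explicit description of the embedding.
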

\begin{proof}
  \emph{Ad \ref{lem:tc-dense-repr-j}.}
Boring calculation.

\emph{Ad \ref{lem:tc-dense-total}.} To see that the embedding is faithful,
assume that $\varphi\in\fifx_C,\psi\in\fifx_D$ are dense predicates.
Since $\fifx$ is faithful, morphisms of type $\varphi\to\psi$ in $\tot{\fifx}$
can be identified with morphisms $f:C\to D$ in $\catx$ such that
$\varphi(x)\ent\psi(fx)$. 
Take two such maps $f,g:C\to D$ such that the
induced  maps of type
$(C,\predeq|_\varphi)\to(D,\predeq|_\psi)$ that are tracked by $f$ and
$g$ are equal.
Then we have $\varphi c,fc=d\ent_\fifx gc=d$, and applying $\pi$ gives
$fc=d\ent_\catx gc=d$ which implies that $f=g$. 

For fullness, observe that
$\Pi(C,\predeq|_\varphi)$ is
isomorphic to $C$ if $\varphi$ is dense (by construction of $\Pi$ in the proof
of~\ref{lem:delta-left-adj}). Given
$\phi:(C,\predeq|_\varphi)\to(D,\predeq|_\psi)$, a preimage of $\phi$ can be
obtained by composing
 $\Pi\phi$ with these isomorphisms.

It remains to check that the essential image coincides with the separated
objects. Let $A\in\sep_j(\catx[\fifx])$. Then the monomorphism
$\eta_A:A\to\Delta\Pi A\cong (I,\predeq)$ corresponds to a predicate
$\varphi\in\fifx_I$ which is dense in $\fifx$ since $\eta_A$ is dense and by
\ref{lem:tc-dense-repr-j}.

\emph{Ad \ref{lem:tc-dense-asm}.} In
Definition~\ref{def:geopos}, $\geopos{\fifx}_1$ is defined as full subcategory
of $\catx[\fifx]$ on `discrete equivalence relations'; in the posetal case this
means subobjects of constant objects $(I,\predeq)$. $j$-separated objects, on
the other hand, are precisely the subobjects of $j$-sheaves, and the
proposition follows since constant objects coincide with sheaves.
\end{proof}

\subsubsection{Total connectedness and double negation}

\begin{lemma}
Let $\trip:\tot{\trip}\to\tope$ be a tripos on a topos. Then the embedding 
$
    \delta:\sub(\tope)\to\trip
$
preserves $\bot$.
\end{lemma}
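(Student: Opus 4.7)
The plan is to unfold $\delta$ on the bottom element of $\sub(\tope)_I$ and to reduce the claim to showing that the fiber $\trip_0$ is trivial. In a topos, the initial object $0$ is strict initial, so the unique map $!_I\colon 0\to I$ is a monomorphism and represents $\bot$ in $\sub(\tope)_I$. By the definition~\eqref{eq:def-delta} of $\delta$ we therefore have
\[
\delta(\bot_I)\;=\;\exists_{!_I}\top_0 \;\in\; \trip_I,
\]
where $\top_0$ denotes the top of $\trip_0$.

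The next step uses the adjunction $\exists_{!_I}\dashv\,!_I^*$: the predicate $\exists_{!_I}\top_0$ is a bottom element of $\trip_I$ iff $\top_0\leq\,!_I^*\psi$ holds in $\trip_0$ for every $\psi\in\trip_I$, i.e.\ iff $\trip_0$ is trivial (equivalently $\top_0\cong\bot_0$). So the whole task reduces to establishing this triviality, which I would phrase as a little sublemma to be cited.

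For the triviality of $\trip_0$ I would invoke the defining feature of a tripos that has played no role yet, namely the existence of a generic predicate $\sigma\in\trip_\prop$ which classifies every predicate up to isomorphism: given $\varphi\in\trip_I$ there exists $\chi_\varphi\colon I\to\prop$ with $\chi_\varphi^*\sigma\cong\varphi$. Specializing to $I=0$, the only map $0\to\prop$ is $!_\prop$, so every element of $\trip_0$ is isomorphic to $!_\prop^*\sigma$; in particular $\top_0\cong\bot_0$. Combined with the reformulation of the preceding paragraph, this immediately yields $\delta(\bot_I)\cong\bot_I$.

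The only delicate point I anticipate is the invocation of impredicativity: the classifier is only determined up to equivalence of predicates, but this is precisely the relation in which we want the elements of $\trip_0$ to stand, so no circularity arises. Once the triviality of $\trip_0$ is in place, the rest is a one-line application of the adjunction, as above.
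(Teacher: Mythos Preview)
Your proof is correct and follows essentially the same route as the paper: reduce $\delta(\bot_I)=\exists_{!_I}\top_0\leq\bot_I$ via the adjunction $\exists_{!_I}\dashv\,!_I^*$ to the triviality of $\trip_0$, and then use the generic predicate together with the fact that $\tope(0,\prop)$ is a singleton. The paper's proof is just a more compressed version of exactly this argument.
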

\begin{proof}
Let $?:0\to I$ in $\tope$. We have to show that $\exists_?\top \leq\bot$ in
$\trip_I$, 
which is equivalent to $\top\leq\bot$ in $\trip_0$. But this follows from the
fact that the predicates over $0$ can be parametrized by $\tope(0,\prop)$, and
there exists only one such map.
\end{proof}
\begin{lemma}
Let $\trip:\tot{\trip}\to\tope$ be a totally connected tripos on a
topos. Then 
$\delta:\sub(\tope)\to\trip$ preserves implication.
\end{lemma}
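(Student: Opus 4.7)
The plan is to prove both inequalities $\delta(m\imp n)\leq\delta m\imp\delta n$ and $\delta m\imp\delta n\leq\delta(m\imp n)$ in the Heyting algebra $\trip_I$, for $m,n\in\sub(\tope)_I$. The tools are the adjunction $\pi\adj\delta$ together with the reflection property $\pi\delta\cong\id$ (Lemma~\ref{lem:totally-connected-reflection}), the fact that $\delta$ preserves $\wedge$ (Lemma~\ref{lem:preds-incl-exists}), and the fact that $\pi$ preserves finite meets (by definition of totally connected).

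The easy direction goes as follows. In the Heyting algebra $\sub(\tope)_I$ we have the tautology $m\wedge(m\imp n)\leq n$; applying the monotone meet-preserving $\delta$ and using the Heyting adjunction in $\trip_I$ yields
\[
\delta m\wedge\delta(m\imp n)\cong\delta\bigl(m\wedge(m\imp n)\bigr)\leq\delta n,\qquad\text{hence}\qquad\delta(m\imp n)\leq\delta m\imp\delta n.
\]

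For the converse inequality, I plan to transpose across the adjunction $\pi\adj\delta$: showing $\delta m\imp\delta n\leq\delta(m\imp n)$ in $\trip_I$ is equivalent to showing $\pi(\delta m\imp\delta n)\leq m\imp n$ in $\sub(\tope)_I$, which by the Heyting adjunction in $\sub(\tope)_I$ reduces to $\pi(\delta m\imp\delta n)\wedge m\leq n$. Now I compute, using that $\pi$ preserves finite meets and $\pi\delta\cong\id$:
\[
\pi(\delta m\imp\delta n)\wedge m\cong\pi(\delta m\imp\delta n)\wedge\pi\delta m\cong\pi\bigl((\delta m\imp\delta n)\wedge\delta m\bigr)\leq\pi\delta n\cong n,
\]
where the single inequality is obtained by applying the monotone map $\pi$ to the modus-ponens inequality $(\delta m\imp\delta n)\wedge\delta m\leq\delta n$ in $\trip_I$. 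This yields the desired bound.

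The argument is a pure manipulation of adjoints, so no serious obstacle is expected; the only subtle point is that we really do need both the meet-preservation of $\pi$ and the iso $\pi\delta\cong\id$ simultaneously in the central calculation, but both are immediately available from the totally-connected hypothesis and Lemma~\ref{lem:totally-connected-reflection}. Note that the full tripos structure is not essentially used beyond the availability of implication in $\trip_I$ and in $\sub(\tope)_I$.
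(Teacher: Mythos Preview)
Your proof is correct and uses essentially the same ingredients as the paper: the adjunction $\pi\adj\delta$, the reflection isomorphism $\pi\delta\cong\id$, meet-preservation of $\pi$, and the Heyting adjunctions in both fibrations. The only difference is packaging: the paper establishes the isomorphism in one stroke by showing that $\phi\leq\delta(U\imp V)$ iff $\phi\leq\delta U\imp\delta V$ for an arbitrary test predicate $\phi$, via a single chain of biconditionals, whereas you split into two inequalities and handle them separately (your easy direction avoiding $\pi$ entirely). The underlying manipulations are the same.
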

\begin{proof}
Let $U,V\subseteq I$ in $\tope$, $\phi\in\trip_I$. We
have
\begin{align*}
\phi&\leq\delta (U\imp V) & \text{iff}\\
\pi \phi&\leq U\imp V & \text{iff}\\
\pi \phi\wedge U&\leq V & \text{iff}\\
\pi \phi\wedge \pi\delta U&\leq V & \text{iff}\\
\pi( \phi\wedge\delta U)&\leq V & \text{iff}\\
\phi\wedge\delta U&\leq \delta V & \text{iff}\\
\phi&\leq\delta U\imp \delta V \\
\end{align*}
\end{proof}
The preceding proof works in general for strong monoidal reflections
between monoidal closed categories. The lemma can also be seen as an analogue of
the
fact that for locally connected geometric morphisms
$\Delta\adj\Gamma:\tope\to\tops$ between toposes, the fibered functor
$\Delta:\fund{\tops}\to\gl_\Delta(\tope)$ preserves fiberwise cartesian closed
structure~\cite[Proposition~C3.3.1]{elephant2} (and actually the lemma can also
be proved in the same way, without relying on the fact that $\pi$ preserves
finite meets).
\begin{corollary}
    Let $\trip:\tot{\trip}\to\tops$ be a totally connected tripos on an
arbitrary
topos. Then
$\delsep$ preserves negation.
\end{corollary}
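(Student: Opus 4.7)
The proof plan is essentially immediate from the two preceding lemmas, so I would structure it as a single short computation. Recall that negation in a Heyting algebra (or in a Heyting-category like a tripos or a subobject fibration of a topos) is defined by $\neg \varphi \defequi \varphi \imp \bot$. Therefore preservation of negation by $\delta$ should follow formally from preservation of $\imp$ and preservation of $\bot$.

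More concretely, for $U \in \sub(\tope)_I$, I would compute
\[
 \delta(\neg U) \;=\; \delta(U \imp \bot) \;\cong\; \delta U \imp \delta \bot \;\cong\; \delta U \imp \bot \;=\; \neg(\delta U),
\]
where the first isomorphism uses the preceding lemma (preservation of implication, which relies on total connectedness), and the second uses the lemma before it (preservation of $\bot$, which only uses that $\trip$ is a tripos on a topos). Note that $\sub(\tope)$ is a Heyting algebra object because $\tope$ is a topos, so the expression $U \imp \bot$ makes sense on the left-hand side, and $\trip$ is Heyting because it is a tripos, so $\delta U \imp \bot$ makes sense on the right-hand side.

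There is no obstacle here: the corollary is purely a formal consequence of the two lemmas, and I do not see any subtlety that would require extra work. The only thing to double-check is the macro: $\delsep$ expands to $\delta : \sub(\tope) \to \trip$, which matches the hypotheses of both preceding lemmas (total connectedness of $\trip$ on the topos $\tops = \tope$ in context). The whole proof fits in two or three lines.
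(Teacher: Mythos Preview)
Your proposal is correct and follows exactly the same approach as the paper: the paper's proof is the single sentence ``This follows from the preservation of falsity and implication,'' and your computation $\delta(\neg U)=\delta(U\imp\bot)\cong\delta U\imp\delta\bot\cong\delta U\imp\bot=\neg(\delta U)$ is precisely the unpacking of that sentence.
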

\begin{proof}
 This follows from the preservation of falsity and implication.
\end{proof}
Let us recapitulate. If $\trip$ is a totally connected regular tripos, then
$\delta:\sub(\tope)\to\trip$ preserves $\exists, \wedge, \top$ for general
reasons; furthermore it preserves $\forall$ since it is a right adjoint, and we
just showed that it also preserves $\imp$ and $\bot$. This means that the only
connective that is \emph{not} preserved is disjunction $\vee$. Similarly,
$\pi:\trip\to\sub(\tope)$ preserves $\wedge, \top$ by assumption, and $\exists,
\bot, \vee$ since it is a left adjoint.

We can now show that the closure operation $\delta\pi$ on $\trip$ coincides
with double negation whenever the base is boolean.
\begin{theorem}
    Let $\trip:\tot{\trip}\to\tope$ be a totally connected tripos on a
\emph{boolean}
topos $\tope$.
Then for any $A\in\tope$ and $\varphi\in\trip_A$, we have
$\delta\pi\varphi\cong\neg\neg\varphi$.
\end{theorem}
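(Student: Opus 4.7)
The plan is to prove the isomorphism $\delta\pi\varphi\cong\neg\neg\varphi$ in the Heyting algebra $\trip_A$ by establishing the two inequalities separately. I expect that booleanness of $\tope$ will be needed only for one of the two directions, so I will isolate its use.

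First I would show $\delta\pi\varphi\leq\neg\neg\varphi$, which should go through even without booleanness. Starting from the tautology $\varphi\wedge\neg\varphi\leq\bot$ in $\trip_A$, I apply $\pi$: since $\pi$ preserves $\wedge$ (by total connectedness) and preserves $\bot$ (being a left adjoint), this gives $\pi\varphi\wedge\pi\neg\varphi\leq\bot$ in $\sub(\tope)_A$, equivalently $\pi\neg\varphi\leq\neg\pi\varphi$. By the adjunction $\pi\dashv\delta$ this transposes to $\neg\varphi\leq\delta\neg\pi\varphi$, and since $\delta$ preserves negation (by the corollary just proved) the right-hand side equals $\neg\delta\pi\varphi$. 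Conjoining with $\delta\pi\varphi$ then yields $\delta\pi\varphi\wedge\neg\varphi\leq\bot$, which in any Heyting algebra is equivalent to $\delta\pi\varphi\leq\neg\neg\varphi$.

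Next I would establish the converse $\neg\neg\varphi\leq\delta\pi\varphi$, and this is where booleanness enters. The idea is to show that $\delta\pi\varphi$ is already $\neg\neg$-stable, whence applying the monotone operator $\neg\neg$ to the reflection unit $\varphi\leq\delta\pi\varphi$ gives the inequality. The stability $\neg\neg\delta\pi\varphi\cong\delta\pi\varphi$ reduces, via preservation of $\neg$ by $\delta$, to $\delta\neg\neg\pi\varphi\cong\delta\pi\varphi$, which follows from $\neg\neg U\cong U$ in $\sub(\tope)$ with $U:=\pi\varphi$, i.e., from the assumption that $\tope$ is boolean.

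The main (modest) obstacle is the first direction: one has to spot the trick of feeding the Heyting tautology $\varphi\wedge\neg\varphi\leq\bot$ into $\pi$ and then transposing across $\pi\dashv\delta$, after which the pieces assemble mechanically. The second direction is then a near-formal consequence of the preservation properties collected at the start of the subsection, combined with booleanness applied in the base.
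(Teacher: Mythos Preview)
Your proposal is correct and follows essentially the same argument as the paper: you establish the two inequalities separately, using $\pi$'s preservation of $\wedge,\bot$ and $\delta$'s preservation of $\neg$ for $\delta\pi\varphi\leq\neg\neg\varphi$, and booleanness together with the unit $\varphi\leq\delta\pi\varphi$ for the converse. The only difference is that you treat the two directions in the opposite order from the paper and make the isolation of booleanness slightly more explicit.
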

\begin{proof}
Let $\varphi\in\trip_A$ for $A\in\tope$.
The first implication is shown as follows
\[
\def\fCenter{ \leq }
\AX$\varphi\fCenter\delta\pi\varphi$
\UI$\neg\neg\varphi\fCenter\neg\neg\delta\pi\varphi$
\AX$\neg\neg\pi\varphi\fCenter\pi\varphi$
\UI$\delta\neg\neg\pi\varphi\fCenter\delta\pi\varphi$
\UI$\neg\neg\delta\pi\varphi\fCenter\delta\pi\varphi$
\BI$\neg\neg\varphi\fCenter\delta\pi\varphi$
\DisplayProof,
\]
and here is the proof of the second implication
\[
\def\fCenter{ \leq }
\AX$\neg\varphi\wedge\varphi\fCenter\bot$
\UI$\pi\neg\varphi\wedge\pi\varphi\fCenter\bot$
\UI$\pi\neg\varphi\fCenter\neg\pi\varphi$
\UI$\neg\varphi\fCenter\delta\neg\pi\varphi$
\UI$\neg\varphi\fCenter\neg\delta\pi\varphi$
\UI$\neg\varphi\wedge\delta\pi\varphi\fCenter\bot$
\UI$\delta\pi\varphi\fCenter\neg\neg\varphi$
\DP.
\]
\end{proof}
\begin{corollary}
     Let $\trip:\tot{\trip}\to\tope$ be a totally connected tripos on a
\emph{boolean}
topos $\tope$. Then we have
\[
 j(m)\cong\neg\neg m\qquad\text{for any }A\in\tope[\trip]\text{ and
}m\in\sub(\tope[\trip])_A,
\]
where $j$ is the universal closure operation on $\sub(\tope[\trip])$
corresponding to the reflection $\Pi\adj\Delta$.
\end{corollary}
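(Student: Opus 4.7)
The plan is to reduce the statement to the preceding theorem by using the explicit description of $j$ from Lemma~\ref{lem:tc-dense}-\ref{lem:tc-dense-repr-j} and computing pseudo-complements on the representing side.

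First I would reduce to the case of a subobject of a constant-like object. Fix $A=(C,\rho)\in\tope[\trip]$ and a subobject $m\in\sub(\tope[\trip])_A$, and use Lemma~\ref{lem:decompo}-\ref{lem:decompo-monorepr} to represent $m$ by a strict predicate $\varphi\in\trip_C$ with respect to $\rho$ (so $\varphi(c)\ent\rho(c)$ and $\varphi(c),\rho(c,c')\ent\varphi(c')$). By Lemma~\ref{lem:tc-dense}-\ref{lem:tc-dense-repr-j}, the closure $j(m)$ is represented by the strict predicate $\rho\wedge\delta\pi\varphi$. By the preceding theorem, $\delta\pi\varphi\cong\neg\neg\varphi$ in $\trip_C$, so $j(m)$ is represented by $\rho\wedge\neg\neg\varphi$, with the negation computed inside $\trip$.

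Next I would compute $\neg\neg m$ in the Heyting algebra $\sub(\tope[\trip])_A$ directly, in the same representation. The pseudo-complement $\neg m$ is the largest subobject $\chi$ of $(C,\rho)$ disjoint from $m$; spelling this out, $\chi$ must correspond to a strict $\psi$ with $\varphi\wedge\psi\leq\bot$ in $\trip_C$, so $\psi\leq\neg\varphi$. The predicate $\rho\wedge\neg\varphi$ is automatically strict (using symmetry of $\rho$ and strictness of $\varphi$, one gets $\neg\varphi(c)\wedge\rho(c,c')\ent\neg\varphi(c')$), hence $\neg m$ is represented by $\rho\wedge\neg\varphi$. Iterating, $\neg\neg m$ is represented by $\rho\wedge\neg(\rho\wedge\neg\varphi)$, and using the Heyting algebra identity $\neg(a\wedge b)=a\imp\neg b$ together with $\rho\wedge(\rho\imp x)=\rho\wedge x$, this simplifies to $\rho\wedge\neg\neg\varphi$.

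Comparing the two computations, both $j(m)$ and $\neg\neg m$ are represented by the same strict predicate $\rho\wedge\neg\neg\varphi$, whence $j(m)\cong\neg\neg m$. The only step that requires some care is the description of pseudo-complementation on strict predicates; this is where the verification that $\rho\wedge\neg\varphi$ remains strict under $\rho$ is essential, and relies on the fact that $\rho$ is a partial equivalence relation so that strictness is symmetric in the two arguments. Everything else is then a direct combination of Lemma~\ref{lem:tc-dense}-\ref{lem:tc-dense-repr-j} with the identity $\delta\pi\cong\neg\neg$ established in the preceding theorem.
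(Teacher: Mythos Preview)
Your proof is correct and follows essentially the same approach as the paper: both reduce to the representation of subobjects by strict predicates, invoke Lemma~\ref{lem:tc-dense}-\ref{lem:tc-dense-repr-j} for $j$, and appeal to the preceding theorem for $\delta\pi\cong\neg\neg$. The only difference is that the paper simply asserts the formula $\varphi\mapsto\rho\wedge\neg\neg\varphi$ for $\neg\neg$ on $\sub(\tope[\trip])$, whereas you carefully derive it by computing pseudo-complements on strict predicates and simplifying via Heyting identities.
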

\begin{proof}
 Assume that $A\in\tope[\trip]$ given by $(I,\rho)$. 
Relative to the representation of predicates in $\sub(\tope[\trip])_{(I,\rho)}$
by predicates in $\trip_I$ which are strict with respect to $\rho$, we know by
Lemma~\ref{lem:tc-dense}-\ref{lem:tc-dense-repr-j} that $j$ is given by
\[\varphi\mapsto \ilbracks{i\csep (\delta\pi\varphi)( i)\wedge \rho(i)}.\] In
the same way, $\neg\neg$ on $\sub(\tope[\trip])$ can be expressed in terms of
$\neg\neg$ on $\trip$ by 
\[\varphi\mapsto \ilbracks{i\csep (\neg\neg\varphi)(
i)\wedge \rho(i)}.\]
The claim then follows from the theorem.
 \end{proof}

\chapter{Uniform preorders}\label{chap:ufp}

Uniform preorders are representations of fibered preorders. More precisely, the
category $\catufp$ of uniform preorders can be identified with a full
subcategory of the locally ordered category $\catpfib(\catset)$ of fibered
preorders on $\catset$. The definition of $\catufp$ is essentially a combination
of ideas
from Hofstra's~\cite{hofstra2006all} work on \emph{basic combinatory objects}\index{basic combinatory object}
(BCOs)\index{BCO}, and Longley's~\cite{longley2011computability} work on
\emph{computability structures}\index{computability structure} (C-structures)\index{C-structure}.

Hofstra and Longley both introduce locally ordered categories of combinatory
structures as frameworks for an abstract study of concepts from realizability
(most importantly pcas, ordered pcas and typed pcas). $\catufp$ can be
viewed as having Longley's objects and Hofstra's morphisms. We will see
later that we can in fact recover Hofstra's BCOs as a full subcategory, and
Longley's C-structures as a Kleisli-category of $\catufp$.

Retrospectively, Hofstra's and Longley's approach can be contrasted by saying
that Longley works with relations, and Hofstra with (partial) functions. Our
approach is to take relations as structuring data of the objects, and functions
as morphisms. This choice is justified by the fact that $\catufp$ is equivalent
to a subcategory of $\catpfib(\catset)$ that can be characterized in a concrete
way (see Lemma~\ref{lem:ufp-embed}).

\medskip

We compose relations like functions and denote their composition by $\circ$ or
by juxtaposition, i.e.\ if $(x,y)\in r$ and $(y,z)\in s$, then $(x,z)\in s
r$. In particular, we allow composition of relations with functions. For a
relation $r$, $r^\circ$ denotes its opposite relation.

\section{Definitions}

\begin{definition}\label{def:uniform-preorder}
  \begin{enumerate}
  \item A \emph{uniform preorder}\index{uniform preorder} (or
C-structure)\index{C-structure} $\upa$ is a triple $\upa=(I,
A, R)$,
    where $A=(A_i)_{i\in I}$ is a family of sets, and $R=(R_{ij})_{i,j\in I}$,
    $R_{ij}\subseteq P(A_i\times A_j)$ is a family of sets of relations,
    subject to the following axioms.
    \begin{enumerate}
    \item $i,j\in I,\;r\in R_{ij},\; s\subseteq r\implies s\in R_{ij}$
    \item $i\in I\implies \id\in R_{ii}$
    \item $i,j,k\in I,\;r\in R_{ij},\;s\in R_{jk}\implies sr\in R_{ij}$
    \end{enumerate}
  \item A \emph{monotone map}\index{monotone map!of uniform preorders} between uniform preorders $\upiar$, $\upjbs$ is
    a pair $(u:I\to K,(f_i:A_i\to B_{ui})_{i\in I})$ such that $r\in R_{ij}$
    implies $f_jrf_i^\op\in S_{ui,uj}$\footnote{$f_jrf_i^\op$ is just another
      way of writing $(f_i\times f_j)(r)$. We chose this representation since it
      is most natural in some calculations in Section~\ref{suse-calc-dist}.}.
\item
    Given $(u,f),(v,g):\upiar\to\upjbs$, we define $(u,f)\leq(v,g)$ iff for all
    $i\in I$ we have $\{(f_ia,g_ia)\msep a\in A_i\}\in S_{ui,vi}$.
  \end{enumerate}
Uniform preorders and monotone maps form an order-enriched category
$\catufp$.
\end{definition}
We call the set $I$ of a uniform preorder $\upiar$ its set of
\emph{sorts}\index{sort!of uniform preorder}. If $I$ has exactly one element,
we simply write $\brar$ for the uniform preorder. We sometimes refer to
one-sorted uniform preorders as \emph{basic relational objects}\index{basic
relational object} (BROs)\index{BRO}.

It is often convenient to describe a uniform preorder by giving only a
generating system of relations. To this end, we introduce the concept of
\emph{base}.
\begin{definition}\label{def:base-ufp}
  Let $A=(A_i)_{i\in I}$ be a family of sets. A \emph{base} for a uniform
  preorder \index{base!of a uniform preorder} structure on $A$ is a family $(R_{ij})_{i,j\in I}$ with
  $(R_{ij}\subseteq P(A_i\times A_j))$ of sets of binary relations such that
  \begin{itemize}
  \item $i\in I\implies \exists r\in R_{ii}\qdot \id_{A_i}\subseteq r$
  \item $i,j,k\in I,\; r\in R_{ij},\; s\in R_{jk}\implies \exists t\in
    R_{ik}\qdot sr\subseteq t$
  \end{itemize}
  Given such a base, the family $\adcl R=(\adcl R_{ij})_{ij}$ with
  $\adcl R_{ij} = \{r\subseteq A_i\times A_j\msep \exists s\in R_{ij}\qdot
  r\subseteq s\}$ is a uniform preorder structure on $A$, and we call
  $(I,A,\adcl R)$ \emph{the uniform preorder generated by $(I,A,R)$}.
\end{definition}
Longley defines C-structures directly in terms of bases, without imposing the
downward closedness condition. This has the advantage that it generalizes
to predicative contexts, since predicatively it makes sense to talk about
families of subsets, but not about downward closed such families.
Products (Lemma~\ref{lem-closure-uord}-\ref{lem-closure-uord-prod}) are also
most easily defined in terms of bases, since we can avoid an additional
downward closure operation. 
In the reconstruction of uniform preorders from fibered preorders in the proof
of~\ref{lem:reconstuct-ufp}, on the other hand, the downward closure condition
comes for free.

\begin{examples}\label{ex:uords}
 \begin{enumerate}
  \item \label{ex:uords-preords}
 Ordinary preorders can be viewed as uniform preorders. More precisely, given a
preorder $(D,\leq)$, the singleton set $\{\leq\}$ is a base of a
uniform preorder structure
on $D$ in the sense of Definition~\ref{def:base-ufp} (this follows directly
from reflexivity and transitivity), whence $(D,\adcl\{\leq\})$
is a uniform preorder. Moreover, the assignment
$(D,\leq)\mapsto(D,\adcl\{\leq\})$ extends
to a 2-functor of type
\begin{equation}\label{eq:preord-to-uord}
 \catord\to\catuord
\end{equation}
which is a {local equivalence}.
\item\label{ex:uords-pca}
\begin{enumerate}
\item\label{ex:uords-pca-pca}
 Given a pca $\pcaa$, the partial functions of the form $(a\appca\,
-):\pcaa\pto\pcaa$ for $a\in \pcaa$ form a base of a uniform preorder structure
on $\pcaa$
(the closure under composition follows from functional completeness).
We denote the generated uniform preorder by $(\pcaa, R(\pcaa))$.
\item\label{ex:uords-pca-t}
Given a typed pca $\pcaia$, the partial functions of the
form $(a\appca\,
-):\pcaa_i\pto\pcaa_j$ for $a\in \pcaa_{i\imp j}$ form a base of a
uniform preorder structure on $\pcaia$. We
denote the generated uniform preorder by
$\ufpcaia$.
\item\label{ex:uords-pca-i}
Given an \emph{inclusion} $\pcaas\subseteq\pcaa$ of pcas, the partial functions
of the form $(a\appca\,
-):\pcaa\pto\pcaa$ for $a\in \pcaas$ form a base of a uniform preorder structure
on $\pcaa$.
We denote the generated uniform preorder by $(\pcaa,R(\pcaas))$.
\item\label{ex:rel-typed-pca}\label{ex:uords-tpca-it}
In the same way, an inclusion $(I,(\pcaasi\subseteq\pcaai)_{i\in I})$ of
\emph{typed} pcas induces a uniform preorder 
$\ufpcaias$
\end{enumerate}
\item\label{ex:uords-prim}
 Denote by $\prim$ the set of unary primitive recursive functions. Since $\prim$
is closed under composition, it is a base of a uniform preorder structure on
$\N$. We call
the induced uniform preorder $(\N,\adcl\prim)$ the \emph{primitive recursive
uniform preorder}\index{primitive recursive uniform preorder}.

We will see later that $(\N,\adcl\prim)$ has finite meets
(Section~\ref{sec:finitely-complete-ufps}), but is not relationally complete
(Section~\ref{sec:reational-completeness}). We can perform a construction
analogous to the construction of the effective topos using only primitive
recursive functions, but the resulting category will only be a
pretopos\footnote{More precisely, it seems to be a \emph{list-arithmetic
pretopos} in the sense of~\cite{maietti2010joyal}.}.
\end{enumerate}
\end{examples}

\section{Uniform preorders and fibered preorders}

\begin{definition}
\label{def:ufp-fib}
  For any uniform preorder $\upa=\upiar$, we define a fibered preorder
  \[\ufam(\upa):\Ufam(\upa)\to\catset\]
 as follows.
  \begin{itemize}
  \item a predicate on a set $M$ is a pair $(i\in I,\phi:M\to A_i)$
  \item given $(i,\phi),(j,\gamma)\in\ufam(\upa)_M$, we define 
\[(i,\phi)\leq(j,\gamma)\defequi{}\{(\phi m,\gamma m)\msep m\in M\}\in R_{ij}\]
\item reindexing is given by precomposition
  \end{itemize}
\end{definition}
We will often omit the indices when talking about predicates in
$\ufam(\upa)$ and just write $\phi$ instead of $(i,\phi)$.

\begin{lemma}\label{lem:ufp-embed}
  The assignment $\upa\mapsto\ufam(\upa)$ gives rise to a 2-functor
  \[\ufam(-):\catufp\to\catpfib(\catset)\]
 into posetal fibrations on $\catset$
  which is a local equivalence.
This 2-functor fits into a commutative (up to isomorphism) triangle
\[
\vcenter{\xymatrix{
\catord \ar[r]\ar[dr]_{\famf(-)} & \catuord\ar[d]^{\ufam(-)} \\
\emar[ru]|(.7)\cong& \catpfib(\catset) }}
\]
of local equivalences, 
where the horizontal arrow is defined in
Example~\ref{ex:uords}-\ref{ex:uords-preords}, and the diagonal arrow is the
posetal version of the family construction for categories
from Definition~\ref{def:fam-fibration}.

\end{lemma}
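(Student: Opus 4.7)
The plan is to prove that $\ufam(-)$ is well-defined as a 2-functor, locally essentially surjective, locally full, locally faithful in a 2-cell sense (order-reflecting), and that composition with $\catord \to \catuord$ yields $\famf(-)$ up to isomorphism.

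First, I would verify that $\ufam(-)$ is a genuine 2-functor. A monotone map $(u,f):\upiar\to\upjbs$ acts on predicates by $(i,\phi:M\to A_i)\mapsto(u(i),f_i\circ\phi)$, and the axiom $f_jrf_i^{\op}\in S_{u(i),u(j)}$ for $r\in R_{ij}$ translates directly into monotonicity of the induced fibered map (apply it to $r=\{(\phi(m),\gamma(m)):m\in M\}$). Functoriality is a straightforward check, and the 2-cell ordering is preserved because downward closure of the $S_{ij}$ absorbs the image under precomposition.

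The central trick for local essential surjectivity and local fullness is the observation that for every sort $i\in I$, the predicate $\xi_i=(i,\id_{A_i})\in\ufam(\upa)_{A_i}$ is \emph{generic}: any $(i,\phi:M\to A_i)$ equals $\phi^*\xi_i$. Hence, given a fibered monotone map $F:\ufam(\upa)\to\ufam(\upb)$, writing $F(\xi_i)=(u(i),f_i)$ for a uniquely determined $u(i)\in K$ and $f_i:A_i\to B_{u(i)}$, cartesian-functoriality of $F$ forces $F(i,\phi)=\phi^*F(\xi_i)=(u(i),f_i\circ\phi)$. So $F$ is completely determined by the data $(u,(f_i))$. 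To see that this data is a morphism in $\catufp$, given $r\in R_{ij}$ view $r$ itself as a set and consider the two predicates $(i,\pi_1)$ and $(j,\pi_2)$ on it; by construction $(i,\pi_1)\leq(j,\pi_2)$ in $\ufam(\upa)_r$, so applying $F$ yields $\{(f_i\pi_1(x),f_j\pi_2(x)):x\in r\}=f_jrf_i^{\op}\in S_{u(i),u(j)}$ as required. This shows both that every $F$ comes from some $(u,f)$ and that $\ufam(-)$ is full on 1-cells.

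For local faithfulness on 2-cells (equivalently, for the ordering to be reflected), given $(u,f),(v,g):\upa\to\upb$, the inequality $\ufam(u,f)\leq\ufam(v,g)$ evaluated at the generic $\xi_i$ reads $\{(f_ia,g_ia):a\in A_i\}\in S_{u(i),v(i)}$, which is precisely $(u,f)\leq(v,g)$; conversely, evaluating at arbitrary $(i,\phi)$ yields an image of this set under $\phi\times\phi$, which lies in $S_{u(i),v(i)}$ by downward closure. Finally, commutativity of the triangle is almost definitional: for a preorder $(D,\leq)$ the uniform preorder $(D,\adcl\{\leq\})$ is one-sorted, so predicates on $M$ are just maps $\phi:M\to D$, and $\phi\leq\psi$ in $\ufam(D,\adcl\{\leq\})$ iff $\{(\phi(m),\psi(m)):m\}\subseteq{\leq}$ iff $\phi(m)\leq\psi(m)$ pointwise, which is exactly the fiberwise ordering in $\famf(D,\leq)$. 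No step seems truly hard; the only subtlety is that the "generic predicate" argument relies crucially on the fibration being posetal (so that $F$ is determined on each sort by a single reindexing datum), and on downward closure of the $R_{ij}$, both of which are built into the definitions.
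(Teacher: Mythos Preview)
Your proposal is correct and follows essentially the same approach as the paper: both arguments hinge on the generic predicates $\xi_i=(i,\id_{A_i})$, use the projections $\pi_1,\pi_2$ out of $r\in R_{ij}$ to verify the monotonicity condition on $(u,f)$, and reflect the ordering by evaluating at the $\xi_i$. You spell out the commutativity of the triangle in slightly more detail than the paper (which merely calls it ``straightforward''), but the substance is identical.
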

\begin{proof}
  It is easy to see that $\ufam(-)$ is functorial. To see that it is locally
  order-reflecting, consider monotone maps $(u,f),(v,g):\upiar\to\upjbs$ such
  that $\ufam(u,f)\leq\ufam(v,g)$. Then we have in particular that for any
  $i\in I$, $\ufam(u,f)(\id_{A_i})\leq\ufam(v,g)(\id_{A_i})$, which
  implies $(u,f)\leq (u,g)$.

  To see that $\ufam(-)$ is essentially full, let
  $ F:\ufam\upiar\to\ufam\upjbs$.  For each $i\in I$,
  $ F(i,\id_{A_i})$ is a pair of an element $u(i)\in J$ and a
  $u(i)$-valued predicate on $A_i$, i.e.\ a map $f_i:A_i\to B_{ui}$. We claim
  that the assignment $i\mapsto u(i)$ together with the maps $(f_i)_{i\in I}$
  gives the desired monotone map. Indeed, for $h:M\to A_i$, we have
  $ F(i,h)= F((i,\id_{A_i})\circ h)\cong F(i,\id_{A_i})\circ
  h=(ui,f_i)\circ h$, thus the action of $ F$ is given by postcomposition
  with $ F(\id_{A_i})$.

  It remains to show that $(u,f)$ is monotone. Let $r\in R_{ii'}$. Then we
  have projection mappings $\pil:r\to A_i$, $\pir:r\to A_{i'}$ such that
  $\pil\leq\pir$ as predicates in $\ufam\upiar$. By monotonicity of $ F$ we
  deduce that $f_i\pil\cong F(\pil)\leq F(\pir)\cong f_{i'}\pir$, which
  means that $\{(f_ia,f_{i'}b)\msep (a,b)\in r\}\in S_{ui,ui'}$ as required.

The commutativity of the triangle of 2-functors is straightforward.
\end{proof}
\begin{example}
Given a typed pca $\pcaia$, the fibration $\ufam\ufpcaia$ obtained by
applying the family construction to the uniform preorder $\ufpcaia$ defined in
Example~\ref{ex:uords}-\ref{ex:uords-pca-t} is \emph{precisely} the fibration
$\ufam\pcaia$ from Definition~\ref{def:fam-pca}.

In the same way, for an untyped pca $\pcaa$, $\ufam\ufpcaa$
is equal to
$\ufam(\pcaa)$.
\end{example}

\begin{definition}\label{def:gen-fam-pred}
  Let $\ffrma:\tot{\ffrma}\to\catset$ be a fibered preorder. We way that a
  family $(\iota_i\in\ffrma_{A_i})_{i\in I}$ is a \emph{generic family of
    predicates}\index{generic family of predicates}, if for every set $M$ and
every predicate $\phi\in \ffrma_M$ there
  exists $i\in I$ and $f:M\to A_i$ such that $\phi\cong f^*\iota_i$.

If a generic family comprises exactly one predicate, we call it a \emph{generic
predicate}\index{generic predicate}.
\end{definition}

\begin{lemma}\label{lem:reconstuct-ufp}
  A fibered preorder $\fpa$ can up to equivalence be represented by a uniform
  preorder iff it has a generic family $(\iota_i\in\ffrma_{A_i})_{i\in I}$ of
predicates and is a pre-stack with
  respect to the regular topology, which means that $e^*\phi\leq e^*\psi$
implies
  $\phi\leq \psi$ for surjective $e:J\eepi I$\footnote{
The pre-stack condition is redundant in the presence of choice.
}.
\end{lemma}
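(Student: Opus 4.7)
The proof splits into two directions connected by a concrete construction of a uniform preorder from a fibered preorder satisfying the stated conditions. The forward direction is essentially by inspection of Definition~\ref{def:ufp-fib}: given $\upa = \upiar$, the family $(\id_{A_i})_{i\in I}$ viewed as predicates $(i,\id_{A_i})\in\ufam(\upa)_{A_i}$ is generic, since reindexing in $\ufam(\upa)$ is precomposition, so every predicate $(i,\phi\colon M\to A_i)$ equals $\phi^*(i,\id_{A_i})$. For the pre-stack condition, given a surjection $e\colon J\eepi M$ and predicates $(i,\phi),(j,\psi)\in\ufam(\upa)_M$ with $(i,\phi e)\leq(j,\psi e)$, one observes that $\{(\phi e(k),\psi e(k))\mid k\in J\}$ coincides with $\{(\phi(m),\psi(m))\mid m\in M\}$ precisely because $e$ is surjective, giving $(i,\phi)\leq(j,\psi)$.

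For the converse, suppose $\ffrma\colon\tot{\ffrma}\to\catset$ is a pre-stack with generic family $(\iota_i\in\ffrma_{A_i})_{i\in I}$. Define a uniform preorder $\upa=(I,(A_i)_i,R)$ by declaring
\[
r\in R_{ij}\quad\defequi\quad (\pi_1^r)^*\iota_i\leq (\pi_2^r)^*\iota_j\ \text{in}\ \ffrma_r,
\]
where $\pi_1^r,\pi_2^r$ are the projections of $r\subseteq A_i\times A_j$. Closure under taking subrelations is immediate from functoriality of reindexing, and reflexivity holds because both projections of $\id_{A_i}$ equal $\id_{A_i}$. The essential point is transitivity: given $r\in R_{ij}$ and $s\in R_{jk}$, form the pullback $P=\{(a,b,c)\mid (a,b)\in r,\,(b,c)\in s\}$ with its canonical maps to $r$, $s$, and $sr$. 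Chasing inequalities through $r$ and $s$ yields the desired inequality between $(\pi_1^{sr})^*\iota_i$ and $(\pi_2^{sr})^*\iota_k$ after reindexing along the projection $P\to sr$; since this projection is surjective by definition of relational composition, the pre-stack property lets the inequality descend to $sr$.

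Finally, to establish the equivalence $\ufam(\upa)\simeq\ffrma$, define a fibered monotone map sending $(i,\phi\colon M\to A_i)$ to $\phi^*\iota_i$. Essential surjectivity on each fiber is exactly the defining property of the generic family, so the crux lies in showing this map is an order-equivalence fiberwise. Monotonicity follows by applying $\langle\phi,\psi\rangle^*$ to the inequality witnessing $\{(\phi(m),\psi(m))\mid m\in M\}\in R_{ij}$, using that $\pi_1^r\circ\langle\phi,\psi\rangle=\phi$ and $\pi_2^r\circ\langle\phi,\psi\rangle=\psi$. Conversely, if $\phi^*\iota_i\leq\psi^*\iota_j$, the map $\langle\phi,\psi\rangle\colon M\eepi r$ is surjective onto $r=\{(\phi(m),\psi(m))\mid m\in M\}$, and applying the pre-stack property once more lets the inequality ascend from $M$ to $r$, giving $r\in R_{ij}$.

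\textbf{Main obstacle.} Both transitivity of $R$ and the order-reflection step of the equivalence rely crucially on descent along a surjection onto a relation, which is exactly where the pre-stack hypothesis enters; without it the construction of $R$ would fail to be closed under relational composition, and inequalities in $\ffrma$ would not in general reflect back along the canonical surjections. Everything else is bookkeeping.
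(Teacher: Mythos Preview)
Your proof is correct and follows essentially the same approach as the paper: the same generic family $(\id_{A_i})_i$ in the forward direction, the same definition of $R_{ij}$ via projections of relations in the converse, and the same use of the surjection $\langle\phi,\psi\rangle\colon M\eepi r$ together with the pre-stack property to establish the order-biconditional. You spell out more than the paper does---in particular the verification of transitivity of $R$ (which, as you correctly observe, also needs the pre-stack property via the surjection $P\eepi sr$) and the explicit check that $\ufam(\upa)$ is a pre-stack---whereas the paper leaves these implicit and only highlights the epi--mono factorization of $\langle f,g\rangle$ for the final equivalence.
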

\begin{proof}
Given a uniform preorder $\upiar$, a generic family of predicates for
$\ufam\upiar$ is given by the family of identity maps $(\id_{A_i})_{i\in I}$.

Conversely, let $\fpa$ be a 
  posetal pre-stack with generic family of predicates $\iota_i\in P_{A_i}$. We
  define a uniform preorder structure on the family $(A_i)_i$ by setting
  $R_{ij}=\{r\subseteq A_i\times
  A_j\msep\pil^*\iota_i\leq\pir^*\iota_j\stext{in}\fpa_r\}$, where $\pil:r\to
A_i,\pir:r\to A_j$ are the projections. To see that
  $\ufam\upiar\simeq \fifa$, it remains to show that for $f:M\to A_i$, $g:M\to
  A_j$ we have $gf^\circ\in R_{ij}$ iff $f^*\iota_i\ent g^*\iota_j$. This can
  be seen by considering the epi-mono factorization $M\eepi
  gf^\circ\hookrightarrow A_i\times A_j$ of $\langle f,g\rangle$ and making use
  of the fact that $\fpa$ is a pre-stack.
\end{proof}
The preceding lemma implies in particular that any regular tripos can be
represented by a uniform preorder.

\section{Functional uniform preorders and modesty}\label{sec:functional-ufps}

\begin{definition}\label{def:functional-ufp}
  We call a uniform preorder $\upiar$ \emph{functional}\index{functional uniform preorder}\index{uniform preorder!functional} if all relations $r\in
  R_{ij}$ ($i,j\in I$) are functional.
\end{definition}
A uniform preorder given by a base (Definition~\ref{def:base-ufp}) is of course
functional whenever all relations in the base are functional. In particular,
the uniform preorders induced by pcas
(Example~\ref{ex:uords}-\ref{ex:uords-pca}) and the primitive recursive uniform
preorder (Example~\ref{ex:uords}-\ref{ex:uords-prim}) are all
functional.

Functionality is an `evil' property, in that it is not closed under equivalence
in the locally ordered category $\catuord$. There is, however, a stronger
condition which is closed under equivalence.
\begin{lemma}
Let $(\bbtwo, \adcl\{\leq\})$ be the uniform preorder associated to the
preorder $\bbtwo=\{0\leq 1\}$
, and let $\brar$ be a uniform preorder. Then every monotone map
$f:(\bbtwo, \adcl\{\leq\})\to(B,S)$ factors (up to isomorphism\footnote{The `up
to isomorphism' is only necessary to make the property stable under equivalence
as promised.}) through the terminal uniform preorder.
\end{lemma}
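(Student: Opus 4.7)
The plan is to realise $f$ up to isomorphism in $\catuord$ as a composite $g\circ{!}$, where $!\colon(\bbtwo,\adcl\{\leq\})\to\mathbf{1}$ is the unique monotone map to the terminal uniform preorder and $g\colon\mathbf{1}\to(B,S)$ sends the unique element to a well-chosen $b\in B$. The first task is to produce $b$; the second is to verify the two 2-cells witnessing $f\cong g\circ{!}$.

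First I would apply the monotonicity condition of Definition~\ref{def:uniform-preorder} to the generating relation $\leq\in\adcl\{\leq\}$ on $\bbtwo$. This immediately yields the image relation
\[
I_f \;:=\; \{(f(0),f(0)),(f(0),f(1)),(f(1),f(1))\}\;\in\;S.
\]
By downward closure of $S$ (axiom~(a) of Definition~\ref{def:uniform-preorder}), every subset of $I_f$ is again in $S$; in particular the singletons $\{(f(i),f(i))\}$ are in $S$ for $i=0,1$, so each of $b=f(0)$ and $b=f(1)$ determines a bona fide monotone map $g\colon\mathbf{1}\to(B,S)$. This already pins down the only candidates for the factorization, since a monotone map out of $\mathbf{1}$ is just a choice of such a ``supported'' element of $B$.

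Second I would unwind what the two 2-cells required for the isomorphism $f\cong g\circ{!}$ amount to. Using the definition of the order on monotone maps, $g\circ{!}\leq f$ becomes $\{(b,f(0)),(b,f(1))\}\in S$ and $f\leq g\circ{!}$ becomes $\{(f(0),b),(f(1),b)\}\in S$. For $b=f(0)$ the first of these is a sub-relation of $I_f$ and so is in $S$ by downward closure; symmetrically for $b=f(1)$ the second is in $S$. Putting this together shows that $g_{f(0)}\circ{!}\leq f\leq g_{f(1)}\circ{!}$ already from $I_f\in S$ alone, so $f$ is pinched between two factorisations through $\mathbf{1}$.

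The main obstacle is to promote this lax pinching to a genuine isomorphism $f\cong g\circ{!}$ for a single choice of $b$, since one direction of each inequality for either $b\in\{f(0),f(1)\}$ would a priori demand a relation in $S$ running ``against'' the image of $\leq$. I expect that the proof concludes by observing that the hom-preorder of $\catuord$ is only a preorder (not a poset), so that isomorphism of 1-cells is a weak notion, and that applying $f$ itself to the full relation $\leq$ together with the two opposite bounds above suffices, via transitivity of the 2-cell order and the axioms on $S$, to exhibit $f\cong g_b\circ{!}$ for either chosen $b$. This is also precisely the feature that makes the property stable under equivalence (as advertised in the footnote), and it is what allows the lemma to be used to diagnose that functionality (Definition~\ref{def:functional-ufp}) is not preserved under equivalence in $\catuord$.
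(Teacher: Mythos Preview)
Your proof has a genuine gap: the lemma sits in the section on functional uniform preorders and implicitly assumes that the target $(B,S)$ (written $\brar=(A,R)$ in the paper's notation, with a minor typographical mismatch) is \emph{functional}. Without this hypothesis the statement is simply false, and your own computations furnish the counterexample. Take $(B,S)=(\bbtwo,\adcl\{\leq\})$ and $f=\id$. You correctly establish $g_{f(0)}\circ{!}\leq f\leq g_{f(1)}\circ{!}$, but neither inequality can be reversed: $f\leq g_{f(0)}\circ{!}$ would require $\{(0,0),(1,0)\}\subseteq{\leq}$, and $g_{f(1)}\circ{!}\leq f$ would require $\{(1,0),(1,1)\}\subseteq{\leq}$, both of which fail because $(1,0)\notin{\leq}$. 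So $\id$ is not isomorphic to any constant map, and the hand-wave in your final paragraph about ``transitivity of the 2-cell order'' cannot close the gap.

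With the functionality hypothesis in place, the paper's argument is a single line and uses precisely the relation you already computed. You have $I_f=(f\times f)(\leq)\in S$ by monotonicity of $f$. Since $(B,S)$ is functional, every relation in $S$ is a partial function; but $I_f$ contains both $(f(0),f(0))$ and $(f(0),f(1))$, so functionality forces $f(0)=f(1)$. Hence $f$ is literally constant and factors strictly through the terminal uniform preorder---no 2-cells need to be constructed. The ``up to isomorphism'' in the statement is there only so that the resulting \emph{condition} on $(B,S)$ is invariant under equivalence in $\catuord$ (as the footnote indicates), not because the argument itself produces a mere isomorphism.
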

\begin{proof}
This follows from the fact that the image $(f\times f)(\leq)$ of the order
relation is functional since it is in $R$.
\end{proof}
The previous lemma shows how different functional uniform preorders are from
ordinary preorders -- one can explore the structure of a preorder by looking at
maps from $\bbtwo$ (or any other linear order) into the preorder. If we do the
same thing with a functional uniform preorder, we can't see anything. The idea
of studying preorders by sampling them with simple shapes (chains) is
known as the `nerve construction' which associates a simplicial set to
a given preorder (or category). Functional uniform preorders have trivial
nerves, and it does not seem to be possible to think about them in a
`geometric' way.

\medskip

Lemma~\ref{lem:reconstuct-ufp} gives a criterion for a fibered preorder to be
representable by a uniform preorder. If we want the induced uniform preorder to
be functional, we have to add an additional condition.
\begin{definition}\label{def:modest}
Let $\fifa:\tot{\fifa}\to\catr$ be a posetal pre-stack. We call a predicate $\mu\in\fifa_I$
\emph{modest}\index{modest predicate}, if for any span
$\xymatrix@1{J & K\depi[l]_{e}\ar[r]^{f} & I}$ where $e$ is a regular epi, and
any $\varphi\in\fifa_J$ such that $e^*\varphi\leq f^*\mu$, there exists a
(necessarily unique) $h:J\to I$ such that $he=f$ and $\varphi\leq h^*\mu$.
\end{definition}
\begin{remark}\label{rem:modest-discrete}
There is a concrete and an abstract intuition about modesty. The concrete one
is that in a realizability tripos over a pca, a predicate is modest iff
distinct elements have disjoint sets of realizers (in particular, modest
predicates are \emph{not} stable under reindexing!). Abstractly modesty is
about functionality -- the previous definition states that the relation induced
by the span $(e,f)$ is functional.

\medskip

Retrospectively, the use of the word `modest' for the above concept doesn't seem to be
such a good idea after all -- the reason is that in realizability (say over a
pca $\pcaa$) it only coincides with the intended meaning when applied to the
fibration $\ufam(\pcaa)$, not for the tripos $\rtr{\pcaa}$ -- there the empty
truth value causes problems.

Hyland, Robinson and Rosolini \cite{hrr90} define a discrete object in the
effective topos to be an object whose terminal projection is right orthogonal
(Definition~\ref{def:facsys}-\ref{def:facsys-orth}) to 
$\Omega\to 1$ (which is equivalent to being orthogonal to $\Delta 2\to 1$),
and define a modest object to be a discrete object which is separated for the
$\neg\neg$-topology. We give a generalization of their concept of discreteness
in Definition~\ref{def:discrete}.

Summarizing, while the concepts of modest and discrete used in this work are
related, their relation differs from their relation in \cite{hrr90}, which is
mainly due to the fact that our use of `modest' is a bit unfortunate.
\end{remark}
In analogy to Lemma~\ref{lem:reconstuct-ufp}, we can now show the following.
\begin{lemma}\label{lem:modest-functional}
 A posetal pre-stack $\fifa:\tot{\fifa}\to\catset$ is induced by a functional
uniform preorder iff it has a generic family $(\iota_i\in\ffrma_{A_i})_{i\in
I}$ of \emph{modest} predicates.
\end{lemma}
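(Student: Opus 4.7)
The plan is to proceed in direct analogy with Lemma~\ref{lem:reconstuct-ufp}, showing that the construction there specializes to functional uniform preorders exactly when the generic family consists of modest predicates.

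For the ``only if'' direction, suppose $\fifa\simeq\ufam\upiar$ with $\upiar$ functional. The proof of Lemma~\ref{lem:reconstuct-ufp} already exhibits the family $(\id_{A_i})_{i\in I}$ as a generic family of predicates, so I only need to verify that each $\id_{A_i}$ is modest. Given a span $J\twoheadleftarrow_e K\to_f A_i$ with $e$ surjective and a predicate $\varphi=(j,\psi\vtp J\to A_j)$ with $e^*\varphi\leq f^*\id_{A_i}$, the inequality unfolds to $\{(\psi(ek),f(k))\msep k\in K\}\in R_{ji}$. Functionality of $R_{ji}$ forces this set to be the graph of a partial function, so whenever $e(k_1)=e(k_2)$ one gets $\psi(ek_1)=\psi(ek_2)$ and hence $f(k_1)=f(k_2)$; therefore $f$ descends to a (unique) $h:J\to A_i$ with $he=f$, and $\varphi\leq h^*\id_{A_i}$ follows by the surjectivity of $e$.

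For the ``if'' direction, let $(\iota_i\in\fifa_{A_i})_{i\in I}$ be a generic family of modest predicates. Apply the construction of Lemma~\ref{lem:reconstuct-ufp}, which equips $(A_i)_i$ with the uniform preorder structure $R_{ij}=\{r\subseteq A_i\!\times\! A_j\msep\pil^*\iota_i\leq\pir^*\iota_j\}$ and yields $\fifa\simeq\ufam(I,A,R)$. It remains to prove every $r\in R_{ij}$ is functional. Given $r\in R_{ij}$, take the epi-mono factorization $\pil=m\circ e$ with $e:r\twoheadrightarrow J$ and $m:J\hookrightarrow A_i$. Setting $\varphi=m^*\iota_i\in\fifa_J$, the defining inequality of $R_{ij}$ reads $e^*\varphi\leq\pir^*\iota_j$. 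Modesty of $\iota_j$ applied to the span $J\twoheadleftarrow_e r\to_{\pir} A_j$ produces a unique $h:J\to A_j$ with $he=\pir$, i.e.\ $h(a)=b$ whenever $(a,b)\in r$, which is precisely functionality of $r$.

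The main (minor) obstacle is bookkeeping: making sure the span in the modesty axiom is instantiated with the right surjection, which in the second direction requires passing to the image of the left projection so that one may apply modesty with a surjective $e$ rather than an arbitrary map. Aside from this factorization step, everything is a direct unfolding of definitions, and the equivalence $\fifa\simeq\ufam(I,A,R)$ is inherited from Lemma~\ref{lem:reconstuct-ufp} so no additional verification is required on that side.
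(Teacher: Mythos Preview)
Your proof is correct and follows essentially the same approach as the paper's. Both directions proceed identically to the paper: the forward direction shows that the generating family $(\id_{A_i})_i$ is modest by using functionality of the relation $\{(\psi(ek),f(k))\}\in R_{ji}$ to factor $f$ through the surjection $e$, and the reverse direction factors $\pil=me$ and applies modesty of $\iota_j$ to obtain $h$ with $he=\pir$. The only cosmetic difference is in how functionality of $r$ is concluded at the end: the paper observes that $(me,he)$ jointly monic forces $e$ to be an iso (hence $\pil$ is monic), while you argue directly that $m$ monic plus $he=\pir$ gives $b=b'$ whenever $(a,b),(a,b')\in r$; these are the same observation.
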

 \begin{proof}
  Let $\upiar$ be a functional uniform preorders. We have to show that the
predicates $\id_{A_i}$ for $i\in I$ are modest. Take a span $\xymatrix@1{M &
N\depi[l]_{e}\ar[r]^{f} & I}$, and $\varphi:M\to A_j$ such that $e^*\varphi\leq
f^*\id_{A_i}$. Then $\{(g(en),fn)\msep n\in N\}\subseteq A_j\times A_i$ is in
$R_{ji}$ and thus functional, which implies that the relation $\{(gen,fn)\msep
n\in N\}\subseteq M\times A_i$ is functional as well. The second relation is
furthermore total since $e$ is surjective, and gives the desired mediator.

Conversely, assume that $\fifa$ has a generic family
$(\iota_i\in\ffrma_{A_i})_{i\in
I}$ of modest predicates. The uniform preorders structure on $(A_i)_{i\in I}$
is then given by $R_{ij}=\{r\subseteq A_i\times A_j\msep
\pil^*\iota_i\ent\pir^*\iota_j\stext{in}\fpa_r\}$. To see that modesty of
$\iota_j$ implies that all $r\in R_{ij}$ are functional, consider the following
diagram.
\[
\xymatrix@R-4mm{
r\ar[rd]^f\depi[d]_e \\
\dashed[r]_h\mono[d]_m & A_j\\
A_i
} 
\]
The existence of $h$ follows from the facts that $\iota_j$ is modest and that
$(me^*)\iota_i\leq f^*\iota_j$. We can deduce that $e$ is an iso since $me$ and
$f$ are jointly monic; and thus $r$ is functional.
 \end{proof}

\section{Uniform preorders and BCOs}

We know how to compare uniform preorders with Hofstra's
BCOs~\cite{hofstra2006all}, since $\bco$ as well as $\catufp$
can
be identified with full subcategories of $\pfib(\catset)$. Given a BCO
$(A,\leq,\mcf)$,
the relations $(\leq)\circ f$ with $f\in\mcf$ generate a uniform preorder
structure on $A$ which induces the same fibered preorder, thus $\bco$ is a
subcategory of $\catufp$. In the following, we lay out some deliberations that
try
to clarify the status of BCOs among uniform preorders.
\begin{lemma}
Given a one sorted uniform preorder $\brar$, the relations $r\in R$ with
$\id\subseteq r$ are directed
with respect to inclusion and their union is a preorder.
\end{lemma}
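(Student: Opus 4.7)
My plan is to unpack the definition of uniform preorder and verify the two claims by direct calculation from the three axioms (downward closure, reflexivity $\id \in R_{ii}$, closure under composition). Let $R^+ := \{r \in R \mid \id \subseteq r\}$ denote the set of reflexive relations in $R$; the desired union is $U := \bigcup R^+$.

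For directedness, given $r, s \in R^+$, I will take $t := s \circ r$ as the common upper bound. Closure of $R$ under composition gives $t \in R$. Since $\id \subseteq r$ and $\id \subseteq s$, monotonicity of composition yields $r = r \circ \id \subseteq s \circ r = t$ and $s = \id \circ s \subseteq s \circ r = t$ as well as $\id = \id \circ \id \subseteq s \circ r = t$; hence $t \in R^+$ and dominates both $r$ and $s$.

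For the second claim, reflexivity of $U$ is immediate since $\id \in R$ and $\id \subseteq \id$, so $\id \in R^+ \subseteq \mathcal{P}(U)$, hence $\id \subseteq U$. For transitivity, suppose $(x,y), (y,z) \in U$; pick $r, s \in R^+$ witnessing these memberships. Then $(x,z) \in s \circ r$, and by the argument of the previous paragraph $s \circ r \in R^+$, so $(x,z) \in U$.

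I do not anticipate any obstacle: the claim is a direct bookkeeping consequence of the axioms of Definition~\ref{def:uniform-preorder}, once one notices that the operation used to witness directedness (composition) simultaneously witnesses transitivity of the union, so the two parts of the statement are really a single argument.
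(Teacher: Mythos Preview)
Your proof is correct and follows essentially the same approach as the paper: take $s\circ r$ as the upper bound for directedness, and use the same closure of $R^+$ under composition to establish transitivity of the union (the paper phrases the latter as the set-theoretic inclusion $\bigcup_{s\supseteq\id}s\circ\bigcup_{r\supseteq\id}r\subseteq\bigcup_{r\supseteq\id}r$, but it is the same argument). One cosmetic slip: in the monotonicity step you want $r=\id\circ r\subseteq s\circ r$ and $s=s\circ\id\subseteq s\circ r$ rather than the swapped identities you wrote, though the conclusions are unaffected.
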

\begin{proof}
  Given $r,s\supseteq\id$ in $R$, an upper bound is given by $sr$. It is
  evident that the union is reflexive, for transitivity we have
\[
\bigcup_{s\supseteq\id}s\circ\bigcup_{r\supseteq\id}r=
\bigcup_{r,s\supseteq\id}sr\subseteq\bigcup_{r\supseteq\id}r.
\]
\end{proof}
\begin{definition}
If the preorder $\leq$ from the previous lemma is contained in $R$, we
call $(A,R)$ \emph{condensable}\index{condensable uniform preorder}\index{uniform preorder!condensable}, and we call
$\leq$ its \emph{condensate}\index{condensate!of uniform preorder}.
\end{definition}
Discrete BCOs are condensable; their condensate is the identity. In general,
however, the ordering on a BCO is not necessarily its condensate.  For example,
let $(A,\leq,\mcf)$ be a BCO with least element $\bot$ for $\leq$, where $\mcf$
consists of \emph{all} total monotone functions. Then the constant $\bot$
function witnesses arbitrary inequalities, thus the BCO is biterminal, and its
condensate is the indiscrete preorder.

If a uniform preorder $\brar$ contains a preorder $(\leq)\in R$, all relations
$r\in R$ can
be completed to order theoretic distributors
$(\leq)r(\leq):(A,\leq)\edist(A,\leq)$ which are still in $R$. Since
$r\subseteq(\leq)r(\leq)$, $R$ is generated by
distributors. We call such a distributor $\phi$
\emph{partially functional}\index{partially functional distributor}\index{distributor!partially functional}, if for every $a\in A$, the upper set
$\{b\msep(a,b)\in\phi\}$ is either empty or representable. Partially functional
distributors are precisely the distributors of the form $(\leq)f$ for partial
monotone functions $f$ with downward closed domain, as explained in
\cite{benabou2000distributors} for partial functors between categories. Using
these techniques, we can characterize uniform preorders arising form BCOs.
\begin{lemma}
  A one sorted uniform preorder $\brar$ is induced by a BCO iff there exists a
preorder $(\leq)\in R$
  such that $R$ is generated by partially functional $\leq$-distributors.
\end{lemma}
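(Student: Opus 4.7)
The plan is to read off both directions directly from the correspondence recalled in the paragraph preceding the statement, namely that partially functional $\leq$-distributors are exactly the relations of the form $(\leq)\circ f$ where $f$ is a partial monotone self-map of $(A,\leq)$ with downward-closed domain, and that these are equivalently of the form $(\leq)f(\leq)$.

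For the forward implication, assume $\brar$ is the uniform preorder induced by a BCO $(A,\leq,\mcf)$, so that $R$ is by definition the downward closure of $\{(\leq)\circ f\msep f\in\mcf\}$. Since $\mcf$ contains $\id_A$ (a BCO axiom), the relation $(\leq)=(\leq)\circ\id_A$ lies in $R$, and reflexivity and transitivity of $\leq$ give us the required preorder in $R$. Each generator $(\leq)\circ f$ is by construction a partially functional $\leq$-distributor, so $R$ is generated by such distributors in the sense of Definition~\ref{def:base-ufp}.

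For the converse, suppose $(\leq)\in R$ is a preorder and $R$ is generated by partially functional $\leq$-distributors. I would define
\[
\mcf \;=\; \{f:A\pto A\msep f\text{ partial, monotone, with downward-closed domain, and }(\leq)\circ f\in R\}
\]
and verify that $(A,\leq,\mcf)$ is a BCO whose induced uniform preorder recovers $\brar$. That $\id_A\in\mcf$ is immediate from $(\leq)\in R$. For closure under composition, given $f,g\in\mcf$, the composite $g\circ f$ (taken as a partial function) is still partial monotone with downward-closed domain, and
\[
(\leq)\circ(g\circ f)\;=\;(\leq)\circ g\circ(\leq)\circ f\;=\;\bigl((\leq)g\bigr)\circ\bigl((\leq)f\bigr),
\]
the first equality using monotonicity of $g$ together with $\id\subseteq(\leq)$, and the right-hand side is in $R$ by closure of $R$ under composition. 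Finally, that the induced uniform preorder equals $\brar$ follows because its base $\{(\leq)\circ f\msep f\in\mcf\}$ is by construction the collection of all partially functional $\leq$-distributors in $R$, which by hypothesis generates $R$.

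The main obstacle is the composition clause: one must make sure that when one rewrites each element of $R$ as an underlying partial monotone function in order to put it in $\mcf$, the downward-closedness of the domain and the identification $(\leq)f(\leq)=(\leq)f$ are used coherently, so that the composite of two distributors of the form $(\leq)\circ f$ again sits in the same form without having to enlarge $\mcf$ beyond partial monotone maps. Provided the characterization of partially functional distributors from the preceding paragraph is applied symmetrically on both sides, this bookkeeping is routine.
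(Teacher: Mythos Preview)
Your proof is correct and follows essentially the same approach as the paper's: in the forward direction you identify the generators $(\leq)f$ as partially functional distributors, and in the converse you recover a BCO by taking the partial monotone functions underlying the partially functional distributors in $R$. The paper's proof is a two-sentence sketch of exactly this idea; you have simply unpacked the details, in particular the verification that $(\leq)(g\circ f)=(\leq)g(\leq)f$ (where your justification should also mention that $g$ has downward-closed domain, not only that $g$ is monotone).
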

\begin{proof}
  Given a BCO $(A,\leq,\mcf)$, the generators $(\leq)f$ for $f\in\mcf$ of the
  uniform preorder structure are partially functional by the remarks preceding
the
  lemma. Conversely, if a one sorted uniform preorder $\brar$ contains a
preorder and is generated by
  partially functional distributors, then it is easy to see that the
  corresponding monotone partial functions form a BCO structure equivalent to
  $\brar$.
\end{proof}

\section{Closure properties of \texorpdfstring{$\catuord$}{UOrd}}

\begin{lemma}\label{lem-closure-uord}
  \begin{enumerate}
  \item\label{lem-closure-uord-prod} $\catufp$ has small products.
  \item\label{lem-closure-uord-coprod} $\catufp$ has small coproducts.
  \item\label{lem-closure-uord-ccc} $\catufp$ is cartesian closed.
  \item\label{lem-closure-uord-invol} We can define an involution operation
\[
(-)^\op:\catufp^\co\to\catufp,
\]
that corresponds to taking the fiberwise opposite on the level of fibered
preorders.
  \end{enumerate}
\end{lemma}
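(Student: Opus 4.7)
The plan is to give the four constructions directly and check in each case that the associated fibered preorder on $\catset$ is the expected (co)limit/exponential/opposite. Since by Lemma~\ref{lem:ufp-embed} the functor $\ufam(-)\colon\catufp\to\catpfib(\catset)$ is a local equivalence, verifying the universal property in $\catpfib(\catset)$ (which is locally-posetal complete, cocomplete, and cartesian closed as a full sub-$2$-category of $[\catset^\op,\catposet]$) and checking that the constructed uniform preorder represents the result suffice; for items \ref{lem-closure-uord-prod}, \ref{lem-closure-uord-coprod}, \ref{lem-closure-uord-invol} one may also verify the universal property directly in $\catufp$, which is what I would do for clarity.

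\textbf{Products and coproducts.} For a family $(\upa_k=\upiar[k])_{k\in K}$ I set $\prod_k\upa_k=(\prod_k I_k,(\prod_k A_{k,i_k})_{(i_k)\in\prod_k I_k}, R^\Pi)$, where $R^\Pi_{(i_k),(j_k)}$ is generated as a base (in the sense of Definition~\ref{def:base-ufp}) by product relations $\prod_k r_k$ with $r_k\in R_{k,i_ki_k'}$. Projections and pairing are defined componentwise, and the universal property is verified sort-by-sort. For coproducts I take $\coprod_k\upa_k=(\bigsqcup_k I_k,(A_{k,i})_{(k,i)\in\bigsqcup_k I_k}, R^\Sigma)$, where $R^\Sigma_{(k,i),(k',j)}=R_{k,ij}$ if $k=k'$ and $\{\emptyset\}$ otherwise; the injections and mediating maps are obvious. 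In both cases it is routine to check that the constructions commute with $\ufam(-)$ up to equivalence.

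\textbf{Involution.} Given $\upa=\upiar$ I set $\upa^\op=(I,A,R^\op)$ where $R^\op_{ij}=\{r^\circ\mid r\in R_{ji}\}$. Downward closure, identities and composition are inherited by transposition. A monotone map $(u,f)\colon\upa\to\upb$ is sent to the same underlying pair, regarded now as a map $\upa^\op\to\upb^\op$: given $r^\circ\in R^\op_{ji}$ with $r\in R_{ij}$ one has $f_ir^\circ f_j^\circ=(f_jrf_i^\circ)^\circ\in S^\op_{uj,ui}$. A $2$-cell $(u,f)\le(v,g)$ becomes, after transposing $\{(f_ia,g_ia)\mid a\in A_i\}\in S_{ui,vi}$, the $2$-cell $(v,g)\le(u,f)$ in $\catufp(\upa^\op,\upb^\op)$; this gives the contravariance on $2$-cells, i.e.\ a $2$-functor $\catufp^\co\to\catufp$, and $(\upa^\op)^\op=\upa$ on the nose. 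Under $\ufam(-)$ this construction corresponds to fiberwise reversal of the ordering.

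\textbf{Cartesian closure.} This is the main work. For $\upa=\upiar$ and $\upb=\upjbs$ I define $\upb^\upa$ as follows. Its sorts are the functions $u\colon I\to J$; the underlying set at a sort $u$ is $E_u:=\prod_{i\in I}B_{ui}^{A_i}$. A relation $\rho\subseteq E_u\times E_{u'}$ lies in $T_{u,u'}$ iff for all $i,i'\in I$ and all $r\in R_{ii'}$ the induced relation
\[
\{(g_i(a),g'_{i'}(a'))\mid (g,g')\in\rho,\ (a,a')\in r\}\subseteq B_{ui}\times B_{u'i'}
\]
belongs to $S_{ui,u'i'}$. Downward closure, reflexivity and transitivity are immediate from the corresponding properties of $S$. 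The evaluation map $\eval\colon\upb^\upa\times\upa\to\upb$ sends the sort $(u,i)$ to $ui$ and the element $(g,a)\in E_u\times A_i$ to $g_i(a)\in B_{ui}$. For transposition, a monotone map $(w,h)\colon\upc\times\upa\to\upb$ assigns to each $(k,i)\in K\times I$ a sort $w(k,i)\in J$ and a function $h_{k,i}\colon C_k\times A_i\to B_{w(k,i)}$; fixing $k$ and varying $i$ yields by currying a function $\tilde h_k\colon C_k\to E_{w(k,-)}$, and the assignment $k\mapsto w(k,-)$ defines the sort-component of the transpose. Verifying that $(\tilde w,\tilde h)$ is monotone amounts to unfolding the definition of $T_{u,u'}$ and using monotonicity of $(w,h)$ against relations of the form $s\times\id_{A_i}$ and $\id_{C_k}\times r$; the $2$-categorical uniqueness of the transpose is similarly a direct verification.

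The main obstacle is the bookkeeping for cartesian closure: one has to choose the right level of generality in the base (functions $u\colon I\to J$ with fibrewise hom-sets $E_u$) so that both monotonicity of $\eval$ and the universal property come out, and then check that the definition of $T_{u,u'}$ really provides the right enrichment. A useful sanity check throughout is the reduction to the classical case via the embedding $\catord\hookrightarrow\catufp$ from Example~\ref{ex:uords}-\ref{ex:uords-preords}, where the construction specialises to the pointwise-ordered hom-preorder. Finally, comparing $\ufam(\upb^\upa)$ to the presheaf exponential $\ufam(\upb)^{\ufam(\upa)}$ via the generic family $(\id_{A_i})_i$ shows that this really computes the exponential in $\catpfib(\catset)$, so by the local equivalence it is the exponential in $\catufp$.
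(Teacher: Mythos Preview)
Your constructions for products, coproducts, and the involution agree with the paper's (up to the cosmetic point that for coproducts the off-diagonal hom should be $\varnothing$ rather than $\{\emptyset\}$; with $\{\emptyset\}$ the composition axiom can fail when $R_{k,il}$ happens to be empty).

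The exponential, however, has a genuine gap. You claim that ``reflexivity is immediate'', i.e.\ that $\id_{E_u}\in T_{u,u}$, but this is false for your choice $E_u=\prod_{i\in I}B_{ui}^{A_i}$. Unfolding your definition, $\id_{E_u}\in T_{u,u}$ requires that for every $r\in R_{ii'}$ the set
\[
\{(g_i(a),g_{i'}(a'))\mid g\in E_u,\ (a,a')\in r\}
\]
lies in $S_{ui,ui'}$. Since $g$ ranges over \emph{all} function families (with no monotonicity constraint), as soon as $r$ is nonempty this set is all of $B_{ui}\times B_{ui'}$, which is not in $S$ in general. Restricting $E_u$ to monotone families over $u$ does not help either: individual monotonicity does not give a \emph{single} relation in $S$ that works uniformly for all $g$.

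The paper circumvents this by not attempting a direct construction at all. It computes the exponential $\ufam(\upb)^{\ufam(\upa)}$ in $\catpfib(\catset)$ and then shows, via Lemma~\ref{lem:reconstuct-ufp}, that this fibration has a generic \emph{family} of predicates. The key notion is that of an \emph{equimonotone family}: an $L$-indexed family $((f^l_i)_i)_{l\in L}$ of maps over $u$ such that for every $r\in R_{ii'}$ the relation $\{(f^l_i(a),f^l_{i'}(b))\mid (a,b)\in r,\ l\in L\}$ lies in $S_{ui,ui'}$ --- exactly the uniformity that your $E_u$ lacks. Each equimonotone family yields a predicate on $L$, and the collection of all of them is the generic family. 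So the sorts of the exponential uniform preorder are equimonotone families $(u,L,(f^l)_l)$ with underlying set $L$, not functions $u\colon I\to J$ with underlying set $\prod_i B_{ui}^{A_i}$.
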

\begin{proof}
  \emph{Ad \ref{lem-closure-uord-prod}.} This is also true for one sorted
uniform preorders, and to understand the proof it might be helpful to consider
first the one sorted case to reduce the amount of indices.

Let  $(I^l,A^l,R^l)_{l\in L}$ be a family of uniform preorders, 
and assume that for each $l\in L$, we are given $i_l,j_l\in I^l$ and $r_l\in
R^l_{i_l,j_l}$. Then the set

\[
\left\{\bigl((a_l)_l,(b_l)_l\bigr)\in \prod_l A^l_{i_l}\times\prod_l
A^l_{j_l}\;\bigg|\;\;\forall l\qdot (a_l,b_l)\in r_l\right\}
\]
is a binary relation on $\prod_l A^l_{i_l}\times\prod_l A^l_{j_l}$, and the set
of all relations defined in this way is a basis for a uniform preorder
structure on the family
\[
\left(\prod_l I^l,\Bigl(\prod_l A^l_{i_l}\;\Big|\;\;{(i_l)_l\in\prod_l
    I^l}\Bigr)\right).
\]
The product of the family $(I^l,A^l,R^l)_{l\in L}$ is the uniform preorder
generated by this basis.

\medskip

\emph{Ad \ref{lem-closure-uord-coprod}.}
The coproduct of a family $(I^l,A^l,R^l)_{l\in L}$ of uniform preorders is
given by
\[
\left(\coprod_l I^l,\,\Bigl( A^l_i\;\Big|\;\;(l,i)\in\coprod_l
I^l\Bigr),\,R\right),
\]
where
\[
R_{(l,i),(m,j)}=
\begin{cases}
  R^l_{ij} & \text{if } l = j\\
 \varnothing & \text{otherwise}
\end{cases}
\]

\medskip

\emph{Ad \ref{lem-closure-uord-ccc}.} This is nontrivial and originally due to
Longley~\cite{longley2011computability}, who showed that
the locally ordered $\catcstruct$ of \emph{computability structures} is in his
words `almost cartesian closed' (it is not really
cartesian closed, since it is a Kleisli category of $\catufp$ as we show in
Section~\ref{sec:cstruct}, and cartesian
closure is generally not preserved by Kleisli constructions). Our proof is an
adaption of Longley's to uniform preorders, expressed in terms of fibrations.

We show that the image of $\catufp$ is an exponential ideal in
$\catpfib(\catset)$. For general fibrations $\fibc,\fibd\in\catfib(\catset)$,
their exponential is given by
$(\fibd^\fibc)_I=\catfib(\catset/I)(\fibc/I,\fibd/I)$
(see~\cite[Section~4]{streicherfib}). Instantiating by
$\ufam(\upa),\ufam(\upb)$
for uniform preorders
$\upa=\upiar,\upb=\upjbs$, we get the following description of the fibration
$\ufam(\upb)^{\ufam(\upa)}$.
\begin{itemize}
\item Predicates on $K$ are pairs $(u,f)=(u,(f_i)_i)$ with $u:I\to J$ and
  $f_i:K\times A_i\to B_{ui}$ for every $i\in I$ such that
\[
\forall i,i'\in I,\;r\in R_{ii'}\qdot
\{(f_i(k,a),f_{i'}(k,a'))\msep (a,a')\in r,k\in K\}\in S_{ui,ui'}.
\]
\item
$(u,f)\leq (v,g)$ if 
\[
\forall i\in I\qdot\{(f_i(k,a),g_i(k,a))\msep k\in K,a\in A_i\}\in S_{ui,vi}.
\]
\end{itemize}
By Lemma~\ref{lem:reconstuct-ufp}, it suffices to show that this fibration has a
generic family of predicates (the
pre-stack condition is always preserved by exponentiation).

Let $u:I\to J$. An $L$-indexed family $((f_i^l)_{i\in I})_{l\in L}$ of
monotone maps from $\upiar$ to $\upjbs$ over $u$ is called
\emph{equimonotone}\index{equimonotone family}, if
\[
\forall i,i'\in I\;\forall r\in R_{ii'}\qdot\{(f_i^l(a),f_{i'}^l(b))\msep
(a,b)\in R,\, l\in L\}\in S_{ui,ui'}
\]
For each equimonotone family $(L,((f_i^l)_i)_l)$ over $u$, we can define a
predicate \[(u,g)\in \left(\ufam(\upb)^{\ufam(\upa)}\right)_L\] by
$g_i(l,a)=f_i^l(a)$,
and it is easy to see that the collection of these predicates where $(u,g)$
ranges over all equimonotone families is jointly generic.

\medskip

\emph{Ad \ref{lem-closure-uord-invol}.} The opposite of a uniform preorder
$\upiar$ is given by $(I,A,R^\op)$ where $R^\op_{ij}=\{r^\circ\msep r\in
R_{ji}\}$
\end{proof}
We observe that the construction of products restricts to the one sorted case,
but not the construction of coproducts and exponentials. The construction of
exponentials is particularly interesting, and an obvious question is which
properties of uniform preorders are stable under exponentiation  What is the
exponential of two pcas? What about
relational completeness\footnote{see
Section~\ref{sec:reational-completeness}}?). I haven't examined this at all yet.

\section{Finitely complete uniform preorders}\label{sec:finitely-complete-ufps}

Hofstra observed that the 2-categorical approach gives a well behaved and
useful notion of finite completeness for BCOs. We can do the same for
uniform preorders.
\begin{definition}
  A uniform preorder $\upa=\upiar$ is called \emph{finitely complete}\index{uniform preorder!finitely complete}, or a
\emph{uniform (meet-)semilattice}\index{uniform semilattice}\index{semilattice!uniform}, if the
maps
  \[\delta:\upa\to\upa\times\upa \quad\qtext{and}\quad !:\upa\to 1\]
 have right adjoints
\begin{equation*}
\delta\adj( *, \wedge):\upa\times \upa\to \upa \qtext{and}
  !\adj(1,\top):1\to \upa.\hfill
\end{equation*}
\end{definition}
Let us spell this out. Concretely, the existence of the right adjoints means
that there exist maps and elements
\begin{align*}
 1 &\in I & (-\ptype -)&:I\times I\to I\\
\top &\in A_1 & (-\wedge_{ij} -)&: A_i\times A_j\to A_{i\ptype j}\quad\text{for
}i,j\in I 
\end{align*}
such that
\begin{enumerate}
 \item $(\ptype,\wedge)$ is monotone:
\begin{equation}
\forall i,j,k,l\,\forall r\in R_{ij}\,\forall s\in R_{kl}\qdot
(\wedge\times \wedge)(r\brprod s)\in R_{i\ptype k,j\ptype l}
\end{equation}
\item $\delta\circ(\ptype,\wedge)\leq \id_{\upa\times \upa}$ (which is
equivalent to $(\ptype,\wedge)\leq\pil$ and $(\ptype,\wedge)\leq\pir$):
\begin{align}
 \forall i,j\qdot\{(a\wedge_{ij} b, a)\msep a\in A_i,\,b\in A_j\}&\in R_{i\ptype
j,i}\quad\text{and}\label{eq:proj-rel-l}\\
\forall i,j\qdot\{(a\wedge_{ij} b, b)\msep a\in A_i,\,b\in A_j\}&\in R_{i\ptype
j,j}\label{eq:proj-rel-r}
\end{align}
\item $\id_\upa\leq(\ptype,\wedge)\circ\delta$:
\begin{equation}
\forall i\qdot \{(a,a\wedge_{ii} a)\msep a\in A_i\}\in R_{i,i\ptype i} 
\end{equation}
\item $\id_\upa\leq(1,\top)\,\circ \,!$:
\begin{equation}
 \forall i\qdot\{(a,\top)\msep a\in A_i\}\in R_{i,1}
\end{equation}
\end{enumerate}
The conditions for the monotonicity of $(1,\top)$ and for $!\,\circ
(1,\top)\leq\id_1$ are vacuous.

\medskip

Since the embedding $\ufam(-):\catuord\to\catpfib$ of uniform preorders into
fibrations is a local
equivalence (Lemma~\ref{lem:ufp-embed}) and preserves finite products, $\upiar$
is finitely complete iff $\ufam\upiar$ is so, i.e.\
the fibers have chosen finite meets which are preserved up to isomorphism by
reindexing. Hence, in particular uniform preorders induced by preorders
(Example~\ref{ex:uords}-\ref{ex:uords-preords}) are finitely complete iff the
preorder has finite meets, as one would expect.

Moreover, the fibered monotone map $\ufam(u,f):\ufam(\upa)\to\ufam(\upb)$
corresponding to a monotone map $(u,f):\upa\to\upb$ preserves finite meets iff
$(u,f)$ is compatible with the finite limit structure in the sense that the
diagrams
\[
 \xymatrix@C+5mm{
\upa\times\upa\ar[r]^{\uf\times\uf}\ar[d]_{\umeet} &
\upb\times\upb\ar[d]^{\umeet} & 1\ar[d]_{\uterm}\ar[rd]^{\uterm}\\
\upa\ar[r]^{\uf} & \upb & \upa\ar[r]^(.4){\uf}& \upb
}
\]
commute up to isomorphism, in which case we will also say that \emph{$(u,f)$}
preserves finite meets. This leads us to the following definition.
\begin{definition}\label{def:catmuord}
 $\catmuord$ is the locally ordered category of finitely complete uniform
preorders, and finite meet preserving monotone maps.
\end{definition}

\medskip

For \emph{functional} uniform preorders, the functions $\wedge_{ij}:A_i\times
A_j\to A_{i * j}$ can be viewed
as a `recursive pairing functions' --- in particular we have:
\begin{lemma}
  If $\upa=\upiar$ is a finitely complete functional uniform preorder, then the
  functions $\wedge_{ij}:A_i\times A_j\to A_{i * j}$ are injective.
\end{lemma}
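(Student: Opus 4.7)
The plan is to exploit directly the counit inequalities of the adjunction $\delta\dashv(\ptype,\wedge)$ together with the hypothesis that every relation in $R$ is functional. The key data are the two relations
\[
p_{ij}=\{(a\wedge_{ij}b,\,a)\mid a\in A_i,\,b\in A_j\}\quad\text{and}\quad q_{ij}=\{(a\wedge_{ij}b,\,b)\mid a\in A_i,\,b\in A_j\},
\]
which, by conditions \eqref{eq:proj-rel-l} and \eqref{eq:proj-rel-r} in the definition of finite completeness, lie in $R_{i\ptype j,\,i}$ and $R_{i\ptype j,\,j}$ respectively.

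Since $\upa$ is assumed functional (Definition~\ref{def:functional-ufp}), every member of $R$ is a functional binary relation; in particular $p_{ij}$ and $q_{ij}$ are functional. Now I would simply unwind injectivity: given $a,a'\in A_i$ and $b,b'\in A_j$ with $a\wedge_{ij}b=a'\wedge_{ij}b'$, both pairs $(a\wedge_{ij}b,a)$ and $(a'\wedge_{ij}b',a')$ belong to $p_{ij}$ and share the same first component, so functionality of $p_{ij}$ forces $a=a'$; an identical argument with $q_{ij}$ yields $b=b'$. This establishes injectivity of $\wedge_{ij}$.

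There is no real obstacle here: the only thing one has to be slightly careful about is that functionality of elements of $R$ is a condition on the relations themselves (not merely up to downward closure), so that the explicit sets $p_{ij}$ and $q_{ij}$, being subsets of relations belonging to $R_{i\ptype j,i}$ and $R_{i\ptype j,j}$ (respectively), are themselves functional — which is immediate since subrelations of functional relations are functional. Thus the whole proof collapses to a two-line invocation of functionality applied to the pairing/projection data supplied by the adjunctions $\delta\dashv(\ptype,\wedge)$ and $\mathord{!}\dashv(1,\top)$.
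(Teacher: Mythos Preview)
Your proof is correct and essentially identical to the paper's: both use the relations from \eqref{eq:proj-rel-l} and \eqref{eq:proj-rel-r}, observe they lie in $R$ and hence are functional, and deduce injectivity of $\wedge_{ij}$. The only difference is cosmetic---your final paragraph hedges unnecessarily about subrelations, since \eqref{eq:proj-rel-l} and \eqref{eq:proj-rel-r} already assert membership in $R$ directly.
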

\begin{proof}
From \eqref{eq:proj-rel-l} and \eqref{eq:proj-rel-r} we know that
$\{(a\wedge_{ij} b,a)\msep
  a\in A_i,b\in A_j\}\in R_{i * j,i}$ and $\{(a\wedge_{ij} b,b)\msep a\in
A_i,b\in  A_j\}\in R_{i * j,j}$. By assumption these relations are functional,
which lets us deduce that $a=a'$ and $b=b'$ whenever
$a\wedge_{ij}b=a'\wedge_{ij}b'$.
\end{proof}

\begin{examples}
 \begin{enumerate}
\item As remarked above, the uniform preorder $(D,\adcl\{\leq\})$ associated to
a
preorder
$(D,\leq)$ is finitely
complete iff $(D,\leq)$ has finite meets.
 \item Given a pca $\pcaa$, the associated uniform preorder $(\pcaa, R(\pcaa))$
(Example~\ref{ex:uords}-\ref{ex:uords-pca-pca}) is finitely complete. This can
be
deduced from the fact that the associated fibered preorder $\ufam(\pcaa,
R(\pcaa))$ can be identified with the subfibration of the realizability tripos
$\rtr{\pcaa}$ on singleton valued predicates, and those are closed under finite
meets which are given by pairing.

The same argument generalizes to inclusions of pcas
(Example~\ref{ex:uords}-\ref{ex:uords-pca-i}) and (inclusions of) typed pcas
(Example~\ref{ex:uords}-\ref{ex:uords-pca-t},\ref{ex:uords-tpca-it}).
\item The uniform preorder $(\N,\adcl\prim)$ from
Example~\ref{ex:uords}-\ref{ex:uords-prim}
is finitely complete, the meet map $\wedge:\N\times\N\to\N$ is given by any
primitive recursive pairing function.
 \end{enumerate}
\end{examples}

Since $\catuord$ has small products, we can also talk about infinite meets and
joins in a uniform preorder. In particular, we call a uniform preorder $\upa$
\emph{small (co)complete}\index{uniform preorder!small (co)complete}, if for all sets $I$, the diagonal embedding
$\upa\to\upa^I$ has a right (left) adjoint. For preorders, small meets and
joins allow to define quantification in the associated fibrations, but for
uniform preorders
the two concepts diverge. We explain how to handle quantification in
Section~\ref{sec:quantification}.

\medskip

\subsection{Relational clones}\label{sec:clone}

By viewing morphisms $f:A_1\times\dots\times A_n\to B$ in a finite product
category $\catc$ as `multi'-morphisms with $n$ inputs and one output, any
cartesian category may be viewed as a
(cartesian~\cite{nlab-cartesian-multicategory}) multicategory. In the same
way, any meet-semilattice may be viewed as a `multi'-ordering, allowing
comparisons like $a_1,\dots,a_n\leq b$, and it is not difficult to define a
notion of `fibered multi-ordering' which is induced by finitely complete
uniform preorders. On the relational level, the corresponding structure is that
of a \emph{(many-sorted) relational clone}, which is most intuitive in the
functional case, and will be helpful in
Section~\ref{sec:reational-completeness}.
\begin{definition}
 A \emph{relational clone}\index{relational clone} on a family of sets $(A_i)_{i\in I}$
is a family
\[
 \left(C_{i_1,\dots ,i_n;j}\subseteq P((A_{i_1}\times\dots\times A_{i_n})\times
A_j)\;\big|\;n\in\N,\, i_1,\dots, i_n,j\in I\right)
\]
of sets of $(n+1)$-ary relations which 
\begin{itemize}
 \item 
contains all projections
$\pi_l:A_{i_1}\times A_{i_n}\to A_{i_l}$ for $1\leq l\leq n\in\N$ viewed as
relations $\pi_l\in C_{i_1,\dots ,i_n;i_l}$, and
\item is closed under composition in the sense that whenever $s\in C_{j_1,\dots,
j_n;k}$ and $r_l\in C_{i_1,\dots, i_m;j_l}$ for $1\leq l\leq n$, then the
relation
\begin{multline*}
s\circ (r_1,\dots,r_n)\stackrel{\mathrm{def}}{=}\\
\big\{(a_1\dots
a_m,c)\msep\exists b_1\dots b_n\qdot s(b_1\dots b_n,c)\wedge\bigwedge_{1\leq
l\leq n} r_1(a_1\dots a_m,b_l)\big\}
\end{multline*}
is contained in $C_{i_1,\dots, i_m;k}$.
\end{itemize}
\end{definition}
Given a finitely complete uniform preorder $\upiar$, the natural way to define
a clone-like structure on $(A_i)_{i\in I}$ is by taking $C_{i_1,\dots, i_n;j}$
to
be the set of relations which is generated via downward closure by the relations
\begin{equation}\label{eq:clone-gen}
 \{(a_1\dots a_n,b)\msep r(a_1\wedge\dots\wedge a_n,b)\} \qtext{for} r\in
R_{i_1 *\dots * i_n,j}.
\end{equation}
The idea underlying this construction is due to
Hofstra~\cite[Section~6]{hofstra2006all}, who uses finite limit structure on
BCOs to talk about `computable' partial functions in several
variables.

The following lemma is purely technical.
\begin{lemma}\label{lem:clone-from-ufp}
 For a finitely complete uniform preorder $\upiar$, the previously defined
system $(C_{i_1,\dots, i_n;j})_{i_1\dots i_n,j}$ of sets of relations is a
relational clone, and does not depend on the choice of
n-ary meet maps 
\[-\wedge\dots\wedge-:A_{i_1}\times\dots\times A_{i_n}\to
A_{i_1 *\dots * i_n}.\]
used to define the generators in~\eqref{eq:clone-gen}.
\qed
\end{lemma}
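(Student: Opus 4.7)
The plan is to reduce everything to two elementary facts about finite meets in $\upa$: (a) any two choices of $n$-fold iterated meet map $m:A_{i_1}\times\dots\times A_{i_n}\to A_{\iota}$ and $m':A_{i_1}\times\dots\times A_{i_n}\to A_{\iota'}$ differ by an isomorphism in $\ufam(\upa)$; equivalently, both $\{(m(\vec a),m'(\vec a))\msep \vec a\}$ and its converse lie in the respective relation sets $R_{\iota,\iota'}$ and $R_{\iota',\iota}$; and (b) for a fixed sort $\iota$ and any family $\bar r_l\in R_{\iota,j_l}$ for $1\leq l\leq n$, there is a tupled relation $\langle\bar r_l\rangle_l\in R_{\iota,\,j_1\ptype\dots\ptype j_n}$ such that $\bar r_l(x,y_l)$ for every $l$ implies $\langle\bar r_l\rangle_l(x,\,y_1\wedge\dots\wedge y_n)$. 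Fact~(a) is uniqueness up to isomorphism of right adjoints to the diagonal, applied in the locally posetal 2-category $\catuord$ via Lemma~\ref{lem:ufp-embed}. Fact~(b) is obtained by observing that the $\bar r_l$ exhibit inequalities $\rho_l^{(1)}\leq\rho_l^{(2)}$ between projection predicates in $\ufam(\upa)$ over the joint graph $M=\{(x,y_1,\dots,y_n)\msep \forall l\qdot \bar r_l(x,y_l)\}$, and then taking the meet of these inequalities in the fiber $\ufam(\upa)_M$.

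For the clone axioms, projections are witnessed by iterating the binary projection relations~\eqref{eq:proj-rel-l} and~\eqref{eq:proj-rel-r} to produce $p_l\in R_{i_1\ptype\dots\ptype i_n,\,i_l}$ containing $\{(a_1\wedge\dots\wedge a_n,\,a_l)\msep \vec a\}$; the generator~\eqref{eq:clone-gen} attached to $p_l$ then already contains the set-theoretic projection $\pi_l$. For closure under composition, suppose $s\in C_{j_1,\dots,j_n;k}$ and $r_l\in C_{i_1,\dots,i_m;j_l}$ are contained in the downward closures of generators $G(\bar s)$ and $G(\bar r_l)$ attached to $\bar s\in R_{j_1\ptype\dots\ptype j_n,\,k}$ and $\bar r_l\in R_{i_1\ptype\dots\ptype i_m,\,j_l}$. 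Apply~(b) to tuple the $\bar r_l$ into $\bar t\in R_{i_1\ptype\dots\ptype i_m,\,j_1\ptype\dots\ptype j_n}$, and then compose with $\bar s$ in $R$ to obtain $\bar u=\bar s\circ\bar t\in R_{i_1\ptype\dots\ptype i_m,\,k}$. Unfolding the existential quantifier in the definition of $s\circ(r_1,\dots,r_n)$ and the condition defining $G(\bar u)$, one checks $s\circ(r_1,\dots,r_n)\subseteq G(\bar u)$, so the composite lies in $C_{i_1,\dots,i_m;k}$ as required.

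For independence of the choice of meet, it suffices by downward closure to show that each generator built from one choice is contained in a generator built from any other. Given two $n$-fold meets with targets $\iota$ and $\iota'$, use~(a) to produce $\alpha\in R_{\iota',\iota}$ containing $\{(a_1\wedge'\dots\wedge' a_n,\,a_1\wedge\dots\wedge a_n)\msep \vec a\}$; then for any $r\in R_{\iota,j}$, the generator $G(r)$ under $\wedge$ is contained in the generator $G(r\circ\alpha)$ under $\wedge'$, and the reverse inclusion is symmetric. The main obstacle is establishing~(b): while morally it is just the universal property of meets, translating it into the language of set-theoretic relations (rather than morphisms in $\catuord$ or predicates in the fiber) needs care, since relations in $R$ are not functional and one cannot ``tuple pointwise''. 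The cleanest route is to transport the statement across the equivalence of Lemma~\ref{lem:ufp-embed} and reduce it to a fiberwise meet computation in $\ufam(\upa)$, at which point it becomes a direct application of the fact that meets preserve inequalities.
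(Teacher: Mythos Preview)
Your proof is correct. The paper itself omits the proof entirely (the lemma is stated as ``purely technical'' and closed with a bare \qed), so there is no approach to compare against; your argument supplies precisely the routine verification the paper leaves to the reader. The two reductions you isolate---(a) uniqueness of iterated meets up to isomorphism in $\ufam(\upa)$, and (b) the tupling of relations $\bar r_l\in R_{\iota,j_l}$ into a single $\bar t\in R_{\iota,j_1\ptype\dots\ptype j_n}$ via the fiberwise meet $\pi_0\leq\pi_1\wedge\dots\wedge\pi_n$ over the joint graph---are exactly the right ingredients, and your treatment of projections, composition, and independence of the meet choice is sound.
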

\begin{examples}
\begin{itemize}
 \item The relational clone associated to (the uniform preorder associated to)
the first Kleene algebra $\pcakone$ (Example~\ref{ex:kone}) consists of all
partial subfunctions of $n$-ary partial recursive functions.
\item The relational clone associated to the primitive recursive uniform
preorder $(\N,\adcl\prim)$ (Example~\ref{ex:uords}-\ref{ex:uords-prim}) contains
all partial subfunctions of $n$-ary primitive recursive functions.
\item The relational clone associated to a meet-semilattice $A$ consists of all
$n+1$-ary relations $r\subseteq A^{n}\times A$ satisfying $a_1\wedge\dots\wedge
a_n\leq b$ for all $(a_1\dots a_n,b)\in r$.
\end{itemize}
\end{examples}

\section{The quantification
monads}\label{suse-quant-monads}\label{sec:quantification}

This section is  about the representation of the posetal $D$-construction
from Section~\ref{sec:pos-d} in terms of uniform preorders. 

This `uniform-preorder version' was actually my starting point, it is based on
the construction with the same name and characteristics used by
Hofstra~\cite{hofstra2006all} in the context of BCOs.

\begin{definition}\label{def:ufp-monad}
  Let $\upiar$ be a uniform preorder. For $i,j\in I$ and $r\in R_{ij}$, we
define $\ebracks{r}\subseteq PA_i\times PA_j$ by
 \[ \ebracks{r}=\{(M,N)\msep \forall m\in M\;\exists n\in N\qdot r(m,n)\}.\] 
The family
$(\{\ebracks{r}\msep r\in R_{ij}\})_{ij\in I}
$
is a base for a uniform preorder structure on $(I,(PA_i)_{i\in I})$, and we
denote
the generated uniform preorder by $D\upiar$.
\end{definition}
Let us clarify the connection between the $D$-construction on uniform preorders
and the $D$-construction on fibered preorders:
\begin{lemma}\label{lem:character-d-ufp}
 For any uniform preorder $\upa=\upiar$, we have
\[
 \ufam(D\upa)\simeq D(\ufam(\upa)).
\]
In particular, $\ufam(D\upa)$ has existential quantification, and if $\upb$ is a
second uniform preorder
such that $\ufam(\upb)$ has existential quantification, then we have
\[
 \catuord(\upa,\upb)\simeq\cateuord(D\upa,\upb),
\]
where $\cateuord(D\upa,\upb)$ is the preorder of monotone maps whose associated
fibered monotone maps preserve existential quantification.
\end{lemma}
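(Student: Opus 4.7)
The strategy is to prove the first assertion -- the equivalence $\ufam(D\upa)\simeq D(\ufam(\upa))$ of posetal fibrations on $\catset$ -- by exhibiting mutually inverse fiberwise constructions, and then to derive everything else formally by combining this equivalence with the local equivalence $\ufam(-)$ from Lemma~\ref{lem:ufp-embed} and the universal property of $D$ for fibered preorders from Lemma~\ref{lem:da}.

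A predicate in $\ufam(D\upa)_M$ is a pair $(i,\Phi\colon M\to PA_i)$, while a predicate in $D(\ufam(\upa))_M$ is a triple $(u\colon N\to M,\,i,\,\phi\colon N\to A_i)$. I would send $(i,\Phi)$ to the canonical span built from $N_\Phi := \{(m,a)\msep a\in\Phi(m)\}$ equipped with its two projections to $M$ and to $A_i$, and in the other direction send $(u,i,\phi)$ to the pair $(i,\,m\mapsto\{\phi(n)\msep u(n)=m\})$. These are mutually inverse up to canonical isomorphism in each fiber and both commute with reindexing (precomposition on either side), so the content is entirely in checking that the two orderings match. On the uniform-preorder side, $(i,\Phi)\leq(j,\Psi)$ unfolds by Definition~\ref{def:ufp-monad} together with the downward closure in Definition~\ref{def:base-ufp} to: there exists $r\in R_{ij}$ such that $\forall m\in M\,\forall a\in\Phi(m)\,\exists b\in\Psi(m)\qdot r(a,b)$. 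On the fibered side, the inequality asks for a span $N_\Phi\twoheadleftarrow K\to N_\Psi$ over $M$ whose image relation in $A_i\times A_j$ lies in $R_{ij}$. From a witness $r$, I build $K=\{(m,a,b)\msep a\in\Phi(m),\,b\in\Psi(m),\,r(a,b)\}$ with the two evident projections; the required surjectivity of the left leg onto $N_\Phi$ is precisely the $\forall\exists$-condition, and the image relation is contained in $r$ hence in $R_{ij}$. Conversely, from a span I take $r$ to be the image relation; surjectivity of the left leg yields the $\forall\exists$-condition.

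For the remaining assertions, once the equivalence is in place the claim follows from the chain
\[
\catuord(\upa,\upb)\simeq\catpfib(\catset)(\ufam(\upa),\ufam(\upb))\simeq\epfib(\catset)(D(\ufam(\upa)),\ufam(\upb))\simeq\epfib(\catset)(\ufam(D\upa),\ufam(\upb))\simeq\cateuord(D\upa,\upb),
\]
where the outer equivalences are instances of Lemma~\ref{lem:ufp-embed} (the last one restricted to $\exists$-preserving sub-preorders, which is literally the definition of $\cateuord$), the middle equivalence is Lemma~\ref{lem:da}-\ref{lem:da-equiv}, and the penultimate one is transport along the equivalence just established (which also yields that $\ufam(D\upa)$ has existential quantification, since $D(\ufam(\upa))$ does by Lemma~\ref{lem:da}). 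The main obstacle is the order comparison: one must carefully align the downward closure built into the base-generated uniform preorder structure on $(PA_i)_{i\in I}$ with the freedom of choosing a span in the fibered $D$-construction, so that no further closure operation is required on the fibered side. Once that correspondence is spelled out, everything else reduces to unwinding definitions.
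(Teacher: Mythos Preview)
Your proposal is correct and follows essentially the same approach as the paper: the paper constructs the forward map by pulling back $\varphi:M\to PA_i$ along the membership relation $E\hookrightarrow A_i\times PA_i$, which is exactly your $N_\Phi=\{(m,a)\msep a\in\Phi(m)\}$, and uses the same fiberwise-image map in the reverse direction. You actually provide more detail than the paper does, since you spell out the order comparison (matching the downward-closed base relations $[r]$ against the span condition) and the derivation of the universal property via Lemmas~\ref{lem:ufp-embed} and~\ref{lem:da}, whereas the paper simply asserts ``it is easy to see that these two operations give the desired equivalence'' and leaves the ``in particular'' claims implicit.
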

\begin{proof}
Given a set $M$, a predicate in $\ufam(D\upa)_M$ is a function $\varphi:M\to
PA_I$, whereas a predicate in $D(\ufam(\upa))_M$ is span
$M\xleftarrow{u}N\xrightarrow{\psi}A_i$. Given a predicate
$\varphi\in\ufam(D\upa)_M$ of the first form, we get a predicate in
$(u,\psi)\in D(\ufam(\upa))_M$ by taking the pullback
\[
 \vcenter{\xymatrix@R-3mm{
& N\pullbackcorner\ar[r]^u\ar[d]\ar[ld]_\psi & M\ar[d]^\varphi\\
A_i & E\ar[l]^{\in_1}\ar[r]_{\in_2}&PA_i
}},
\]
where ${\in}=\langle\in_1,\in_2\rangle:E\hookrightarrow A_i\times PA_i$ is the
membership predicate. Conversely, given a predicate $(u,\psi)\in
D(\ufam(\upa))_M$, we can
define a predicate $\varphi:M\to PA_i$ in $\ufam(D\upa)_M$ by $\varphi(m)=\{
\psi(n)\msep n\in u^{-1}(m)\}$. It is easy to see that these two
operations give the desired equivalence.
\end{proof}

The previous lemma implies that $D$ is a left biadjoint of the forgetful
functor $\cateuord\to\catuord$, and composing the two adjoints, we  get  a
2-monad \[
 D:\catuord\to\catuord
\]
for which we use the same name.  Let us describe the morphism part, and unit
and multiplication of $D$ explicitly.
A monotone map $(u,f):\upiar\to\upjbs$ is
mapped to $(u,Df):D\upiar\to D\upjbs$ with $Df_i(M)=\{f_im\msep m\in M\}$.  
Unit $\upiar\to D\upiar$ and multiplication $DD\upiar\to D\upiar$ are given by
indexwise singleton-map and union, respectively. Observe that these definitions
make $D$ into a \emph{2-monad}, not merely a pseudo-monad. Given a uniform
preorder $\upa$, it is easily seen that we have
\[
 \mu_\upa\adj\eta_{D\upa}:D\upa\to DD\upa
\](one only has to check that $\id_{DD\upa}\leq \eta_{D\upa}\circ\mu_D$), which
means that the monad is a \emph{KZ-monad}~\cite[Definition~B1.1.11]{elephant1}.
Recall that for KZ-monads, Eilenberg-Moore algebra structures coincide
with left adjoints to the
unit, whence algebras are not objects with supplementary structure, but rather
objects with a property. In the case of $D$, algebras give us an `internal' way
to talk about existential quantification, without mentioning the fibered
preorder.
\begin{lemma}\label{lem:quant-alg}
A uniform preorder $\upb=\upjbs$ is a $D$-algebra iff $\ufam(\upb)$ has
existential quantification.
\end{lemma}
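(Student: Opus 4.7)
The plan is to transport the statement across the local equivalence $\ufam(-):\catuord\to\catpfib(\catset)$ of Lemma~\ref{lem:ufp-embed}, and then use that $D$ is a KZ-monad together with the universal property of $D$ on fibered preorders (Lemma~\ref{lem:da}) to identify algebras with existential quantification.

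First I would check that the equivalence $\ufam(D\upa)\simeq D(\ufam(\upa))$ of Lemma~\ref{lem:character-d-ufp} is pseudo-natural in $\upa$ and intertwines the unit and multiplication of the $D$-monad on $\catuord$ with those of the $D$-monad on $\catpfib(\catset)$. Indeed, under the equivalence, the uniform-preorder unit $\eta_\upa$ (indexwise singleton map) corresponds to the fibered-preorder unit $y$ of Lemma~\ref{lem:da}, since $\eta_\upa$ sends $\varphi:M\to A_i$ to $m\mapsto\{\varphi m\}$, which matches the pullback description $(\id_M,\varphi)$; a similar comparison for indexwise union versus existential quantification identifies the multiplications. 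Once this is in place, an Eilenberg-Moore algebra structure on $\upb$ in $\catuord$ corresponds to an Eilenberg-Moore algebra structure on $\ufam(\upb)$ for the $D$-monad on $\catpfib(\catset)$.

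Next, I would invoke KZ-ness. Since $\mu_\upa\adj\eta_{D\upa}$, the monad $D$ is KZ, both on $\catuord$ and, by the same argument, on the image of $\ufam$. For a KZ-monad, carrying an algebra structure is equivalent to the unit admitting a left adjoint, and the left adjoint \emph{is} the structure map. So the problem reduces to the following: for a fibered preorder $\fifb$ of the form $\ufam(\upb)$, the unit $y:\fifb\to D\fifb$ has a left adjoint if and only if $\fifb$ has existential quantification.

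For the ``$\Leftarrow$'' direction, assuming $\fifb$ has existential quantification, I would apply Lemma~\ref{lem:da}-\ref{lem:da-equiv} to the identity $\id:\fifb\to\fifb$ (which trivially preserves $\exists$ in the codomain) to obtain an $\exists$-preserving extension $L:=\widetilde{\id}:D\fifb\to\fifb$ explicitly given by $L(u,\varphi)=\exists_u\varphi$. The adjunction $L\dashv y$ is the chain of equivalences $L(u,\varphi)\leq\psi\;\Leftrightarrow\;\exists_u\varphi\leq\psi\;\Leftrightarrow\;\varphi\leq u^*\psi\;\Leftrightarrow\;(u,\varphi)\leq(\id,\psi)=y\psi$, using the definition of the ordering in $D\fifb$. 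For the ``$\Rightarrow$'' direction, given $L\dashv y$, I would define $\exists_u\varphi:=L(u,\varphi)$ for $u:J\to I$ and $\varphi\in\fifb_J$; the above chain of equivalences, read in reverse, shows $\exists_u\dashv u^*$, while the Beck-Chevalley condition follows because $L$, being a fibered monotone map, commutes with reindexing, and the unit $y$ commutes with reindexing by construction.

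The main obstacle I anticipate is the bookkeeping in the first step: making the coherence of the equivalence $\ufam(D\upa)\simeq D(\ufam(\upa))$ with units and multiplications precise enough to conclude that it descends to an equivalence of Eilenberg-Moore 2-categories. Once that is set up, the remainder is a direct application of the KZ-algebra/left-adjoint dictionary and of Lemma~\ref{lem:da}.
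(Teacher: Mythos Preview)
Your proposal is correct, and the last two paragraphs contain essentially the same argument as the paper. The difference is that you front-load unnecessary machinery: you do not need to check that the equivalence $\ufam(D\upa)\simeq D(\ufam(\upa))$ is compatible with the \emph{multiplication}, nor to set up an equivalence of Eilenberg--Moore $2$-categories. Since $D$ is KZ on $\catuord$ (as noted just before the lemma), being a $D$-algebra is already equivalent to $\eta_\upb$ having a left adjoint \emph{in $\catuord$}; because $\ufam$ is a local equivalence (Lemma~\ref{lem:ufp-embed}), this holds iff $\ufam(\eta_\upb)$ has a left adjoint, and only the compatibility $\ufam(\eta_\upb)\cong y$ of \emph{units} is needed to reduce to your final step. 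So the ``main obstacle'' you flag is a non-obstacle.

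The paper's own proof is even leaner. For the direction $\exists\Rightarrow\text{algebra}$ it simply invokes the universal property $\catuord(\upb,\upb)\simeq\cateuord(D\upb,\upb)$ of Lemma~\ref{lem:character-d-ufp} and takes the transpose of $\id_\upb$ as the structure map (KZ-ness ensures this really is an algebra). For $\text{algebra}\Rightarrow\exists$ it transports the structure map $\bigvee:D\upb\to\upb$ along $\ufam$ and the equivalence of Lemma~\ref{lem:character-d-ufp}, then sets $\exists_u\varphi:=\bigvee_M(u,\varphi)$, leaving the adjunction and Beck--Chevalley verifications implicit. Your explicit verification of Beck--Chevalley (via $L$ commuting with reindexing and the description of reindexing in $D\fifb$ by pullback) is a genuine addition over what the paper spells out.
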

\begin{proof}
This is not completely trivial, since we do not a priori know whether the
adjunction between $\catuord$ and $\cateuord$ is monadic (at least I don't know
of a generic argument to show this).

In the easy direction, if $\upb$ has $\exists$, we have
$\catuord(\upb,\upb)\simeq\cateuord(D\upb,\upb)$ and the algebra map is the
transpose of $\id_\upb$.

Conversely, a $D$-algebra structure $\bigvee:D\upb\to \upb$ induces a fibered
monotone map
\[
\bigvee:D(\ufam(\upb))\simeq\ufam(D\upb)\xrightarrow{\ufam(\bigvee)}
\ufam(\upb),
\]
and the quantification of $\varphi:N\to B_j$ along $u:N\to M$ is given by
$\bigvee_M(u,\varphi)$.
\end{proof}
\begin{example}\label{ex:pca-d-hyp-trip}
 Given a typed pca $\pcaia$, the fibration $\ufam(D\ufpcaia)$ associated
to the uniform preorder $D\ufpcaia$ precisely the realizability hyperdoctrine
$\hyph\pcaia$ from
Definition~\ref{def:realizability-hyperdoctrine}-
\ref{def:realizability-hyperdoctrine-real}.

Given an untyped pca $\pcaa$, $\ufam(D\ufpcaa)$ is the realizability tripos
$\rtr{\pcaa}$
(Definition~\ref{def:realizability-tripos}-\ref{def:realizability-tripos-real}).
\end{example}
\begin{remark}\label{rem:sym-mon-mon}
 While not essential for this work, it is interesting to know that $D$ is even
a \emph{symmetric monoidal} monad\index{symmetric monoidal monad}. This can most easily seen by remarking that
$D$ is given by the power set monad on underlying sets, which is known to be
monoidal from~\cite{kock1972strong}. Concretely, the monoidal structure is
given by
\begin{align*}
\phi_{\upa,\upb}=(\id_{I\times J},p)&:D\upa\times D\upb\to D(\upa\times\upb)
&\text{with}\\
p_{i,j}&:PA_i\times PA_j\to P(A_i\times A_j)&\text{given by}\\
&(M,N)\mapsto M\times N ,
\end{align*}
where $\upa=\upiar$ and $\upb=\upjbs$ are uniform preorders.
\end{remark}

\begin{remark}\label{rem:forall-in-ufps}
Since a fibered preorder has universal quantification iff its opposite fibered
preorder has existential quantification, and $\catufp$ is closed
under $(-)^\op$, we can define a \emph{universal quantification monad}\index{universal quantification monad} $U$ by
simply dualizing the definition of $D$.

Explicitly, given a uniform preorder $\upiar$, $U\upiar$ is defined by setting 
\[
\abracks{r}=\{(M,N)\in PA_i\times PA_j\msep \forall n\in N\;\exists m\in M\qdot
r(m,n)\}\quad\text{for $i,j\in I$, $r\in R_{ij}$,}\\
\]
and by setting $U\upiar$ the uniform preorder generated by the basis
$(I,(PA_i)_i,(\{\abracks{r}\msep r\in R_{ij}\})_{ij})$.
\end{remark}

\subsection{Uniform frames}\label{sec:uniform-frames}

\begin{definition}\label{def:uniform-frame}
\begin{itemize}
 \item A \emph{uniform frame}\index{uniform frame}\index{frame!uniform} is a uniform preorder $\upa$
such that $\ufam(\upa)$ is a fibered frame
(Definition~\ref{def:fibered-frame}).
\item $\catufrm$ is the locally ordered category of uniform frames and
monotone maps that preserve finite meets and existential quantification.
\end{itemize}
\end{definition}
\begin{remark}
 If $\upa$ is a uniform preorder, then $\ufam(\upa)$ has finite meets iff
$\upa$ is finitely complete, and $\ufam(\upa)$ has existential quantification
iff $\upa$ is a $D$-algebra. If we want to completely `internalize' the
definition of uniform frame (i.e.\ express it without referring to the family
fibration), then we still have to internalize the Frobenius law. This can be
done using the monoidal structure of $D$ from Remark~\ref{rem:sym-mon-mon}.
Concretely, if $\upa$ is a finitely complete uniform preorder with $D$-algebra
structure $\bigvee\adj\eta_{D\upa}$, one can show that the Frobenius law holds
in $\ufam(\upa)$ iff the two paths around the rectangle
\[
    \vcenter{\xymatrix@R-2mm@C+1mm{
      D\upa\times D\upa \ar[r]^{\phi_{\upa,\upa}} &
D(\upa\times\upa)\ar[r]^-{D(\ptype,\wedge)} & D\upa\ar[d]^{\bigvee}\\
      \upa\times D\upa\ar[u]_{\eta\times\id}\ar[r]^{\id\times{\bigvee}} &
\upa\times\upa \ar[r]^-{(\ptype,\wedge)} & \upa }}
    \]
are isomorphic.
\end{remark}
By specializing Lemma~\ref{lem:msl-to-frame} from fibered to uniform
preorders,
we can deduce that $D$ is well behaved in relation to conjunction, which gives
us a way to construct uniform frames.
\begin{lemma}\label{lem:ufp-d-meet}
If $\upa$ is a finitely complete uniform preorder, then
\begin{itemize}
 \item $D\upa$ is a uniform frame,
 \item $y:\upa\to D\upa$ preserves finite meets, and
 \item for any uniform frame $\upb$, precomposition with $y$ induces an
equivalence
\[
 \catmuord(\upa,\upb)\simeq\catufrm(D\upa,\upb)
\]
of preorders.
\end{itemize}
\end{lemma}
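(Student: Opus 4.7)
The plan is to transport Lemma~\ref{lem:msl-to-frame} across the local equivalence $\ufam(-) : \catuord \to \catpfib(\catset)$ from Lemma~\ref{lem:ufp-embed}. Since $\upa$ is finitely complete, $\ufam(\upa)$ is a pre-stack (it comes from a uniform preorder, hence satisfies the pre-stack criterion of Lemma~\ref{lem:reconstuct-ufp}) of meet-semilattices on $\catset$; so the hypotheses of Lemma~\ref{lem:msl-to-frame} are met. By Lemma~\ref{lem:character-d-ufp} we have an equivalence $\ufam(D\upa)\simeq D(\ufam(\upa))$ of fibered preorders on $\catset$, and Lemma~\ref{lem:msl-to-frame} tells us that $D(\ufam(\upa))$ is a fibered frame with finite-meet-preserving embedding $y:\ufam(\upa)\to D(\ufam(\upa))$ enjoying the universal property
\[
\mpfib(\catset)(\ufam(\upa),\fifx)\simeq\ffrm(\catset)(D(\ufam(\upa)),\fifx)
\]
for every fibered frame $\fifx$ on $\catset$.

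From this, the first two claims follow immediately. Since $\ufam(D\upa)$ is a fibered frame, $D\upa$ is a uniform frame by Definition~\ref{def:uniform-frame}. The embedding $y:\upa\to D\upa$ sending $a\in A_i$ to the singleton $\{a\}\in PA_i$ corresponds, under the equivalence $\ufam(D\upa)\simeq D(\ufam(\upa))$ established in the proof of Lemma~\ref{lem:character-d-ufp}, to the fibered Yoneda-style embedding; hence it preserves finite meets because the fibered $y$ does, and because $\ufam(-)$ is locally order-reflecting (Lemma~\ref{lem:ufp-embed}) preservation can be checked fiberwise. For the universal property, observe that a monotone map $F:D\upa\to\upb$ into a uniform frame $\upb$ preserves finite meets and existential quantification iff $\ufam(F):\ufam(D\upa)\to\ufam(\upb)$ does so: preservation of finite meets transfers via Lemma~\ref{lem:ufp-embed} because $\ufam(-)$ preserves products, and preservation of existential quantification transfers via Lemma~\ref{lem:quant-alg} (or equivalently by reading off the $D$-algebra structure). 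Composing the resulting restriction of the fibered universal property with the equivalence $\ufam(D\upa)\simeq D(\ufam(\upa))$ yields the desired equivalence
\[
\catmuord(\upa,\upb)\simeq\catufrm(D\upa,\upb).
\]

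The main obstacle is purely one of bookkeeping: verifying that the local equivalence $\ufam(-)$ cuts down to a local equivalence between $\catmuord$ and pre-stacks of meet-semilattices with finite-meet preserving fibered maps, and between $\catufrm$ and $\ffrm(\catset)$. This amounts to checking that a monotone map in $\catuord$ preserves uniform finite meets (resp.\ the $D$-algebra structure, Frobenius reciprocity) precisely when its associated fibered monotone map preserves the corresponding fibered structure, which follows by chasing the definitions through Lemma~\ref{lem:ufp-embed} together with the description of finite meets and $\exists$ in $\ufam(\upa)$ and $\ufam(D\upa)$ given before Definition~\ref{def:catmuord} and in Lemma~\ref{lem:quant-alg}.
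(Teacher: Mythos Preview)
Your proof is correct and follows exactly the same approach as the paper: transport Lemma~\ref{lem:msl-to-frame} across the local equivalence $\ufam(-):\catuord\to\catpfib(\catset)$, using Lemma~\ref{lem:character-d-ufp} to identify $\ufam(D\upa)$ with $D(\ufam(\upa))$. The paper's proof is a single sentence to this effect, whereas you have spelled out the bookkeeping in more detail, but the substance is identical.
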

\begin{proof}
 This follows from Lemma~\ref{lem:msl-to-frame}, since the occuring categories
of uniform preorders can be identified with full subcategories of the
corresponding categories of posetal pre-stacks on $\catset$ in a way which
preserves all relevant structure.
\end{proof}
\begin{convention}
When applying the constructions of Section~\ref{sec:preordered-case} to
the family fibrations of finitely complete uniform preorders $\upa$ and uniform
frames $\upb$, we usually leave the $\ufam(-)$ implicit. Thus,
\begin{itemize}
\item $\catset[\upb]$ is the category of partial equivalence relations in
$\ufam(\upb)$, and $\sheaf{\upb}$ is the corresponding gluing fibration
 \item $\srel{\catset}{\upa}$ is the category of equivalence relations in
$\siev(\ufam(\upa))$, with corresponding gluing fibration $\widehat{\upa}$
\end{itemize}
In particular, we have $\srel{\catset}{\upa}\simeq\catset[D\upa]$ by
Corollary~\ref{cor-hat-sh-d}.
\end{convention}

\subsection{Preservation of logical structure by \texorpdfstring{$D$}{D}}

In Lemma~\ref{lem:ufp-d-meet}, we showed that if a uniform preorder $\upa$ has
finite meets, then so does $D\upa$, and furthermore $y:\upa\to D\upa$ preserves
them. Analogous statements are true for implication and universal
quantification, as we show now.
\begin{lemma}
 Let $\upa=\upiar$ be a uniform preorder such that $\ufam(\upa)$ has universal
quantification. Then $\ufam(D\upa)$ has universal quantification as well, and
$y:\upa\to D\upa$ preserves it.
\end{lemma}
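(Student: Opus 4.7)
The plan is to transfer the universal quantification on $\ufam(\upa)$ through the equivalence $\ufam(D\upa)\simeq D(\ufam(\upa))$ of Lemma~\ref{lem:character-d-ufp} via an explicit pointwise construction. Write $\widehat\forall$ for universal quantification in $\ufam(\upa)$, fixing a cleavage so that sort-$i$ predicates on $M$ are sent along $u:M\to K$ to predicates of a canonical sort $\tilde\imath$ on $K$. Given $u:M\to K$ and $\varphi\in\ufam(D\upa)_M$ represented as $\varphi:M\to PA_i$, let
\[
\mathcal{C}_\varphi \;=\; \{f:M\to A_i \mid \forall m\in M,\; f(m)\in\varphi(m)\}
\]
be the set of global choice functions of $\varphi$. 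Each such $f$ is a predicate in $\ufam(\upa)_M$ of sort $i$, so $\widehat{\forall}_u f \in\ufam(\upa)_K$ is a predicate of sort $\tilde\imath$; I then set
\[
(\forall_u\varphi)(k) \;=\; \{\,\widehat{\forall}_u f(k) \mid f\in\mathcal{C}_\varphi\,\}\subseteq A_{\tilde\imath},
\]
which defines an element of $\ufam(D\upa)_K$ of sort $\tilde\imath$.

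Next I would verify the adjunction $u^*\dashv\forall_u$, namely $u^*\beta\leq\varphi$ iff $\beta\leq\forall_u\varphi$ for $\beta:K\to PA_j$. For the ``$\Leftarrow$'' direction, a generator $\ebracks{r_0}$ of a tracking relation for $\beta\leq\forall_u\varphi$ supplies, for each $k$ and $b\in\beta(k)$, an $f\in\mathcal{C}_\varphi$ together with $(b,\widehat{\forall}_u f(k))\in r_0$; reindexing along $u$ and applying the counit of $u^*\dashv\widehat{\forall}_u$ in $\ufam(\upa)$ yields $f(m)\in\varphi(m)$ and a uniform relation in $R_{j,i}$ witnessing $u^*\beta\leq\varphi$. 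In the ``$\Rightarrow$'' direction, a relation $s\in R_{j,i}$ tracking $u^*\beta\leq\varphi$ assigns, uniformly in $m\in M$ and in each $b\in\beta(u(m))$, an element of $\varphi(m)$; collecting these produces a $\beta$-indexed family of choice functions, and applying $\widehat\forall_u$ pointwise in the parameter $b$ yields the required generator in $R_{j,\tilde\imath}$ for a tracking relation of $\beta\leq\forall_u\varphi$. Beck--Chevalley for $\forall_u$ then follows from its counterpart for $\widehat\forall$ together with the evident naturality of $\mathcal{C}_\varphi$ under pullbacks in $\catset$.

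Finally, preservation of $\forall$ by $y:\upa\to D\upa$ is immediate from the construction: for $\psi:M\to A_i$ the singleton-valued predicate $y(\psi)(m)=\{\psi(m)\}$ has $\mathcal{C}_{y(\psi)}=\{\psi\}$, so $(\forall_u y(\psi))(k)=\{\widehat{\forall}_u\psi(k)\}=y(\widehat{\forall}_u\psi)(k)$. The main obstacle is the ``$\Rightarrow$'' direction of the adjunction: one must convert a single uniform $\upa$-relation into a uniform $D\upa$-relation, necessarily of the form $\ebracks{-}$, and this reorganisation rests on being able to quantify universally in $\ufam(\upa)$ over the parameter $b$ that ranges through the realizers of $\beta$ -- which is exactly what the hypothesis on $\ufam(\upa)$ provides.
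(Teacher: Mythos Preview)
Your construction has a genuine gap: by using \emph{global} choice functions $\mathcal{C}_\varphi=\{f:M\to A_i\mid \forall m\, f(m)\in\varphi(m)\}$, the value $(\forall_u\varphi)(k)$ at a point $k$ depends on $\varphi$ over \emph{all} of $M$, not just over the fibre $u^{-1}(k)$. If $\varphi(m_0)=\emptyset$ for a single $m_0$, then $\mathcal{C}_\varphi=\emptyset$ and your $(\forall_u\varphi)(k)$ is empty for \emph{every} $k$, including those with $m_0\notin u^{-1}(k)$. This cannot be a right adjoint to $u^*$: take $\varphi$ with $\varphi(m)=\{a_0\}$ for $m\notin u^{-1}(k_0)$ and $\varphi(m)=\emptyset$ otherwise; there are plenty of $\beta$ with $u^*\beta\le\varphi$ (namely any $\beta$ with $\beta(k_0)=\emptyset$ and $\beta(k)\subseteq\{a_0\}$ elsewhere), but none of them lie below the everywhere-empty predicate. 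The same globality also makes the construction depend on the axiom of choice, which the paper avoids throughout.

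The flaw shows up exactly where you flag the difficulty: in the ``$\Rightarrow$'' direction, a witness $s\in R_{j,i}$ for $u^*\beta\le\varphi$ only tells you that for each $m$ and each $b\in\beta(u(m))$ there is \emph{some} $c\in\varphi(m)$ with $s(b,c)$. The ``$\beta$-indexed family of choice functions'' you extract is only defined on $\{m\mid b\in\beta(u(m))\}$, so it is not an element of $\mathcal{C}_\varphi$ and you cannot apply $\widehat\forall_u$ to it. The paper's proof sidesteps all of this by working with the $U$-monad algebra structure: it defines $\widetilde{\bigwedge}_i:PPA_i\to PA_{\pi i}$ by
\[
\widetilde{\textstyle\bigwedge}_i\,\mcm \;=\; \bigl\{\,{\textstyle\bigwedge}_i U \;\bigm|\; U\subseteq\textstyle\bigcup\mcm,\ \forall M\in\mcm\;\exists a\in M\;a\in U\,\bigr\},
\]
i.e.\ it ranges over \emph{local selectors} (subsets meeting every member of the fibre's family) rather than global choice functions. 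This is fibre-by-fibre and choice-free, and the required inequalities reduce to the unit/counit of the original $U$-algebra. Your formula for $y$-preservation is fine and agrees with the paper's, but the definition of $\forall_u$ needs to be localised to fibres to work.
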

 \begin{proof}
  We show that if $\upa$ has an algebra structure for the universal
quantification monad $U$ from Remark~\ref{rem:forall-in-ufps}, then so does
$D\upa$. Assume that $(\pi,\bigwedge):U\upiar\to\upiar$ is such an algebra
structure. The algebra structure on $D\upiar$ is given by
$(\pi,\widetilde{\bigwedge}):UD\upiar\to D\upiar$, where
$\widetilde{\bigwedge}_i:PPA_i\to PA_{\pi i}$ is given by 
\[
 \widetilde{\bigwedge}_i\mcm = \{{\bigwedge}_i U\msep
U\subseteq\bigcup\mcm,\forall M\in\mcm\;\exists a\in M\qdot a\in
U\}\qquad\text{for }\mcm\subseteq PA_i.
\]
It is not difficult to see that $(\pi,\widetilde{\bigwedge})$ is monotone and
right adjoint to $\eta:D\upa\to UD\upa$ (algebra structures are right adjoint
to the unit in the case of $U$, since it is a dualized KZ-monad). Furthermore,
$y:\upa\to D\upa$ is a strict $U$-algebra morphism with respect to the algebra
structures  $(\pi,\bigwedge)$ and $(\pi,\widetilde{\bigwedge})$, which follows
from the construction of $\widetilde{\bigwedge}$.
\end{proof}
\begin{lemma}\label{lem:d-imp-forall}
 Let $\upa=\upiar$ be a uniform preorder such that $\ufam(\upa)$ has
implication \emph{and} universal
quantification. Then $\ufam(D\upa)$ has implication and universal quantification
as well, and
$y:\upa\to D\upa$ preserves both connectives.
\end{lemma}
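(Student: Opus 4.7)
The preservation of universal quantification is identical to the preceding lemma: the construction of a $U$-algebra structure on $D\upa$ from that on $\upa$, and the verification that $y$ is a strict morphism of $U$-algebras, go through verbatim. The substance of the present lemma is therefore implication. I plan to work via the equivalence $\ufam(D\upa)\simeq D\fifa$ (with $\fifa:=\ufam(\upa)$) of Lemma~\ref{lem:character-d-ufp}, representing predicates in $D\fifa_I$ as pairs $(u\colon J\to I,\varphi\in\fifa_J)$ and using Lemma~\ref{lem:da}-\ref{lem:da-generated} to write every such predicate as $\exists_u y\varphi$.

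My plan is to define implication in $D\fifa$ by two nested formulas. First, for $\alpha\in\fifa_N$ and $\gamma'\in D\fifa_N$ represented as $\gamma'=\exists_v y\beta$ with $v\colon L\to N$ and $\beta\in\fifa_L$, set
\[
  y\alpha \;\imp\; \exists_v y\beta \;:=\; \exists_v\,y(v^*\alpha \imp \beta),
\]
which is well-defined because $\fifa$ has $\wedge$ and $\imp$. Second, for arbitrary $\phi\in D\fifa_I$ represented as $\phi=\exists_u y\alpha$, set
\[
  \phi \;\imp\; \gamma \;:=\; \forall_u\bigl(y\alpha \imp u^*\gamma\bigr),
\]
with the outer $\forall_u$ supplied by the preceding lemma and the inner implication reduced to the first formula.

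The Heyting adjunction $\xi\wedge\phi\leq\gamma\iff\xi\leq\phi\imp\gamma$ in $D\fifa$ then splits into two steps. For the first formula, direction $(\Rightarrow)$ is a direct Frobenius computation:
\[
  \exists_v y(v^*\alpha\imp\beta)\wedge y\alpha \;\cong\; \exists_v\,y\bigl((v^*\alpha\imp\beta)\wedge v^*\alpha\bigr) \;\leq\; \exists_v y\beta,
\]
using that $y$ commutes with $\wedge$ and with reindexing and the $\imp$-adjunction in $\fifa$. Direction $(\Leftarrow)$ is the technically delicate point: given $\xi=\exists_u y\tau$ with $\exists_u y(\tau\wedge u^*\alpha)\leq\exists_v y\beta$, the definition of $\leq$ in $D\fifa$ produces a span $P\twoheadleftarrow Q\to L$ with epi left leg $e$, satisfying $ue=vf$ and $e^*(\tau\wedge u^*\alpha)\leq f^*\beta$; the identity $e^*u^*\alpha=f^*v^*\alpha$ together with the $\imp$-adjunction in $\fifa$ then yields $e^*\tau\leq f^*(v^*\alpha\imp\beta)$, so the very same span witnesses $\exists_u y\tau\leq\exists_v y(v^*\alpha\imp\beta)$. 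For the second formula, the adjunction follows by pure adjunction chasing:
\[
  \xi\leq\forall_u(y\alpha\imp u^*\gamma)\iff u^*\xi\leq y\alpha\imp u^*\gamma\iff u^*\xi\wedge y\alpha\leq u^*\gamma\iff \xi\wedge\exists_u y\alpha\leq\gamma,
\]
using $u^*\dashv\forall_u$ and $\exists_u\dashv u^*$ in $D\fifa$ together with Frobenius in the last step.

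Independence of the chosen representations is automatic from uniqueness of right adjoints. Preservation by $y$ is immediate: for $\alpha,\beta\in\fifa_I$, writing $y\beta=\exists_{\id_I}y\beta$ and applying the first formula gives $y\alpha\imp y\beta=\exists_{\id_I}y(\id^*\alpha\imp\beta)=y(\alpha\imp\beta)$. The main obstacle is direction $(\Leftarrow)$ of the first formula — everything else is adjunction chasing — and its resolution hinges on the compatibility $ue=vf$ of the witnessing span with the bases of the two $\exists$-representations, which is precisely what lets the internal $\imp$ of $\fifa$ be transported through $e$ and $f$ to produce the witnessing span for $\xi\leq\exists_v y(v^*\alpha\imp\beta)$.
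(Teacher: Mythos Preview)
Your proof is correct and takes a genuinely different route from the paper's. The paper constructs implication in $D\upa$ explicitly at the element level: for $\varphi:M\to PA_i$ and $\psi:M\to PA_j$ it sets
\[
(\varphi\imp\psi)(m)=\Bigl\{{\textstyle\bigwedge}_{i\imp j}\{a\imp_{ij}b\mid(a,b)\in r\}\;\Big|\;r\in\trel(\varphi(m),\psi(m))\Bigr\},
\]
using the implication map $\imp_{ij}:A_i\times A_j\to A_{i\imp j}$ and the $U$-algebra structure $\bigwedge$ of $\upa$. You instead work entirely inside the fibered preorder $D\fifa$, exploiting that every predicate factors as $\exists_u y\alpha$ and reducing the Heyting adjunction to one span manipulation (your $(\Leftarrow)$ step for the first formula, which correctly pivots on $ue=vf$ together with stability of $\imp$ under reindexing) followed by pure adjunction chasing. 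Your approach is cleaner and more conceptual; the paper's explicit formula, on the other hand, makes visible how \emph{total relations} enter the construction --- a theme that recurs in Lemma~\ref{lem:eda-ccc} and Theorem~\ref{theo:rel-compl} --- and gives a concrete description of the sort of the implication ($\pi(i\imp j)$) that is handy for later computations with uniform preorders.
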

\begin{proof}
Assume that $\upiar$ has $\imp$ and $\forall$. It remains to show that
$D\upiar$ has $\imp$. For
$i,j\in I$, the projections
$
 A_i \xleftarrow{\pil}A_i\times A_j\xrightarrow{\pir}A_j
$
are predicates on $A_i\times A_j$. We denote the sort of $\pil\imp\pir$ by
$i\simp j$, and thus we have a map $(\imp_{ij}):=(\pil\imp\pir):A_i\times A_j\to
A_{i\imp j}$.

Given predicates $\varphi:M\to PA_i$, $\psi:M\to PA_j$, we construct the
implication of $\varphi$ and $\psi$ by
\[
(\varphi\imp\psi)(m)=\left\{{\bigwedge}_{i\imp j}\left\{a\imp_{ij}b\msep
(a,b)\in r\right\}\msep r\in\trel(\varphi(m),\psi(m)) \right\} \subseteq
A_{\pi(i\imp j)},
\]
where $\trel(\varphi(m),\psi(m))$ is the set of total relations from
$\varphi(m)$ to $\psi(m)$, and $(\pi,\bigwedge):U\upa\to\upa$ is the $U$-algebra
structure representing universal quantification.

It is not difficult to see that this predicate has the desired property, and
preservation of $\imp$ follows again from the definition.
\end{proof}
Since regular triposes correspond
to one-sorted uniform preorders $\upa$ such that $\ufam(\upa)$ has finite
meets, implication, and universal quantification, the previous lemma implies in
particular that $D\trip$ is a tripos for every regular tripos
$\trip:\tot{\trip}\to\catset$. Moreover, we can show the following.
\begin{corollary}\label{cor:tripos-subtripos-free-tripos}
 Every regular tripos is a subtripos of a regular tripos with freely generated
existential quantification.
\end{corollary}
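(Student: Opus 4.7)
The plan is simply to apply the $D$-construction to the given tripos. Let $\trip$ be a regular tripos; by the correspondence of Lemma~\ref{lem:reconstuct-ufp}, we may represent $\trip\simeq\ufam(\upa)$ for a one-sorted uniform preorder $\upa$ whose family fibration carries finite meets, implication, universal and existential quantification. The key idea is that $D\upa$ will be the enveloping tripos with freely generated $\exists$, and the unit $y\colon\upa\to D\upa$ of the $D$-adjunction will exhibit $\upa$ as the sub-tripos.

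First I would verify that $\ufam(D\upa)$ is indeed a regular tripos. Finite meets and existential quantification come from Lemma~\ref{lem:ufp-d-meet} (recall $D\upa$ is a uniform frame whenever $\upa$ is finitely complete); implication and universal quantification are transported from $\upa$ via Lemma~\ref{lem:d-imp-forall}. The pre-stack condition is automatic, since $\ufam(D\upa)$ arises from a uniform preorder and every such family fibration is a pre-stack by Lemma~\ref{lem:reconstuct-ufp}; a generic predicate is inherited from the one-sorted structure of $\upa$ using the explicit description of $D$ in Definition~\ref{def:ufp-monad}. Freeness of $\exists$ in $D\upa$ is then exactly the universal property recorded in Lemma~\ref{lem:da}-\ref{lem:da-equiv}.

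Second I would justify that $y\colon\upa\to D\upa$ witnesses $\trip$ as a sub-tripos of $\ufam(D\upa)$. Order-reflection follows from Lemma~\ref{lem:da}; preservation of finite meets is part of Lemma~\ref{lem:ufp-d-meet}; preservation of implication and universal quantification is built into the constructions used in the proof of Lemma~\ref{lem:d-imp-forall} (where the $\imp$ and $\forall$ on $D\upa$ are defined so that $y$ is a strict $U$-algebra morphism and commutes with the implication maps). These properties together exhibit the image of $y$ as a sub-fibered-preorder of $\ufam(D\upa)$ closed under $\top,\wedge,\imp,\forall$ and carrying its own existential quantification inherited from $\upa$, which is the relevant notion of sub-tripos here.

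The main subtlety is that $y$ does \emph{not} in general preserve existential quantification; this is consistent with (and in fact forced by) the statement, since the $\exists$ of $D\upa$ is freely generated and so cannot be expected to agree with the pre-existing $\exists$ of $\upa$. For a cleaner intrinsic description of the embedded sub-tripos, one can note by Lemma~\ref{lem:epstack-prime}-\ref{lem:epstack-prime-d-image} that the image of $y$ lands in the $\exists$-prime predicates of $D\upa$, and is up to reindexing exactly the subfibration of $\exists$-primes (Lemma~\ref{lem:epstack-prime}-\ref{lem:epstack-prime-in-d-image}); this identification is the one nontrivial point to spell out, everything else being a direct appeal to the preceding lemmas.
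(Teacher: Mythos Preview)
Your identification of $D\upa$ as the enveloping tripos and your verification that $\ufam(D\upa)$ is again a regular tripos are correct and match the paper. The gap is in what you take ``subtripos'' to mean. You argue that $y:\upa\to D\upa$ is an order-reflecting embedding preserving $\top,\wedge,\imp,\forall$ (but not $\exists$), and declare that to be ``the relevant notion of sub-tripos here''. It is not: the corollary is meant in the geometric sense, i.e.\ a \emph{geometric inclusion} of triposes, so that it induces a subtopos inclusion $\catset[\trip]\hookrightarrow\catset[D\trip]$ on the topos level (this is exactly how the paper uses the result immediately afterwards). An embedding preserving $\top,\wedge,\imp,\forall$ but equipped with its own incompatible $\exists$ does not give that.

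What the paper does, and what is missing from your argument, is to exhibit the \emph{left adjoint} to $y$. Since $\ufam(\upa)$ already has existential quantification, $\upa$ carries a $D$-algebra structure $\bigvee:D\upa\to\upa$ (Lemma~\ref{lem:quant-alg}); because $D$ is a KZ-monad, $\bigvee\dashv y$, and because $\bigvee$ is an algebra map it satisfies $\bigvee\circ y\cong\id_\upa$. Lemma~\ref{lem:ufp-d-meet} (applied to the algebra map, which is the transpose of $\id_\upa$) shows $\bigvee$ preserves finite meets. Thus $\bigvee\dashv y$ is a geometric inclusion of triposes, which is the intended statement. Your discussion of $\exists$-primes is a nice observation but tangential; the essential point is the adjunction $\bigvee\dashv y$, not the structure preserved by $y$ alone.
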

\begin{proof}
 Let $\upa$ be a uniform preorder representing a tripos. Since $\ufam(\upa)$ has
existential quantification, by Lemma~\ref{lem:quant-alg} there exists 
$D$-algebra structure $\bigvee:D\upa\to\upa$ on $\upa$ which is given by
transposing  $\id_\upa$ as in Lemma~\ref{lem:character-d-ufp}, and is left
adjoint to $y:\upa\to D\upa$ since $D$ is a KZ-monad. Furthermore, it follows
from Lemma~\ref{lem:ufp-d-meet} that $\bigvee$ preserves finite meets.
Since algebra maps are left pseudoinverse to units by definition, we thus
have a geometric inclusion
\[
 \bigvee\adj y:\upa\to D\upa,
\]
where $D\upa$ is a tripos by the previous lemma.
\end{proof}
Since inclusions of triposes induce inclusions of toposes via the
tripos-to-topos construction, we thus obtain a subtopos inclusion
\[
\catset[\trip] \hookrightarrow \catset[D\trip]\simeq\catset\{\trip\}
\]
which can be viewed as  tripos-theoretic
analogue of the statement that any Grothendieck topos is a subtopos of a
presheaf topos. Every subtopos inclusion induces a Lawvere-Tierney topology on
the larger topos, and a \emph{quasitopos}\index{quasitopos} of separated objects for this
topology. In the case of the above inclusion, it can be shown that this
quasitopos is equivalent to the \emph{q-topos} associated to $\trip$ as defined
in \cite[Definition~5.1]{frey2011}. Thus, q-toposes constructed from regular
triposes on $\catset$ are always quasitoposes.

\subsection{Computability structures and the monad
\texorpdfstring{$\dplus$}{D+}}\label{sec:cstruct}

Hofstra~\cite{hofstra2006all} defines, apart from the monad
$D$, a monad
$\dplus$ -- the `nonempty downset monad'\footnote{The definition can be traced
back to Hofstra's thesis~\cite{hofstra2003completions}\index{nonempty downset monad} where he defines an
analogous monad $T$ on ordered pcas.}. This monad corresponds to existential
quantification along \emph{surjective functions} in the same way that $D$
corresponds to general existential quantification.

We can define an analogous monad for uniform preorders, and it turns out
that its Kleisli category coincides almost precisely with
Longley's~\cite{longley2011computability} category
$\catcstruct$.

Given a uniform preorder $\upiar$, $\dplus\upiar$ is given by $(I, \pplus
A,\dplus R)$ where $\pplus $ is the non-empty power set and $\dplus R$ is
obtained by restricting the relations in $DR$ to $\pplus A$. 

Using $D_+$, we obtain the following description of  $\catcstruct$.
\begin{lemma}
  $\catcstruct$ is equivalent to the Kleisli 2-category of $D_+$ on those
  uniform preorders $\upiar$ where all $A_i$ are inhabited.
\qed
\end{lemma}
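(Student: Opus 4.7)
The plan is to exhibit the equivalence by matching up objects, morphisms, and 2-cells between the two 2-categories, and verifying that composition and identities correspond under this matching.

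First, I would unfold what a morphism in the Kleisli 2-category $\catuord_{\dplus}$ is. A morphism $\upiar \to \upjbs$ there is a monotone map $(u,f):\upiar \to \dplus\upjbs$ in $\catuord$, i.e.\ a function $u:I\to J$ together with functions $f_i: A_i \to \pplus B_{ui}$ for each $i\in I$, such that for every $r \in R_{ij}$ the relation $\{(f_i(a), f_j(b)) \mid (a,b)\in r\}$ belongs to $\dplus S_{ui,uj}$. Unfolding the definition of $\ebracks{s}$ from Definition~\ref{def:ufp-monad}, restricted to nonempty subsets, this monotonicity condition is equivalent to: for every $r\in R_{ij}$ there exists $s\in S_{ui,uj}$ such that for all $(a,b)\in r$ and all $m\in f_i(a)$, there exists $n \in f_j(b)$ with $s(m,n)$. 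I expect this to coincide on the nose with Longley's tracking condition for morphisms between C-structures, the nonemptiness of $f_i(a)$ matching the `totality of realizers' built into the C-structure definition. The condition that all $A_i$ be inhabited is required for the unit $\eta_\upa:\upa \to \dplus\upa$, $a\mapsto \{a\}$, to define an identity morphism in the full C-structure sense (and to match Longley's convention that sorts are inhabited).

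Second, I would check the 2-cells. In the Kleisli 2-category, $(u,f) \leq (v,g)$ means $\{(f_i(a),g_i(a)) \mid a\in A_i\} \in \dplus S_{ui,vi}$ for each $i$, which by the definition of $\ebracks{-}$ unfolds to the existence, for each $i$, of $s\in S_{ui,vi}$ such that for every $a\in A_i$ and every $m\in f_i(a)$ there is $n\in g_i(a)$ with $s(m,n)$. This is precisely the ordering on C-structure morphisms by uniform tracking.

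Third, I would check that composition in the Kleisli 2-category, given by Kleisli multiplication $\mu:\dplus\dplus \to \dplus$ which acts as union, corresponds to composition of C-structure morphisms: given $(u,f):\upa \to \dplus\upb$ and $(v,g):\upb \to \dplus\upc$, the composite sends $a\in A_i$ to $\bigcup_{b\in f_i(a)} g_{ui}(b)$, which is exactly the standard composition rule for applicative-style morphisms and preserves nonemptiness. Identities match via $\eta$ as described above. Functoriality and 2-functoriality (preservation of the order) of this assignment are then automatic from the corresponding properties of $\dplus$ as a 2-monad.

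The main obstacle, as I see it, is not any deep calculation but rather a careful bookkeeping of Longley's original definition of $\catcstruct$ in \cite{longley2011computability}, making sure the conventions on morphisms (in particular the direction of the tracking relation and the handling of sorts) line up with the unfolded Kleisli description; once this is done, the 2-functor taking a Kleisli morphism to its underlying `realizer-assignment' is essentially tautologically a 2-equivalence onto $\catcstruct$.
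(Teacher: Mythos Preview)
Your proposal is correct and is exactly the natural argument; the paper itself omits the proof entirely (the lemma is marked \qed\ with no argument given), so there is nothing to compare against beyond confirming that your unfolding is sound.

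One small clarification: your explanation of the inhabitedness hypothesis is slightly off. The unit $a\mapsto\{a\}$ is well-defined regardless of whether the $A_i$ are inhabited; the restriction is there simply because Longley's definition of C-structure requires all sorts to be inhabited, so the object-level matching only goes through on that subcategory. Otherwise your identification of Kleisli morphisms with Longley's tracking condition, of the Kleisli 2-cells with his ordering, and of Kleisli composition (via union) with his composition is exactly right, and the verification is indeed just careful bookkeeping against the conventions in \cite{longley2011computability}.
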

The monad $D_+$ can also be used to define Longley's applicative morphisms
between pcas. Let us recall the definition.
\begin{definition}[Longley]
Let $\pcaa,\pcab$ be pcas. An \emph{applicative morphism}\index{applicative morphism} from $\pcaa$ to
$\pcab$ is a \emph{total} relation $\gamma:\pcaa\etrel \pcab$ such
that there exists an $e\in\pcab$ (the \emph{realizer} of $\gamma$) such that
\[
\gamma(a,b),\gamma(a',b'),a a'=a'' \ent \gamma(fa'',e b
b')
\]
\end{definition}
The following lemma describes applicative morphisms in terms of monotone
maps between uniform preorders and
$D_+$. Similar characterizations have been given by Hofstra and van
Oosten~\cite{hofstra2003ordered} in terms of order-pcas (see
also~\cite[Proposition~1.8.10]{vanoosten2008realizability}), and by
Longley~\cite[Proposition~5.24]{longley2011computability} in terms of
C-structures.
\begin{lemma}
  Let $\pcaa,\pcab$ be pcas. The following concepts are equivalent.
\begin{enumerate}
 \item applicative morphisms $\gamma:\pcaa\etrel\pcab$
\item finite meet preserving monotone maps $f:(\pcaa,R(\pcaa))\to
\dplus(\pcab,R(\pcab))$
\item finite meet preserving monotone maps $f:(\pcaa,R(\pcaa))\to
D(\pcab,R(\pcab))$
\end{enumerate}
\end{lemma}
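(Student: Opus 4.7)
The plan is to unfold each of the three conditions into concrete data on a function $f:\pcaa\to P\pcab$ (or $\pplus\pcab$), and then to show that these unfoldings coincide. Throughout, all three uniform preorders in question are one-sorted, so a monotone map of the type considered is simply a function between the underlying sets.

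First I would spell out what a finite meet preserving monotone map $f:(\pcaa,R(\pcaa))\to D(\pcab,R(\pcab))$ amounts to. Concretely, $f$ is a function $\pcaa\to P\pcab$, and monotonicity says that for every $a\in\pcaa$, the image $\{(f(x),f(a\appca x))\msep a\appca x\defined\}$ lies in $\ebracks{(c\appca{-})}$ for some $c\in\pcab$, i.e.\ there is $c_a\in\pcab$ such that
\[
\forall x\in\pcaa\;\text{with}\;a\appca x\defined,\;\forall m\in f(x)\;\exists n\in f(a\appca x)\qdot c_a\appca m=n.
\]
Preservation of the terminal means $f(\top_\pcaa)$ contains a realizer of the top predicate on $\pcab$, which forces $f(\top_\pcaa)\neq\varnothing$; combined with monotonicity applied to the partial function $(\mathsf{k}_{\top_\pcaa}\appca -)$, this yields $f(a)\neq\varnothing$ for every $a\in\pcaa$. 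Hence $f$ automatically takes values in $\pplus\pcab$, which gives the equivalence (ii)$\Leftrightarrow$(iii).

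Next, the heart of the argument is (i)$\Leftrightarrow$(ii). Given a finite-meet-preserving monotone $f$, I would define $\gamma:=\{(a,b)\msep b\in f(a)\}$, total by the previous paragraph, and extract a uniform realizer as follows. Apply monotonicity to the single relation $\{(\mathsf{pair}\appca a\appca x,\,a\appca x)\msep a\appca x\defined\}\in R(\pcaa)$, which comes from $(\mathsf{app}\appca{-})$ with $\mathsf{app}=\lambda p\qdot(\mathsf{fst}\appca p)\appca(\mathsf{snd}\appca p)$; this produces some $e'\in\pcab$ such that for all $b\in f(\mathsf{pair}\appca a\appca x)$ one can pick $b''\in f(a\appca x)$ with $e'\appca b=b''$. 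Finite meet preservation supplies a realizer $q\in\pcab$ with $q\appca(\mathsf{pair}\appca c\appca c')\in f(\mathsf{pair}\appca a\appca x)$ whenever $c\in f(a)$, $c'\in f(x)$. Composing, $e:=\lambda cc'\qdot e'\appca(q\appca(\mathsf{pair}\appca c\appca c'))$ realizes the applicative morphism axiom. Conversely, given an applicative morphism $\gamma$ with realizer $e$, setting $f(a):=\{b\msep\gamma(a,b)\}$ yields a function $\pcaa\to\pplus\pcab$; monotonicity against any generator $(a\appca{-})$ is witnessed by the combinator $\lambda m\qdot e\appca b_a\appca m$ where $b_a\in f(a)$ is fixed (using totality of $\gamma$), and finite meet preservation is witnessed by the pairing combinator together with $e$ applied coordinatewise.

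The main obstacle is the step in which per-generator realizers, obtained from raw monotonicity, are converted into the single uniform realizer required by the applicative morphism axiom: this is exactly where finite meet preservation is indispensable, since it allows the argument $a$ to be internalized as a realizer $c\in f(a)$ that can be fed into the uniform combinator $e'$ coming from the single application relation. Everything else is a routine verification in the combinatory calculus.
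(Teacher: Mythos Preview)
Your overall strategy is the same as the paper's: unfold each notion into concrete realizer conditions on a function $f:\pcaa\to P\pcab$, and check that these coincide. The arguments for (i)$\Leftrightarrow$(ii) are essentially identical to the paper's (the paper uses the element $r\in\pcaa$ with $r(a\wedge a')\preceq aa'$ where you use $\mathsf{app}$, but this is the same idea).

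There is, however, a genuine slip in your argument for (iii)$\Rightarrow$(ii). Applying monotonicity to the generator $(\mathsf{k}\,\top_\pcaa\appca{-})$ gives you, for each $x$, a realized inequality $f(x)\leq f(\top_\pcaa)$ in $D(\pcab,R(\pcab))$; but this says only that every element of $f(x)$ can be sent into $f(\top_\pcaa)$, which is vacuously true when $f(x)=\varnothing$. So it does \emph{not} force $f(x)\neq\varnothing$. The inequality you need goes the other way: for each fixed $a\in\pcaa$, the relation $(\mathsf{k}\,a\appca{-})\in R(\pcaa)$ sends $\top_\pcaa$ to $a$, and monotonicity then gives $f(\top_\pcaa)\leq f(a)$; since $f(\top_\pcaa)$ is inhabited by top-preservation, so is $f(a)$. (The paper phrases this more abstractly, observing that all elements of the terminal fiber $\ufam(\pcaa,R(\pcaa))_1$ are isomorphic and hence all $f(a)$ are isomorphic to $\{\top_\pcab\}$.) With this correction your proof goes through.
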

\begin{proof}
We start by showing the equivalence between the first two concepts. Total
relations $\gamma:\pcaa\etrel\pcab$ are in bijection with functions
$f:\pcaa\to\pplus\pcab$ and $\pplus\pcab$ is the underlying set of
$\dplus(\pcab,R(\pcab))$, thus we have to check that the condition for
applicative morphisms is equivalent to monotonicity and preservation of finite
meets.

  Let $\gamma:\pcaa\etrel \pcab$ be an applicative morphism with realizer
  $e\in\pcab$. To show that the corresponding $f:\pcaa\to\pplus\pcab$
constitutes a monotone map of type
  $(\pcaa,R(\pcaa))\to \dplus (\pcab,R(\pcab))$, we have to show that 
\[\forall
  r\in \pcaa\;\exists s\in\pcab\qdot\forall a,a'\in\pcaa\qdot ra=a'\imp\forall
  b\in fa\qdot sb\in f(b').\] 
For given $r\in\pcaa$, let $s_0\in fr$. Without
  loss of generality, we can assume that $es_0$ is defined\footnote{By
    functional completeness, we can replace $e$ by an $e'$ for which it is
    everywhere defined, and which still realizes $f$ as an applicative morphism,
see
    \cite[Remark~2.1.2~(i)]{longley1995realizability}.}, and setting $s=es_0$
  the claim is immediate.

  To see that $f$ commutes with binary meets, we have to find an $s\in\pcab$
  such that for every $a,a'\in\pcaa$, we have 
\[b\in fa,b'\in fa'\ent s(b\wedge
  b')\in f(a\wedge a').\]
 Let $p\in\pcaa$ such that $\forall a,a'\qdot a\wedge
  a'=paa'$ and let $q\in fp$. For $a,a'\in\pcaa$, $b\in fa$, and  $b'\in fa'$,
we
  have $eqb\in f(pa)$ and $e(eqb)b'\in f(paa')=f(a\wedge a')$. The existence of
an $s$ such that $s(b\wedge b')=e(eqb)b'$ follows from functional completeness.

  Conversely, assume that $f:(\pcaa,R_\pcaa)\to D(\pcab,R_\pcab)$ is a meet
  preserving monotone map. Let $r\in \pcaa$ such that $\forall
  a,a',a''\qdot aa'=a''\imp r(a\wedge a')=a''$. Since $f$ is
  monotone, there exists an $s\in\pcab$ such that $ra=a'',b\in fa\ent sb\in
  fa''$, which together with the first statement implies $aa'=a'',b\in
  f(a\wedge a')\ent sb\in fa''$. Now since $f$ preserves finite meets, there
exists
  $t\in\pcab$ such that $b\in fa,b'\in fa'\ent t(b\wedge b')\in f(a\wedge a')$,
  and we can deduce $aa'=a'',b\in fa,b'\in fa'\ent s(t(b\wedge b'))\in fa''$.
By functional completeness there exists $e\in\pcab$ such that $\forall
b,b'\in\pcab\qdot s(t(b\wedge b'))\les ebb'$, and this $e$ is the realizer of
the applicative morphism $\gamma:\pcaa\etrel\pcab$ corresponding to $f$.

It remains to show that meet preserving maps 
$(\pcaa,R(\pcaa))\to \dplus (\pcab,R(\pcab))$ are equivalent to meet preserving
maps $(\pcaa,R(\pcaa))\to D(\pcab,R(\pcab))$. In one
direction, we can compose with the embedding  $\dplus(\pcab,R(\pcab))\to 
D(\pcab,R(\pcab))$ which preserves finite meets. For the other direction,
let $f:(\pcaa,R(\pcaa))\to D(\pcab,R(\pcab))$ be finite meet preserving
monotone, and consider the component
\[
 \ufam(f)_1:\ufam(\pcaa,R(\pcaa))_1\to \ufam(D(\pcab,R(\pcab)))_1
\]
of the corresponding fibered monotone map between the terminal fibers. Since
all elements of $\ufam(\pcaa,R(\pcaa))_1$ are equivalent, and $\ufam(f)_1$
preserves $\top$ (as a particular finite meet), all $fa$ are equivalent to
$\{\top\}$ in $\ufam(D(\pcab,R(\pcab)))_1$ and thus in particular inhabited,
meaning that $f$ factors through $\dplus(\pcab,R(\pcab))\to 
D(\pcab,R(\pcab))$.
\end{proof}
\begin{remark}
It might be argued that from a presheaf theoretic point of view and for general
uniform preorders, the relevant objects are the meet-preserving maps of type
$\upa\to D\upb$, since they correspond to positive fibered functors between the
associated fibrations of presheaves. The fact that such maps factor through
$\dplus\upb\to D\upb$ is specific to pcas.
\end{remark}

\section{Global sections}\label{sec:glob-secs}

This section is not a priori about uniform preorders, but I decided to put it
in this chapter nevertheless, since it is strongly related to
Section~\ref{sec:arbitrary-bases}.

\medskip

Given a Grothendieck topos $\tope$, the diagonal functor
$\Delta:\catset\to\tope$ is left adjoint to the global sections functor
$\Gamma=\tope(1,-):\tope\to\catset$. 

For a (say bounded) geometric morphism
$\Delta\adj\Gamma:\tope\to\tops$, the fact that $\Delta$ has a right adjoint is
equivalent to the statement that the fibration $\gl_\Delta(\tope)$ has
\emph{small global sections}\index{fibration!with small global sections}\index{small global sections in a fibration}, which is a special case of the fibrational
property of being locally small (see~\cite[Sections 10,
16]{streicherfib})\footnote{Having small global sections is also equivalent to
having \emph{comprehension}\index{fibration!with comprehension}\index{comprehension in a fibration} in the sense of
Lawvere~\cite{lawvere1970equality}.}. Thus, it still makes sense to view
$\Gamma$ as a global sections functor relative to the base topos $\tops$.

In general, $\Gamma$ does not fit into the framework of Moens' theorem as stated
in Theorem~\ref{theo:moens} since for $\Gamma$ to correspond to a 1-cell in
$(\tops\pslice\catlex)(\Delta, \id_\tops)$ we need
$\Gamma\circ\Delta\cong\id_\tops$. However, we can still use the generalized
version of the correspondence presented in Remark~\ref{rem:generalized-moens},
allowing us to view $\Gamma$ as a fibered functor of type
$\gl_\Delta(\tope)\to\fund{\tops}$ which does not necessarily preserve
internal sums.

\medskip

If $\Delta:\catr\to\catx$ is an arbitrary regular functor into an exact
category, there is no reason for the existence of a right adjoint -- in other
words the fibration $\gl_\Delta(\catx)$ does not have to have small global
sections in the fibrational sense. However, if
$\catr=\catset$ and $\catx$ is locally small, the ordinary global sections
functor $\Gamma=\catx(1,-)$ in the inverse direction of $\Delta$ is always
definable. Moreover, we have half of the structure of the adjunction, namely
a natural transformation $\eta:\id_\catset\to\Gamma\Delta$, given by
\[
M\cong\catset(1,M)\to\catx(\Delta 1,\Delta M)\cong\catx(1,\Delta
M)=\Gamma\Delta M.
\]

\medskip

If the regular category $\catx$ is of the form $\catset[\fifx]$ for a fibered
frame $\fifx$, the same story can be told directly on the level of fibered
preorders. We can define $\gamma:\fifx\to\sub(\catset)$ by
\[
\fifx_M\ni\varphi\quad\mapsto\quad\{m\msep \top\ent m^*\varphi\}\subseteq M.
\]
Then it is straightforward that $\gamma$ preserves finite meets, 
and by a similar argument as the one above one can show $U\subseteq
\gamma\delta U$ for $U\subseteq M$. Moreover, $\gamma$ can be reconstructed
from $\Gamma$ and $\eta$ as the following lemma shows.
\begin{lemma}\label{lem:gamma-gamma}
Let $\fifx:\tot{\fifx}\to\catset$ be a fibered frame, and $\varphi\in\fifx_M$.
Then $\gamma(\varphi)\cong\eta_M^*\Gamma\tilde{\varphi}$, where
$\tilde{\varphi}$ is the subobject of $\Delta M$ represented by $\varphi$.
\[
\vcenter{\xymatrix{
{}\phantom{X}\pullbackcorner\mono[d]_{\gamma(\varphi)}\ar[r] &
\Gamma(M,\predeq_\varphi)\mono[d]^{\Gamma\tilde{\varphi}} \\
M\ar[r]_{\eta_M} & \Gamma\Delta M
}}
\]
\end{lemma}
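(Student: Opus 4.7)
The plan is to unpack both sides of the claimed isomorphism into concrete subsets of $M$ and verify they coincide elementwise.

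First, since $\Gamma = \catset[\fifx](1,-)$ is representable, it preserves monomorphisms, so $\Gamma\tilde{\varphi} : \Gamma(M,\predeq_\varphi) \emono \Gamma\Delta M$ is an injection and the pullback $\eta_M^*\Gamma\tilde{\varphi} \subseteq M$ is just the preimage $\{m \in M \mid \eta_M(m) \in \Gamma(M,\predeq_\varphi)\}$. By the construction of $\eta_M$ given just before the lemma, $\eta_M(m) = \Delta m : 1 \to \Delta M$. Hence this subset consists of precisely those $m \in M$ for which $\Delta m$ factors (in $\catset[\fifx]$) through the monomorphism $\tilde{\varphi}$.

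Next I would make the morphism $\Delta m$ concrete as a functional relation. By Definition~\ref{def:delta-r-rs}, $\Delta m$ corresponds to the predicate $[y \vtp M \csep m = y] \in \fifx_{1 \times M} \cong \fifx_M$, which by the definition of $\delta$ is $\delta\{m\}$, where $\{m\} \emono M$ is the singleton subobject of $M$ in $\catset$. Similarly, by Lemma~\ref{lem:decompo}--\ref{lem:decompo-monorepr} the subobject $\tilde{\varphi}$ is represented by $\varphi$ regarded as a strict predicate, and so $\tilde{\varphi}$ itself corresponds to the functional relation $[y,y' \csep y = y' \wedge \varphi(y)] \in \fifx_{M \times M}$.

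Now I would characterize when $\Delta m$ factors through $\tilde{\varphi}$. A factorization amounts to a functional relation $h \in \fifx_M$ (representing a morphism $1 \to (M,\predeq_\varphi)$, whence $h \leq \varphi$ by strictness) such that composition with $\tilde{\varphi}$ recovers $\Delta m$. That composite is $[y \csep h(y) \wedge \varphi(y)]$, which equals $h$ because $h \leq \varphi$. So the factorization exists iff $h = \delta\{m\}$ and $\delta\{m\} \leq \varphi$ holds in $\fifx_M$. By Lemma~\ref{lem:preds-incl-exists} applied to the mono $m : 1 \emono M$ (using $\delta\{m\} \cong \exists_m \top$), the condition $\delta\{m\} \leq \varphi$ in $\fifx_M$ is equivalent by adjunction to $\top \leq m^*\varphi$ in $\fifx_1$, which is exactly the condition $m \in \gamma(\varphi)$. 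The two subobjects of $M$ therefore coincide, proving the claim.

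The main obstacle, though routine, is the second and third steps: faithfully translating $\Delta m$ into a functional relation and carrying out the composition calculus in $\catset[\fifx]$ without confusing the two roles of equality (in $\catset$ and in the internal logic of $\fifx$).
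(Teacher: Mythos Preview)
Your proof is correct and follows essentially the same route as the paper's: both compute the pullback elementwise, identify $\eta_M(m)$ with the global element of $\Delta M$ given by the singleton predicate $\delta\{m\}$, and check that this factors through $\tilde{\varphi}$ precisely when $\top \leq m^*\varphi$. The paper compresses all of this into one sentence, whereas you spell out the functional-relation calculus and the adjunction $\exists_m \dashv m^*$ explicitly; the reference to Lemma~\ref{lem:preds-incl-exists} is a little gratuitous (you only use the definition of $\delta$ and the $\exists_m \dashv m^*$ adjunction), but the argument stands.
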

\begin{proof}
$m_0\in M$ is contained in  $\eta_M^*\Gamma\tilde{\varphi}$ iff the global
element of $\Delta M$ given by the singleton predicate $\ilbracks{m\csep
m=m_0}$ factors through $\varphi$, which is equivalent to $\ent\varphi(m_0)$.
\end{proof}

We remark that $\gamma$ is definable not only for fibered frames, but for
arbitrary fibered posets with $\top$. For \emph{uniform preorders} $\upiar$ with
greatest element $\top\in A_1$, $\gamma$ is defined by
\[
\ufam\upiar_M\ni(i,\varphi)\quad\mapsto\quad\bigl\{m\msep
\{(\top,\varphi(m))\}\in
R_{1,i}\bigr\}\subseteq M,
\]
the fact that we are talking about the pointwise and not the uniform ordering
corresponds to the use of \emph{singleton} relations
$\{(\top,\varphi(m))\}$.

\section{Calculus of distributors}\label{suse-calc-dist}

\begin{definition}\label{def:posetal-dist}
Given preorders $D$ and $E$, a \emph{(posetal)
distributor}\index{distributor!posetal}\index{posetal!distributor} $\phi:D\edist E$ is
a monotone function of type $E^\op\times D\to\bbtwo$, or equivalently a
relation $\phi\subseteq D\times E$ which is upward closed in $D$ and downward
closed
in $E$. 
\end{definition}
Preorders and posetal distributors form a compact closed locally
ordered
category $\catpdist$\footnote{
For a presentation of $\catpdist$ from a domain theory and linear logic
perspective, see~\cite{hyland2010some} -- because of its links
to linear logic,
the category is called $\mathbf{Lin}$ there.
}, and we have an embedding
\[
\catord{}\hookrightarrow\catpdist
\]
which identifies the category $\catord$ of preorders with the subcategory of
$\catpdist$\label{page:catpdist} on left
adjoints. We can do something completely analogous for uniform preorders.

\begin{definition}\label{def:udist}
\begin{enumerate}
\item A \emph{uniform distributor}\index{uniform!distributor}\index{distributor!uniform} $H:\upiar\edist\upjbs$ between
  uniform preorders $\upiar$ and $\upjbs$ is a family of sets \[H_{ji}\subseteq
  P(B_j\times A_i)\]
of binary relations such that
\begin{enumerate}
\item $h\in H_{ji},k\subseteq h\implies k\in H_{ji}$
\item $h\in H_{ji},r\in R_{ii'}\implies rh\in H_{ji'}$
\item $s\in S_{j'j},h\in H_{ji}\implies hs\in H_{j'i}$
\end{enumerate}
\item The \emph{composition} of uniform distributors
\[
\xymatrix@1{
\upiar\dist[r]^G&\upjbs\dist[r]^H&\upkct
},
\]
  is defined by $(H\circ G)_{ki}=\adcl\{gh\msep j\in J,\,g\in
  G_{ji},\,h\in H_{kj}\}$.
\item The \emph{product} of uniform distributors 
\[
\xymatrix@R-5mm{
\upiar\dist[r]^G& \upkct\\
\upjbs\dist[r]^H&\upldu
}
,\] 
is the uniform preorder
\[\xymatrix@C+2mm{
  \upiar\times\upjbs\dist[r]^{G\times H}&\upkct\times\upldu}\] defined by
$(G\times
  H)_{klij}=\adcl\{g\times h\msep g\in G_{ki},\,h\in H_{lj}\}$.
\end{enumerate}
\end{definition}
These definitions give rise to a locally ordered monoidal category
$\catudist$,
where the order on the morphisms is componentwise inclusion\footnote{Observe
that since the ordering of morphisms is defined in terms of inclusions,
$\catudist$ is enriched in \emph{posets}, and not just in preorders as most of
our locally ordered categories.} and the identity on
$\upiar$ is $R$.

Given a monotone map $(u,f):\upiar\to\upjbs$, we define uniform distributors
\[
\ldist{\uf}:\upiar\edist\upjbs\qtext{and}\rdist{\uf}:\upjbs\edist\upiar
\]
by
\begin{align*} 
  (\ldist{\uf})_{ji}&=\{g\subseteq B_j\times A_i\msep f_ig\in S_{j,ui}\} \quad
  \text{and} \\
  (\rdist{\uf})_{ij}&=\{h\subseteq A_i\times B_j\msep h f^\circ_i\in S_{ui,j}\}.
\end{align*}
\begin{lemma}\label{lem-updist}
  \begin{enumerate}
  \item\label{lem-updist-dual} $\upiar^\op$ is dual to $\upiar$ in
$\catudist$.
  \item\label{lem-updist-adj} For $(u,f):\upiar\to\upjbs$, we have
$\ldist{\uf}\adj\rdist{\uf}$.
  \item\label{lem-updist-map} Let $ G\adj H:\upjbs\to\upiar$ be an adjunction
in $\catudist$.  Then the axiom of choice implies that there exists a
monotone $(u,f):\upiar\to\upjbs$ such that $\ldist{\uf}= G$ and $\rdist{\uf}=
H$.
  \end{enumerate}
\end{lemma}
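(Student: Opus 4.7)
The three parts have increasing difficulty; I will treat them in order.

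For (i), the plan is to exhibit explicit duality data. Define the counit $\varepsilon:\upiar^\op\times\upiar\edist 1$ by $\varepsilon_{\ast,(i,i')}:=R_{ii'}\subseteq A_i\times A_{i'}$, and the unit $\eta:1\edist\upiar\times\upiar^\op$ by $\eta_{(i,i'),\ast}:=\{r^\circ\msep r\in R_{i'i}\}=R^\op_{ii'}$. That these are uniform distributors is immediate from the distributor axioms applied to $R$, since $R$ is closed under pre- and post-composition with itself. The two snake identities reduce to the statement that composing the identity distributor $R$ with $\eta$ on one side and $\varepsilon$ on the other returns $R$: containment of $R$ in the composite uses reflexivity (take the middle element in $R_{ii}$ to be $\id_{A_i}$), and the reverse containment uses transitivity together with downward closure.

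For (ii), I would check the two triangle inequalities separately. For the unit $R\subseteq\rdist{\uf}\circ\ldist{\uf}$: given $r\in R_{i'i}$, take intermediate sort $j:=u(i')$, let $h=\{(a',f_{i'}(a'))\msep a'\in A_{i'}\}$ (so that $hf_{i'}^\circ\subseteq\id_{B_{u(i')}}\in S_{u(i'),u(i')}$, giving $h\in(\rdist{\uf})_{i',u(i')}$), and let $g=\{(f_{i'}(a'),a)\msep(a',a)\in r\}$ (then $f_i g = f_i r f_{i'}^\circ\in S_{u(i'),u(i)}$ by monotonicity, so $g\in(\ldist{\uf})_{u(i'),i}$); one then computes $r\subseteq gh$, yielding $r\in(\rdist{\uf}\circ\ldist{\uf})_{i'i}$ by downward closure. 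For the counit $\ldist{\uf}\circ\rdist{\uf}\subseteq S$, given $h\in(\ldist{\uf})_{j\ell}$ and $g\in(\rdist{\uf})_{\ell j'}$ one observes $gh\subseteq (gf_\ell^\circ)(f_\ell h)$ because $f_\ell^\circ f_\ell\supseteq\id_{A_\ell}$; the right-hand side lies in $S_{jj'}$ by composition closure of $S$, and downward closure delivers $gh\in S_{jj'}$.

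The substantive part is (iii). From the unit $R\subseteq H\circ G$ applied to $\id_{A_i}\in R_{ii}$, I invoke AC (first on sorts, then pointwise on elements) to obtain, for each $i\in I$, a sort $j_i\in J$ with relations $g_i\in G_{j_i,i}$ and $h_i\in H_{i,j_i}$ satisfying $\id_{A_i}\subseteq g_ih_i$, together with a function $f_i:A_i\to B_{j_i}$ such that $(a,f_i(a))\in h_i$ and $(f_i(a),a)\in g_i$ for every $a\in A_i$. Setting $u(i):=j_i$, monotonicity of $(u,f)$ is extracted from the counit as follows: for $r\in R_{ii'}$, axiom (b) for $G$ gives $rg_i\in G_{u(i),i'}$, and then $h_{i'}(rg_i)\in G\circ H\subseteq S_{u(i),u(i')}$; the pointwise construction ensures $f_{i'}rf_i^\circ\subseteq h_{i'}rg_i$, so downward closure finishes. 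I identify $\ldist{\uf}$ with $G$ by double inclusion: for $g\in G_{ji}$ one has $f_ig\subseteq h_ig\in G\circ H\subseteq S_{j,u(i)}$, hence $g\in\ldist{\uf}$; conversely, for $g\in(\ldist{\uf})_{ji}$ axiom (c) applied to $f_ig\in S_{j,u(i)}$ and $g_i\in G_{u(i),i}$ yields $g_i(f_ig)\in G_{ji}$, while the pointwise construction gives $g\subseteq g_i f_i g$, placing $g\in G_{ji}$ by downward closure. The identification $\rdist{\uf}=H$ is entirely dual, using that $f_i^\circ f_i\supseteq\id_{A_i}$ in both directions.

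The main obstacle is part (iii): one must simultaneously juggle the two distributor-closure axioms and the two triangle inequalities, and identifying $G$ with $\ldist{\uf}$ requires recognizing that both inclusions hinge on the same trading move---absorbing the identity-containing composite $g_ih_i$ (or its shadow $f_i^\circ f_i$) to convert the abstract distributor condition into the concrete one involving the graph of $f_i$. Once this move is isolated, the rest is a sequence of mechanical type-chases through the composition conventions.
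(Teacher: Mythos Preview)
Your proof is correct and follows essentially the same strategy as the paper's: explicit duality data given by $R$ for part (i), the graph of $f$ and its converse for the unit and the sandwich $f_\ell^\circ f_\ell\supseteq\id$ for the counit in part (ii), and choice functions extracted from $\id_{A_i}\subseteq g_ih_i$ for part (iii).

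There are two tactical differences in part (iii) worth noting. For monotonicity, the paper reapplies the unit to the given $r\in R_{ii'}$, obtaining $r\subseteq g'h'$ with $g'\in G$, $h'\in H$, and then argues $f_{i'}rf_i^\circ\subseteq h_{i'}g'h'g_i\in GHGH\subseteq SS=S$; you instead use the distributor closure axiom $rg_i\in G_{u(i),i'}$ directly, which is a little cleaner (one application of the counit rather than two). For the identification of $G$ with $\ldist{\uf}$, the paper proves only the single inclusions $G\subseteq\ldist{\uf}$ and $H\subseteq\rdist{\uf}$ and implicitly appeals to uniqueness of adjoints (from $G\leq\ldist{\uf}$ one gets $\rdist{\uf}\leq H$, hence equality everywhere); your direct verification of both inclusions via $g\subseteq g_if_ig$ is more elementary but slightly longer. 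Both routes are perfectly valid.
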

\begin{proof}
  The counit $\ve_{\upiar}:\upiar^\op\times\upiar\edist 1$ has to be a family of
  sets of relations of type $1\erel A_i\times A_j$ which are closed under
  composition with structuring relations of $\upiar^\op\times\upiar$ on the
  left. It is given by $\ve_{\upiar}= R$. Dually, $\eta_{\upiar}:1\edist
  \upiar\times\upiar^\op$ is a family of sets of relations of type $A_i\times
  A_j\erel 1$ which is closed under composition with structuring relations of
  $\upiar\times\upiar^\op$ on the right, and is given by $R$ as well. The
  verifications of the triangle equalities are straightforward.

  To show that $\ldist{\uf}\adj\rdist{\uf}$, we have to verify
  $R\subseteq\rdist{\uf}\ldist{\uf}$ and $\ldist{\uf}\rdist{\uf}\subseteq S$
  componentwise. Let $r\in R_{ii'}$. We have to find $j\in J$ and
  $g\in(\ldist{\uf})_{ji},h\in(\rdist{\uf})_{ij}$ such that $r\subseteq
  gh$. This is the case for $j=ui$, $g=rf_i^\circ$, and $h=f_i$. For the second
  inclusion, we have to show that
  $g\in(\ldist{\uf})_{ji},h\in(\rdist{\uf})_{ij'}$ implies $hg\in
  S_{jj'}$. This follows from $f_ig\in S_{j,ui}, hf_i^\circ\in S_{ui,j'}$ and
  $\id_{A_i} \subseteq f_i^\circ f_i$.

  For the third claim, it follows from $R\subseteq H G$ that for given $i\in I$
  there exist $j\in J$, $h_i\in H_{ij}, g_i\in G_{ji}$ such that
  $\id_{A_i}\subseteq g_ih_i$, which means that $\forall a\vtp A_i\,\exists
  b\vtp B_j\qdot h_iab\wedge g_iba$. Let $u:I\to J$ be a choice function for
  the association of $j$ to $i$, and for each $i\in I$, let $f_i:A_i\to B_{ui}$
  be a choice function for the second part of the statement. Then in particular
  $f_i\subseteq h_i$ and $f_i^\circ\subseteq g_i$. We claim that $(u,f)$ is the
  desired monotone map.

  To see that $(u,f)$ is monotone, let $r\in R_{ii'}$. Since $R\subseteq H G$,
  there exist $j\in J$, $h'\in H_{ij}$, and $g'\in G_{ji'}$ such that
  $r\subseteq g'h'$. Therefore we can argue $f_{i'}rf_i^\circ\subseteq
  h_{i'}g'h'g_i\in (G H G H)_{ui,ui'}\subseteq (SS)_{ui,ui'} = S_{ui,ui'}$. To
  verify that $\uf$ induces the adjunction $ G\adj H$, we show
  $G\subseteq\ldist{\uf}$ and $H\subseteq\rdist{\uf}$. Let $g\in G_{ji}$. We
  have to show that $f_ig\in S_{j,ui}$, which follows because $f_ig\subseteq
  h_ig\in (G H)_{j,ui}\subseteq S_{j,ui}$. The verification of
  $H\subseteq\rdist{\uf}$ is similar.
\end{proof}

In Lemma~\ref{lem:ufp-embed}, we showed that we can identify the 2-category
$\catuord$ with a full subcategory of $\catpfib(\catset)$. In the following, we
will do something similar for $\catudist$, but in this case it turns out to be
more convenient to work with indexed preorders instead of fibered ones.

Recall that an indexed preorder is a pseudo-functor of type
$\catset^\op\to\catord$, and that -- if we ignore size issues -- the locally
ordered categories $\catiord(\catset)$\label{page:catiord} and
$\catpfib(\catset)$ of indexed
and fibered preorders on $\catset$ are biequivalent. Given a uniform preorder
$\upa$, we do not distinguish the associated indexed and fibered preorders
notationally -- we denote both by  $\ufam(\upa)$.
\begin{definition}
  \begin{enumerate}
  \item Let $\ffrma,\ffrmb$ be indexed preorders. An \emph{indexed
distributor}\index{indexed!distributor}\index{distributor!indexed}
$\Phi:\ffrma\edist \ffrmb$ is a family $\Phi_M:
    \ffrma_M\edist \ffrmb_M$ of posetal distributors such that for $u:N\to M$,
    $\psi\in \ffrma_M$, and $\theta\in \ffrmb_M$, we have
    $\Phi_M(\theta,\psi)\implies\Phi_N(u^*\theta,u^*\psi)$.
  \item 
If $\fifa,\fifb$ are indexed preorders satisfying the pre-stack condition, we 
say that an indexed distributor $\Phi:\ffrma\edist \ffrmb$  is
\emph{separated}\index{separated!indexed distributor}\index{indexed!distributor!separated}, if for any epimorphism
$e:N\eepi M$ and predicates $\psi\in \ffrma_M$ and $\theta\in \ffrmb_M$,
    $\Phi_N(e^*\theta,e^*\psi)\implies \Phi_N(\theta,\psi)$.

In the presence of choice, this condition is always satisfied.
  \end{enumerate}
\end{definition}
Indexed distributors can be composed componentwise, but the
componentwise composition of two separated indexed distributors does not need to
be separated. If we want to define a category of separated indexed
distributors, we therefore have to compose them differently.
\begin{definition}\label{def:compose-separated-indexed-dist}
Let $\fifa,\fifb$, and $\fifc$ be indexed preorders satisfying the pre-stack
condition, and let $\Phi:\fifa\edist\fifb$ and $\Psi:\fifb\edist\fifc$ be two
separated indexed distributors. Their composition 
\[
 \Psi\circ\Phi:\fifa\edist\fifc
\]
is defined by 
\[
 (\Psi\circ\Phi)_M(\gamma,\alpha)\;\defequi\;\exists u\vtp N\!\!\eepi\!\!
M,\,\beta\in\fifb_N\qdot\Psi_N(u^*\gamma,\beta)\wedge\Phi_N(\beta,u^*\alpha)
\]
where $\alpha\in\fifa_M$ and $\gamma\in\fifc_M$.
\end{definition}
Using this definition, it is easy to see that the composition of separated
indexed distributors is separated, which allows us to make the following
definition.
\begin{definition}\label{def:idist}
$\catidist$ is the locally ordered category of indexed preorders satisfying the
pre-stack condition, and separated indexed distributors, where the ordering is
given by componentwise inclusion. 
\end{definition}
Given a uniform distributor $G:\upiar\edist\upjbs$, we can define an indexed
distributor
\[
\ufam(G):\ufam\upiar\edist\ufam\upjbs
\]
by
\[
\ufam(G)_M((j,\psi),(i,\phi))\defequi\{(\psi m,\phi m)\msep m\in M\}\in
G_{ji}.
\]
\begin{lemma}
The previously defined operation gives rise to a 2-functor
\[
 \ufam(-):\catudist\to\catidist.
\]
In particular, given uniform distributors
$\upa\stackrel{G}{\edist}\upb\stackrel{H}{\edist}\upc$ the indexed
distributors $\ufam(G)$ and $\ufam(H)$ are separated, and we have $\ufam(G\circ
H)=\ufam(G)\circ \ufam(H)$ with the composition of separated indexed
distributors defined in~\ref{def:compose-separated-indexed-dist}. 

Moreover, $\ufam(-):\catudist\to\catidist$ is a local equivalence.
\end{lemma}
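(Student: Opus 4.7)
The plan is to verify in order: (a) $\ufam(G)$ is a separated indexed distributor, (b) $\ufam$ preserves identities and composition, and (c) the induced preorder map $\catudist(\upa,\upb)\to\catidist(\ufam(\upa),\ufam(\upb))$ is an equivalence for every pair $\upa,\upb$.

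For (a), separation follows immediately from the pointwise definition: if $e\colon N\eepi M$ is surjective, then $\{(\psi(em),\phi(em))\msep m\in N\} = \{(\psi m,\phi m)\msep m\in M\}$ as subsets of $B_j\times A_i$, so $\ufam(G)_N(e^*\theta,e^*\psi)\Leftrightarrow\ufam(G)_M(\theta,\psi)$. The axioms of an indexed distributor (reindexing compatibility and the monotonicity in each variable inherited from the structuring relations of $\upa$ and $\upb$) are verified by the same set-theoretic identity together with the downward closure and the closure of $G_{ji}$ under composition with relations in $R$ and $S$.

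For (b), preservation of identities is immediate from $\ufam(R)_M((i,\psi),(i,\phi))\Leftrightarrow\{(\psi m,\phi m)\msep m\in M\}\in R_{ii'}\Leftrightarrow (i,\psi)\leq(i,\phi)$ in $\ufam(\upa)_M$. For composition, unfold $\ufam(H\circ G)_M$ and the separated composition of Definition~\ref{def:compose-separated-indexed-dist}. One direction: if $\{(\gamma m,\phi m)\msep m\in M\}\subseteq gh$ with $g\in G_{ji}$, $h\in H_{kj}$, take $N=\{(m,b)\in M\times B_j\msep (\gamma m,b)\in h\wedge (b,\phi m)\in g\}$ with the projection $u\colon N\eepi M$ (surjective by hypothesis) and $\beta\colon N\to B_j$ the second projection; then $\{(\gamma(un),\beta n)\msep n\in N\}\subseteq h$ and $\{(\beta n,\phi(un))\msep n\in N\}\subseteq g$, giving the witnessing factorization after invoking downward closure of $H_{kj}$ and $G_{ji}$. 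Conversely, a factorization $(u,\beta)$ in $\catidist$ gives relations $h\in H_{kj}$ and $g\in G_{ji}$ whose composite $gh\in (H\circ G)_{ki}$ contains $\{(\gamma m,\phi m)\msep m\in M\}$ (using surjectivity of $u$ to find preimages), and downward closure finishes the argument.

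For (c), order-reflection: if $\ufam(G)\leq\ufam(G')$ componentwise, then for any $g\in G_{ji}$ the two projections $\pi_B^g,\pi_A^g$ realize $\ufam(G)_g((j,\pi_B^g),(i,\pi_A^g))$ (since this datum is literally $g$), so the assumption gives $g\in G'_{ji}$. For essential surjectivity, given $\Phi\colon\ufam(\upa)\edist\ufam(\upb)$ separated, define
\[
G_{ji}\;\defequi\;\{g\subseteq B_j\times A_i\msep \Phi_g((j,\pi_B^g),(i,\pi_A^g))\}.
\]
Downward closure of $G_{ji}$ follows by reindexing $\Phi$ along the inclusion $g'\hookrightarrow g$; closure under left composition with $S$ and right composition with $R$ follows by forming the triple-product set $P$, reindexing $\Phi$ along the projection $P\to g$, using monotonicity of $\Phi$ in each variable against the relation $r\in R_{ii'}$ (respectively $s\in S_{j'j}$), and then applying separation along the surjection $P\eepi gr$ (resp.\ $P\eepi sg$). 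Finally $\ufam(G)=\Phi$: for any $(j,\psi)\in\ufam(\upb)_M$ and $(i,\phi)\in\ufam(\upa)_M$, the map $e\colon M\to r$, $m\mapsto(\psi m,\phi m)$, onto $r=\{(\psi m,\phi m)\msep m\in M\}$ is surjective with $e^*\pi_B^r=\psi$ and $e^*\pi_A^r=\phi$, so separation of $\Phi$ and the definition of $G$ yield the equivalence $\Phi_M((j,\psi),(i,\phi))\Leftrightarrow r\in G_{ji}\Leftrightarrow\ufam(G)_M((j,\psi),(i,\phi))$.

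The main obstacle is the verification of the composition identity in (b), where the passage from a subset inclusion in $gh$ to a surjective $u\colon N\eepi M$ and a witnessing $\beta$ must be arranged without invoking the axiom of choice; this is handled by taking $N$ to be the set of witnessing pairs as above. The remaining checks are straightforward bookkeeping that parallels the standard equivalence between preorders and posetal distributors on which the proof ultimately rests.
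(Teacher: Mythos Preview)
Your proof is correct and follows essentially the same approach as the paper: the same witnessing set $N=\{(m,b)\msep h(\gamma m,b)\wedge g(b,\alpha m)\}$ for composition, and the same construction $G_{ji}=\{g\msep \Phi_g(\pi_B^g,\pi_A^g)\}$ via projections for essential surjectivity. You supply somewhat more detail than the paper does (the closure of $G_{ji}$ under composition with $R$ and $S$, and the final verification that $\ufam(G)=\Phi$ using separation along $M\eepi r$), but the underlying argument is the same.
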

\begin{proof}
Consider uniform distributors
$\upa\stackrel{G}{\edist}\upb\stackrel{H}{\edist}\upc$, where
$\upa=\upiar,\,\upb=\upjbs,\,\upc=\upkct$. It follows directly from the
definition that $\ufam(G)$ and $\ufam(H)$ are separated. For compatibility
with composition, let $M\in \catset$, $\alpha:M\to A_i$ and $\gamma:C_k$ for
some $i\in I$ and $k\in K$. Instantiating definitions, we get
\begin{multline*}
(\ufam(H)\circ\ufam(G))_M(\gamma,\alpha)\Leftrightarrow\\
\exists
e\vtp N\!\!\eepi\!\! M,\,j,\,\beta\vtp N\!\!\to\!\! B_j\qdot
\setof{(\gamma(en),\beta n)}{n\in N}\in H_{kj}\wedge 
\setof{(\beta n,\alpha(en))}{n\in N}\in G_{ji}
\end{multline*}
and
\begin{multline*}
 \ufam(H\circ G)_M(\gamma,\alpha) \Leftrightarrow\\
 \exists j,\,h\in
H_{kj},\,g\in G_{ji}\;\forall m\vtp M\;\exists v\vtp B_j\qdot
 h(\gamma
m,b)\wedge g(b,\alpha m)
\end{multline*}
It is easy to see that the first condition implies the second one. For the
converse direction, we can set $N=\setof{(m,b)}{h(\gamma m,b)\wedge
g(b,\alpha m)}$ for fixed $h$ and $g$.

Clearly, $\ufam(-)$ is monotone on morphisms. To see that it is also order
reflecting, let $G,H:\upa\edist\upb$ such that $\ufam(G)\leq\ufam(H)$.
Let $B_j\times A_i\supseteq g\in G_{ji}$, and let $\alpha:g\to A_i$ and
$\beta:g\to B_j$ be the projections. Then we can reason
\[
 \ufam(G)_g(\beta,\alpha)\quad\imp\quad\ufam(H)_g(\beta,\alpha)\quad\imp\quad
g\in H_{ji},
\]
whence $G\leq H$.

Finally, let $\Phi:\ufam(\upa)\to\ufam(\upb)$ be a separated indexed
distributor. We can construct a pre-image $G:\upa\to\upb$ under $\ufam(-)$ by
setting
\[
G_{ji}=\setof{g\subseteq B_j\times A_i}{\Phi_g(\beta,\alpha)} ,
\]
where $\beta:g\to B_j$ and $\alpha:g\to A_i$ are again the projections.
\end{proof}
\begin{remarks}\label{rem:hoshino}
\begin{enumerate}
\item\label{rem:hoshino-hoshino}
It is possible to define the subcategory of $\catuord$ on \emph{1-sorted} uniform 
preorders as a category of monads and modules on a more 
primitive locally ordered category whose objects are sets, and whose morphisms 
are downward closed sets of binary relations. I learned this from Naohiko 
Hoshino, who, in unpublished work, defined a locally ordered 
category of combinatory objects similar to (but more general than) 1-sorted uniform preorders using this approach.

A similar suggestion has been made earlier by Paul-André Melliès, who strongly 
promoted the importance of the construction of monads and modules on several 
occasions and in different contexts (see e.g.~\cite{mellies2008groupoides}). 
However, I failed to see
the relevance back then.

Tom Hirschowitz pointed out that it is possible to define \emph{many-sorted} 
uniform preorders as quantaloid-enriched {categories}, which is a direct 
generalization of
the monads-and-modules approach. We describe this in more detail in 
Section~\ref{sec:hirschowitz}.
 \item 
So far, we have observed a remarkable similarity between the theory of ordinary
and of uniform preorders -- ordinary preorders often serve as a guiding
principle to come up with constructions for the uniform case.
In the context of distributors, there is however also a major difference:
In the case of ordinary preorders, the locally ordered category of distributors
is the Kleisli category of the downset monad. In the uniform case, however,
$\catudist$ is not the Kleisli category of the $D$-monad, and it does not seem
to be representable by a Kleisli construction at all.
\end{enumerate}
\end{remarks}

\section{Relationally complete uniform
preorders}\label{sec:reational-completeness}

Given a (say finitely complete) preorder $A$, its preorder $DA$ of downsets has
small meets and Heyting implication, and $\widehat{A}$ is a topos and thus in
particular locally cartesian closed. 

For a finitely complete \emph{uniform} preorder $\upa$, $D\upa$ does not in
general have universal quantification and implication, and neither does
$\widehat{\upa}$ need to be locally cartesian closed -- however, as we showed in
Theorem~\ref{theo:rda-lccc}, these two properties are equivalent. In this
section, we study an equivalent combinatorial criterion for this to be the
case, which we call \emph{relational completeness}.

The development of this concept was initially motivated by Hofstra's
characterization of BCOs $A$ such that $\ufam(DA)$ is a tripos as coming from
`ordered pcas with a filter'~\cite[Theorem~6.9]{hofstra2006all}, but curiously
enough, the end result is quite close to Longley's concept of `higher order
C-structure'~\cite{longley2011computability}.

  \begin{definition}\label{def:rel-compl-ufp}
A finitely complete uniform preorder $\upiar$ is called \emph{relationally
  complete}\index{relationally complete}, if 
for each pair $j,k\in I$ there exists $j\imp k\in I$ and $@^j_k\in
      R_{(j\imp k) * j,k}$ such that for all $i\in I$ and $r\in
      R_{i * j,k}$ there exists $\tilde{r}\in R_{i,j\imp k}$ such that
\begin{equation}\label{eq:rel-compl}
\forall a\in A_i\;\exists h\in A_{j\imp k}\qdot \tilde{r}(a,h)\wedge
r(a\wedge -,-)\subseteq @^j_k(h\wedge -,-).
\end{equation}
  \end{definition}

\begin{remarks}\label{rem:rel-compl}
\begin{enumerate}
\item
Precision: to conform with our convention regarding the axiom of choice
(Section~\ref{sec:choice}), in particular to be able to construct a
\emph{choice} of universal quantification in Lemma~\ref{theo:rel-compl} below,
we require a relationally complete uniform preorder
$\upiar$ to come with a \emph{choice} of sorts $j\simp k$ and relations $@^j_k$
for all $j,k\in I$. For the relations $\tilde{r}$, on the other hand, mere
existence is sufficient.
 \item
In the one sorted case, we call $@$ the \emph{generic relation}\index{generic relation}.
In the case of number realizability, the generic relation is the
universal Turing machine, for a meet-semilattice, the ordering relation is
the generic relation. 

\item\label{rem:rel-compl-prim}
Relational completeness is not tautological. The archetypal example of a
finitely complete uniform preorder which is \emph{not} relationally complete is
the primitive recursive uniform preorder
$(\N,\adcl\prim)$ from Example~\ref{ex:uords}\ref{ex:uords-prim} --
here, relational completeness would imply the existence of a primitive
recursive interpreter for primitive recursive functions, which is impossible
because of diagonalization.
\item
It is instructive to compare condition~\eqref{eq:rel-compl} to the statement
that 
\[r\subseteq @^j_k\circ\wedge\circ(\tilde{r}\times\id_{A_j}),\] 
a variant
of which occurs in Longley's definition of `higher order
C-structure' \cite[Definition~5.22]{longley2011computability}\footnote{It is
not literally the same condition, since Longley's `cartesian
C-structures'~\cite[Definition~5.19-(ii)]{longley2011computability} are defined
differently from our finitely complete uniform preorders.}. Spelled out, the
former is equivalent to
\begin{equation*}
 \forall a\,\exists h\qdot\tilde{r}(a,h)\wedge\forall b,c\qdot r(a,b,c)\imp
@^j_k(h\wedge b,c)
\end{equation*}
whereas the latter is equivalent to
\begin{equation*}
 \forall a,b,c\qdot r(a,b,c)\imp \exists h\qdot \tilde{r}(a,h)\wedge
@^j_k(h\wedge b,c).
\end{equation*}
We see that relational completeness is stronger since the
$\exists h$ is further on the left. 
Nevertheless, the concepts of `relationally
complete uniform preorder' and `higher order C-structure' are remarkably
similar. 
\end{enumerate}
\end{remarks}

\begin{theorem}\label{theo:rel-compl}
  Let $\upa=\upiar$ be a finitely complete uniform preorder.  The following are
  equivalent.
\begin{enumerate}
\item\label{theo:rel-compl-rc} $\upa$ is relationally complete.
\item\label{theo:rel-compl-iuq} $D\upa$ has implication and universal
quantification.
\item\label{theo:rel-compl-lccc} $\srel{\catset}{\upa}$ is locally cartesian
closed.
\end{enumerate}
\end{theorem}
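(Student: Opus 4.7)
The equivalence $(2)\Leftrightarrow(3)$ is immediate from Theorem \ref{theo:rda-lccc} applied to $\tope=\catset$ and $\fifa=\ufam(\upa)$: the resulting fibered frame $D\fifa$ agrees with $\ufam(D\upa)$ by Lemma \ref{lem:character-d-ufp}, and $\catset[D\fifa]\simeq\srel{\catset}{\upa}$ by Corollary \ref{cor-hat-sh-d}. So the content of the theorem lies in $(1)\Leftrightarrow(2)$.

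For $(1)\Rightarrow(2)$, my plan is to use the relational-completeness data $(j\imp k,\,@^j_k,\,r\mapsto\tilde r)$ to construct implication and universal quantification in $\ufam(D\upa)$ explicitly. For predicates $\varphi:M\to PA_i$ and $\psi:M\to PA_j$ in $\ufam(D\upa)_M$, set
\[
(\varphi\imp\psi)(m)\;=\;\{h\in A_{i\imp j}\msep \forall a\in\varphi(m)\;\exists b\in\psi(m)\qdot @^i_j(h\wedge a,b)\}.
\]
The backward adjoint inequality $\chi\leq\varphi\imp\psi\Rightarrow\chi\wedge\varphi\leq\psi$ is tracked essentially by $@^i_j$ (combined with the meet map $\wedge$), while the forward inequality is where the substantive content of relational completeness enters: if $r\in R_{k*i,j}$ tracks $\chi\wedge\varphi\leq\psi$, then $\tilde r\in R_{k,i\imp j}$ tracks $\chi\leq\varphi\imp\psi$, with the condition \eqref{eq:rel-compl} saying exactly that the values produced by $\tilde r$ satisfy the defining clause of $\varphi\imp\psi$.

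For universal quantification along $u:M\to N$, the natural candidate $\bigcap_{m\in u^{-1}(n)}\varphi(m)$ fails (relations in $R$ are not functional, so the tracking witnesses cannot in general be chosen uniformly in $m$). Instead I propose
\[
(\forall_u\varphi)(n)\;=\;\{h\in A_{1\imp i}\msep \forall m\in u^{-1}(n)\;\exists b\in\varphi(m)\qdot @^1_i(h\wedge\top,b)\},
\]
using the terminal sort $1$ and its top element $\top\in A_1$. The point is that the witness $h$ is chosen once, independently of $m$, which is precisely the uniformity built into the ordering of $D\upa$. Verifying the adjunction $u^*\psi\leq\varphi\Leftrightarrow\psi\leq\forall_u\varphi$ proceeds analogously to the implication case: the nontrivial direction rewrites a tracking relation $s\in R_{j,i}$ into $s'\in R_{j*1,i}$ (via the projection $A_j*A_1\to A_j$) and then invokes $s'\mapsto\tilde{s'}\in R_{j,1\imp i}$.

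For $(2)\Rightarrow(1)$, the relational-completeness data is read off from the adjunctions. Define $j\imp k$ as the sort of the implication in $\ufam(D\upa)$ of the two projection predicates $y(\pi_j),y(\pi_k)$ on $A_j\times A_k$, and take $@^j_k$ to be (the underlying relation of) the counit $(\varphi\imp\psi)\wedge\varphi\leq\psi$ evaluated at the generic case. The assignment $r\mapsto\tilde r$ is then obtained by transposing the tracking relation of $r$ across the adjunction $(-\wedge\varphi)\dashv(\varphi\imp-)$. The main obstacle throughout is the universal-quantification clause of $(1)\Rightarrow(2)$: Definition \ref{def:rel-compl-ufp} is purely about implication, so $\forall$ has to be reconstructed from $@$ and $\tilde{\,\cdot\,}$ alone. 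The insight above---that the single sort $1\imp i$ with the relation $@^1_i$ already parametrizes the uniform families needed---is what makes this work, and is the key step to check carefully.
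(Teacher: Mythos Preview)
Your $(2)\Leftrightarrow(3)$ and $(1)\Rightarrow(2)$ are fine; the latter is precisely the paper's ``synthetic'' construction of $\forall_u(\varphi\Rightarrow\psi)$ split into its two special cases $u=\id$ and $\varphi=\top$.

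The gap is in $(2)\Rightarrow(1)$. Taking $j\Rightarrow k$ to be the sort of $y\pi_j\Rightarrow y\pi_k$ on $A_j\times A_k$ and $@^j_k$ the counit realizer does not suffice. Given $r\in R_{i*j,k}$, the only place to view $r$ as a tracking relation is over $M_r=\{(c,a,b)\mid r(c\wedge a,b)\}$, where $r$ tracks $y\pi_i\wedge y\pi_j\leq y\pi_k$. Transposing across $(-\wedge y\pi_j)\dashv(y\pi_j\Rightarrow-)$ gives $y\pi_i\leq(y\pi_j\Rightarrow y\pi_k)$ on $M_r$, realized by some $\tilde r\in R_{i,j\Rightarrow k}$; but this only says that for each $(c,a,b)\in M_r$ there is an $h$ with $\tilde r(c,h)$ and $h\in(y\pi_j\Rightarrow y\pi_k)(a,b)$. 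The $h$ may vary with $(a,b)$, whereas condition~\eqref{eq:rel-compl} demands a \emph{single} $h$ per $c$ working for all $(a,b)$ with $r(c\wedge a,b)$.

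The paper's construction builds $\forall$ into the data from the start. One takes $j\Rightarrow k$ to be the sort of $\forall_u(\varphi\Rightarrow\psi)$, where $\varphi,\psi$ are the singleton projections on the membership predicate $E\hookrightarrow A_j\times A_k\times P(A_j\times A_k)$ and $u:E\to P(A_j\times A_k)$ is the last projection; $@^j_k$ realizes the corresponding counit. For a given $r$, one transposes \emph{twice} --- across $\Rightarrow$ and then across $v^*\dashv\forall_v$ for the projection $v:M_r\to A_i$ --- to get $\iota\leq\forall_v(\beta\Rightarrow\gamma)$ on $A_i$, whose realizer now genuinely supplies one $h$ per $c$. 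A Beck--Chevalley argument (for the pullback of $u$ along $w:A_i\to P(A_j\times A_k)$, $w(c)=\{(a,b)\mid r(c\wedge a,b)\}$) identifies $\forall_v(\beta\Rightarrow\gamma)$ with $w^*\forall_u(\varphi\Rightarrow\psi)$, so that $\tilde r$ lands in the fixed sort $j\Rightarrow k$ independently of $r$ and $i$. So you located the subtlety correctly --- it is about $\forall$ --- but in the wrong direction: your own formula shows that reconstructing $\forall$ from $@$ in $(1)\Rightarrow(2)$ is routine; it is in $(2)\Rightarrow(1)$ that $\forall$ has to do serious work.
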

\begin{proof}

We already showed in Theorem~\ref{theo:rda-lccc} that \ref{theo:rel-compl-iuq}
and \ref{theo:rel-compl-lccc} are equivalent;
now we will show the equivalence of \ref{theo:rel-compl-rc} and
\ref{theo:rel-compl-iuq}.

Assume first that $D\upa$ has implication and universal quantification. Given
$j,k\in I$, let $E\emono A_j\times A_k\times P(A_j\times A_k)$ be the membership
predicate. Let $u:E\to P(A_j\times A_k)$ be third projection, and let
$\varphi:E\to PA_j, \psi:E\to PA_k$ be the first and second projections composed
with the singleton maps $A_j\to PA_j$ and $A_k\to PA_k$. We choose $j\imp k\in
I$ to be the sort of the predicate $\forall_u\varphi\imp\psi$ and we
choose $@^j_k\in R_{i * j,k}$ to be a relation realizing the valid
judgment
\begin{equation}\label{eq:def-at}
 (u^*\forall_u\varphi\imp\psi)\wedge\varphi\ent\psi
\end{equation}
(To get a functional choice of $@^j_k$, we can just choose the minimal such
relation, which is unique since the predicate on the right of $\ent$ is
singleton valued).
Now let $i\in I$ and $r\in R_{i * j,k}$. Define 
\[A_i\times A_j\times
A_k\supseteq M =\{(a,b,c)\msep (a\wedge b,c)\in r\},\] 
and let $v:M\to A_i$ be
the first projection. Let $\beta:M\to PA_j$, $\gamma:M\to PA_k$ be the second
and third projections postcomposed with the singleton maps, and let
$\iota:A_i\to PA_i$ be the singleton map on $A_i$. Then $u^*\iota$ is the
predicate on $M$ corresponding to the first projection, and we have
$u^*\iota\wedge \beta\ent \gamma$ realized by $r$. Doing $\imp$ and $\forall$
introduction, we obtain the valid judgment $\iota\ent \forall_v\beta\imp\gamma$
on $A_i$. We define $w:A_i\to P(A_j\times A_k)$ by $A_i\ni a\mapsto
\{(b,c)\msep (a\wedge b,c)\in r\}$, which gives us a pullback square
\[
\vcenter{\xymatrix@-2mm{
M\ar[r]_-v\ar[d]_-x\pullbackcorner & A_i\ar[d]^-w\\
E\ar[r]_-u & P(A_j\times A_k)
} },
\]
and moreover $x^*\varphi=\beta$ and $x^*\psi=\gamma$, which implies using the
Beck-Chevalley condition that $(\forall_v\beta\imp\gamma)\cong
(w^*\forall_u\varphi\imp\psi)$. Choose $\tilde{r}\in r_{i,j\imp k}$ to be a
realizer of 
\begin{equation}\label{eq:judg-tilder}
\iota\ent w^*\forall_u\varphi\imp\psi.
\end{equation} 
To show relational
completeness, it remains to verify~\eqref{eq:rel-compl}. Let $a\in A_i$. Since
$\tilde{r}$ realizes~\eqref{eq:judg-tilder}, there exists
$h\in (\forall_u\varphi\imp\psi)(wa)$ such that $\tilde{r}(a,h)$, and it
remains to show that for $b\in A_j$, $c\in A_k$ we have $r(a\wedge b,c)\imp
@^j_k(h\wedge b,c)$. But if $r(a\wedge b,c)$ then $(a,b,c)\in M$, and
$@^j_k(h\wedge b,c)$ follows from the validity of~\eqref{eq:def-at} at
$x(a,b,c)$.

\medskip

Conversely, assume that $\upa$ is relationally complete. Instead of
constructing implication and universal quantification separately, we show how
to define the `synthetic' connective $\forall_u\varphi\imp\psi$ for
$u:X\to Y$ and $\varphi,\psi\in \ufam(D\upa)_X$. Implication and universal
quantification can then be recovered by either replacing $u$ by the identity, or
$\varphi$ by the true predicate. For $\varphi:X\to PA_j$, $\psi:Y\to
PA_k$, define $(\forall_u\varphi\imp\psi):Y\to PA_{j\imp k}$ by 
\begin{equation*}
 (\forall_u\varphi\imp\psi)(y)=\bigcap_{ux=y}\{h\in A_{j\imp k}\msep\forall
b\in\varphi(x)\,\exists c\in\psi(x)\qdot @^j_k(h\wedge b,c)\}.
\end{equation*}
It is then easy to see that $@^j_k$ realizes
$u^*\forall_u\varphi\imp\psi,\varphi\ent\psi$; and if $\zeta:Y\to
PA_i$ such that the judgment $u^*\xi,\varphi\ent \psi$ is realized by $r\in
A_{i * j,k}$, then $\tilde{r}$ realizes $\xi\ent\forall_u\varphi\imp\psi$.
\end{proof}
\begin{examples}
 \begin{itemize}
\item
If $\upa$ is a uniform preorder such that $\ufam(\upa)$ has $\wedge, \imp$, and 
$\forall$, then $\upa$ is relationally complete. This follows from
Lemma~\ref{lem:d-imp-forall}.
  \item 
Uniform preorders that come from meet-semilattices are always relationally
complete, since downset lattices are complete Heyting algebras.
 \end{itemize}
\end{examples}

 Relational completeness is particularly interesting for \emph{functional}
uniform preorders. Let us rephrase the definition in this case, using the
language of \emph{relational clones} from Section~\ref{sec:clone}.

\begin{lemma}\label{lem:clone-functional-ufp}
 A finitely complete \emph{functional} uniform preorder $\upa=\upiar$ is
relationally
complete iff for every pair $j,k\in I$ there
exists $j\imp k\in I$ and a partial function 
\[
( -\,\appca_{jk}\,-): A_{j\imp k}\times A_j\pto A_k
\]
such that for all $i_1\dots i_n\in I$ and every partial function
$f\in C_{i_1,\dots, i_n
j;k}$
there exists a \emph{total} function $\Lambda f\in C_{i_1,\dots, i_n;j\imp k}$
such that 
\[
 \Lambda{f}(a_1,\dots,a_n)\appca b\ges f(a_1,\dots,a_n,b)
\]
for appropriately typed $a_1,\dots,a_n,b$, where $(C_{i_1,\dots,
i_n;j})_{i_1\dots i_n j\in I}$ is the relational clone associated to $\upa$
defined before Lemma~\ref{lem:clone-from-ufp}.
\qed
\end{lemma}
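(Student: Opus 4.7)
The plan is to prove the two directions separately by translating between the abstract relational condition of Definition~\ref{def:rel-compl-ufp} and the concrete formulation in terms of partial functions and the clone $(C_{i_1,\dots,i_n;j})$. Throughout, functionality of $\upa$ lets us view each $r \in R_{i,k}$ as a partial function $A_i \pto A_k$, and the description of the clone generators in \eqref{eq:clone-gen} says precisely that $f\in C_{i_1,\dots,i_n j;k}$ iff $f$ is dominated (as a subgraph) by some partial function $(a_1,\dots,a_n,b)\mapsto r(a_1\wedge\cdots\wedge a_n\wedge b)$ for some $r\in R_{i_1 *\cdots * i_n * j,k}$.

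For the forward direction, assume $\upa$ is relationally complete and take $(-\appca_{jk}-)$ to be the partial function given by $@^j_k\in R_{(j\imp k)*j,k}$; this is in the clone $C_{(j\imp k),j;k}$ by construction. Given $f \in C_{i_1,\dots,i_n j;k}$, pick $r\in R_{(i_1 *\cdots * i_n)*j,k}$ with $f(a_1,\dots,a_n,b) \simeq r(a_1\wedge\cdots\wedge a_n\wedge b)$. Apply relational completeness to $r$ with $i := i_1 *\cdots * i_n$ to obtain $\tilde r \in R_{i, j\imp k}$; the $\forall a\,\exists h$ clause in \eqref{eq:rel-compl} forces $\tilde r$ to be defined on \emph{all} of $A_i$, hence \emph{total}. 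Setting $\Lambda f(a_1,\dots,a_n) := \tilde r(a_1\wedge\cdots\wedge a_n)$ gives a total function in $C_{i_1,\dots,i_n; j\imp k}$, and whenever $f(a_1,\dots,a_n,b) = c$, the containment $r(\tilde a\wedge -,-) \subseteq @^j_k(\tilde r(\tilde a)\wedge -,-)$ (writing $\tilde a = a_1\wedge\cdots\wedge a_n$) yields $\Lambda f(a_1,\dots,a_n)\appca b = c$, which is exactly the $\ges$ condition.

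For the reverse direction, set $@^j_k$ to be the graph of $\appca_{jk}$; membership of $@^j_k$ in $R_{(j\imp k)*j,k}$ is the hypothesis that $\appca_{jk}$ lies in the clone $C_{(j\imp k),j;k}$. Given arbitrary $i\in I$ and $r \in R_{i*j,k}$, define $f : A_i\times A_j \pto A_k$ by $f(a,b) := r(a\wedge b)$, so that $f \in C_{i,j;k}$. The hypothesis produces a \emph{total} $\Lambda f\in C_{i;j\imp k}$ with $\Lambda f(a)\appca b \ges f(a,b)$. Because $\Lambda f$ is dominated by some relation in $R_{i,j\imp k}$ and $R$ is closed under arbitrary sub-relations, the graph of $\Lambda f$ itself lies in $R_{i,j\imp k}$; let $\tilde r$ be this graph. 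For any $a\in A_i$ set $h := \Lambda f(a)$; then $\tilde r(a,h)$ holds by construction and $r(a\wedge b) = c$ implies $f(a,b) = c$, so $h\appca b = c$, i.e.\ $@^j_k(h\wedge b,c)$, which is exactly the inclusion required by~\eqref{eq:rel-compl}.

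The only slightly delicate point, which I expect to be the main obstacle, is bookkeeping: translating between the uniform-preorder formulation (where everything is phrased in terms of meets $a_1\wedge\cdots\wedge a_n$ landing in $A_{i_1 *\cdots * i_n}$, and relations in $R$ modulo downward closure) and the clone-theoretic formulation (where inputs are tuples and the meets are absorbed into the clone generators by Lemma~\ref{lem:clone-from-ufp}). Once this translation is set up using functionality to identify partial functions with their graphs and the fact (from Lemma~\ref{lem:clone-from-ufp}) that the clone does not depend on the choice of meet maps, both directions reduce to the manipulations sketched above.
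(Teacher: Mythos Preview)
Your proof is correct and is exactly the routine translation the paper intends; the lemma carries a bare \qed\ in the paper with no argument given, so there is nothing further to compare against.

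Two small points. First, in the reverse direction you rely on ``$\appca_{jk}$ lies in the clone $C_{j\imp k,j;k}$'' to place $@^j_k$ in $R_{(j\imp k)*j,k}$. You are right that this is needed, but observe that the lemma as literally stated does not include it. It is clearly the intended reading, however: the forward direction automatically produces $\appca_{jk}\in C_{j\imp k,j;k}$, and the proof of Lemma~\ref{lem:rel-compl-func-tpca} explicitly uses that the application maps are in the clone. So your treatment is appropriate; it would just be worth flagging the omission. Second, the clone description only guarantees $f(a_1,\dots,a_n,b)\preceq r(a_1\wedge\cdots\wedge a_n\wedge b)$, not the strong equality $\simeq$ you wrote; but since $\wedge$ is injective in the functional case and $R$ is downward closed, you may replace $r$ by the exact subrelation supported on the image of the meet, so the argument is unaffected.
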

The notation $\Lambda f$ is inspired by the apparent analogy to abstraction in
$\lambda$-calculus; we denote iterated abstraction by $\Lambda^jf$. If we
abstract an $n$-ary function $f\in C_{i_1,\dots, i_n;j}$ $n$ times, we obtain a
`total $0$-ary function', which is a singleton $\{\Lambda^n f\}\in
C_{\varnothing;i_1\imp\dots\imp
i_n\imp j}$ (i.e.\ $\Lambda^nf\in A_{i_1\imp\dots\imp
i_n\imp j}$) such that 
\[
 (\Lambda^nf)\appca a_1\appca\dots\appca a_n
\simeq \dots
\simeq (\Lambda f)(a_1,\dots,a_{n-1})\appca
a_n\ges f(a_1,\dots,a_n),
\]
where we have strong equality $\simeq$ between all but the last two terms since
the functions $\Lambda^i(f)$ are total.

\begin{lemma}\label{lem:rel-compl-func-tpca}
Every relationally complete functional uniform preorder $\upa = \upiar$ is
induced (in the sense of Definition~\ref{ex:uords}-\ref{ex:rel-typed-pca}) by an
inclusion of typed pcas.
\end{lemma}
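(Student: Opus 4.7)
The plan is to take $\pcaai := A_i$ with application $\appca_{ij} : A_{i\imp j} \times A_i \pto A_j$ given by the generic relation $@^i_j$ (well-defined as a partial function since $\upa$ is functional, so $@^i_j$ is a functional relation), and to define the sub-family $\pcaasi := \{a \in A_i \mid \{(\top, a)\} \in R_{1,i}\}$ of ``uniformly definable'' elements. The claim is that this $(I, (\pcaasi \subseteq \pcaai)_{i \in I})$ is an inclusion of typed pcas whose induced uniform preorder (in the sense of Example~\ref{ex:uords}-\ref{ex:uords-tpca-it}) is exactly $\upa$.

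To verify that $(\pcaai)$ is a typed pca and that $(\pcaasi)$ is a sub-pca, I would construct the $k$ and $s$ combinators via the $\Lambda$-abstraction machinery of Lemma~\ref{lem:clone-functional-ufp}. For instance, the first projection $A_{i*j} \to A_i$ (sending $a \wedge b$ to $a$) is a total function in the clone $C_{i,j;i}$ because it is a restriction of a generator coming from a projection relation \eqref{eq:proj-rel-l}, and iterating $\Lambda$ twice produces a total $0$-ary function $k_{ij} \in A_{i\imp j \imp i}$ with $k_{ij} \appca a \appca b \ges a$; analogously one constructs $s_{ijk}$ from the appropriate ternary function $(a, b, c) \mapsto (a \appca c)\appca(b \appca c)$, which lies in the clone because the clone contains $@$ and closes under composition. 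These combinators lie in $\pcaas$ because each $\Lambda^n g$ of a total $g$ in the clone yields a singleton relation in $R_{1, -}$ (by the generating construction preceding Lemma~\ref{lem:clone-from-ufp}). Closure of $\pcaas$ under application then follows from the fact that if $\{(\top, a)\} \in R_{1, i\imp j}$ and $\{(\top, b)\} \in R_{1, i}$, then their $\wedge$-product lies in $R_{1, (i\imp j)*i}$, and composing with $@^i_j \in R_{(i\imp j)*i, j}$ gives $\{(\top, a \appca b)\} \in R_{1,j}$.

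It remains to show that the induced uniform preorder $R' = R(\pcaas)$ coincides with $R$. For $R' \subseteq R$: given $a \in \pcaas_{i\imp j}$, the graph of $a \appca - : A_i \pto A_j$ factors as the composition $R_{i, (i\imp j)*i} \ni \{(b, a\wedge b) : b\} \;;\; @^i_j$, which lies in $R_{ij}$ by closure of $R$ under composition and product. For $R \subseteq R'$: given $r \in R_{ij}$, view $r$ as an element of $R_{1 * i, j}$ via the isomorphism $A_{1*i} \cong A_i$ induced by $\wedge$, and apply relational completeness with parameter sort $1$ to obtain $\tilde r \in R_{1, i\imp j}$. Instantiating at $\top \in A_1$ yields $h \in A_{i\imp j}$ with $\{(\top, h)\} \subseteq \tilde r$ (so $h \in \pcaas_{i\imp j}$ by downward closure) and $r(b, c) \imp h \appca b = c$, i.e.\ $r$ is contained in the graph of $h \appca -$.

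The main obstacle will be bookkeeping the many-sorted structure and the interplay between the categorical meet $\wedge$ (encoding tuples of arguments) and the pca application $\appca$, particularly when verifying the $s$-combinator equation, which requires several nested $\Lambda$-abstractions and careful use of the fact that the relational clone is generated by the $R_{i_1 * \cdots * i_n, j}$ via \eqref{eq:clone-gen}. The proof becomes routine once one accepts that every computation which mixes $\wedge$, projections, and $@^i_j$ stays within the clone, and hence can be abstracted to a total element of the appropriate arrow sort lying in $\pcaas$.
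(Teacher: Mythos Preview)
Your approach is essentially the same as the paper's: define application via the generic relations $@^i_j$, build the combinators by iterated $\Lambda$-abstraction in the associated relational clone, take the sub-pca to consist of the ``definable'' elements (your condition $\{(\top,a)\}\in R_{1,i}$ is equivalent to the paper's $\{a\}\in C_{\varnothing;i}$), and recover $R$ by showing each $r\in R_{ij}$ is bounded by some $(h\appca -)$ with $h\in\pcaas$. One small omission: the paper's Definition~\ref{def:typed-pca} of typed pca also requires $\compair$, $\comfst$, $\comsnd$, which you do not construct explicitly; but your closing remark that anything built from $\wedge$, projections, and $@$ stays in the clone covers these by the same mechanism.
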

\begin{proof}
Relational completeness provides us with the type constructors on $I$. First,
let us show that the partial binary application maps $(-\,\appca\,-):A_{i\imp
j}\times A_{i}\pto A_j$ make $(A_i)_{i\in I}$ a typed pca. For this we have to
construct the combinators postulated in
Definition~\ref{def:typed-pca} and verify the axioms.

For $\coms_{ijk}$, consider the partial function $f_\coms:A_{i\imp j\imp
k}\times A_{i\imp j}\times A_i\pto A_k$ given by $(x,y,z)\mapsto xz(yz)$, which
is contained in the clone $(C_{i_1,\dots, i_n;j})_{i_1\dots i_n j\in I}$, since
the application maps are and the clone is closed under composition. Three-fold
abstraction gives us a singleton $\Lambda^3 f_\coms=\{\coms\}\subseteq A_{(i\imp
j\imp k)\imp (i\imp j)\imp i \imp k}$, and it follows from the remarks
preceding the lemma
that $\coms$ has the desired properties. In the same way, the $\comk$
combinator is obtained by double abstracting the function $(x,y)\mapsto x$.

The combinator $\compair$ is given by abstracting $(x,y)\mapsto x\wedge y$
twice. 
For the (say) first
projection, observe that for given $i,j\in I$ we have $\pil\wedge\pir \leq
\pil$  in $\ufam(\upa)_{A_i\times A_j}$ ($\pil$ and $\pir$ being the first and
second projection), which implies that the set $p=\{(x\wedge y,x)\msep x\in
A_i, y\in A_j\}$ is contained
in $R_{i\ptype j,i}$, and therefore in $C_{i\ptype j;i}$. We take $\comfst$ to
be $\Lambda p\in C_{\varnothing;i\ptype j\imp i}$. Then
the verification of the corresponding axiom is not difficult:
\[
 \comfst\appca(\compair\appca x\appca y)=\comfst\appca(x\wedge y)=p(x\wedge
y)=x,
\]
where the last equation holds since $(x\wedge y,x)\in p$ by definition.
The construction for $\comsnd$ is analogous, which finishes the construction a
typed pca structure on $(A_i)_{i\in I}$.

To get a typed sub-pca, we set for each $i\in I$
\begin{equation}\label{eq:def-asharp}
 A_{\#,i}=\bigl\{a\in A_i\msep \{a\}\in C_{\varnothing;i}\bigr\}.
\end{equation}
Then the combinators $\comk,\coms,\comfst,\comsnd,\compair$ are contained in
the substructure by construction, closure under application follows from
closure under composition of the clone.

It remains to check that the inclusion $(A_{\#,i}\subseteq A_i)_{i\in I}$ of
typed pcas does indeed induce the uniform preorder $\upa$. This is equivalent
to the statement that the partial functions $(a\appca\, -):A_i\pto A_j$ for
$a\in A_{\#,i\imp j}$ generate the sets $C_{i;j}=R_{i,j}$ under down closure.
In one direction, given $a\in A_{\#,i\imp j}$, the function $(a\appca\,-)$ can
be expressed as a composition of $(-\,\appca\,-)$ and $\{a\}$ which are both in
the clone. In the other direction, every element of $f\in C_{i;j}$ is contained
in $(\Lambda f\appca\,-)$. 
\end{proof}
In the same way, we can show the one sorted case.
\begin{corollary}
 Every one-sorted relationally complete functional uniform preorder $\brar$ is
induced by an inclusion
$\pcaas\subseteq\pcaa$ of pcas.
\qed
\end{corollary}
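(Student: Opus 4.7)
The plan is to specialize the proof of Lemma~\ref{lem:rel-compl-func-tpca} to the case where the index set $I$ is a singleton. Essentially nothing new is needed: when $I=\{*\}$, both type constructors $(-\imp -)$ and $(-\ptype -)$ are forced to send $(*,*)$ to $*$, so the typed pca machinery collapses onto a single carrier set $A$. The ``main obstacle'' here is really just bookkeeping — making sure the collapse preserves all the axioms and that the resulting inclusion actually induces $\brar$.

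Concretely, given a one-sorted relationally complete functional uniform preorder $\brar$, relational completeness supplies a partial application $(-\appca -):A\times A\pto A$ (the map denoted $@^*_*$), and finite completeness supplies a total pairing $\wedge:A\times A\to A$. First I would invoke Lemma~\ref{lem:clone-functional-ufp} to express relational completeness in terms of the associated relational clone $(C_{*,\dots,*;*})_n$ on $A$, giving the abstraction operator $\Lambda$ satisfying $\Lambda f(a_1,\dots,a_n)\appca b\ges f(a_1,\dots,a_n,b)$ for partial functions $f$ in the clone. Then I would construct the combinators exactly as in the proof of Lemma~\ref{lem:rel-compl-func-tpca}: $\coms=\Lambda^3 f_\coms$ with $f_\coms(x,y,z)=xz(yz)$, $\comk=\Lambda^2 f_\comk$ with $f_\comk(x,y)=x$, $\compair=\Lambda^2\!\wedge$, and $\comfst,\comsnd$ obtained by abstracting the inclusions $\{(x\wedge y,x)\}$ and $\{(x\wedge y,y)\}$ (which lie in $C_{*;*}=R$ since $\pil\wedge\pir\leq\pil$ and $\pil\wedge\pir\leq\pir$ in $\ufam(\upa)_{A\times A}$). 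This yields a pca structure $\pcaa=(A,\appca)$.

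Next I would define the sub-pca
\[
\pcaas = \bigl\{a\in A \msep \{a\}\in C_{\varnothing;*}\bigr\},
\]
exactly as in~\eqref{eq:def-asharp}. By construction the combinators $\coms,\comk,\compair,\comfst,\comsnd$ all belong to $\pcaas$ (each is the unique value of a total $0$-ary function in the clone), and $\pcaas$ is closed under application because the clone is closed under composition of the application map with constants. Thus $\pcaas\subseteq\pcaa$ is an inclusion of pcas in the sense of Definition~\ref{def:typed-pca} (restricted to one sort).

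Finally, I would verify that this inclusion induces $\brar$, i.e.\ that the partial functions $(a\appca -):A\pto A$ for $a\in\pcaas$ generate $R=C_{*;*}$ under downward closure. For one direction, if $a\in\pcaas$ then $\{a\}\in C_{\varnothing;*}$ and $(a\appca -)$ is the composition of application with $\{a\}$, hence in the clone, hence in $R$. For the other, any $f\in C_{*;*}=R$ is contained in $(\Lambda f\appca -)$ by the defining property of $\Lambda$, and $\Lambda f\in\pcaas$ since $\{\Lambda f\}\in C_{\varnothing;*}$ by construction. This establishes the desired equivalence between $\brar$ and the uniform preorder induced by $\pcaas\subseteq\pcaa$ in the sense of Example~\ref{ex:uords}-\ref{ex:uords-pca-i}.
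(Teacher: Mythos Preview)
Your proposal is correct and follows exactly the approach the paper intends: the corollary is stated with a bare \qed{} after the remark ``In the same way, we can show the one sorted case,'' so the paper's proof is precisely the specialization of Lemma~\ref{lem:rel-compl-func-tpca} to $I=\{*\}$ that you spell out. One minor comment: for untyped pcas (Definition~\ref{def:pca}) only $\comk$ and $\coms$ are required, so the construction of $\compair,\comfst,\comsnd$ is superfluous here (though harmless).
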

If we want non-relative versions of the previous two lemmas, we only have to
add one condition. We will do this using terminology of
Pitts~\cite[page~11]{pitts81}.
\begin{definition}
 Let $\upiar$ be a finitely complete uniform preorder, and let $(C_{i_1,\dots,
i_n;j})_{i_1\dots i_n,j}$ be the associated relational clone. A
\emph{designated truth value}\index{designated truth value} is an $a\in A_i$ such that $\{a\}\in
C_{\varnothing;i}$.
\end{definition}
\begin{lemma}\label{lem:pca-designated}
 \begin{itemize}
  \item A one sorted uniform preorder $\brar$ is induced by a pca via the
construction from
Example~\ref{ex:uords}-\ref{ex:uords-pca}, iff $\brar$ is relationally complete,
functional, and all elements are designated truth values.
\item A uniform preorder $\upiar$ is induced by a typed pca iff $\upiar$ is
relationally complete, functional, and all elements are designated truth values.
 \end{itemize}
\end{lemma}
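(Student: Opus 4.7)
The plan is to reduce this result to Lemma~\ref{lem:rel-compl-func-tpca}, isolating precisely what the `designated truth value' assumption adds to the relational completeness of the uniform preorder. Both items in the lemma can be handled simultaneously, with the one-sorted case being a specialization of the typed one.

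For the forward direction, I would start from an (inclusion of) (typed) pca inducing $\upiar$ and check the three conditions. Functionality is immediate from Example~\ref{ex:uords}-\ref{ex:uords-pca}, whose generating relations are partial functions. Relational completeness is essentially a repackaging of functional completeness: I would take $@^j_k$ to be (a suitable restriction of) the application partial function $A_{j\imp k}\times A_j\pto A_k$, and use the combinators $\coms,\comk$ to build, for any $r\in R_{i\ptype j,k}$, an element $\tilde r\in A_{i\imp(j\imp k)}$ whose application realizes~\eqref{eq:rel-compl}. For the designated-truth-values clause, given $a\in A_i$ I would set $b:=\comk\appca a\in A_{j\imp i}$ (with $j$ the sort of a chosen $\top\in A_1$), so that the total function $x\mapsto b\appca x$ lies in the generating family for $R_{j,i}$ and picks out $a$ on all inputs; in particular the relation $\{(\top,a)\}$ is contained in this generator, placing $\{a\}$ in $C_{\varnothing;i}$ via Lemma~\ref{lem:clone-from-ufp}.

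For the converse, I appeal directly to Lemma~\ref{lem:rel-compl-func-tpca}: it already produces an inclusion of (typed) pcas $\pcaas\subseteq\pcaa$ inducing $\upiar$, in which the sub-pca $A_{\#,i}$ is defined by~\eqref{eq:def-asharp} to consist \emph{exactly} of the designated truth values at sort $i$. Under the hypothesis that every element of each $A_i$ is designated, this equation forces $A_{\#,i}=A_i$ for every $i$, so the inclusion collapses to an equality and we obtain a plain (typed) pca inducing $\upiar$. The one-sorted case of the first item is the specialization where $I$ is a singleton.

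The routine but slightly delicate step is checking, in the forward direction, that the uniform preorder $\brar$ recovered from the constructed pca really agrees on the nose with the original one — in particular that the generators $(b\appca-)$ for $b$ ranging over the pca exhaust $R$ up to downward closure. This coincidence is already implicit in the last paragraph of the proof of Lemma~\ref{lem:rel-compl-func-tpca}, since once $\pcaas=\pcaa$ the generating families for $R(\pcaas)$ and $R(\pcaa)$ agree. I do not expect any substantial obstacle beyond bookkeeping of sorts.
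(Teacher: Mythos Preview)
Your proposal is correct and follows essentially the same approach as the paper. The converse direction is identical to the paper's argument: invoke Lemma~\ref{lem:rel-compl-func-tpca} and observe that the sub-pca $A_{\#,i}$ defined in~\eqref{eq:def-asharp} consists precisely of the designated truth values, so it equals $A_i$ exactly under your hypothesis. Your forward direction is more explicit than the paper's (which leaves it entirely implicit), but the ingredients you name are the expected ones.
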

\begin{proof}
 The construction of the (typed) sub-pca in~\eqref{eq:def-asharp} gives the
entire (typed) pca iff all truth values are designated.
\end{proof}

\section{Characterizations of realizability triposes and toposes}

In this section, we assemble all our technology to obtain characterizations of
realizability triposes and hyperdoctrines
(Definitions~\ref{def:realizability-tripos},
\ref{def:realizability-hyperdoctrine}), and of the associated realizability
categories (Definition~\ref{def:realizability-cats}) together with their
constant objects functors.

\subsection{Realizability hyperdoctrines and triposes}

\begin{theorem}\label{theo:character-relrealhyper}
A posetal pre-stack $\fifx:\tot{\fifx}\to\catset$ is equivalent to a
\emph{relative realizability hyperdoctrine}
(Definition~\ref{def:realizability-hyperdoctrine}-
\ref{def:realizability-hyperdoctrine-relreal})
 iff
\begin{enumerate}
  \item $\fifx$ models the logical connectives
$\top,\wedge,\imp,\exists,\forall$\footnote{with
quantification along \emph{all} maps, not just along projections}, and
\item there exists a family $\pcaaii$ of sets, and a family $(\pi_i\in
\fifx_{\pcaai})_{i\in I}$ of \emph{$\exists$-prime} predicates such that
\begin{itemize}
\item
the subfibration of $\fifa\subseteq\fifx$ generated by the $(\pi_i)_{i\in I}$ is
closed under
finite meets, 
\item
all $\pi_i$ are modest in $\fifa$, and
\item
$\fifa$ generates $\fifx$ under existential quantification.
\end{itemize}
\end{enumerate}
\end{theorem}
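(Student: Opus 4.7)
The plan is to reduce the statement to the characterization of relationally complete functional uniform preorders in terms of typed pca inclusions (Lemma~\ref{lem:rel-compl-func-tpca}), using the $D$-construction as the bridge between the uniform preorder and its induced hyperdoctrine.

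For the reverse (easy) direction, I would start with an inclusion $(I,(\pcaasi\subseteq\pcaai))$ of typed pcas, which by Example~\ref{ex:uords}-\ref{ex:uords-tpca-it} induces a functional, finitely complete uniform preorder $\upa=\ufpcaias$ that is moreover relationally complete. By Example~\ref{ex:dufamia}, the associated realizability hyperdoctrine $\hyph(I,\pcaasi\subseteq\pcaai)$ is equivalent to $D\ufam(\upa)\simeq\ufam(D\upa)$. The connectives $\top,\wedge,\exists$ come from finite meets in $\upa$ and Lemma~\ref{lem:ufp-d-meet}, while $\imp,\forall$ are provided by relational completeness via Theorem~\ref{theo:rel-compl}. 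The generic family is given by $(y(\id_{\pcaai}))_{i\in I}$: these predicates are $\exists$-prime in $D\upa$ by Lemma~\ref{lem:epstack-prime}-\ref{lem:epstack-prime-d-image}, the subfibration $\fifa$ they generate under reindexing is precisely $\ufam(\upa)$ (up to equivalence), hence closed under finite meets and modest (since $\upa$ is functional, Lemma~\ref{lem:modest-functional}), and by construction $D\upa$ is generated from the image of $y$ under existential quantification (Lemma~\ref{lem:da}-\ref{lem:da-generated}).

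For the main (hard) direction, suppose $\fifx$ satisfies conditions (i) and (ii). I would first observe that the subfibration $\fifa\subseteq\fifx$ generated by the $\exists$-prime predicates $(\pi_i\in\fifx_{\pcaai})_{i\in I}$ is a posetal pre-stack (since $\fifx$ is) and comes with the generic family $(\pi_i)_{i\in I}$; since the $\pi_i$ are assumed modest in $\fifa$ and $\fifa$ is closed under finite meets by hypothesis, Lemma~\ref{lem:modest-functional} produces a finitely complete functional uniform preorder $\upa$ with $\ufam(\upa)\simeq\fifa$. Next, I would apply the universal property of $D$ (Lemma~\ref{lem:epstack-prime}-\ref{lem:epstack-prime-equiv}) to the inclusion $\fifa\hookrightarrow\fifx$: since every $\pi_i$ is $\exists$-prime in $\fifx$ and the $\pi_i$ generate $\fifx$ under existential quantification, the induced fibered monotone map $\widetilde{H}:D\fifa\to\fifx$ is both order-reflecting and essentially surjective, hence an equivalence. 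Thus $\fifx\simeq D\ufam(\upa)\simeq\ufam(D\upa)$.

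The key step that requires some care is the transfer of logical structure: since $\fifx\simeq\ufam(D\upa)$ has $\imp$ and $\forall$, we can invoke Theorem~\ref{theo:rel-compl} to conclude that $\upa$ is relationally complete. Combined with functionality, Lemma~\ref{lem:rel-compl-func-tpca} then identifies $\upa$ with a uniform preorder $\ufpcaias$ arising from an inclusion of typed pcas, and we obtain $\fifx\simeq\ufam(D\ufpcaias)\simeq\hyph(I,\pcaasi\subseteq\pcaai)$. The main obstacle is verifying that the $\pi_i$ are still $\exists$-prime \emph{inside} $\fifa$ rather than merely in $\fifx$ (needed to apply Lemma~\ref{lem:epstack-prime} to $\fifa$ so that the equivalence $\widetilde{H}$ lands correctly), and that the two notions of modesty --- in $\fifa$ versus in $\fifx$ --- match up so that Lemma~\ref{lem:modest-functional} can indeed be applied to $\fifa$; both should follow from the fact that $\fifa$ is a full subfibration of $\fifx$ closed under the relevant operations, but this is the technical point that demands the most attention.
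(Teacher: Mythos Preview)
Your proposal is correct and follows essentially the same route as the paper's proof: both directions pivot on identifying $\fifa$ with $\ufam(\upa)$ for a functional finitely complete uniform preorder (via Lemma~\ref{lem:modest-functional}), establishing $\fifx\simeq D\fifa$ (via Lemma~\ref{lem:epstack-prime}-\ref{lem:epstack-prime-equiv}), and then invoking Theorem~\ref{theo:rel-compl} and Lemma~\ref{lem:rel-compl-func-tpca}. Your worry at the end is unfounded, though: Lemma~\ref{lem:epstack-prime}-\ref{lem:epstack-prime-equiv} only requires the predicates of $\fifa$ to be $\exists$-prime \emph{in $\fifx$} (which is the hypothesis), not in $\fifa$ itself, and its proof goes through verbatim for any subfibration consisting of $\exists$-primes, not just the maximal one; likewise modesty is by hypothesis stated relative to $\fifa$, so Lemma~\ref{lem:modest-functional} applies directly.
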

\begin{proof}
First assume that $\fifx=\hyph(I,\pcaa,\pcaas)$ is a relative realizability
hyperdoctrine. We claim that the family of identities $(\id_{\pcaai})_i$
(more precisely the corresponding singleton maps) has the
desired properties. By Example~\ref{ex:pca-d-hyp-trip} we have
$\fifx=\ufam(D(I,\pcaa,R(\pcaa)))$, and $\fifa=\ufam(I,\pcaa,R(\pcaa))$. 
$\fifa$ generates $\fifx$ under existential quantification by
Lemma~\ref{lem:da}-\ref{lem:da-generated}, and predicates in $\fifa$ are prime
in $\fifx$ by Lemma~\ref{lem:epstack-prime}. We explained in
Remark~\ref{rem:flfibs} that $\ufam(I,\pcaa,R(\pcaas))$ is finitely complete,
and while we didn't explicitly prove it, it can be deduced from
Lemma~\ref{lem:clone-from-ufp} 
that $\hyph(I,\pcaa,\pcaas)$ models the claimed connectives.
Finally, the predicates $\id_{\pcaa_i}$ are modest in $\fifa$ by
Lemma~\ref{lem:modest-functional}.

In the other direction, let $(I,\pcaa, R)$ be the uniform preorder presentation
of $\fifa$. $\fifa$ is functional by Lemma~\ref{lem:modest-functional}, and
$\fifx\simeq D\fifa$
by Lemma~\ref{lem:epstack-prime}-\ref{lem:epstack-prime-equiv}. This allows us
to deduce that $(I,\pcaa, R)$ is relationally complete by
Theorem~\ref{theo:rel-compl}, and thus induced by an inclusion of typed pcas by
Lemma~\ref{lem:rel-compl-func-tpca}.
\end{proof}
It is straightforward to derive a characterization of relative realizability
triposes from the theorem.
\begin{corollary}
A posetal pre-stack $\fifx:\tot{\fifx}\to\catset$ is equivalent to a
\emph{relative realizability tripos}
(Definition~\ref{def:realizability-tripos}-
\ref{def:realizability-tripos-relreal})
 iff it satisfies the conditions of the theorem in such a way that the
family $(\pi_i)_{i\in I}$ can be chosen to comprise a single predicate.
\qed
\end{corollary}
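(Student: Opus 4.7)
The plan is to specialize Theorem~\ref{theo:character-relrealhyper} to the one-sorted case and appeal to the existing correspondence between one-sorted uniform preorders and (inclusions of) untyped pcas. The key observation is that the family $(\pi_i)_{i\in I}$ in the theorem corresponds, via Lemma~\ref{lem:reconstuct-ufp} and Lemma~\ref{lem:modest-functional}, to the generic family of a functional uniform preorder whose indexing set of sorts is precisely $I$; hence the case where $(\pi_i)_{i\in I}$ reduces to a single predicate corresponds exactly to the case where the uniform preorder is one-sorted.

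First I would treat the forward direction. Assume $\fifx$ is a relative realizability tripos $\rtr(\pcaas\subseteq\pcaa)$. By Example~\ref{ex:pca-d-hyp-trip}, $\fifx\simeq \ufam(D(\pcaa,R(\pcaas)))$, and the underlying one-sorted functional uniform preorder $(\pcaa,R(\pcaas))$ has a single generic predicate $\pi=\id_\pcaa\in\fifx_\pcaa$. The proof of Theorem~\ref{theo:character-relrealhyper} in the ``only if'' direction then applies verbatim to this single predicate, showing that $\pi$ is $\exists$-prime (by Lemma~\ref{lem:epstack-prime}-\ref{lem:epstack-prime-d-image}), modest in the generated subfibration (by Lemma~\ref{lem:modest-functional}, since $(\pcaa,R(\pcaas))$ is functional), and that $\fifa$ generates $\fifx$ under existential quantification (by Lemma~\ref{lem:da}-\ref{lem:da-generated}). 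Closure of $\fifa$ under finite meets follows from finite completeness of $(\pcaa,R(\pcaas))$, and the logical connectives are present by Theorem~\ref{theo:rel-compl} together with Lemma~\ref{lem:d-imp-forall}.

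For the converse, suppose the hypotheses of Theorem~\ref{theo:character-relrealhyper} hold with $(\pi_i)_{i\in I}$ reducing to a single predicate $\pi\in\fifx_A$. Running the proof of the theorem yields a uniform preorder whose underlying family of sets is indexed by $I$; since here $I$ is a singleton, we obtain a \emph{one-sorted} finitely complete functional uniform preorder $\brar=(A,R)$ together with an equivalence $\fifa\simeq\ufam(A,R)$, and by Lemma~\ref{lem:epstack-prime}-\ref{lem:epstack-prime-equiv} an equivalence $\fifx\simeq D\fifa\simeq\ufam(D(A,R))$. Relational completeness of $(A,R)$ follows from Theorem~\ref{theo:rel-compl} and the assumed presence of $\imp,\forall$ in $\fifx=D\fifa$. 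Applying the one-sorted version of Lemma~\ref{lem:rel-compl-func-tpca} (that is, the corollary immediately following it), we conclude that $(A,R)$ is induced by an inclusion $\pcaas\subseteq\pcaa$ of untyped pcas, so $\fifx\simeq\ufam(D(\pcaa,R(\pcaas)))$ is the associated relative realizability tripos by Example~\ref{ex:pca-d-hyp-trip}.

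The only real point of care is to ensure that the proof of the theorem, when instantiated with $|I|=1$, really produces a one-sorted structure (rather than a typed one that happens to have a single generator)---this is immediate from the construction of the uniform preorder in Lemma~\ref{lem:reconstuct-ufp}, where the index set of sorts is literally the index set of the chosen generic family. No new calculation is required; the corollary is essentially a bookkeeping specialization of the theorem.
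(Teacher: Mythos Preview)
Your proposal is correct and matches the paper's approach exactly: the paper gives no proof at all (just $\qed$), treating the corollary as an immediate specialization of Theorem~\ref{theo:character-relrealhyper} via the observation that the index set $I$ of the generic family becomes the set of sorts of the resulting uniform preorder, so a single predicate yields the one-sorted case and hence an inclusion of untyped pcas. You have simply unpacked what the paper leaves implicit.
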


To characterize \emph{non-relative} realizability, we have to add one more
condition. Remark that since relative realizability hyperdoctrines $\fifx$ can
be constructed by freely adding existential quantification to fibered
meet-semilattices, they are totally connected, i.e.\
$\delta:\sub(\catset)\to\fifx$ has a finite meet preserving left adjoint
$\pi:\fifx\to\sub(\catset)$
\begin{corollary}\label{cor:character-realhyper}
 \begin{enumerate}
  \item A posetal pre-stack $\fifx:\tot{\fifx}\to\catset$ is equivalent to a
\emph{realizability hyperdoctrine}
(Definition~\ref{def:realizability-hyperdoctrine}-
\ref{def:realizability-hyperdoctrine-real})
 iff it satisfies the conditions of the theorem, and
$\gamma\footnote{See Section~\ref{sec:glob-secs}}
\cong\pi:\fifx\to\sub(\catset)$.
\item $\fifx$ is equivalent to a \emph{realizability tripos}
Definition~\ref{def:realizability-tripos}-
\ref{def:realizability-tripos-real}), if the family of
predicates $(\pi_i)_{i\in I}$ can moreover be chosen to comprise a single
predicate.
 \end{enumerate}
\end{corollary}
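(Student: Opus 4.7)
The plan is to reduce both parts of the corollary to Theorem~\ref{theo:character-relrealhyper} and Lemma~\ref{lem:pca-designated}, and then to identify the extra condition $\gamma\cong\pi$ as the precise categorical counterpart of the condition that every element of $\pcaa$ is a designated truth value. Assuming $\fifx$ satisfies the hypotheses of Theorem~\ref{theo:character-relrealhyper}, that theorem yields an equivalence $\fifx\simeq \hyph(I,\pcaa,\pcaas) = \ufam(D\fifa)$ for some inclusion $(I,\pcaa,\pcaas)$ of typed pcas, where $\fifa = \ufam(I,\pcaa,R(\pcaas))$. The task then reduces to showing that $\gamma\cong\pi$ holds precisely when $\pcaas = \pcaa$, since Lemma~\ref{lem:pca-designated} upgrades the relative presentation $\hyph(I,\pcaa,\pcaas)$ to the genuine realizability hyperdoctrine $\hyph(I,\pcaa)$.

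Next I would compute both $\pi$ and $\gamma$ on the generic predicates $\iota_i=\id_{\pcaa_i}\in\fifa_{\pcaa_i}$, viewed as predicates in $\fifx$ via the embedding $y:\fifa\to D\fifa$. By Lemma~\ref{lem:da-totally-connected}, $\pi(y\iota_i) = \pcaa_i$. For $\gamma$, unfolding the definition from Section~\ref{sec:glob-secs} and using that $y$ preserves $\top$ and is order-reflecting, membership $a\in\gamma(y\iota_i)$ becomes equivalent to $\top_{\fifa_1}\leq y(a)$ in $\fifa_1$, which by definition of $\fifa$ says there is a realizer $r\in\pcaas_{1\imp i}$ with $r\appca\top = a$. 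Since $\pcaas$ is closed under application and contains $\top$, any such $a$ lies in $\pcaas_i$; conversely, given $a\in\pcaas_i$, the element $r = \comk\appca a\in\pcaas_{1\imp i}$ satisfies $r\appca\top = a$. Hence $\gamma(y\iota_i) = \pcaas_i$, and agreement $\gamma(y\iota_i)\cong\pi(y\iota_i)$ on the generic family forces $\pcaas_i = \pcaa_i$ for every $i$.

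For the converse I would verify $\gamma\cong\pi$ globally in the non-relative case by computing $\gamma$ on an arbitrary predicate $(u:J\to M,\varphi:J\to\pcaa_i)\in D\fifa_M$: using the span description of the ordering in $D\fifa$ from Section~\ref{sec:pos-d}, the condition for $m\in M$ to lie in $\gamma(u,\varphi)$ amounts to the existence of a non-empty span $1\twoheadleftarrow L\to u^{-1}(m)$ together with a single $\pcaas$-realizer tracking $\top\leq\varphi|_L$ in $\fifa$. When $\pcaas = \pcaa$, any choice $\ell\in u^{-1}(m)$ with $L=\{\ell\}$ and $r = \comk\appca\varphi(\ell)$ witnesses this, so $\gamma(u,\varphi)=\mathrm{image}(u)=\pi(u,\varphi)$. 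Together with the previous paragraph and Lemma~\ref{lem:pca-designated}, this proves~(i); part~(ii) is the one-sorted specialization, using the corollary following Theorem~\ref{theo:character-relrealhyper} in place of the theorem itself and noting that the designated-truth-value argument turns a one-sorted relative pca inclusion into an equality, recovering an (untyped) realizability tripos. The main obstacle is the span-level computation of $\gamma$, in particular verifying that the freedom to choose the witness $L$ collapses in the non-relative case but becomes genuinely restrictive as soon as $\pcaas\subsetneq\pcaa$.
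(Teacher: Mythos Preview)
Your proposal is correct and follows essentially the same route as the paper. Both arguments reduce to Lemma~\ref{lem:pca-designated} by identifying the condition $\gamma\cong\pi$ with the statement that every truth value is designated (equivalently, $\pcaas=\pcaa$); the paper's proof simply asserts that this identification is ``easy to see'', whereas you spell out the computation of $\gamma$ and $\pi$ on the generic predicates and on arbitrary elements of $D\fifa$.
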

\begin{proof}
It is easy to see that for realizability hyperdoctrines, $\pi$ does indeed
coincide with $\gamma$.

Conversely, it is sufficient by Lemma~\ref{lem:pca-designated} to show that all
truth values in a uniform meet-semilattice $\upa$ are designated iff
$\gamma\cong\pi:\ufam(D\upa)\to\sub(\catset)$, which is easy to see as well.
\end{proof}

\subsection{Realizability categories and toposes}

The `modesty' condition in Theorem~\ref{theo:character-relrealhyper} is a bit
awkward since it is not a property of the $\pi_i$ as predicates in the
fibration $\fifx$, but in the subfibration $\fifa$.
If we want to characterize the fibered pretoposes associated to realizability
categories (Definition~\ref{def:realizability-cats}), this condition becomes
replaced by a somewhat nicer condition involving the concept of
\emph{discreteness} that we introduce now (see also
Remark~\ref{rem:modest-discrete} for a comparison of the concepts of modest and
discrete).
\begin{definition}\label{def:discrete}
 Let $\fibp:\tot{\fibp}\to\catr$ be a positive pre-stack. We call $D\in\fibp_I$
\emph{discrete}\index{discrete object in a fibration}, if in any configuration
\[
\xymatrix@R-6mm@C-3mm{
V\epicart[dr]_e\ar[rrrd]^f\\
& U\dashed[rr]_h && D\\
K\depi[rd]_p\\
{} & J\ar[rr]^u && I
} 
\]
where $U$ is subterminal in its fiber, $e$ is cartesian over the regular epi
$p$, and $f$ is over $u\circ p$, there exists
an $h$ over $u$ such that $he=f$. (In this case, $h$ is necessarily unique since
cover-cartesian maps are collectively epic in pre-stacks.)
\end{definition}

\begin{theorem}\label{theo:char-cat}
Let 
$\Delta:\catset\to\catx$ be a regular functor into an exact category, and
$\fibx=\gl_\Delta(\catx)$ the associated fibered pretopos obtained by gluing. 
$\Delta$ is up to equivalence of the
form $\Delta:\catset\to\catrc(I,\pcaa,\pcaas)$
(Definition~\ref{def:realizability-cats}) for an inclusion $\iipcaasa$ of typed
pcas, iff
\begin{enumerate}
 \item\label{theo:char-cat-lccc} $\catx$ is locally cartesian closed
 \item\label{theo:char-cat-fam} there exists a family $(\pi_i:D_i\emono \Delta
A)_{i\in I}$ of
monomorphisms in $\catx$ such that
\begin{enumerate}
 \item\label{theo:char-cat-fam-ip} all $\pi_i$ are indecomposable and projective
in $\fibx$
\item\label{theo:char-cat-fam-disc} all $D_i\to\Delta 1$ are discrete in
$\fibx$
 \item\label{theo:char-cat-fam-gen} the posetal subfibration
$\fifa\subseteq\fibx$ generated by the $\pi_i$ is closed under finite meets and 
every $X\in\tot{\fibx}$ can be covered by a $\varphi\in\tot{\fifa}$ as in 
\[
\xymatrix{ \varphi\coca[r]^-s&S\vepi[r]^-e&X},
\]
where $s$ is cocartesian and $e$ is a vertical epimorphism.
\end{enumerate}
\end{enumerate}
\end{theorem}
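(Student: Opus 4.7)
The strategy is to leverage the universal characterization of the fibered presheaf construction (Lemma~\ref{lem:chr-psh}) together with the characterization of realizability hyperdoctrines (Theorem~\ref{theo:character-relrealhyper}) and the relational-completeness criterion (Theorem~\ref{theo:rel-compl}, Lemma~\ref{lem:rel-compl-func-tpca}). In the forward direction, starting from a relative realizability category $\catrc(I,\pcaa,\pcaas)$, the natural candidate family is $(\pi_i:D_i\emono\Delta\pcaa_i)_{i\in I}$ where $D_i$ is the assembly with underlying set $\pcaa_i$ and existence predicate given by the singleton map $a\mapsto\{a\}$. I would verify directly that these are indecomposable and projective in the gluing fibration (they are essentially the partitioned assemblies), that each $D_i\to\Delta 1$ is discrete (this is immediate from the functional nature of realizers, via Lemma~\ref{lem:modest-functional} transported through $y:\fifa\hookrightarrow D\fifa$), that the generated subfibration coincides with $\ufam\ufpcaias$ which has finite meets (Remark~\ref{rem:flfibs}), and that the covering condition holds because every object in a realizability category is a quotient of an internal sum of partitioned assemblies.

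For the reverse direction, which is the substantive half, I would start with the family $(\pi_i)_{i\in I}$ and consider the subfibration $\fifa\subseteq\fibx$ it generates. By~\ref{theo:char-cat-fam-gen} this is a finite-limit subfibration; furthermore it inherits the pre-stack property from $\fibx$. The discreteness condition~\ref{theo:char-cat-fam-disc} is precisely what allows me to apply Lemma~\ref{lem:modest-functional}: it guarantees that the $\pi_i$, viewed as a generic family of predicates in $\fifa$, are modest, so that $\fifa$ is representable by a \emph{functional} uniform preorder $(I,\pcaa,R)$ via Lemma~\ref{lem:reconstuct-ufp}. The indecomposability/projectivity condition~\ref{theo:char-cat-fam-ip} combined with the covering condition~\ref{theo:char-cat-fam-gen} places us exactly in the situation of Lemma~\ref{lem:chr-psh}-\ref{lem:chr-psh-characterization}: the fibered pretopos $\fibx$ is equivalent to $\widehat{\fifa}$, the fibered presheaf construction applied to $\fifa$. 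By Corollary~\ref{cor-hat-sh-d}, this means $\catset[\fifx]\simeq\catset[D\fifa]$, where I use that $\widehat{\fifa}\simeq\sheaf{D\fifa}$.

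The main obstacle is now to convert the local cartesian closedness of $\catx\simeq\catset[D\fifa]$ into a concrete pca structure. Here I invoke Theorem~\ref{theo:rda-lccc}: since $\catset[D\fifa]$ is locally cartesian closed, $D\fifa$ has implication and universal quantification, which by Theorem~\ref{theo:rel-compl} is equivalent to $(I,\pcaa,R)$ being \emph{relationally complete}. Combining relational completeness with functionality, Lemma~\ref{lem:rel-compl-func-tpca} produces an inclusion $(I,\pcaas\subseteq\pcaa)$ of typed pcas inducing $(I,\pcaa,R)$. Putting everything together, $\fibx\simeq\widehat{\ufam\ufpcaias}$, which by Example~\ref{ex:pca-d-hyp-trip} together with the construction of $\catrc(I,\pcaa,\pcaas)$ (Definition~\ref{def:realizability-cats}) is equivalent to the gluing fibration of $\catrc(I,\pcaa,\pcaas)$ along $\Delta$.

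The delicate step I would expect to require the most care is checking that the discreteness condition~\ref{theo:char-cat-fam-disc} truly corresponds to the modesty condition needed for Lemma~\ref{lem:modest-functional}, because modesty is a statement about a predicate in the subfibration $\fifa$, whereas discreteness is a statement about an object of the total fibration $\fibx$. The translation should go through the embedding $y:\fifa\hookrightarrow\widehat{\fifa}$ (which coincides with the inclusion of indecomposable projectives by Lemma~\ref{lem:chr-psh}-\ref{lem:chr-psh-yip}), using that for $\varphi\in\fifa$ the object $D_\varphi\to\Delta I$ realizing $\varphi$ in $\fibx$ is subterminal-like, so that the cartesian/cocartesian lifting conditions characterizing discreteness translate pointwise into the unique-mediator condition characterizing modesty of $\varphi$ in $\fifa$. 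Once this translation is done cleanly, the remainder is an assembly of the quoted lemmas.
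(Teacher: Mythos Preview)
Your plan is essentially the paper's proof: Lemma~\ref{lem:chr-psh} for the identification $\fibx\simeq\widehat{\fifa}$, Theorem~\ref{theo:rel-compl} for relational completeness from local cartesian closedness, Lemma~\ref{lem:modest-functional} for functionality, and Lemma~\ref{lem:rel-compl-func-tpca} for the typed-pca inclusion. Two points of divergence worth noting. First, swap the order in the reverse direction: the paper establishes $\fibx\simeq\widehat{\upa}\simeq\sheaf{D\upa}$ \emph{before} extracting modesty from discreteness, and indeed your own proposed mechanism (going through $y:\fifa\hookrightarrow\widehat{\fifa}$) presupposes this identification. Second, for the delicate step you correctly flag, the paper does not argue directly via $y$ but exploits that $D\upa$ is totally connected (Lemma~\ref{lem:da-totally-connected}), so that $\catset[D\upa]$ has well-behaved assemblies with $\asm(D\upa)\simeq\tot{\ufam(D\upa)_d}$ (Lemma~\ref{lem:tc-dense}); the discreteness diagram then lives entirely among assemblies with dense existence predicates, where the argument that functionality of the realizing relation forces the required mediator goes through cleanly. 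This is the technical content you should expect to fill in at that step.
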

\begin{proof}
First, assume that we are dealing with a functor
$\Delta:\catset\to\hyph(I,\pcaa,\pcaas)$, and let $\upa=(I,\pcaa,R(\pcaas))$ be
the uniform preorder associated to the inclusion $\iipcaasa$ of typed pcas.
Then $\upa$ is relationally complete, and $\catx=\srel{\catset}{\upa}$ is
locally cartesian closed by Theorem~\ref{theo:rel-compl}. Since
$\Delta^*\subf(\fifx)=\ufam(D\upa)$, we can define the $\pi_i$ as predicates
in the latter fibration, and we set $\pi_i\in \ufam(D\upa)_{\pcaai}$ to be the
the singleton map corresponding to $\id_{\pcaai}$, keeping the indexing set $I$
from $\iipcaasa$. With this choice of generators we have $\fifa=\ufam(\upa)$,
and \ref{theo:char-cat-fam-ip} and \ref{theo:char-cat-fam-gen} follow from
Lemma~\ref{lem:chr-psh}.

It remains to verify the condition about discreteness. $D_i$ is given by
$(\pcaai,\predeq|_{\pi_i})$, and by instantiating Definition~\ref{def:discrete}
with $\gl_\Delta(\catx)$, we see that we have to verify the existence of a
mediator $h$ in diagrams of the form
\[
\vcenter{\xymatrix@R-3mm{
N\depi[d]& \Delta N\depi[d]& V\pullbackcorner[ld]\depi[d]\ar[rd]\mono[l]_n
\\
M& \Delta M & U\dashed[r]_h\mono[l]^m & D_i
}}
\]
which live in the subcategory
$\asm(D\upa)\subseteq\catset[D\upa]\simeq\srel{\catset}{\upa}$ of
assemblies.
Now by Lemma~\ref{lem:da-totally-connected} $D\upa$ is totally connected,
which implies with Lemma~\ref{lem:tc-dense}-\ref{lem:tc-dense-total} and
-\ref{lem:tc-dense-asm} that $\asm(D\upa)\simeq\tot{\ufam(D\upa)_d}$ -- the
total category of the subfibration of $\ufam(D\upa)$ on \emph{dense} predicates
(Definition~\ref{def-subfib-dense}). By restricting to the support if
necessary, we can assume without loss of generality that $m$ and $n$ are dense
in the diagram, and since denseness of a predicate $\varphi:M\to P\pcaai$ in
$\ufam(D\upa)$ means that it factors through $P_+(\pcaai)$, it remains to show
that $\varphi e\leq \pi_i f$ for dense $\varphi$ and epic $e$ implies that
there exists $h:M\to \pcaai$ such that $he=f$ and $\varphi\leq\pi_i f$
(observe that compared with Definition~\ref{def:discrete}, the triangle now
lives in the \emph{base}, not the total category). This follows from the fact
that any relation realizing $\varphi e\leq \pi_i f$ is functional, which forces
$f$ to be constant on the fibers of $e$ since distinct elements of $\pcaai$
have disjoint images under $\pi_i$.

\medskip

Conversely, assume that $\Delta:\catset\to\catx$ is an exact functor having the
specified properties. Since the fibered poset $\fifa$ is generated by the
predicates $\pi_i$, it comes from a uniform
preorder structure $\upa=(I,\pcaa,R)$ whose underlying family of sets are the
underlying sets of the $\pi_i$ (Lemma~\ref{lem:reconstuct-ufp}). Since $\fifa$
has finite meets, the same is true for $\upa$.
Conditions \ref{theo:char-cat-fam-ip} and
\ref{theo:char-cat-fam-gen} imply together with
Lemma~\ref{lem:chr-psh}-\ref{lem:chr-psh-characterization} that
$\fibx=\widehat{\upa}\simeq\sheaf{D\upa}$, in particular
$\catset[D\upa]\simeq\srel{\catset}{\upa}\simeq\catx$
which implies by
Theorem~\ref{theo:rel-compl} that $\upa$ is relationally complete.
Since $D\upa$ is totally connected by Lemma~\ref{lem:da-totally-connected},
$\catset[D\upa]\simeq\catx$ has well behaved assemblies, which allows us to
derive 
modesty of the $\pi_i$ in $\fifa$ from discreteness of the maps $D_i\to \Delta
1$ using arguments similar to the ones above. Lemma~\ref{lem:modest-functional}
allows us then to derive that $\upa$ is functional, and finally
Lemma~\ref{lem:rel-compl-func-tpca} allows us to deduce that $\upa$ is induced
by an inclusion of typed pcas.
\end{proof}
\begin{remarks}
By Lemma~\ref{lem:chr-psh}-\ref{lem:chr-psh-yip-b}, $\fifa$
is weakly equivalent to the subfibration of $\fibx$ on indecomposable
projectives. To deduce that $\upa$ is finitely complete in $\catuord$, it is
however important
to assume that $\fifa$ (and not only the fibration of indecomposable
projectives) is closed under finite meets. From closure of indecomposable
projectives under finite meets we can only deduce that $\upa$ is finitely
complete in the locally ordered category of left adjoints in $\catudist$ --
i.e.\ the meet map is given by a distributor which has a left adjoint, but is
not necessarily induced by a family of functions.
 \end{remarks}
As for the characterization of hyperdoctrines and triposes, we can deduce
untyped and non-relative versions of the theorem as corollaries.
\begin{corollary}\label{cor:char-cat}
 Let $\Delta:\catset\to\catx$ be a regular functor into an exact category.
\begin{enumerate}
 \item $\Delta$ is up to equivalence of the form
$\Delta:\catset\to\catrt(\pcaa,\pcaas)$ for an inclusion $\pcaas\subseteq\pcaa$
of pcas, iff $\Delta$ satisfies the conditions of Theorem~\ref{theo:char-cat}
in such a way that the family $(\pi_i:D_i\to\Delta A_i)_{i\in I}$ can be chosen
to comprise a single mono.
\item\label{cor:char-cat-nonrel} $\Delta$ is up to equivalence of the form
$\Delta:\catset\to\catrc(I,\pcaa)$ for a typed pca $(I,\pcaa)$, iff $\Delta$
satisfies the conditions of the theorem, and moreover is
right adjoint to the global sections functor
$\Gamma=\catx(1,-):\catx\to\catset$.
\item\label{cor:char-top-nonrel} $\Delta$ is up to equivalence of the form
$\Delta:\catset\to\catrt(\pcaa)$ for a pca $\pcaa$, iff $\Delta$
satisfies the conditions of the theorem in such a way that
the family $(\pi_i:D_i\to\Delta A_i)_{i\in I}$ can be chosen
to comprise a single mono, and moreover $\Delta$ is right
adjoint to the global sections functor $\Gamma=\catx(1,-):\catx\to\catset$.
\end{enumerate}
\end{corollary}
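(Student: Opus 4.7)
The plan is to reduce each of the three statements to Theorem~\ref{theo:char-cat} and then translate the additional hypotheses into combinatorial conditions on the corresponding (typed) pca inclusion.

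For part (i), I would specialize the proof of Theorem~\ref{theo:char-cat} to families $(\pi_i:D_i\emono\Delta A_i)_{i\in I}$ consisting of a single mono $\pi:D\emono\Delta A$. In the forward direction, a functor of the form $\Delta:\catset\to\catrt(\pcaa,\pcaas)$ for an (untyped) inclusion $\pcaas\subseteq\pcaa$ comes from the one-sorted uniform preorder $(\pcaa,R(\pcaas))$, and we take $\pi$ to be the singleton representation of its generic predicate, which is indecomposable, projective and discrete in the gluing fibration by the argument of the theorem. In the converse direction, a single generator $\pi$ yields via Lemma~\ref{lem:reconstuct-ufp} a one-sorted uniform preorder; the rest of the argument of Theorem~\ref{theo:char-cat} (relational completeness via Theorem~\ref{theo:rel-compl}, functionality via Lemma~\ref{lem:modest-functional}) goes through in the one-sorted setting and delivers a plain sub-pca inclusion by the one-sorted case of Lemma~\ref{lem:rel-compl-func-tpca} (i.e., the Corollary stated right after it).

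For parts (ii) and (iii), the additional hypothesis $\Gamma\dashv\Delta$ must be identified with the combinatorial condition \emph{all truth values are designated}, which by Lemma~\ref{lem:pca-designated} distinguishes an (actual, typed) pca from a proper sub-pca inclusion. I would proceed as follows. By Theorem~\ref{theo:char-cat} we already know $\Delta$ is of the form $\catset\to\catrc(I,\pcaa,\pcaas)$ for some $\upa = (I,\pcaa,R(\pcaas))$, and $\catx\simeq\catset[D\upa]$ is totally connected by Lemma~\ref{lem:da-totally-connected}. Hence Lemma~\ref{lem:delta-left-adj} provides a canonical finite-limit-preserving left adjoint $\Pi\dashv\Delta$, built from the frame-level reflection $\pi\adj\delta$. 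Thus the postulated adjointness $\Gamma\dashv\Delta$ is equivalent to $\Gamma\cong\Pi$ as functors $\catx\to\catset$. Unfolding $\Pi$ and $\Gamma$ through the pullback representation of $D\upa$ and Lemma~\ref{lem:gamma-gamma}, this equivalence of functors reduces to the equality of fibered monotone maps $\gamma\cong\pi:D\upa\to\sub(\catset)$.

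The final step is to evaluate $\gamma\cong\pi$ on representables. For $\varphi\in\ufam(\upa)_M$ and its image $y\varphi\in(D\upa)_M$, one has $\pi(y\varphi) = M$ tautologically, whereas $\gamma(y\varphi) = \{m\in M\msep \{(\top,\varphi(m))\}\in R\}$ by the unfolding of $\gamma$ given at the end of Section~\ref{sec:glob-secs}. Hence $\gamma\cong\pi$ holds iff every element $\varphi(m)\in A_i$ is a designated truth value in the sense of Section~\ref{sec:reational-completeness}, and this is exactly the additional condition of Lemma~\ref{lem:pca-designated} needed to promote a (typed) sub-pca inclusion to a (typed) pca. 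This gives part (ii); part (iii) then follows by intersecting the conditions of (i) and (ii), since specializing to a single mono on the categorical side corresponds to restricting to the one-sorted case on the algebraic side.

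The main obstacle is making the equivalence $\Gamma\dashv\Delta\iff\gamma\cong\pi$ fully rigorous. The unit $\eta:\id_\catset\to\Gamma\Delta$ of Section~\ref{sec:glob-secs} is always an iso here because $\Delta$ is fully faithful (a consequence of the theorem), but to run the adjointness transposition one must also verify that $\Gamma$ preserves the finite colimits (quotients of partial equivalence relations) used to present objects of $\catx\simeq\catset[D\upa]$. This is easiest to see through the assembly representation $\asm(D\upa)\subseteq\catx$ of Lemma~\ref{lem:tc-dense}: since every object is covered by an assembly and assemblies are themselves subobjects of constant objects, it suffices to check the condition on dense predicates, where it reduces precisely to the pointwise calculation of $\gamma$ above.
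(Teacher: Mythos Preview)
Your approach is essentially the same as the paper's: part~(i) is handled by specialising to the one-sorted case, and for parts~(ii)--(iii) you run the chain
\[
\Gamma\dashv\Delta \;\Longleftrightarrow\; \Gamma\cong\Pi \;\Longrightarrow\; \gamma\cong\pi \;\Longleftrightarrow\; \text{all truth values designated},
\]
using total connectedness (Lemma~\ref{lem:da-totally-connected}), Lemma~\ref{lem:delta-left-adj}, Lemma~\ref{lem:gamma-gamma}, and Lemma~\ref{lem:pca-designated}, exactly as the paper does (the paper in turn defers the last equivalence to the proof of Corollary~\ref{cor:character-realhyper}, which is precisely your ``evaluate on representables'' computation).

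The one place where you diverge slightly is the forward direction of~(ii): you try to close the loop by also proving $\gamma\cong\pi \Rightarrow \Gamma\cong\Pi$, and your final paragraph worries about whether $\Gamma$ behaves well enough on quotients to make this go through. The paper sidesteps this entirely: it treats the forward implication (typed pca $\Rightarrow$ $\Gamma\dashv\Delta$) as a well-known fact about realizability categories, so only the implication $\Gamma\cong\Pi \Rightarrow \gamma\cong\pi$ is needed, and that one is immediate from Lemma~\ref{lem:gamma-gamma}. Your worry is therefore legitimate but unnecessary---you can simply drop the attempt at a full biconditional at the functor level and handle the forward direction directly, which makes the last paragraph superfluous.
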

\begin{proof}
 The only non-obvious part is \ref{cor:char-cat-nonrel}. 
The fact that $\Gamma\adj\Delta$ is well known for realizability over untyped
pcas, and the relevant parts of the theory carry over to the typed case
without change.

It remains to show that $\Gamma\adj\Delta:\catset\to\catrt(I,\pcaa,\pcaas)$
implies $\pcaas=\pcaa$. From
Lemma~\ref{lem:delta-left-adj} we know that $\Delta:\catset\to\catset[\fifx]$
has a finite limit preserving left adjoint $\Pi$ whenever $\fifx$ is totally
connected, thus we only have to show that $\Gamma\cong\Pi$ implies
$\pcaas=\pcaa$.

In the proof of Corollary~\ref{cor:character-realhyper} showed that
$\gamma\cong\pi$ on the level of fibered posets implies $\pcaas=\pcaa$, and it
follows from Lemma~\ref{lem:gamma-gamma} that $\gamma\cong\pi$ whenever
$\Gamma\cong\Pi$.
\end{proof}
\begin{remark}
 Although our general approach is to characterize the categories together with
their constant objects functors, we see that in the {non-relative}
cases~\ref{cor:char-cat-nonrel} and \ref{cor:char-top-nonrel}, the corollary
gives us characterizations of the \emph{bare categories}, since the constant
objects functor is already determined by the fact that it is right adjoint to
$\Gamma$ in this case.
\end{remark}

\section{And on arbitrary bases?}\label{sec:arbitrary-bases}

The definition of uniform preorder can be internalized in any topos $\tops$
(and with a bit of care even in predicative metatheories). However, on base
categories other than $\catset$, uniform preorders most naturally do not embed
into
posetal fibrations on $\tops$ as one might naively expect, but rather into
so-called \emph{fibered fibrations}. The concept of `fibered fibration' can be
attributed to Bénabou, who realized that fibrations compose and more
importantly that a fibration on a total category of another fibration can be
viewed as `fibered fibration' in the sense of `generalized category internal to
another generalized category' (the precise technical statement can be
found in~\cite[Theorem~4.1]{streicherfib}). In the following we are
interested in the case where the `base fibration' is a fundamental fibration.
\begin{definition}
Let $\catc$ be a category with finite limits. A \emph{fibered
fibration}\index{fibered!fibration}\index{fibration!fibered} on $\catc$ is a
fibration on $\commacat{\catc}{\catc}$. 
\end{definition}
The following example is paradigmatic of our use of fibered fibrations.
\begin{definition}\label{def:pointwise-fam}
To any fibration $\fibc:\tot{\fibc}\to\catset$ on $\catset$ we
can associate a fibered fibration
$\tilde{\fibc}:\tot{\tilde{\fibc}}\to\commacat{\catset}{\catset}$ by setting
\[
\tilde{\fibc}_{(m:M\to L)} = \prod_{l\in L}\fibc_{M_l}.
\]
(Note that this is \emph{not} the fibered family construction from
\cite[Definition~6.2]{streicherfib}).
\end{definition}
\begin{lemma}\label{lem:quantification-in-tilde-c}
Let $\fibc:\tot{\fibc}\to\catset$ be a fibration on $\catset$.
\begin{enumerate}
\item
$\tilde{\fibc}$ has left/right adjoints to reindexing along \emph{vertical} maps
in $\fund{\catset}$ iff $\fibc$ has left/right adjoints to reindexing along
arbitrary maps in $\catset$. 

In this case the adjoints to reindexing in $\fibc$ satisfy the Beck Chevalley
condition iff 
the adjoints to reindexing along vertical maps in $\tilde{\fibc}$ satisfy the
Beck Chevalley condition for pullbacks along vertical maps.

\item
$\tilde{\fibc}$ has left/right adjoints to reindexing along \emph{cartesian}
maps in $\fund{\catset}$ iff the fibers of $\fibc$ have small (co)products.

In this case the adjoints to reindexing along cartesian maps in $\tilde{\fibc}$
satisfy the Beck Chevalley condition for pullbacks along vertical maps iff the
small (co)products in the fibers of $\fibc$ are stable under pullback along
arbitrary maps.
\end{enumerate}
The Beck Chevalley condition in $\tilde{\fibc}$ for pullbacks along cartesian
maps is always satisfied.
\qed
\end{lemma}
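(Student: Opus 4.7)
The plan is to unpack the pointwise-family structure of $\tilde{\fibc}$ and translate each condition to a statement about $\fibc$ fiber by fiber. Write a morphism in $\commacat{\catset}{\catset}$ from $m:M\to L$ to $m':M'\to L'$ as a commuting square with top arrow $f:M\to M'$ and bottom arrow $u:L\to L'$; it is vertical iff $u=\id_L$, and cartesian iff the square is a pullback. Reindexing in $\tilde{\fibc}$ along a vertical map (so $u=\id_L$ and $f$ restricts to maps $f_l:M_l\to M'_l$ on fibers) acts on a family $(\varphi_l)_{l\in L}\in\prod_l\fibc_{M'_l}$ by applying $f_l^*$ componentwise, giving $(f_l^*\varphi_l)_{l\in L}$. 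Reindexing along a cartesian square with bottom $u:L\to L'$ takes a family $(\psi_{l'})_{l'\in L'}$ to $(\psi_{u(l)})_{l\in L}$, essentially a reindexing of the indexing set.

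First I would treat the vertical case. Since reindexing in $\tilde{\fibc}$ along a vertical map is computed componentwise, a left (right) adjoint exists iff for each $l\in L$ a left (right) adjoint to $f_l^*$ exists in $\fibc$, and the adjoint is then also given pointwise. Since an arbitrary map $g:N\to P$ in $\catset$ arises as some $f_l$ (take $L=1$, $m=!_N$, $m'=!_P$, $f=g$), the existence of adjoints in $\tilde{\fibc}$ along vertical maps is equivalent to existence of adjoints to all reindexings in $\fibc$. The Beck–Chevalley condition for a pullback of vertical maps in $\fund{\catset}$ reduces fiberwise to a Beck–Chevalley condition for a pullback in $\catset$, since pullbacks in $\fund{\catset}$ over a fixed base are computed pointwise; conversely any pullback square in $\catset$ appears as a fiber of such a pullback of verticals, giving the claimed equivalence.

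Next I would handle the cartesian case. Reindexing along a cartesian square with bottom $u:L\to L'$ is the map $\prod_{l'\in L'}\fibc_{M'_{l'}}\to\prod_{l\in L}\fibc_{M_l}$ sending $(\psi_{l'})$ to $(\psi_{u(l)})$ (using that the top of a cartesian square identifies $M_l$ with $M'_{u(l)}$). A left adjoint, evaluated on a family $(\varphi_l)_{l\in L}$, must produce at each $l'\in L'$ an object of $\fibc_{M'_{l'}}=\prod_{l\in u^{-1}(l')}\fibc_{M_l}$ representing the coproduct of the $\varphi_l$ for $l\in u^{-1}(l')$ — i.e.\ a small coproduct in the fiber $\fibc_{M'_{l'}}$. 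Thus adjoints along cartesian maps exist iff fibers of $\fibc$ have small (co)products, and they are given fiberwise by such (co)products. The Beck–Chevalley condition in $\tilde{\fibc}$ for pullbacks along cartesian maps is automatic because such pullbacks are computed trivially (a cartesian square over $u$ pulled back along a cartesian square over $v$ just reindexes the indexing set), whereas the Beck–Chevalley condition for pullbacks of cartesian maps along vertical maps asks precisely that the fiberwise small (co)products commute with all $f_l^*$, which is the stability of small (co)products in $\fibc$ under reindexing along arbitrary maps.

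The only mild obstacle is keeping the bookkeeping tidy: one has to be careful that pullbacks in $\fund{\catset}$ decompose into a pullback of the bases and a fiberwise pullback over each point of the base, so each Beck–Chevalley square in $\tilde{\fibc}$ unpacks into one component indexed by the base pullback (handled by the cartesian case) and componentwise squares in $\fibc$ (handled by the vertical case). Once this decomposition is made explicit, every equivalence reduces to the pointwise description of $\tilde{\fibc}$, and the last assertion — automatic Beck–Chevalley for pullbacks along cartesian maps — follows because two successive index-reindexings compose on the nose rather than up to a genuine Beck–Chevalley transformation.
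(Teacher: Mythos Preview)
The paper omits the proof entirely (the lemma ends with \qed), so there is nothing to compare against; your argument is the natural one and is essentially correct.

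One small slip: in the cartesian case you write ``an object of $\fibc_{M'_{l'}}=\prod_{l\in u^{-1}(l')}\fibc_{M_l}$'', but that equation is not what you mean. Rather, each $\fibc_{M_l}$ for $l\in u^{-1}(l')$ is \emph{equal} to $\fibc_{M'_{l'}}$ (since $M_l\cong M'_{u(l)}=M'_{l'}$ for a cartesian square), so the family $(\varphi_l)_{l\in u^{-1}(l')}$ is a $u^{-1}(l')$-indexed family of objects all living in the single fiber $\fibc_{M'_{l'}}$, and the left adjoint must send it to their coproduct there. With that corrected, the rest goes through: since $u^{-1}(l')$ can be any set (take $L'=1$ and $M'$ arbitrary), existence of the adjoint is exactly existence of small (co)products in each fiber of $\fibc$.

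Your treatment of the final clause (Beck--Chevalley along cartesian maps is automatic) could be sharpened: the relevant squares are pullbacks of an arbitrary map against a cartesian map, and the point is that reindexing along a cartesian map simply precomposes the $L$-indexing by a function of base sets, so the Beck--Chevalley comparison becomes an identity between two families that are literally equal index by index. Your phrase ``compose on the nose'' captures this, but it is worth saying explicitly that this covers the mixed case (vertical against cartesian) as well as the purely cartesian case.
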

\begin{definition}\label{def:fibered-ufam}
Let $\upa=\upiar$
be a uniform preorder internal to a topos $\tops$. The fibered fibration
$\ufam(\upa):\tot{\upa}\to\commacat{\tops}{\tops}$ is defined as follows.
\begin{itemize}
 \item predicates on $(m:M\to L)\in\commacat{\tops}{\tops}$ are commutative
squares
\[
\xymatrix@R-3.5mm{
M\ar[d]_m\ar[r]^\varphi& A\ar[d]^a\\
L\ar[r]_u& I
} 
\]
\item $(u,\varphi)\leq(v,\psi)$ over $(m:M\to L)$ iff
\[
 \forall l\vtp L\qdot \{(\varphi m,\psi m)\csep m\in M_l\}\in R_{ul,vl}
\]
in the internal logic.
\end{itemize}
\end{definition}
\begin{remark}
If we apply the construction from Definition~\ref{def:pointwise-fam} to the
(ordinary) uniform family fibration $\ufam(\upa):\tot{\ufam(\upa)}\to\catset$
of a uniform preorder in $\catset$, we obtain the uniform family fibration in
the sense of Definition~\ref{def:fibered-ufam}.

The intuition about
$\ufam(\upa):\tot{\ufam(\upa)}\to\commacat{\tops}{\tops}$ for a uniform
preorder $\upa$ in $\tops$ is that the order on predicates over
$m:M\to L$ is pointwise in $L$, but uniform in the fibers of $m$. In the case of
realizability this means that for each $l$ there exists a realizer that works
uniformly over $M_l$.
\end{remark}

\begin{definition}
Let $\fifa:\tot{\fifa}\to\commacat{\tops}{\tops}$ a fibered posetal fibration on
$\tops$.
\begin{itemize}
\item We say that $\leq$ is \emph{horizontally definable}\index{horizontally
definable} in $\fifa$, if for
  every $\varphi,\psi\in\fifa_{M\xrightarrow{m}L}$ there exists a greatest
  subobject $m:U\emono L$ of $L$ such that $\varphi|_U\leq\psi|_U$.
\[
\xymatrix@R-4mm{
\psi|_U\cart[r]\emar[d]|{\rotatebox[origin=c]{90}{$\leq$}} & \psi \\
\varphi|_U\cart[r] & \varphi \\
{}\phantom{\bullet}\pullbackcorner\ar[d]\mono[r] & M\ar[d]^{m} \\
U\mono[r] & L \\
} 
\]

\item We call $\fifa$ a \emph{fibered posetal pre-stack}\index{fibered!posetal pre-stack}\index{pre-stack!fibered posetal}, if $e^*\varphi\leq
e^*\psi$ implies $\varphi\leq \psi$ for every \emph{vertical} epimorphism $e$ in
  $\commacat{\tops}{\tops}$.
\end{itemize}
\end{definition}
\begin{lemma}\label{lem:uord-tops-fibfib}
The locally ordered category $\catuord(\tops)$ of uniform preorders internal to
$\tops$
 is biequivalent to the locally ordered category of
fibered posetal pre-stacks on $\tops$ 
with horizontally definable $\leq$ and a
generic predicate.
\end{lemma}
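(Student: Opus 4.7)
The plan is to adapt the proof of Lemma~\ref{lem:reconstuct-ufp} to an arbitrary topos $\tops$ by working throughout in the internal language of $\tops$. On $\tops=\catset$, the \emph{set-indexed} generic family of predicates of Lemma~\ref{lem:reconstuct-ufp} collapses internally to a \emph{single} generic predicate (indexed by the sort object $I\in\tops$), and the information previously encoded by the choice of index is recovered via horizontal definability of $\leq$. Accordingly, I would establish the biequivalence by exhibiting $\ufam(-)$ as a locally order-reflecting, locally essentially surjective 2-functor whose essential image is exactly the claimed sub-2-category.

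For the forward direction, given $\upa=\upiar$ internal to $\tops$, I would first verify that $\ufam(\upa)$ of Definition~\ref{def:fibered-ufam} is a fibered fibration on $\commacat{\tops}{\tops}$, with cartesian liftings of predicate squares along a square $(m{:}M\to L)\to(m'{:}M'\to L')$ given by precomposition, as in the $\catset$ case. The generic predicate is the identity square $(\id_I,\id_A)$ over $(a:A\to I)\in\commacat{\tops}{\tops}$: every predicate $(u,\varphi)$ over $m:M\to L$ is the reindexing of $\id_A$ along the square $(u,\varphi):m\to a$. Horizontal definability holds because the data of $R$ is packaged, via the power-object functor of $\tops$, as a subobject of $\Omega^{A\times_{I\times I}A}$ over $I\times I$, whence the condition ``$(u,\varphi)\leq(v,\psi)$ holds over $l\in L$'' is an internal formula in $L$ and so cut out by a subobject of $L$. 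The pre-stack condition is immediate: the defining formula of Definition~\ref{def:fibered-ufam} is a $\forall$ over the base $L$, hence stable under vertical (i.e.\ epimorphic) reindexing.

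Functoriality and local equivalence follow closely the pattern of Lemma~\ref{lem:ufp-embed}. A monotone map $(u,f):\upa\to\upb$ induces a fibered monotone map by postcomposition, monotonicity being exactly the axiom of $(u,f)$. For order reflection, if $\ufam(u,f)\leq\ufam(v,g)$, evaluate both sides at the generic predicate to read off $(u,f)\leq(v,g)$. For essential fullness on morphisms, given a fibered monotone $F:\ufam(\upa)\to\ufam(\upb)$, set $(u,f):=F(\id_I,\id_A)$; every predicate $(m,\varphi)$ equals $(\id_I,\id_A)$ reindexed along itself, so $F(m,\varphi)\cong(u,f)\circ(m,\varphi)$, and the monotonicity axiom for $(u,f)$ is obtained by applying $F$ to the two projections out of a relation $r\in R_{ij}$.

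The main obstacle is essential surjectivity on objects: reconstructing a uniform preorder from a fibered posetal pre-stack $\fifa$ equipped with horizontally definable $\leq$ and a generic predicate $\iota\in\fifa_{a:A\to I}$. Taking $I$ and $A$ as sort and carrier objects, I would recover $R$ as follows. With projections $\pi_l,\pi_r:A\times_{I\times I}A\to A$, horizontal definability provides a greatest subobject of $A\times_{I\times I}A$ (over $I\times I$) on which $\pi_l^*\iota\leq\pi_r^*\iota$; transposing internally in $\tops$ this yields a subobject $R\emono\Omega^{A\times_{I\times I}A}$ over $I\times I$. Downward closure of $R$ is automatic, and reflexivity and transitivity come from the reflexivity and transitivity of $\leq$ in $\fifa$. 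That $\ufam(-)$ applied to this reconstructed $\upa$ recovers $\fifa$ up to equivalence is then the transported version of Lemma~\ref{lem:reconstuct-ufp}: every predicate of $\fifa$ factors (by genericity) as a reindexing of $\iota$, and the pre-stack condition identifies the native $\fifa$-order on such reindexings with the order defined by $R$ in $\ufam(\upa)$. The delicate point — the step I would check most carefully — is that horizontal definability is powerful enough to deliver the full power-object-valued $R$ (and not merely pointwise-in-$I\times I$ data), but this is ensured precisely by the fact that $\fifa$ is a fibered fibration over the whole arrow category $\commacat{\tops}{\tops}$ rather than over $\tops$ alone.
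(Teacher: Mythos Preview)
Your overall strategy matches the paper's: show $\ufam(-)$ lands in the right sub-2-category, prove local equivalence by evaluating at the generic predicate (exactly as in Lemma~\ref{lem:ufp-embed}), and then reconstruct an internal uniform preorder from a fibered posetal pre-stack with the stated properties. The forward direction and the local-equivalence argument are fine and essentially identical to the paper's.

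The gap is in your reconstruction of $R$. Horizontal definability, as defined, takes predicates over $M\to L$ and returns a subobject of the \emph{base} $L$. If you place $\pi_l^*\iota$ and $\pi_r^*\iota$ over $A\times_{I\times I}A\to I\times I$, horizontal definability hands you a subobject of $I\times I$, not of $A\times_{I\times I}A$; and a subobject of $I\times I$ cannot be ``transposed'' into a subobject of $\Omega^{A\times_{I\times I}A}$. (Under the alternative reading where you place the predicates over $\id_{A\times_{I\times I}A}$, you get a single subobject of $A\times_{I\times I}A$, i.e.\ one relation per pair of sorts, again not the required \emph{family} of sets of relations.) Either way you have only tested the full relation $A_i\times A_j$, not arbitrary relations.

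The missing idea is to let the base itself parametrize relations: form the power object $P(a\times a)\to I\times I$ of $a\times a$ in $\tops/(I\times I)$ with membership predicate $E\hookrightarrow (A\times A)\times_{I\times I}P(a\times a)$, view the two projections $E\rightrightarrows A$ as morphisms $(E\to P(a\times a))\rightrightarrows(A\to I)$ in $\commacat{\tops}{\tops}$, and apply horizontal definability to $\pi_l^*\iota,\pi_r^*\iota\in\fifa_{E\to P(a\times a)}$. This yields directly the desired $R\hookrightarrow P(a\times a)$ over $I\times I$, and the uniform preorder axioms and the equivalence $\ufam\upiar\simeq\fifa$ follow. Your closing remark that horizontal definability must deliver ``power-object-valued $R$'' is exactly the issue; the resolution is not a transposition trick but choosing the power object as the horizontal base from the start.
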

\begin{proof}
First of all we have to show that fibered fibrations of the form $\ufam(\upa)$
for internal uniform preorders $\upa=\upiar$ are fibered pre-stacks with
horizontally definable $\leq$. The pre-stack condition is immediate from the
definition of $\ufam(\upa)$. For the horizontal definability of $\leq$ let
$(u,\varphi),(v,\psi)\in\ufam(\upa)_{M\xrightarrow{m} L}$.
\[
\xymatrix@R-3mm{
M\ar[d]_m\ppair{r}{\varphi}{\psi}& A\ar[d]^a\\
L\ppair{r}{u}{v}& I
} 
\]
Then the greatest subobject $M\subseteq L$ such that $\varphi|_U\leq\psi|_U$ is
given by 
\[
L\supseteq M =  
\{l\msep\{(\varphi m,\psi m)\csep m\in M_l\}\in
R_{ul,vl} \}.
\]

The proof that $\upa\mapsto\ufam(\upa)$ is a local equivalence is analogous
to the proof of Lemma~\ref{lem:ufp-embed}: Let $\upa=\upiar$ and $\upb=\upjbs$
be uniform preorders internal to $\tops$.
To see that the construction is
locally order reflecting, assume that $\uf,\vg:\upa\to\upb$ such that
$\ufam\uf\leq\ufam\vg:\ufam(\upa)\to\ufam(\upb)$. Then we have in particular
$\ufam\uf_{A\to I}(\id_I,\id_A)\leq \ufam\vg_{A\to I}(\id_I,\id_A)$, and since 
$\ufam\uf$ and $\ufam\vg$ act on predicates by postcomposition, and taking into
account the definition of the ordering on monotone maps, this implies
$\uf\leq\vg$.
To see that $\ufam(-)$ is essentially full, let $F:\ufam(\upa)\to\ufam(\upb)$.
An essential pre-image is given by $F_{A\to I}(\id_A,\id_I)$.

\medskip

Finally, we have to show that $\ufam(-)$ is bi-essentially surjective.
Let $\fifa:\tot{\fifa}\to\scs$ be a fibered posetal pre-stack with horizontally
definable $\leq$ and generic predicate $\iota\in\fifa_{A\xrightarrow{a}I}$.
Let $P(a\times a)\to I\times I$ be the power object of $A\times
A\xrightarrow{a\times a}I\times I$ in $\tops/(I\times I)$, and let 
$E\emono (A\times A) \times_{I\times I} P(a\times a) $
be the associated membership predicate. Consider the diagram
\[
\vcenter{\xymatrix{
\pil^*\iota|_R\leq\pir^*\iota|_R & \pil^*\iota,\pir^*\iota & \iota \\
\bullet\pullbackcorner\mono[r]\ar[d] &
E\ar@<.6ex>[r]\ar@<-.6ex>[r]\ar[d] & A\ar[d]\\
R\mono[r] & P(a\times a)\ppair{r}{p}{q} & I
}},
\]
where the parallel horizontal pairs are the evident projections, and $R$ is the
maximal subobject of $P(a\times a)$ such that
$p^*\iota|_R\leq q^*\iota|_R$. Then it is straightforward to check that
$(A,R)$ is an internal uniform preorder and that the associated fibered
fibration is equivalent to
$\fifa$. 
\end{proof}

Given an uniform preorder $\upa$ in $\catset$, we saw earlier that the 
logical structure of $\ufam(\upa)$ can be characterized directly in $\catuord$
-- $\upa$ has finite meets iff the diagonal and terminal projection maps of
$\upa$ have right adjoints, and it has quantification iff $\upa$ is an algebra
for the corresponding monad\footnote{Actually we didn't treat implication -- to
get a fibration-free treatment here we need $\catudist$.
But in the following we
are mainly concerned about quantification anyway.}. The treatment of the
propositional connectives generalizes straightforwardly to internal uniform
preorders and fibered fibrations, but the quantifiers require attention: 
analogous to Lemma~\ref{lem:quantification-in-tilde-c}, the existence of a
$D$-algebra structure on an internal uniform preorder $\upa$ in a topos $\tops$
corresponds to 
$\ufam(\upa):\tot{\ufam(\upa)}\to\catset$ having existential quantification 
along \emph{vertical} maps in $\fund{\tops}$ -- quantification along cartesian
 maps would correspond to cocomplete fibers in the simply fibered case over  
$\catset$, a property that is not important in the present work.

\medskip

The emergence of the additional layer in the fibrations seems a bit
frightening technically, especially since one can imagine situations where by
iteration the layers get stacked up even further. Fortunately this can be
avoided in an important special case as we will see now.

Let $\fifa:\tot{\fifa}\to\commacat{\tops}{\tops}$ be a fibered posetal
fibration where $\leq$ is horizontally definable and where the fibers have
greatest elements $\top$ which are stable under reindexing. The fibration
$\fifa^{(1)}:\tot{\fifa^{(1)}}\to\tops$ is the pullback of $\fifa$ along the
functor
\[
\bigl(M\mapsto (M\to 1) \bigr)\;:\;\tops\to\commacat{\tops}{\tops}.
\]
We can define a fibered monotone map $\gamma:\fifa^{(1)}\to\sub(\fibs)$.
The image of a predicate $\varphi\in\fifa_M^{(1)}=\fifa_{(M\to 1)}$ under
$\gamma$ is given by first reindexing $\varphi$ in $\fifa$ onto $\id_M$, and
then taking the greatest subobject of $U\emono M$ such that the restriction of
the reindexing to $U$ is entailed by $\top$.
\[
\begin{matrix}
\top\leq \overline{\varphi}|_U\\
\\
\gamma(\varphi)=U
\end{matrix}
\qquad
\vcenter{\xymatrix@R-4mm{
\overline{\varphi}|_{U}\cart[r] & \overline{\varphi}\cart[r] & \varphi \\
U\mono[r]\ar[d]\pullbackcorner & I\ar[r]\ar[d] & I\ar[d]\\
U\mono[r] & I\ar[r] & 1
}} 
\]
If $\fifb:\tot{\fifb}\to\catset$ is a fibered poset with pullback stable
greatest elements, then the result of applying the previous construction to the
fibration $\tilde{\fifb}:\tot{\tilde{\fifb}}\to\commacat{\catset}{\catset}$
from Definition~\ref{def:pointwise-fam}, is precisely the transformation
$\gamma$ defined in Section~\ref{sec:glob-secs}. Thus the coincidence of
notation is justified, and moreover we see that the construction from
Section~\ref{sec:glob-secs} can be understood as a kind of comprehension
principle after all.

\medskip

Now let $\brar$ be a one-sorted uniform preorder with
$\top,\wedge,\imp,\forall$ in a topos $\tops$ (over $\catset$, these
are exactly the regular triposes). We observe that since we only have one sort,
the
predicates in $\ufam\brar_{M\to L}$ are in bijection with the predicates in
$\ufam\brar_{M\to 1}$ (both are simply morphisms $\varphi:M\to A$), and
furthermore we have the following lemma.
\begin{lemma}\label{lem:fibered-fib-from-gamma}
 Let $\brar$ be a one-sorted uniform preorder with $\top,\wedge,\imp,\forall$ in
a topos $\tops$. Let $m:M\to L$ and $\varphi,\psi:M\to A$. Then
$\varphi\leq\psi$ in $\ufam\brar_{m}$ iff $\top\leq\gamma(\forall_m(
\varphi\imp\psi))$ in $\ufam\brar^{(1)}$.
\end{lemma}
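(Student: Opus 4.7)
The plan is to reduce both sides of the equivalence to a common ``pointwise in $l \in L$'' statement about the relation $R$, working throughout in the internal language of $\tops$ and using the universal properties of $\imp$ and $\forall$ together with the definition of $\gamma$. Unfolding the right-hand side first: by construction of $\gamma$, the inequality $\top\leq\gamma(\alpha)$ in $\sub(\tops)_L$ amounts to $\gamma(\alpha)=L$ as a subobject of $L$, which in turn means $\top\leq\bar\alpha$ in $\ufam\brar_{(\id_L)}$, where $\bar\alpha$ is the reindexing of $\alpha\in\ufam\brar^{(1)}_L=\ufam\brar_{(L\to 1)}$ onto $\id_L: L \to L$. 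The underlying morphism $L\to A$ is unchanged by this reindexing, but the fibers of $\id_L$ are singletons, so the ordering in $\ufam\brar_{(L\to L)}$ is pointwise in $L$, and the condition becomes $\forall l\in L.\,\{(\top,\alpha(l))\}\in R$. Setting $\alpha=\forall_m(\varphi\imp\psi)$ and invoking the explicit description of $\forall$ via the $U$-algebra structure $\bigwedge$ on $\brar$ --- by which $(\forall_m\chi)(l)=\bigwedge\{\chi(m'):m'\in M_l\}$ --- the right-hand side turns into
\[
\forall l\in L.\;\{(\top,\;\bigwedge\{\varphi(m')\imp\psi(m'):m'\in M_l\})\}\in R,
\]
while by definition of the ordering in $\ufam\brar_m$ the left-hand side reads $\forall l\in L.\,\{(\varphi(m'),\psi(m')):m'\in M_l\}\in R$.

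It therefore suffices to prove, uniformly in $l$, the fiberwise equivalence
\[
\{(\varphi(m'),\psi(m')):m'\in M_l\}\in R\;\;\Longleftrightarrow\;\;\{(\top,\;\bigwedge\{\varphi(m')\imp\psi(m'):m'\in M_l\})\}\in R,
\]
which is a standard chain of adjunction steps inside $\ufam\brar^{(1)}$ restricted to the fiber above $l$. The left-hand side says exactly that $\varphi|_{M_l}\leq\psi|_{M_l}$ in $\ufam\brar^{(1)}_{M_l}$; by the universal property of Heyting implication this is equivalent to $\top\leq\varphi|_{M_l}\imp\psi|_{M_l}$; by the adjunction $\forall_{!_{M_l}}\dashv\,{!_{M_l}}^{*}$ this transposes to $\top\leq\forall_{!_{M_l}}(\varphi|_{M_l}\imp\psi|_{M_l})$ in $\ufam\brar^{(1)}_1$; and unfolding the order in the single-point fiber $\ufam\brar^{(1)}_1$ together with the explicit formula for $\forall$ recovers the right-hand condition.

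The main point of care, and the only real obstacle, is keeping track of which fiber each morphism is read in: the same underlying map $L\to A$ represents genuinely different predicates in $\ufam\brar^{(1)}_L=\ufam\brar_{(L\to 1)}$ (uniform order over $L$) and in $\ufam\brar_{(L\to L)}$ (pointwise order in $L$), and it is precisely this gap between ``uniform'' and ``pointwise'' that $\gamma$ is designed to bridge via horizontal definability of $\leq$. Once this bookkeeping is fixed, the fiberwise quantification over $l$ is legitimized either by replaying the argument inside the slice topos $\tops/L$ (after pulling $\brar$ back along $L\to 1$) or by reading the chain of equivalences above directly in the internal language of $\tops$.
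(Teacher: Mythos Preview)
Your proof is correct, but the paper takes a more categorical route. Rather than unfolding the explicit $U$-algebra formula $(\forall_m\chi)(l)=\bigwedge\{\chi(m'):m'\in M_l\}$ and arguing pointwise in $l$, the paper arranges the four objects $(M\to L)$, $(L\to L)$, $(M\to 1)$, $(L\to 1)$ into a pullback square in $\commacat{\tops}{\tops}$ and runs the equivalences entirely inside the fibered fibration $\ufam\brar$: pass from $\varphi\leq\psi$ to $\top\leq\varphi\imp\psi$, recognise this as $\top\leq(\id_M,!_L)^*(\varphi_0\imp\psi_0)$, transpose along the vertical map $(m,\id_L)$, invoke Beck--Chevalley for the pullback square to commute $\forall$ past the reindexing, and finally read off the definition of $\gamma$. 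Your approach has the virtue of concreteness---one sees exactly which elements of $R$ witness each step---while the paper's buys brevity: it never names a formula for $\forall$ and never needs to justify the passage to slices over $L$, since Beck--Chevalley absorbs all the fiberwise bookkeeping in a single move.
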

\begin{proof}
Consider the cube
\[
 \vcenter{\xymatrix@-5mm{
& M\ar[rr]\ar[dd]|\hole && L\ar[dd] \\
M\ar[rr]\ar[dd]\ar[ur] && L\ar[ur]\ar[dd]\\
& 1\ar[rr]|\hole && 1 \\
L\ar[rr]\ar[ur] && L\ar[ur]\\
}},\]
which is a pullback square in $\commacat{\tops}{\tops}$ (since both horizontal
faces are). To avoid confusion, we denote the maps $\varphi,\psi$ by
$\varphi_0,\psi_0$ when regarding them as predicates on $M\to 1$, and by
$\varphi,\psi$ when regarding them as predicates on $M\to L$. Now we have of
course that $(\id_M,!_L)^*\varphi_0=\varphi$ and
$(\id_M,!_L)^*\psi_0=\psi$, and we can argue
\begin{align*}
\varphi&\leq\psi &\text{in }\ufam\brar^{\phantom{(1)}}& \text{ iff}\\
\top&\leq\varphi\imp\psi &\text{in }\ufam\brar^{\phantom{(1)}}& \text{ iff}\\
\top&\leq(\id_M,!_L)^*(\varphi_0\imp\psi_0) &\text{in
}\ufam\brar^{\phantom{(1)}}& \text{ iff}\\
\top&\leq\forall_{(m,\id_L)}(\id_M,!_L)^*(\varphi_0\imp\psi_0) &\text{in
}\ufam\brar^{\phantom{(1)}}& \text{ iff}\\
\top&\leq(\id_L,!_L)^*\forall_{(m,\id_1)}(\varphi_0\imp\psi_0) &\text{in
}\ufam\brar^{\phantom{(1)}}& \text{ iff}\\
\top&\leq\gamma(\forall_{m}(\varphi_0\imp\psi_0)) &\text{in
}\ufam\brar^{(1)}\\
\end{align*}
where the two last steps follow from the Beck-Chevalley condition and the
definition of $\gamma$, respectively.
\end{proof}
Thus, all the ordering structure of $\ufam\brar$ can be encoded in terms of
$\ufam\brar^{(1)}$ and $\gamma$. 

In order to get a characterization of one-sorted internal uniform preorders
with $\top,\wedge,\imp,\forall$ in terms of fibered preorders and $\gamma$, it
remains to characterize the fibered monotone maps $\gamma$ that arise in from
the construction described before the lemma. For the purposes of this section,
we use the following definitions.
\begin{definition}\label{def:fibtrip-tripgamma}
 Let $\tops$ be a topos.
\begin{enumerate}
 \item A \emph{fibered tripos}\index{fibered!tripos}\index{tripos!fibered} on $\tops$ is a fibered
posetal pre-stack $\fifx:\tot{\trip}\to\commacat{\tops}{\tops}$ with
horizontally definable $\leq$ whose fibers are pre-Heyting algebras, which has
universal quantification along vertical maps (subject to BC along arbitrary
maps), and a generic predicate $\triptr\in\fifx_{(\prop\to 1)}$
\item \label{def:fibtrip-tripgamma-gamma}
A \emph{`tripos with $\gamma$'}\index{tripos!with $\gamma$} on $\tops$ is a posetal pre-stack
$\trip:\tot{\trip}\to\tops$ which models $\twif$ and has a generic predicate
$\triptr\in\trip_\prop$, together with a finite meet preserving fibered
monotone map $\gamma:\trip\to\sub(\tops)$ satisfying
\[
 \top\leq\gamma(p)\quad\imp\quad\top\leq p\qquad\text{for }p\in \trip_1.
\]
\end{enumerate}
\end{definition}
Observe that every regular tripos on $\catset$ is
\emph{uniquely} a `tripos with $\gamma$' -- the condition that $\top$ is
reflected over $1$ already forces $\gamma$ with to coincide with the
transformation defined in Section~\ref{sec:glob-secs}.
\begin{theorem}\label{theo:fibered-internal-gamma-tripos}
Let $\tops$ be a topos.
 The following concepts are equivalent.
\begin{itemize}
 \item internal one-sorted uniform preorders with $\twif$
\item fibered triposes on $\tops$
\item triposes with $\gamma$ on $\tops$
\end{itemize}
\end{theorem}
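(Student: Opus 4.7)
The plan is to establish the equivalence via a triangle, where the link (i) $\Leftrightarrow$ (ii) comes from Lemma~\ref{lem:uord-tops-fibfib} supplemented with a translation of logical structure, and the link (ii) $\Leftrightarrow$ (iii) is powered by Lemma~\ref{lem:fibered-fib-from-gamma}.

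First I would handle (i) $\Leftrightarrow$ (ii). Lemma~\ref{lem:uord-tops-fibfib} already identifies internal uniform preorders on $\tops$ with fibered posetal pre-stacks on $\tops$ possessing horizontally definable $\leq$ and a generic predicate, restricted to the one-sorted case by requiring the generic predicate to live over an object of the form $(X\to 1)$. What remains is to check that each of the connectives $\top,\wedge,\imp,\forall$ on $\brar$ translates under $\brar\mapsto\ufam\brar$ to the corresponding structure on the fibered fibration, and conversely. The cases $\top,\wedge$ are a direct internalisation of what we already know from Section~\ref{sec:finitely-complete-ufps}; implication is treated via the construction $\imp_{ij}$ analogous to the one appearing in the proof of Lemma~\ref{lem:d-imp-forall}, now carried out internally in $\tops$. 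The key correspondence is that $\upa$ having $\forall$ (i.e., $\upa$ being an algebra for the universal-quantification monad dual to $D$) amounts exactly to $\ufam(\upa)$ admitting right adjoints along \emph{vertical} morphisms in $\commacat{\tops}{\tops}$ subject to BC along arbitrary maps, which is the internalisation of Lemma~\ref{lem:quantification-in-tilde-c} combined with Lemma~\ref{lem:quant-alg}.

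Next I would show (ii) $\Rightarrow$ (iii). Given a fibered tripos $\fifx$, define $\trip=\fifx^{(1)}$ as the pullback along $M\mapsto (M\to 1)\colon\tops\to\commacat{\tops}{\tops}$. The generic predicate of $\fifx$ lies in $\fifx_{(\prop\to 1)}=\trip_\prop$, so it serves as a generic predicate for $\trip$; the pre-stack condition on $\trip$ is inherited from vertical maps in $\commacat{\tops}{\tops}$, and the $\twif$-structure restricts since fibres are preserved. Define $\gamma\colon\trip\to\sub(\tops)$ by the construction given just before Lemma~\ref{lem:fibered-fib-from-gamma}: $\gamma(\varphi)$ is the largest subobject $U\emono M$ such that the reindexing of $\varphi$ over the identity is entailed by $\top$ on $U$; this is well-defined using horizontal definability of $\leq$ together with the pullback-stable $\top$. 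Preservation of finite meets by $\gamma$ is routine, and reflection of $\top$ over $1$ is immediate because $\gamma(p)$ on $M=1$ is either $0$ or $1$ and agrees with the validity of $\top\leq p$.

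For (iii) $\Rightarrow$ (ii), given $(\trip,\gamma)$, build $\fifx$ by decreeing $\fifx_{(m\colon M\to L)}$ to be $\trip_M$ as a set, with the partial order $\varphi\leq_m\psi$ defined (following the content of Lemma~\ref{lem:fibered-fib-from-gamma}) by
\[
 \varphi\leq_m\psi\;\defequi\;\top\leq\gamma(\forall_m(\varphi\imp\psi))\stext{in}\trip.
\]
Reindexing is ordinary $\trip$-reindexing. Reflexivity is clear; for transitivity I would use that $\gamma$ preserves $\wedge$, Frobenius in $\trip$, and the reflection property of $\gamma$ over $1$ applied to the image through $\forall_m$. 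Horizontal definability of $\leq$ is built in: the subobject $m^*\gamma(\forall_m(\varphi\imp\psi))\emono L$ is the required maximal $U$. The pre-stack property along vertical maps in $\commacat{\tops}{\tops}$ reduces (via a Beck-Chevalley argument) to the pre-stack property of $\trip$, and the fibrewise $\wedge,\imp$ and vertical $\forall$-structure on $\fifx$ is inherited directly from $\trip$ since over an identity map in $\commacat{\tops}{\tops}$ the definition of $\leq_m$ collapses to ordinary $\leq$ in $\trip$. Finally, one verifies that the round trip (ii)$\to$(iii)$\to$(ii) reproduces the original $\fifx$ up to equivalence, which is precisely the content of Lemma~\ref{lem:fibered-fib-from-gamma}, and the round trip (iii)$\to$(ii)$\to$(iii) gives back $(\trip,\gamma)$ on the nose.

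The main obstacle I anticipate is in the (iii) $\Rightarrow$ (ii) direction: ensuring that the ordering defined via $\gamma$ really yields a \emph{fibered} preorder (in particular that reindexing along arbitrary, not just vertical, maps in $\commacat{\tops}{\tops}$ is order-preserving) and that vertical $\forall$ in the fibered $\fifx$ satisfies Beck-Chevalley for arbitrary (not merely vertical) squares. Both hinge on the interplay between $\gamma$, reindexing, and $\forall_m$ in $\trip$, and are most cleanly proved by taking the canonical cube decomposition of an arbitrary commutative square in $\commacat{\tops}{\tops}$ into a vertical-cartesian composite and checking the two pieces separately, using the Beck-Chevalley condition for $\forall$ in $\trip$ on the vertical piece and the naturality of $\gamma$ on the cartesian piece.
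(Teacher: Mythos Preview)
Your proposal is correct and follows essentially the same approach as the paper: the equivalence (i)$\Leftrightarrow$(ii) via Lemma~\ref{lem:uord-tops-fibfib} plus the translation of logical structure, and (ii)$\Leftrightarrow$(iii) via the $\fifx^{(1)}$/$\gamma$ construction and the ordering $\varphi\leq_m\psi\defequi\top\leq\gamma(\forall_m(\varphi\imp\psi))$ from Lemma~\ref{lem:fibered-fib-from-gamma}. You in fact supply more detail than the paper, which leaves the well-definedness and mutual-inverse verifications to the reader; your anticipation of the reindexing and Beck--Chevalley issues in the (iii)$\Rightarrow$(ii) direction, and the cube-decomposition strategy for handling them, are exactly the kind of checks the paper is silently deferring.
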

\begin{proof}
 The equivalence of internal uniform preorders with the specified properties
and fibered triposes follows from Lemma~\ref{lem:uord-tops-fibfib} and our
remarks about propositional connectives and quantification in fibered
fibrations.

We described before Lemma~\ref{lem:fibered-fib-from-gamma} how to obtain the
fibration $\fifx^{(1)}$ from a fibered fibration $\fifx$, and how to construct
$\gamma$ using the horizontal definability of $\leq$.

Conversely, to construct a fibered tripos $\fifx$ from a tripos $\trip$ with
$\gamma$, we take predicates in $\fifx_{(M\xrightarrow{m} L)}$ to be 
predicates in $\trip_M$, and define the ordering by 
\[
\varphi\leq\psi\quad\defequi\quad \top\leq\gamma(\forall_m(\varphi\imp\psi))
\]
as in Lemma~\ref{lem:fibered-fib-from-gamma}.

We leave the numerous verification necessary to establish that the
constructions are well defined and mutually inverse to the reader.
\end{proof}
\begin{remarks}\label{rem:geometric-theory-of}
\begin{enumerate}
 \item 
We could have phrased the previous result as a biequivalence of locally
ordered categories, but it doesn't really matter which morphisms we consider --
the statement only depends on the concept of `equivalence' of
fibrations. 
\item\label{rem:geometric-theory-of-triposes}
I find it remarkable that the logical structure that is necessary to make the
theorem work is exactly that of a tripos. For me that is a strong indication
that the above concept of `tripos with $\gamma$' might be a good alternative to
the usual definition of tripos on base toposes other than $\catset$, especially
when envisioning a `geometric theory of triposes' which views constant objects
functors as generalizations of geometric morphisms.
\end{enumerate}
\end{remarks}

\begin{example}
 On base toposes $\tops$ other than $\catset$, it is possible that the same
tripos $\trip$ on $\tops$ can be equipped
with different maps $\gamma:\trip\to\sub(\tops)$ corresponding to different
uniform preorders and giving rise to non-equivalent fibered triposes. We
demonstate this using as tripos the subobject fibration
$\sub(\widehat{\bbtwo})$ of the Sierpinski topos $\widehat{\bbtwo}$ ($\bbtwo=
(0\to 1)$).

On the one hand, $\sub(\widehat{\bbtwo})$ is the externalization of the
internal preorder $\Omega$, which can be viewed as uniform preorder via the
construction from Example~\ref{ex:uords}-\ref{ex:uords-preords}. The
corresponding transformation
$\gamma:\sub(\widehat{\bbtwo})\to\sub(\widehat{\bbtwo})$ is just the identity,
and the order on predicates in the associated fibered fibration is given by
\[
U\leq V \stext{over}A\to I\qtext{iff} U\subseteq V
\]
for subobjects $U,V\subseteq A$, in other words the fibered fibration is just
given by the fibered family construction (\cite[Definition~6.2]{streicherfib})
\[
\vcenter{\xymatrix@-3mm{
{}\phantom{\bullet}\pullbackcorner
\ar[r]\ar[d]
&
\Sub(\sierp)
\ar[d]^{\sub(\sierp)}
\\
{}\commacat{\sierp}{\sierp}\ar[r]_{\partial_1} 
& 
{}\sierp
} },
\]
where $\partial_1$ is the projection on the domain.

\medskip

A different transformation $\gamma':\sub(\sierp)\to\sub(\sierp)$ satisfying the
conditions of
Definition~\ref{def:fibtrip-tripgamma}-\ref{def:fibtrip-tripgamma-gamma} is
given by
\[
\vcenter{\xymatrix@-3mm{
U_0\mono[d]_{m_0} & \ar[l]U_1\mono[d]^{m_1}\\
A_0 & \ar[l]A_1
}}
\quad\stackrel{\gamma'}{\mapsto}\quad
\vcenter{\xymatrix@-3mm{
A_0\ar[d]_{\id} & \ar[l]U_1\mono[d]^{m_1}\\
A_0 & \ar[l]A_1
}}\footnote{Incidentally, $\gamma'$ is a universal closure operation, but this
seems to be a different story since we do not use it to describe a subtopos,
but a self-fibering of $\sierp$.}.
\]
To give an explicit description of the associated internal uniform preorder,
remark that a uniform preorder structure on $\Omega$ is a subobject 
$R\subseteq P(\Omega\times \Omega)$, which amounts to a set of binary
relations on $\Omega$ for $R_1$ and a set of binary relations on $\Omega_0$ for
$R_0$, such that the obvious inclusion holds. In our case, $R_1$ is the set of
subrelations of the
implication relation on $\Omega$, and $R_0$ consists of \emph{all} relations
on $\Omega_0\times\Omega_0$. 

Finally, the fibered fibration associated to $\gamma'$ is given
by $U\leq V$ over $A\xrightarrow{a} I$ iff $(\forall_a(U\imp V))=\top$ for
$U,V\subseteq A$ as in the following diagram,
\[
\vcenter{\xymatrix@R-6mm@C-4mm{
& V_0 \mono[dd]|\hole && V_1\mono[dd]\ar[ll]\\
U_0 \mono[dr] && U_1\mono[dr]\ar[ll]\\
& A_0 \ar[ddd]_{a_0} && A_1\ar[ddd]^{a_1}\ar[ll]\\
\\
\\
& I_0  && I_1\ar[ll]_{I_\leq}\\
}}
\]
which concretely means that $U_1\subseteq V_1$, and $U_0\cap
a_0^{-1}(I_\leq(I_1))\subseteq V_0$. As two extreme cases, the ordering of
predicates coincides with the ordering by inclusion whenever $I_\leq$ is
surjective, and the ordering collapses if $I_1$ is empty.
\end{example}

\appendix
\titleformat{\chapter}
  {\normalfont\Huge\bfseries}{}{0em}{}
\titlespacing*{\chapter}{0pt}{0pt}{8.3ex}
\chapter{Appendix}

\section{Partial combinatory algebras and triposes}\label{sec:pcas}

In this appendix, we recall classical concepts of categorical realizability
which don't have a natural place in the main text.

\subsection{Partial combinatory algebras}

\begin{definition}\label{def:pca}
A (weak) partial combinatory algebra\dindex{partial combinatory algebra}
(pca\dindex{pca}) is a set $\pcaa$ together with a
partial
binary
operation $(-\,\cdot\, -):\pcaa\times\pcaa\rightharpoonup\pcaa$ such that there
exist $\comk,\coms\in\pcaa$ satisfying for all $x,y,z\in\pcaa$ the conditions
\begin{align*}
  \comk\appca x\appca y &= x\\
 &\coms\appca x\appca y\defined\\
 \coms\appca x\appca y\appca z&\ges x\appca z\appca (y\appca z).
\end{align*}
\end{definition}
We refer to Section~\ref{sec:conventions} for the notations $t\defined$ and
$s\les t$. We use the usual convention that the binary application
associates to the left; thus, for example, ${\coms}\appca x\appca
y\appca z$ should be
read as $((\coms\appca x)\appca y)\appca z$. We usually omit the dot and
write
application simply by juxtaposition. The notion of the above definition is
usually called \emph{weak} pca, `strong' pcas being those where 
the last condition is replaced by the strong equality $\coms\appca x\appca
y\appca z\simeq x\appca z\appca (y\appca z)$. However, as we deal exclusively
with the weak version in this text, we will simply call them pcas.
\begin{example}\label{ex:kone}
 The archetypal example of a pca is the so-called \emph{first Kleene
algebra}\index{first Kleene algebra} $\pcakone$ which has as underlying set the
set $\N$ of natural numbers, and where the application operation is given by
\[
 n\appca m = \phi_n(m),
\]
where $(\phi_n)_{n\in \N}$ is an effective enumeration of partial recursive
functions (see~\cite[Section~1.4.1]{vanoosten2008realizability}).
\end{example}
Next, we introduce Longley's \emph{typed} pcas~\cite{longley1999unifying},
whose relation to ordinary pcas is analogous to the relation between typed and
untyped $\lambda$-calculus. As for pcas, there is a `weak' and a `strong'
version and we use the weak one.
\begin{definition}\label{def:typed-pca}
 A \emph{typed partial combinatory algebra}\index{partial combinatory algebra!typed}\index{pca!typed} (typed pca) is a pair
$(I,\pcaa)=(I,\pcaaii)$ consisting of 
\begin{enumerate}
 \item 
a set $I$ of `types', equipped with binary operations 
\[(-\ptype -),(- \imp -):I\times I\to I,
 \]
\item 
a family
$(\pcaa_i)_{i\in I}$ of sets, with for each pair $i,j\in I$ of types a partial
`application' map $(-\,\appca_{ij}\,-):\pcaa_{i\imp
j}\times\pcaa_i\pto\pcaa_j$, 
\end{enumerate}
 such that for all $i,j,k\in I$ there exist elements
\begin{align*}
 \comk_{ij} &\in \pcaa_{i\imp j\imp i} & \compair_{ij}& \in\pcaa_{i\imp j\imp
(i\ptype j)}\\
\coms_{ijk} &\in \pcaa_{(i\imp j\imp
k)\imp (i\imp j)\imp i \imp k} & \comfst_{ij}&\in\pcaa_{(i\ptype j)\imp i}\\
 &
 & \comsnd_{ij}&\in\pcaa_{(i\ptype
j)\imp j}\\
\end{align*}
satisfying
\begin{align*}
 \comk x y &= x & \comfst (\compair x y) &= x\\
&\coms x y  \defined& \comsnd (\compair x y) &= y\\
 \coms xyz &\ges x z (y z)
\end{align*}
for all appropriately typed $x,y,z$.
\end{definition}
As usual, the type constructor $(-\imp-)$ for function spaces associates to the
right.

We do not go into the details of the theory of pcas -- for the untyped case we
refer the reader to~\cite{vanoosten2008realizability}, and the typed case is
not much different (certain constructions, such as fixed point combinators,
don't work in the typed case). 

\medskip

\begin{definition}\label{def:sub-pca}
 Let $\pcaa$ be a pca. A \emph{sub-pca}\index{sub-pca}\footnote{called
\emph{elementary} sub-pca in \cite{vanoosten2008realizability}} of $\pcaa$ is a
subset
$\pcaas\subseteq\pcaa$ which is closed under application in the sense that
\[
 a,b\in\pcaas,\; a\appca b\defined \;\imp\; a\appca b\in \pcaas,
\]
such that the combinators $\comk,\coms$ for $\pcaa$ can be chosen in $\pcaas$.

\medskip

In the same way, a \emph{typed sub-pca}\index{sub-pca!typed} of  a typed pca $\pcaia$ is a family
$(\pcaasi\subseteq\pcaai)_{i\in I}$ of subsets which is closed under application
in the
sense that 
\[
 a\in\pcaasi,\; b\in\pcaass{i\imp j},\; a\appca b\defined \;\imp\; a\appca b\in
\pcaass{j}
\]
and such that all the combinators for $\pcaaii$ can be chosen in the subsets.
 \end{definition}
A (typed) pca together with a (typed) sub-pca is also called an \emph{inclusion
of pcas}\index{inclusion of pcas}. Inclusions of pcas are important in relative
realizability and occur naturally in our reconstruction. We denote inclusions
of pcas and of typed pcas by $(\pcaas\subseteq\pcaa)$, and
$(I,\pcaas\subseteq\pcaa)$, respectively.

\subsection{Triposes}

Informally, triposes -- introduced in~\cite{hjp80} and studied further in
\cite{pitts81,pitts2002} -- are fibrational models of higher order
intuitionistic logic.

In one sentence, a tripos on a cartesian closed category $\catc$
is a complete fibered Heyting pre-algebra $\trip$ with a 
generic predicate. In a bit more detail, this means the following.
\begin{definition}\label{def:tripos}
 A \emph{tripos}\index{tripos} on a cartesian \emph{closed}\footnote{It is
possible to define triposes on categories having only finite
limits~\cite{pitts81}, or even finite products\cite{pitts2002,
vanoosten2008realizability,frey2011}, but this is not relevant here.
} 
category $\catc$ is
a fibered preorder
\[
 \trip:\tot{\trip}\to\catc
\]
such that
\begin{enumerate}
\item all fibers $\trip_C$ for $C\in\catc$ are Heyting pre-algebras, and the
Heyting pre-algebra structure is preserved by reindexing.
\item $\trip$ has internal products in the sense of
\cite[Section~7]{streicherfib}, and
\item generic
predicate
$\triptr\in\trip_\prop$.
\end{enumerate}
\end{definition}
Using the postulated structure, we can interpret the $\forall, \imp, \wedge,
\top$ fragment of predicate logic in a tripos, and one can use the generic
predicate and higher order encodings to interpret the remaining connectives
$\exists, \vee, \bot$. If the base category is regular and the tripos is a
pre-stack, it is thus in particular a fibered frame in the sense of
Definition~\ref{def:fibered-frame}. We will refer to a tripos which is a
pre-stack as a \emph{regular tripos}\index{regular
tripos}\index{tripos!regular}.

It seems reasonable to only work with regular triposes,
since all known constructions seem to give rise to triposes satisfying the
pre-stack 
condition, and moreover Pitts'~\cite{pitts81} important \emph{iteration
theorem} depends on it. On the other hand, the condition is not vacuous -- we
will now state an example (due to Streicher) of a tripos which is \emph{not} a
pre-stack.

\begin{example}[Streicher]
Let $\catset^{\bullet\rightrightarrows\bullet}$ be the topos of non-reflexive
graphs. Define the fibered preorder $\trip$ on
$\catset^{\bullet\rightrightarrows\bullet}$ by the pullback
\[
 \xymatrix@R-3mm{
 \tot{\trip}\pullbackcorner\ar[d]_\trip\ar[r] 
&
\Sub(\catset)\ar[d]^{\sub(\catset)}\\
\catset^{\bullet
\rightrightarrows\bullet}_{\phantom{\bullet\rightrightarrows\bullet}}\ar[r]
^\Gamma & \catset
}
\]
where $\Gamma$ is the global sections functor which sends each
graph to its set of loops. $\trip$ interprets full first order logic since
$\sub(\catset)$ does and the relevant structure is stable under change of base
along finite limit preserving functors. Given a graph $G$, a predicate in
$\trip_G$ is a subset of $\Gamma(G)$, i.e.\ a set of loops. This intuition
allows us to construct a generic predicate --  $\prop$ is the graph
with one vertex and two loops, and $\triptr$ singles out one of the two loops.
Thus $\trip$ is a tripos. To see that $\trip$ is not a pre-stack, take $I$ to
be the graph $(\bullet\to\bullet)$ with two vertices connected by one edge. Its
terminal projection $!:I\eepi 1$ is an epimorphism, but
$\top_1\nvdash\exists_!\top_I$, which contradicts the pre-stack property.
\end{example}

\medskip

Having defined pcas and triposes, we will now explain how to construct triposes
from pcas. These so-called \emph{realizability triposes} traditionally belong
to the central concepts of categorical realizability.
\begin{definition}\label{def:realizability-tripos}
\begin{enumerate}
 \item \label{def:realizability-tripos-real}
Let $\pcaa$ be a pca. The \emph{realizability tripos}\index{realizability tripos}\index{tripos!realizability}
\[\rtr{\pcaa}:\tot{\rtr{\pcaa}}\to\catset\] is defined as follows
\begin{itemize}
 \item Predicates on a set $M$ are functions $\varphi:M\to P(\pcaa)$ into
the power set of $\pcaa$.
\item For predicates $\varphi,\psi:M\to P(\pcaa)$ the ordering is defined by
\[
 \varphi\leq\psi\;\defequi\;\exists e\vtp\pcaa\sall m\vtp M\sall
a\!\in\!\varphi(m)\qdot ea\in \psi(m).
\]
\item Reindexing is given by precomposition.
\end{itemize}
\item\label{def:realizability-tripos-relreal} Let $(\ipcaasa)$ be an inclusion
of pcas. The \iemph{relative realizability tripos}\index{tripos!relative realizability}
$\rtr{\pcaa,\pcaas}:\tot{\rtr{\pcaa,\pcaas}}\to\catset$ is defined as follows.
\begin{itemize}
 \item Predicates on a set $M$ are functions $\varphi:M\to P(\pcaa)$.
\item For predicates $\varphi,\psi:M\to P(\pcaa)$, the ordering is defined by
\[
 \varphi\leq\psi\;\defequi\sists e\vtp\pcaas\sall m\vtp M\sall
a\!\in\!\varphi(m)\qdot ea\in \psi(m)
\]
\item Reindexing is given by precomposition.
\end{itemize}
\end{enumerate}
\end{definition}

We can do analogous constructions for \emph{typed} pcas, and this has been done
in \cite[Definition~3.1-(iv)]{lietz2002impredicativity}, but we can not expect
the result to be a tripos anymore.
\begin{definition}\label{def:realizability-hyperdoctrine}
\begin{enumerate}
 \item \label{def:realizability-hyperdoctrine-real}
Let $\pcaia$ be a typed pca. The \iemph{realizability hyperdoctrine}
$\hyph\pcaia:\tot{\hyph\pcaia}\to\catset$ is defined as follows.
\begin{itemize}
 \item Predicates on a set $M$ are pairs $(i\in I,\varphi:M\to P\pcaai)$.
\item For predicates $(i,\varphi),(j,\psi)$ on $M$, the ordering is defined
by
\[
 (i,\varphi)\leq(j,\psi)\;\defequi\;\exists e\vtp\pcaa_{i\imp j}\sall m\vtp
M\sall
a\!\in\!\varphi(m)\qdot ea\in \psi(m).
\]
\item Reindexing is given by precomposition.
\end{itemize}
\item \label{def:realizability-hyperdoctrine-relreal}
Let $\iipcaasa$ be am inclusion of typed pcas. The \iemph{relative realizability
hyperdoctrine}
$\hyph(I,\pcaa,\pcaas):\tot{\hyph(I,\pcaa,\pcaas)}\to\catset$ is defined as
follows.
\begin{itemize}
 \item Predicates on a set $M$ are pairs $(i\in I,\varphi:M\to P\pcaai)$.
\item For predicates $(i,\varphi),(j,\psi)$ on $M$, the ordering is defined
by
\[
 (i,\varphi)\leq(j,\psi)\;\defequi\;\exists e\vtp\pcaa_{\#,i\imp j}\sall m\vtp
M\sall
a\!\in\!\varphi(m)\qdot ea\in \psi(m).
\]
\item Reindexing is given by precomposition.
\end{itemize}
\end{enumerate}
\end{definition}

All (relative) realizability triposes and hyperdoctrines interpret the
$(\top,\wedge,\imp,\exists,\forall)$-fragment of first order logic, in
particular they are fibered frames. Thus, we can construct their
categories of partial equivalence relations, for which we use the following
terminology.
\begin{definition}\label{def:realizability-cats}
 \begin{itemize}
  \item Given a pca $\pcaa$, the \iemph{realizability topos}
$\catrt(\pcaa)$ is
the category $\catset[\rtr{\pcaa}]$ of partial equivalence relations in
$\rtr{\pcaa}$.
\item For an inclusion $\ipcaasa$ of pcas, the \iemph{relative realizability
topos} $\catrt(\pcaa, \pcaas)$ is
the category $\catset[\rtr{\pcaa, \pcaas}]$.
\item Given a typed pca $\pcaia$, the \iemph{realizability category}
$\catrc\pcaia$ is
the category $\catset[\hyph\pcaia]$.
\item Given an \emph{inclusion} $\iipcaasa$ of typed pcas, the
\iemph{relative realizability category} $\catrc(I,\pcaa,\pcaas)$ is
the category $\catset[\hyph(I,\pcaa,\pcaas)]$ .
 \end{itemize}
\end{definition}
\begin{remark}
Given an inclusion $\iipcaasa$ of typed pcas, it can be deduced from
Theorem~\ref{theo:rda-lccc} and the fact that $\hyph(I,\pcaa,\pcaas)\simeq
D(\ufam(I,\pcaa,\pcaas))$ (Example~\ref{ex:dufamia}) that
$\catrc(I,\pcaa,\pcaas)$ is locally cartesian closed. However, in
general $\catrc(I,\pcaa,\pcaas)$ does not seem to be a pretopos since it
doesn't have finite coproducts. Longley~\cite{longley1999unifying} writes:
\begin{quotation}
``[\dots{}] since RC(A) doesn't automatically 
have binary coproducts. But under a mild extra condition that we can 
``simulate'' the booleans within A, we do.''
\end{quotation}
\end{remark}

\section{A first factorization result}\label{sec:decompo}

Remark~\ref{rem:geometric-theory-of}-\ref{rem:geometric-theory-of-triposes}
alluded to a possible `geometric theory of triposes' which views constant
objects functors associated to triposes in analogy to geometric morphisms. From
Pitts' iteration theorem we know that these functors compose (I expect this to
generalize to `triposes with $\gamma$'). Following the analogy to geometric
morphisms, the natural question to ask is whether they can also be
\emph{decomposed}, paralleling the known factorization theorems for geometric
morphisms (see~\cite{elephant1,elephant2}).

Here I present a first such result.

\begin{definition}
Let $(\trip:\tot{\trip}\to\tops,\gamma:\trip\to\sub(\tops))$, be a tripos with
$\gamma$ (Definition~\ref{def:fibtrip-tripgamma}).
\begin{itemize}
 \item $(\trip,\gamma)$ is called
\emph{realizability-like}\index{tripos!realizability-like}\index{realizability-like tripo}\footnote{Suggestions for better terminology
welcome -- maybe `shallow'?}, if $\gamma\adj\delta$
\item $(\trip,\gamma)$ is called \emph{localic}\index{localic tripos}\index{tripos!localic}, if $\delta\adj\gamma$
\end{itemize}
\end{definition}
\begin{remark}
 It is clear that any internal locale $X$ in $\tops$ gives rise to a localic
tripos with $\gamma$. 
Conversely, if $(\trip,\gamma)$ is a localic tripos with $\gamma$ then the
constant objects functor $\Delta:\tops\to\tops[\trip]$ has a right adjoint and
is thus a localic geometric morphism, which shows that the terminology makes
sense.
\end{remark}
In the following we want to show that any tripos on $\catset$ can be
decomposed into a realizability-like part and a localic part. To  this end, we
first recall a result from Birkedal's thesis~\cite{birkedal2000} about
categories of assemblies. In \cite[Definition~3.3.1]{birkedal2000} Birkedal
defines a category $\asm(\fifx)$ (which we will call $\basm(\fifx)$ to
distinguish it from the assemblies of Section~\ref{sec:assemblies}) for any
existential fibration\footnote{`regular fibration' in his terminology}
$\fifx:\tot{\fifx}\to \catc$ on a finite product category $\catc$.
For the moment we only consider the case with base category $\catset$ as it
spares us to spell out some subtleties. For good measure, we shall also assume
that the existential fibration satisfies the pre-stack condition.
\begin{definition}
 Let $\fifx:\tot{\fifx}\to\catset$ be a fibered frame. 
\begin{itemize}
 \item 
The category
$\basm(\fifx)$ of \emph{Birkedal assemblies}\index{Birkedal assemblies} has
\begin{enumerate}
 \item  pairs $(M,\varphi)$ as {objects}, where $\varphi\in\fifx_M$ such
that $\gamma(\varphi)\cong\top$ ($\gamma$ as defined in
Section~\ref{sec:glob-secs}), and
\item functions $f:M\to N$ such that $\varphi\leq f^*\psi$ as {morphisms}
from $(M,\varphi)$ to $(N,\psi)$.
\end{enumerate}
\item
The functor $\Delta:\catset\to\basm(\trip)$ is given by $M\mapsto(M,\top)$.
\end{itemize}
\end{definition}
Birkedal shows that for any existential fibration $\fifx$, $\basm(\fifx)$ is
regular and $\Delta$ is right adjoint to the global sections functor
$\Gamma=\basm(\fifx)(1,-)$ (in particular it preserves finite meets); if
$\fifx$ is a fibered frame then $\Delta$ is moreover regular. In
\cite[Theorem~3.5.1]{birkedal2000}, Birkedal shows that if $\fifx$ models
$\imp,\forall$ in addition to being a fibered frame, then $\basm(\fifx)$ is
locally cartesian closed.

\begin{theorem}
 Let $\trip:\tot{\trip}\to\catset$ be a regular tripos.
Then
$\Delta:\catset\to\catset[\trip]$ can be factorized into two functors, where
the first one is the constant objects functor of a realizability-like tripos
and the second one is the constant objects functor of a localic tripos.
\end{theorem}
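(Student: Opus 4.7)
The strategy is to use Birkedal's category of assemblies $\basm(\trip)$ as the intermediate category, factoring
\[
\catset \xrightarrow{\Delta_1} \basm(\trip) \xrightarrow{\Delta_2} \catset[\trip]
\]
where $\Delta_1(M) = (M,\top)$ is Birkedal's constant-objects functor and $\Delta_2(M,\varphi) = (M,\predeq_\varphi)$ is the canonical embedding of assemblies into the tripos topos. It is immediate that $\Delta \cong \Delta_2 \circ \Delta_1$.

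First I would exhibit $\Delta_1$ as the CO functor of a realizability-like tripos $\trip_1$ on $\catset$. Take $\trip_1 := \Delta_1^{*}\sub(\basm(\trip))$, the pullback of the subobject fibration of $\basm(\trip)$ along $\Delta_1$. Predicates on $M$ in $\trip_1$ correspond to subobjects of $(M,\top)$ in $\basm(\trip)$, and the full tripos structure (finite meets, implication, quantifiers, generic predicate) is inherited from $\trip$ using Birkedal's theorem that $\basm(\trip)$ is regular and locally cartesian closed (which applies because every tripos models $\imp$ and $\forall$). The realizability-like property $\gamma_1 \adj \delta_1$ follows from Birkedal's result that $\Delta_1$ is right adjoint to $\Gamma_1 := \basm(\trip)(1,-)$, combined with Lemma~\ref{lem:gamma-gamma} relating $\gamma$ to $\Gamma$.

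Next I would exhibit $\Delta_2$ as the CO functor of a localic tripos $\trip_2$ on $\basm(\trip)$. Take $\trip_2 := \Delta_2^{*}\sub(\catset[\trip])$. By Pitts' iteration theorem the iterated construction gives $\basm(\trip)[\trip_2] \simeq \catset[\trip]$, so $\Delta_2$ really is the CO functor of $\trip_2$. For the localic property $\delta_2 \adj \gamma_2$, the key observation is that every subobject of an assembly $(M,\predeq_\varphi)$ in $\catset[\trip]$ is of the form $(M,\predeq_\psi)$ for some $\psi \leq \varphi$, and therefore already lies in $\basm(\trip)$; hence $\delta_2 : \sub(\basm(\trip)) \to \trip_2$ is essentially an equivalence of fibered preorders, making the adjunction $\delta_2 \adj \gamma_2$ automatic.

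The main obstacle I anticipate is the verification of the full tripos axioms for $\trip_1$ (and to a lesser degree for $\trip_2$), particularly the existence of a generic predicate: for $\trip_1$ the natural candidate is obtained by restricting $\triptr$ along $\gamma(\triptr) \hookrightarrow \prop$, and one must verify that this predicate classifies all of $\trip_1$. A related subtlety is that $\trip_2$ lives over the non-topos base $\basm(\trip)$, so one has to work in the "tripos with $\gamma$" framework of Definition~\ref{def:fibtrip-tripgamma}; showing that the pullback construction yields a genuine tripos in this enlarged sense, and that Pitts' iteration theorem applies in that generality, will require some care.
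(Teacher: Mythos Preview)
Your construction of $\trip_1=\Delta_1^*\sub(\basm(\trip))$ is exactly the tripos $\triq$ the paper builds, and your identification of the generic predicate is correct. The gap is in the claim that $\Delta_1:\catset\to\basm(\trip)$ is the constant objects functor of $\trip_1$. The CO functor of $\trip_1$ lands in $\catset[\trip_1]$, which is always \emph{exact}; but $\basm(\trip)$ is in general only regular and locally cartesian closed (for $\trip=\rtr{\pcaa}$ it is the ordinary category of assemblies, famously not exact). In fact the paper's remark after the proof identifies $\basm(\trip)$ with $\asm(\triq)$, not with $\catset[\triq]$. So your proposed intermediate category is too small: you would need its ex/reg completion.

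There is a second gap in the localic leg. Your argument that $\delta_2$ is an equivalence rests on the assertion that every subobject of $(M,\predeq_\varphi)$ in $\catset[\trip]$ already lies in $\basm(\trip)$. Such a subobject is indeed isomorphic to some $(M,\predeq_\psi)$ with $\psi\leq\varphi$, but membership in $\basm(\trip)$ requires $\gamma(\psi)\cong\top$, which fails for arbitrary $\psi$. So $\delta_2$ is not essentially surjective, and this route to the localic property does not go through.

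The paper avoids both problems by keeping \emph{both} triposes over $\catset$. It factors through $\catset[\triq]$ rather than $\basm(\trip)$, and for the second leg it constructs an explicit geometric morphism of triposes $f^*\dashv f_*$ between $\triq$ and $\trip$ (with $f^*(U,\varphi)=\exists_U\varphi$ and $f_*(\varphi)=(\gamma(\varphi),\varphi|_{\gamma(\varphi)})$), then invokes \cite[Theorem~2.5.8]{vanoosten2008realizability} to obtain a localic geometric morphism of toposes whose inverse image is the desired second CO functor. This sidesteps entirely the need to set up a tripos with $\gamma$ over the non-exact base $\basm(\trip)$.
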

\begin{proof}
 We define a fibered poset $\triq:\tot{\triq}\to\catset$ by taking the pullback
\[
\xymatrix@-2mm{
\tot{\triq}\ar[r]\ar[d]_\triq\pullbackcorner &
\Sub(\basm(\trip))\ar[d]^{\sub(\basm(\trip))}\\
\catset\ar[r]_-\Delta &\basm(\trip)
} 
\]
of $\sub(\basm(\trip))$ along $\Delta:\catset\to\basm(\trip)$. Since
$\basm(\trip)$ is regular and locally cartesian closed and $\Delta$ is regular,
$\triq$ is a fibered frame with $\imp,\forall$. Concretely, $\triq$ is given as
follows.
\begin{itemize}
 \item Predicates on $M$ are pairs $(U\subseteq M,\varphi\in\trip_M)$ such that
$\gamma(\varphi)\cong\top$.
\item $(U,\varphi)\leq(V,\psi)$ iff $U\subseteq V$ and $\varphi\leq\psi|_U$.
\item Reindexing is given by pullback and reindexing in $\trip$.
\end{itemize}
To show that $\triq$ is a tripos it remains to construct a generic predicate.
The underlying object is given by
$\prop_\triq=\widetilde{\gamma(\triptr_\trip)}$ -- the partial map classifier of
the subobject $\gamma(\triptr_\trip)\subset\prop_\trip$, and $\triptr_\triq$ is
given by the inclusion
$\gamma(\triptr_\trip)\hookrightarrow\widetilde{\gamma(\triptr_\trip)}$ together
with the predicate $\triptr_\trip|_{\gamma(\triptr_\trip)}$. Thus, $\triq$ is a
tripos, and it is straightforward to verify that $\triq$ is realizability-like.

We now construct a geometric morphism between $\triq$ and $\trip$.
Define $f^*:\triq\to\trip$ by 
\[
 \triq_M\ni(U,\varphi)\quad\mapsto\quad (\exists_U\varphi)\in\trip_M,
\]
where $\exists_U$ is a shorthand for existential quantification along the
inclusion $U\hookrightarrow I$. Define $f_*:\trip\to\triq$ by
\[
 \trip_M\ni\varphi\quad\mapsto\quad
(\gamma(\psi),\varphi|_{\gamma(\psi)})\in\triq_m.
\]
It is easy to see that the fibered monotone maps $f^*$ and $f_*$ constitute a
geometric morphism of triposes and thus by
\cite[Theorem~2.5.8]{vanoosten2008realizability} give rise to a localic
geometric morphism $F^*\adj F_*:\catset[\trip]\to\catset[\triq]$ of toposes with
$F^*$ preserving constant objects, or in other words
$F^*\circ\Delta_\triq\cong\Delta_\trip$.
\end{proof}
\begin{remarks}
\begin{enumerate}
 \item 
The presented decomposition is \emph{not} unique. This can be seen from
relative realizability. Given an inclusion $\pcaas\subseteq\pcaa$, the relative
realizability topos $\catrt(\pcaa,\pcaas)$ is localic over $\catrt(\pcaas)$
(see e.g.\ the introduction of \cite{awodey2002local}), which gives a
decomposition of $\Delta:\catset\to\catrt(\pcaa,\pcaas)$ into a
realizability-like followed by a localic part. However, this is 
\emph{not} the factorization given by the above construction -- the tripos
$\triq$ obtained by applying the construction to the tripos
$\catrt(\pcaa,\pcaas)$ is equivalent to the subfibration of
$\catrt(\pcaa,\pcaas)$ on predicates $\varphi:M\to P\pcaa$ satisfying
\[
 m\vtp M\csep \exists a\vtp\pcaa\qdot a\in \varphi(m)\ent\exists
a\vtp\pcaas\qdot a\in\varphi(m),
\]
which looks quite different from $\rtr{\pcaas}$.

One should search for strengthenings of the concepts `realizability-like' and
`localic', which make the decomposition unique up to equivalence.
\item
Given a tripos $\trip$, we have $\basm(\trip)\cong\asm(\triq)$ ($\triq$ is the
tripos constructed in the proof), which reconciles the definitions of
asssemblies of Birkedal and van Oosten in a certain sense.
\item
Using `triposes with $\gamma$', I hope that the factorization works over
arbitrary bases. The necessity of $\gamma$ is clear from the definition of
$\basm(\trip)$.
\end{enumerate}
\end{remarks}

\section{Bits and pieces, open ends}\label{sec:bits}

In the following, I present some open questions and ideas that have not been
fully explored yet.

\subsection{Uniform preorders as internal
preorders}\label{sec:uniform-as-internal}

A small presheaf on a locally small category $\catc$ is a small colimit of
representable presheaves in $\catset^{\catc^\op}$
\cite{day2007limits,rosicky1999cartesian,freyd-kelly-cats_of_cont_functors}.
Following \cite{day2007limits}, we denote the category of small presheaves on
$\catc$ by $\pp\catc$. $\pp\catc$ is locally small and can abstractly be
characterized as the \emph{small colimit cocompletion} of $\catc$. If $\catc$ is
small, then $\pp\catc=\widehat{\catc}$.

The category $\pp\catset$ of small presheaves on $\catset$ is a \emph{locally
cartesian closed $\infty$-pretopos}, but it is not a topos
(Giraud's theorem
fails since we do not have a small cogenerating family, and the truth value
object $\Omega:\catset^\op\to\catset$ is definable, but not small).
Using the \emph{axiom of choice} one can show that uniform preorders (and thus
fibered preorders with a generic family of predicates) are equivalent to
preorders internal to $\pp\catset$, which means that the theory of uniform
preoders is \emph{order theory in a predicative framework}\footnote{It is not
surprising that fibered preorders are preorders internal to presheaves -- the
interesting part is the relation between smallness and the generic family
of predicates.}. 

This point of view is a rich source of intuitions, and explains
for example why the uniform distributors of Section~\ref{suse-calc-dist} can
not be represented as a Kleisli category in the same way as ordinary
distributors (the monad applied to $1$ would yield the truth value object of
$\pp\catset$ which does not exist). As another example, given a uniform frame
$\upa$ it is possible to view $\catset[\upa]$ as a \emph{sub}category of the
category of \emph{internal presheaves} on $\upa$ in $\pp\catset$ --
intuitively it is only a subcategory since the category of internal presheaves
contains coproducts of elements of $\upa$ over arbitrary indexing objects in
$\pp\catset$ whereas $\catset[\upa]$ only contains the coproducts with respect
to indexing objects in the image of $Y:\catset\to\pp\catset$.

\medskip

If we want to do all this without relying on the axiom of choice, we have to
replace $\pp\catset$ by a category of `small sheaves for the regular topology'.
The problem is that it is not entirely clear how to define this category. One
approach would be to take the subcategory of $\pp\catset$ on sheaves for the
regular topology, but is not clear whether this category is closed under
colimits. Another approach would be to take small colimits of representables in
the subcategory of $\catset^{\catset^\op}$ on sheaves, but here we run into
similar problems. For me, it appears to be the safest option to take a larger
universe $\mathbf{SET}$ of sets with respect to which $\catset$ is small, and
then to take the category of \emph{large sheaves} on $\catset$ with respect to
the regular topology, that is the category of functors
$F:\catset^\op\to\mathbf{SET}$ which are sheaves for the regular topology. This
category is then a topos, and we can define the category of `small regular
sheaves' as small colimits of representables in `large regular sheaves'.

\subsection{Uniform preorders as enriched categories}\label{sec:hirschowitz}

Tom Hirschowitz made the remarkable observation that uniform preorders can be
defined as categories enriched in a quantaloid (see
e.g.~\cite{stubbe2005categorical}). Recall that a quantaloid is a category
which is enriched in cocomplete lattices, thus can be viewed as a 
locally ordered 2-category. Bénabou \cite{benabou1967introduction} defined a
category enriched in a bicategory to be a lax functor from an indiscrete
category into a bicategory, and it is in this sense that 
`quantloid-enriched' has to be read here.

The quantaloid $\mathcal{R}$ of interest is defined as follows.
\begin{itemize}
 \item objects are sets
\item a morphism from $M$ to $N$ is a \emph{downward closed} set
${R}\subseteq P(M\times N)$ of relations from $M$ to $N$
\item the ordering on $\mathcal{R}(M,N)$ is given by inclusion
\item composition is given by ${S}\circ{R}=\adcl\{s\circ r\msep
s\in S,r\in R\}$ for $M\xrightarrow{R}N\xrightarrow{S}O$
\end{itemize}
The reader is invited to verify that a category enriched in $\mathcal{R}$ is
exactly the same thing as a uniform preorder. Furthermore, $\catudist$ is the
category of enriched profunctors, but -- as Isar Stubbe pointed out --
$\catuord$ is more general than the category of enriched functors as defined in
\cite{stubbe2005categorical}.

As pointed out earlier, Hirschowitz's observation can
be viewed as a generalization of Hoshino's approach in the one-sorted case
(Remark~\ref{rem:hoshino}-\ref{rem:hoshino-hoshino}). It remains to be
clarified if and how this approach can be reconciled with the presentation of
uniform preorders as internal preorders
from Section~\ref{sec:uniform-as-internal}.

\subsection{The non-posetal case}\label{sec:non-posetal}

One of the motivating questions of this work was to find a common framework for
Grothendieck toposes and toposes induced by triposes. This goal has not 
been achieved, but the theory developed here can nevertheless shed some light
on the question. A first observation is that one-sorted uniform preorders with
$\twif$ are more `site-like' objects than triposes since they are small objects
`inside a category' instead of fibrations on it, which is already a step in the
right direction. 

The description of uniform preorders as preorders internal to small
regular sheaves from Section~\ref{sec:uniform-as-internal} tells us how to
define `uniform categories', namely as categories internal to small regular
sheaves on $\catset$ (it is possible -- while not quite as nice as for the
preorders -- to give an internal/fibration-free presentation of these objects).
With this in mind, I think a `non-posetal tripos' should be at least a
geometric uniform category $\upc$ such that $\catset[\upc]$ is a topos. 

To find the right conditions, one should also have a
second look at Theorem~\ref{theo:fibered-internal-gamma-tripos}. Since the
conditions on a uniform preorder $\upa$ that are necessary to make the theorem
work are exactly those which make $\ufam(\upa)$ into a tripos, it would be
illuminating to have an analogous result for the tentative `uniform
categories'.

\bibliographystyle{plain}
\nocite{marley1976}
\bibliography{../shared/bib}
\newpage
\include{notation}
\printindex
\end{document}